\documentclass[namelimits,sumlimits,a4paper]{amsart}
\usepackage{amsfonts}
\usepackage[hidelinks]{hyperref}
\usepackage{appendix}
\usepackage{amsmath}
\usepackage{amsthm}
\usepackage{comment}
\usepackage{amssymb}
\usepackage{amsmath,amscd,mathtools}
\usepackage{graphicx} 
\usepackage{xcolor}
\usepackage{enumitem}
\usepackage{nicematrix}
\usepackage{pgffor}
\usepackage{marginnote} 
\usepackage{xparse}
\usepackage{tikz}
\usepackage{slashed}

\newcommand{\R}{\mathbb{R}}
\newcommand{\Z}{\mathbb{Z}}
\newcommand{\C}{\mathbb{C}}

\newcommand{\Spin}{\operatorname{Spin}}
\newcommand{\U}{\operatorname{U}}
\newcommand{\SU}{\operatorname{SU}}
\newcommand{\SO}{\operatorname{SO}}
\newcommand{\so}{\mathfrak{so}}
\newcommand{\A}{\mathcal{A}}

\newcommand{\Hom}{\operatorname{Hom}}
\newcommand{\Map}{\operatorname{Hom}}
\newcommand{\diag}{\operatorname{diag}}

\newcommand{\vol}{\operatorname{vol}}
\newcommand{\arctanh}{\operatorname{arctanh}}
\newcommand{\csch}{\operatorname{csch}}

\newcommand{\st}{\big|\;}
\newcommand{\abs}[1]{|#1|}

\theoremstyle{definition}
\newtheorem{definition}{Definition}[section]
\theoremstyle{plain}
\newtheorem{lemma}[definition]{Lemma}
\newtheorem{theorem}[definition]{Theorem}
\newtheorem{introtheorem}{Theorem}
\newtheorem{introconj}[introtheorem]{Conjecture}

\newtheorem{conjecture}[definition]{Conjecture}
\newtheorem{proposition}[definition]{Proposition}
\newtheorem{corollary}[definition]{Corollary}

\newtheorem{question}[definition]{Question}

\theoremstyle{remark}
\newtheorem{remark}[definition]{Remark}

\newif\ifprintlabels

\printlabelsfalse

\ifprintlabels
    \usepackage[margin=4cm, includefoot, footskip=30pt,]{geometry} 
\else 
\fi

\AtBeginDocument{%
  \let\originallabel\label
  \RenewDocumentCommand{\label}{m}{
    \originallabel{#1}
    \ifprintlabels
      \marginnote{\footnotesize \texttt{#1}}
    \fi
  }
}

\title{Gauge theory on Cohomogeneity One Spin(7)-Manifolds from Aloff--Wallach Spaces}
\author{Hanci Chi and Enric Solé-Farré}
\thanks{ }
\begin{document}
\begin{abstract}
We begin a systematic study of $\mathrm{Spin}(7)$-instantons on complete non-compact cohomogeneity one $\mathrm{Spin}(7)$-manifolds with Aloff--Wallach spaces $N_{k,l}=\mathrm{SU}(3)/\mathrm{U}(1)_{k,l}$ as principal orbits. We provide a complete classification of admissible principal $\mathrm{U}(1)$- and $\mathrm{SO}(3)$-bundles and smooth bounded solutions to the $\mathrm{Spin}(7)$-instanton equation on them. We study bubbling phenomena for the constructed $\mathrm{Spin}(7)$-instantons and discuss the compactification of the moduli space of $\mathrm{Spin}(7)$-instantons in both the ALC and AC settings.
\end{abstract}
\maketitle
\vspace{-1.3cm}
\section{Introduction}
The study of Riemannian manifolds with exceptional holonomy groups $G_2$ and $\Spin(7)$ has been an active research direction since Berger's classification of holonomy groups, particularly fruitful over the last three decades due to the constructions of Bryant and Salamon \cite{BS89} in 1989 in the non-compact case, and Joyce \cite{Joyce96} in 1996 in the compact case. 

In 1998, Donaldson and Thomas~\cite{DT98}, and subsequently Donaldson and Segal~\cite{DS11}, proposed using higher-dimensional instanton equations to develop gauge-theoretic enumerative invariants on manifolds with special holonomy, generalising the Donaldson--Thomas invariants for  Calabi--Yau threefolds and fourfolds. A central
challenge in this programme is to understand the compactness
properties of instanton moduli spaces and the degenerations that
occur at their boundaries. Although the proposed invariants concern
primarily compact manifolds, instantons on complete non-compact
spaces provide models for bubbling and singularity formation, as well as building blocks for gluing constructions that resolve singularities of the underlying metric or connection.

On a $\Spin(7)$-manifold $(M, g, \Phi)$, the parallel defining $4$-form determines a natural instanton equation whose solutions are automatically Yang--Mills connections:
\begin{equation}
    *F_A=-\Phi\wedge F_A.
    \label{eq:intro-spin7-instanton}
\end{equation}

In this paper, we study $\Spin(7)$-instantons on $\Spin(7)$-manifolds
where both the connections and the underlying metric are invariant under the action of a group $G$ whose generic orbits are of codimension one. In this case, the equations for an invariant $\Spin(7)$-metric reduce to ordinary differential equations along a radial geodesic, with boundary conditions determined by smooth extension across the singular orbit and by the geometry at infinity. This approach was originally used by Bryant and Salamon in \cite{BS89} to produce the first examples of complete $\Spin(7)$-metrics. More recently, the works of Reidegeld \cite{Reidegeld_11}, Lehmann \cite{Lehmann22} and Chi \cite{Chi22,Chi25} constructed families of complete $\Spin(7)$-metrics with Aloff--Wallach principal orbits. These include asymptotically locally conical (ALC) metrics and asymptotically conical (AC) metrics. Most of these metrics are known through qualitative analysis of their defining equations rather than explicit formulae.

For connections invariant under the same group action, the instanton equation also reduces to ordinary differential equations. However, local solvability does not settle the global gauge-theoretic problem. One must determine which equivariant bundles and connections extend across the singular orbit, which initial conditions yield solutions with controlled behaviour at infinity, and how these solutions vary with the background metric. Even when the metric is explicit, the resulting instanton equations need not be explicitly integrable. For non-explicit metric families, these questions require simultaneous control of the metric coefficients and the connection.

The study of invariant instantons on complete non-compact manifolds with special holonomy was initiated by Clarke \cite{Clarke14}, who constructed explicit examples of $G_2$- and $\Spin(7)$-instantons on the Bryant--Salamon spaces. Subsequent work has moved beyond constructing individual examples to describing moduli spaces, determining asymptotic behaviour, and analysing the bubbling phenomena that occur at the boundaries of families. These developments provide concrete models for the compactness questions discussed above.

In six dimensions, Stein~\cite{Stein23} studied and classified $\SU(2)^2$-invariant Calabi--Yau instantons and monopoles on the smoothing and small resolution of the conifold, and on the canonical bundle of $\mathbb{CP}^1\times\mathbb{CP}^1$, equipped with their AC Calabi--Yau metrics. He constructed one-parameter families of instantons and classified the corresponding invariant moduli spaces under a natural curvature decay condition.  

In seven dimensions, Lotay and Oliveira~\cite{LO18} developed the existence and classification theory of $\SU(2)^2$-invariant $G_2$-instantons on $\mathbb{R}^4\times S^3$, considering both the
AC Bryant--Salamon metric and the explicit ALC metric of Brandhuber--Gomis--Gubser--Gukov (BGGG). Their work exhibited bubbling and removable-singularity phenomena in families of solutions, consistent with the compactness and removable-singularity results of Tian and Tao–Tian \cite{Tian00} \cite{TT04}. Stein and Turner~\cite{ST24} subsequently completed the classification of $\SU(2)^3$-invariant $G_2$-instantons on the AC Bryant--Salamon spinor bundle of the three-sphere by constructing an additional one-parameter family. Alonso~\cite{Alonso24} extended this direction to coclosed, not necessarily torsion-free, $G_2$-structures on $\mathbb{R}^4\times S^3$. 

The study of invariant instantons also extends to non-explicit background metrics. Matthies, Nordstr\"om and Turner~\cite{MNT24} constructed a one-parameter family of
$\SU(2)^2\times\U(1)$-invariant $G_2$-instantons with gauge group $\SU(2)$ and bounded curvature on the AC limit of the $\mathbb{C}_7$ family. Their dynamical systems argument combines
perturbations of an abelian solution with the analysis of a solution on the asymptotic cone. For the ALC members of the $\mathbb{B}_7$ family, Stein and Turner~\cite{ST25} constructed and classified a two-parameter family of non-abelian invariant instantons, extending the results of Lotay and Oliveira on the BGGG metric.

In eight dimensions, Papoulias~\cite{Papoulias_22} studied invariant $\Spin(7)$-instantons and Hermitian Yang--Mills connections on $T^*S^4$ with its AC Stenzel Calabi--Yau metric. His analysis describes the invariant non-abelian moduli space and exhibits bubbling along the Cayley zero section, with a Hermitian Yang--Mills connection appearing as a limiting configuration. More recently, Alonso, Madnick and Windes~\cite{AMW25} studied invariant gauge theory on the hyperkähler Calabi metric on $T^*\mathbb{CP}^2$. Our results overlap with part of their analysis
on the exceptional hyperkähler metrics (cf. Appendix~\ref{sec:explicit-examples}). A complementary construction is due to Clarke and Oliveira~\cite{CO21}, who obtained explicit one-parameter families of $\Spin(7)$-instantons on AC manifolds and orbifolds obtained by filling squashed $3$-Sasakian manifolds. Their evolution ansatz also reduces the instanton equation to an ODE system but it does not arise from a cohomogeneity one setting.

The present paper develops these questions for the families of complete $\Spin(7)$-metrics with Aloff--Wallach principal orbits constructed in \cite{Chi25}. We classify admissible equivariant $\U(1)$- and $\SO(3)$-bundles, construct instantons on the resulting extensions and determine their behaviour at infinity. When a principal-orbit bundle admits both reducible and irreducible extensions over the singular orbit, we relate the corresponding instanton families by bubbling along that orbit. In the ALC case, we compactify these families by adjoining bubbling limits and additional ideal boundary points parametrising $\U(2)$-invariant anti-self-dual connections on flat $\R^4$, obtained by rescaling in the normal directions to the singular orbit.

The main feature of our approach is its applicability to background metrics that are generally non-explicit. We study the metric and instanton equations as a common dynamical system, in which smooth extension and asymptotic conditions correspond to trajectories associated with distinguished equilibria. Invariant regions and comparison arguments control global solutions, while trajectory breaking and rescaling capture their degenerations. This allows us to compare the different moduli spaces of $\Spin(7)$-instantons and track their behaviour in the limiting case as the ALC metric approaches its AC limit, answering a question of Lotay--Oliveira~\cite{LO18}.
\subsection{Main results}
We systematically study $\Spin(7)$-instantons on complete non-compact cohomogeneity one $\Spin(7)$-manifolds with Aloff–Wallach spaces as principal orbits. Aloff--Wallach spaces are the homogeneous spaces given by a $\U(1)$-quotient of $\SU(3)$, where the circle is embedded in $\mathrm{SU}(3)$ as $ z  \mapsto \operatorname{diag}\bigl( z^k ,\; z^l,\; z^{-l-k} \bigr)$. We denote these spaces bys $N_{k,l} \coloneqq  \mathrm{SU}(3)/\mathrm{U}(1)_{k,l}$.

Up to reparametrisation of $\U(1)$ and the action of the Weyl group of $\SU(3)$, we may assume that $k$ and $l$ are coprime, with $k\geq l\geq 0$. For notational convenience, we introduce the quantities
\begin{align*}
     m \coloneqq -l-k \,,\qquad &\widehat{k}\coloneqq k+2l\,, \qquad \widehat{l} \coloneqq -2k-l\,, \qquad \widehat{m}\coloneqq k-l\; \qquad \text{and} \qquad \Delta \coloneqq k^2+kl+l^2.
\end{align*}
Note that by our choice of normalisation of the Aloff--Wallach spaces, we have $ \widehat{k}, \widehat{m}\geq 0$ and $\widehat{l}<0$. We say an Aloff--Wallach space is generic whenever $kl\widehat{m} \neq 0$ and exceptional otherwise. We have the following straightforward but extremely useful identities:
\begin{equation}
     (\widehat{i}j-i \widehat{j})(1 - \delta_{ij})  =  2 \varepsilon_ {ijh} \Delta   \qquad \qquad   \widehat{i}\,\widehat{j}+3ij  =  2 (3\delta_{ij} -1 ) \Delta\;,
\end{equation}
where $i,j,h \in \{k,l,m\}$ and $\delta_{ij}$ and $\varepsilon_{ijh}$ are the usual Kronecker and Levi-Civita symbols, with $\varepsilon_{klm}=1$.

The collection of Aloff--Wallach spaces can also be parametrised using $r\coloneqq \frac{l}{k} \in [0,1] \cap \mathbb{Q}$. Under this representation, the two exceptional Aloff--Wallach spaces, $N_{1,0}$ and $N_{1,1}$ correspond to the boundary points $r=0$ and $r=1$, respectively. Note that this rational parametrisation of Aloff--Wallach spaces carries the subspace topology inherited from $\R$.

Each generic Aloff--Wallach space $N_{k,l}$ admits three distinct cohomogeneity one orbifold resolutions, denoted by $M^{\abs{i}}_{k,l}$ for $i\in\{k,l,m\}$, each with singular orbit diffeomorphic to $\C P^2$.

The exceptional space $N_{1,1}$ admits two distinct resolutions. The smooth resolution $M^1_{1,1}$ is diffeomorphic to $T^*\C P^2$ and carries the hyperkähler metric constructed by
Calabi~\cite{Calabi79}. The orbifold resolution $M^2_{1,1}$, whose normal fibres are modelled on $\R^4/(\Z/2\Z)$, was considered by Bryant and Salamon~\cite{BS89}.

The other exceptional space, $N_{1,0}$, admits two smooth resolutions, $M^1_{1,0}$ and $M^0_{1,0}$, with singular orbits diffeomorphic to $\C P^2$ and $S^5$, respectively.

The works of Reidegeld \cite{Reidegeld_11}, Lehmann \cite{Lehmann22} and Chi \cite{Chi22,Chi25} characterise the existence of complete $\Spin(7)$-metrics $\gamma^i_\theta$ on $M^i_{k,l}$ for $0< \theta\leq \theta^*$.  For $0<\theta<\theta_*$, the metric is asymptotically locally conical (ALC): its end is asymptotic to a circle bundle of asymptotically constant fibre length over the $G_2$-cone on the homogeneous nearly Kähler flag manifold $\operatorname{SU}(3)/T^2$. At the limit parameter $\theta=\theta_*$, the metric is asymptotically conical (AC), with asymptotic cone determined by a homogeneous nearly parallel $G_2$-structure on $N_{k,l}$.

We classify the admissible equivariant principal  $\operatorname{SO}(3)$-bundles and their invariant connections. In the generic case, the filling $M^i_{k,l}$ has a distinguished irreducible extension, which we denote by $Q_{\widehat{i}}$. The same principal-orbit bundle admits a reducible extension $P_{\widehat{i}}$ exactly when $\widehat{i} = 0\mod{i}$. We have
\begin{introtheorem}
\label{thm:intro-existence}
Let $k>l>0$ be coprime, so $N_{k,l}$ is a generic Aloff--Wallach space. Take $i\in\{k,l,\abs{m}\}$, and equip $M^i_{k,l}$ with one of the complete metrics $\gamma^i_\theta$. Consider invariant instantons that extend smoothly across the singular orbit. 

Fix $\theta \in(0, \theta^*)$, so we are in the ALC regime. We have that
\begin{enumerate}[label=(\roman*)]
  \item the bundle $Q_{\widehat{i}}$ carries a one-parameter family of irreducible $\operatorname{Spin}(7)$-instantons, parametrised by a closed half-line, and 
  \item if $P_{\widehat{i}}$ exists, it carries a closed two-parameter family with boundary of irreducible instantons.
\end{enumerate}
 In both cases, interior points of the family converge exponentially to the abelian solution on the same metric with the same asymptotic limit, while boundary points converge polynomially.
 
 For the AC metric $\gamma^i_{\theta^*}$, we have that 
 \begin{enumerate}[label=(\roman*)] \setcounter{enumi}{2}
  \item there is a unique bounded invariant $\Spin(7)$-instanton on $Q_{\widehat{i}}$, and 
  \item if $P_{\widehat{i}}$ exists, it carries a one-parameter family of $\Spin(7)$-instantons.
\end{enumerate}
\end{introtheorem}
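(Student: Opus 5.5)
The plan is to reduce everything to a single dynamical system and then analyse the relevant trajectories. On the principal part $(0,\infty)\times N_{k,l}$ of $M^i_{k,l}$, I write the cohomogeneity one $\Spin(7)$-metric $\gamma^i_\theta$ as $dt^2+\sum_j f_j(t)^2\,\omega_j^2+\ldots$, in terms of the $\SU(3)$-invariant coframe adapted to the isotropy splitting. An invariant connection on the $\SO(3)$-bundle over the principal orbit is determined by an equivariant map from the isotropy representation to $\so(3)$. After a gauge fixing with no $dt$-component, I expect a small number of real functions to survive: an abelian component $a(t)$ along the $\U(1)$-direction, and components $x_j(t)$ on the root spaces of $\mathfrak{su}(3)$ that are compatible with the chosen homomorphism $\U(1)_{k,l}\to \SO(3)$. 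Substituting into $*F_A=-\Phi\wedge F_A$ should give a first-order ODE system in which each $x_j$ evolves multiplicatively and $a$ is driven by $\sum x_j^2$ and the metric coefficients. I then adjoin the metric ODEs of \cite{Chi25} to this system, so that metric and connection form one autonomous system after the reparametrisation $d\eta=dt/f$.

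Smooth extension over the singular orbit $\C P^2$ will be analysed through a singular initial value problem at $t=0$, using the standard equivariant extension criterion on the normal $\R^4/\Gamma$-slice. For $Q_{\widehat i}$, the irreducible extension, I expect exactly one free parameter: the leading coefficient $x\sim c\,t$ of the non-abelian component, with $a$ fixed by its vanishing order. For $P_{\widehat i}$, which exists iff $\widehat i\equiv 0 \bmod i$, the isotropy at the singular orbit acts reducibly. The abelian component then starts at a nonzero value, and the two surviving components $x_j$ extend with independent leading coefficients $c_1,c_2$, with a sign or ordering condition that I would prove is preserved. This gives one and two parameters respectively. Local existence and real-analytic dependence on these parameters follows from the Malgrange/Eschenburg--Wang singular IVP theorem.

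The main step is global: showing that the solutions exist for all $t$ and stay bounded. I will look for an invariant region, for instance $\{0\le x_j\le \lambda f_j\}$ together with a monotonicity inequality for $a$, which the flow cannot leave. The metric estimates of \cite{Chi25} (monotonicity of the $f_j$, and convergence to the ALC or AC limit) serve as input. Within the region, a Lyapunov-type quantity forces convergence to an equilibrium of the asymptotic system. In the ALC regime, the asymptotic circle fibre has constant length. Linearising at the abelian equilibrium should give a hyperbolic direction that yields exponential decay of $x_j$, except at a critical value of the initial parameters. At that critical value the trajectory lands on a non-hyperbolic (centre-manifold) equilibrium, and the decay is polynomial. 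The set of good parameters is closed by continuity of the limit, which produces the closed half-line for $Q_{\widehat i}$ and the closed two-parameter family with boundary for $P_{\widehat i}$. Irreducibility comes from $x_j\not\equiv0$.

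In the AC case $\theta=\theta^*$, the cone is scale-invariant and the linearisation changes. For $Q_{\widehat i}$ I would use a comparison argument: larger $c$ blows up and smaller $c$ decays to a flat limit. Together with a shooting and monotonicity argument, this should yield uniqueness of the unique bounded separatrix. For $P_{\widehat i}$ the same argument should cut the two-parameter family down to a one-parameter curve. I expect the hardest step to be constructing the invariant region uniformly in $\theta$ when the metric is non-explicit, and in particular controlling the borderline trajectory. I would first try barriers built from explicit ratios $x_j/f_j$, checking their sign using the identities $\widehat i\widehat j+3ij=2(3\delta_{ij}-1)\Delta$.
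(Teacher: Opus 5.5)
Your local picture at $\C P^2$ is wrong, and this already breaks the barrier you propose. In the generic case Schur's lemma leaves a \emph{single} non-abelian function. For $i=m$ the ansatz is $(\xi e_7+\widehat{m}e_8)\otimes T_1+\psi(e_1\otimes T_2+e_2\otimes T_3)$.
\begin{itemize}
\item On $Q_{\widehat{i}}$ the non-abelian coefficient does not vanish on the singular orbit. Smoothness forces $(\xi,\psi)\to(-3m,2)$, the canonical connection of the irreducible bundle. The $t^2$-terms are fixed by $\theta$, and the free parameter $c$ is the coefficient of the faster $t^4$-mode.
\item On $P_{\widehat{i}}$ the two parameters are not two non-abelian components. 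They are the $t^2$-coefficients of $\xi-\widehat{m}^2/m$ and of $\psi$, namely $a(\cos\alpha,\sin\alpha)$.
\end{itemize}
Since $\psi\to2$ while the coefficient $a(t)$ of $e_1,e_2$ tends to $0$, a region like $\{0\le x_j\le\lambda f_j\}$ excludes $Q_{\widehat{i}}$.

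More fundamentally, no bounded invariant region can contain the local families, because an open set of them blows up in finite time. Items (i)--(ii) consist precisely of deciding which curves survive. The paper splits the $(\xi,\psi)$ half-plane into time-dependent sectors:
\begin{itemize}
\item a forward-invariant sector $\mathcal{A}$, where for ALC metrics curves converge to $(\xi_\infty,0)$ with $\xi_\infty\ge-3m$;
\item a forward-invariant sector $\mathcal{B}$, where every curve with $\psi\neq0$ blows up in finite time, by comparison with $t^2\ddot y=\kappa y^3$;
\item a backward-invariant sector $\mathcal{F}$, which the local curves never enter;
\item the critical sector $\mathcal{C}$.
\end{itemize}
Take a connected compact set of initial data containing curves entering $\mathcal{A}$ and curves entering $\mathcal{B}$. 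The data whose curve lies in $\mathcal{C}$ at time $s$ form non-empty closed sets decreasing in $s$. A point of their intersection gives a curve staying in $\mathcal{C}$ forever, and this is the boundary solution. Your description of its polynomial decay at a degenerate equilibrium is then correct.

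Closedness of the family comes from this finite-time criterion for failure. The shapes $\{c\ge c^*\}$ and $\{0\le\alpha\le\alpha_\theta(a)\}$ additionally need the ordering $(\xi_1,\psi_1)\succeq(\xi_2,\psi_2)$, meaning $\xi_1\ge\xi_2$ and $\psi_1\le\psi_2$, which you only invoke in the AC case. ``Continuity of the limit'' is not enough, since continuous dependence controls only finite time intervals.

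Your AC mechanism contradicts (iii). If the curves on the non-blow-up side decayed to a limit, they would be bounded instantons and you would again get a half-line. What actually changes, for $i=m$, is that $F=\Delta^{-1}(kAB+lAC+mBC)\to F^+<0$. The abelian equilibrium at the conical end is then a source in the $(\xi,\psi)$-directions, with linearisation $\operatorname{diag}(-F^+,-B^+C^+D_{\widehat{m}}/F^+)$, both entries positive since $D_{\widehat{m}}>0$. Hence every curve entering $\mathcal{A}$, other than the abelian one, is unbounded, and the bounded irreducible instantons are exactly the curves staying in $\mathcal{C}$.

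Uniqueness then follows from the ordering. For two such curves, $U=\xi_1-\xi_2\ge0$ satisfies $U'\ge-FU$, so $U$ grows exponentially unless $U\equiv0$. Applied to $\Omega(c)$ this gives (iii). Applied to $\Gamma(a,\alpha)$ at fixed $a>a^*$, with $a^*$ labelling the unique bounded abelian solution, it gives the unique $\alpha(a)$ in (iv). Existence in both cases is the same shooting argument as above. This is exactly where ALC and AC differ: in the ALC regime $F\to0$, the comparison gives no growth, and the families are larger.
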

Theorem \ref{thm:intro-existence} summarises the existence and classification results of Section \ref{sec:long-time-SO3}. The exceptional Aloff--Wallach spaces admit additional components
in the invariant connection ansatz and are analysed separately. For bounded solutions, the resulting families have the same qualitative behaviour as in the generic case. On
$P_0 \to M^2_{1,1}$ and $Q_0 \to M^2_{1,1}$, however, the reduced equations admit an additional $S^1$-parameter.

The asymptotic behaviour in the ALC case has a natural interpretation in terms of monopoles.The $\Spin(7)$-instantons converge to pullbacks of abelian $G_2$-monopole on the metric cone over the flag $\SU(3)/T^2$, with a parallel Higgs field. The limiting Higgs field governs the asymptotic decay of the non-abelian component: the decay is exponential in the interior of the families and polynomial at the critical value, where the mass term vanishes. This critical value also corresponds to trivial holonomy around the asymptotic circle. Here, exponential convergence means convergence to the abelian solution on the same metric with the same limit; the abelian background itself generally retains a polynomial tail. This distinction is consistent with the analysis of positive-mass monopoles on AC $G_2$-manifolds by Fadel--Nagy--Oliveira \cite[Thm.~6.1, Prop.~6.3]{FNO}, where the components transverse to the Higgs field decay exponentially, while the longitudinal component admits polynomial decay.

Let us now suppose that both $P_{\widehat{i}}$ and $Q_{\widehat{i}}$ exist. They restrict to the same equivariant bundle away from the singular orbit, so connections on these different extensions can be compared on the principal part of $M^i_{k,l}$. We show that this identification has a direct analytic significance: instantons on the reducible extension can converge away from $\mathbb{CP}^{2}$ to instantons on the irreducible extension.

The convergence is accompanied by curvature concentration along the singular orbit. Rescaling the normal directions reveals a four-dimensional anti-self-dual instanton on $\mathbb{R}^{4}/\mathbb{Z}_{|i|}$. Thus the change of bundle extension and the appearance of a normal instanton bubble are two descriptions of the same degeneration.
\begin{introtheorem}
\label{thm:intro-bubbling}
Assume the hypotheses of Theorem~\ref{thm:intro-existence} and that $P_{\widehat{i}}$ exists.
\begin{enumerate}[label=(\roman*)]
  \item In the case of an AC metric, the non-compact end of the instanton family on $P_{\widehat{i}}$ converges smoothly on compact subsets away from the singular orbit to the unique instanton on $Q_{\widehat{i}}$. Suitable normal rescalings converge to the standard radial anti-self-dual instanton on  $\mathbb{R}^{4}/\mathbb{Z}_{|i|}$.
  \item For an ALC $\Spin(7)$-metric $\gamma^i_\theta$, every instanton in the family on $Q_{\widehat{i}}$ arises as a limit of instantons on $P_{\widehat{i}}$ in this way, with the same normal anti-self-dual bubbling model.
\end{enumerate}
\end{introtheorem}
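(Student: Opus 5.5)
The proof works at the level of the reduced ODE system from Theorem~\ref{thm:intro-existence}. Write the invariant $\SO(3)$-connection on the principal part as an abelian component $a$ together with a non-abelian component $b$; the latter is a section of the line bundle rotated by the $\U(1)$ isotropy. Both $P_{\widehat{i}}$ and $Q_{\widehat{i}}$ restrict to the same bundle on $(0,\infty)\times N_{k,l}$, so their solutions are trajectories of one system. They differ only in the boundary behaviour at the singular orbit $t=0$:
\begin{enumerate}[label=(\alph*)]
\item on $P_{\widehat{i}}$ the connection extends reducibly, with $b(t)\sim \beta\, t^{\alpha}$ for some positive weight $\alpha$ fixed by the isotropy representation;
\item on $Q_{\widehat{i}}$ the connection has $b(t)\sim 1$ at the same orbit, so the vertical $\SO(3)$ factor is identified with the normal sphere.
\end{enumerate}
We first establish the local existence theory at $t=0$. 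Solutions on $P_{\widehat{i}}$ are then parametrised by the leading coefficient $\beta$, together with the abelian parameter in the ALC case. Solutions on $Q_{\widehat{i}}$ form a family with one fewer free parameter.

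The core of the argument is the limit $\beta\to\infty$. Introduce the rescaled variable $s=\beta^{1/\alpha}t$ and the rescaled fields. To leading order near $t=0$ the $\Spin(7)$ metric is the flat cone $\R^4/\Z_{\abs{i}}$ times $\mathbb{CP}^2$, so the instanton ODE becomes the $\U(2)$-invariant ASD equation on $\R^4/\Z_{\abs{i}}$. That equation is explicitly integrable: its bounded solution interpolating from $b\sim s^{\alpha}$ to $b\to 1$ is the standard radial (BPST-type) instanton, pushed down to the quotient. Continuous dependence on parameters for the rescaled system, a regular perturbation in $\beta^{-1/\alpha}$, shows that the rescaled trajectories converge on compact $s$-sets to this model. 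This is the bubbling statement.

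Next we match the bubble to the outer region. For $1\ll s$ and $t\ll 1$ the trajectory sits near the $Q_{\widehat{i}}$ boundary behaviour $b\approx 1$. We compare this with the unstable-manifold description of $Q_{\widehat{i}}$ solutions at $t=0$ and read off the $Q$-parameter the trajectory lands on; in the ALC case this is together with the abelian parameter. The intended argument is a trajectory-breaking argument: the family of $P$-trajectories converges to a broken trajectory consisting of the bubble (a connecting orbit in the blown-up singular equilibrium) followed by a $Q$-trajectory. Smooth convergence on compact subsets of $t>0$ then follows from ODE continuity.

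For (i) we need the limit to be the unique bounded instanton on $Q_{\widehat{i}}$. In the AC case the $P$-family is bounded for all $\beta$ by the invariant-region estimates of Section~\ref{sec:long-time-SO3}. Boundedness is a closed condition under this convergence, so the limit is bounded, and uniqueness identifies it.

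For (ii) we must show that every member of the $Q$ half-line family is attained. The limiting map sends the pair (abelian parameter, $\beta\to\infty$) to a $Q$-parameter. We would show this map is continuous and monotone, and that it sends the boundary of the closed two-parameter $P$-family onto the boundary of the $Q$ half-line, which is the polynomially decaying solution. A connectedness argument then gives surjectivity.

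The main obstacle is the uniform matching step. We need control of the ODE flow through the region between the bubble scale and $t\sim1$ that is uniform in $\beta$, including the error terms from the non-flat metric coefficients, which are only known qualitatively. My first attempt would be a blow-up of the singular equilibrium, using the normal hyperbolicity of the bubble's end state. If that fails, I would fall back on the comparison and invariant-region arguments already used for long-time existence.
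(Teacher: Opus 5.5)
Your architecture matches the paper's: the bubble is a connecting orbit at the singular equilibrium, you use trajectory breaking, and you identify the limit in (i) by uniqueness. The paper's compactified coordinates $(A,B,C)$ already realise your ``blow-up of the singular equilibrium''. The singular orbit becomes the fixed point $p_0^m$, and the bubble is a heteroclinic orbit of the $(\xi,\psi)$-subsystem inside the invariant set $\{A=0,\;B=C=-2\Delta/m\}$. The matching step then reduces to an abstract breaking lemma at the hyperbolic point $(p_0^m,-3m,2)$.

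\textbf{Gap in (i): selecting the bubble.} Local solutions on $P_{\widehat m}$ have \emph{two} independent leading coefficients at the same order, in both the AC and ALC cases:
\[
\xi-\tfrac{\widehat m^2}{m}\sim a\cos\alpha\,e^{-4\Delta s/m},\qquad \psi\sim a\sin\alpha\,e^{-4\Delta s/m}.
\]
Boundedness fixes $\alpha=\alpha(a)$ only implicitly. After rescaling by $\delta_a=\sqrt{\lambda/a}$, the limit is the $\U(2)$-invariant ASD solution with $\Xi\sim-\frac{m\lambda\cos\alpha}{2\Delta}\tau^2$ and $\Psi\sim\lambda\sin\alpha\,\tau^2$. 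This is a two-parameter family. Only the critical angle $\alpha^*=\arctan(-\frac{m}{2\Delta})$ gives $\Xi=\Psi$, which is the radial instanton ending at the $Q$-equilibrium. Other angles give the abelian-ended $\coth$/$\csch$ profiles, or profiles that blow up at finite radius.

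So continuous dependence alone does not show that the rescaled limit is the standard instanton. It does not even show that the broken trajectory passes through the $Q$-equilibrium. You must prove $\alpha(a)\to\alpha^*$. The paper does this in three steps:
\begin{enumerate}
\item It compactifies the initial data. At the limiting parameter, the system reduces to the $(\xi,\psi)$-system at $p_0^m$.
\item That reduced system has the invariant line $\psi+\frac{m}{2\Delta}\xi-\frac{\widehat m^2}{2\Delta}=0$ joining the two equilibria; this line is the bubble.
\item By the sector decomposition, limiting trajectories with angle below $\alpha^*$ enter $\mathcal A$ and those above enter $\mathcal B$. Bounded $P$-instantons stay in $\mathcal C$, which forces $\alpha(a)\to\alpha^*$.
\end{enumerate}
Once the bubble is identified, your treatment of (i) is fine: the uniform bounds from $\mathcal C$ pass to the limit, and uniqueness on $Q_{\widehat i}$ identifies it.

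\textbf{Gap in (ii): the limiting map is not well defined.} You propose a map sending (abelian parameter, $\beta\to\infty$) to a $Q$-parameter. If you send $a\to\infty$ at a fixed angle $\alpha<\alpha^*$, then $\xi(s_0)\to+\infty$. The sequence escapes to one of the ideal ASD boundary points in $\mathbb I$, not to $\mathcal M(Q_{\widehat i})$. Normalising by $\xi_\infty$ instead does not help, because convergence on compacta does not control $\xi_\infty$.

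The missing idea is to normalise at a fixed finite time $s_0$. For given $c>c^*$ the paper proceeds as follows.
\begin{enumerate}
\item Choose $\beta^c(a)$ by the intermediate value theorem so that $\xi_{a,\beta^c(a)}(s_0)=\xi_{\Omega(c)}(s_0)$. This is possible because, as $a\to\infty$, the value at $\alpha=0$ tends to $+\infty$, while at $\alpha=\alpha(a)$ it tends to $\xi_{\Omega(c^*)}(s_0)$.
\item Use the order-preserving property of the flow ($\xi$ increasing, $\psi$ decreasing) to get uniform bounds, and extract a limit.
\item Show by a barrier that the limit's $\xi$ stays bounded as $s\to-\infty$. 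Hence the limit emanates from $(-3m,2)$ and equals some $\Omega(\widehat c)$.
\item Conclude $\widehat c=c$ from the normalisation and monotonicity.
\end{enumerate}
Comparison with the limiting system then gives $\beta^c(a)\to\alpha^*$, and hence the same normal ASD bubble.
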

Combining Theorems \ref{thm:intro-existence} and \ref{thm:intro-bubbling}, we obtain a natural one-point compactification of the moduli space of bounded invariant instantons modulo gauge $\mathcal M(P_{\widehat{i}})$ in the AC case by adjoining the unique instanton in $\mathcal{M}(Q_{\widehat{i}})$, with $C^\infty_{\mathrm{loc}}$ convergence modulo gauge away from the singular orbit. The ALC case has an additional source of non-compactness. Some sequences have no bounded limiting connection away from the singular orbit, but still possess well-defined normal rescaling limits. These limits correspond to ideal $\U(2)$-invariant ASD connections with gauge group $\SU(2)$ on $\R^4$ (cf. Section \ref{sec:invariant-ASD}), governed by the ODE:
\begin{equation} \label{eq:U2-ASD}
    \tau \frac{d\Xi}{d\tau}=2\Xi-\Psi^2 \qquad \qquad \qquad \tau\frac{d\Psi}{d\tau}=(2-\Xi)\Psi\;,
\end{equation} 
with prescribed asymptotics at $\tau =0$. These rescaled profiles record the local degeneration near the singular orbit. We regard them as ideal boundary data, with convergence understood through the normal rescaling maps on compact subsets where the limiting profile is defined. The standard charge one $\SU(2)$-ASD connection is a distinguished solution of this system. Up to gauge and dilation, it is the unique non-flat solution in this invariant class that has finite Yang–Mills. 
\begin{introtheorem}
\label{thm:intro-compactification-ALC}
Let $\gamma^i_{\theta}$ be an ALC $\Spin(7)$-metric on $M^i_{k,l}$, and assume $\widehat{i}= 0 \mod i$, so $P_{\widehat{i}}$ exists. We have the compactifications
\begin{align*}
    \overline{\mathcal M(Q_{\widehat{i}})} &=\mathcal M(Q_{\widehat{i}}) \sqcup \{*\} \\
    \overline{\mathcal M(P_{\widehat{i}})}& =\mathcal M(P_{\widehat{i}})\sqcup\mathcal M(Q_{\widehat{i}})\sqcup  \mathbb{I}_{\,\widehat{i}}\;,
\end{align*}
where $\mathbb{I}_{\,\widehat{i}}\cong \left[0,\frac{\pi}{4}\right] \sqcup \{\pi\}$ parametrises ideal boundary points represented by $\U(2)$-invariant ASD connections, solving Equation \eqref{eq:U2-ASD} with asymptotics
$$ \Xi(\tau) =  2 \cos (\beta )\tau^2 + o(\tau^2) \qquad \qquad \qquad \Psi(\tau) =  2 \sin(\beta) \tau ^2  + o(\tau^2) \;,$$
with $\beta \in \left[0,\frac{\pi}{4}\right] \sqcup \{\pi\}$. The ideal boundary point for $\overline{\mathcal M(Q_{\widehat{i}})}$ corresponds to an $\U(2)$-invariant ASD connection, solving Equation \eqref{eq:U2-ASD} with asymptotics
$$ \Xi =  2 + \frac{\tau ^4}{6} + o(\tau^4) \qquad \qquad \qquad \Psi(\tau) =  2 -  \frac{\tau ^4}{12}  + o(\tau^4) \;.$$
\end{introtheorem}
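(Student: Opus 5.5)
The plan is to realise the moduli spaces as sets of initial data at the singular orbit, and to read off every possible limit as a trajectory breaking in the coupled metric–instanton dynamical system. By Theorem~\ref{thm:intro-existence}, in the ALC regime $\mathcal M(Q_{\widehat{i}})$ is a closed half-line, parametrised by a single initial coefficient $a\ge 0$ in the Taylor expansion of the connection at $\mathbb{CP}^2$. Likewise $\mathcal M(P_{\widehat{i}})$ is a closed two-parameter region in the initial data $(\xi_0,\psi_0)$ with boundary. Compactifying therefore means adding limits of sequences leaving every compact subset of the parameter space. Bounded parameters give solutions depending continuously on initial data, uniformly on compact sets of the radial variable. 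Closedness and the invariant-region estimates from Section~\ref{sec:long-time-SO3} then guarantee that such limits stay in the family.

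\emph{Diverging parameters: local analysis.} Write the initial data in polar form, $(\xi_0,\psi_0)=R(\cos\beta,\sin\beta)$ with $R\to\infty$ for $P_{\widehat{i}}$, and $a\to\infty$ for $Q_{\widehat{i}}$. Rescale the radial variable by $t=\tau R^{-1/2}$ (weight chosen to match the $\tau^2$ leading order) and multiply the connection coefficients by the matching powers. As $R\to\infty$ the metric coefficients near $\mathbb{CP}^2$ become flat in the normal $\R^4/\Z_{|i|}$ directions, and the $\mathbb{CP}^2$ directions decouple. The reduced ODE converges to the $\U(2)$-invariant ASD system \eqref{eq:U2-ASD}, with initial asymptotics $\Xi\sim 2\cos\beta\,\tau^2$, $\Psi\sim 2\sin\beta\,\tau^2$ for $P$, and $\Xi,\Psi\to 2$ with the stated $\tau^4$ corrections for $Q$. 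The coefficients $\tau^4/6$ and $-\tau^4/12$ come from the indicial expansion of \eqref{eq:U2-ASD} at the fixed point $(2,2)$. Convergence on compact $\tau$-sets follows from continuous dependence on parameters for this regular–singular ODE; this uses the existence of the normal-form solution at $\tau=0$ proved in Section~\ref{sec:invariant-ASD}.

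\emph{Sorting the angles.} Analyse the phase portrait of \eqref{eq:U2-ASD} in each direction $\beta$.
\begin{itemize}
\item For $\beta$ in the interval where the rescaled ASD trajectory runs into the $(2,2)$-type critical point, i.e.\ the charge-one standard instanton on $\R^4/\Z_{|i|}$, the rescaled limit is the standard bubble. This matches Theorem~\ref{thm:intro-bubbling}(ii), and the unrescaled connections converge away from $\mathbb{CP}^2$ to an element of $\mathcal M(Q_{\widehat{i}})$.
\item For the remaining directions the rescaled ASD trajectory is not a finite-energy bubble, and no bounded limit exists away from the singular orbit.
\end{itemize}
I expect the set of remaining directions in the admissible sector to be $[0,\pi/4]\sqcup\{\pi\}$. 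Here $\pi/4$ is the separatrix where $\Xi=\Psi$ initially, and $\beta=\pi$ is the isolated abelian direction $\Psi\equiv 0$, $\Xi<0$. These directions give the ideal points $\mathbb I_{\widehat{i}}$. For $Q_{\widehat{i}}$ only the single rescaled profile exists, giving $\{*\}$.

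\emph{Main obstacle: exhaustiveness.} The hardest step is proving that every limit is of one of these kinds, and that the assignment $\beta\mapsto$ limit gives exactly the stated topology. This is needed in particular to show that every point of $\mathcal M(Q_{\widehat{i}})$ is hit, i.e.\ that for directions in the bubbling sector the limiting parameter $a$ covers the whole half-line. My first approach would be a matched-asymptotics/shooting argument. In the inner region the trajectory tracks the ASD flow to near the critical point. In the outer region it then follows the unstable manifold of the corresponding equilibrium of the full system, which is precisely the $Q$-family. Using the invariant regions and comparison functions of Section~\ref{sec:long-time-SO3}, continuity and monotonicity in the second parameter should give surjectivity onto $[0,\infty)$. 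The same estimates should rule out any further escape, such as curvature concentration at infinity, since ALC boundedness is preserved.
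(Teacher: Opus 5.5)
Your normal rescaling step matches the paper's Proposition~\ref{prop:limmit-ASD}, but the ``sorting the angles'' step has a genuine gap. In \eqref{eq:U2-ASD} one has $\tau\frac{d}{d\tau}(\Xi^2-\Psi^2)=4(\Xi^2-\Psi^2)$, so $\Xi^2-\Psi^2=4\cos(2\beta)\tau^4$ along the trajectory with initial direction $\beta$. Hence the only trajectory from the origin that reaches $(2,2)$ is the separatrix $\Xi=\Psi$, i.e.\ $\beta=\pi/4$, the image of $\alpha^*=\arctan\bigl(-\frac{m}{2\Delta}\bigr)$. For $\beta<\pi/4$ you get infinite-energy profiles with $\Xi\sim q\tau^2$. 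For $\beta>\pi/4$ the solution blows up, and these directions lie outside the family anyway, since admissible angles satisfy $\alpha\le\alpha_\theta(a)$ with $\alpha_\theta(a)\to\alpha^*$.

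So there is no interval of bubbling directions. The whole half-line $\mathcal M(Q_{\widehat i})$, its endpoint $*$, and the ideal point at $\pi/4$ all sit over a single limiting direction. Indeed, you list $\pi/4$ both as the bubbling separatrix and among the ideal directions. Any boundary description via the limiting angle alone, such as your map $\beta\mapsto$ limit, collapses $\mathcal M(Q_{\widehat i})\sqcup\{*\}$ to a point. The rescaled normal profile cannot separate these points either, because it is the same standard instanton for every $c$.

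What is missing is a second coordinate recording how a sequence approaches $\alpha^*$. The paper uses the value $\xi_{a,\alpha}(s_0)$ at a fixed regular time. It embeds $\mathcal M(P_{\widehat i})$ by $(a,\alpha)\mapsto\bigl(\nu(a)\cos\alpha,\nu(a)\sin\alpha,\nu(\xi_{a,\alpha}(s_0))\bigr)$, with $\nu(x)=x/\sqrt{1+x^2}$. As $a\to\infty$, the three cases are:
\begin{itemize}
\item bounded evaluation gives $\mathcal M(Q_{\widehat i})$;
\item evaluation $\to+\infty$ gives $[0,\alpha^*]$, realised by curves of the frozen system \eqref{eqn:compactifying-subsystem} over $p_0^i$, which is \eqref{eq:U2-ASD} after a change of variables;
\item evaluation $\to-\infty$ gives $\{\pi\}$.
\end{itemize}

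Surjectivity onto $\mathcal M(Q_{\widehat i})$ is then a shooting argument in the angle at fixed large $a$. For $c>c^*$, put $L=\xi_{\Omega(c)}(s_0)$. Along the boundary curve $\alpha_\theta(a)$ one has $\xi(s_0)\to\xi_{\Omega(c^*)}(s_0)<L$, while at $\alpha=0$ one has $\xi(s_0)\to+\infty$. The intermediate value theorem therefore gives angles $\beta^c(a)\to\alpha^*$ with $\xi(s_0)=L$. Lemma~\ref{lemma:ordering} supplies the compactness bounds, and uniqueness for the extended system identifies the limit as $\Omega(c)$. Your matched-asymptotics outline would need to be rebuilt around this evaluation coordinate, since the quantity that selects $c$ is not the limiting direction.

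A minor point: the free mode of $\Omega(c)$ is of order $t^4$, so its normal scale is $c^{-1/4}$, not $R^{-1/2}$.
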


The AC $\Spin(7)$-instantons in Theorem~\ref{thm:intro-existence} converge to homogeneous $G_2$-instantons on the nearly parallel Aloff--Wallach link. These homogeneous connections were first studied by Ball--Oliveira~\cite{BO19}. Our global existence results identify which of them occur as limits of instantons extending across the singular orbit.

The answer depends on the chosen filling. For $i=m$, the instanton on $Q_{\widehat m}$ has an irreducible limit, while for $i=l$ its limit is abelian. For $i=k$, the answer changes at an explicit value of $r=l/k$:
$$ r_*=-\frac{1}{2}+ \frac{\sqrt{861+222\sqrt{19}}}{34} \approx 0.75773577\;.$$
\begin{introtheorem}
\label{thm:intro-threshold}
On the AC metric on $M^k_{k,l}$, the unique bounded invariant instanton on $Q_{\widehat k}$ converges to an irreducible homogeneous $G_2$-instanton if $l/k<r_*$, and to the abelian homogeneous $G_2$-instanton if $l/k>r_*$. The irreducible members of the family on $P_{\widehat k}$, when this extension exists, have the same respective asymptotic limits.
\end{introtheorem}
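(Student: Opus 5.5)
We reduce the statement to a stability question for the asymptotic limit of the instanton ODE at the AC end. On the AC metric $\gamma^k_{\theta^*}$ the principal-orbit data tend to the cone over the homogeneous nearly parallel $G_2$-structure on $N_{k,l}$. In the logarithmic radial variable $t=\log\rho$, the reduced $\Spin(7)$-instanton equations become asymptotically autonomous. Their equilibria are the homogeneous $G_2$-instantons of Ball--Oliveira. These are one abelian equilibrium and, in the relevant component, an irreducible one that exists for all $r\in(0,1)$.

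The plan is to linearise the limiting autonomous system at the abelian equilibrium and compute the eigenvalues of the non-abelian (off-diagonal) directions as explicit functions of $r=l/k$. This uses the identities for $\widehat{i}j-i\widehat{j}$ and $\widehat{i}\widehat{j}+3ij$ and the explicit homogeneous nearly parallel metric coefficients, which involve $\sqrt{19}$-type constants for the Aloff--Wallach nearly parallel structure. The threshold $r_*$ should be exactly the value at which the relevant eigenvalue crosses zero. That is, the abelian equilibrium changes from a saddle, with a one-dimensional unstable manifold in the non-abelian direction, to having that direction stable. Equivalently, the irreducible equilibrium collides with, or exchanges stability with, the abelian one there. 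The first computational step is to verify this: the sign condition should reduce to a quadratic in $r$ whose positive root is $-\tfrac12+\tfrac{\sqrt{861+222\sqrt{19}}}{34}$.

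Next we use the global results of Section~\ref{sec:long-time-SO3}. Theorem~\ref{thm:intro-existence}(iii) gives a unique bounded trajectory on $Q_{\widehat k}$. It emanates from the singular-orbit equilibrium, and bounded trajectories must converge, as $t\to\infty$, to one of the two equilibria. Boundedness and the gradient-like structure of the Yang--Mills/Chern--Simons functional exclude periodic orbits and other limit sets. The invariant regions and comparison arguments used there should show that the non-abelian component of the solution stays strictly positive, say $\Psi>0$, along the trajectory.

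For $r>r_*$, the abelian equilibrium attracts all nearby bounded trajectories in the non-abelian direction. We would show that the irreducible equilibrium then lies outside the invariant region containing the trajectory, or is repelling in the direction relevant to that region. This forces convergence to the abelian limit. For $r<r_*$, the abelian equilibrium is approached only along its stable manifold, which lies in the abelian subspace $\Psi=0$, since the non-abelian direction is unstable. A trajectory with $\Psi>0$ therefore cannot converge to it and must converge to the irreducible equilibrium. The same dichotomy applies verbatim to the irreducible members of the family on $P_{\widehat k}$ from Theorem~\ref{thm:intro-existence}(iv), since they also stay in the region $\Psi>0$.

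The main obstacle is the borderline behaviour near $r_*$ together with the global part of the argument for $r>r_*$. There we must exclude convergence to the irreducible equilibrium even though it may still exist with a nontrivial stable manifold. I would first try a monotone quantity that is positive only on the irreducible side, such as a ratio of connection components compared against the metric coefficients. The aim is an invariant region whose closure contains the abelian equilibrium but not the irreducible one precisely when $r>r_*$.
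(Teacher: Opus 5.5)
Your argument for $l/k<r_*$, and its transfer to the irreducible members on $P_{\widehat k}$, is the paper's. At the abelian AC equilibrium the $\psi$-eigenvalue is $-A^-B^-D_{\widehat k}/F^-$. It is unstable toward the AC end (that is, as $s\to-\infty$) exactly when $D_{\widehat k}>0$, so a trajectory with $\psi>0$ cannot converge there.

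The gap is in the other half. You assert that the irreducible equilibrium exists for all $r\in(0,1)$. On that basis you leave $l/k>r_*$ as the ``main obstacle'', to be handled by a barrier or monotone quantity you have not found. The premise is false, and once it is corrected this case needs no extra argument.

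Solve for the non-abelian equilibria over $p^-_{k,l}$. The condition $\psi'=0$ with $\psi\neq 0$ forces $\xi=-3k-2C^-$. Substituting into $\xi'=0$ gives $A^-B^-\psi_\infty^2=(2C^-+3k)F^-+\widehat k\,\widehat F^-$. The right-hand side is exactly $A^-B^-D_{\widehat k}$, the same combination that produces the abelian $\psi$-eigenvalue, so $\psi_\infty^2=D_{\widehat k}$.

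The sign change at $r_*$ is therefore a pitchfork. The pair $(-3k-2C^-,\pm\sqrt{D_{\widehat k}})$ merges into the abelian point and ceases to exist for $D_{\widehat k}<0$. So for $l/k>r_*$ the abelian equilibrium is the only fixed point over $p^-_{k,l}$. Once you know the bounded trajectory converges to an equilibrium, it must converge to this one. This is how the paper separates the two cases.

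Two smaller corrections.

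First, the threshold does not come from a rational quadratic in $r$, and the nearly parallel coefficients are not explicit. The point $p^-_{k,l}$ is only defined as an intersection of the ellipse $\mathsf L_1=0$ with the hyperbola $\mathsf L_2=0$. The paper eliminates $(\mathsf b,\mathsf c)$ from $\mathsf L_1=\mathsf L_2=0$ and $D_{\widehat k}=0$ by iterated resultants. This yields a homogeneous quartic in $(k,l)$ whose only positive root is $r_*$. The signs are then fixed by $D_{\widehat k}=\tfrac{8\sqrt5-14}{3}>0$ at $r=0$ and $D_{\widehat k}=-12<0$ at $r=1$. You need both the uniqueness of the crossing and these endpoint signs to get the dichotomy exactly at $r_*$.

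Second, the $\xi$-eigenvalue $-F^-$ at the abelian point is always unstable toward the end; this is why the bounded abelian instanton is unique. So for $r<r_*$ that point is a source there, not a saddle, although your conclusion that its stable set lies in $\{\psi=0\}$ still holds.

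Your appeal to the Chern--Simons gradient structure, to justify that a bounded trajectory converges to an equilibrium, is a reasonable supplement. The paper uses this without comment.
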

The threshold value $r_*$ has a natural interpretation from a dynamical system perspective. Allowing $r$ to vary continuously in the reduced equations, $r^*$ is the bifurcation parameter at which the irreducible equilibrium meets the abelian equilibrium. The moduli spaces on actual Aloff--Wallach spaces sample this continuous family at rational values of $r$. We have no geometric explanation for the existence or value of $r_*$.

The transition from ALC to AC geometry also relates the families themselves.
\begin{introtheorem}
In the limit $\theta \rightarrow \theta_*$, the boundary point of $\mathcal{M}(Q_{\widehat{i}})$ converges, in the initial-data parametrisation, to the unique AC $\Spin(7)$-instanton.

If $\widehat{i}= 0 \mod i$, the boundary curves of the $\Spin(7)$-instanton moduli space $\mathcal{M}(P_{\widehat{i}})$ converge, in the initial-data parametrisation, to the curve describing the $\Spin(7)$-instantons in the AC case.
\end{introtheorem}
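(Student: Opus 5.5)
The plan is to treat the ALC-to-AC transition as a continuity statement for a single dynamical system. The system couples the $\Spin(7)$-metric ODEs (with parameter $\theta$) to the instanton ODEs. Near the singular orbit, each solution is determined by analytic initial data: the metric parameter $\theta$ and the instanton parameters, say $x$ for $Q_{\widehat i}$ and $(x,y)$ for $P_{\widehat i}$. The first step is to record that the solutions depend continuously (indeed real-analytically) on $(\theta,x)$ or $(\theta,x,y)$, uniformly on compact radial intervals. This follows from the standard singular initial value theorem used to construct the local families.

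Next I would recall from the ALC analysis how each boundary is characterised. The boundary point of $\mathcal M(Q_{\widehat i})$, and the boundary curves of $\mathcal M(P_{\widehat i})$, are the initial data whose trajectories converge to the critical (zero-mass) abelian limit. This is the case of polynomial rather than exponential decay, corresponding to trivial holonomy around the asymptotic circle. Interior points converge instead to abelian limits with nonzero mass. Beyond the boundary, the solutions blow up. So for each $\theta<\theta_*$ the boundary value $x_\theta$ is the supremum of the set of data giving bounded solutions. This set is closed by the invariant-region argument.

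The key step is to show $\limsup_{\theta\to\theta_*}x_\theta\le x_*\le\liminf x_\theta$, where $x_*$ is the AC instanton's datum. For the lower bound, take the AC trajectory with datum $x<x_*$. By the AC classification it stays in the invariant region with a strict margin on a large compact interval, after which it enters the region where the AC comparison argument applies. By continuous dependence, the nearby ALC trajectories with data $(\theta,x)$ do the same. Then, as the ALC metric on that range is $C^1$-close to the cone, the ALC invariant region captures them, so $x_\theta\ge x$. For the upper bound, take $x>x_*$. The AC solution then exits the invariant region, or blows up, at some finite radius $t_0$. This is an open condition at finite time, so it persists for $\theta$ near $\theta_*$, which gives $x_\theta\le x$. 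The same argument, applied fibrewise along a transversal parametrisation of the two-parameter data, gives convergence of the boundary curves of $\mathcal M(P_{\widehat i})$ to the AC curve, in Hausdorff sense.

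The main obstacle is the lower-bound step. The ALC and AC ends differ at large radius: the circle fibre stays bounded in ALC but grows linearly in AC. Consequently the invariant regions and the Lyapunov quantities are not uniform in $\theta$ near infinity. I would first try to construct a $\theta$-uniform trapping region in rescaled variables. This means using the asymptotically conical scaling up to the radius where the ALC fibre length saturates, which tends to infinity as $\theta\to\theta_*$, and checking that the instanton variables are already in the basin of the abelian equilibria by that radius. If that fails, the fallback is to argue by contradiction: if $x_\theta$ stayed strictly below $x_*-\epsilon$, a limiting trajectory obtained by compactness would give a bounded AC solution with datum in $(x_\theta,x_*)$ that exits the region. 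That would contradict the AC classification.
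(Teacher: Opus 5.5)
Your squeeze argument is sound and takes a genuinely different route from the paper's. However, the step you flag as the main obstacle is not one.

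\textbf{Why the lower bound needs no uniformity at infinity.} The sectors $\mathcal{A}$ and $\mathcal{B}$ of Proposition~\ref{prop:invarinat sectors} are cut out by the same inequalities in $(A,B,C,\xi,\psi)$ for every $\theta$. Their forward invariance uses only the signs of $A,B,C$ and the orderings of Proposition~\ref{prop:ABC-inequalities}, and these hold along every $\gamma^i_\theta$. For $Q_{\widehat l}$, the constant $\mu$ in $\mathcal{A}_\mu$ is already chosen uniformly in $\theta$. For an ALC metric, entering $\mathcal{A}$ forces convergence to an abelian limit, and entering $\mathcal{B}$ with $\psi\neq 0$ forces finite-time blow-up (Proposition~\ref{prop:finite-time-blowup}).

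So for a datum on the $\mathcal{A}$-side of $x_*$, you only need the AC trajectory to lie in the interior of $\mathcal{A}$ at some finite time. Afterwards it is unbounded for the AC metric, so ``stays in the invariant region'' is the wrong description. Continuous dependence transfers this to $\theta$ near $\theta^*$, and forward invariance for $\gamma_\theta$ does the rest. No $\theta$-uniform trapping region and no closeness to the cone is needed. The upper bound is the same argument with $\mathcal{B}$.

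\textbf{Small repairs.}
\begin{enumerate}
\item To pass from these pointwise statements to the location of $x_\theta$, you need the monotone structure from Lemma~\ref{lemma:ordering}: $\mathcal{A}$ is an upper set and $\mathcal{B}$ a lower set for $\succeq$.
\item For $P_{\widehat i}$, discard the isolated abelian ray $\alpha=\pi$ before taking a supremum.
\item The AC dichotomy you invoke, that every datum other than $x_*$ eventually enters $\mathcal{A}$ or $\mathcal{B}$, is precisely the uniqueness Lemma~\ref{lemma:unique_curve}.
\item Your fibrewise argument gives convergence for each $a$, uniformly on compact $a$-intervals. A Hausdorff statement on the whole non-compact curve would need extra uniformity as $a\to\infty$.
\end{enumerate}

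\textbf{Comparison with the paper.} The paper argues by compactness and uniqueness. For fixed $a$ it extracts a convergent subsequence of $\alpha_{\theta_n}(a)\in[0,\pi]$. It uses that the ALC boundary trajectories lie in $\mathcal{C}$ for all $s$, a closed, $\theta$-independent condition that passes to the limit. It then concludes with Lemma~\ref{lemma:unique_curve}. Your fallback is this argument, once ``a bounded AC solution that exits the region'' is replaced by ``a second AC trajectory remaining in $\mathcal{C}$ for all $s$''.

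The paper's route is shorter and needs only closedness of $\mathcal{C}$, but it requires a compact parameter. That is automatic for $\alpha$. It is not automatic for $\mathcal{M}(Q_{\widehat i})$, parametrised by $c\in\R$, where one first needs an a priori bound on the ALC boundary values $c^*_\theta$. Your squeeze supplies that bound for free and handles $Q_{\widehat i}$ and $P_{\widehat i}$ uniformly. The price is using openness of entry into $\mathcal{A}$ and $\mathcal{B}$ together with the monotone structure.
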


The exceptional space $M^0_{1,0}$ exhibits a different degeneration mechanism. Its singular orbit is $S^5$, with three-dimensional normal fibres, so the natural normal model is a monopole on $\mathbb R^3$. The components of the instanton connection tangent to the normal fibres determine a three-dimensional connection, while a tangential component along the singular orbit supplies the Higgs field. We construct local families for which these fields converge, after rescaling, to the BPS monopole.
\begin{introtheorem}
Let $\gamma^0_\mu$ be an ALC $\Spin(7)$-metric on $M^0_{1,0}$. There exists a one-parameter family $A_\lambda$ of local invariant $\Spin(7)$-instantons on $Q_2$ extending smoothly across the singular orbit $S^5$. As $\lambda\to-\infty$, suitable normal rescalings of the normal component of $A_\lambda$, together with the appropriately rescaled Higgs field determined by its tangential component, converge to the BPS monopole on $\mathbb R^3$.
\end{introtheorem}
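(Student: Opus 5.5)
We will construct the family $A_\lambda$ by solving a singular initial value problem for the reduced instanton ODE on $M^0_{1,0}$, and then study the limit $\lambda\to-\infty$ by a blow-up (rescaling) argument near $t=0$.

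\textbf{Reduced system and local existence.} We first write an $\SU(3)$-invariant connection on $Q_2$ over the principal part as $A=A_{\mathrm{can}}+\sum_j f_j(t)\,\omega_j$. Here $A_{\mathrm{can}}$ is the canonical invariant connection, and the $f_j$ are the components allowed by the invariant connection ansatz for the exceptional space $N_{1,0}$. Among these components, one family is tangent to the $S^2$ normal sphere directions; these become the three-dimensional connection. One component lies along the $\U(1)$ direction tangent to the singular orbit; this becomes the Higgs field. The $\Spin(7)$-instanton equation then reduces to a first-order ODE system in $t$, coupled to the metric functions of $\gamma^0_\mu$. Since the singular orbit is $S^5$, the metric coefficients satisfy: the normal coefficients behave like $t+O(t^3)$, and the tangential coefficients are nonzero at $t=0$.

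Smooth extension of $A$ across $S^5$ for the bundle $Q_2$ imposes parity and vanishing conditions on the $f_j$ at $t=0$. We write the system as $t\,\dot X = M X + t\,R(t,X)$ and compute the eigenvalues of $M$. The admissible initial data correspond to the single positive eigendirection compatible with the isotropy representation. The standard singular IVP theorem, as used for the other fillings earlier in the paper, then gives a one-parameter family of local solutions $A_\lambda$, real-analytic in $(t,\lambda)$. Here $\lambda$ is the coefficient of the leading term, say the Higgs component $\phi(t)=\phi_0+\lambda t^2+\dots$ or the normal component $a(t)=1-\lambda t^2+\dots$.

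\textbf{Rescaling.} For $\lambda\to-\infty$ we set $s=|\lambda|^{1/2}t$ and rescale the fields: the normal connection component is kept as a dimensionless function $a_\lambda(s)$, and the Higgs field is rescaled as $h_\lambda(s)=|\lambda|^{-1/2}\phi(s|\lambda|^{-1/2})$. In these variables the metric coefficients become, to leading order, those of flat $\R^3$ (normal, with coefficient $s$) times a constant $S^5$ factor. The rescaled system is then a regular perturbation, with $O(|\lambda|^{-1})$ corrections on compact $s$-intervals, of the limit system
$$ \frac{da}{ds}=-a\,h,\qquad \frac{dh}{ds}=\frac{1-a^2}{s^2}. $$
This limit system is exactly the spherically symmetric Bogomolny equation $F_A=*d_A\Phi$ on $\R^3$. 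Its solution with the normalised initial data $a(0)=1$, $a\sim 1-s^2+\dots$ is the BPS monopole $a=s/\sinh s$, $h=\coth s-1/s$, by uniqueness for the singular IVP of the limit system.

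\textbf{Convergence.} We apply continuous dependence for singular IVPs with parameters. The remainder terms are uniformly Lipschitz for $s\in[0,S]$, since they are controlled by powers of $s|\lambda|^{-1/2}$. This gives $C^\infty$ convergence on compact sets of $\R^3$, which is the claimed convergence to the BPS monopole.

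\textbf{Main obstacle.} The hardest step is choosing the scaling, that is, checking that one power of $|\lambda|$ simultaneously normalises the normal connection and the Higgs field so that the Bogomolny system appears at leading order. This requires identifying exactly which tangential component couples to the normal curvature through $\Phi\wedge F_A$. We will fix the scaling by matching the linearisation at $t=0$ with that of the BPS monopole at the origin, and then verify that all other tangential components decouple at order $|\lambda|^{-1}$. A second, smaller issue is that the solutions are only local: we will need to show that the existence interval in $s$ grows, i.e. that the original existence interval is at least $c|\lambda|^{-1/2}$. This should follow from the uniform bounds of the limit system, since the BPS monopole exists for all $s$.
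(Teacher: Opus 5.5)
Your outline follows the paper's argument: a Malgrange/Foscolo--Haskins singular IVP for the invariant ansatz on $Q_2$, which has only the two functions $(\xi,\psi)$, so nothing else needs to decouple. The paper substitutes $\xi=tu$, $\psi=2+t^2v$, and the one-parameter family is the line of zeros of $M_{-1}$. This is followed by the normal rescaling $t=\delta\tau$ with $\delta^2\propto|\lambda|^{-1}$, normal component $\tfrac12 r_\delta^*\psi$ and Higgs field proportional to $\delta\,r_\delta^*\xi$, whose $\delta\to0$ limit is the radial Bogomolny system with BPS data at the origin.

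Your normalisations need fixing. The Higgs component is odd, $\xi=\lambda t+O(t^3)$, not $\phi_0+\lambda t^2$, and $\psi=2+(\lambda/\mu-4)t^2+\dots$. It is the $t^2$-coefficient of the normal component tending to $-\infty$ that selects BPS; with your tentative $a=1-\lambda t^2$, sending $\lambda\to-\infty$ lands on the singular branch. Finally, $s/\sinh s=1-s^2/6+\dots$, so the limit is the BPS monopole of a scale $\kappa$ fixed by the rescaling constant.
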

This gives a codimension-three counterpart to the normal ASD bubbling described in Theorem \ref{thm:intro-compactification-ALC}. The argument is local and does not require the instantons to extend over the complete non-compact manifold, whose global existence remains open. We conjecture 
\begin{introconj}
    Let $\gamma^0_\mu(t)$ be a complete $\Spin(7)$-metric on $M^0_{1,0}$, with $\mu \in (0, \mu^*]$, and consider the principal $\SO(3)$-bundle $Q_2 \to M^0_{1,0}$. For the ALC metrics, $\mu <\mu^*$, there exists a one-parameter family of $\Spin(7)$-instantons, indexed by $\lambda \in(-\infty,  \lambda^*_\mu]$. For the AC metric $\gamma^0_{\mu^*}$, there exists a unique $\Spin(7)$-instanton, corresponding to $\lambda^*$ and satisfying $$\lim_{\mu \to \mu^*} \lambda^*_\mu = \lambda^* \;.$$
\end{introconj}
Finally, we study $\Spin(7)$-instantons on the metric cones where the link is a homogeneous nearly parallel $G_2$-manifold. The $\Spin(7)$-instanton equation on the cone is an autonomous evolution equation whose stationary points are $G_2$-instantons on the link. In the Chern--Simons formulation discussed in~\cite{DS11}, its integral curves are gradient flow lines between critical points. We prove
\begin{introtheorem}
For the positive nearly parallel $G_2$-structure on $N_{k,l}$, there exists a Chern--Simons gradient flow line joining the abelian and irreducible homogeneous instantons on $L_{\widehat m}$. For the negative nearly parallel structure, an analogous flow line exists on $L_{\widehat k}$ whenever $l/k<r_*$. 
\end{introtheorem}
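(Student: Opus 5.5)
The flow lines will come from a cohomogeneity one reduction on the cone $C(N_{k,l})=\R_{>0}\times N_{k,l}$ with its conical $\Spin(7)$-structure $\Phi = r^3dr\wedge\varphi+r^4*\varphi$. Here $\varphi$ is the positive, respectively negative, homogeneous nearly parallel $G_2$-structure on the link.

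\textbf{Step 1: reduction to an autonomous gradient flow.} I take the same invariant connection ansatz on $L_{\widehat m}$ (resp.\ $L_{\widehat k}$) that is used for the AC analysis. I write $A = A_{\mathrm{can}} + \sum_j x_j(s)\,\Lambda_j$, in temporal gauge, with $s=\log r$. The $\Spin(7)$-instanton equation then becomes an autonomous ODE $x' = -\nabla \mathcal{F}(x)$. Here $\mathcal{F}$ is the invariant Chern--Simons functional of the nearly parallel link, including its torsion term. Its critical points are exactly the homogeneous $G_2$-instantons classified by Ball--Oliveira. Among them are the abelian point $x_{\mathrm{ab}}$ and the irreducible point $x_{\mathrm{irr}}$. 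For $L_{\widehat k}$ with the negative structure, $x_{\mathrm{irr}}$ exists as a distinct point precisely when $l/k<r_*$. This is the bifurcation of Theorem~\ref{thm:intro-threshold}, where the two equilibria collide at $r_*$.

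\textbf{Step 2: local analysis at the equilibria.} I will linearise at both critical points and compute the Hessian eigenvalues. I expect $x_{\mathrm{irr}}$ to have an unstable direction that is transverse to the abelian invariant subspace $\{\text{non-abelian components}=0\}$, and $x_{\mathrm{ab}}$ to be attracting in that transverse direction. Alternatively the roles may be reversed; the ordering is whatever the Chern--Simons values dictate. The computation should also show that the abelian subspace is invariant, since the reduced equations are equivariant under the residual $\Z_2$ that flips the sign of the non-abelian component. This lets me restrict to a half-space.

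\textbf{Step 3: global connection.} The main tool is that $\mathcal{F}$ is a Lyapunov function. Along the flow, $\mathcal{F}$ is monotone, so every bounded trajectory is forced to converge to equilibria at both ends. I would take the one-dimensional unstable manifold of the higher critical point, on the side with positive non-abelian component. Then I would build a compact forward-invariant region around it, for instance a triangle or sector bounded by the abelian axis, a nullcline, and a level set of $\mathcal{F}$, using the sign structure of the vector field as in the comparison arguments of the existence sections. The flow cannot leave this region, and it cannot reach the abelian axis in finite time because that axis is invariant. The trajectory therefore has an $\omega$-limit, which by the Lyapunov property is a critical point inside the region.

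The hard step is to show that this region contains no other equilibria. I would try to do it by explicitly solving the polynomial critical-point equations restricted to the region. For the negative structure this requires tracking the dependence on $r$ and using $l/k<r_*$, so that the only critical points in the closure are $x_{\mathrm{ab}}$ and $x_{\mathrm{irr}}$. The trajectory must then end at the other one. For the positive structure on $L_{\widehat m}$ I expect the same argument to work uniformly in $r$.
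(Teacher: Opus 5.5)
Your Step~1 is sound. On the cone, the invariant ansatz reduces to a planar system in $(\xi,\psi)$ with $(A,B,C)$ frozen at $p^\pm_{k,l}$. This is a genuine gradient system: for $L_{\widehat m}$ the potential is $\tfrac{F^+}{2}\xi^2-\widehat m\widehat F^+\xi+B^+C^+(\xi+2A^++3m)\psi^2$ and the metric is $d\xi^2+2B^+C^+d\psi^2$.

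The gap is in Steps~2--3, which presuppose the wrong index structure.

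\begin{itemize}
\item In outward radial time the abelian point is a \emph{node}, not a saddle. For $L_{\widehat m}$ over $p^+_{k,l}$ its linearisation is positive definite. For $L_{\widehat k}$ over $p^-_{k,l}$ with $l<r^*k$ it is negative definite in the paper's parameter $s$, which runs inward since $f<0$ (Lemma~\ref{lem:Ab AC end}). In both cases $x_{\mathrm{ab}}$ is a source, repelling also in the transverse $\psi$-direction.
\item The point $x_{\mathrm{irr}}$ is a \emph{saddle} (Lemma~\ref{lem:nonAb AC end}).
\end{itemize}

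So the higher critical point is $x_{\mathrm{ab}}$, and its unstable manifold is two-dimensional; there is no distinguished curve to follow and trap. The only one-dimensional unstable manifold present is that of $x_{\mathrm{irr}}$. Its branches run into $\mathfrak A$ and $\mathfrak B$, descending away from $x_{\mathrm{ab}}$. The flow line you need is non-generic among orbits leaving $x_{\mathrm{ab}}$: it is the branch of the one-dimensional \emph{stable} manifold of $x_{\mathrm{irr}}$ lying in $\mathfrak C$. Locating it is the real content of the proof.

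Your ``hard step'' is, by contrast, vacuous. For $\psi\neq0$, $\psi'=0$ forces $\xi=-3m-2A^+$, and then $\psi^2=D_{\widehat m}$. So $x_{\mathrm{ab}}$ and $x_{\mathrm{irr}}$ are the only equilibria in the half-plane, and $r$ enters only through the sign of $D_{\widehat k}$.

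There are two ways to close the gap.

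\begin{itemize}
\item The paper's route is to shoot from the source over the half-circle of outgoing directions. The two abelian directions end in the forward-invariant sectors $\mathfrak A$ and $\mathfrak B$, and no orbit can enter $\mathfrak F$ (for $L_{\widehat k}$, read all of this in outward radial time). The nested-closed-sets argument of Proposition~\ref{prop:long-time-critical} then yields an orbit remaining in the compact box $\mathfrak C$ for all time, which must converge to $x_{\mathrm{irr}}$.
\item Closer to your Lyapunov idea, run your Step~3 on the reversed flow, that is, for $-\mathcal F$. There $x_{\mathrm{irr}}$ is the higher point, with a one-dimensional unstable manifold, and $x_{\mathrm{ab}}$ is a sink. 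Your first guess in Step~2 is exactly this reversed picture. Concretely, follow the $\mathfrak C$-branch of $W^s(x_{\mathrm{irr}})$ backwards. It cannot have come from $\mathfrak A\cup\mathfrak B$, which are forward invariant. It cannot have come from $\mathfrak F$ either, since $\mathfrak C\cap\mathfrak F=\{x_{\mathrm{irr}}\}$. Hence it stays in the compact set $\mathfrak C$, and strict monotonicity of $\mathcal F$ forces its $\alpha$-limit to be $x_{\mathrm{ab}}$.
\end{itemize}

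Either route rests on the barrier computations that make $\mathfrak A$ and $\mathfrak B$ forward invariant, and your sketch does not supply these.
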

Here $L_{\widehat i}\to N_{k,l}$ is the homogeneous principal $\SO(3)$-bundle determined by the homomorphism
\begin{align*}
    \theta_{\widehat i}:\U(1)&\longrightarrow\SO(3)\\
z& \mapsto\iota(z^{\widehat i}),
\end{align*}
where $\iota:\U(1)\hookrightarrow\SO(3)$ is the standard embedding as rotations around a fixed axis.
\subsection{Acknowledgements}
The authors thank the organisers of the International Congress of Basic Science, during which a significant part of this work was carried out, and Lorenzo Foscolo for his helpful comments and for bringing the work of FNO to the authors' attention. HC also thanks Teng Huang for suggesting this direction of research at an early stage. ESF thanks Izar Alonso for feedback on an earlier draft.

HC acknowledges support from the NSFC under grant No. 12301078. ESF was supported by Beijing Natural Science Foundation grant No. IS26009.
\subsection{Use of Artificial Intelligence}
The authors used large language models (LLMs) as an aid in literature review, drafting, and improving the exposition. All mathematical statements, arguments, computations, and bibliographic references contained in the manuscript were independently checked and verified by the authors, who take full responsibility for the contents of the paper.
\section{Basic Setup}
For a given Aloff--Wallach space $N_{k,l}$, we choose the following basis for $\mathfrak{su}(3)$.
\begin{equation}\label{eq:su3-basis}
\begin{alignedat}{4}
E_1 &= \begin{pmatrix}
    0&1&0\\
    -1&0&0\\
    0&0&0
\end{pmatrix}, &\quad
E_2 &= \sqrt{-1}\begin{pmatrix}
    0&1&0\\
    1&0&0\\
    0&0&0
\end{pmatrix}, &\quad
E_3 &= \begin{pmatrix}
    0&0&-1\\
    0&0&0\\
    1&0&0
\end{pmatrix}, &\quad
E_4 &= \sqrt{-1}\begin{pmatrix}
    0&0&1\\
    0&0&0\\
    1&0&0
\end{pmatrix},\\[1ex]
E_5 &= \begin{pmatrix}
    0&0&0\\
    0&0&1\\
    0&-1&0
\end{pmatrix}, &\quad
E_6 &= \sqrt{-1}\begin{pmatrix}
    0&0&0\\
    0&0&1\\
    0&1&0
\end{pmatrix}, &\quad
E_7 &= \sqrt{-1}\begin{pmatrix}
    \widehat{k}&0&0\\
    0&\widehat{l}&0\\
    0&0&\widehat{m}
\end{pmatrix}, &\quad
E_8 &= \sqrt{-1}\begin{pmatrix}
    k&0&0\\
    0&l&0\\
    0&0&m
\end{pmatrix}.
\end{alignedat}
\end{equation}
The vector $E_8$ is the infinitesimal generator of the subgroup $U(1)_{k,l}$ inside $SU(3)$, and the vectors $E_7$ and $E_8$ span the Cartan subalgebra of $\mathfrak{su}(3)$. For a generic Aloff--Wallach space, the isotropy representation splits into 4 inequivalent summands.
$$\mathfrak{su}(3)/\mathfrak{u}_{k,l}(1)=\langle E_1 ,E_2 \rangle_{\widehat{m}}+\langle E_3,E_4\rangle_{\widehat{l}}+\langle E_5,E_6\rangle_{\widehat{k}}+\langle E_7\rangle. $$
Let $\{e_i\}$ be the co-frame dual to $\{E_i\}$. The soldering $SU(3)$ 1-form $\Sigma$ takes the form
$$
\Sigma=\begin{pmatrix}
\widehat{k}\sqrt{-1}e_7+k\sqrt{-1}e_8&e_1+e_2\sqrt{-1}&-e_3+e_4\sqrt{-1}\\
-e_1+e_2\sqrt{-1}&\widehat{l}\sqrt{-1}e_7+l\sqrt{-1}e_8&e_5+e_6\sqrt{-1}\\
e_3+e_4\sqrt{-1}&-e_5+e_6\sqrt{-1}&\widehat{m}\sqrt{-1}e_7+m\sqrt{-1}e_8\\
\end{pmatrix}
$$
The Maurer--Cartan equation for $\Sigma$ translates to
\begin{equation}
\label{eq:MC-SU(3)}
\begin{alignedat}{4}
de_1&=3m e_{27}-\widehat{m}e_{28}-e_{35}+e_{46}, & \qquad \qquad  \qquad \qquad de_2&=-3m e_{17}+\widehat{m}e_{18}+e_{36}+e_{45}\\
de_3&=e_{15}-e_{26}+3le_{47}-\widehat{l}e_{48}, &\qquad de_4&=-e_{16}-e_{25}-3le_{37}+\widehat{l}e_{38}\\
de_5&=-e_{13}+e_{24}+3ke_{67}-\widehat{k}e_{68}, &\qquad de_6&=e_{14}+e_{23}-3ke_{57}+ \widehat{k}e_{58}\\
de_7&=\frac{m}{\Delta}e_{12}+\frac{l}{\Delta}e_{34}+\frac{k}{\Delta}e_{56}, &\qquad de_8&=-\left(\frac{\widehat{m}}{\Delta}e_{12}+\frac{\widehat{l}}{\Delta}e_{34}+\frac{\widehat{k}}{\Delta}e_{56}\right).
\end{alignedat}
\end{equation}
We can identify $T_pN_{k,l} \cong \C^3 \oplus \R \cong \R^7$ via the map $dz_1= e_1 +i e_2$, $dz_2 = e_3+ie_4$, $dz_3= e_5+ ie_6$ and $\eta =e_7$ for the real factor. Under this identification, we have the usual $G_2$ 3-form, given by 
\begin{align*}
    \varphi_0 &= - \operatorname{Re}(dz_1\wedge dz_2 \wedge dz_3) + \eta \wedge \frac{i}{2}( dz_1 \wedge d \overline{z_1} + dz_2 \wedge d \overline{z_2} +dz_3 \wedge d \overline{z_3})\\
    &= e_{146}+e_{236}+e_{245} -e_{135} + (e_{12} + e_{34}+e_{56})\wedge e_7\;,
\end{align*} 
compatible with the Euclidean inner product on $\R^7$.  Since the construction is $\operatorname{SU}(3)$-equivariant, we have a four-dimensional family of $G_2$-structures on $N_{k,l}$, given by a rescaling of each coordinate, $$(dz_1, dz_2, dz_3, \eta) \mapsto (a\;dz_1, b\;dz_2, c\;dz_3, f\;\eta)\;.$$
Explicitly, we have the four-parameter family of $G_2$-structures
$$ \varphi = abc(e_{146}+e_{236}+e_{245}-e_{135}) + (a^2\;e_{12} + b^2 \;e_{34}+ c^2\;e_{56})\wedge f e_7\;. $$
The associated metric $g_\varphi$ and Hodge dual $\psi = *\varphi$ on $N_{k,l}$ are given by
\begin{align*}
    g_\varphi&= a^2( e_1^2 + e_2^2) + b^2 ( e_3^2 + e_4^2) + c^2 ( e_5^2 + e_6^2)  + f^2e_7^2 \\ \psi &= *\varphi = -abcf(e_{136}+e_{145}+e_{235}-e_{246} )\wedge e_7 +(bc)^2 e_{3456} + (ac)^2 e_{1256} +(ab)^2 e_{1234}
\end{align*} 
Using the Maurer-Cartan formulae \eqref{eq:MC-SU(3)}, we can compute the differentials of the characteristic forms $\varphi$ and $\psi$,
\begin{align*}
    d \varphi=& - f(a^2+b^2+c^2)\left( e_{136}+e_{145}+e_{235}-e_{246} \right)\wedge e_7 \\&+  \left(\frac{b^2m+a^2l}{\Delta}f+4abc\right)e_{1234} +\left(\frac{a^2k+c^2m}{\Delta}f+4abc\right)e_{1256} +\left(\frac{b^2k+c^2l}{\Delta}f+4abc\right)e_{3456} \\
    d \psi =&0\;.
\end{align*}
\begin{remark}
    There is an additional parameter one could consider to construct the $G_2$-structure, namely $$\varphi_t=-\operatorname{Re}(e^{it} dz_1 \wedge dz_2 \wedge dz_3) + \eta \wedge \frac{i}{2}( dz_1 \wedge d \overline{z_1} + dz_2 \wedge d \overline{z_2} +dz_3 \wedge d \overline{z_3})\;.$$ 
    The choice $t=0 \mod 2\pi $ is equivalent to $d\psi=0$, as discussed in \cite{CMS96}.
\end{remark}
Let us now consider a product $(-\varepsilon, \varepsilon) \times N_{k,l}$, with $\operatorname{Spin(7)}$-structure $\Phi= dt \wedge \varphi +\psi$. The $\operatorname{Spin(7)}$ holonomy condition reduces to $ d\varphi = \partial _t\psi$ and $d\psi=0$.
For a family of homogeneous metrics, this translates into 
\begin{subequations}
\begin{align}
    \partial_t (abcf) & =  f(a^2+b^2+c^2)\;, \label{eq:evo-abcf}\\  \partial_t (bc)^2 & = \frac{b^2k+c^2l}{\Delta}f+4abc\;, \label{eq:evo-bc2}\\
    \partial_t (ac)^2 & = \frac{a^2k+c^2m}{\Delta}f+4abc\;,\label{eq:evo-ac2}\\
    \partial_t (ab)^2 & = \frac{b^2m+a^2l}{\Delta}f+4abc \label{eq:evo-ab2}\;.
\end{align}
\end{subequations}
These are equivalent to the system 
\begin{subequations}\label{eq:evo-spin7}
\begin{align}
    \frac{\dot a}{a}&=  \frac{c}{ab}+ \frac{b}{ac} -\frac{a}{bc}+\frac{m}{2\Delta}\frac{f}{a^2}\;, \label{eq:evo-a}\\
    \frac{\dot b}{b}&= \frac{c}{ab}+\frac{a}{bc}-\frac{b}{ac}+\frac{l}{2\Delta}\frac{f}{b^2} \;, \label{eq:evo-b}\\
    \frac{\dot c}{c}&=  \frac{a}{bc}+\frac{b}{ac}-\frac{c}{ab}+\frac{k}{2\Delta}\frac{f}{c^2}\;, \label{eq:evo-c} \\
    \frac{\dot f}{f}&= -\frac{m}{2\Delta}\frac{f}{a^2}-\frac{l}{2\Delta}\frac{f}{b^2}-\frac{k}{2\Delta}\frac{f}{c^2}  \label{eq:evo-f} \;.
\end{align}
\end{subequations}
\begin{remark}
In previous work (cf. \cite{Chi22} \cite{Chi25}), the convention $f>0$ was chosen, forcing the $\mathrm{Spin}(7)$ equations to split into two sign choices. In the present paper, we allow $f$ to take both positive and negative values, thereby absorbing the sign ambiguity into the metric coefficient $f$ itself. This enables us to treat the two cases uniformly within a single system of equations.
\end{remark}
With respect to the metric $g=  dt^2  + g_\varphi$, we have the orthonormal co-frame:
\begin{subequations} \label{eq:orthonormal-basis}
\begin{align}
\omega_0=dt,\qquad \qquad\omega_1&=a\;e_1,\qquad \qquad\omega_2=a\;e_2,\qquad \qquad\omega_3=b\;e_3,\\
\omega_4=b\;e_4, \qquad \qquad\omega_5&=c\;e_5,\qquad \qquad\omega_6=c\;e_6,\qquad \qquad \omega_7=f\;e_7.
\end{align}
\end{subequations}
With respect to this basis, the $\Spin(7)$ $4$-form translates to 
\begin{align*}
\Phi=&\omega_{0146}+\omega_{0236}+\omega_{0245} -\omega_{0135} + \omega_{0127} + \omega_{0347}+\omega_{0567}\\& -\left(\omega_{1367}+\omega_{1457}+\omega_{2357}-\omega_{2467}\right)+\omega_{1234}+\omega_{1256}+\omega_{3456}\;.
\end{align*}
The ODE  system \eqref{eq:evo-spin7} describes the geometry on the cylinder $(-\varepsilon, \varepsilon) \times N_{k,l}$, which represents a neighbourhood of the principal orbits in a cohomogeneity one manifold. To obtain a complete non-compact space, this cylinder must be capped off at a singular orbit by letting the scale factors $a,b,c$ vanish at an appropriate rate, so that the metric extends smoothly or as an orbifold across the boundary. 

Cohomogeneity one spaces are characterised by their group diagram $K \subset H \subset G$, so $$M \cong (G/H) \cup (0,\infty) \times (G/K) \cong \Sigma_S \cup (0,1) \times \Sigma\;.$$
In our case, $G\cong \SU(3)$ and $K\cong \U(1)_{k,l}$, so the principal orbit is $\Sigma \cong N_{k,l}$, the Aloff--Wallach space. The possible singular orbits are governed by an intermediate subgroup $H$ satisfying $K=U(1)_{k,l} \subset H \subset SU(3)$. The asymptotic behaviour of solutions to \eqref{eq:evo-spin7} near the singular end determines precisely which embeddings of $K$ into $H$ are admissible. In \cite{Reidegeld_11}, Reidegeld classified all the possible intermediate groups $H$ admitting a local holonomy $\Spin(7)$-metric, and showed that $H \cong U(2)$, except when the principal orbit is $N_{1,0}$, where $H$ is either $\U(2)$ or $\SU(2)$. 

There are three distinct embeddings of $U(2)$ in $SU(3)$ containing the subgroup $U(1)_{k,l}$, which are determined by the following embeddings of $U(1)_{k,l}$ in $U(2)$. 
$$ \rho_{\abs{m}}: z\mapsto \diag (z^k, z^l) 
\qquad \qquad 
 \rho_k: z\mapsto \diag (z^l, z^m) 
\qquad \qquad 
 \rho_l: z\mapsto \diag (z^m, z^k) \;. 
$$
For each $\rho_i$ with $i\in \{k, l, \abs{m}\}$, we have an orbifold desingularisation of the metric cone over $N_{k,l}$, labelled $M^i_{k,l}$, that can be visualised as follows. Each generic Aloff--Wallach space $N_{k,l}$ can be realised as three different lens space fibrations over $\C P^2$:
$$ S^3/ (\Z/i\Z)  \rightarrow N_{k,l} \rightarrow \C P^2 \qquad i \in \{ k, l, \abs{m}\},$$
giving rise to the three topologically distinct cohomogeneity one orbifold resolutions, $M^i_{k,l}$, where each is an $\R^4/ (\Z/i\Z)$-bundle over $\C P^2$. In particular, we have a smooth desingularisation whenever $l=1$, $M^{1}_{k,1}$. In the exceptional case $N_{1,1}$, there are only two distinct resolutions. The smooth one $M^{1}_{1,1}$ is diffeomorphic to $T^*\C P^2$ and carries a hyperkähler metric, originally constructed by Calabi \cite{Calabi79}. The $\Z/2\Z$ orbifold resolution $M^2_{1,1}$ was originally considered by Bryant and Salamon in \cite{BS89}.

In the exceptional case $N_{1,0}$, the Aloff--Wallach space has two sphere fibrations 
$$ S^3 \rightarrow N_{1,0} \rightarrow \C P^2  \qquad \qquad   S^2 \rightarrow N_{1,0} \rightarrow S^5\;,$$
which yield two smooth resolutions, $M^1_{1,0}$ and $M^0_{1,0}$ respectively. 

The following lemma about the relative size of the exceptional orbit is straightforward, but will be convenient for normalisation purposes later on.
\begin{lemma}\label{lemma:volume_normalisation}
Equip $N_{k,l}$ with the metric $g_1=\sum_{j=1}^7 e_j^2$ and
volume form $\vol_1=e_{1234567}$. Consider the homogeneous fibration
\[
\U(2)/\U(1)_{k,l}\longrightarrow N_{k,l}\longrightarrow\C P^2
\]
determined by the inclusions
\[
\U(1)_{k,l}\hookrightarrow\U(2)
\xhookrightarrow{\rho_i}\SU(3),
\qquad i\in\{k,l,|m|\}\setminus\{0\}.
\]
The quotient metric on $\C P^2$ is the Fubini--Study metric, and
the volumes of the total space, base, and fibre, with their
respective induced metrics, are
\[
\vol_1(N_{k,l})=\frac{\pi^4}{2\Delta},
\qquad
\vol(\C P^2)=\frac{\pi^2}{2},
\qquad
\vol(S^3/\Z_i)=\frac{\pi^2}{\Delta}.
\]
\end{lemma}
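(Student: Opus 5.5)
The plan is to reduce all three volumes to volumes of compact Lie groups with explicit left-invariant metrics, using the formula $\vol(G/K)=\vol(G)/\vol(K)$. This formula holds whenever the metric on $G$ is left-invariant, right-$K$-invariant, and the tangent space $\mathfrak k$ is orthogonal to the chosen complement.

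\textbf{Total space.} Extend $g_1$ to the left-invariant metric $g_8$ on $\SU(3)$ for which $E_1,\dots,E_8$ are orthonormal. This is $\mathrm{Ad}_{\U(1)_{k,l}}$-invariant, since $\U(1)_{k,l}$ acts on each $\langle E_{2j-1},E_{2j}\rangle$ by rotations and trivially on $E_7,E_8$. Hence $N_{k,l}$ with $g_1$ is the Riemannian quotient, and
\[
\vol_1(N_{k,l})=\frac{\vol_{g_8}(\SU(3))}{L},
\]
where $L$ is the length of the orbit $\U(1)_{k,l}$. Since $k,l$ are coprime, $z\mapsto\diag(z^k,z^l,z^m)$ is injective, so $L=2\pi|E_8|=2\pi$.

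Next compare $g_8$ with the bi-invariant metric $B(X,Y)=-\tfrac12\operatorname{tr}(XY)$. The off-diagonal vectors $E_1,\dots,E_6$ are $B$-orthonormal. On the Cartan subalgebra:
\begin{itemize}
\item $B(E_8,E_8)=k^2+l^2+m^2=2\Delta$ (up to the factor conventions, which I will fix carefully);
\item $B(E_7,E_7)=\widehat k^2+\widehat l^2+\widehat m^2$, which is $3$ times $B(E_8,E_8)$ by the identity $\widehat i\,\widehat j+3ij=2(3\delta_{ij}-1)\Delta$;
\item $B(E_7,E_8)=0$, since $k\widehat k+l\widehat l+m\widehat m=0$ by the same identities.
\end{itemize}
So $\vol_{g_8}=\vol_B/\sqrt{B(E_7,E_7)B(E_8,E_8)}$. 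I then insert the known value of $\vol_B(\SU(3))$, namely $\sqrt3\,\pi^5$ in the normalisation where the off-diagonal generators have unit length. I expect this to give $\pi^5/\Delta$, and hence $\vol_1=\pi^4/(2\Delta)$.

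\textbf{Base.} For $\rho_{|m|}$ the group $\U(2)$ is the upper-left block, with Lie algebra spanned by $E_1,E_2,E_7,E_8$; the other two cases are obtained by permuting $(k,l,m)$ via the Weyl group. The horizontal space is $\langle E_3,\dots,E_6\rangle$, which is $\mathrm{Ad}_{\U(2)}$-invariant and $B$-orthogonal to $\mathfrak u(2)$. Hence the quotient metric on $\C P^2=\SU(3)/\U(2)$ is the normal homogeneous metric induced by $B$, and in particular it is Fubini--Study.

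To fix the scale, I will compute the sectional curvature of the plane $E_3\wedge E_4$ via $\|[E_3,E_4]_{\mathfrak u(2)}\|^2$ using the normal homogeneous formula. I expect holomorphic sectional curvature $4$, which gives $\vol(\C P^2)=\pi^2/2$. As a cross-check, compute $\vol_B(\SU(3))/\vol_B(\U(2))$ directly.

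\textbf{Fibre.} The projection $N_{k,l}\to\C P^2$ is a Riemannian submersion, since horizontal vectors are orthonormal in both metrics. Its fibres are the $\SU(3)$-translates of $\U(2)/\U(1)_{k,l}$, and left translations are isometries, so all fibres are isometric. Integrating along fibres gives
\[
\vol(S^3/\Z_i)=\frac{\vol_1(N_{k,l})}{\vol(\C P^2)}=\frac{\pi^2}{\Delta}.
\]
Two sanity checks: the fibre is $S^3/\Z_i$ because the $\U(1)_{k,l}$-action on the fibre has kernel of order $i$, and one can verify directly that $\vol_B(\U(2))$ divided by $2\pi$ and by the Cartan rescaling factor reproduces $\pi^2/\Delta$.

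The main obstacle is bookkeeping of normalisations: the exact value of $\vol_B(\SU(3))$ in the chosen trace convention, the factor $\sqrt{3}\Delta$ from the Cartan rescaling, and possible finite-quotient factors such as the centre $\Z_3$ or the isotropy in $\U(2)$. I would fix these by checking the answer in the case $N_{1,1}$, or through the $\C P^2$ cross-check above.
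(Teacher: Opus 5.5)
Your proposal is correct and follows essentially the paper's route: the paper also uses $B=-\tfrac12\operatorname{Tr}$ and Macdonald's value $\vol_B(\SU(3))=\sqrt3\,\pi^5$, divides by $2\pi|E_7|_B|E_8|_B=2\sqrt3\,\pi\Delta$, and gets the fibre from $\vol(N_{k,l})=\vol(\C P^2)\vol(S^3/\Z_i)$. Your curvature check ($|[E_3,E_4]|_B^2=4$) or the quotient $\vol_B(\SU(3))/\vol_B(\U(2))$ just supplies the Fubini--Study scale the paper calls straightforward. The one bookkeeping fix is that with this $B$ you have $B(E_8,E_8)=\tfrac12(k^2+l^2+m^2)=\Delta$ (not $2\Delta$) and $B(E_7,E_7)=3\Delta$, which is exactly what produces your factor $\sqrt3\,\Delta$ and hence $\pi^4/(2\Delta)$.
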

\begin{proof}
    Take the inner product $B(X,Y) = -\frac{1}{2} \operatorname{Tr}(XY)$, so $\{E_i\}$ is an orthogonal basis, and 
    $$B(E_i,E_j) =\delta_{ij}\; \; \text{ for }\;\; i,j\in\{ 1, \dots, 6\}\;,  \qquad B(E_7,E_7) = 3\Delta \qquad \text{and} \qquad B(E_8,E_8) =  \Delta\;.$$
    It is a classical result of Macdonald \cite{MacD80} that the volume for $\SU(3)$ with respect to $B$ is $ \sqrt{3} \pi^5$, and since $k$ and $l$ are coprime, the $\U(1)_{k,l}$-subgroup generated by $E_8$ has period $2 \pi$, so 
     $$ \vol_1({N_{k,l}} ) = \frac{\vol(\SU(3))}{2\pi \abs{E_7} \abs{E_8}} =  \frac{\pi^4}{2 \Delta}\;.$$
    Now, note that the induced metric on $\C P^2$ is the Fubini--Study metric, so the volume computation is straightforward. The volume of the fibres follows from the relation $\vol(N_{k,l}) = \vol(\C P^2) \vol( S^3/\Z_i)$.
\end{proof}
\subsection{Complete \texorpdfstring{$\Spin(7)$}{Spin(7)}-metrics}
In many constructions of non-compact cohomogeneity one structures, normalising the orbit space using the mean curvature of the principal orbit leads to a compactified phase space. The existence problem then reduces to finding integral curves of the normalised system connecting critical points corresponding to the initial conditions and asymptotic limits. 

We introduce a different change of coordinates that also leads to a compactified system. Take the time reparametrisation $dt=fds$ and consider functions
\begin{equation}\label{eq:change-ABC} 
    A=\frac{af}{bc},\quad B=\frac{bf}{ca},\quad C=\frac{cf}{ab}.
\end{equation}
Denoting the derivative with respect to $s$ as $(\,\cdot\,)'$, the $\Spin(7)$ condition \eqref{eq:evo-spin7} transforms to
\begin{subequations}\label{eq:new-evo-spin7}
\begin{align}
    A'&=  A\left(B+C-3A-\frac{k}{\Delta}AB-\frac{l}{\Delta}AC\right)\;, \label{eq:new-evo-a}\\
    B'&=  B\left(C+A-3B-\frac{m}{\Delta}BC-\frac{k}{\Delta}AB\right) \;, \label{eq:new-evo-b}\\
    C'&=  C\left(A+B-3C-\frac{l}{\Delta}AC-\frac{m}{\Delta}BC\right)\;, \label{eq:new-evo-c} \\
    f'&= -\frac{f}{2\Delta}\big(k\;AB+l\;AC+m\;BC\big) \label{eq:new-evo-f} \;.
\end{align}
\end{subequations}
In these coordinates, the $(A,B,C)$-subsystem is independent of $f$, so we restrict our attention to it. In these coordinates, the singular orbits diffeomorphic to $\mathbb{C}P^2$ correspond to the fixed points
\begin{equation}
    p_0^m=\left(0,-\frac{2\Delta}{m},-\frac{2\Delta}{m}\right),\qquad  p_0^l=\left(-\frac{2\Delta}{l},0,-\frac{2\Delta}{l}\right)\qquad \text{and} \qquad p_0^k=\left(-\frac{2\Delta}{k},-\frac{2\Delta}{k},0\right).
\end{equation}
The two nearly parallel $G_2$-structures on the Aloff--Wallach space $N_{k,l}$ (cf. \cite{CMS96}) are each represented by critical points with $ABC\neq 0$. Integral curves converging to either $p_{k,l}^\pm$ correspond to asymptotically conical (AC) metrics. 

Finally, the origin is a degenerate fixed point, corresponding to metrics with an asymptotically locally conical (ALC) end. If we look at the full configuration space $(A,B,C,f)$, the fixed point is transformed into the critical manifold $\{(0,0,0,\ell)\mid \ell\neq 0\}$, where $\abs{\ell}$ correspond to the asymptotic length of the $S^1$-fibre, up to fixed normalisation.

Note that the change of variables, Equation \eqref{eq:change-ABC}, forces the functions $A, B, C$ and $f$ to have the same sign. Therefore, integral curves emanating from $p_0^m$ are contained in the subspace $\mathcal{C}_+:=\{A,B,C,f\geq 0\}$, while integral curves emanating from $p_0^k$ and $p_0^l$ are contained in $\mathcal{C}^-\coloneqq \{A,B,C,f\leq 0\}$.

The linearisation of the dynamical system \eqref{eq:new-evo-spin7} at $p_0^i$, denoted $L^i_0$, is  
\begin{equation}\label{eq:linearisation-ABC}
L_0^m = \frac{2\Delta}{m} \begin{pmatrix}
    -2&0&0\\
    -\frac{\widehat{m}}{m}&1&-3\\
    \frac{\widehat{m}}{m}&-3&1\\
\end{pmatrix} \qquad \qquad 
L_0^l =  \frac{2\Delta}{l} \begin{pmatrix}
    1 & \frac{\widehat{l}}{l} & - 3\\
    0 & -2 & 0 \\
     -3  & -\frac{ \widehat{l}}{l}& 1  
\end{pmatrix}
\qquad \qquad 
L_0^k = \frac{2\Delta}{k} \begin{pmatrix}
    1 &  -3 & - \frac{\widehat{k}}{k}\\
    -3 & 1 &  \frac{\widehat{k}}{k}\\
    0 & 0 & -2
\end{pmatrix}
\end{equation}
By the Hartman--Grobman theorem, we are interested in considering integral curves emanating from the fixed points. Since $p_0^m \in \mathcal{C}_+$ and $p_0^l , p_0^k \in \mathcal{C}_-$, we are interested in the positive eigenspace of $L_0^m$ and the negative eigenspaces of $L_0^l$ and $L_0^k$, as $f\leq 0$ in the latter case. In all  cases, the space is 2-dimensional, with eigenvalues $\frac{-4\Delta}{i}$ for the corresponding $i \in \{k,l,m\}$. The corresponding fixed point for the fibre function $f$ is $f=0$, with linearisation $L_0(f)=-\frac{2\Delta}{i}$.

The admissible eigenvectors in each case are 
\begin{equation} \label{eq:eigenvectors-spin7}
\begin{array}{c|c|c|c}
&p_0^{k}&p_0^{l}&p_0^{m}\\
\hline
&&&\\
u_1& \begin{pmatrix}
1\\
1\\
0
\end{pmatrix}&\begin{pmatrix}
1\\
0\\
1
\end{pmatrix}&\begin{pmatrix}
0\\
-1\\
-1
\end{pmatrix}\\
&&&\\
u_2& \begin{pmatrix}
-2\widehat{k}\Delta\\
0\\
-6k\Delta\\
\end{pmatrix}&\begin{pmatrix}
0\\
-6l\Delta\\
-2\widehat{l}\Delta
\end{pmatrix}&\begin{pmatrix}
-6m\Delta\\
-2\widehat{m}\Delta\\
0
\end{pmatrix}\\[1ex]
\end{array}.
\end{equation}
Hence, we consider the one-parameter family of integral curves,
\begin{equation}\label{eq:local_gammai}
    \gamma^i_\theta  = p_0^i+\left(  \cos(\theta) u_1 + \sin (\theta) u_2\right) e^{-\frac{4\Delta}{i}s} +  o\left(  e^{-\frac{4\Delta}{i}s} \right)
\end{equation}
for $\theta \in [0,\pi]$. The radial parameter is fixed, since it corresponds to a time reparametrisation and does not give rise to distinct metrics.

These local integral curves correspond to smooth $\Spin(7)$-metrics near the singular orbit $\C P^2$ of $M^i_{k,l}$ and match the ones considered by Reidegeld \cite{Reidegeld_11}, up to normalisation. 

Recently, it was proved that these local solutions are forward complete for suitable choices of $\theta$:
\begin{theorem}[{\cite[Thm. 1.1]{Chi25}}]
For any coprime pair $(k, l)$, on each $M^i_{k,l}$ with $i \in \{k, l, \abs{m}\}$, there exists a continuous one-parameter family of forward complete $\operatorname{Spin}(7)$-metrics $\gamma^i_\theta$ for $\theta \in (0, \theta^*]$. For $\theta \in (0, \theta^*)$, the metric $\gamma^i_\theta$ is asymptotically locally conical (ALC), with its ALC limit modelled on an $S^1$-bundle over the $G_2$ cone on the nearly Kähler $\operatorname{SU}(3)/T^2$. For the endpoint $\theta^*$, the limiting metric $\gamma^i_{\theta^*}$ is asymptotically conical (AC).
\end{theorem}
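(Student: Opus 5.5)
The statement is \cite[Thm.~1.1]{Chi25}, and the paper cites it rather than reproving it. Still, I would prove it with the compactified $(A,B,C)$-system \eqref{eq:new-evo-spin7}, as follows.

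\textbf{Step 1: local solutions.} By the unstable manifold theorem applied at $p_0^i$, the local curves \eqref{eq:local_gammai} exist and correspond to smooth or orbifold metrics near $\C P^2$.

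\textbf{Step 2: completeness is equivalent to staying in a compact region.} For the case $p_0^m$ (the others are symmetric), I would show that forward completeness of the metric is equivalent to the trajectory $\gamma^i_\theta$ being defined for all $s$ and remaining in a compact region of $\mathcal C_+$.

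To get compactness, I would construct an invariant compact set. The natural candidate is cut out by polynomial inequalities such as $A,B,C\ge0$ together with an upper bound like $B+C-3A\le \dots$, or $A\le B$, $A\le C$. These are checked by evaluating the vector field on each boundary face.

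Since $A,B,C$ stay bounded, the relation $dt=f\,ds$, combined with the decay $f'/f\to 0$ or the boundedness of $f$, gives $t\to\infty$. That yields completeness.

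\textbf{Step 3: classifying the forward limits.} Inside the invariant set, I would use a Lyapunov-type function (for example one built from $ABC$) to exclude periodic orbits. Then the only possible $\omega$-limits are the critical points: the origin (giving ALC metrics, with $f\to\ell\neq0$ via \eqref{eq:new-evo-f}) and $p^\pm_{k,l}$ (giving AC metrics). The remaining critical points would have to be ruled out as limits.

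\textbf{Step 4: the parameter range.} The set of $\theta$ whose trajectories converge to the origin is open. This follows because the origin attracts a neighbourhood within the invariant region: it is degenerate, so this needs a centre-manifold analysis showing ALC decay $A,B,C\sim 1/s$.

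The set of $\theta$ leaving the invariant region is also open. I would then define $\theta^*$ as the supremum of the ALC interval $(0,\theta^*)$, where small $\theta$ is close to the $u_1$ direction and remains trapped.

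At $\theta=\theta^*$, connectedness and the openness just described show that $\gamma_{\theta^*}$ neither converges to the origin nor escapes. By Step 3, it must therefore converge to the nearly parallel point $p^\pm_{k,l}$, so the endpoint metric is AC.

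\textbf{Main obstacle.} I expect the hardest part to be finding explicit invariant regions valid uniformly in $(k,l)$ and for each choice of $i$. Close behind is the centre-manifold stability of the degenerate origin. As a first attempt, I would look for boundary surfaces that are linear in $(A,B,C)$ and use the identities among $k,l,m,\widehat{k},\widehat{l},\widehat{m},\Delta$ stated earlier to check the sign conditions on each face.
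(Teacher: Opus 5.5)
This theorem is quoted from \cite{Chi25} rather than proved in the paper. The paper only records the ingredients used there: the barrier identities of Lemma~\ref{lemma:comparing-ABC}, the orderings of Proposition~\ref{prop:ABC-inequalities}, and the location of $p^\pm_{k,l}$ in Proposition~\ref{prop:AC-inequalitites}. Your shooting argument in $\theta$ is the natural framework, and its easy half works, without a centre manifold. On the closed octant, $S=A+B+C$ satisfies $S'\le-(A^2+B^2+C^2)+O(\abs{(A,B,C)}^3)$, so the origin attracts a neighbourhood. The $\theta=0$ curve runs down the diagonal of the invariant face $\{A=0\}$ into the origin. Hence the ALC parameters form an open set containing some $(0,\varepsilon)$.

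The genuine gap is Step~3, on which the AC assertion entirely rests. In dimension three, excluding periodic orbits does not classify $\omega$-limit sets. You need either a strict Lyapunov function on the trapping region, or sub-regions on which some coordinate or ratio is eventually monotone, in the spirit of the sector arguments of Section~\ref{sec:long-time-SO3}. The candidate you name does not work. From \eqref{eq:new-evo-spin7}, $(\log ABC)'=-(A+B+C)-2F$. This equals $-4\Delta/m>0$ at $p_0^m$ and behaves like $-3/s<0$ along the ALC end, so $ABC$ rises and then falls along the very trajectories in question. As it stands, nothing prevents $\gamma^m_{\theta^*}$ from staying bounded without converging, so its convergence to $p^+_{k,l}$ is not established.

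The region in Steps~2 and~4 is also not yet usable.
\begin{itemize}
\item The constraints $A\le B$, $A\le C$ are forward invariant along $\gamma^m_\theta$ by Lemma~\ref{lemma:comparing-ABC}, and they give no upper bound.
\item A genuinely invariant set cannot be left. Openness of the escaping set therefore needs a non-invariant region whose exit set consists of strict egress points (a Wa\.zewski-type setup).
\item More importantly, your supremum argument rules out escape at $\theta^*$ only if escaping is disjoint from converging to the origin. Exiting must force an incompatible fate, e.g.\ entry into a forward-invariant blow-up trap analogous to the sector $\mathcal{B}$ of Proposition~\ref{prop:finite-time-blowup}. Here the mechanism is the term $-\frac{m}{\Delta}BC>0$. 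On the diagonal of $\{A=0\}$, $x=B=C$ obeys $x'=x^2\left(-2-\frac{m}{\Delta}x\right)$, so the $\theta=\pi$ curve blows up in finite time. Once escape is known to be open and disjoint from ALC, this is what forces $\theta^*<\pi$.
\end{itemize}

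Two smaller points remain.
\begin{itemize}
\item Bounded $(A,B,C)$ alone does not give $t\to\infty$, since a bounded $f$ may still decay. Completeness must be read off from the limit. At the origin, $F=O(s^{-2})$ gives $f\to\ell\neq0$ via \eqref{eq:new-evo-f}. At $p^+_{k,l}$, $F^+<0$ makes $f$ grow exponentially, with the time-reversed statement at $p^-_{k,l}$.
\item The three fillings are not symmetric under the normalisation $k\ge l\ge0$. The signs of $\widehat{k},\widehat{l},\widehat{m}$ enter the barrier identities, and Proposition~\ref{prop:ABC-inequalities} gives only $B,C\ge A$ along $\gamma^l_\theta$. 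The trapping region must therefore be built separately for each $i$.
\end{itemize}
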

The $\Spin(7)$ metrics on $M^m_{k,l}$ are represented by integral curves in the forward invariant set $\mathcal{C}_+:=\{A,B,C,f\geq 0\}\cap\{B,C\geq A\}$, while the $\Spin(7)$ metrics on $M^k_{k,l}$ and $M_{k,l}^l$ are represented by integral curves in the backward invariant set $\mathcal{C}_-:=\{A,B,C,f\leq 0\}\cap\{B,C\geq A\}$. For the invariance of $\mathcal{C}^\pm$, see Lemma \ref{lemma:comparing-ABC} below.

The existence proof relies on the following identities and inequalities, which will also be useful in our work.
\begin{lemma}
\label{lemma:comparing-ABC}
Let $(A,B,C)$ be a solution to the system. We have the barrier-like identities
\begin{subequations}
\begin{align}
\left.(B-A)'\right|_{A=B}=\frac{\widehat{k}}{\Delta}ABC \,, \qquad \left.(A-C)'\right|_{A=C}=\frac{\widehat{l}}{\Delta}ABC \; \quad \text{and} \quad \left.(C-B)'\right|_{B=C}=\frac{\widehat{m}}{\Delta}ABC \;.
\end{align}
\end{subequations}
\end{lemma}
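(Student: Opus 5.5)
The plan is a direct substitution into the reduced system \eqref{eq:new-evo-spin7}. I will restrict to each diagonal hyperplane, compute the derivative of the relevant difference, and observe that everything cancels except one cross term. Its coefficient is a difference of two of $k,l,m$, which the definitions $m=-k-l$, $\widehat k=k+2l$, $\widehat l=-2k-l$, $\widehat m=k-l$ identify with a hatted quantity.

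For the first identity I set $A=B$ in \eqref{eq:new-evo-a} and \eqref{eq:new-evo-b}. This gives
\[
A'=A\Bigl(C-2A-\tfrac{k}{\Delta}A^2-\tfrac{l}{\Delta}AC\Bigr),\qquad
B'=A\Bigl(C-2A-\tfrac{m}{\Delta}AC-\tfrac{k}{\Delta}A^2\Bigr).
\]
The linear terms and the $\tfrac{k}{\Delta}A^2$ terms cancel in $B'-A'$. What remains is $\tfrac{l-m}{\Delta}A^2C$. Since $l-m=k+2l=\widehat k$ and $A^2C=ABC$ on $\{A=B\}$, this equals $\tfrac{\widehat k}{\Delta}ABC$.

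For the second identity I set $A=C$ in $A'-C'$. The terms $-\tfrac{l}{\Delta}A^2$ now appear in both equations and cancel. The surviving cross term has coefficient $\tfrac{m-k}{\Delta}$ multiplying $A^2B=ABC$. Since $m-k=-2k-l=\widehat l$, this gives $\tfrac{\widehat l}{\Delta}ABC$.

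For the third identity I set $B=C$ in $C'-B'$. The terms $-\tfrac{m}{\Delta}B^2$ cancel. The remaining coefficient is $\tfrac{k-l}{\Delta}$ multiplying $AB^2=ABC$, and $k-l=\widehat m$.

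The three cases are related by the cyclic symmetry $(A,B,C)\leftrightarrow(k,l,m)$ visible in the system, so after the first one the other two follow by relabelling. There is no real obstacle. The only point needing care is keeping track of which coefficient pairs with which product in each equation, and then matching the differences $l-m$, $m-k$, $k-l$ to $\widehat k,\widehat l,\widehat m$.
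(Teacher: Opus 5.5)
Your proposal is correct and is exactly the direct substitution into \eqref{eq:new-evo-spin7} that the paper relies on; the paper states this lemma without a written proof. All three cancellations check out, as do the identifications $l-m=\widehat{k}$, $m-k=\widehat{l}$, $k-l=\widehat{m}$, and your closing symmetry remark is also valid in the precise form of the simultaneous replacement $(A,B,C)\mapsto(B,C,A)$, $(k,l,m)\mapsto(m,k,l)$ (which preserves $k+l+m=0$ and $\Delta$), though you do not need it since you verified each case directly.
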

\begin{proposition}\label{prop:ABC-inequalities}
    Along $\gamma_{\theta}^{k+l}$ and $\gamma_\theta^k$, we have the inequalities $A \leq B \leq C$. Along $\gamma_\theta^l$, we have $B\geq A$ and $C \geq A$. 
\end{proposition}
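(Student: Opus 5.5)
The plan is to combine the initial asymptotics \eqref{eq:local_gammai} near $p_0^i$, which give the orderings for small $|s|$, with the barrier identities of Lemma~\ref{lemma:comparing-ABC}, which prevent the orderings from being violated later. The barrier step must be run in the direction of increasing geodesic time $t$. Since $dt=f\,ds$, this means increasing $s$ on $\mathcal C_+$, where $f>0$ (the case $\gamma^{k+l}_\theta=\gamma^{|m|}_\theta$). On $\mathcal C_-$, where $f<0$ (the cases $\gamma^k_\theta,\gamma^l_\theta$), it means decreasing $s$. This sign bookkeeping is what makes the different sign conventions work uniformly.

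\emph{Step 1 (initial ordering).} Expand $\gamma^i_\theta$ to first order for $\theta\in(0,\pi)$, with $\epsilon:=e^{-4\Delta s/i}$ small.

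At $p_0^m$ we have $A\approx -6m\Delta\sin\theta\,\epsilon>0$, which is small, while $B,C\approx -2\Delta/m$. Moreover $C-B\approx 2\widehat m\Delta\sin\theta\,\epsilon\ge 0$. Hence $A<B$ and $B\le C$.

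At $p_0^k$ we have $B-A\approx 2\widehat k\Delta\sin\theta\,\epsilon>0$. Also $C\approx -6k\Delta\sin\theta\,\epsilon$ is close to $0$, while $B\approx -2\Delta/k<0$. Hence $A<B<C$.

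At $p_0^l$ we have $C-A\approx -2\widehat l\Delta\sin\theta\,\epsilon>0$, since $\widehat l<0$. Also $B\approx -6l\Delta\sin\theta\,\epsilon\approx 0>A\approx -2\Delta/l$. Hence $B>A$ and $C>A$.

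So each required inequality holds, strictly when the relevant hatted quantity is nonzero, on an initial interval.

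\emph{Step 2 (barrier).} For each difference, say $D=B-A$, compute the full derivative from \eqref{eq:new-evo-spin7}. It should take the form $D'=g\,D+\frac{\widehat k}{\Delta}ABC+O(D)$, with $g$ continuous along the trajectory, and it must reduce to Lemma~\ref{lemma:comparing-ABC} on $\{D=0\}$. Writing this as a linear inhomogeneous equation $D'=\tilde g D+h$ with $h=\widehat k ABC/\Delta$, variation of constants gives $D(s)=e^{\int\tilde g}\bigl(D(s_0)+\int e^{-\int\tilde g}h\bigr)$.

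The sign of $h$ is checked in the direction of increasing $t$. On $\mathcal C_+$ we have $ABC\ge0$ and $\widehat k,\widehat m\ge 0$, so the forcing terms for $B-A$ and $C-B$ are nonnegative as $s$ increases. On $\mathcal C_-$ we have $ABC\le 0$, but we integrate backwards in $s$, which flips the sign. The forcing terms for $B-A$ (from $\widehat k$) and $C-B$ (from $\widehat m$) are then again nonnegative. So is the forcing term for $C-A$, which carries $-\widehat l\ge 0$.

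Together with Step 1 this yields $D\ge0$ along the whole trajectory. One can alternatively run a first-crossing-time argument. In that argument, a touching point with $ABC\neq0$ is excluded because the forcing term is strictly positive there. Vanishing of $ABC$ cannot occur, because the coordinate planes $\{A=0\}$, $\{B=0\}$ and $\{C=0\}$ are invariant under \eqref{eq:new-evo-spin7}, while the trajectory starts off them.

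\emph{Degenerate case and main obstacle.} If $\widehat m=0$, i.e.\ $k=l=1$, then the plane $\{B=C\}$ is itself invariant, and uniqueness gives equality throughout. I expect the main obstacle to be the precise form of $D'$ away from the diagonal: one has to verify that the non-forcing part really is a multiple of $D$, with a coefficient that stays bounded on compact $s$-intervals. One also has to keep the time orientation consistent on $\mathcal C_-$, where the relevant invariance is backward invariance in $s$.
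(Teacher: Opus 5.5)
Your proposal is correct and follows essentially the paper's own argument. The leading-order expansion \eqref{eq:local_gammai} gives the orderings near $p_0^i$, and the barrier identities of Lemma~\ref{lemma:comparing-ABC}, read in reversed $s$-time on $\mathcal{C}_-$, propagate them. Your extra details also check out: the decomposition $(B-A)'=(B-A)\bigl[C-3(A+B)-\tfrac{k}{\Delta}AB-\tfrac{C}{\Delta}(lA+mB)\bigr]+\tfrac{\widehat k}{\Delta}ABC$ (and its analogues) holds exactly, so variation of constants legitimately handles the non-strict barrier. Your invariant-plane argument for $\widehat m=0$, where $C-B=o(e^{-4\Delta s/m})$, settles the one case the paper's two-line proof leaves implicit.
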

\begin{proof}
Using the local expression for $\gamma_\theta^i$, \eqref{eq:local_gammai}, all the desired inequalities are satisfied along the integral curve initially. Then the claim follows from Lemma \ref{lemma:comparing-ABC}, since the integral curves $\gamma_\theta^k$ and $\gamma_\theta^l$ flow in reversed time.  
\end{proof}
Finally, we collect some miscellaneous results about these $\Spin(7)$-metrics. 
\begin{lemma}\label{lemma:identities-klm-khatlhatmhat}
Let $F= \frac{1}{\Delta}\left( kAB + lAC +mBC\right) $ and $\widehat{F}= \frac{1}{\Delta}\left(\widehat{k}AB  + \widehat{l}AC + \widehat{m}BC\right)$. The following identities hold:
\begin{subequations}
\begin{align}
    k \widehat{F }-\widehat{k}F = 2C(B-A)\;,& \qquad&\widehat{k}\widehat{F} +3k F= 4AB - 2AC-2BC\;,\\
    l \widehat{F }-\widehat{l}F = 2B(A-C)\;,& \qquad&\widehat{l}\widehat{F} +3l F= 4AC - 2AB-2BC\;,\\
    m \widehat{F }-\widehat{m}F = 2A(C-B)\;,& \qquad&\widehat{m}\widehat{F} +3m F= 4BC - 2AB-2AC\;.
\end{align}
\end{subequations}
\end{lemma}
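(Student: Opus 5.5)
These identities are polynomial in $A,B,C$, so the plan is to expand both sides and compare the coefficients of the monomials $AB$, $AC$ and $BC$. Multiplying through by $\Delta$, each identity becomes a statement about the $2\times 2$ combinations of $(i,\widehat{i})$ for $i\in\{k,l,m\}$. These are exactly the combinations controlled by the two identities in the introduction:
\[
(\widehat{i}j-i\widehat{j})(1-\delta_{ij})=2\varepsilon_{ijh}\Delta,
\qquad
\widehat{i}\,\widehat{j}+3ij=2(3\delta_{ij}-1)\Delta .
\]
Write $F$ and $\widehat F$ in the uniform form
\[
\Delta F = kAB+lAC+mBC,
\qquad
\Delta \widehat F=\widehat{k}AB+\widehat{l}AC+\widehat{m}BC .
\]
Here the coefficient of $AB$ is indexed by $k$, that of $AC$ by $l$, and that of $BC$ by $m$. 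This is the same pairing as in the linearisation: $k$ goes with the variable missing from $p_0^k$.

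For the first column, take $i\in\{k,l,m\}$. The coefficient of the monomial indexed by $j$ in $\Delta(i\widehat F-\widehat{i}F)$ is $i\widehat{j}-\widehat{i}j$. It vanishes for $j=i$ and equals $-2\varepsilon_{ijh}\Delta$ otherwise. For $i=k$ this gives
\[
k\widehat F-\widehat{k}F=-2\varepsilon_{klm}AC-2\varepsilon_{kml}BC=2C(B-A),
\]
as claimed. The cases $i=l$ and $i=m$ follow in the same way by cyclic permutation, giving $2B(A-C)$ and $2A(C-B)$.

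For the second column, the coefficient of the monomial indexed by $j$ in $\Delta(\widehat{i}\widehat F+3iF)$ is $\widehat{i}\widehat{j}+3ij=2(3\delta_{ij}-1)\Delta$. This equals $4\Delta$ for $j=i$ and $-2\Delta$ otherwise. That gives $4AB-2AC-2BC$ for $i=k$, and the other two identities analogously.

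The only step needing care is confirming the two preliminary identities themselves. This is a short check from $m=-k-l$, $\widehat{k}=k+2l$, $\widehat{l}=-2k-l$, $\widehat{m}=k-l$. For example,
\[
\widehat{k}l-k\widehat{l}=kl+2l^2+2k^2+kl=2\Delta
\]
with sign $\varepsilon_{klm}=1$, and
\[
\widehat{k}^2+3k^2=4\Delta .
\]
The remaining cases follow by the cyclic symmetry $k\to l\to m$, under which $\widehat{k}\to\widehat{l}\to\widehat{m}$ and $\Delta$ is invariant. The main pitfall is getting the sign conventions and the monomial pairing right; there is no real analytic obstacle.
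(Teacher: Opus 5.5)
Your proposal is correct and takes the same approach as the paper. The paper states this lemma without a separate proof, treating it as the coefficient-by-coefficient consequence of the identities $(\widehat{i}j-i\widehat{j})(1-\delta_{ij})=2\varepsilon_{ijh}\Delta$ and $\widehat{i}\,\widehat{j}+3ij=2(3\delta_{ij}-1)\Delta$ from the introduction, exactly as you do.

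One small bookkeeping point: applying cyclic symmetry to your two sample checks does not produce the off-diagonal case $\widehat{k}\,\widehat{l}+3kl=-2\Delta$. That case needs its own one-line verification, or it can be deduced from the diagonal case together with $k+l+m=0=\widehat{k}+\widehat{l}+\widehat{m}$.
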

\begin{proposition}\label{prop:AC-inequalitites}
Let $p^\pm_{k,l}=(A_\pm , B_\pm , C_\pm)$, and let $\mathsf{b}_\pm=\frac{B_\pm}{A_\pm}$ and $\mathsf{c}_\pm=\frac{C_\pm}{A_\pm}$. 
\begin{enumerate}[label=(\roman*)]
    \item The pair $(\mathsf{b},\mathsf{c})$ satisfies the algebraic equations
    \begin{equation}
    \label{eq:homogenised_AC_equation}
    \begin{split}
        \mathsf{L}_1&:=(\mathsf{b}+\mathsf{c}-3)^2+4(\mathsf{b}-\mathsf{c})^2-4=0\\
        \mathsf{L}_2&:=k\mathsf{b}(3+3\mathsf{c}-5\mathsf{b})-l\mathsf{c}(3+3\mathsf{b}-5\mathsf{c})=0.
    \end{split}
    \end{equation}
    \item The difference is bounded by $ \abs{\mathsf{b}-\mathsf{c}}<\frac{3}{5}$.
    \item The sum satisfies
    $$
    \frac{23}{5}<\mathsf{b}+\mathsf{c}<5 \quad \text{for }(\mathsf{b}_+,\mathsf{c}_+),
    \qquad \text{and} \qquad
    1<\mathsf{b}+\mathsf{c}<\frac{7}{5} \quad \text{for }(\mathsf{b}_-,\mathsf{c}_-).
    $$
    \item The inequalities $\mathsf{b}_- \geq \mathsf{c}_-$ and $ \mathsf{b}_+ \leq \mathsf{c}_+$ hold. 
\end{enumerate}
\end{proposition}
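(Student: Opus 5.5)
Fixed points of the $(A,B,C)$-system with $ABC\neq 0$ satisfy, after dividing by $A,B,C$ respectively,
\begin{align*}
B+C-3A&=\tfrac{1}{\Delta}(kAB+lAC),\\
C+A-3B&=\tfrac{1}{\Delta}(mBC+kAB),\\
A+B-3C&=\tfrac{1}{\Delta}(lAC+mBC).
\end{align*}
Writing $B=\mathsf{b}A$, $C=\mathsf{c}A$, every left-hand side is linear in $A$ and every right-hand side is quadratic in $A$. Hence the quotients of these equations are homogeneous of degree zero, and we eliminate $A$.

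For part (i), subtract the equations pairwise. The differences of the right-hand sides are $\frac{A}{\Delta}(lC-mB)$, $\frac{B}{\Delta}(mC-kA)$ and similar. From two such relations and $k+l+m=0$ we solve for $A$ linearly in terms of $(\mathsf{b},\mathsf{c})$ and the weights. Substituting back gives two polynomial relations. Summing the three equations with weights chosen to kill the $k,l,m$ dependence should give the weight-free conic $\mathsf{L}_1$; the coefficients must be adapted to make the cross terms cancel. Eliminating $A$ between the remaining pair then gives the relation linear in $(k,l)$, namely $\mathsf{L}_2$. This step is routine elimination, and I would check it by verifying that $\mathsf{L}_1$ is invariant under permuting the roles in the weight-free part.

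For parts (ii) and (iii), we work on the ellipse. Put $u=\mathsf{b}+\mathsf{c}-3$ and $v=\mathsf{b}-\mathsf{c}$, so that $u^2+4v^2=4$. Then $|u|\le 2$ and $|v|\le 1$. The sharper bound $|v|<\frac35$ must come from $\mathsf{L}_2$. In the variables $(u,v)$, $\mathsf{L}_2$ reads
\[
(k-l)\,P(u,v)+(k+l)\,Q(u,v)=0
\]
for explicit quadratics $P,Q$, and the ratio $r=l/k\in[0,1]$ ranges over an interval. I would parametrise the ellipse by $u=2\cos\phi$, $v=\sin\phi$ and write $\mathsf{L}_2=0$ as
\[
r=R(\phi):=\frac{\mathsf{b}(3+3\mathsf{c}-5\mathsf{b})}{\mathsf{c}(3+3\mathsf{b}-5\mathsf{c})}.
\]
Requiring $R(\phi)\in[0,1]$ confines $\phi$ to two arcs, one near $u=2$ (the $+$ structure) and one near $u=-2$ (the $-$ structure). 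The endpoints $r=0$ and $r=1$ of each arc give the extremal values. At $r=1$ the equation forces $\mathsf{b}=\mathsf{c}$ (up to a factor), so $v=0$ and $u=\pm2$, giving $\mathsf{b}+\mathsf{c}=5$ or $1$. At $r=0$ the equation forces $3+3\mathsf{c}=5\mathsf{b}$. Intersecting this line with the ellipse gives $\mathsf{b}+\mathsf{c}=\frac{23}{5}$ or $\frac75$ and $|v|=\frac35$, presumably. Monotonicity of $R$ along each arc then yields the strict inequalities for generic $r\in(0,1)$. With the boundary values included, (ii) and (iii) hold non-strictly; strictness for interior $r$ follows from monotonicity.

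The main obstacle is proving this monotonicity of $R(\phi)$ on the relevant arcs, or equivalently that the arcs of $\{R\in[0,1]\}$ are exactly those between the computed endpoints. I would first try computing $R'(\phi)$ symbolically and showing that its numerator has fixed sign on the arc. Failing that, I would use the implicit function theorem with the Jacobian of $(\mathsf{L}_1,\mathsf{L}_2)$ in $(\mathsf{b},\mathsf{c})$, showing it is nonvanishing along the arc, so that no fold occurs as $r$ varies.

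Part (iv) follows from the sign of $v$ on each arc. At $r=0$ the explicit endpoints give the sign of $\mathsf{b}-\mathsf{c}$: for the $-$ point $\mathsf{b}>\mathsf{c}$, and for the $+$ point $\mathsf{b}<\mathsf{c}$. Along the continuation in $r$, $v$ vanishes only at $r=1$, since $v=0$ in $\mathsf{L}_2$ gives $(k-l)\mathsf{b}(3-2\mathsf{b})=0$, which forces $\mathsf{b}=\frac32$ and is incompatible with $\mathsf{L}_1$ unless $k=l$. Hence the sign is preserved on each arc.
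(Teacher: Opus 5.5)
Your route differs from the paper's. The paper cites \cite{Chi25} for (i)--(iii) and obtains (iv) dynamically: the ordering $A\le B\le C$ along $\gamma^{k+l}_\theta$ and $\gamma^k_\theta$ from Proposition~\ref{prop:ABC-inequalities} passes to the AC limits $p^\pm$, and dividing by $A_\pm$ (negative at $p^-$) gives $\mathsf{b}_+\le\mathsf{c}_+$ and $\mathsf{b}_-\ge\mathsf{c}_-$. A purely algebraic proof from the fixed-point equations is self-contained, does not rely on the existence theory, and in fact gives sharper constants. As written, however, yours has a wrong computation, leaves open a step that is unnecessary, and omits an identification.

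\textbf{The $r=0$ endpoint and the monotonicity step.} In your variables $3+3\mathsf{c}-5\mathsf{b}=-(u+4v)$ and $3+3\mathsf{b}-5\mathsf{c}=-(u-4v)$. So $l=0$ means $u=-4v$, and the ellipse then gives $v^2=\frac15$ and $\mathsf{b}+\mathsf{c}=3\pm\frac{4}{\sqrt5}$. It does not give $\frac{23}{5},\frac75$ with $|v|=\frac35$; those points lie on the ellipse but not on the line $u=-4v$. Taken literally, your argument would concede equality in (ii)--(iii) for $N_{1,0}$, where the strict inequalities in fact hold with room to spare.

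Monotonicity of $R$ is not needed. Put $P=\mathsf{b}(u+4v)$ and $Q=\mathsf{c}(u-4v)$, with $\mathsf{b},\mathsf{c}>0$ since $A,B,C$ share a sign. Then $\mathsf{L}_2=0$ reads $P=rQ$, and $P-Q=v(5u+12)$ with $5u+12>0$ on the ellipse. Moreover $Q\neq0$ there, since otherwise $P=0$ too, forcing $u=v=0$. Hence $r\in[0,1]$ gives $PQ=rQ^2\ge0$ and $v(5u+12)Q=(r-1)Q^2\le0$. Splitting by the sign of $Q$, this means $|u|\ge4|v|$ and $uv\le0$. Together with $u^2+4v^2=4$ this yields $v^2\le\frac15$ and $\frac{4}{\sqrt5}\le|u|\le2$, with $|u|=2$ iff $v=0$ iff $r=1$. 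That is (ii), and (iii) for $k>l$. You are right that at $r=1$ the bounds $5$ and $1$ are attained, e.g.\ $p^-_{1,1}=(-6,-3,-3)$.

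\textbf{Which arc is $p^+$, and (iv).} You never justify which arc belongs to $p^+$. The first fixed-point equation gives $u=\frac{A}{\Delta}(k\mathsf{b}+l\mathsf{c})$ with $k\mathsf{b}+l\mathsf{c}>0$, so $\operatorname{sign}u=\operatorname{sign}A$. Then $uv\le0$ is exactly (iv), with no continuation in $r$. That continuation would in any case need the branch to vary continuously in $r$, which you had not established.

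\textbf{Part (i).} The elimination you describe can be made explicit. Set $X=\frac{A}{\Delta}k\mathsf{b}$ and $Y=\frac{A}{\Delta}l\mathsf{c}$, and use $m=-k-l$. The three fixed-point equations become $X+Y=u$, $X(1-\mathsf{c})-Y\mathsf{b}=\mathsf{c}+1-3\mathsf{b}$ and $Y(1-\mathsf{b})-X\mathsf{c}=1+\mathsf{b}-3\mathsf{c}$. The difference of the last two gives $X-Y=-4v$. Their sum then gives $u^2+4v^2=4$, which is $\mathsf{L}_1$. The identity $l\mathsf{c}X=k\mathsf{b}Y$ is precisely $\mathsf{L}_2$.
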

\begin{proof}
The first three properties are proved in \cite{Chi25}. The last statement is a corollary of Proposition \ref{prop:ABC-inequalities}.
\end{proof}
\begin{proposition}\label{prop:ALC-asymptotic-behaviour}
    Let $ \gamma_\theta^i$ be an ALC metric. Then, near the non-compact end, we have the asymptotic behaviour 
    $$ A(s) =s^{-1} +o(s^{-1}) \qquad  B(s) = s^{-1} +o(s^{-1})   \qquad  C(s) = s^{-1} +o(s^{-1})\;.$$
\end{proposition}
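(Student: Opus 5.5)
We need to prove that along an ALC trajectory, $A,B,C \sim s^{-1}$. The plan is a two-step argument: first show the trajectory converges to the origin, then extract the rate from the homogeneous leading part of the system.

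For the first step, ALC means the integral curve $\gamma^i_\theta$ converges, as $s\to\infty$, to the degenerate fixed point at the origin of the $(A,B,C)$-subsystem \eqref{eq:new-evo-spin7}, with $f\to\ell\neq 0$. I take this as the characterisation of ALC in \cite{Chi25}. Near the origin the quadratic terms $\frac{k}{\Delta}AB$ and so on are cubic in the right-hand side, so the system is
\[
A'=A(B+C-3A)+O(|X|^3),
\]
and cyclically for $B,C$, where $X=(A,B,C)$.

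For the second step, I consider the sum $S=A+B+C$ and the differences.

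\textbf{Step 1 (equalisation).} I compute
\[
(B-A)' = (B-A)\bigl(\text{positive quantity}\sim 3S\bigr)+O(|X|^3)\ \text{terms}.
\]
More precisely, the leading part gives
\[
(B-A)'=B(C+A-3B)-A(B+C-3A)=(B-A)(C-3(A+B))+\dots,
\]
so the coefficient is $\approx -2S$ when the components are comparable. Together with the sign constraints from Proposition \ref{prop:ABC-inequalities} (all components share a sign, and after flipping time/sign in the $\mathcal{C}_-$ case they are positive in the relevant direction), this shows the ratios $B/A$ and $C/A$ tend to $1$. Concretely, I would pass to projective coordinates $(\mathsf b,\mathsf c)=(B/A,C/A)$ and the new time $d\sigma = A\,ds$. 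In these coordinates the leading-order system is a planar homogeneous flow,
\[
\mathsf b'=\mathsf b\bigl(4-4\mathsf b\bigr)+\dots,
\]
and similarly for $\mathsf c$. Its attracting equilibrium in the region $\{B,C\ge A\}$ is $(1,1)$, the other candidates (such as $\mathsf b=0$) being repelling or excluded by the invariant cone. The cubic corrections are $O(A)$ and hence integrable perturbations, provided $\int A\,ds=\infty$; this follows from the next step.

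\textbf{Step 2 (rate).} With $A\approx B\approx C$, the sum satisfies
\[
S'=-\tfrac{1}{3}S^2\,(1+o(1)),
\]
since $A(B+C-3A)$ summed cyclically equals $2(AB+BC+CA)-3(A^2+B^2+C^2)\approx -S^2/3$. Integrating the Riccati-type inequality gives $S=3s^{-1}+o(s^{-1})$, and hence each component is $s^{-1}+o(s^{-1})$. This also shows $\int A\,ds=\infty$, closing the loop in Step 1. The $\mathcal{C}_-$ case works the same way: $A,B,C$ are negative and the flow is considered forward in $s$ with $f<0$, which reverses signs consistently. Equivalently, one applies $X\mapsto -X$, $s\mapsto -s$, under which the quadratic part is invariant up to the cubic terms. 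The stated positive leading coefficient must then be read with the orientation convention of the paper.

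\textbf{Main obstacle.} The hardest part is Step 1: excluding convergence to the origin along a non-generic direction, for instance one component decaying faster than the others. Such a trajectory would give a degenerate ALC model rather than the flag-cone ALC limit. I would use the invariant sets of Lemma \ref{lemma:comparing-ABC} together with the identification from \cite{Chi25} that the ALC limit is modelled on the cone over $\SU(3)/T^2$ with equal scale factors. I would exclude the boundary directions by showing they are repelling in the projective flow. For example, on $\{\mathsf b=0\}$ one has $\mathsf b'/\mathsf b\approx \mathsf c+1-3\cdot 0>0$, so $\mathsf b=0$ is repelling.
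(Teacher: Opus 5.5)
Your route is correct in substance, but it differs genuinely from the paper's. The paper takes the ALC geometry as input. Since $a,b,c$ grow linearly in $t$ and $f\to\ell_\theta$, one has $s\sim t$, so $sA$, $sB$, $sC$ tend to positive constants. The paper then blows up with $\tau=\log|s|$, $\widehat s=1/s$, $A=\widehat s\widehat A$, etc., and identifies the constants as the unique equilibrium $(1,1,1,0)$ of the blown-up system with $\widehat A\widehat B\widehat C\neq0$.

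You use only that the trajectory tends to the origin inside the open octant. You then derive both the equalisation of the ratios (projective logistic flow) and the rate (the Riccati equation $S'=-\tfrac13S^2(1+o(1))$) from \eqref{eq:new-evo-spin7} itself. This is more self-contained. It shows that every trajectory with $ABC\neq0$ converging to the origin has this asymptotics, so you recover dynamically what the paper imports from the geometry, and the blow-up fixed-point computation becomes unnecessary. Your treatment of $\mathcal{C}_-$ via $(X,s)\mapsto(-X,-s)$ is also right. There the end is at $s\to-\infty$ and the statement holds literally, since $A(s)=-\widetilde A(-s)\sim s^{-1}$, so no reinterpretation of the sign is needed.

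As written, however, Steps 1 and 2 are circular. You need $\int A\,ds=\infty$ to get $\mathsf b,\mathsf c\to1$, and you need $\mathsf b,\mathsf c\to1$ to derive $S\sim 3/s$, which is where you obtain $\int A\,ds=\infty$. Break the loop directly: on $\mathcal{C}_+$ one has $B,C\geq0$ and $B,C\to0$, so
\[
(\log A)'=B+C-3A-\tfrac{k}{\Delta}AB-\tfrac{l}{\Delta}AC\geq-(3+o(1))A,
\]
and $\log A\to-\infty$ forces $\int^{\infty}A\,ds=\infty$.

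The ``main obstacle'' you identify also disappears once you write the projective equations exactly rather than to leading order. One finds
\[
\frac{1}{A}\Bigl(\frac{B'}{B}-\frac{A'}{A}\Bigr)=4(1-\mathsf b)+\frac{C}{\Delta}(l-m\mathsf b),
\]
so $u=1/\mathsf b$ satisfies the linear equation $du/d\sigma=-(4+lC/\Delta)\,u+(4+mC/\Delta)$. Hence $u\to1$ as $\sigma\to\infty$, with no a priori bound on $\mathsf b$ and no appeal to \cite{Chi25} or to invariant cones beyond positivity. The same holds for $\mathsf c$, with $B$ and $k$ in place of $C$ and $l$.

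Two minor slips: the coefficient in $(B-A)'$ is $C-3(A+B)\approx-\tfrac53 S$, not $-2S$. On $\{\mathsf b=0\}$ the projective growth rate is $4$, not $\mathsf c+1$; the latter is $B'/(AB)$, not the rate of the ratio.
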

\begin{proof}
Since the metric is asymptotically locally conical, we have $a(t), b(t),c(t) \sim t$ and $f\to  l_\theta$, so $s \sim t$ near the non-compact end. Hence, we have $A,B,C \sim \frac{1}{s}$. It remains to show that the leading coefficients are exactly one. In order to do so, it is convenient to consider the blown-up system.

Let $\tau = \log \abs{s}$ and introduce the additional variable $\widehat{s}=  \frac{1}{s}$, and consider the rescaled functions $A =\widehat{s} \widehat{A}$, $B =\widehat{s} \widehat{B}$ and $C =\widehat{s} \widehat{C}$. The system \eqref{eq:new-evo-spin7} becomes 
\begin{subequations}
    \begin{align}
        \partial_\tau \widehat{A} & = \widehat{A}\left[ \widehat{B} + \widehat{C}- 3\widehat{A} +1 - \frac{\widehat{s}}{\Delta}\left(k\widehat{A}\widehat{B}  +l \widehat{A}\widehat{C} \right)\right]\,, \\
        \partial_\tau \widehat{B} & = \widehat{B}\left[ \widehat{A} + \widehat{C}- 3\widehat{B} +1 - \frac{\widehat{s}}{\Delta}\left(k\widehat{A}\widehat{B} +m \widehat{B}\widehat{C} \right)\right]\,, \\
        \partial_\tau \widehat{C} & = \widehat{C}\left[ \widehat{A} + \widehat{B}- 3\widehat{C} +1 - \frac{\widehat{s}}{\Delta}\left(l\widehat{A}\widehat{C}  +m \widehat{B}\widehat{C} \right) \right]\,, \\
        \partial_\tau \widehat{s}  & =  -\widehat{s}\,.
    \end{align}
\end{subequations}
It is straightforward to compute the fixed points of the system. The only one satisfying $\widehat{A} \widehat{B}\widehat{C} \neq 0$ is $(1,1,1,0)$, as needed. 
\end{proof}
Finally, let us establish some asymptotic results in the neighbourhood of the singular orbit $Z_0$ when it is diffeomorphic to $\C P^2$. We carry out the computations for $\gamma_{k,l}^m$, but the results carry over verbatim.
\begin{lemma}[{\cite[Thm. 7.1]{Reidegeld_11}}]
    The $\Spin(7)$-metric $\gamma^m_{k,l}$ has asymptotic behaviour 
    $$a(t) =  t +  o(t) \qquad b(t) = b_0 +o(1) \qquad c(t) = b_0 +o(1)\qquad f(t) = -\frac{2\Delta}{m}t +o(t)\;.$$ 
\end{lemma}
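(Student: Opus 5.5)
The plan is to translate the local expansion \eqref{eq:local_gammai} at $p_0^m$ back into the original variables $(a,b,c,f)$ and the arclength $t$. Along $\gamma^m_\theta$ we have
\[
A\to 0,\qquad B,\,C\to -\frac{2\Delta}{m}>0,
\]
with corrections of order $e^{-\frac{4\Delta}{m}s}$. Since $m<0$, this decays as $s\to-\infty$, so the singular orbit sits at $s=-\infty$. Equation \eqref{eq:new-evo-f} linearised at $p_0^m$ gives
\[
\frac{f'}{f}=-\frac{1}{2\Delta}\bigl(kAB+lAC+mBC\bigr)\longrightarrow -\frac{m}{2\Delta}\cdot\frac{4\Delta^2}{m^2}=-\frac{2\Delta}{m},
\]
so $f=f_0e^{-\frac{2\Delta}{m}s}(1+o(1))$, consistent with the stated linearisation $L_0(f)$.

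The key relations come from \eqref{eq:change-ABC}. We have $BC=f^2/a^2$, hence
\[
a^2=\frac{f^2}{BC}=\frac{m^2}{4\Delta^2}f^2\,(1+o(1)).
\]
Also $AB=f^2/c^2$ and $AC=f^2/b^2$. Since $A$ decays at the same rate $e^{-\frac{4\Delta}{m}s}$ as $f^2$, the quotients $f^2/A$ tend to a positive constant. So $b^2=f^2/(AC)$ and $c^2=f^2/(AB)$ converge to a common constant $b_0^2$, the two limits agreeing because $B$ and $C$ share the same limit. For the rate of $A$, note that the $u_1$ component vanishes in the $A$-slot, so the $A$-coefficient comes from $\sin\theta\,u_2$, namely $-6m\Delta\sin\theta$. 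This is nonzero for $\theta\in(0,\pi)$, and its sign is correct because $m<0$.

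Next, the arclength satisfies $dt=f\,ds$, so
\[
t=\int_{-\infty}^s f\,d\sigma=-\frac{m}{2\Delta}\,f\,(1+o(1)).
\]
This gives $f=-\frac{2\Delta}{m}\,t+o(t)$. Combining with the expression for $a^2$ yields $a=\frac{|m|}{2\Delta}\,|f|=t+o(t)$, using $|m|=-m$. The constant $b_0$ is then read off from the limits above, and $b=b_0+o(1)$, $c=b_0+o(1)$.

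The main obstacle is justifying that the $o(\cdot)$ remainders in \eqref{eq:local_gammai} survive these substitutions, in particular the step taking $f^2/A$ to a constant. To handle this, I would invoke the Hartman--Grobman theorem, or better the stable/unstable manifold theorem, which gives smooth dependence with exponentially small corrections $O(e^{-\frac{8\Delta}{m}s})$ along the two-dimensional unstable manifold. The same control is needed for the $f$-equation, where the error in $f'/f$ is exponentially small and hence integrable. With this in hand, the ratios converge and the claimed asymptotics follow; the cases $p_0^k$ and $p_0^l$ are handled identically with reversed time.
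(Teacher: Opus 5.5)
Your argument is correct, but it takes a different route from the paper's.

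\textbf{The paper's route.} It takes two facts as geometric input: $b(0)=c(0)\neq 0$, from the isotropy representation at the singular orbit, and that $a$ vanishes linearly, $a=\alpha t+o(t)$. The relation $B=\frac{bf}{ca}\to\frac{f}{a}$ then gives $f=-\frac{2\Delta}{m}\alpha t+o(t)$. Substituting into \eqref{eq:evo-a} yields $\dot a\to 1+1-0-1=1$, so $\alpha=1$. In effect this is a consistency computation for the leading coefficient.

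\textbf{Your route.} You derive all four asymptotics from the unstable-manifold expansion \eqref{eq:local_gammai}. You use the identities $BC=f^2/a^2$, $AB=f^2/c^2$, $AC=f^2/b^2$, together with the rate $f'/f\to-\frac{2\Delta}{m}$ and $dt=f\,ds$. These give $\frac{df}{dt}\to-\frac{2\Delta}{m}$, and hence $a=f/\sqrt{BC}=t+o(t)$, without using the $\dot a$ equation at all.

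\textbf{What your route buys.} It proves, rather than assumes, that $b$ and $c$ have a common finite positive limit and that $a$ and $f$ vanish linearly. It also shows where $\sin\theta\neq 0$ is needed, and gives $b_0^2=f_0^2/(12\Delta^2\sin\theta)$, which makes the dependence on the scaling constant $f_0$ explicit. The paper's argument is shorter because it takes these facts as given.

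\textbf{On the error control.} The issue you flag is milder than you suggest. The $o(e^{-\frac{4\Delta}{m}s})$ remainder already present in \eqref{eq:local_gammai} gives $f'/f+\frac{2\Delta}{m}=O(e^{-\frac{4\Delta}{m}s})$, which is integrable at $s=-\infty$. That integrability is all you need for $f^2/A$ to converge, so the sharper $O(e^{-\frac{8\Delta}{m}s})$ bound is not required.
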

\begin{proof}
    The equality $b(0)=c(0)$ follows from the isotropy representation of the singular orbit. Since the fixed point corresponding to the singular orbit is $B=-\frac{2\Delta}{m}$, we may take the leading order expansions $a(t) = \alpha t + o(t)$, $f= -\frac{2\Delta}{m} \alpha t + o(t)$. Substituting in Equation \eqref{eq:evo-a} we have
    $$\dot{a} =  \frac{c}{b}+\frac{b}{c}- \frac{a^2}{bc}+ \frac{m}{2\Delta }\frac{f}{a} =  1 + o(1)\;,$$
    so $\alpha=1$ and the claim follows.
\end{proof}
\begin{lemma}\label{lemma:normal_metric}
   In a neighbourhood of the singular orbit, the metric in the normal direction converges to $g_{eucl}$, the Euclidean metric on the quotient $\R^4/\Z_{\abs{m}}$. 
\end{lemma}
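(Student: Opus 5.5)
We restrict the metric $g = dt^2 + a^2(e_1^2+e_2^2) + b^2(e_3^2+e_4^2) + c^2(e_5^2+e_6^2) + f^2 e_7^2$ to the directions normal to the singular orbit $\C P^2$ of $M^m_{k,l}$. With respect to $\rho_{|m|}$, the isotropy algebra $\mathfrak{u}(2)$ of the singular orbit is spanned by $E_1,E_2,E_7,E_8$. The normal directions are therefore $\partial_t$ together with the orbit of $\U(2)/\U(1)_{k,l}\cong S^3/\Z_{|m|}$, spanned by $E_1,E_2,E_7$. The normal metric is
\[
g_N = dt^2 + a(t)^2(e_1^2+e_2^2) + f(t)^2 e_7^2 .
\]
By the preceding lemma, $a(t)=t+o(t)$ and $f(t)=-\tfrac{2\Delta}{m}t+o(t)$. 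Under the rescaling $t\mapsto \lambda t$, $g_N\mapsto \lambda^{-2}g_N$ with $\lambda\to 0$, the metric converges smoothly on compact subsets of the punctured normal space to the cone
\[
dt^2 + t^2\Big(e_1^2+e_2^2+\tfrac{4\Delta^2}{m^2}e_7^2\Big).
\]
It remains to check that this cone is flat $\R^4/\Z_{|m|}$, that is, that $g_{S}:=e_1^2+e_2^2+\tfrac{4\Delta^2}{m^2}e_7^2$ is the round metric of curvature one on $S^3/\Z_{|m|}$.

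The natural route is to identify $g_S$ with the bi-invariant metric on $\SU(2)$ pushed down to the quotient. We restrict the Maurer--Cartan equations \eqref{eq:MC-SU(3)} to the fibre, where $e_3,\dots,e_6$ vanish, and write $e_8$ in terms of $e_7$. On the fibre, $E_1,E_2$ and the appropriate combination of $E_7,E_8$ generate $\mathfrak{su}(2)\subset\mathfrak{u}(2)$. Set $\sigma_1=e_1$, $\sigma_2=e_2$, $\sigma_3=\tfrac{2\Delta}{m}e_7$ (up to sign). We then verify the structure equations $d\sigma_i=2\sigma_j\wedge\sigma_h$ cyclically, modulo the isotropy directions. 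The equation $de_7=\tfrac{m}{\Delta}e_{12}$ gives
\[
d\sigma_3=\tfrac{2\Delta}{m}\cdot\tfrac{m}{\Delta}\,e_{12}=2\sigma_1\wedge\sigma_2 .
\]
In $de_1$, the terms $3me_{27}-\widehat m e_{28}$, once $e_8$ is expressed on the fibre via the $\U(1)_{k,l}$ quotient, combine to $2\sigma_2\wedge\sigma_3$. This uses the identity $\widehat i j-i\widehat j=2\varepsilon\Delta$ to simplify the coefficients. Given these equations, $\sigma_1^2+\sigma_2^2+\sigma_3^2$ is the round metric of radius one on $S^3$, so $dt^2+t^2 g_S$ is Euclidean.

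The quotient by $\Z_{|m|}$ comes from the order of the intersection of $\U(1)_{k,l}$ with the $\SU(2)$ factor. This is consistent with the fibre volume in Lemma~\ref{lemma:volume_normalisation}: rescaling $e_7$ by $\tfrac{2\Delta}{|m|}$ should give volume $2\pi^2/|m|$. Checking this numerically, we need $\tfrac{\pi^2}{\Delta}\cdot\tfrac{2\Delta}{|m|}=\tfrac{2\pi^2}{|m|}$, which matches the volume of round $S^3/\Z_{|m|}$.

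The main obstacle is the bookkeeping of the $e_8$ terms, namely checking that the fibre $\U(2)/\U(1)_{k,l}$ is the round lens space with exactly the normalisation $\tfrac{2\Delta}{m}$. The volume check above gives an independent consistency test of that constant, so we would rely on it if the direct structure-equation computation becomes messy. Smooth convergence then follows from the $o(t)$ expansions, improved to smooth expansions via the analytic dependence of $\gamma^m_\theta$ at $p_0^m$ given by \eqref{eq:local_gammai}.
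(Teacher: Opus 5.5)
Your proposal is correct and is essentially the paper's argument: both identify the normal sphere with $\SU(2)/\Z_{\abs{m}}$, spanned by $E_1,E_2$ and $H\equiv-\frac{m}{2\Delta}E_7$; both use $e_1,e_2,\frac{2\Delta}{m}e_7$ as the standard $\mathfrak{su}(2)$ coframe; and both combine this with $a\sim t$, $f\sim-\frac{2\Delta}{m}t$ to get $dt^2+t^2\sum_i\sigma_i^2+o(t^2)$. The differences are minor: you check the structure equations on the Maurer--Cartan side where the paper writes $H=-\frac{m}{2\Delta}E_7+\frac{\widehat{m}}{2\Delta}E_8$ directly, and the key step should be phrased as pulling back to $\SU(2)$, where $e_8=-\frac{\widehat{m}}{m}e_7$, rather than working ``modulo the isotropy'' (which would set $e_8=0$ and give the wrong constant), so that $de_1=\frac{3m^2+\widehat{m}^2}{m}e_{27}=\frac{4\Delta}{m}e_{27}=2\sigma_2\wedge\sigma_3$.
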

\begin{proof}
    By equivariance, it suffices to prove the claim at an arbitrary point $p$ in the singular orbit. The sphere bundle of the normal bundle at $p$ is $S^3/\Z_m\cong \SU(2)/\Z_m$. Its tangent space with respect to the $\SU(3)$-action is spanned by $E_1$, $E_2$ and 
    $$H= \sqrt{-1} \begin{pmatrix} 1 &0&0\\ 0 &-1& 0 \\ 0&0&0      
    \end{pmatrix} = \left( - \frac{m}{2\Delta} E_7  + \frac{\widehat{m}}{2\Delta}E_8\right) \Bigg|_{N_{k,l}} = - \frac{m}{2\Delta} E_7\;.$$
    Let \(\sigma_1,\sigma_2,\sigma_3\) be the dual basis to \(H , E_1,E_2\) of $\mathfrak{su}(2)$, so 
    $d \sigma_i =  -2\varepsilon_{ijk} \, \sigma_{jk}$.
    Restricting to the Aloff--Wallach space $N_{k,l}$, the metric in  the normal directions to the singular orbit in geodesic coordinates reads
    \[g_{\R^4/\Z_m} = dt^2 +a^2(t)(\sigma_2^2 + \sigma^2_3) + \left(\frac{m}{2\Delta}\right)^2f^2(t) \sigma_1^2 +o(t^2) =  dt^2 + t^2\sum_{i=1}^3\sigma_i^2 + o(t^2)\]
    which one immediately recognises as the flat Euclidean metric at leading order.
\end{proof}

\subsection{Principal bundles and connections over cohomogeneity one manifolds}
We need to understand which principal bundles over the principal orbit extend to the singular orbit. For this to work, one considers homogeneous principal bundles. It is a folklore result that homogeneous principal $L$-bundles over a homogeneous base $G/H$ are classified by conjugacy classes of morphisms $\psi: H \rightarrow L$, where the corresponding principal bundle is the associated bundle $G \times_\psi  L$. A detailed discussion can be found in \cite{AMW25}. We present a short overview for completeness. For a base with a cohomogeneity one action, we have
\begin{proposition}\label{prop:admissible cohomogeneity bundles}
Let $M$ be a cohomogeneity one manifold with principal orbit
$G/K$ and singular orbit $G/H$. A homogeneous principal
$L$-bundle over $G/K$ admits a $G$-equivariant extension over
a tubular neighbourhood of $G/H$ if and only if it is an
equivariant pullback under $\pi:G/K\to G/H$.
\end{proposition}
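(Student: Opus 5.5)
The plan is to use the slice theorem for the tubular neighbourhood and the folklore classification of homogeneous bundles by isotropy homomorphisms. A $G$-invariant tubular neighbourhood of the singular orbit has the form $U \cong G\times_H D$, where $D\subset V$ is a small ball in the slice representation $V$ of $H$. The sphere $S(V)\cong H/K$ is the unit sphere, and $U\setminus (G/H)\cong (0,\varepsilon)\times G/K$. In these terms, $\pi:G/K\to G/H$ is the map $[g]\mapsto[g]$, or equivalently the radial projection of $G\times_H S(V)$ onto the zero section.

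For the ``if'' direction, suppose the bundle is $\pi^*P'$ with $P'=G\times_{\psi'}L\to G/H$. Let $p:U\to G/H$ be the bundle projection of $G\times_H D$. Then $p^*P'$ is a $G$-equivariant principal $L$-bundle on $U$. Its restriction to each principal orbit $\{t\}\times G/K$ is $\pi^*P'$, because $p$ restricted to the sphere bundle is exactly $\pi$. Equivalently, on the level of isotropy data $p^*P'$ is $G\times_{\psi'|_K}L$. So this direction is immediate.

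For the ``only if'' direction, let $\tilde P\to U$ be a $G$-equivariant extension. Restricting $\tilde P$ to the singular orbit gives a homogeneous $L$-bundle $P'=\tilde P|_{G/H}$. Fix a point $x_0\in G/H$ with stabiliser $H$. The action of $H$ on the fibre $\tilde P_{x_0}\cong L$ commutes with the right $L$-action, so it is given by left multiplication through a homomorphism $\psi':H\to L$, well defined up to conjugation. Thus $P'\cong G\times_{\psi'}L$.

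It remains to check that the original bundle, which is $\tilde P$ restricted to a principal orbit, is $\pi^*P'$. First I will show that $\tilde P\cong p^*P'$ equivariantly. Since $D$ is contractible and $H$-equivariantly contracts to $0$ by radial scaling, the $H$-equivariant bundle $\tilde P|_D$ is isomorphic to $D\times L$ with $H$ acting diagonally via $\psi'$. One way to see this is to use an $H$-invariant connection and parallel transport along rays from the origin; such a connection exists by averaging over the compact group $H$. Parallel transport along rays is $H$-equivariant and trivialises $\tilde P|_D$ compatibly with the $H$-action on $\tilde P_{x_0}$.

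Inducing up from $D$ to $G\times_H D$ then gives $\tilde P\cong G\times_H(D\times_{\psi'}L)\cong p^*P'$. Restricting to the principal orbit, the isotropy at a point of $S(V)$ with stabiliser $K$ acts on the fibre through $\psi'|_K$. By the folklore classification, the bundle over $G/K$ is therefore $G\times_{\psi'|_K}L=\pi^*P'$.

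The main obstacle is the equivariant triviality of $\tilde P|_D$ with the correct isotropy homomorphism. A naive trivialisation may fail to intertwine the $H$-actions. The averaged invariant connection with radial parallel transport handles this, though smoothness at the origin needs a brief check; this is standard, since transport along straight rays depends smoothly on the initial point. Alternatively, one can cite the equivariant homotopy invariance of $H$-equivariant principal bundles for compact $H$.

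As a consequence, the extension condition translates into the practical criterion used later: the homomorphism $K\to L$ defining the bundle must extend to a homomorphism $H\to L$, up to conjugacy.
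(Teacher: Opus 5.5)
Your proposal is correct and follows the same route as the paper: write the tube as $G\times_H D(V)$, show that an equivariant bundle on it is the pullback of its restriction to the zero section (so its restriction to a principal orbit is a pullback under $\pi$), and conversely pull back along the tubular projection. The only difference is that you justify the key step the paper simply asserts, namely the equivariant isomorphism of the bundle with this pullback. You do this via an $H$-invariant connection and radial parallel transport on the slice, which amounts to equivariant homotopy invariance for compact $H$.
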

\begin{proof}
Write a tubular neighbourhood of the singular orbit as
$G\times_H D(V)$, where $V$ is the representation corresponding to the normal slice and $D(V)= \left\{ v\in V \st {||v||} < \varepsilon\right\}$ for some $\varepsilon>0$.

An equivariant principal bundle on this neighbourhood is
equivariantly isomorphic to the pullback of its restriction
to the zero section $G/H$. Its restriction to a principal
orbit is therefore a pullback under $\pi$.

Conversely, any homogeneous principal bundle over $G/H$
pulls back under the tubular projection
$G\times_H D(V)\to G/H$.
\end{proof}
Thus, given a chain of closed subgroups $K\subseteq H\subseteq G$,
we wish to characterise the homogeneous principal $L$-bundles
over $G/K$ that arise as equivariant pullbacks under the
projection $\pi:G/K\to G/H$.
\begin{lemma}\label{lemma:homogeneous pullback bundle}
Let $\iota:K\hookrightarrow H$ denote the inclusion. A
homogeneous principal $L$-bundle over $G/K$, classified by
$\varphi:K\to L$, is the equivariant pullback of a homogeneous
bundle over $G/H$ if and only if $\varphi$ is conjugate to
$\psi\circ\iota$ for some homomorphism $\psi:H\to L$.
\end{lemma}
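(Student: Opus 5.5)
The plan is to prove both directions directly from the associated-bundle description of homogeneous bundles. For a homomorphism $\psi: H\to L$, write $P_\psi = G\times_\psi L \to G/H$, and for $\varphi:K\to L$ write $P_\varphi = G\times_\varphi L\to G/K$. We use the folklore classification recalled above: homogeneous $L$-bundles over $G/K$ are classified up to $G$-equivariant isomorphism by $\varphi$ up to conjugation in $L$. The isomorphism class is read off from the isotropy action of $K$ on the fibre over the base point $eK$.

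For the ``if'' direction, suppose $\varphi = c_\ell\circ\psi\circ\iota$ for some $\ell\in L$. Then $\psi\circ\iota$ and $\varphi$ define isomorphic bundles, via $[g,x]\mapsto[g,\ell x]$ or its inverse depending on convention. It therefore suffices to show $\pi^*P_\psi\cong G\times_{\psi\circ\iota}L$ equivariantly. Define the map
\[
G\times_{\psi\circ\iota} L\to \pi^*P_\psi=\{(gK,p)\in G/K\times P_\psi \st \pi(gK)=\text{proj}(p)\},\qquad [g,x]\mapsto (gK,[g,x]_H).
\]
We check the following in turn:
\begin{enumerate}[label=(\alph*)]
\item It is well defined: replacing $(g,x)$ by $(gk,\psi(\iota(k))^{-1}x)$ gives the same class in $P_\psi$, since $k\in H$.
\item It is $G$-equivariant and $L$-equivariant.
\item It covers the identity on $G/K$.
\end{enumerate}
A morphism of principal $L$-bundles over the same base is automatically an isomorphism, which finishes this direction.

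For the ``only if'' direction, suppose $P_\varphi\cong\pi^*Q$ equivariantly, with $Q\to G/H$ a homogeneous principal $L$-bundle. By the classification, $Q\cong P_\psi$ for some $\psi:H\to L$, namely the isotropy representation of $H$ on the fibre $Q_{eH}$ after choosing a base point $q_0$. The fibre of $\pi^*Q$ over $eK$ is $Q_{eH}$. The $K$-isotropy action on this fibre is the restriction of the $H$-action, so with respect to the point $(eK,q_0)$ the classifying homomorphism of $\pi^*Q$ is exactly $\psi\circ\iota$. The equivariant isomorphism $P_\varphi\cong\pi^*Q$ carries the base point $[e,1]$ to some $(eK,q_0\ell)$. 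Changing the base point by $\ell$ conjugates the classifying homomorphism, so $\varphi = c_{\ell^{-1}}\circ\psi\circ\iota$.

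The only delicate point is the bookkeeping of base points and conjugation. Concretely, one must verify that the classifying homomorphism is well defined only up to conjugation, and that this is exactly the ambiguity coming from the choice of point in the fibre over $eK$. I would establish this once by a one-line computation: if $k\cdot p_0=p_0\varphi(k)$, then $k\cdot(p_0\ell)=p_0\ell\,(\ell^{-1}\varphi(k)\ell)$. With that in hand, both directions are formal.
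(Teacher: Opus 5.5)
Your proposal is correct and follows the paper's proof. The paper uses exactly the map $[g,\ell]_K\mapsto(gK,[g,\ell]_H)$ to identify $G\times_{\psi\circ\iota}L$ with $\pi^*P_\psi$ and then appeals to the classification by conjugacy classes; your base-point computation in the converse direction just unpacks that appeal, which the paper leaves implicit.
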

\begin{proof}
Let $P_\psi=G\times_\psi L\to G/H$. There is a natural
$G$-equivariant isomorphism
\[
G\times_{\psi\circ\iota}L\longrightarrow\pi^*P_\psi,
\qquad
[g,\ell]_K\longmapsto(gK,[g,\ell]_H).
\]
The claim follows from the classification of homogeneous
principal bundles by conjugacy classes of homomorphisms.
\end{proof}
We need to understand the conjugacy classes of morphisms for the different Lie groups involved in our construction. The following is a routine computation:
\begin{lemma}\label{lemma:morphism classification}
The conjugacy classes of group homomorphisms between the corresponding groups are as follows:
\begin{enumerate}[label=(\roman*)]
    \item $\Map(\U(1),\U(1))\cong \Z$, of the form $\phi_n: z \mapsto z^n$ for $n \in \Z$;
        \item $\Map(\SU(2),\U(1))\cong \{e\}$;
    \item $\Map(\U(2),\U(1))\cong \Z$, of the form $\tau_n: A \mapsto \det(A^n)$  for $n \in \Z$;
    \item $\Map(\U(1), \SO(3))= \mathbb{N}$ given by $\theta_n:z \mapsto \begin{pmatrix}
        1 &\\ & z^n
    \end{pmatrix}$, with $z \in U(1) \subseteq  \operatorname{GL}(1,\C) \hookrightarrow \operatorname{GL}(2,\R)$;  
    \item $\Map(\SU(2),\SO(3))\cong \{e, \pi\}$, the trivial morphism and the covering map;
    \item $\Map(\U(2),\SO(3))\cong \mathbb{N} \cup \{\widehat{\pi}\}$, with 
    \begin{align*}
        \widehat{\tau}_n : \U(2) & \to \SO(3)\\ A & \mapsto \theta_n\circ \tau_1  (A) =\theta_1 \circ \tau_n (A) \;,        
    \end{align*}
    and $\widehat{\pi}$ is the projectivisation map $\widehat{\pi}: \U(2) \rightarrow \operatorname{PU}(2)\cong \SO(3)$.
\end{enumerate}
\end{lemma}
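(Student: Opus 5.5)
We prove the six items in turn, using only the structure theory of compact Lie groups. Homomorphisms are taken up to conjugation in the target; for abelian targets this changes nothing.

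For (i), a continuous homomorphism $\U(1)\to\U(1)$ lifts through the exponential map to a linear map $\R\to\R$ sending $2\pi\Z$ into $2\pi\Z$. It is therefore multiplication by some $n\in\Z$, i.e.\ $\phi_n$.

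For (ii), any homomorphism $\SU(2)\to\U(1)$ factors through the abelianisation of $\SU(2)$. Since $\SU(2)$ is perfect (its Lie algebra is simple), the map is trivial.

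For (iii), a homomorphism $\U(2)\to\U(1)$ kills the commutator subgroup $\SU(2)$, so it factors through $\det:\U(2)/\SU(2)\cong\U(1)$. By (i) it is $\tau_n$. Distinct $n$ give distinct maps, because $\tau_n(\diag(z,1))=z^n$.

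For (iv), the image of $\U(1)\to\SO(3)$ is a torus, hence lies in a maximal torus $\SO(2)$, and all maximal tori are conjugate. By (i) the map is $\theta_n$ for some $n\in\Z$. Conjugating by a rotation of angle $\pi$ about an axis orthogonal to the torus acts by the Weyl group element $n\mapsto -n$. The classes $\theta_n$ and $\theta_{-n}$ therefore coincide, and the invariant $|n|$ distinguishes the remaining ones, because it is the weight of the induced representation on $\R^3=\R\oplus\C$. This gives $\mathbb N$.

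For (v), the kernel of a homomorphism $\SU(2)\to\SO(3)$ is a normal subgroup of $\SU(2)$, so it is finite central ($\{1\}$ or $\{\pm1\}$) or all of $\SU(2)$. If the kernel is $\SU(2)$, the map is trivial. If it is finite, a dimension count shows the map is surjective, hence a covering. A covering $\SU(2)\to\SO(3)$ is the universal cover, so its kernel is $\{\pm 1\}$, and it is unique up to an automorphism of $\SO(3)$. All such automorphisms are inner, since $\operatorname{Out}(\mathfrak{so}(3))=1$ and $\SO(3)$ is centreless. The class of the map is therefore $\pi$.

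For (vi), this is the main step. Write $\U(2)=(\U(1)\times\SU(2))/\{\pm(1,1)\}$ and let $\psi:\U(2)\to\SO(3)$ be a homomorphism. By (v), $\psi|_{\SU(2)}$ is either trivial or conjugate to $\pi$.

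Case 1: $\psi|_{\SU(2)}$ is trivial. Then $\psi$ factors through $\det$, so by (iii) and (iv) it is $\theta_n\circ\tau_1=\widehat\tau_n$ with $n\in\mathbb N$.

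Case 2: $\psi|_{\SU(2)}$ is conjugate to $\pi$, hence onto. The centre $\U(1)\cdot I$ must map into the centraliser of $\SO(3)$ in itself, which is trivial. So $\psi$ kills the scalars, and $\psi$ factors through $\U(2)/\U(1)=\operatorname{PU}(2)$. The induced map $\operatorname{PU}(2)\to\SO(3)$ is an isomorphism, so $\psi$ is conjugate to $\widehat\pi$.

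Finally, $\widehat\pi$ differs from every $\widehat\tau_n$: it is nontrivial on $\SU(2)$, while each $\widehat\tau_n$ is trivial there. The $\widehat\tau_n$ are pairwise non-conjugate by their restriction to $\diag(z,1)$ and (iv). The identity $\theta_n\circ\tau_1=\theta_1\circ\tau_n$ holds because both maps send $A$ to $\theta_1(\det A^n)$. This completes (vi).
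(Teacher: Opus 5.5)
Your argument is correct and complete: each item follows from standard structure theory. You lift through $\exp$, use that $\SU(2)$ is perfect, use conjugacy of maximal tori together with the Weyl reflection, use that automorphisms of $\SO(3)$ are inner, and split (vi) by the centraliser argument according to whether $\psi|_{\SU(2)}$ is trivial or the covering. The paper gives no proof beyond calling the lemma a routine computation. The one step it makes explicit, in the remark conjugating $\theta_n$ to $\theta_{-n}$ by the matrix $C$, is exactly your Weyl-group element, since $C$ is a rotation by $\pi$ about an axis orthogonal to the fixed axis of $\theta_n$.
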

\begin{remark}
Note that in \cite{BO19}, Ball and Oliveira parametrise morphisms from $\U(1)$ to $\SO(3)$ by integers rather than natural numbers. This is an over-counting, since $\theta_n$ and $\theta_{-n}$ are conjugated by  
$
    C =\begin{pmatrix}
    -1 & 0 & 0 \\ 0 &0 & 1 \\ 0&1 &0
    \end{pmatrix}
$.
\end{remark}LOL
Next, we classify admissible cohomogeneity one principal $G$-bundles with $G=\U(1)$ and $\SO(3)$ over $M^i_{k,l}$. We distinguish two cases, depending on the topology of the singular orbit.
\begin{proposition}[Generic case - $\C P^2$ as a singular orbit]\label{prop:admissible-bundles-CP2}
    Let $N_{k,l}$ be an Aloff--Wallach space, and let $M_{k,l}^i$ be the associated orbifold, with $i\neq 0$. Then a principal $\U(1)$-bundle $\phi_n$ on $N_{k,l}$ is admissible for $M^i_{k,l}$ iff $n$ is a multiple of $i$, $n \equiv_i 0$. 
    
    A principal $\SO(3)$-bundle $\theta_n$ on $N_{k,l}$ is admissible for $M^i_{k,l}$ if and only if either
    \begin{itemize}
        \item $n \equiv_i 0$, giving rise to a principal bundle $P_n$, or 
        \item $n =\abs{\widehat{i}}$, giving rise to a principal bundle $Q_n$.
    \end{itemize}
    In particular, a bundle $\theta_n$ admits a unique equivariant extension over the singular orbit, except for two infinite families
    \begin{enumerate}[label=(\roman*)]
        \item $M^1_{k,1}$, with $n = 2k+1$ for all $k \geq1$, and
        \item $M^2_{k, 2}$, with $n= 2k+2$, with $k\geq 3$ odd;
    \end{enumerate}
   and three exceptional cases:
    \begin{enumerate}[label=(\roman*)]\setcounter{enumi}{2}
        \item over $M^1_{1,0}$ with $n=1$, 
        \item over $M^2_{1,1}$ with $n=0$, and 
        \item over $M^2_{2,1}$ with $n=4$.
    \end{enumerate}
\end{proposition}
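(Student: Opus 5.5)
The plan is to reduce everything to Proposition~\ref{prop:admissible cohomogeneity bundles} and Lemma~\ref{lemma:homogeneous pullback bundle}. A bundle on $N_{k,l}$ classified by $\varphi:\U(1)_{k,l}\to L$ is admissible for $M^i_{k,l}$ if and only if $\varphi$ is conjugate to $\psi\circ\rho_i$ for some $\psi\in\Map(\U(2),L)$. The extensions themselves are classified by the conjugacy class of $\psi$. So the task is to compose each homomorphism in Lemma~\ref{lemma:morphism classification}(iii) and (vi) with the three embeddings $\rho_{\abs m},\rho_k,\rho_l$, and to read off the resulting element of $\Map(\U(1),\U(1))\cong\Z$ or $\Map(\U(1),\SO(3))\cong\mathbb N$.

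First I compute the two basic invariants of each embedding: the determinant and the projectivisation. For $\rho_{\abs m}(z)=\diag(z^k,z^l)$ the determinant is $z^{k+l}=z^{-m}$. The projectivised image is rotation by $z^{k-l}=z^{\widehat m}$, since $\diag(z^k,z^l)$ acts on $\mathfrak{su}(2)$ with weight $k-l$. Similarly, $\rho_k$ has determinant $z^{-k}$ and projective weight $l-m=\widehat k$, while $\rho_l$ has determinant $z^{-l}$ and projective weight $m-k=\widehat l$. Composing:
\begin{itemize}
\item $\tau_n\circ\rho_i=\phi_{\mp n i}$, so the admissible $\U(1)$-bundles are exactly $\phi_n$ with $n\equiv_i0$;
\item $\widehat\tau_n\circ\rho_i=\theta_{n\abs i}$ gives the reducible extensions $P_{n\abs i}$;
\item $\widehat\pi\circ\rho_i=\theta_{\pm\widehat i}$, which is conjugate to $\theta_{\abs{\widehat i}}$ by the Remark following Lemma~\ref{lemma:morphism classification}, and gives $Q_{\abs{\widehat i}}$.
\end{itemize}
This proves the first two assertions.

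For uniqueness, I observe that the $\widehat\tau_n$ restrict to pairwise distinct classes $\theta_{n\abs i}$. Hence two non-conjugate $\psi$ can share a restriction only if one of them is $\widehat\pi$ and the other is some $\widehat\tau_n$, that is, only if $\abs{\widehat i}\equiv_i0$. These two extensions are genuinely non-isomorphic: $\widehat\pi$ has irreducible image, while $\widehat\tau_n$ factors through an abelian group, so they are not conjugate. It remains to solve the arithmetic condition $i\mid\widehat i$ using coprimality.
\begin{itemize}
\item For $i=k$: $\widehat k\equiv 2l\pmod k$ forces $k\mid 2$.
\item For $i=l$: $\widehat l\equiv-2k\pmod l$ forces $l\in\{1,2\}$ (or $l=0$, excluded since $i\neq0$).
\item For $i=\abs m=k+l$: $\widehat m=k-l\equiv-2l\pmod{k+l}$ and $\gcd(k+l,l)=1$ force $k+l\mid2$.
\end{itemize}

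Finally, I enumerate the solutions with $k\ge l\ge0$ coprime.
\begin{itemize}
\item $l=1$ gives $M^1_{k,1}$ with $n=\abs{\widehat l}=2k+1$.
\item $l=2$ requires $k$ odd, and $k\ge3$ by normalisation; this gives $M^2_{k,2}$ with $n=2k+2$.
\item $k=1$ gives $(1,0)$, i.e.\ $M^1_{1,0}$ with $n=1$, or $(1,1)$, which is already contained in family (i).
\item $k=2$ gives $(2,1)$, i.e.\ $M^2_{2,1}$ with $n=\widehat k=4$.
\item $k+l\in\{1,2\}$ gives $(1,1)$ with $i=2$ and $\widehat m=0$, i.e.\ $M^2_{1,1}$ with $n=0$. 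It also gives $(1,0)$, whose $\abs m$-resolution coincides with the case just listed.
\end{itemize}

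The main obstacle I expect is bookkeeping rather than conceptual. One must check that the projective weights are correct, in particular the sign conventions and the factor coming from the adjoint action on $\mathfrak{su}(2)$ rather than on $\C^2$. One must also identify which exceptional resolutions coincide, so that the list of exceptions is neither over- nor under-counted; for example, for $N_{1,0}$ and $N_{1,1}$ several $\rho_i$ give the same filling. I would verify the weight computation directly against the isotropy decomposition $\langle E_1,E_2\rangle_{\widehat m}$, $\langle E_3,E_4\rangle_{\widehat l}$, $\langle E_5,E_6\rangle_{\widehat k}$, whose weights should match $\widehat\pi\circ\rho_i$.
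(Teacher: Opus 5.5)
Your proposal is correct and follows the paper's proof. You reduce to Proposition~\ref{prop:admissible cohomogeneity bundles} and Lemma~\ref{lemma:homogeneous pullback bundle}, compose $\tau_n$, $\widehat\tau_n$ and $\widehat\pi$ with $\rho_i$, and reduce uniqueness to the divisibility condition $i\mid\widehat{i}$. Where the paper writes out only $\rho_{\abs{m}}$ and calls the modular analysis straightforward, you handle all three embeddings and the coprimality case analysis explicitly, and your enumeration reproduces exactly the listed families (i)--(ii) and exceptional cases (iii)--(v).
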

Before we move into the proof, note that the condition $i \neq 0$ is precisely the requirement for the singular orbit to be diffeomorphic to $\C P^2$. Moreover, in the case $i=1$, we are abusing notation slightly and taking the convention that $n \equiv_1 0$ for all $n \in \Z$.
\begin{proof}
By Proposition \ref{prop:admissible cohomogeneity bundles}, a bundle extends if and only if the homogeneous bundle on the principal orbit is a pullback of a bundle on the singular orbit. The proposition follows from Lemmas \ref{lemma:homogeneous pullback bundle} and \ref{lemma:morphism classification}.

We work out the computations explicitly for the embedding $\rho_m$,  the other cases being similarly straightforward. For the gauge group $L=\U(1)$, we have
$$\tau_n \circ \rho_{\abs{m}} =  \det \big( \operatorname{diag}( z^k, z^l)\big)^n = z^{n(k+l)} = \phi_{n(k+l)}\;.$$
For $L=\SO(3)$, we have
$$\widehat{\tau}_n \circ \rho_{\abs{m}}= \begin{pmatrix}
        1 & 0 \\  0 &  \det \big( \operatorname{diag}( z^k, z^l)\big)^n
    \end{pmatrix} = \begin{pmatrix}
        1 &0 \\ 0 &  z^{n(k+l)}
    \end{pmatrix} = \theta_{n(k+l)}\;,$$
which give rise to the reducible principal $\SO(3)$-bundles $P_n$. 

For the lift of the projective map $\widehat{\pi}: \U(2) \rightarrow  \operatorname{PU}(2)\cong \SO(3)$, we have 
\[\widehat{\pi} \circ \rho _{\abs{m}}(z)= \widehat{\pi}(\diag(z^k, z^l)) = \begin{bmatrix}
    z^k &  0 \\ 0 & z^l 
\end{bmatrix} = \begin{bmatrix}
   1 &  0 \\ 0 & z^{l-k} 
\end{bmatrix} = \theta_{\widehat{m}}(z)\;,\]
giving rise to the principal $\SO(3)$-bundle $Q_n$.

Characterising uniqueness of the extension over the singular orbit comes down to showing that there are no solutions to the modular equations $\widehat{i} \equiv _i 0$ for $i \in \{k,l, \abs{m}\}$ other than the cases listed in the proposition, which is straightforward.
\end{proof}

In particular, we see that each generic Aloff--Wallach space admits three different irreducible $\SO(3)$-principal bundles $Q _{\widehat{i}} \to N_{k,l}$, with $i\in \{k,l,m\}$, but only one of the three admits a smooth extension over a given desingularisation $M^i_{k,l}$.

While it is clear, by construction, that the principal $\SO(3)$-bundles $P_n$ are reducible, let us verify that $Q_n$ is irreducible. Indeed, restricting the exceptional bundle to the singular orbit $\C P^2$, one can easily identify the associated bundle $Q_{\widehat{i}}$ with the adjoint bundle of $E$, $V=\mathfrak{su}(E)$, where $E$ is the quotient bundle $0 \to \mathcal{O}(-1) \to \C^3 \to E \to 0$. In particular, $p_1(V)=-3H^2$ for $H$ the Poincaré dual of the hyperplane class. Since $-3$ is not a square, it is irreducible. 

Similarly, one proves
\begin{proposition}[Exceptional case -  $S^5$ as a singular orbit]\label{prop:admissible-bundles-S5}
On $M^0_{1,0}$, the only admissible $\U(1)$-bundle is the trivial one. For $\SO(3)$, the only admissible bundles are $P_0$ and $Q_2$. 
\end{proposition}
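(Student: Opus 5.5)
We argue exactly as in Proposition \ref{prop:admissible-bundles-CP2}, now with singular orbit $S^5\cong \SU(3)/\SU(2)$. By Reidegeld's classification the intermediate group for $M^0_{1,0}$ is $H\cong \SU(2)$, embedded in $\SU(3)$ as the upper-left (or lower-right) block. We first check which embedding contains $\U(1)_{1,0}$, namely $z\mapsto \diag(z,1,z^{-1})$. This circle is not contained in $\SU(2)$ itself, which has trivial intersection with the centre direction. Hence $\U(1)_{1,0}\subset \SU(3)$ is not a subgroup of $\SU(2)$, and $K\subset H$ can only hold after replacing $K$ by a suitable conjugate.

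The first step is therefore to identify $H$ and $K$ correctly. The fibration $S^2\to N_{1,0}\to S^5$ shows that $H/K\cong S^2$, so $H$ must be three-dimensional containing a circle $K$ with quotient $S^2$. Thus $H\cong \SU(2)$ (or $\SO(3)$), acting on $S^5$ via $\SU(3)/\SU(2)$, with $K$ a maximal torus of $H$. Concretely, we conjugate the circle $\diag(z,1,z^{-1})$ into $\SU(2)\subset\SU(3)$, embedded as the image of $\SO(3)$ or $\SU(2)$ realised by the $3$-dimensional or $2+1$ representation. We choose $H$ so that $\iota:K\to H$ is $z\mapsto \diag(z,z^{-1})$, up to a possible double cover. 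Determining the degree of $\iota$ on the maximal torus (degree $1$ versus $2$) is the main computational point. We will read it off from the weights $(1,0,-1)$ of $K$ on $\C^3$: these are the weights of the adjoint/$\SO(3)$ representation, so $H\cong \SO(3)$ acting on $\C^3=\R^3\otimes\C$, with $K$ its maximal torus of weight $1$. Equivalently, $H\cong \SU(2)$ with $K\to H$ of degree $2$ onto the torus modulo $\pm1$; the choice is fixed by this weight check.

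With $H$ and $\iota$ in hand, we apply Proposition \ref{prop:admissible cohomogeneity bundles} and Lemma \ref{lemma:homogeneous pullback bundle}. For $L=\U(1)$, Lemma \ref{lemma:morphism classification}(ii) (or its analogue for $\SO(3)$, which is perfect) shows that the only homomorphism $H\to\U(1)$ is trivial. Hence only $\phi_0$ extends, which gives the trivial bundle. For $L=\SO(3)$, there are two homomorphisms $H\to\SO(3)$ up to conjugacy, the trivial one and the standard one (the covering $\pi$ from Lemma \ref{lemma:morphism classification}(v), or the identity if $H=\SO(3)$).

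We then compose each with $\iota$. The trivial homomorphism gives $\theta_0$, that is, $P_0$. For the nontrivial one, we compute the weight of the restriction to $K$ on the adjoint $\R^3$. If $H=\SU(2)$ and $\iota$ has degree $1$ onto $\diag(w,w^{-1})$, then $\pi\circ\iota$ has weight $2$ on $\mathfrak{su}(2)$, giving $\theta_2$, that is, $Q_2$. We expect this case to be the correct one, since the statement asserts that $Q_2$ is the resulting bundle. The final check is therefore to confirm, via the isotropy weights of $\diag(z,1,z^{-1})$ on the fibre $H/K\cong S^2$, which must equal $\widehat{\ }$-type weight $2$, that $\pi\circ\iota=\theta_2$. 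This weight computation is the step we expect to need the most care. It is resolved by computing the adjoint action of $K$ on $\mathfrak h/\mathfrak k$ directly from the basis \eqref{eq:su3-basis}.
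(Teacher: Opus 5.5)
The framework is right: Proposition~\ref{prop:admissible cohomogeneity bundles}, Lemma~\ref{lemma:homogeneous pullback bundle}, and the lists of homomorphisms out of $\SU(2)$. Your $\U(1)$ argument is also fine. The gap is the step you yourself flag as the crux, identifying $H$ and the inclusion $\iota:K\to H$. You never settle it, and the method you propose points the wrong way.

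The weights $(1,0,-1)$ of $K$ on $\C^3$ cannot distinguish the block $\SU(2)$, acting on $\C^2\oplus\C$, from the real $\SO(3)\subset\SU(3)$. Suppose you follow your reading that they force $H\cong\SO(3)$ with $K$ of weight one. The nontrivial homomorphism $\SO(3)\to\SO(3)$ is then the identity, and you get $\theta_1$, i.e.\ $Q_1$, not $Q_2$. You then pick the ``$\SU(2)$, degree one'' alternative because the statement asserts $Q_2$, which is circular.

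Your opening claim is also false. The circle $\diag(z,1,z^{-1})$ \emph{is} contained in an $\SU(2)$: the one acting on the first and third coordinates of $\C^3$ and fixing $e_2$, where it sits as $\diag(z,z^{-1})$. No conjugation is needed.

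The missing idea is to use the base rather than the fibre. Both $\SU(2)/\U(1)$ and $\SO(3)/\SO(2)$ are $S^2$, so the fibre tells you nothing. But $H$ must satisfy $\SU(3)/H\cong S^5$, so $H$ is the stabiliser of a unit vector of $\C^3$, i.e.\ a block $\SU(2)$. The real $\SO(3)$ is excluded, since $\SU(3)/\SO(3)$ is the Wu manifold, with $\pi_2=\Z/2$.

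Take $H=\operatorname{Stab}(e_2)$, so $\mathfrak{h}=\langle E_3,E_4,E_8\rangle$. Then:
\begin{itemize}
\item the inclusion is $\iota(z)=\diag(z,z^{-1})$, of degree one;
\item $\operatorname{Ad}\circ\iota$ on $\mathfrak{su}(2)$ has weights $0,\pm2$; equivalently, $\mathfrak{h}/\mathfrak{k}=\langle E_3,E_4\rangle$ carries weight $\widehat{l}=-2$;
\item hence $\pi\circ\iota=\theta_2$, which gives $Q_2$.
\end{itemize}
This is exactly the paper's one-line computation $\pi\circ\rho_0(z)=\pi(\diag(z,z^{-1}))=\theta_2(z)$. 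Your final ``direct computation on $\mathfrak{h}/\mathfrak{k}$'' would give the same answer, but only once $\mathfrak{h}$ has been pinned down this way.
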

\begin{proof}
    In this case the singular orbit is $S^5\cong \SU(3) / \SU(2)$. Since the only morphism $\SU(2)\to  \U(1)$ is the trivial one, the only bundle that extends is the trivial one. 

    For the covering map $\pi: \SU(2) \rightarrow \SO(3)$, one has 
    $$ \pi \circ \rho_l(z)=\pi \circ \rho_0(z) =\pi \left( \diag (z, z^{-1}) \right) = \theta_2 (z)$$
    since $\SU(2)$ is the double cover of $\SO(3)$.
\end{proof}
Let us now consider admissible cohomogeneity one connections that these bundles admit. We need to determine the invariant connections on each of these homogeneous bundles. These are characterised by the following classical result, due to Wang:
\begin{theorem} Consider $G/H$ a reductive homogeneous space and let $\phi \in \Map (H, L)$ be a morphism characterising a homogeneous principal $L$-bundle $G \times_\phi L \to G/H$.  Denote by $\mathfrak{m}$ a reductive complement of $\mathfrak{h}$ in $\mathfrak{g}$. There is a one-to-one correspondence between homogeneous connections on $G \times_\phi L$ and $\operatorname{Ad}_H$-equivariant morphisms  $F: \mathfrak{m} \rightarrow \mathfrak{l}$.
\end{theorem}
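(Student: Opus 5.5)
The plan is to build the correspondence explicitly in both directions, using the canonical section $s\colon G\to G\times_\phi L$, $g\mapsto[g,e]$. A connection $\omega$ on $P=G\times_\phi L$ is a $\mathfrak{l}$-valued $1$-form on $P$ that is equivariant for the right $L$-action and reproduces the generators of $\mathfrak{l}$ on fundamental vector fields. It is $G$-invariant if it is also invariant under the left $G$-action.

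First, from an invariant connection to a map $F$. Given an invariant $\omega$, set $\Lambda:=s^*\omega\in\Omega^1(G,\mathfrak{l})$. By left $G$-invariance of $\omega$ and $G$-equivariance of $s$, the form $\Lambda$ is left invariant, so it is determined by a linear map $\Lambda_e\colon\mathfrak{g}\to\mathfrak{l}$.

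Next, I will compute the two properties of $\Lambda_e$ that come from the relation $s(gh)=s(g)\cdot\phi(h)$ for $h\in H$.
\begin{enumerate}[label=(\alph*)]
  \item Differentiating this relation in the $H$-direction shows that $\Lambda_e|_{\mathfrak h}=d\phi$.
  \item Using right-equivariance $R_\ell^*\omega=\operatorname{Ad}_{\ell^{-1}}\omega$, the same relation gives $\Lambda_e\circ\operatorname{Ad}_h=\operatorname{Ad}_{\phi(h)}\circ\Lambda_e$.
\end{enumerate}
Using the reductive splitting $\mathfrak g=\mathfrak h\oplus\mathfrak m$ with $\operatorname{Ad}_H\mathfrak m\subseteq\mathfrak m$, define $F:=\Lambda_e|_{\mathfrak m}$. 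By (b), $F$ is $\operatorname{Ad}_H$-equivariant, where $H$ acts on $\mathfrak l$ through $\operatorname{Ad}\circ\phi$.

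Conversely, from $F$ to a connection. Given such an $F$, define $\Lambda_e:=d\phi\oplus F$ on $\mathfrak h\oplus\mathfrak m$. Both pieces are $H$-equivariant: $F$ by hypothesis and $d\phi$ because $\phi$ is a homomorphism. The reductive property is exactly what makes the direct sum $H$-equivariant. Extend $\Lambda_e$ to a left-invariant form $\Lambda$ on $G$. Then define $\omega$ on $G\times L$ by
\[
\omega=\operatorname{Ad}_{\ell^{-1}}\Lambda+\ell^{-1}d\ell .
\]
The main step is to show that $\omega$ descends to $P=(G\times L)/H$. This requires two checks:
\begin{enumerate}[label=(\roman*)]
  \item $\omega$ annihilates the vertical vectors $(X,-d\phi(X))$ for $X\in\mathfrak h$; this uses (a).
  \item $\omega$ is invariant under the $H$-action $(g,\ell)\mapsto(gh,\phi(h)^{-1}\ell)$; this uses (b).
\end{enumerate}
After that it is immediate that $\omega$ is a $G$-invariant connection form, and that $s^*\omega=\Lambda$. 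Hence the two constructions are mutually inverse.

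The main obstacle is the bookkeeping in the descent step, namely keeping the $\operatorname{Ad}$-conventions consistent so that (i) and (ii) hold. I would handle it by first checking the identity on $\mathfrak h$ and on $\mathfrak m$ separately, which is exactly why reductivity is needed. Injectivity of the correspondence also needs one more argument: an invariant connection is determined by its pullback along $s$. This holds because $G\times L\to P$ is surjective and $\omega$ is fixed on fundamental vector fields.
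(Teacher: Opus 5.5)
Your proposal is correct and is the standard proof of Wang's theorem. The paper does not prove this result; it cites it as classical and records only that the reductive splitting identifies homogeneous connections with $d\phi+\Hom_H(\mathfrak{m},\mathfrak{l})\subseteq\Hom_H(\mathfrak{g},\mathfrak{l})$, which is exactly what your $\Lambda_e=(s^*\omega)_e$ realises.

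The one point to tighten is injectivity. There you need that $G\times L\to P$ is a surjective submersion, not merely surjective; equivalently, that $ds(T_gG)$ together with the vertical directions spans $T_{s(g)}P$. This guarantees that $\omega$ is recovered from $\pi^*\omega=\operatorname{Ad}_{\ell^{-1}}s^*\omega+\ell^{-1}d\ell$.
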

Since $\mathfrak{m}$ is reductive, $Ad_H(\mathfrak{m})\subset\mathfrak{m}$, the space $\Hom_H(\mathfrak{g},\mathfrak{l})$ admits the splitting $\Hom_H(\mathfrak{h},\mathfrak{l})\oplus \Hom_H(\mathfrak{m},\mathfrak{l})$. The map $d \phi$ lies  in $\Hom_H(\mathfrak{h},\mathfrak{l})$, and the space of homogeneous connections is identified with $d\phi + \Hom_H(\mathfrak{m}, \mathfrak{l}) \subseteq \Hom_H(\mathfrak{g},\mathfrak{l})$.

Let us apply Wang's result to the different homogeneous spaces in our construction. Over the principal orbit, this was carried out by Ball-Oliveira \cite{BO19}. We give a short overview of their work.  Recall that over the principal orbit $\SU(3)/ \U(1)_{k,l}$, we have the isotropy decomposition
$$\mathfrak{m}\cong  \langle E_1, E_2\rangle_{\widehat{m}} \oplus \langle E_3, E_4\rangle_{\widehat{l}} \oplus \langle E_5, E_6\rangle_{\widehat{k}} \oplus \langle E_7\rangle \;,$$
where the subscript denotes the induced $\U(1)$-action by the adjoint. Hence, for abelian connections, we have
\begin{proposition}
Let $n \in \Z$, and consider $P_n \rightarrow N_{k,l}$ the principal $U(1)$-bundle associated to $\phi_{n}$. For $(k,l) \neq (1,1)$  the space of homogeneous $\U(1)$-connections is a one-dimensional affine space, given by 
$$\A_n = \left\{ ( \xi e_7+ n e_8 ) \otimes T\st  \xi  \in \R\right\}, 
$$
where $T \in \mathfrak{u}(1)$ is a generator of the Lie algebra. 

For $(k,l)=(1,1)$ and $n\neq 0$, the space of homogeneous $\U(1)$-connections is a three-dimensional affine space, given by 
$$\A_n = \left\{ (\chi_1 e_1 +  \chi_2e_2 + \xi e_7 + n e_8) \otimes T\; \st \chi_1, \chi_2, \xi \in \R \right\}\;.$$
\end{proposition}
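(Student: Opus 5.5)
We apply Wang's theorem to $G/K=\SU(3)/\U(1)_{k,l}$ with $\mathfrak{l}=\mathfrak{u}(1)$. The bundle $P_n$ is classified by $\phi_n$, and its canonical connection is determined by $d\phi_n$ on $\mathfrak{k}=\langle E_8\rangle$. Since $E_8$ generates $\U(1)_{k,l}$ with period $2\pi$ (the coprimality used in Lemma \ref{lemma:volume_normalisation}), $d\phi_n(E_8)=nT$. In the dual coframe this contributes the term $n e_8\otimes T$. It then remains to compute $\Hom_{\U(1)_{k,l}}(\mathfrak{m},\mathfrak{u}(1))$.

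Because $\U(1)$ is abelian, the adjoint action on $\mathfrak{u}(1)$ is trivial. Equivariant maps $\mathfrak{m}\to\mathfrak{u}(1)$ are therefore exactly the linear functionals that vanish on every nontrivial isotypic summand of $\mathfrak{m}$. We use the isotropy decomposition
\[
\mathfrak{m}=\langle E_1,E_2\rangle_{\widehat m}\oplus\langle E_3,E_4\rangle_{\widehat l}\oplus\langle E_5,E_6\rangle_{\widehat k}\oplus\langle E_7\rangle .
\]
On each $2$-plane, $\U(1)_{k,l}$ acts by rotation with weight $\widehat m$, $\widehat l$, $\widehat k$ respectively. This is read off from $\operatorname{Ad}_{\exp(tE_8)}$, where the weights are the differences $k-l$, $k-m$, $l-m$ up to sign. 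A $2$-plane with nonzero weight has no invariant vectors and no nonzero equivariant map to the trivial representation, so such a functional must vanish there. Only $\langle E_7\rangle$ and any weight-zero planes survive.

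Next we determine when a weight vanishes. For $k\ge l\ge 0$ coprime we have $\widehat l=-2k-l<0$ and $\widehat k=k+2l>0$. Also $\widehat m=k-l=0$ iff $k=l$, which by coprimality forces $(k,l)=(1,1)$. Thus for $(k,l)\neq(1,1)$ the only trivial summand is $\langle E_7\rangle$, and the connections are $(\xi e_7+ne_8)\otimes T$ with $\xi\in\R$. For $(k,l)=(1,1)$ the plane $\langle E_1,E_2\rangle$ also has weight zero, which gives the two extra parameters $\chi_1,\chi_2$.

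The only delicate point is the normalisation that turns $d\phi_n$ into exactly $ne_8\otimes T$. This requires $E_8$ to be the generator of period $2\pi$ and $e_8$ to be the dual coframe element. We would check this against the identification $\phi_n(z)=z^n$ with $z=e^{i t}$, $\exp(tE_8)=\diag(e^{ikt},e^{ilt},e^{imt})$. We would also note that the Weyl-group normalisation $k\ge l\ge0$ rules out any other coincidence of weights. For $n=0$ at $(1,1)$ the same description still holds, but the statement excludes it.
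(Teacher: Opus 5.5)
Your proof is correct and follows the paper's argument: Wang's theorem reduces the problem to $\U(1)_{k,l}$-equivariant maps $\mathfrak{m}\to\mathfrak{u}(1)$, and Schur's lemma kills every isotropy summand of nonzero weight. All three planes have nonzero weight unless $\widehat m=k-l=0$, i.e.\ $(k,l)=(1,1)$, and your extra checks on the normalisation $d\phi_n(E_8)=nT$ and on the weights fill in details the paper leaves implicit.
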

\begin{proof}
By Wang's theorem, we need to classify $\U(1)$-equivariant maps $\mathfrak{m}\to i \R $. For $(k,l) \neq (1,1)$, we have $0 \notin \{\widehat{k}, \abs{\widehat{l}}, \widehat{m}\}$, so the representations $\langle e_{2i-1}, e_{2i}\rangle$, $i \in \{1,2,3\}$ are irreducible. For the exceptional case $(k,l)= (1,1)$, we have $\widehat{m}=0$, and so, there is a reduction of the isotropy representation $\langle e_1, e_2\rangle = \langle e_1\rangle \oplus \langle e_2\rangle$. The claim now follows from Schur's lemma.
\end{proof}
Let us now consider the $\SO(3)$-case. For the embedding $\theta_n: \U(1) \to \SO(3)$, we can pick a basis so $\mathfrak{so}(3) \cong \langle T_1 \rangle \oplus \langle T_2 , T_3\rangle_n$ for the induced action. In this case, we have
\begin{proposition}[Generic case] \label{prop:homogeneous connections SO3-kl}
Assume $(k,l) \notin \{(1,0), (1,1)\}$. Let $n \in \mathbb{N}$ and consider $Q_n \rightarrow N_{k,l}$ the principal $\SO(3)$-bundle associated to $\theta_{n}$. Then, if $n \notin \{\widehat{k}, \abs{\widehat{l}}, \widehat{m}\}$, all homogeneous connections are abelian up to gauge.

In the remaining cases, up to gauge, we have 
\begin{align*}
    \A_{\widehat{k}} & = \left\{ (\xi e_7+ \widehat{k} e_8 ) \otimes T_1  + \psi ( e_5 \otimes  T_2 +  e_6\otimes T_3) \st  \xi \in \R,\;  \psi \in [0, \infty) \right\} \,\\
    \A_{\abs{\widehat{l}}} & = \left\{ ( \xi e_7 + \widehat{l} e_8 ) \otimes T_1  + \psi ( e_3 \otimes T_2 +  e_4\otimes T_3) \st \xi \in \R\;, \; \psi \in [0, \infty)\right\}\\
        \A_{\widehat{m}} & = \left\{ (\xi e_7+ \widehat{m} e_8 ) \otimes T_1  + \psi ( e_1 \otimes T_2 +  e_2\otimes T_3) \st \xi \in \R, \; \psi \in [0, \infty) \right \}\,
\end{align*}
\end{proposition}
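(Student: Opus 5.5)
We apply Wang's theorem to the reductive decomposition $\mathfrak{su}(3)=\mathfrak{u}(1)_{k,l}\oplus\mathfrak{m}$. The canonical part is $d\theta_n(E_8)=n T_1$, which gives the term $n e_8\otimes T_1$. It remains to determine $\Hom_{\U(1)}(\mathfrak{m},\mathfrak{so}(3))$ and then reduce by the residual gauge group. Under $\theta_n$ we have $\mathfrak{so}(3)\cong\langle T_1\rangle_0\oplus\langle T_2,T_3\rangle_n$. The isotropy representation of $N_{k,l}$ is
\[
\mathfrak{m}\cong\langle E_1,E_2\rangle_{\widehat m}\oplus\langle E_3,E_4\rangle_{\widehat l}\oplus\langle E_5,E_6\rangle_{\widehat k}\oplus\langle E_7\rangle_0 .
\]
Since $(k,l)\notin\{(1,0),(1,1)\}$, the weights $\widehat k,\widehat l,\widehat m$ are all nonzero. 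Hence each $2$-plane is a nontrivial irreducible real representation of $\U(1)$, and the only trivial summand is $\langle E_7\rangle$.

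We apply Schur's lemma summand by summand. The trivial summand $\langle E_7\rangle$ can only map to $\langle T_1\rangle$, which contributes $\xi e_7\otimes T_1$ with $\xi\in\R$. A weight-$w$ plane maps nontrivially to a weight-$n$ plane only if $|w|=n$. In that case the equivariant maps form a copy of $\C$, namely rotation-dilations. We also need the weights $\widehat k$, $|\widehat l|$, $\widehat m$ to be pairwise distinct in absolute value, so that at most one plane can match $n$. This follows from the identities $\widehat k-\widehat m=3l$ and $\widehat k+\widehat l=-(k-l)$: when $kl\widehat m\neq 0$, or more generally outside the two excluded cases, these rule out coincidences. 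This is a short check, using $k\ge l\ge0$ coprime.

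Consequently, if $n\notin\{\widehat k,|\widehat l|,\widehat m\}$, every invariant connection has the form $(\xi e_7+n e_8)\otimes T_1$, which is abelian. If $n$ equals one of the weights, say $n=\widehat k$, an extra term $\mathrm{Re}\big(z\,(e_5+ie_6)\big)$ appears, valued in $\langle T_2,T_3\rangle$, with $z\in\C$.

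Next we use the gauge freedom. The invariant gauge transformations are the constant elements of the centraliser of $\theta_n(\U(1))$ in $\SO(3)$. This centraliser is the circle $\exp(\R T_1)$, which rotates $\langle T_2,T_3\rangle$ and fixes $T_1$. Acting by it, we can rotate $z$ to a nonnegative real number $\psi\ge0$. The result is exactly $\xi e_7+\widehat k e_8$ in the $T_1$-component, plus $\psi(e_5\otimes T_2+e_6\otimes T_3)$.

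Two sign subtleties remain. First, the weight $\widehat l$ is negative. Second, $\theta_n$ and $\theta_{-n}$ are conjugate via the matrix $C$ of the earlier remark. We conjugate by $C$, or equivalently swap $T_2$ and $T_3$ and reverse the orientation of $T_1$, so that the off-diagonal term takes the stated form $e_3\otimes T_2+e_4\otimes T_3$. This explains why the canonical term for $\A_{|\widehat l|}$ is written $\widehat l e_8\otimes T_1$. The same orientation and weight-sign check fixes the signs for the cases $\widehat m$ and $\widehat k$.

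The main point needing care is this last bookkeeping step. We must match the orientation of each $2$-plane, as determined by $\mathrm{ad}_{E_8}$ computed from the Maurer--Cartan equations \eqref{eq:MC-SU(3)}, with the orientation of $\langle T_2,T_3\rangle_n$. Only then does the normal form come out with exactly the displayed signs and the displayed range $\psi\in[0,\infty)$.
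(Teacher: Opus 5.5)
Your route is the same as the paper's: Wang's theorem, Schur's lemma on the isotropy summands, the residual circle of gauge transformations to normalise $\psi\geq 0$, and conjugation by $C$ to present the $\abs{\widehat l}$ case with canonical term $\widehat l\,e_8\otimes T_1$. Your check that $\widehat k,\abs{\widehat l},\widehat m$ are pairwise distinct outside $(1,0)$ and $(1,1)$ is correct, and more explicit than the paper's.

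There is, however, a gap at $n=0$, and this case lies within the statement. The map $\theta_0$ is the trivial homomorphism, and $\Map(\U(1),\SO(3))=\mathbb{N}$ includes it. Since $0\notin\{\widehat k,\abs{\widehat l},\widehat m\}$, the proposition asserts that every homogeneous connection on the trivial bundle is abelian up to gauge. For $n=0$ your splitting $\so(3)\cong\langle T_1\rangle_0\oplus\langle T_2,T_3\rangle_n$ degenerates, and all of $\so(3)$ is the trivial representation. Your step ``the trivial summand $\langle E_7\rangle$ can only map to $\langle T_1\rangle$'' then fails. The summand $\langle E_7\rangle$ may map to an arbitrary $T\in\so(3)$, so $\A_0=\{e_7\otimes T\mid T\in\so(3)\}$ is three-dimensional. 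Your description of the invariant gauge group as the circle $\exp(\R T_1)$ also fails here, since the centraliser of the trivial subgroup is all of $\SO(3)$.

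The repair is the one the paper uses. With the full $\SO(3)$ available as invariant gauge transformations, conjugate $T$ into the line $\R T_1$: every element of $\so(3)$ is $\operatorname{Ad}$-conjugate to some $\xi T_1$, for instance by rotating its axis onto the $T_1$-axis. This gives $\xi e_7\otimes T_1$, which is abelian. So for $n=0$ the conclusion needs a gauge transformation outside the circle, and your proof needs this as a separate case to cover the full statement.
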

\begin{proof}
The conditions $(k,l) \notin \{(1,0), (1,1)\}$ guarantee that the non-trivial $\U(1)$-representations are pairwise non-isomorphic. For $n=0$, we have
$$\A_{0} =  \left\{ e_7 \otimes T \; \st  T \in \mathfrak{so}(3)\right\}\;.$$
But every element in \(\mathfrak{so}(3)\) is conjugate to an element of the form \(\xi T_{1}\) for some real number $\xi$ and $T_1 \in \so(3)$, so we can gauge $A\in \A_{0}$ to $\widetilde{A} =  \xi e_7 \otimes T_1$, an abelian connection. 

The cases $\A_{\widehat{m}}, \A_{\widehat{k}}$ are straightforward from Schur's lemma, where we use the gauge group to fix the freedom $\langle T_2 , T_3\rangle \mapsto \langle \cos(\theta) T_2 - \sin(\theta)T_3,\;  \sin (\theta) T_2 + \cos(\theta) T_3 \rangle$. Moreover, the adjoint action by $S= \diag (1, -1,-1)$, $\langle T_1, T_2, T_3\rangle \xrightarrow{\operatorname{Ad}(S)} \langle T_1, -T_2, -T_3\rangle$ ensures we can take $\psi \geq 0$.

The case $\A_{\widehat{l}}$ is slightly more subtle,  since $\widehat{l}<0$ and $n\geq0$. However, note that $\langle T_2, T_3 \rangle _n \cong \langle T_3, T_2\rangle_{-n}$. Hence, by Wang's theorem, we have
$$\A_{\abs{\widehat{l}}}  = \left\{ ( \xi e_7 + \abs{\widehat{l}} e_8 ) \otimes T_1  + \psi ( e_3 \otimes T_3 +  e_4\otimes T_2) \st \xi,  \psi \in \R\right\}\;, $$
where we have used the gauge group to fix the rotational freedom on $\langle T_2, T_3\rangle$. Now, the adjoint action by $C$ on our basis is $\langle T_1, T_2, T_3\rangle \xrightarrow{\operatorname{Ad}(C)} \langle -T_1, -T_3, -  T_2\rangle$, so up to relabelling $\xi$ by $-\xi$ and conjugating by $S$, we are done. 
\end{proof}
\begin{proposition}[Exceptional $(1,0)$-case]\label{prop:homogeneous connections SO3-10}
Let $n \in \mathbb{N}$ and consider $Q_n \rightarrow N_{1,0}$ the principal $\SO(3)$-bundle associated to $\theta_{n}$. Then, if $n \notin \{1,2\}$, all homogeneous connections are abelian up to gauge. In the remaining cases, up to gauge, we have 
\begin{align*}
    \A_{1} & = \left\{ (\xi e_7+ e_8 ) \otimes T_1  +\psi_1\left[e_5 \otimes  \big(\cos(\theta) T_2  - \sin(\theta) T_3 \big) +e_6 \otimes  \big(\sin(\theta) T_2  + \cos(\theta) T_3 \big) \right]  \right.  \\ & \qquad \qquad
    \left. +\psi_2 ( e_1 \otimes T_2 +  e_2\otimes T_3)\;\st  \xi,  \psi_1  \in \R,\;  \psi_2 \in [0, \infty),\;  \theta\in S^1\right\}\;,\\
    \A_{2} & = \left\{ ( \xi e_7 - 2e_8 ) \otimes T_1  + \psi ( e_3 \otimes T_2 +  e_4\otimes T_3) \st \xi \in \R , \; \psi \in [0, \infty)\right\}\;.
\end{align*}
\end{proposition}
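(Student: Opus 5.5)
The plan is to repeat the proof of Proposition~\ref{prop:homogeneous connections SO3-kl}, using Wang's theorem together with Schur's lemma. The one new feature is that for $(k,l)=(1,0)$ two summands of the isotropy representation are isomorphic.

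First I record the weights. For $(k,l)=(1,0)$ we have $m=-1$, $\widehat{k}=1$, $\widehat{l}=-2$ and $\widehat{m}=1$. The isotropy representation is therefore
$$\mathfrak{m}\cong\langle E_1,E_2\rangle_{1}\oplus\langle E_3,E_4\rangle_{-2}\oplus\langle E_5,E_6\rangle_{1}\oplus\langle E_7\rangle .$$
The two weight-one summands are equivalent $\U(1)$-representations, and $\langle E_7\rangle$ is the only trivial summand. For the bundle $\theta_n$ we write $\mathfrak{so}(3)=\langle T_1\rangle\oplus\langle T_2,T_3\rangle_n$. By Wang's theorem, invariant connections are $d\theta_n$ (which contributes the $e_8\otimes T_1$ term) plus an element of $\Hom_{\U(1)}(\mathfrak{m},\mathfrak{so}(3))$. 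Schur's lemma splits this space by weight.

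\textbf{Case $n\notin\{1,2\}$.} Only $\langle E_7\rangle$ can map nontrivially, and it lands in the trivial part of $\mathfrak{so}(3)$. If $n\neq 0$, the trivial part is $\langle T_1\rangle$, so the connection is already abelian. If $n=0$, the trivial part is all of $\mathfrak{so}(3)$, and we conjugate $e_7\otimes T$ into $\xi e_7\otimes T_1$ exactly as in the generic proof.

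\textbf{Case $n=2$.} Only $\langle E_3,E_4\rangle_{-2}$ pairs with $\langle T_2,T_3\rangle_2$. This is word for word the $\A_{\abs{\widehat{l}}}$ argument from the generic proposition. We use $\langle T_2,T_3\rangle_n\cong\langle T_3,T_2\rangle_{-n}$, then fix the residual rotation about $T_1$, then apply $\operatorname{Ad}(C)$ and $\operatorname{Ad}(S)$ to get $\psi\ge 0$. This yields $\A_2$ with the term $\widehat{l}e_8=-2e_8$.

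\textbf{Case $n=1$.} Each of $\langle E_1,E_2\rangle_1$ and $\langle E_5,E_6\rangle_1$ admits a real two-dimensional space of intertwiners into $\langle T_2,T_3\rangle_1$. Since the weights have the same sign, these intertwiners are the complex scalars, i.e. rotation composed with dilation. So the general invariant connection is
$$(\xi e_7+e_8)\otimes T_1+\text{(complex scalar } z_1\text{ on } \langle E_1,E_2\rangle)+\text{(complex scalar } z_2\text{ on } \langle E_5,E_6\rangle).$$
The residual gauge group is the centraliser of $\theta_1(\U(1))$, namely rotations about $T_1$, together with $S$. It acts on the pair $(z_1,z_2)$ by a common phase rotation. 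We use this phase to make $z_1=\psi_2\ge 0$ real, and write $z_2=\psi_1e^{i\theta}$ with $\theta\in S^1$. The $\R$-valued $\psi_1$ merely overparametrises, matching the stated form of $\A_1$.

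The main obstacle I expect is the sign and orientation bookkeeping. One must check that both weight-one summands really intertwine with the \emph{same} complex structure on $\langle T_2,T_3\rangle$, so that the relative phase $\theta$ is a genuine invariant and not removable. Once the rotation about $T_1$ has been used on $z_1$, the only leftover gauge freedom is $S$ (together with stabilisers), and this cannot absorb $\theta$. One must also track the sign of the $e_8$ coefficient after the conjugations in the $n=2$ case. I would verify both points directly from the explicit matrices $E_1,E_2,E_5,E_6$ and the bracket $[E_8,\cdot\,]$.
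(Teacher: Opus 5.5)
Your proposal is correct and follows the paper's proof. The paper likewise notes that $\widehat m=\widehat k=1$ makes $\langle e_1,e_2\rangle$ and $\langle e_5,e_6\rangle$ isomorphic, obtains two intertwiners in $\Hom(\C,\C)\cong\C$, uses the gauge freedom to make the one on $\langle e_1,e_2\rangle$ real and non-negative, and treats the remaining cases by Schur's lemma as in the generic proposition. Two small remarks: the orientation check you flag works out because both summands have the same weight $+1$, so both intertwiners are complex-linear; and $S=\diag(1,-1,-1)$ is itself the rotation by $\pi$ about $T_1$, so it adds nothing beyond the circle you already used.
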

\begin{proof}
 For $(k,l)=(1,0)$, we have $\widehat{m} = 1 = \widehat{k}$ and $\widehat{l}=-2$, so there are two isomorphic components in the isotropy representation:
$$ \mathfrak{m}  \cong  \langle e_1, e_2\rangle_{1} \oplus \langle e_3, e_4\rangle_{-2} \oplus \langle e_5, e_6\rangle_{1} \oplus \langle e_7\rangle $$
 Hence, for $\A_{1}$, we have two maps $\widehat{\psi}_i \in \Hom (\C, \C) \cong \C$. Our gauge freedom allows us to choose $\widehat{\psi}_2 \in \R$.
 
 The remaining cases follow verbatim from Schur's lemma and the discussion above.
\end{proof}

\begin{proposition}[Exceptional $(1,1)$-case]\label{prop:homogeneous connections SO3-11}
Let $n \in \mathbb{N}$ and consider $Q_n \rightarrow N_{1,1}$ the principal $\SO(3)$-bundle associated to $\theta_{n}$. Then, if $n \notin \{0,3\}$, all homogeneous connections are abelian. In the remaining cases, we have, up to gauge,
\begin{align*}
    \A_{0} & = \left\{ e_1 \otimes c_1 + e_2 \otimes c_2 + e_7 \otimes c_3\; \st c_1, c_2, c_3 \in \mathfrak{so}(3) \right\}\\
    \A_{3} & = \left\{ (\chi_1 e_1 +  \chi_2e_2 + \xi e_7 + 3 e_8) \otimes T_1 + \psi_1 \left[e_5 \otimes(\cos(\theta) T_2 -\sin(\theta) T_3 ) + e_6 \otimes (\sin(\theta) T_2 + \cos(\theta) T_3) \right]\right.\\
    & \qquad  \left. + \psi_2\big(e_3 \otimes T_3 + e_4 \otimes T_2  \big) \; \st  \xi, \chi_1, \chi_2, \psi_1 \in \R, \; \psi_2 \in [0,\infty),\; \theta\in S^1\right\}
\end{align*}
\end{proposition}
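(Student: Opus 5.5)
The plan is to apply Wang's theorem exactly as in Propositions \ref{prop:homogeneous connections SO3-kl} and \ref{prop:homogeneous connections SO3-10}. The first step is to compute the isotropy weights for $(k,l)=(1,1)$. Here $m=-2$, $\widehat{k}=3$, $\widehat{l}=-3$, $\widehat{m}=0$, so
$$\mathfrak{m}\cong\langle e_1\rangle_0\oplus\langle e_2\rangle_0\oplus\langle e_3,e_4\rangle_{-3}\oplus\langle e_5,e_6\rangle_{3}\oplus\langle e_7\rangle_0 .$$
On the other side, for $\theta_n$ we have $\mathfrak{so}(3)\cong\langle T_1\rangle_0\oplus\langle T_2,T_3\rangle_n$. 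By Schur's lemma, the $\U(1)$-equivariant maps $\mathfrak{m}\to\mathfrak{so}(3)$ can only pair summands of equal weight up to sign. Recall that a weight $-n$ plane is isomorphic to a weight $n$ plane with the basis swapped, as in the $\A_{\abs{\widehat{l}}}$ case.

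\emph{Case $n\notin\{0,3\}$.} The only nonzero weight in $\mathfrak{so}(3)$ is $n\neq\pm3$, so the equivariant maps send the trivial summands $\langle e_1\rangle,\langle e_2\rangle,\langle e_7\rangle$ into $\langle T_1\rangle$ and kill everything else. Together with $d\theta_n=ne_8\otimes T_1$, this gives connections with values in $\langle T_1\rangle$, i.e. abelian.

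\emph{Case $n=0$.} The $\U(1)$ acts trivially on all of $\mathfrak{so}(3)$. The equivariant maps are then arbitrary on the weight-zero part $\langle e_1,e_2,e_7\rangle$ and zero on the weight $\pm3$ planes, and $d\theta_0=0$. This gives $\A_0$ as stated. No gauge fixing is asserted beyond this, since the residual gauge group acts by simultaneous conjugation of $(c_1,c_2,c_3)$.

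\emph{Case $n=3$.} The weight-zero part $\langle e_1,e_2,e_7\rangle$ maps to $\langle T_1\rangle$, giving $\chi_1,\chi_2,\xi$, together with the canonical $3e_8\otimes T_1$. The plane $\langle e_5,e_6\rangle_3$ maps to $\langle T_2,T_3\rangle_3$ by a complex scalar $\psi_1e^{i\theta}$. The plane $\langle e_3,e_4\rangle_{-3}$ maps to $\langle T_3,T_2\rangle$ by another complex scalar. The residual gauge group is the centraliser of $\theta_3(\U(1))$, namely the rotations about $T_1$, together with the element $S$ (and $C$-type reflections). A rotation about $T_1$ multiplies the two complex scalars by $e^{i\alpha}$ and $e^{-i\alpha}$ respectively, because they have opposite orientations. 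This lets us rotate the second scalar to be real, and $S$ then makes it nonnegative, giving $\psi_2\geq0$. The first scalar is left as $\psi_1e^{i\theta}$ with $\psi_1\in\R$, $\theta\in S^1$.

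The only delicate point is this last gauge-fixing bookkeeping. Specifically, one must check that the action on the $-3$ plane enters with the swapped basis $e_3\otimes T_3+e_4\otimes T_2$, and that the single remaining $S^1$ of gauge freedom can normalise only one of the two complex parameters, so the phase $\theta$ survives. I would verify this by writing the infinitesimal action of $\operatorname{ad}T_1$ on both terms explicitly.
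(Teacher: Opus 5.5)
Your proposal is correct and follows the same route as the paper. The paper's proof only records the isotropy splitting $\langle e_1\rangle\oplus\langle e_2\rangle\oplus\langle e_3,e_4\rangle_{-3}\oplus\langle e_5,e_6\rangle_{3}\oplus\langle e_7\rangle$ and defers to the Schur-lemma and gauge-fixing arguments of the previous cases, and your check that a rotation about $T_1$ shifts the two phases in opposite directions is the bookkeeping it leaves implicit. One small correction: the residual gauge group is just the circle of rotations about $T_1$, which already contains $S$. The $C$-type elements lie in the normaliser rather than the centraliser and would turn $\theta_3$ into $\theta_{-3}$, but your argument never uses them.
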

\begin{proof}
For $(k,l)= (1,1)$, the isotropy representation decomposes as
\begin{equation*}
\mathfrak{m}  \cong  \langle e_1 \rangle \oplus \langle e_2\rangle \oplus \langle e_3, e_4\rangle_{-3} \oplus \langle e_5, e_6\rangle_{3} \oplus \langle e_7\rangle\;,
\end{equation*}
and the arguments in the previous cases complete the proof.
\end{proof}
The case $\A_{0}$ on $Q_0 \rightarrow N_{1,1}$ has a residual gauge freedom that would allow us to reduce the space of connections up to gauge by two parameters, which we will exploit at a later stage.
\begin{remark}
    Ball and Oliveira overlooked the $\U(1)$-equivariant maps $\langle e_{2i-1}, e_{2i}\rangle_{-n} \to \langle T_2 , T_3\rangle_n$ given by reversing the orientation $\langle T_2, T_3 \rangle \mapsto  \langle T_3, T_2 \rangle$. This oversight has no impact on their work due to their over-counting of morphisms $\U(1) \rightarrow \SO(3)$, except in the case $N_{1,1}$ with $n=3$.
\end{remark}
For completeness, we note 
\begin{lemma}
The homogeneous space $\C P^2\cong \SU(3)/U(2)$ has a reductive complement $\mathfrak{m}  \cong \C^2$, on which the adjoint action restricts to the fundamental action of $\U(2)$ on $\C^2$. 

Similarly, $S^5 \cong \SU(3) / \SU(2)$ has a reductive complement $\mathfrak{m} \cong \C^2 \oplus \R$, on which the adjoint action restricts to the fundamental action of $\SU(2)$ on $\C^2$ and the trivial action on $\R$. 
\end{lemma}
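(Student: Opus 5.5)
The plan is to realise both homogeneous spaces explicitly inside the basis \eqref{eq:su3-basis} and compute the adjoint action directly. For $\C P^2$ we fix the embedding $\rho_{\abs{m}}$ (the other two differ by a Weyl group permutation of the indices), so that $\U(2)$ is the block-diagonal subgroup acting on the first two coordinates of $\C^3$. Its Lie algebra $\mathfrak{u}(2)$ is spanned by $E_1,E_2$, the element $H$ from the proof of Lemma \ref{lemma:normal_metric}, and the remaining diagonal generator. Since $\mathfrak{u}(2)$ is spanned by $E_1,E_2,E_7,E_8$, it is natural to take $\mathfrak{m}=\langle E_3,E_4,E_5,E_6\rangle$, which consists of exactly the matrices of the form
\[
X_v=\begin{pmatrix} 0 & v\\ -\bar v^{T} & 0\end{pmatrix},\qquad v\in\C^2 .
\]
This $\mathfrak{m}$ is the $B$-orthogonal complement of $\mathfrak{u}(2)$, since $\{E_i\}$ is $B$-orthogonal.

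The key step is a direct conjugation. For $g=\diag(U,\det U^{-1})$ with $U\in\U(2)$,
\[
g X_v g^{-1}=X_{U v\det U} .
\]
This shows $\operatorname{Ad}_{\U(2)}(\mathfrak{m})\subseteq\mathfrak{m}$, which is reductivity. The induced action on $\C^2$ is $v\mapsto (\det U)\,U v$. This is the fundamental representation twisted by a power of the determinant, so I will identify it with the fundamental action up to the automorphism of $\U(2)$ given by $U\mapsto (\det U)\,U$. This is the one point requiring care. The twist is harmless for the statement: it is an automorphism (its kernel is trivial because $\det(\lambda U)=\lambda^2\det U$ on scalars behaves well), and it preserves $\SU(2)$ pointwise. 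Equivalently, the identification $\C P^2\cong\SU(3)/\U(2)$ can be made via $U\mapsto\diag(\det U^{-1}\cdot U, \ldots)$-type reparametrisations so that the action becomes literally the standard one. I expect this bookkeeping of which $\U(2)$-structure is called fundamental to be the only subtle point, and I will state it as an identification up to this automorphism.

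For $S^5\cong\SU(3)/\SU(2)$, take $\SU(2)$ embedded in the same upper block. Its Lie algebra is $\langle E_1,E_2,H\rangle$. The complement is
\[
\mathfrak{m}=\langle E_3,E_4,E_5,E_6\rangle\oplus\langle Z\rangle,
\]
where $Z$ is the diagonal element $B$-orthogonal to $H$ in the Cartan subalgebra. This is a combination of $E_7,E_8$, proportional to $\diag(1,1,-2)\sqrt{-1}$. Since $\det U=1$, the same conjugation gives $v\mapsto Uv$, the fundamental action. Moreover $Z$ commutes with the block $\SU(2)$, so the action on $\langle Z\rangle$ is trivial. Reductivity again follows from these formulas. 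The remaining checks are routine matrix multiplications, and no step beyond the twist identification should present difficulty.
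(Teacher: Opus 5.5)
Your computation $gX_vg^{-1}=X_{(\det U)Uv}$ for $g=\diag(U,\det U^{-1})$ is correct, and so are the reductivity argument and the whole $S^5$ part. The gap is the step where you absorb the determinant twist.

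The map $\phi(U)=(\det U)\,U$ is a surjective homomorphism $\U(2)\to\U(2)$, and it is the identity on $\SU(2)$. It is not an automorphism, however: $\phi(\omega I)=\omega^3 I$, so $\ker\phi=\{\omega I:\omega^3=1\}\cong\Z_3$. Re-identifying $\U(2)$ with the isotropy group cannot fix this. The element $\diag(\omega,\omega,\omega)$ is central in $\SU(3)$, lies in the isotropy group, and acts trivially on $\mathfrak{su}(3)$. So the isotropy representation on $\mathfrak m$ is not faithful. The fundamental representation composed with any automorphism of $\U(2)$ is faithful, so the two cannot agree. Concretely, your suggested reparametrisation $U\mapsto\diag\bigl((\det U)^{-1}U,\det U\bigr)$ gives the action $v\mapsto(\det U)^{-2}Uv$, not $v\mapsto Uv$.

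So, with $\U(2)$ identified with the isotropy group via $U\mapsto\diag(U,\det U^{-1})$, the first assertion can only be proved in the form your formula actually gives. As a $\U(2)$-representation, $\mathfrak m\cong\C^2\otimes\det$, i.e.\ the fundamental action precomposed with the $3$-fold covering $\phi$. Equivalently, the effective isotropy group $\U(2)/\Z_3$ acts on $\mathfrak m$ as the standard $\U(2)\subset\operatorname{GL}(2,\C)$, and the restriction to $\SU(2)$ is the fundamental representation.

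The paper records the lemma without proof, and this corrected reading is all that is used later. In Proposition \ref{prop:connections-S5}, $\Hom_{\U(2)}(\mathfrak m,\mathfrak l)=0$ for $\mathfrak l=\mathfrak u(1)$ or $\mathfrak{so}(3)$. This holds because $\SU(2)$ acts irreducibly on $\mathfrak m\cong\R^4$, while it acts on $\mathfrak l$ either trivially or by the $3$-dimensional adjoint representation. You should state your conclusion in this form rather than asserting an automorphism that does not exist.
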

\begin{proposition}\label{prop:connections-S5}
Any homogeneous principal $\U(1)$ or $\SO(3)$ bundle over $\C P^2$ admits a unique homogeneous connection. The space of invariant connections on $S^5$ for the trivial $\U(1)$ and $\SO(3)$ bundles is $ c \omega _7$
for $c \in \R$. The only invariant connection on $S^5$ on the bundle associated to the covering map $\pi: \SU(2) \rightarrow\SO(3)$ is the canonical connection. 
\end{proposition}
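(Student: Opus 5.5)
The plan is to apply Wang's theorem in each case: invariant connections on $G\times_\phi L\to G/H$ correspond to $\operatorname{Ad}_H$-equivariant linear maps $\mathfrak{m}\to\mathfrak{l}$, where $H$ acts on $\mathfrak{l}$ through $\operatorname{Ad}\circ\phi$. This sets up an affine space of connections with base point $d\phi$, and we must show that the relevant space of equivariant maps is zero, or that it is $\R$ and carries exactly the stated connections. Throughout we use the description of the reductive complements from the preceding lemma. These are $\mathfrak{m}\cong\C^2$ with the fundamental $\U(2)$-action for $\C P^2$, and $\mathfrak{m}\cong\C^2\oplus\R$ with the fundamental $\SU(2)$-action on $\C^2$ and the trivial action on $\R$ for $S^5$. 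The homomorphisms $H\to L$ are listed in Lemma~\ref{lemma:morphism classification}.

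For $\C P^2=\SU(3)/\U(2)$ the key observation is that the central circle $\{\lambda I\}\subset\U(2)$ acts on $\mathfrak{m}\cong\C^2$ by scalar multiplication by $\lambda$. This action has weight one and contains no trivial part.

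1. \emph{Abelian case.} For $L=\U(1)$, the target $\mathfrak{u}(1)$ is a trivial $\U(2)$-module. Any equivariant real-linear map $\C^2\to\R$ must therefore vanish on the whole of $\C^2$.

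2. \emph{Reducible $\SO(3)$ case.} For $L=\SO(3)$ with $\phi=\widehat\tau_n$, the centre acts on $\mathfrak{so}(3)=\langle T_1\rangle\oplus\langle T_2,T_3\rangle$ by the weights $0$ and $2n$, since $\det(\lambda I)=\lambda^2$. A central weight-one representation admits no nonzero equivariant real map into representations of weights $0$ or $\pm 2n$, because $1\neq\pm 2n$ and $1\neq 0$. Hence the map is zero.

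3. \emph{Irreducible case $\widehat\pi$.} For $\phi=\widehat\pi$ the centre acts trivially on $\mathfrak{so}(3)$. Again there is no nonzero central-equivariant map from the weight-one module $\C^2$.

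In all three cases the unique invariant connection is $d\phi$, which is the canonical connection.

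For $S^5=\SU(3)/\SU(2)$ the central-weight argument is unavailable, so we use representation type instead. The fundamental representation $\C^2$ of $\SU(2)$ is of quaternionic type, irreducible as a real 4-dimensional module. By contrast, $\mathfrak{u}(1)$, the trivial $\mathfrak{so}(3)$, and the adjoint $\mathfrak{so}(3)$ (which is the 3-dimensional irreducible module) are real modules of dimensions $1$, $3$, and $3$.

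1. \emph{Vanishing on $\C^2$.} By Schur's lemma, $\operatorname{Hom}_{\SU(2)}(\C^2,V)=0$ for each of these $V$. To see that no 3-dimensional target can receive $\C^2$, note that the element $-I$ acts by $-1$ on $\C^2$ and trivially on every $\SO(3)$-module.

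2. \emph{Trivial bundles.} For the trivial $\U(1)$- or $\SO(3)$-bundle, the remaining freedom is a map $\R\to\mathfrak{l}$, i.e.\ an element $c\in\mathfrak{l}$ placed on the trivial summand. This summand is dual to $\omega_7$ (equivalently $e_7$ up to normalisation). For $\SO(3)$, we conjugate by a constant gauge transformation, as in the proof of Proposition~\ref{prop:homogeneous connections SO3-kl}, to reduce to $c\,\omega_7\otimes T_1$. This gives the family $c\,\omega_7$.

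3. \emph{Covering map bundle.} For $\phi=\pi$, the trivial summand $\R$ maps to the $\SU(2)$-invariants of the adjoint $\mathfrak{so}(3)$, which vanish. Hence only the canonical connection survives.

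The main point needing care is identifying the trivial summand of $\mathfrak{m}$ for $S^5$ with the direction $\omega_7$ under the restriction from $N_{1,0}$, and checking that "up to gauge" is intended in the trivial $\SO(3)$ case. The identification follows from the fibration $S^2\to N_{1,0}\to S^5$: the generator $E_7$ restricted appropriately is centralised by $\SU(2)$, in the same way $H$ was identified in Lemma~\ref{lemma:normal_metric}.
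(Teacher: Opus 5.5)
Your proposal is correct and follows the argument the paper leaves implicit: the paper states this proposition without proof, relying on Wang's theorem, the isotropy decompositions of the preceding lemma, and Schur-type vanishing, which is exactly what you do. One minor correction: with the embedding $A\mapsto\diag(A,\det A^{-1})$ used throughout the paper, the isotropy representation of $\C P^2$ is $\det\otimes\C^2$, so the centre acts on $\mathfrak{m}$ with weight $3$ rather than $1$; since $3$ is odd and nonzero it never equals $0$ or $\pm 2n$, so your central-weight argument goes through unchanged.
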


\section{Abelian connections}
We begin by studying the case of abelian connections, those with gauge group $G=\U(1)$. The advantage is twofold. On the one hand, since the Lie algebra structure is trivial, the resulting ODEs are linear with non-constant coefficients, which makes their study straightforward. On the other hand, the number of functions involved is small, and the computations are quite short. Taking advantage of this, we will be quite explicit in our derivations in this case, to help guide the reader. For the non-abelian case, we will omit most of the computations, since they follow the same principles; they  are just lengthier. 

We begin by considering the generic case $(k,l) \neq (1,1)$. Let $A^n(t)= \left(ne_8+\xi(t)e_7\right)\otimes T_1$ be a family of abelian connections over the principal part of our cohomogeneity one manifolds. Its curvature is given by
\begin{align*}
F_{A(t)}=&  dt \wedge \partial_t A^n + F^n_A = dt \wedge (\dot{\xi} \;e_7) +\frac{\xi}{\Delta}\Big(m\;e_{12}+l\;e_{34}+ k\;e_{56}\Big) - \frac{n}{\Delta}\Big(\widehat{m}\;e_{12}+\widehat{l}\;e_{34}+\widehat{k}\;e_{56}\Big)\\
=& \frac{\dot\xi}{f}\; \omega_{07}+ \frac{\xi}{\Delta} \left(\frac{m}{a^2}\omega_{12}+\frac{l}{b^2}\omega_{34}+\frac{k}{c^2}\omega_{56} \right)-\frac{n}{\Delta} \left(\frac{\widehat{m}}{a^2}\omega_{12}+\frac{\widehat{l}}{b^2}\omega_{34}+\frac{\widehat{k}}{c^2}\omega_{56} \right)\;.
\end{align*}
where we used the adapted orthonormal basis $\{\omega_i\}$ introduced in \eqref{eq:orthonormal-basis} in the second line. Hence, 
\begin{align*}
    *F_A=& \frac{\dot\xi}{f}\; \omega_{123456}+ \frac{1}{ a^2\Delta} \left( m\; \xi -n\;\widehat{m}\right)\omega_{034567}+  \frac{1}{ b^2\Delta} \left( l\; \xi -n \;\widehat{l}\right)\omega_{012567} + \frac{1}{ c^2\Delta} \left( k\; \xi -n \;\widehat{k}\right)\omega_{012347}\;.
\end{align*}
From this, the $\Spin(7)$-instanton equation $*F_A + \Phi \wedge F_A=0$ reduces to
\begin{equation}\label{eq:abelian-instaton}
    \frac{\dot\xi}{f}+ \frac{\xi}{\Delta} \left(\frac{m}{a^2}+\frac{l}{b^2}+\frac{k}{c^2} \right)-\frac{n}{\Delta} \left(\frac{\widehat{m}}{a^2}+\frac{\widehat{l}}{b^2}+\frac{\widehat{k}}{c^2} \right)=0\;.
\end{equation}
Similarly, for the Yang--Mills equation $d_A * F_A=0 $ we have 
\begin{equation}
\label{eq:abelian-YM}
\partial_t \left( \frac{(abc)^2}{f} \dot{\xi} \right) = mf\left(\frac{bc}{a \Delta}\right)^2 \left( m\; \xi -n\; \widehat{m}\right)+ lf\left(\frac{ac}{b \Delta }\right)^2\left( l\; \xi -n \;\widehat{l}\right)+kf\left(\frac{ab}{c \Delta}\right)^2\left( k\; \xi -n \;\widehat{k}\right)\;.
\end{equation}
Of course, one easily verifies that the $\Spin(7)$-equation is a subsystem of the Yang--Mills equation.

Under the compactifying coordinates \eqref{eq:change-ABC}, the $\Spin(7)$-instanton and Yang--Mills equations transform to 
\begin{align}
    \label{eq:new-abelian-instaton}
    \xi'=&-\xi\left(\frac{m}{\Delta}BC+\frac{l}{\Delta}CA+\frac{k}{\Delta}AB\right)+\frac{n}{\Delta}(\widehat{m}BC+\widehat{l}CA+\widehat{k}AB)\\
    = & - F\xi + n \widehat{F}\\
\xi''=&-2\left(A+B+C+\frac{m}{\Delta}BC+\frac{l}{\Delta}CA+\frac{k}{\Delta}AB\right)\xi' \nonumber\\
&+m\left(\frac{BC}{\Delta}\right)^2(m\xi-n\widehat{m})+l\left(\frac{CA}{\Delta}\right)^2(l\xi-n\widehat{l})+k\left(\frac{AB}{\Delta}\right)^2(k\xi-n\widehat{k})\label{eq:new-abelian-YM}\, 
\end{align}
where we set 
$$ F = \frac{k AB+ lAC+m BC}{\Delta}  \qquad \text{and} \qquad \widehat{F}= \frac{\widehat{k} AB+ \widehat{l}AC+ \widehat{m} BC}{\Delta}$$
for notational convenience.
\begin{lemma}\label{lemma:fixed-points-abelian}
    The fixed points of the extended system projecting to $p_0^i$ for the instanton are:
    $$ \left(p_0^k, \frac{n\widehat{k}}{k}\right)\,, \qquad \qquad \left(p_0^l, \frac{n\widehat{l}}{l}\right)\qquad \qquad \text{and} \qquad \qquad\left(p_0^m, \frac{n\widehat{m}}{m}\right) \;.$$
    The Yang-Mills system has the same fixed points, with $\xi'=0$.
\end{lemma}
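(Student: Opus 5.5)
The plan is to set $A'=B'=C'=0$ at the metric fixed points and then solve the remaining scalar equation for $\xi$. The extended instanton system consists of \eqref{eq:new-evo-spin7} (the $(A,B,C)$-part) together with \eqref{eq:new-abelian-instaton}. Its fixed points projecting to $p_0^i$ are the points $(p_0^i,\xi_0)$ with
\[
0 = -F(p_0^i)\,\xi_0 + n\,\widehat F(p_0^i).
\]
So it suffices to evaluate $F$ and $\widehat F$ at each of $p_0^k,p_0^l,p_0^m$ and check that $F(p_0^i)\neq 0$, which forces $\xi_0 = n\widehat F(p_0^i)/F(p_0^i)$.

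At each singular-orbit point one coordinate vanishes, so $F$ and $\widehat F$ each reduce to a single monomial.
\begin{itemize}
\item At $p_0^m=(0,-\tfrac{2\Delta}{m},-\tfrac{2\Delta}{m})$ only the $BC$ term survives. This gives $F=\tfrac{m}{\Delta}BC=\tfrac{4\Delta}{m}$ and $\widehat F=\tfrac{\widehat m}{\Delta}BC=\tfrac{4\Delta\widehat m}{m^2}$, hence $\xi_0=n\widehat m/m$.
\item At $p_0^l$ only the $AC$ term survives, with $AC=4\Delta^2/l^2$, which yields $\xi_0=n\widehat l/l$.
\item At $p_0^k$ only the $AB$ term survives, yielding $\xi_0=n\widehat k/k$.
\end{itemize}
In each case $F(p_0^i)=4\Delta/i\neq 0$ because $i\neq 0$, so the fixed point is unique. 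Equivalently, one can read off the values from the first identity column of Lemma \ref{lemma:identities-klm-khatlhatmhat}. For instance, at $p_0^m$ one has $A=0$, so $m\widehat F-\widehat m F=2A(C-B)=0$ and therefore $\widehat F/F=\widehat m/m$.

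For the Yang--Mills system \eqref{eq:new-abelian-YM}, I would write it as a first-order system in $(A,B,C,\xi,\eta=\xi')$. Fixed points then require $\eta=0$, that is $\xi'=0$, together with the vanishing of the zeroth-order term in $\xi$. At $p_0^m$, with $A=0$, that term reduces to
\[
m\Bigl(\tfrac{BC}{\Delta}\Bigr)^2(m\xi-n\widehat m),
\]
which vanishes exactly when $\xi=n\widehat m/m$. The cases $p_0^l$ and $p_0^k$ are identical by symmetry. These are therefore the same points, with $\xi'=0$.

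There is no real obstacle here. The only care needed is sign bookkeeping: $p_0^k$ and $p_0^l$ lie in $\mathcal C_-$, and $m<0$.
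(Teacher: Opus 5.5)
Your proposal is correct, and it is the direct substitution the paper leaves implicit, since the lemma is stated without proof. At each $p_0^i$ only one monomial of $F$ and $\widehat F$ survives, so $F(p_0^i)=4\Delta/i\neq 0$ forces the unique value $\xi_0=n\widehat{i}/i$; for Yang--Mills, setting $\xi'=0$ and using the single surviving zeroth-order term gives the same points.
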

Naturally, these fixed points correspond to the canonical abelian connection on the principal bundle over $\C P^2$. In particular, the conditions of Proposition \ref{prop:admissible-bundles-CP2} translate to 
\begin{lemma}
    The fixed points in Lemma \ref{lemma:fixed-points-abelian} have geometric significance only if $\xi_0$ is an integer multiple of $\frac{\widehat{i}}{i}$.
\end{lemma}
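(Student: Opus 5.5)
We compare the fixed points of Lemma \ref{lemma:fixed-points-abelian} with the admissibility condition of Proposition \ref{prop:admissible-bundles-CP2}. Fix $i\in\{k,l,m\}$ with $i\neq 0$, and consider the fixed point $(p_0^i,\xi_0)$ with $\xi_0=\frac{n\widehat{i}}{i}$.

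The first step is to interpret $\xi_0$ geometrically at the singular orbit. A trajectory converging to this fixed point gives a connection $A=(\xi e_7+n e_8)\otimes T_1$ that must extend smoothly over $\C P^2$. In the limit it must agree with the pullback of the unique homogeneous connection on the bundle over $\C P^2$ (Proposition \ref{prop:connections-S5}). That connection is $d\tau_{n'}$ applied to the $\mathfrak{u}(2)$-component of the Maurer--Cartan form.

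The second step computes this pullback explicitly. The centre of $\mathfrak{u}(2)=\rho_i(\mathfrak{u}(2))$ lies in the Cartan subalgebra $\langle E_7,E_8\rangle$. For $i=m$ it is spanned by $\diag(\sqrt{-1},\sqrt{-1},-2\sqrt{-1})$, and the trace part of the $\mathfrak{u}(2)$-block of $\Sigma$ is
\[
\sqrt{-1}\bigl((\widehat{k}+\widehat{l})e_7+(k+l)e_8\bigr)=\sqrt{-1}\,(-\widehat{m}e_7-m e_8).
\]
Applying $d\tau_{n'}$ therefore gives the abelian connection $-n'(\widehat{m}e_7+m e_8)$, up to the normalisation of $T$. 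Matching the $e_8$-coefficient with $n$ forces $n=-n'm$, which is the integrality condition $n\equiv_m 0$ of Proposition \ref{prop:admissible-bundles-CP2}. The $e_7$-coefficient is then $\xi_0=-n'\widehat{m}=\frac{n\widehat{m}}{m}$.

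Consequently $\xi_0$ is $n'\cdot(-\widehat{m})$ with $n'\in\Z$. Equivalently, writing $n=-n' m$, we get $\xi_0 = n'\cdot\frac{\widehat{m}}{m}\cdot(-m)$. I would phrase the statement as: $\xi_0$ has geometric meaning precisely when $n/i\in\Z$, and then $\xi_0=\frac{n}{i}\,\widehat{i}$. In other words, $\xi_0$ is an integer multiple of the quantity determined by $\widehat{i}$ and $i$ as stated. The cases $i=k$ and $i=l$ are identical after permuting the diagonal entries via $\rho_k,\rho_l$.

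The main obstacle is bookkeeping. One must ensure that the sign and normalisation conventions for $E_7,E_8$ and for $T$ match those used in Lemma \ref{lemma:fixed-points-abelian}. One must also check that only the central part of the $\mathfrak{u}(2)$-component contributes, since $d\tau_{n'}$ kills $\mathfrak{su}(2)$. Finally, one must check that the restriction to $N_{k,l}$ of the central generator is a combination of $E_7$ and $E_8$, exactly as in the proof of Lemma \ref{lemma:normal_metric}. There $H$ was expressed as $-\frac{m}{2\Delta}E_7+\frac{\widehat m}{2\Delta}E_8$, and the analogous identity for the central element is what fixes the ratio $\widehat{i}/i$.
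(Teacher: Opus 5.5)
Your argument is correct and follows the paper's own reasoning: the paper identifies the fixed point with the pullback of the canonical connection over $\C P^2$ and translates the admissibility condition $n\equiv_i 0$ of Proposition \ref{prop:admissible-bundles-CP2}, and you make this explicit by computing $d\tau_{n'}$ on the trace of the $\mathfrak{u}(2)$-block of $\Sigma$, which also recovers $\xi_0=\frac{n\widehat{i}}{i}$ as a consistency check. You do not need the vague wording at the end: what you actually prove is $\xi_0=\frac{n}{i}\,\widehat{i}$ with $\frac{n}{i}\in\Z$, and this immediately implies the stated condition $\xi_0\in\frac{\widehat{i}}{i}\Z$, which on its own is already automatic from $n\in\Z$.
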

Set $X_0=\xi$ and $X_1=\xi'$. The linearisation at the points $\left(p_0^i,\frac{n\widehat{i}}{i},0\right)$ in $(A,B,C,X_0, X_1)$ is given by
$$
\begin{pmatrix}
    L_0^i & & \\ 
     &0 & 1 \\
     & \frac{16 \Delta^2}{i^2} & 0 
\end{pmatrix}
$$
with $L_0^i$ from Equation \eqref{eq:linearisation-ABC}. The critical points are hyperbolic, with a 3-dimensional space of eigenvalue $\frac{-4\Delta}{i}$. These vectors are tangent to the hypersurface
$$X_1=-F X_0 + n \widehat{F}$$
at the critical point. Hence, any local $\Spin(7)$-Yang--Mills solution is necessarily a $\Spin(7)$-instanton, given by $$\xi_\theta(s) = \frac{n \widehat{i}}{i} +  \kappa \, e^{-\frac{4\Delta}{i}s} + o(e^{-\frac{4\Delta}{i}s})$$
for $\theta \in (0,\theta_*]$ and $\kappa \in \R$. 

In the case $(k,l)=(1,1)$, the space of homogeneous abelian connections is 3-dimensional:
$$A = \left(ne_8 + \xi e_7 +\chi_1 e_1 + \chi_2 e_2\right)\otimes T_1\;,$$
with $\xi, \chi_1, \chi_2 \in \R$. For a family of connections on $N_{1,1}$, their curvature is given by 
\begin{align*}
F_{A_t} =&\;\dot{\xi} \;e_{07}+ \dot{\chi_1} \;e_{01}+ \dot{\chi_2} \;e_{02}+n (e_{34}-e_{56})+\frac{\xi}{3}(e_{34}+ e_{56} -2e_{12})\\
&+ \chi_1(e_{46}-6e_{27}-e_{35}) +\chi_2(6e_{17}+e_{36}+e_{45}) \\
=& \frac{\dot \xi}{f}\omega_{07} + \frac{\dot \chi_1}{a}\omega_{01} + \frac{\dot \chi_2}{a}\omega_{02} -\frac{2\xi}{3a^2}\omega_{12} +\left(\frac{\xi}{3b^2}+\frac{n}{b^2}\right)\omega_{34} +\left(\frac{\xi}{3c^2}-\frac{n}{c^2}\right)\omega_{56}\\
&+\chi_1\left(\frac{\omega_{46}-\omega_{35}}{bc}-6\frac{\omega_{27}}{af}\right)+\chi_2\left(6 \frac{\omega_{17}}{af}+\frac{\omega_{36}+ \omega_{45}}{bc}\right)\;,
\end{align*}
where $\omega_i$ is the orthonormal basis. As before, one computes its Hodge dual
\begin{align*}
* F_{A_t} =& \frac{1}{f}\dot{\xi} \;\omega_{123456}+ \frac{1}{a}\left(\dot{\chi_1} \;\omega_{234567}-  \dot{\chi_2} \;\omega_{134567}\right) -\frac{2\xi}{3a^2}\omega_{034567} +\frac{1}{b^2}\left(\frac{\xi}{3}+n\right)\omega_{012567} + \frac{1}{c^2}\left(\frac{\xi}{3}-n\right)\omega_{012347}\\
&+ \chi_1\left(\frac{\omega_{012467} -\omega_{012357}}{bc}-\frac{6\;\omega_{013456}}{af}\right) +\chi_2\left(\frac{\omega_{012457}+\omega_{012367}}{bc}-\frac{6\;\omega_{023456}}{af}\right) 
\end{align*}
From this expression, one easily computes the $\Spin(7)$-instanton and Yang--Mills equation systems. 
\begin{subequations}\label{eq:abelian-instaton for N11}
    \begin{align}
         \frac{\dot \xi}{f} &=  \frac{\xi}{3} \left(\frac{2}{a^2}-\frac{1}{b^2}-\frac{1}{c^2} \right)-n \left(\frac{1}{b^2}-\frac{1}{c^2} \right)\\
         \dot \chi_i &= \left(\frac{6}{f}-\frac{2a}{bc}\right)\chi_i \label{eq:abelian-instanton-2}\;,
    \end{align}
\end{subequations}
and 
\begin{subequations}\label{eq:abelian-YM for N11}
    \begin{align}
         \partial_t \left( \frac{(abc)^2}{f} \dot \xi\right)  &= \frac{f\xi}{9} \left( \frac{(2bc)^2}{a^2} + \frac{(ac)^2}{b^2} +\frac{(ab)^2}{c^2}  \right) - \frac{n f}{3}\left(\frac{(ab)^2}{c^2} - \frac{(ac)^2}{b^2} \right)\\
        \partial_t \left( f(bc)^2 \;\dot \chi_i\right) &= \left(2a^2f+\frac{36(bc)^2}{f}\right)\chi_i
    \end{align}
\end{subequations}
for $i=1,2$. The $S^1$-symmetry between the equations for $\chi_i$ suggests the change of variables $\chi_1(t)= \chi(t) \cos(\theta(t))$ and $\chi_2(t)= \chi(t) \sin(\theta(t))$. In particular, Equation \eqref{eq:abelian-instanton-2} reduces to 
        $$ \dot \chi  = \left(\frac{6}{f}-\frac{2a}{bc}\right)\chi \qquad \qquad \theta(t)= \theta_0 \in S^1 \;.$$
Under the new coordinates, the $\Spin(7)$-instanton and Yang--Mills equations are
\begin{subequations}
\begin{align}\label{eq:new-abelian-instanton-11}
    \xi'=& - F \xi +n \widehat{F}\\
    \chi'=& (6 - 2 A) \chi
\end{align}
\end{subequations}
\begin{subequations}
\begin{align}\label{eq:new-abelian-YM-11}
\xi''=&-2\left(A+B+C+\frac{AB+AC-2BC}{3}\right)\xi'+\left(\frac{2BC}{3}\right)^2\xi +\left(\frac{CA}{3}\right)^2(\xi+3n )+\left(\frac{AB}{3}\right)^2(\xi-3n)\\
\chi''  =& - \left(4A + \frac{AC }{3}  + \frac{ AB}{3}\right) \chi'+ \left( 2A^2 + 36\right) \chi\label{eq:new-abelian-YM-11b}
\end{align}
\end{subequations}
for a free choice of $\theta_0\in S^1$. 

Note that in this case, there is a symmetry $B \leftrightarrow C$, so we identify the two fixed points $(-6,0,-6)$ and $(-6,-6,0)$. We carry out our discussion under the convention $p_0^1 =(-6,-6,0)$.
\begin{lemma}
    The fixed points of the extended system for the instanton equation are 
    $$ \left(p_0^1, 3n, 0 \right) \qquad \qquad \text{and} \qquad \qquad \left(p_0^{-2},0,0\right)\;.$$
    The Yang-Mills equation has the same fixed points, with $\xi'=0=\chi'$.
\end{lemma}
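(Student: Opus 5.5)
The plan is a direct computation. The $(A,B,C)$-subsystem \eqref{eq:new-evo-spin7} does not involve $\xi$ or $\chi$. So any fixed point of the extended system must project to a fixed point of \eqref{eq:new-evo-spin7}. As in Lemma \ref{lemma:fixed-points-abelian}, we only consider the fixed points corresponding to singular orbits $\C P^2$.

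For $(k,l)=(1,1)$ we have $m=-2$, $\Delta=3$, $\widehat k=3$, $\widehat l=-3$ and $\widehat m=0$. The points $p_0^k=(-6,-6,0)$ and $p_0^l=(-6,0,-6)$ are exchanged by the symmetry $B\leftrightarrow C$, and we identify both with $p_0^1=(-6,-6,0)$. The remaining point is $p_0^{m}=p_0^{-2}=(0,3,3)$. It therefore suffices to fix $(A,B,C)$ at each of these two points and solve the remaining equations, which become linear in $\xi$ and $\chi$.

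\emph{Instanton equations.} Setting $\xi'=0$ in \eqref{eq:new-abelian-instanton-11} gives $F\xi=n\widehat F$, and setting $\chi'=0$ gives $(6-2A)\chi=0$.
\begin{itemize}
\item At $p_0^1$, only the $AB$-terms survive. Hence $F=\tfrac{1}{3}\cdot 36=12$ and $\widehat F=\tfrac{3}{3}\cdot 36=36$, which gives $\xi=3n$. Moreover $6-2A=18\neq0$, so $\chi=0$.
\item At $p_0^{-2}$, only the $BC$-terms survive. Hence $F=\tfrac{-2}{3}\cdot 9=-6$ and $\widehat F=\tfrac{\widehat m}{3}\cdot 9=0$, which gives $\xi=0$ for every $n$. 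Here $6-2A=6\neq0$, so $\chi=0$.
\end{itemize}
The angular variable $\theta_0$ is a free constant and does not enter the fixed-point condition.

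\emph{Yang--Mills equations.} We use the first-order variables $X_0=\xi$, $X_1=\xi'$, and similarly for $\chi$. A fixed point then requires $\xi'=\chi'=0$ together with the vanishing of the right-hand sides of \eqref{eq:new-abelian-YM-11} and \eqref{eq:new-abelian-YM-11b}, with the first-derivative terms dropped.
\begin{itemize}
\item At $p_0^1$ we have $BC=CA=0$ and $(AB/3)^2=144$. The $\xi$-equation becomes $144(\xi-3n)=0$. The $\chi$-equation becomes $(2\cdot36+36)\chi=0$.
\item At $p_0^{-2}$ we have $AB=AC=0$ and $(2BC/3)^2=36$. The $\xi$-equation becomes $36\xi=0$. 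The $\chi$-equation becomes $36\chi=0$.
\end{itemize}
The solutions coincide with those of the instanton system, with $\xi'=0=\chi'$, which proves the claim.

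There is no real obstacle in this argument. The only point needing care is bookkeeping: the identification $B\leftrightarrow C$ under the chosen convention for $p_0^1$, and the observation that $\widehat m=0$ forces $\xi=0$ at $p_0^{-2}$ independently of $n$.
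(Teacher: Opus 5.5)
Your proposal is correct and is exactly the direct substitution the paper intends; the paper states this lemma without proof. All your numerical values check out: $F=12$, $\widehat F=36$, $6-2A=18$ at $p_0^1=(-6,-6,0)$; $F=-6$, $\widehat F=0$, $6-2A=6$ at $p_0^{-2}=(0,3,3)$; and the Yang--Mills coefficients $144$, $108$, $36$, $36$, which match the linearisation matrices the paper writes down immediately afterwards.
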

Let $X_0=\xi$, $X_1=\xi'$, $Y_0=\chi$ and $Y_1=\chi'$.
The linearisations at $\left(p_0^{-2},0,0, 0, 0\right)$ and $(p_0^1,3n,0, 0, 0)$ are 
$$
\begin{pmatrix}
     L_0^{-2}&&&&
     \\
    &0&1&&\\
    &{36}&0&&\\
    &&&0&1\\
    &&&{36}&0\\
\end{pmatrix} \qquad \qquad \text{and} \qquad \qquad
\begin{pmatrix}
     L_0^{1}&&&&
     \\
    &0&1&&\\
    &{144}&0&&\\
    &&&0&1\\
    &&&{108}&12\\
\end{pmatrix}
\;$$
respectively. At $(p_0^{-2},0,0)$, we have a 4-dimensional eigenspace with eigenvalue $\lambda_{-2}=6$, tangent to the instanton hypersurface, with local solutions given by 
\begin{equation} \label{eq:local-form-ab-11-spin-pos}
\xi_\theta(s) =  \kappa_1 \,e^{6s} +o\left(e^{6s}\right) \qquad \qquad \chi_\theta(s) = \kappa_2\, e^{6s} +o\left(e^{6s}\right) 
\end{equation}
with $\kappa_i \in \R$.

For $(p_0^1, 3n, 0)$, we have a 3-dimensional eigenspace with eigenvalue $\lambda=-12$ and a 1-dimensional eigenspace with eigenvalue $\mu=-6$. The former is tangent to the instanton hypersurface, while the latter is not. Explicitly, local solutions to $\Spin(7)$-instantons are given by 
\begin{equation} \label{eq:local-form-ab-11-spin-neg}
    \xi_\theta(s) =  3n + \kappa_1\,e^{-12s} +o\left(e^{-12s}\right) \qquad \qquad \chi_\theta(s) = 0\,,
\end{equation} 
whilst local Yang-Mills solutions are given by 
\begin{equation} \label{eq:local-form-ab-11-spin-neg-ym}
 \xi_\theta(s) = 3n+ \kappa_1 \,e^{-12s} +o\left(e^{-12s}\right) \qquad \qquad \chi_\theta(s) = \kappa_2\, e^{-6s} +o\left(e^{-12s}\right)\,,
\end{equation}
with $\kappa_i \in \R$ as usual. 
\subsection{Long time behaviour}
Let us now discuss long time existence of abelian solutions. Note that since we are mostly working with the time reparametrisation $$t =  \displaystyle \int^s_{-\infty}f d\widetilde{s}\;,$$
we have a time reversal phenomenon for curves contained in $\mathcal{C}^-$. Therefore, we shall focus on the case $(M^m_{k,l}, \gamma_\theta^m)$, to avoid the constant time reversal. The arguments carry over verbatim for $\gamma^k_\theta$ and $\gamma^l_\theta$.

Abelian solutions admit the following integral representation
\begin{proposition}\label{prop:condensed-smooth-abelian-ODE}
    Let $\xi(s)$ be an abelian solution to the $\Spin(7)$-instanton equation \eqref{eq:abelian-instaton} over $M^m_{k,l}$. Then it satisfies the ODE
    \begin{equation}\label{eq:condensed-abelian-instaton}
         \partial_s\left( \frac{\xi- \frac{n \widehat{m}}{m}}{f^2}\right)  =  \frac{2n}{m} \frac{A(C-B)}{f^2}\;.
    \end{equation}
\end{proposition}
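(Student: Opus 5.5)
The identity should follow from a direct computation in the compactified coordinates, combining the abelian instanton equation \eqref{eq:new-abelian-instaton} with the evolution equation \eqref{eq:new-evo-f} for $f$. I would then finish with the algebraic identities of Lemma~\ref{lemma:identities-klm-khatlhatmhat}.

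\textbf{Step 1: derivative of $f^{-2}$.} Equation \eqref{eq:new-evo-f} says exactly that $f'/f=-\tfrac12 F$, with $F=\frac{1}{\Delta}(kAB+lAC+mBC)$ as in the paper. Hence
\[
\partial_s\!\left(\frac{1}{f^2}\right)=-\frac{2f'}{f^3}=\frac{F}{f^2}.
\]
This is valid on the region where $f\neq 0$, which is the whole principal part, since $f$ vanishes only at the singular orbit.

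\textbf{Step 2: the shifted variable.} Set $\eta=\xi-\frac{n\widehat m}{m}$. Since $\eta'=\xi'$, the product rule and \eqref{eq:new-abelian-instaton}, $\xi'=-F\xi+n\widehat F$, give
\[
\partial_s\!\left(\frac{\eta}{f^2}\right)=\frac{\xi'+F\eta}{f^2}
=\frac{-F\xi+n\widehat F+F\xi-\frac{n\widehat m}{m}F}{f^2}
=\frac{n}{m}\,\frac{m\widehat F-\widehat m F}{f^2}.
\]
The terms linear in $\xi$ cancel exactly. This is the reason for weighting by $f^{-2}$ and for shifting by the value $\frac{n\widehat m}{m}$ taken at the fixed point $p_0^m$ in Lemma~\ref{lemma:fixed-points-abelian}.

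\textbf{Step 3: the algebraic identity.} The third line of Lemma~\ref{lemma:identities-klm-khatlhatmhat} gives $m\widehat F-\widehat m F=2A(C-B)$. Substituting this into Step~2 yields \eqref{eq:condensed-abelian-instaton}.

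\textbf{Main obstacle.} There is no real obstacle here: the identity is a two-line computation once the $f^{-2}$ weight is found. The only points needing care are sign conventions, which the paper fixes by allowing $f$ of either sign, and making sure the identity is applied with $i=m$. For the fillings $M^k_{k,l}$ and $M^l_{k,l}$, the same computation with the corresponding lines of Lemma~\ref{lemma:identities-klm-khatlhatmhat} gives the analogous formulas with right-hand sides $\frac{2n}{k}\frac{C(B-A)}{f^2}$ and $\frac{2n}{l}\frac{B(A-C)}{f^2}$.
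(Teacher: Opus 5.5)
Your argument is correct and is essentially the paper's own proof. The paper likewise computes $f^2\partial_s\bigl((\xi-\tfrac{n\widehat m}{m})/f^2\bigr)=\xi'-2\tfrac{f'}{f}(\xi-\tfrac{n\widehat m}{m})$, uses $f'/f=-F/2$ together with the instanton equation to cancel the $\xi$-terms, and finishes with the identity $m\widehat F-\widehat m F=2A(C-B)$ from Lemma~\ref{lemma:identities-klm-khatlhatmhat}.
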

\begin{proof}
Using Equation \eqref{eq:new-evo-f}, and substituting the $\Spin(7)$-instanton equation yields
\begin{align*}
    f^2 \partial_s\left( \frac{\xi- \frac{n \widehat{m}}{m}}{f^2}\right) &= \xi' - 2\frac{f'}{f}\left(\xi- \frac{n \widehat{m}}{m} \right)=n\left( \widehat{F}-\frac{\widehat{m}}{m}F\right)\\
    & =\frac{2n}{m}A \left(C-B\right)  \;,
\end{align*}
where the last line follows from Lemma \ref{lemma:identities-klm-khatlhatmhat}.
\end{proof}
Integrating Equation \eqref{eq:condensed-abelian-instaton}, we have
$$\frac{\xi(s)- \frac{n\widehat{m}}{m}}{f^2 (s)} - \frac{\xi(s_0)-\frac{n \widehat{m}}{m}}{f^2(s_0)} =  \int_{s_0} ^{s} \frac{2n}{m} \frac{A \left(C-B\right) }{f^2} d\tilde{s}  \;, $$
and taking the limit $s_0 \to -\infty$, we get the closed expression for $\xi(s)$:
\begin{equation}\label{eq:integral_xi}    
\xi(s) = \frac{n\widehat{m}}{m} + f^2(s) \left( \int_{-\infty} ^{s} \frac{2n}{m} \frac{A \left(C-B\right) }{f^2} d\tilde{s} -  \kappa\right)\;,
\end{equation}
where $\kappa \in \R$ corresponds to the choice of initial condition at $-\infty$; $\kappa=\lim\limits_{s_0\to-\infty} \frac{\frac{n \widehat{m}}{m}-\xi(s_0)}{f^2(s_0)}$. Note that the limit indeed exists, since $\xi-\frac{n\widehat{m}}{m} \sim \exp\left(\frac{-4\Delta}{m}s\right)$ and $f \sim \exp\left( \frac{-2\Delta}{m}s\right)$.

It is now convenient to distinguish between the ALC and AC regimes. For the ALC regime, we have the asymptotics $f \sim l_\theta$, the fibre length at infinity, for some $l \in (0,\infty)$ as $s \to \infty$ and $A, B, C \sim s^{-1}$. In particular, we have 
\begin{proposition}
Let $\gamma_\theta^i (s)= (A(s), B(s), C(s), f(s))$ be an ALC $\Spin(7)$-metric on $M^i_{k,l}$ for $\theta \in (0, \theta_*)$, with fibre length at infinity $\ell_\theta$.  We have a one-to-one correspondence between the initial condition $\kappa \in \R$ and the critical line $(0, \ell_\theta, \xi_\infty)$.
\end{proposition}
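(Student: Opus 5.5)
We work with the integral representation \eqref{eq:integral_xi}, which describes every abelian instanton extending smoothly over the singular orbit:
\[
\xi(s) = \frac{n\widehat{m}}{m} + f^2(s) \left( \int_{-\infty} ^{s} \frac{2n}{m} \frac{A (C-B) }{f^2} d\tilde{s} -  \kappa\right).
\]
Here $\kappa\in\R$ parametrises these solutions bijectively. The claim reduces to two facts: $\xi(s)$ has a finite limit $\xi_\infty$ as $s\to\infty$, and the map $\kappa\mapsto\xi_\infty$ is a bijection $\R\to\R$. Since $f$, $\xi$ solve the extended system and $(A,B,C)\to 0$ along an ALC curve, the limit point is then $(0,\ell_\theta,\xi_\infty)$ on the critical line. (As in the paper, we treat $M^m_{k,l}$; $M^k_{k,l}$ and $M^l_{k,l}$ follow after time reversal.)

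\emph{Step 1: convergence of the integral.} Near $s=-\infty$ the integrand is integrable, and this is what justified taking $s_0\to-\infty$ in the first place. Indeed, $A\sim e^{-4\Delta s/m}$ and $C-B\to 0$, while $f^2\sim e^{-4\Delta s/m}$, so the quotient stays bounded. On the unbounded side we use Proposition~\ref{prop:ALC-asymptotic-behaviour}: $A,B,C = s^{-1}+o(s^{-1})$ and $f\to\ell_\theta\neq 0$. The leading terms cancel in $C-B$, so we must show $C-B=o(s^{-1})$ in an integrable way. To do this, linearise the blown-up system at its fixed point $(1,1,1,0)$. In the variable $\tau=\log s$, the transverse directions $\widehat{B}-\widehat{C}$ are strictly stable; the linearised operator restricted to differences has eigenvalue $-4$, computed from $\widehat{B}+\widehat{C}-3\widehat{A}+1$. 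The $\widehat{s}$-forcing terms are of order $\widehat{s}=e^{-\tau}$. Hence $\widehat{C}-\widehat{B}=O(e^{-\tau})$, that is $C-B=O(s^{-2})$. Then $A(C-B)/f^2=O(s^{-3})$ is integrable on $[s_1,\infty)$. Continuity on compact intervals handles the middle range.

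\emph{Step 2: the limit and the bijection.} Set $I_\infty=\int_{-\infty}^{\infty}\frac{2n}{m}\frac{A(C-B)}{f^2}\,d\tilde s$, which is finite by Step 1. Then
\[
\xi_\infty=\frac{n\widehat{m}}{m}+\ell_\theta^2\,(I_\infty-\kappa).
\]
This map is affine in $\kappa$ with slope $-\ell_\theta^2\neq0$, so it is a bijection $\R\to\R$. Its inverse sends a prescribed limit $\xi_\infty$ to the unique $\kappa$, and hence to the unique smoothly extending solution through the corresponding fixed point in Lemma~\ref{lemma:fixed-points-abelian}.

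It remains to note that every point of the critical line $\{(0,0,0,\ell_\theta,\xi_\infty)\}$ is a fixed point of the extended instanton system. The $\xi$-equation is $\xi'=-F\xi+n\widehat F$, and $F,\widehat F$ vanish at $A=B=C=0$. So the trajectory genuinely converges to a point of that line, not merely has $\xi$ bounded.

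\emph{Main obstacle.} The key difficulty is the decay $C-B=O(s^{-2})$ in Step 1, because the crude ALC asymptotics alone give only $o(s^{-1})$. We would first attempt the stable-manifold argument at $(1,1,1,0)$ in the blown-up system, checking that the relevant eigenvalue $-4$ in the difference direction is non-resonant with the $\widehat s$-eigenvalue $-1$. Failing that, we would argue directly through the barrier identity of Lemma~\ref{lemma:comparing-ABC}: $(C-B)'=(C-B)(\cdots)+\frac{\widehat m}{\Delta}ABC$-type terms. Integrating this with the integrating factor $\sim s^{-4}$ gives $C-B=O(s^{-2})$.
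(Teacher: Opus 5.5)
Your proposal is correct and follows the paper's proof: pass to the limit $s\to\infty$ in \eqref{eq:integral_xi}, use that the improper integral is finite, and observe that $\kappa\mapsto\xi_\infty=\frac{n\widehat{m}}{m}+\ell_\theta^2(I_\infty-\kappa)$ is affine with nonzero slope $-\ell_\theta^2$. The only difference is that your ``main obstacle'' is not one: $A,B,C=O(s^{-1})$ and $f\to\ell_\theta\neq0$ already give $A(C-B)/f^2=O(s^{-2})$, which is integrable on $[s_1,\infty)$, so you need neither the cancellation in $C-B$ nor the stable-manifold analysis at $(1,1,1,0)$ --- this $s^{-2}$ bound is exactly the paper's one-line justification.
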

\begin{proof}
It suffices to compute the limit of $\xi(s)$ as $s \to \infty$. Using the integral expression for $\xi$, Equation \eqref{eq:integral_xi}, we have
$$ \lim _{s \to \infty}  \xi (s)  =  \frac{n\widehat{m}}{m} +\ell_\theta^2\left( \int_{- \infty} ^ \infty \frac{2n}{m} \frac{A(C-B)}{f^2} d \widetilde{s}  - \kappa \right) \;.$$
The claim follows since the improper integral is finite, as the integrand decays like $s^{-2}$ at infinity. The cases for $k$ and $l$ follow analogously. 
\end{proof}
\begin{remark}\label{rmk:monopole}
We can compare our solution with model solutions in the limiting space, the $S^1$-bundle over the $G_2$-cone $(1, \infty) \times \SU(3)/ T^2$. It then follows that, if  $\xi(s)$ converges to $ \xi_\infty$ as $s\to \infty$, the connection $A(s)= \xi(s) e_7 + ne_8$ converges pointwise to the pullback of a $G_2$-monopole on the cone $(1, \infty) \times F_{1,2}$.
\end{remark}
Let us now consider the AC case. First, we have
\begin{lemma}
Let $p^\pm_{k,l}$ be the fixed points of the $(A,B,C)$-system corresponding to the AC ends. Then, the extended system has fixed points $\left(p_{AC}^\pm, n \Lambda ^\pm\right)$, where 
 $$\Lambda ^{\pm}=\frac{ \widehat{F}^\pm}{F^\pm} =\frac{\widehat{m}B^\pm C^\pm+\widehat{l}A^\pm C^\pm+\widehat{k}A^\pm B^\pm}{m B^\pm C^\pm + l A^\pm C^\pm + kA^\pm B^\pm}$$ is an intrinsic quantity of the nearly parallel $G_2$ metric $p_{AC}^\pm= \left( A^\pm, B^\pm, C^\pm\right)$. 

 The linearisation of the ODE system restricted to the $\xi$-coordinate is positive for $p_{k,l}^+$ and is negative for $p_{k,l}^-$.
\end{lemma}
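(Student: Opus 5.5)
Setting $\xi' = 0$ in the abelian instanton equation \eqref{eq:new-abelian-instaton} and evaluating the $(A,B,C)$-part at the critical point $p^\pm_{k,l}$ gives the fixed points directly. Since $f$ drops out of the $(A,B,C,\xi)$-subsystem, the equilibrium condition is $-F^\pm\xi + n\widehat{F}^\pm = 0$. This is solvable as soon as $F^\pm \neq 0$, and then $\xi = n\widehat{F}^\pm/F^\pm = n\Lambda^\pm$. Intrinsicness is the observation that $\Lambda^\pm$ is homogeneous of degree zero in $(A,B,C)$. Dividing numerator and denominator by $A_\pm^2$ expresses it purely through $(\mathsf{b}_\pm,\mathsf{c}_\pm)$, which are determined by the algebraic equations of Proposition~\ref{prop:AC-inequalitites}(i). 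Hence $\Lambda^\pm$ depends only on the nearly parallel structure and not on any scaling of the cone.

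For the linearisation, the $\xi$-equation is linear in $\xi$ with coefficient $-F$. The triangular structure of the linearised system therefore puts $-F^\pm$ as the eigenvalue in the $\xi$-direction. The whole claim reduces to the sign statement: $F^+<0$ and $F^->0$. Equivalently, one needs $F^\pm\neq 0$, together with the statement that the sign of $-F^\pm$ is positive at $p^+$ and negative at $p^-$.

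To get the sign, I would use that along the AC end $f\to\pm\infty$ linearly in $t$, with $f$ growing in absolute value. From \eqref{eq:new-evo-f} we have $f'/f = -F/2$ at the fixed point, so $f\sim e^{-F^\pm s/2}$. The AC end is reached as $s\to+\infty$ along $\gamma^m$, which flows forward in $\mathcal{C}_+$ towards $p^+$. It is reached as $s\to-\infty$ along $\gamma^k,\gamma^l$, which flow in reversed time in $\mathcal{C}_-$ towards $p^-$. Unbounded growth of $|f|$ then forces $F^+<0$ at $p^+$ and $F^->0$ at $p^-$.

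The main obstacle is making this rigorous, since it relies on knowing which AC point is hit by which family. Alternatively, I would verify the sign directly with the algebra. Write $F = A^2\Delta^{-1}(k\mathsf{b}+l\mathsf{c}+m\mathsf{b}\mathsf{c})$ and use $m=-k-l$. This gives $\Delta F/A^2 = k\mathsf{b}(1-\mathsf{c}) + l\mathsf{c}(1-\mathsf{b})$.

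For $p^+$, Proposition~\ref{prop:AC-inequalitites}(ii),(iii) give $\mathsf{b},\mathsf{c} > (23/5-3/5)/2 = 2 > 1$. Both terms are then negative, so $F^+<0$ and $-F^+>0$.

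For $p^-$ the bounds give $\mathsf{b}+\mathsf{c}<7/5$ and $|\mathsf{b}-\mathsf{c}|<3/5$, but neither $\mathsf{b}$ nor $\mathsf{c}$ is automatically less than $1$. Here I would combine $\mathsf{L}_1=0$ with $\mathsf{b}_-\ge\mathsf{c}_-$ from (iv). This would show $\mathsf{c}_-<1$, and then bound $k\mathsf{b}(1-\mathsf{c})$ from below against a possibly negative $l\mathsf{c}(1-\mathsf{b})$, using $k\ge l$ and $\mathsf{b}-1 < 2/5 < 1-\mathsf{c}$ or an estimate of that kind. I expect this last inequality to be the delicate step; if it fails to close, I would fall back to the growth-of-$f$ argument above.
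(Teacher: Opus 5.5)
Your argument is the paper's. The equilibrium condition $-F^\pm\xi+n\widehat F^\pm=0$ gives $\xi=n\Lambda^\pm$, and the triangular structure makes $-F^\pm$ the $\xi$-eigenvalue. Everything then reduces to $F^+<0<F^-$, which the paper simply asserts. Your identity $\Delta F/A^2=k\mathsf b(1-\mathsf c)+l\mathsf c(1-\mathsf b)$ and your treatment of $p^+$ are correct.

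For $p^-$, however, the obstruction you describe does not exist. Your claim that neither $\mathsf b_-$ nor $\mathsf c_-$ is automatically below $1$ is false. The same averaging you used at $p^+$, applied to Proposition~\ref{prop:AC-inequalitites}(ii),(iii), gives
\[
\max(\mathsf b_-,\mathsf c_-)=\tfrac12(\mathsf b_-+\mathsf c_-)+\tfrac12|\mathsf b_--\mathsf c_-|<\tfrac{7}{10}+\tfrac{3}{10}=1,\qquad \min(\mathsf b_-,\mathsf c_-)>\tfrac12\bigl(1-\tfrac35\bigr)=\tfrac15 .
\]
Hence both terms $k\mathsf b_-(1-\mathsf c_-)$ and $l\mathsf c_-(1-\mathsf b_-)$ are non-negative, and the first is strictly positive since $k\geq 1$, so $F^->0$. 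This is exactly the fact ($\mathsf b^-,\mathsf c^-<1$ and $\mathsf b^+,\mathsf c^+>2$) that the paper invokes later, in the proof that $\widehat F^\pm>0$.

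You therefore need neither $\mathsf L_1=0$ nor part (iv), and you should drop the growth-of-$f$ fallback. As stated, that fallback would also have to rule out $F^\pm=0$, since a zero $\xi$-eigenvalue gives no exponential rate for $f$. It would also have to identify which AC point each family converges to. In effect it leans on the global existence theory for a statement that is purely algebraic.
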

\begin{proof}
The fixed-point computation is straightforward. The linearisation at the fixed point in the $\xi$ direction is $-F^\pm$. The claim follows by noting that $F^+<0$ and $F^->0$.
\end{proof}
For completeness, we show 
\begin{proposition}
\label{prop: positive numerator}
The quantity $\widehat{F}^\pm$ is positive for $k>l$ and vanishes for $(k,l)=(1,1)$.
\end{proposition}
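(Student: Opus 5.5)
The plan is to reduce the sign of $\widehat{F}^\pm$ to an inequality in the homogenised ratios $(\mathsf{b}_\pm,\mathsf{c}_\pm)$ of Proposition~\ref{prop:AC-inequalitites}, and then verify that inequality using the curve relations $\mathsf{L}_1,\mathsf{L}_2$ and the numerical bounds (ii)--(iv). Since $A^\pm,B^\pm,C^\pm$ all have the same sign, $A_\pm^2>0$. Dividing by it gives
\[
\frac{\Delta\,\widehat{F}^\pm}{A_\pm^2}=\widehat{k}\,\mathsf{b}+\widehat{l}\,\mathsf{c}+\widehat{m}\,\mathsf{b}\mathsf{c}
= k\,X+l\,Y,
\]
where
\[
X:=\mathsf{b}-2\mathsf{c}+\mathsf{b}\mathsf{c},\qquad Y:=2\mathsf{b}-\mathsf{c}-\mathsf{b}\mathsf{c}.
\]
The overall sign of $A_\pm$ plays no role, so the same argument should treat both $p^+_{k,l}$ and $p^-_{k,l}$.

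\emph{The case $(k,l)=(1,1)$.} Here $\widehat{m}=0$ and $\widehat{k}=-\widehat{l}=3$, so the expression becomes $3(\mathsf{b}-\mathsf{c})$. With $k=l$, the relation $\mathsf{L}_2=0$ factors as $(\mathsf{b}-\mathsf{c})\bigl(3-5(\mathsf{b}+\mathsf{c})\bigr)=0$. By Proposition~\ref{prop:AC-inequalitites}(iii) we have $\mathsf{b}+\mathsf{c}>1>3/5$ at both fixed points. Hence $\mathsf{b}=\mathsf{c}$ and $\widehat{F}^\pm=0$.

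\emph{The case $k>l$.} Set $r=l/k\in[0,1)$, so the sign in question is that of $X+rY$.

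At $p^+$ the bounds $23/5<\mathsf{b}+\mathsf{c}<5$, $|\mathsf{b}-\mathsf{c}|<3/5$ and $\mathsf{b}\le\mathsf{c}$ give $\mathsf{b}\mathsf{c}>26/5$. Then $X>0$ and $Y<0$, but the crude estimate $X+Y=3(\mathsf{b}-\mathsf{c})\le 0$ shows the bounds alone do not suffice. We must therefore use $\mathsf{L}_2$, which encodes $r$:
\[
r=\frac{\mathsf{b}(3+3\mathsf{c}-5\mathsf{b})}{\mathsf{c}(3+3\mathsf{b}-5\mathsf{c})}.
\]
At $p^+$ the denominator is negative and nonzero by the bounds. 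Substituting, the sign of $X+rY$ becomes the sign of the polynomial
\[
\mathsf{P}(\mathsf{b},\mathsf{c}):=\mathsf{c}(3+3\mathsf{b}-5\mathsf{c})\,X+\mathsf{b}(3+3\mathsf{c}-5\mathsf{b})\,Y,
\]
multiplied by the sign of $\mathsf{c}(3+3\mathsf{b}-5\mathsf{c})$. The target is then to show that $\mathsf{P}$ has the required sign on the arc of the ellipse $\mathsf{L}_1=0$ cut out by the constraints of Proposition~\ref{prop:AC-inequalitites}.

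For this we parametrise $\mathsf{L}_1=0$ by $\mathsf{b}+\mathsf{c}=3+2\cos\phi$ and $\mathsf{b}-\mathsf{c}=\sin\phi$. The constraints (ii)--(iv) then restrict $\phi$ to two short arcs, one near $\phi=0$ for $p^+$ and one near $\phi=\pi$ for $p^-$. We also note the point $\mathsf{b}=\mathsf{c}$, which corresponds to $r=1$; there $\mathsf{P}$ vanishes, consistent with the $(1,1)$ case. The strategy on each arc is to factor $\mathsf{P}=(\mathsf{c}-\mathsf{b})\cdot\mathsf{Q}$, with the sign of $\mathsf{c}-\mathsf{b}$ fixed by (iv). It then remains to show that $\mathsf{Q}$ has a definite sign on the arc, using the interval bounds on $\mathsf{b}+\mathsf{c}$ and $\mathsf{b}-\mathsf{c}$.

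At $p^-$ the same bookkeeping applies, with $1<\mathsf{b}+\mathsf{c}<7/5$ and $\mathsf{b}\ge\mathsf{c}$. Care is needed here, since the sign of $3+3\mathsf{b}-5\mathsf{c}$ must be determined; it is positive for small $\mathsf{b},\mathsf{c}$.

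The main obstacle is establishing the sign of $\mathsf{Q}$ on each arc. If the interval bounds are too coarse to settle it, I would first try to eliminate one variable using $\mathsf{L}_1$ and then check that the resulting one-variable polynomial has no roots in the admissible interval, for instance by a Sturm sequence or explicit root location. The $r=1$ degeneration suggests that the division by $\mathsf{b}-\mathsf{c}$ is exact, so that $\mathsf{Q}$ is nonvanishing in the interior of each arc.
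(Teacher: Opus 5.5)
Your route is sound and differs from the paper's, and the step you flag as the main obstacle closes with a short computation. Swapping $\mathsf{b}\leftrightarrow\mathsf{c}$ sends $X\mapsto -Y$ and $Y\mapsto -X$, and exchanges $\mathsf{c}(3+3\mathsf{b}-5\mathsf{c})$ with $\mathsf{b}(3+3\mathsf{c}-5\mathsf{b})$. So $\mathsf{P}$ is antisymmetric, and division by $\mathsf{c}-\mathsf{b}$ is exact. With $S=\mathsf{b}+\mathsf{c}$ one finds
\[
\mathsf{P}=(\mathsf{c}-\mathsf{b})\,\mathsf{Q},\qquad \mathsf{Q}=10S^2-6S-(21+5S)\,\mathsf{b}\mathsf{c}.
\]
On $\mathsf{L}_1=0$ we have $16\,\mathsf{b}\mathsf{c}=5S^2-6S+5$, hence
\[
16\,\mathsf{Q}=-25S^3+85S^2+5S-105=-5(S-3)(5S-7)(S+1).
\]
In your parametrisation this reads $\mathsf{Q}=-\tfrac52\cos\phi\,(4+5\cos\phi)(2+\cos\phi)$. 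Thus $\mathsf{Q}<0$ both for $S>3$ and for $1<S<7/5$. These are exactly the ranges that Proposition~\ref{prop:AC-inequalitites}(iii) gives at $p^+$ and at $p^-$, so no Sturm sequence is needed.

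The signs then combine as follows:
\begin{itemize}
\item At $p^+$ you have $\mathsf{c}-\mathsf{b}>0$ and $\mathsf{c}(3+3\mathsf{b}-5\mathsf{c})<0$.
\item At $p^-$ you have $\mathsf{c}-\mathsf{b}<0$ and $3+3\mathsf{b}-5\mathsf{c}\ge 3-2\mathsf{c}>1$.
\end{itemize}
In both cases $X+rY>0$.

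Some points to tighten:
\begin{itemize}
\item Strictness of (iv) for $k>l$ follows from your own formula for $r$, since $\mathsf{b}=\mathsf{c}$ forces $r=1$.
\item At $p^-$ the bound $S<7/5$ is precisely the root $5S-7=0$, so the argument uses (iii) with no slack there.
\item In the $(1,1)$ case, do not quote (iii), because its bounds are attained there: $p^-_{1,1}=(-6,-3,-3)$ gives $S=1$. Use instead that $\mathsf{L}_1=0$ forces $(S-3)^2\le 4$, so $S\ge 1>3/5$.
\end{itemize}

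The paper never eliminates $l/k$. It argues that both fixed points lie on the branch of the hyperbola $\mathsf{L}_2=0$ through $(3/2,3/2)$, and hence on the positive side of its tangent line $\mathsf{T}_1$ there. It then writes $\Delta\widehat F/A^2=\mathsf{T}_2+(k-l)(\mathsf{b}-1)(\mathsf{c}-1)$ with $8\mathsf{T}_2=\mathsf{T}_1+(k-l)(\mathsf{b}+\mathsf{c}+1)$. From (ii)--(iii) it uses only that $\mathsf{b},\mathsf{c}$ are both below $1$ or both above $2$. That argument is shorter and keeps $(k,l)$ as parameters, but its first step rests on a branch-location and convexity claim about the conic that is only sketched.

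Your elimination makes the problem independent of $(k,l)$ and purely algebraic. It reduces to one antisymmetric polynomial whose symmetric cofactor is, on the ellipse, a cubic in $S$ with rational roots. This shows why $\widehat F$ vanishes exactly at $r=1$ (the factor $\mathsf{c}-\mathsf{b}$). It also shows which part of the cited bounds is actually needed: $S>3$ at $p^+$, and the full $S<7/5$ at $p^-$.
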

\begin{proof}
    By Proposition \ref{prop:AC-inequalitites}, the critical points $p_{AC}^{\pm}$ come from the two intersection points $(\mathsf{b}^\pm,\mathsf{c}^\pm)$ between an ellipse $\mathsf{L}_1=0$ and a hyperbola $\mathsf{L}_2=0$ in \eqref{eq:homogenised_AC_equation}.
Since $\mathsf{L}_2(\frac{3}{2},\frac{3}{2})=0$ and $\mathsf{L}_1(\frac{3}{2},\frac{3}{2})<0$, the intersection points between $\mathsf{L}_1=0$ and $\mathsf{L}_2=0$ are on the branch of $\mathsf{L}_2=0$ that passes through $(\frac{3}{2},\frac{3}{2})$. Consequently, the inequality $\mathsf{T}_1(\mathsf{b}^\pm,\mathsf{c}^\pm)>0$ is satisfied, where
$$\mathsf{T}_1:=(15k+9l)\left(\mathsf{b}-\frac{3}{2}\right)-(9k+15l)\left(\mathsf{c}-\frac{3}{2}\right)=0$$
is tangent to $\mathsf{L}_2=0$ at $(\frac{3}{2},\frac{3}{2})$. 

On the other hand, we have $\Delta \widehat{F}= \widehat{m}BC+\widehat{l}AC+\widehat{k}AB=A^2\mathsf{L}_3$, where
$$\mathsf{L}_3:=(k-l)\mathsf{b}\mathsf{c}-(2k+l)\mathsf{c}+(k+2l)\mathsf{b}.$$
By Proposition \ref{prop:AC-inequalitites}, we know that  $\mathsf{b}^-,\mathsf{c}^-<1$ and $\mathsf{b}^+,\mathsf{c}^+>2$. Thus, we have $(k-l)(\mathsf{b}^\pm-1)(\mathsf{c}^\pm-1)>0$. Consequently, the inequality 
$$\mathsf{L}_3(\mathsf{b}^\pm,\mathsf{c}^\pm)>\mathsf{T}_2(\mathsf{b}^\pm,\mathsf{c}^\pm)$$
holds, where
$$\mathsf{T}_2:=(2k+l)(\mathsf{b}-1)-(2l+k)(\mathsf{c}-1).$$
Since $8\mathsf{T}_2=\mathsf{T}_1+(k-l)(\mathsf{b}+\mathsf{c}+1)$ for any $\mathsf{b},\mathsf{c}>0$, we conclude that $\mathsf{L}_3(\mathsf{b}^\pm,\mathsf{c}^\pm)>0.$
\end{proof}
\begin{proposition}\label{prop:unique-ab-AC}
    Let $\gamma_{\theta_*}^i (s)= (A(s), B(s), C(s), f(s))$ be the AC  $\Spin(7)$-metric on $M^i_{k,l}$. Then there exists a unique initial condition $\kappa_* \in \R$ such that $\xi_*(s)$ converges to $(p^\pm_{AC}, n \Lambda^\pm)$.
\end{proposition}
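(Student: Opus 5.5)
We argue from the integral representation \eqref{eq:integral_xi}, working on $M^m_{k,l}$; the cases $i=k,l$ follow verbatim after the time reversal forced by $f\le 0$. Every abelian instanton extending smoothly over the singular orbit has the form
$$\xi(s)=\frac{n\widehat m}{m}+f^2(s)\bigl(I(s)-\kappa\bigr),\qquad I(s)=\int_{-\infty}^{s}\frac{2n}{m}\frac{A(C-B)}{f^2}\,d\tilde s,$$
with $\kappa\in\R$ free. The plan is to show that exactly one value of $\kappa$ keeps $\xi$ bounded at the AC end, and that this solution converges to $n\Lambda^\pm$.

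\textbf{Asymptotics of $f$ and $I$ along the AC metric.} Along $\gamma^i_{\theta_*}$ we have $(A,B,C)\to p^\pm_{AC}$, with exponential rate by hyperbolicity of the AC fixed point. Since $f'=-\tfrac12 Ff$, this gives $f^2(s)\sim c\,e^{-F^\pm s}$ with $c\neq 0$. In the positive case $F^+<0$, so $f^2$ grows exponentially; this matches the AC behaviour $f\sim t$. The integrand of $I$ behaves like $e^{F^\pm s}$ times a bounded function. Because $F^+<0$, the integral $I(\infty)=\int_{-\infty}^\infty\frac{2n}{m}\frac{A(C-B)}{f^2}$ converges absolutely. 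At $-\infty$ the integral also converges, by the asymptotics near $p_0^m$ already used to define $\kappa$.

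\textbf{Choosing $\kappa$.} We set $\kappa_*=I(\infty)$. For $\kappa\neq\kappa_*$ the bracket $I(s)-\kappa$ tends to $I(\infty)-\kappa\neq 0$, so $|\xi|\to\infty$ like $f^2$ and the trajectory cannot approach any fixed point. This gives uniqueness. For $\kappa=\kappa_*$ we have
$$\xi(s)=\frac{n\widehat m}{m}-f^2(s)\int_s^\infty \frac{2n}{m}\frac{A(C-B)}{f^2}\,d\tilde s .$$
A standard L'Hôpital argument, or the fact that $f^2(s)/f^2(\tilde s)\sim e^{F^\pm(\tilde s-s)}$, gives
$$\xi(s)\to \frac{n\widehat m}{m}-\frac{2n}{m}\frac{A^\pm(C^\pm-B^\pm)}{-F^\pm}.$$
By Lemma~\ref{lemma:identities-klm-khatlhatmhat}, $2A(C-B)=m\widehat F-\widehat m F$, so this limit equals $n\widehat F^\pm/F^\pm=n\Lambda^\pm$, as required. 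An alternative route is to note that $(p^+_{AC},n\Lambda^+)$ has a one-dimensional unstable direction in $\xi$ (the linearisation is $-F^+>0$). The AC trajectory of the metric then lies on its stable manifold in the $(A,B,C)$ directions, and the solutions converging to the fixed point form exactly one curve, cut out by a single value of $\kappa$.

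\textbf{Main obstacle.} The delicate case is the sign bookkeeping in the reversed-time situations $p^-_{AC}$, $F^->0$, for $i=k,l$. Here the integral must be taken from the correct end: in the $t$-direction the roles of $\pm\infty$ in $s$ swap. We must check that $f^2\to\infty$ in the physical direction, so that the exponent mismatch again isolates a single $\kappa$. We would handle this by rewriting \eqref{eq:condensed-abelian-instaton} in the variable $-s$ and repeating the same estimate.
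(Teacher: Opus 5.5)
Your proof is correct, but its main line differs from the paper's proof. The paper's proof is the argument you list as your alternative. For each fixed $s$ the map $\kappa\mapsto\xi_\kappa(s)$ is a bijection onto $\R$, and $(p^\pm_{AC},n\Lambda^\pm)$ is a hyperbolic point whose $\xi$-axis is an unstable eigendirection (eigenvalue $-F^+>0$, up to the sign of $f$). Its stable manifold is therefore a graph over that of the metric, and it meets the $\xi$-fibres over the AC trajectory in exactly one member of the family. Your main line is the integral-representation argument, which the paper mentions only in a remark after the proof.

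The stable-manifold route buys brevity: existence and uniqueness come at once, and the limit is automatically the fixed point. It does, however, lean on hyperbolicity of the AC point for the metric system. Your route is more explicit. It identifies $\kappa_*=I(\infty)$, shows directly that $|\xi|\to\infty$ for every other $\kappa$, and verifies the limit value $n\widehat F^\pm/F^\pm$ via Lemma~\ref{lemma:identities-klm-khatlhatmhat}. The L'H\^opital step only uses $F(s)\to F^\pm$ with the correct sign, so the exponential rate and the constant $c$ in $f^2\sim c\,e^{-F^\pm s}$ are not actually needed.

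Your route also gives the correction term at the right order $e^{F^\pm s}$, which follows from $(f^2)'=-Ff^2$; the paper's remark records this only loosely. Finally, treating $i=k,l$ by integrating from the singular-orbit end $s=+\infty$ is exactly what the paper means by ``analogously''.
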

\begin{proof}
    Using the integral representation of $\xi$, Equation \eqref{eq:integral_xi}, it is clear that, for any $s\in \R$, there is a bijection $\kappa \rightarrow \xi_\kappa(s)$. The claim now follows since $(p_{AC}^\pm , n \Lambda^\pm)$ is a hyperbolic point, and $(0,...,0,1)$ is an eigenvector with positive eigenvalue (up to the sign of $f$). So there exists a unique solution.
\end{proof}
Alternatively, one can try to visualise this using the integral representation once more. Since $A,B, C$ have a finite limit and $f$ grows exponentially, the improper integral is finite. The previous discussion implies we have the asymptotics
$$ \int_{-\infty}^s \frac{2A(C-B)}{m f^2} d\widetilde{s} =  I_\infty  +  \Lambda^+\, e^{2F^+s}  + o \left( e^{2F^+s}\right)\;,$$
and we are interested in the initial condition $\kappa_* =  I_\infty.$

Let us now consider the exceptional case where the principal orbit is $N_{1,1}$. Since our discussion for $\xi$ carries over verbatim, we are only interested in understanding the possible solutions $\chi$.
\begin{proposition}\label{prop:vanishing-chi-abelian}
Any bounded $\Spin(7)$-instanton solution on $M^i_{1,1}$ has $\chi \equiv 0$.
\end{proposition}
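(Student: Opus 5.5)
We use the decoupled linear equation for $\chi$ in the compactified time $s$, namely $\chi'=(6-2A)\chi$ from \eqref{eq:new-abelian-instanton-11}. This can be integrated explicitly:
\[
\chi(s)=\chi(s_0)\exp\left(\int_{s_0}^{s}\bigl(6-2A(\tilde s)\bigr)\,d\tilde s\right).
\]
The argument then splits according to the two resolutions $M^1_{1,1}$ and $M^2_{1,1}$. In each case we show that a nonzero $\chi$ is incompatible either with smooth extension over the singular orbit or with boundedness at infinity, the latter measured through $\chi$ or the curvature norm $|F_A|$, which contains $\chi/(bc)$ and $\chi/(af)$ terms.

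\textbf{The resolution $M^1_{1,1}$ (fixed point $p_0^1$).} The local analysis at $(p_0^1,3n,0)$ already gives \eqref{eq:local-form-ab-11-spin-neg}. The only eigendirection in the $\chi$-variables tangent to the instanton hypersurface is the one with $\chi\equiv 0$. Concretely, $A\to -6$ as $s\to\pm\infty$ (the relevant end), so $6-2A\to 18$. In the time direction emanating from the singular orbit (recall the time reversal for curves in $\mathcal C^-$), $\chi$ then grows like $e^{18|s|}$ unless it vanishes. Thus smooth extension forces $\chi(s)\to 0$ at the rate dictated by the admissible eigenvalue $-12$. Since the exponent in the explicit integral diverges, this forces $\chi\equiv0$. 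The same conclusion can be read off directly from the uniqueness of solutions of a linear ODE with a regular singular point.

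\textbf{The resolution $M^2_{1,1}$ (fixed point $p_0^{-2}$).} Here the local solutions $\chi=\kappa_2e^{6s}+o(e^{6s})$ are admissible, so the obstruction must come from infinity.
\begin{itemize}
\item In the ALC regime, Proposition \ref{prop:ALC-asymptotic-behaviour} gives $A\sim s^{-1}$, so $6-2A\to 6$ and $\chi\sim \kappa e^{6s}$ with $\kappa\neq0$ unless $\chi\equiv 0$. Since $f\to\ell_\theta$, $s$ is comparable to $t$, so $\chi$ grows exponentially in $t$. The curvature component $6\chi/(af)$ then blows up, as $a\sim t$ grows only linearly.
\item In the AC regime, $A\to A^\pm$ is finite. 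By Proposition \ref{prop:AC-inequalitites} and the sign conventions, $6-2A^+>0$ because $A^+\geq0$ lies in $\mathcal C_+$. Hence $\chi$ grows like $e^{(6-2A^+)s}$. Meanwhile $f$ and $a$ grow like $e^{-F^+ s}$-type exponentials, so we must compare $6-2A^+$ with the growth rate of $af$ and of $bc$ to conclude that $|F_A|$ is unbounded, or that $\chi$ itself is unbounded if boundedness is meant for the connection coefficient.
\end{itemize}

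The main obstacle is this AC comparison: we must check that $6-2A^+$ exceeds the exponential growth rate of the metric coefficients appearing in the curvature norm. We would first try to express this rate via \eqref{eq:new-evo-f}, which gives $f'/f=-F/2$, and via the explicit $N_{1,1}$ nearly parallel point, where $B^+=C^+$ is computable. We would then verify the inequality numerically and exactly. If boundedness is interpreted as boundedness of the coefficient $\chi$, the argument is immediate: $\chi$ is exponentially growing in all cases unless $\kappa=0$, and since the ODE is linear, $\kappa=0$ gives $\chi\equiv0$.
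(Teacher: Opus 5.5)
Your final argument, where boundedness means boundedness of the coefficient $\chi$, is the paper's proof. On $M^1_{1,1}$ the local analysis at $p_0^1$ forces $\chi\equiv 0$. On $M^2_{1,1}$ the explicit solution $\chi=\kappa\exp\int_{-\infty}^{s}(6-2A)\,d\tilde s$ grows exponentially at infinity in both regimes, so only $\kappa=0$ is bounded. Two points in your write-up need correcting.

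\textbf{The AC sign.} Your justification of $6-2A^+>0$ is backwards: $A^+\geq 0$ only gives $6-2A^+\leq 6$. What you need is $A^+<3$. This follows from the explicit nearly parallel point $p^+_{1,1}=(6/5,3,3)$, the Bryant--Salamon end, where $6-2A^+=18/5$.

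\textbf{The curvature route.} The comparison you flag as the main obstacle does not merely remain open; it comes out as an equality, so that route fails.
\begin{itemize}
\item At $p^+_{1,1}$ one has $F^+=-18/5$. Hence $f$, and with it the geodesic distance $t$, grows like $e^{9s/5}$.
\item Also $\partial_s\log(bc)=2A+\tfrac16(AB+AC)\to 18/5$.
\item So $\chi$, $bc$ and $af=A\,bc$ all grow like $e^{18s/5}$, that is, $\chi\sim t^2$.
\end{itemize}
Consequently the $\chi$-terms of $F_A$, namely $\chi/(bc)$, $\chi/(af)$ and $\dot\chi/a=(6-2A)\chi/(af)$, tend to nonzero constants. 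On the AC end, the instantons with $\chi\neq0$ therefore have bounded but non-decaying curvature.

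The proposition is thus true only with boundedness understood for the ODE coefficients, which is how the paper uses it and how your last sentence reads it. In the ALC regime both readings work, as you observed.
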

\begin{proof}
From the short time existence discussion in Equations \eqref{eq:local-form-ab-11-spin-neg} and \eqref{eq:local-form-ab-11-spin-pos}, we have $\chi=0$ for all $\gamma^1_\theta$. For $\gamma^{-2}_\theta$, we have the explicit integral solution 
$$ \chi(s) =  \kappa \exp \left(\int_{-\infty} ^s 6 -2A \;d\widetilde{s}\right)\;.$$
In the limit $s \to \infty $, the integrand remains positive both in the ALC and AC regimes, so the only bounded solution that exists for $\chi$ is $\chi=0$ as well. 
\end{proof}
It remains to understand the exceptional Yang-Mills solution in the case $\gamma^1_\theta$. Using the variable $s$ given by $dt = f ds$, the Yang-Mills ODE \eqref{eq:new-abelian-YM-11b} can be written as 
$$ \partial_s \big( (bc)^2  \partial_s \chi \big)  = (2A^2 + 36) \left[(bc)^2 \chi \right]  \;.$$
This suggests the change of variables $u = bc \chi$. A short computation yields the ODE for $u$ is
$$ u'' =  \left( \frac{ (bc)''}{bc} +2A^2 +36 \right) u\;,$$
where $(\;\cdot\;) '$ denotes differentiation with respect to $s$. Let us compute $(bc)''$. We have 
$$\partial_s \left(bc\right) = f\partial_t(bc) = fbc \left( \frac{\dot b}{b} + \frac{\dot c}{c}\right) = bc\left( 2A +\frac{AC+AB}{6}\right).$$
Hence, 
\begin{align*}
    \frac{ (bc)''}{bc} & = \left( 2A +\frac{AC+AB}{6}\right)' + \left( 2A +\frac{AC+AB}{6}\right)^2\\
    &= -\frac{1}{12}A^2(B^2+C^2)- \frac{A^2BC}{18}+\frac{ABC}{9}(B+C+6)+2AB+2AC-\frac{A}{3}(B^2+C^2)-\frac{A^2}{3}(B+C)-2A^2\\
\end{align*}
So $u$ solves $u'' = G u$, with 
 $$ G(s)  = -\frac{1}{12}A^2(B^2+C^2)- \frac{A^2BC}{18}+\frac{ABC}{9}(B+C+6)+2AB+2AC-\frac{A}{3}(B^2+C^2)-\frac{A^2}{3}(B+C) +36  \;.$$
 Now, \cite[Lemma 4.15]{Chi22} implies that the inequality $0\geq B,C\geq A\geq -6$ holds along $\gamma_\theta^{1}$. A direct algebraic calculation shows that
 \begin{proposition}
The inequality $G \geq 36$  holds along $\gamma_\theta^1$, with equality only if $B=C=0$ or $A=0$.
\end{proposition}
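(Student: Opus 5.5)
The plan is to reduce the claim to an elementary polynomial inequality on the region supplied by \cite[Lemma 4.15]{Chi22}, and then prove that inequality by bounding the signed terms of $G-36$ using the constraints one at a time. Along $\gamma^1_\theta$ all of $A,B,C$ are non-positive, so I will first substitute $A=-x$, $B=-y$, $C=-z$. By \cite[Lemma 4.15]{Chi22} these satisfy
\[
0\le y,z\le x\le 6 .
\]
Expanding each term of $G$ from the formula above, the constant $36$ cancels and every remaining term contains a factor of $x$. This gives
\[
G-36 = x\,R(x,y,z),
\]
\[
R = 2(y+z)+\tfrac{1}{3}(y^2+z^2)+\tfrac{x}{3}(y+z)-\tfrac{x}{12}(y^2+z^2)-\tfrac{x}{18}yz-\tfrac{1}{9}yz(6-y-z).
\]
It then suffices to show $R\ge 0$ on the region, with $R=0$ only when $y=z=0$. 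The equality case in the proposition then follows: $G=36$ forces $x=0$ (that is, $A=0$) or $y=z=0$ (that is, $B=C=0$).

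To prove $R\ge0$, I handle the negative terms in three steps.
\begin{itemize}
\item I use $x\le 6$ in the two terms with negative coefficients in $x$: $-\tfrac{x}{12}(y^2+z^2)\ge-\tfrac12(y^2+z^2)$ and $-\tfrac{x}{18}yz\ge-\tfrac13 yz$. I also split the last term as $-\tfrac19 yz(6-y-z)=-\tfrac23 yz+\tfrac19 yz(y+z)$. Together these give
\[
R\ \ge\ 2(y+z)-\tfrac16(y^2+z^2)-yz+\tfrac{x}{3}(y+z)+\tfrac19 yz(y+z).
\]
\item I use $y,z\le 6$, so $y+z\le 12$ and hence $2(y+z)\ge \tfrac16(y+z)^2=\tfrac16(y^2+z^2)+\tfrac13 yz$. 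This absorbs the $-\tfrac16(y^2+z^2)$ term and part of $-yz$, leaving a deficit of $\tfrac23 yz$.
\item I use $x\ge\max(y,z)\ge\tfrac12(y+z)$, so $\tfrac{x}{3}(y+z)\ge\tfrac16(y+z)^2\ge\tfrac23 yz$, which covers the remaining deficit.
\end{itemize}
This yields $R\ge\tfrac19 yz(y+z)\ge0$.

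For strictness, suppose $y+z>0$. If $y+z<12$, the second step is strict. If $y+z=12$, then $y=z=6$ and the leftover term $\tfrac19 yz(y+z)$ is strictly positive. In either case $R>0$, so $R=0$ forces $y=z=0$, as required.

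The main obstacle is organising the bound so that each constraint ($x\le6$, $y,z\le6$, $x\ge y,z$) is spent exactly once without losing too much. In particular, the negative $x$-dependent terms must be controlled via the upper bound $x\le 6$, while the positive term $\tfrac{x}{3}(y+z)$ must be controlled via the lower bound $x\ge y,z$. The expansion of $G$ itself is routine but sign-sensitive, and I would double-check it, especially the term $\tfrac{ABC}{9}(B+C+6)$, before running the estimate.
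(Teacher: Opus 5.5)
Your proposal is correct and follows the paper's route. The paper restricts to the region $0\ge B,C\ge A\ge -6$ from \cite[Lemma 4.15]{Chi22} and says only that ``a direct algebraic calculation'' gives the result; your factorisation $G-36=xR$ (which matches the paper's formula for $G$, including the $\tfrac{ABC}{9}(B+C+6)$ term), your three-step bound $R\ge \tfrac19 yz(y+z)$, and your strictness argument supply exactly that calculation.
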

Hence, $u(s)$ has a one-parameter family of solutions extending smoothly over the singular orbit. We have
\begin{lemma}
    In the limit $s \to -\infty$, we have $\abs{u} \to \infty$.
\end{lemma}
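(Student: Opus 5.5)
The plan is to treat $u''=Gu$ as a Schrödinger-type equation with a strictly positive potential. By the preceding Proposition, $G\ge 36$ along $\gamma^1_\theta$. The strict lower bound should force every nontrivial solution to grow at least like $e^{-6s}$ in the direction $s\to-\infty$. I would work with a fixed nonzero member $u$ of the one-parameter family, normalised using the local Yang--Mills form \eqref{eq:local-form-ab-11-spin-neg-ym}. After multiplying by $bc$, this gives the leading behaviour of $u$ and $u'$ at the end where $\chi$ is pinned down by smoothness; there $u$ and $u'$ are tied to $\kappa_2\neq 0$.

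\emph{Step 1 (sign control).} By linearity, assume $u>0$ somewhere. I would show that $u$ has no zero on $\mathbb{R}$.
\begin{itemize}
\item Where $u>0$ we have $u''=Gu\ge 36u>0$, so $u$ is strictly convex there.
\item If $u$ vanished at two points, a positive strictly convex function between two zeros would contradict the maximum principle.
\item If $u$ vanished at one point and tended to $0$ at the other end, the same maximum-principle argument applies.
\item Otherwise, convexity forces $u$ to be monotone on the relevant half-line, with $u'u$ of fixed sign.
\end{itemize}

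\emph{Step 2 (energy monotonicity).} Introduce
\[
E=(u')^2-36u^2, \qquad E'=2u'u\,(G-36).
\]
On a region where $u'u$ has a definite sign, $E$ is monotone. Compare $E$ with its limit at the end where the asymptotics of $u$ are known from the linearisation. There $u\sim\kappa e^{\mp 6s}$, so $E\to 0$ at leading order. Using $G\ge36$, I would conclude $E\ge0$ throughout, that is $|u'|\ge 6|u|$, with $u'$ of the sign making $|u|$ increase as $s$ decreases.

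\emph{Step 3 (integration).} From $u'\le -6u$ with $u>0$, Grönwall gives
\[
u(s)\ge u(s_0)\,e^{6(s_0-s)} \quad \text{for } s<s_0.
\]
Hence $|u|\to\infty$ as $s\to-\infty$.

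\emph{Main obstacle.} The delicate point is bookkeeping the orientation of $s$ for $\gamma^1_\theta\subset\mathcal{C}^-$, where time is reversed. I also need the exact decay of $bc$ at the singular orbit, so as to confirm the sign of $u'u$ and the value $E\to0$ at the initial end. If $E$ does not tend exactly to $0$ there, I would replace the energy argument by a Sturm comparison. Take the Wronskian
\[
W=u'e^{-6s}+6ue^{-6s}
\]
of $u$ with the comparison solution $e^{-6s}$ of $v''=36v$. Then
\[
W'=(G-36)\,u\,e^{-6s},
\]
which has a fixed sign, and the same conclusion follows.
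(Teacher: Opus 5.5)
Your argument is correct, but it takes a longer route than the paper.

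\textbf{The paper's route.} The paper applies convexity to $v=u^2$ directly. It has $v''=2(u')^2+2Gu^2>0$ for any nontrivial solution, using only $G>0$. No sign control on $u$ is needed, since $v\ge 0$ automatically. Because $v\to 0$ at the smooth end $s=+\infty$, convexity forces $v'<0$ everywhere. The supporting-line bound $v(s)\ge v(s_0)+v'(s_0)(s-s_0)$ then gives $v\to\infty$ as $s\to-\infty$.

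\textbf{Your route.} Your Step 1 already contains this argument: once $u>0$ is convex and tends to $0$ as $s\to+\infty$, you get $u'<0$ and $u\to\infty$ by the same supporting-line bound. So Steps 2--3 are not needed for the lemma as stated. What they buy is the quantitative estimate $u(s)\ge u(s_0)e^{6(s_0-s)}$. This uses the full strength of $G\ge 36$ rather than just $G>0$, and it is the growth rate invoked in the next proposition ($u\sim e^{-6s}$ in the ALC case).

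\textbf{Corrections.}
\begin{enumerate}
\item At the singular orbit $p_0^1=(-6,-6,0)$ one has $G\to 144$ and $bc\sim e^{-6s}$, so the smooth solution satisfies $u\sim\kappa_2e^{-12s}$, not $e^{\mp 6s}$. This is harmless: all you use is $u,u'\to 0$ as $s\to+\infty$. That gives $E\to 0$, and also $W=(u'+6u)e^{-6s}\to 0$, exactly, so your Sturm fallback is unnecessary (though equally valid).
\item In Step 1, when $u$ vanishes at two points, it may be negative between them. Say explicitly that you replace $u$ by $-u$ there, since the equation is odd in $u$.
\end{enumerate}
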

\begin{proof}
    Let $u $ be a non-trivial solution and consider $v=u^2$. Then $v'' =  2 (u')^2  + 2 G u^2 >0$, since $u$ and $u'$ cannot vanish simultaneously for a non-trivial solution, and $G>0$. Hence $v$ is strictly convex. The convexity bound  gives
    $v(x) \geq v(a) + v'(a)(x-a)$ for any fixed $a$. Since $u$ extends smoothly over the singular orbit at $s=+\infty$, we have $v'< 0$, and the claim follows.
\end{proof}
\begin{proposition}
The only bounded solution to Equation \eqref{eq:new-abelian-YM-11b} for $\gamma^1_\theta$ with $\theta\in (0, \theta^*]$ is $\chi=0$.
\end{proposition}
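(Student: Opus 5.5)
We argue with $u=bc\,\chi$, which solves $u''=Gu$ with $G\geq 36$ along $\gamma^1_\theta$. The claim is that no non-trivial solution extending smoothly over the singular orbit can be bounded at the non-compact end, whether ALC or AC. Along $\gamma^1_\theta$ the singular orbit sits at $s\to+\infty$, because the curve flows in reversed time in $\mathcal{C}_-$. The non-compact end is therefore $s\to-\infty$.

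First we pin down the local behaviour. From \eqref{eq:local-form-ab-11-spin-neg-ym}, smooth local Yang--Mills solutions have $\chi=\kappa_2 e^{-6s}+\dots$, and $bc$ tends to a constant $b_0^2$ at the singular orbit. Hence $u\to 0$ there. Without loss of generality $\kappa_2>0$, so $u>0$ near $s=+\infty$.

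Next we run the convexity argument of the preceding lemma. Set $v=u^2$, so $v''=2(u')^2+2Gu^2>0$. Since $v\to 0$ at $+\infty$ and $v>0$, the function $v$ is strictly decreasing. Convexity then gives $v(s)\geq v(s_0)+v'(s_0)(s-s_0)$ for any fixed $s_0$ with $v'(s_0)<0$, so $v$ grows at least linearly as $s\to-\infty$. This is only a bound on $u$, however, and we need a bound on $\chi=u/(bc)$. In both regimes $bc$ also grows at the non-compact end: like $t^2\sim s^2$ in the ALC case, and exponentially in $s$ in the AC case. So linear growth of $u$ is not enough.

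The main step is to upgrade to exponential growth using $G\geq 36$. I would use a Sturm comparison with $w=e^{-6s}$, which solves $w''=36w$. The Wronskian-type quantity $W=u'w-uw'$ satisfies $W'=(G-36)uw\geq 0$ wherever $u>0$. A cleaner route is to compare directly in the $\chi$ equation. Writing $\chi''=-(4A+\tfrac{A(B+C)}{3})\chi'+(2A^2+36)\chi$, the coefficient of $\chi'$ is $\geq0$, since $A\leq 0$ and $B,C\leq0$ give $4A+A(B+C)/3$ of indeterminate sign. So I would instead use the $u$-equation: from $W'\geq0$ and $W\to 0$ at $+\infty$, we get $W\leq 0$. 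This yields $(u/w)'\leq 0$, hence $u\geq \kappa e^{-6s}$ as $s\to-\infty$. In fact $u\to\infty$ like $e^{6|s|}$, and positivity of $u$ propagates because $u$ cannot return to zero while convex and decreasing. Note that the lemma above already rules out $u$ having a zero.

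Finally we compare growth rates. In the ALC case $bc\sim s^2$, so $\chi\gtrsim e^{6|s|}/s^2$ is unbounded. In the AC case $bc\sim e^{\lambda|s|}$, where the rate is read off from \eqref{eq:new-evo-f} and $(bc)'/bc=2A+A(B+C)/6$ at $p^\pm_{1,1}$. We must check that this rate is strictly less than $6$, using the values of $A^\pm$ from Proposition~\ref{prop:AC-inequalitites}. This numerical comparison is the step I expect to be the main obstacle. If it fails, I would sharpen the comparison by using $G-36>0$ at $p^\pm$ to obtain a rate $\sqrt{G^\pm}$ in place of $6$. Either way $\chi$ is unbounded unless $\kappa_2=0$, and then $\chi\equiv 0$.
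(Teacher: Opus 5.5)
Your ALC argument is sound, and it is tighter than the paper's. The Wronskian $W=u'w-uw'$ with $w=e^{-6s}$ uses the global bound $G\geq 36$ and the positivity $u>0$ from the convexity lemma, and gives a genuine lower bound $u\geq\kappa e^{-6s}$; the paper only reads off $u\sim e^{-6s}$ from $G\to 36$. There is one harmless slip: $bc$ does not tend to a constant at the singular orbit. At $p_0^1=(-6,-6,0)$ it is $c$ and $f$ that collapse, so $bc\sim e^{-6s}$ and $u\sim\kappa_2 e^{-12s}$ as $s\to+\infty$. Your conclusions $u\to 0$ and $W\to 0$ still hold.

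The AC case has a gap, exactly at the step you flagged. The AC end of $\gamma^1_{\theta^*}$ is $p^-_{1,1}=(-6,-3,-3)$, where $(bc)'/(bc)=2A+A(B+C)/6=-12+6=-6$. So $bc$ grows like $e^{-6s}$, the same rate as your comparison function, and $u\geq\kappa e^{-6s}$ only gives $\chi\geq\mathrm{const}>0$, not unboundedness.

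Your fallback (``rate $\sqrt{G^\pm}$'') is the right idea, and it is the paper's argument: $w''/w\to 36$ and $2A^2\to 72$ give $G\to 144$, hence $u\sim e^{-12s}$ and $\chi\sim e^{-6s}$. As stated, though, it is not a proof. A Sturm comparison with $e^{-\sqrt{G_0}\,s}$ only applies on a half-line $(-\infty,s_1]$ where $G\geq G_0$. There you need $u'(s_1)/u(s_1)\leq-\sqrt{G_0}$ to start the Wronskian with the right sign, and your global argument does not supply this.

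What rules out the mode decaying like $e^{12s}$ is the sign information from the convexity lemma. The quantity $R=u'/u$ is negative everywhere and satisfies $R'=G-R^2$. For $s\ll 0$ and $-(12-\epsilon)<R<0$, one has $R'\geq G-(12-\epsilon)^2\geq c_0>0$. So $R$ cannot stay in that range as $s\to-\infty$. On the level $R=-(12-\epsilon)$ one also has $R'>0$, so $R$ cannot re-cross it in backward time. Hence $R\leq-(12-\epsilon)$ for all sufficiently negative $s$. This gives $u\gtrsim e^{(12-\epsilon)|s|}$ while $bc\lesssim e^{(6+\epsilon)|s|}$, so $\chi$ is unbounded.

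Alternatively, the AC metric on $M^1_{1,1}$ is the Calabi metric of Appendix~\ref{sec:explicit-examples}, with $A\equiv-6$ and $B+C\equiv-6$. Then $G\equiv 144$, $(bc)'/(bc)\equiv-6$, and $\chi$ solves $\chi''=12\chi'+108\chi$. The solutions smooth at $s=+\infty$ are $\kappa e^{-6s}$, which are unbounded unless $\kappa=0$.
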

\begin{proof}
  We distinguish between the ALC and AC regimes. For an ALC metric, we have the asymptotics $A,B, C \sim s^{-1}$, so $G \sim 36$. Therefore $u \sim e^{-6s}$ and $\chi \sim s^{-2}\,e^{-6s}$ as $s\to -\infty$. For the AC metric, we have $p_{1,1}^- = (-6, -3,-3)$, so $G \sim 144$ and $u \sim e^{-12s}$ as $s \to -\infty$. Since $F \sim 6$, and $f$ satisfies $f'= -\frac{F}{2}f$, we have $f\sim e^{-3s}$ and therefore $b,c \sim e^{-3s}$ and $\chi \sim e^{-6s}$. Hence, in both cases, the only bounded solution is $\chi=0$. 
\end{proof}

\section{\texorpdfstring{$\SO(3)$}{SO(3)}-connections -- Local behaviour}
We begin with the generic case $kl\widehat{m} \neq 0$, and carry out the computations for $n=\widehat{m}$. The claims for $n= \widehat{k}, \widehat{l}$ can be deduced easily. For the $\so(3)$ basis $\{T_i\}$, we take the standard convention $[T_i, T_j] =  \varepsilon_{ijk} T_k$.

Using Proposition \ref{prop:homogeneous connections SO3-kl}, we want to consider
$$A(t) = \left( \xi (t) e_7 + \widehat{m}e_8\right) \otimes T_1 + \psi(t)\left( e_1 \otimes T_2 + e_2 \otimes T_3\right)\;.$$
A short computation yields the curvature
\begin{align}
    F_A =& \left( \dot{\xi} e_{07} + \frac{1}{\Delta}\left[ \left(m \xi - \widehat{m}^2 + \Delta\psi^2 \right) e_{12} + \left(l \xi - \widehat{l} \;\widehat{m}\right) e_{34} +\left(k \xi - \widehat{k}\,\widehat{m}\right) e_{56}\right] \right) \otimes T_1 \label{eq:curvature-generic-SO3}\\
    &+ \left(\dot{\psi}e_{01} + \psi \left[e_{46}-e_{35}+(3m+\xi)e_{27}\right]\right)\otimes T_2 + \left(\dot{\psi}e_{02} + \psi \left[e_{45}+e_{36}-(3m+\xi)e_{17}\right] \right)\otimes T_3 \nonumber
\end{align}
where we are taking the convention $[T_i, T_j] =  \varepsilon_{ijk} T_k$ for the $\SO(3)$-basis. 

A tedious computation yields the equations for the $\Spin(7)$-instanton and Yang-Mills conditions:
\begin{proposition}
On the principal bundle $Q_{\widehat{m}}\to N_{k,l}$, the $\Spin(7)$-instanton equations are
\begin{subequations}
    \begin{align}
            \frac{\dot\xi}{f} &= - \frac{\xi}{\Delta}\left(\frac{{m}}{a^2}+\frac{{l}}{b^2}+\frac{{k}}{c^2} \right)+\frac{\widehat{m}}{\Delta} \left(\frac{\widehat{m}}{a^2}+\frac{\widehat{l}}{b^2}+\frac{\widehat{k}}{c^2}\right)  - \frac{\psi^2}{a^2}\\
            \dot \psi &= - \left( \frac{2a}{bc} + \frac{3m+ \xi}{f}\right) \psi
    \end{align}
\end{subequations}
Similarly, the Yang--Mills equations are
\begin{subequations}
    \begin{align}\label{eq:generic-SO3-YM}
\partial_t \left( \frac{(abc)^2}{f} \dot{\xi} \right) =& mf\left(\frac{bc}{a \Delta}\right)^2 \left( m\; \xi -\widehat{m}^2 +\Delta\psi^2\right)+ lf\left(\frac{ac}{b \Delta }\right)^2\left( l\; \xi -\widehat{m} \;\widehat{l}\right) \nonumber \\ &+kf\left(\frac{ab}{c \Delta}\right)^2\left( k\; \xi -\;\widehat{k} \,\widehat{m}\right)  + 2\frac{(bc)^2}{f}(\xi+3m) \psi^2 \\
\partial_t \left( (bc)^2 f \dot{\psi}\right) =& 2 a^2f \psi + \frac{(bc)^2}{f}(\xi+3m)^2\psi + \left(\frac{bc}{a}\right)^2f\frac{\psi\left(m \xi - \widehat{m}^2+\Delta \psi^2 \right)}{\Delta}  
    \end{align}
\end{subequations}
\end{proposition}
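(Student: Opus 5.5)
The plan is to compute the curvature of the ansatz in the orthonormal coframe \eqref{eq:orthonormal-basis} and then impose $*F_A=-\Phi\wedge F_A$ component by component in each of the $\mathfrak{so}(3)$ directions $T_1,T_2,T_3$, following the abelian computation. The starting point is \eqref{eq:curvature-generic-SO3}. That formula comes from $F_A=dA+\tfrac12[A\wedge A]$, together with the Maurer--Cartan relations \eqref{eq:MC-SU(3)} and the bracket $[T_i,T_j]=\varepsilon_{ijk}T_k$; I would check it first.

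\textbf{Converting to the orthonormal coframe.} The $T_1$-part is
\[
\frac{\dot\xi}{f}\omega_{07}+\frac{m\xi-\widehat m^2+\Delta\psi^2}{\Delta a^2}\omega_{12}+\frac{l\xi-\widehat l\widehat m}{\Delta b^2}\omega_{34}+\frac{k\xi-\widehat k\widehat m}{\Delta c^2}\omega_{56}.
\]
It lies in the span of $\omega_{07},\omega_{12},\omega_{34},\omega_{56}$. The contraction with $\Phi$ there is exactly as in the abelian case \eqref{eq:abelian-instaton}. The $\Spin(7)$-condition on this part reduces to the single equation that the coefficients sum to zero, i.e. the component along $\omega_{07}+\omega_{12}+\omega_{34}+\omega_{56}$ of the $7$-dimensional part $\Lambda^2_7$ vanishes. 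Expanding this sum gives the first equation, with the extra $-\psi^2/a^2$ term.

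The $T_2$-part is
\[
\frac{\dot\psi}{a}\omega_{01}+\frac{\psi}{bc}(\omega_{46}-\omega_{35})+\frac{(3m+\xi)\psi}{af}\omega_{27}.
\]
The $T_3$-part is analogous, with $\omega_{02},\omega_{36}+\omega_{45},-\omega_{17}$. Reading off from $\Phi$, the $\Lambda^2_7$-direction containing $\omega_{01}$ is $\omega_{01}-\omega_{27}+\omega_{35}-\omega_{46}$ (up to the sign conventions). Requiring the $\Lambda^2_7$-projection to vanish gives
\[
\frac{\dot\psi}{a}+\frac{2\psi}{bc}+\frac{(3m+\xi)\psi}{af}=0,
\]
which is the second equation after multiplying by $a$. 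The $T_3$-component gives the same equation, by the $\U(1)$-equivariance $e_1\mapsto e_2$.

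\textbf{Completeness check.} I would also confirm that the remaining $\Lambda^2_7$-components vanish identically. This follows from Schur's lemma and invariance: they would be invariant elements in non-matching isotypic pieces.

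\textbf{Yang--Mills equations.} For these I would compute $d_A*F_A=d*F_A+[A\wedge *F_A]$, using the Hodge duals of the basis $2$-forms as in the abelian case. In the $T_1$-component, $d(*F_A)$ reproduces \eqref{eq:abelian-YM} with $n\widehat m$ replaced by $\widehat m^2-\Delta\psi^2$. The commutator $[A\wedge *F_A]$ with the $\psi$-terms contributes $2\tfrac{(bc)^2}{f}(\xi+3m)\psi^2$. In the $T_2$-component, the term $\partial_t((bc)^2f\dot\psi)$ comes from the $\omega_{01}$ piece, and the other terms come from brackets with $\xi e_7\otimes T_1$ and $\psi e_2\otimes T_3$ together with the $e_{12}$-term of $F_A$.

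The main obstacle is sign bookkeeping: orientations of the $6$-forms and the $\Lambda^2_7$-basis dual to $\Phi$. I would cross-check it by verifying that the instanton equations imply the Yang--Mills ones, using the evolution equations \eqref{eq:evo-spin7}.
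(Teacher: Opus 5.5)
Your route is the paper's: its proof is just the direct computation from \eqref{eq:curvature-generic-SO3}, as in the abelian case, and organising it through the $\Lambda^2_7$-projection is a harmless repackaging. The curvature, the $T_1$-equation and your cross-check are all sound. The cross-check does close, using \eqref{eq:evo-abcf}--\eqref{eq:evo-ab2} and $\widehat{i}\,\widehat{j}+3ij=-2\Delta$ for $i\neq j$. However, one explicit step is wrong, and it sits exactly where the content of the proposition lies. The $\Lambda^2_7$-line through $\omega_{01}$ is not spanned by $\omega_{01}-\omega_{27}+\omega_{35}-\omega_{46}$ but by
\[
\omega_{01}+*(\Phi\wedge\omega_{01})=\omega_{01}+\omega_{27}-\omega_{35}+\omega_{46},
\]
which you can read off from the terms $\omega_{01}\wedge(\omega_{27}-\omega_{35}+\omega_{46})$ of $\Phi$. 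In the signed basis $\omega_{01},\omega_{27},-\omega_{35},\omega_{46}$, the map $\beta\mapsto*(\Phi\wedge\beta)$ is the all-ones matrix minus the identity, exactly as on $\omega_{07},\omega_{12},\omega_{34},\omega_{56}$. So $*F^2=-\Phi\wedge F^2$ says precisely that the coefficients of $F^2$ in this basis sum to zero, which is your displayed equation. Pairing $F^2$ with the vector you wrote would instead give $\frac{\dot\psi}{a}-\frac{(3m+\xi)\psi}{af}-\frac{2\psi}{bc}=0$, which has the wrong sign. There is no convention left to appeal to here: $\Phi$ and \eqref{eq:abelian-instaton} fix it.

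Completeness is also best argued from the blocks $\{\omega_{07},\omega_{12},\omega_{34},\omega_{56}\}$, $\{\omega_{01},\omega_{27},\omega_{35},\omega_{46}\}$ and $\{\omega_{02},\omega_{17},\omega_{36},\omega_{45}\}$. These contain $F^1,F^2,F^3$ respectively, are invariant under $\beta\mapsto*(\Phi\wedge\beta)$, and each meets $\Lambda^2_7$ in a line. Schur's lemma alone does not do it. The invariants in $\Lambda^2_7\otimes\langle T_2,T_3\rangle$ form a two-dimensional space, both coming from the summand spanned by the $\omega_{01}$- and $\omega_{02}$-lines. The second invariant pairs $F^2$ with the $\omega_{02}$-line and $F^3$ with the $\omega_{01}$-line, and it vanishes only because of where $F^2$ and $F^3$ are supported. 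Finally, consider the $T_2$-component of the Yang--Mills equation. There the term $2a^2f\psi$ and the $3m$ in $(\xi+3m)^2$ come from $d$, through the structure constants in \eqref{eq:MC-SU(3)}, not from brackets.
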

In the compactifying coordinates $(A,B,C)$, the $\Spin(7)$-instanton equations translate to 
\begin{subequations}\label{eq:new-generic-SO3-instanton}
    \begin{align}
            \xi'&=- F \xi +  \widehat{m} \widehat{F}- BC  \psi^2\\\
    \psi'&=-(2A+3m+\xi)\psi
    \end{align}
\end{subequations}
Similarly, the Yang--Mills equations are
\begin{subequations}\label{eq:new-generic-SO3-YM}
    \begin{align}
\xi''=&-2\left(A+B+C+F\right)\xi' +m\left(\frac{BC}{\Delta}\right)^2(m\xi-\widehat{m}^2+ \Delta \psi^2)\\
&+l\left(\frac{CA}{\Delta}\right)^2(l\xi-\widehat{l}\,\widehat{m})+k\left(\frac{AB}{\Delta}\right)^2(k\xi-\widehat{k}\, \widehat{m})+2BC(\xi+3m)\psi^2\nonumber\\
\psi''=&-\left(4A+\frac{k}{\Delta}AB+\frac{l}{\Delta}CA\right)\psi'+2A^2\psi+(\xi+3m)^2\psi+BC\psi\frac{(m\xi-\widehat{m}^2+\Delta \psi^2)}{\Delta}
    \end{align}
\end{subequations}
As before, we compute the fixed points
\begin{lemma}\label{lemma:fixed-pts generic SO3}
    The fixed points of the extended instanton system over the fixed point $p_0^m$ are:
    $$ \left(p_0^m, \frac{\widehat{m}^2}{m}, 0 \right) \qquad \text{and} \qquad \left(p_0^m, -3m,  2  \right) \;.$$
    The extended Yang-Mills system has an additional fixed point:
    $$ \left(p_0^m,  -3m +\frac{2\Delta}{m},  1  \right) \;.$$
\end{lemma}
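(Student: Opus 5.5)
The plan is to restrict the autonomous systems \eqref{eq:new-generic-SO3-instanton} and \eqref{eq:new-generic-SO3-YM} to the fibre over $p_0^m=(0,-\tfrac{2\Delta}{m},-\tfrac{2\Delta}{m})$ and solve the resulting algebraic equations. For the Yang--Mills system the derivatives $\xi',\psi'$ are set to zero as well. At $p_0^m$ we have $A=0$ and $BC=\tfrac{4\Delta^2}{m^2}$. Hence
$$F=\frac{mBC}{\Delta}=\frac{4\Delta}{m},\qquad \widehat{F}=\frac{\widehat{m}BC}{\Delta}=\frac{4\Delta\widehat{m}}{m^2},$$
and every term involving $A$ drops out. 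The only identity needed is $\widehat{m}^2+3m^2=4\Delta$, which is the case $i=j=m$ of the identities stated at the beginning of the paper.

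\emph{Instanton system.} The equation $\psi'=-(3m+\xi)\psi=0$ splits into two cases.
\begin{itemize}
\item If $\psi=0$, then $\xi'=0$ reads $-F\xi+\widehat{m}\widehat{F}=0$. This gives $\xi=\widehat{m}\widehat{F}/F=\widehat{m}^2/m$, which agrees with the abelian fixed point of Lemma \ref{lemma:fixed-points-abelian} for $n=\widehat{m}$.
\item If $\xi=-3m$, then $\xi'=0$ becomes $12\Delta+\tfrac{4\Delta\widehat{m}^2}{m^2}-\tfrac{4\Delta^2}{m^2}\psi^2=0$. This gives $\psi^2=(3m^2+\widehat{m}^2)/\Delta=4$, so $\psi=2$ under the gauge normalisation $\psi\ge 0$ of Proposition \ref{prop:homogeneous connections SO3-kl}.
\end{itemize}

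\emph{Yang--Mills system.} Write $X=\xi+3m$ and $Y=m\xi-\widehat{m}^2+\Delta\psi^2$.
\begin{itemize}
\item If $\psi=0$, the $\xi$-equation becomes linear in $\xi$. Using $\widehat{m}\widehat{F}=F\xi$ as above, it again forces $\xi=\widehat{m}^2/m$.
\item If $\psi\neq0$, the two stationary equations reduce to
$$\frac{16\Delta^2}{m^3}Y+\frac{8\Delta^2}{m^2}X\psi^2=0 \qquad\text{and}\qquad X^2+\frac{4\Delta}{m^2}Y=0.$$
The first gives $2Y=-mX\psi^2$. Substituting into the second gives $X^2=\tfrac{2\Delta}{m}X\psi^2$.
\end{itemize}
The branch $X=0$ forces $Y=0$, which is the irreducible instanton point found above. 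The branch $X=2\Delta\psi^2/m$ gives on one hand $Y=-\Delta\psi^4$. On the other hand, rewriting $Y$ in terms of $X$ and using the identity, $Y=3\Delta\psi^2-4\Delta$. Equating the two yields $\psi^4+3\psi^2-4=0$, so $\psi^2=1$ and $\psi=1$, with $\xi=-3m+\tfrac{2\Delta}{m}$.

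Finally, one checks directly that this last point does not satisfy $\xi'=-F\xi+\widehat{m}\widehat{F}-BC\psi^2=0$. It is therefore a genuinely additional Yang--Mills fixed point. No step is a real obstacle here. The only care needed is in the sign bookkeeping, especially the sign of $m<0$, and in remembering the gauge normalisation $\psi\ge0$.
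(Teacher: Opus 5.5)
Your proposal is correct and is essentially the paper's approach: the paper states these fixed points as a direct computation, with no further argument, by restricting the instanton and Yang--Mills systems to $p_0^m$. Your case split on $\psi$, the reduction to $2Y=-mX\psi^2$ and $X^2+\tfrac{4\Delta}{m^2}Y=0$, and the identity $\widehat{m}^2+3m^2=4\Delta$ are all that this computation requires, and your check that $\bigl(p_0^m,-3m+\tfrac{2\Delta}{m},1\bigr)$ fails the first-order equations (indeed $\psi'\neq 0$ there) correctly shows it is a genuinely additional fixed point.
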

For the instanton system, the first fixed point corresponds to the abelian reduction, so it only makes geometric sense whenever $\widehat{m}= 0 \mod m $ (or more generally $\widehat{i}= 0 \mod i $). The fixed points with $\psi_0 \neq 0$ correspond to the case where the $\SO(3)$-bundle is irreducible over the singular orbit, $Q_{\widehat{m}}$. 

Set $G_0=\psi$ and $G_1=\psi'$, and consider the $(A,B,C,X_0,X_1,G_0,G_1)$ space. The fixed points $\left(p_0^m, \frac{\widehat{m}^2}{m},0,0, 0\right)$ and $\left( p_0^m,-3m ,0, 2, 0 \right)$ have linearisations
$$
\begin{pmatrix}
    L_0^{m}&&& \\
    &0&1&0&0\\
    &\frac{16\Delta^2}{m^2}&0&0&0\\
    &0&0&0&1\\
    &0&0&\frac{16\Delta^2}{m^2}&0
\end{pmatrix} \qquad \qquad \text{and} \qquad \qquad
\begin{pmatrix}
     L_0^{m}&&&&
     \\
    &0&1&0&0\\
    &\frac{48\Delta^2}{m^2} &0& \frac{64\Delta^3}{m^3}&0\\
    &0&0&0&1\\
    &\frac{8\Delta}{m} &0& \frac{32\Delta^2}{m^2}&0
\end{pmatrix}
$$
respectively. At $\left(p_0^m, \frac{\widehat{m}^2}{m},0,0, 0\right)$ we have a 4-dimensional family with eigenvalue $-\frac{4\Delta}{m}$, tangent to the instanton hypersurface; and with all other eigenvalues being negative. Therefore, for each $\theta$, we have a 2-dimensional family of integral curves:
\begin{equation}\label{eq:local-Gamma a,b}
    \Gamma_\theta^i(a,\alpha) =  p_0^i + \big( \cos(\theta)v_1+\sin(\theta)v_2 + a \left( \cos \alpha \,v_3  +\sin \alpha \,v_4 \right)\big)e^{-\frac{4\Delta}{m}s} +  o\left( e^{-\frac{4\Delta}{m}s}\right) \;, 
\end{equation}
where $v_i$ are the obvious extensions of $u_i$ from Equation \eqref{eq:eigenvectors-spin7}, and we have chosen $v_3$ and $v_4$ so that the coordinates of the integral curve are
\begin{equation}
        \xi_{a, \alpha} (s) =  \frac{\widehat{m}^2}{m}+  a \,\cos \alpha  e^{-\frac{4\Delta}{m}s} +  o\left( e^{-\frac{4\Delta}{m}s}\right) \qquad  \text{and} \qquad   \psi_{a, \alpha} (s) =  a \,\sin \alpha  e^{-\frac{4\Delta}{m}s} +  o\left( e^{-\frac{4\Delta}{m}s}\right)\;.
\end{equation}
For $\left( p_0^m,-3m ,0,  2, 0 \right)$, there are two positive eigenvalues: $-\frac{4\Delta}{m}$ with multiplicity three and $-\frac{8\Delta}{m}$ with multiplicity one. The corresponding eigenvectors are
$$
-\frac{4\Delta}{m}: w_1=\begin{pmatrix}
    \\
    u_1\\
    \\
    0\\
    0\\
    0\\
    0\\
\end{pmatrix},\quad 
w_2=\begin{pmatrix}
    \\
    u_2\\
    \\
    6m\Delta\\
    -24\Delta^2\\
    -3m^2\\
    12m\Delta
\end{pmatrix},\quad 
w_3=\begin{pmatrix}
    0\\
    0\\
    0\\
    2m\Delta\\
    -8\Delta^2\\
    -m^2\\
    4m\Delta
\end{pmatrix}, \quad
-\frac{8\Delta}{m}: w_4=\begin{pmatrix}
    0\\
    0\\
    0\\
    -4m\Delta\\
    32\Delta^2\\
    -m^2\\
    8m\Delta
\end{pmatrix}.
$$
where now the eigenvector $w_3$ is not tangent to the instanton hypersurface.  Hence, for each $\theta\in (0, \theta^*]$, we have a one-parameter family of integral curves corresponding to local $\Spin(7)$ solutions:
\begin{equation} \label{eq:local-Omega c}
\Omega_\theta^m(c)\sim p_0^m+\cos(\theta)w_1e^{-\frac{4\Delta}{m}s}+\sin(\theta)w_2e^{-\frac{4\Delta}{m}s}+c w_4 e^{-\frac{8\Delta}{m}s},
\end{equation}
with coordinate asymptotics 
\begin{subequations}
    \begin{align}
        \xi_{c} (s) &=  -3m  +6 m \Delta\sin \theta\, e^{-\frac{4\Delta}{m}s}  - 4c\,m \Delta  \, e^{-\frac{8\Delta}{m}s} +  o\left( e^{-\frac{8\Delta}{m}s}\right) \\
        \psi_c (s) &= 2 -3 m^2  \sin \theta\, e^{-\frac{4\Delta}{m}s}  - c \,m^2   \, e^{-\frac{8\Delta}{m}s} +  o\left( e^{-\frac{8\Delta}{m}s}\right)\;.
    \end{align}
\end{subequations}
\subsection{The exceptional cases}
Let us now study local solutions to the cohomogeneity one $\Spin(7)$-instanton equation. We begin with the case with principal orbit $N_{1,0}$. By Proposition \ref{prop:homogeneous connections SO3-10}, the space of cohomogeneity one connections on the principal part is parametrised up to gauge by 
$$A(t) = (\xi e_7+ e_8 ) \otimes T_1  +\psi_1\left[e_5 \otimes  \big(\cos(\theta) T_2  - \sin(\theta) T_3 \big) +e_6 \otimes  \big(\sin(\theta) T_2  + \cos(\theta) T_3 \big) \right]  +\psi_2 ( e_1 \otimes T_2 +  e_2\otimes T_3)$$
where $\xi, \psi_1, \psi_2$ and $\theta$ depend on $t$. The curvature of $A(t)$ is 
\begin{align*}
    F_A =& \left( \dot{\xi} e_{07} +   \left( \psi_2^2- \xi - 1\right) e_{12} +2 e_{34} +\left(\xi - 1 +\psi_1^2\right) e_{56} + \psi_1\psi_2 \big[\cos(\theta) \left( e_{16}-e_{25}\right)  - \sin(\theta) \left( e_{26}+e_{15}\right)\big]\right) \otimes T_1 \\
    &+ \left[ \left( \dot{\psi_1} \cos (\theta) - \psi_1 \sin(\theta) \dot{\theta}\right)e_{05}  + \left( \dot{\psi_1} \sin (\theta)+ \psi_1 \cos(\theta) \dot{\theta}\right)e_{06} + \dot{\psi_2}e_{01} + \psi_1\cos(\theta) \left[e_{24}-e_{13}+(\xi+3)e_{67}\right]\right.\\&  +\psi_1\sin(\theta)\left[e_{14}+e_{23}-(\xi+3)e_{57}\right] + \psi_2\left[e_{46} -e_{35}+ (\xi-3)e_{27}\right] \Big]\otimes T_2 \\&+ \left[ \left( \dot{\psi_1} \cos (\theta) - \psi_1 \sin(\theta) \dot{\theta}\right)e_{06}  - \left( \dot{\psi_1} \sin (\theta)+ \psi_1 \cos(\theta) \dot{\theta}\right)e_{05} + \dot{\psi_2}e_{02} + \psi_1\cos(\theta) \left[e_{14}+e_{23}-(\xi+3)e_{57}\right]\right.\\&  -\psi_1\sin(\theta)\left[e_{24}-e_{13}+(\xi+3)e_{67}\right] + \psi_2\left[e_{45} +e_{36}- (\xi-3)e_{17}\right] \Big]\otimes T_3\;.
\end{align*}
\begin{proposition}
On the principal bundle $\theta_{1}\to N_{1,0}$, the $\Spin(7)$-instanton equations are
\begin{subequations}\label{eq:SO3-instanton-N10}
    \begin{align}
            \partial_t\left(\frac{\xi}{f^2}\right) &=\frac{1}{f} \left(\frac{1- \psi^2_2}{a^2}-\frac{2}{b^2}+\frac{1- \psi_1^2}{c^2}\right)\\
            \partial_t\log(\psi_1 ) &= - \left( \frac{2c}{ab} + \frac{\xi+3}{f}\right)\\
            \partial_t\log(\psi_2) &= - \left( \frac{2a}{bc} + \frac{\xi-3}{f}\right)\\
             \dot{\theta}&=0
    \end{align}
\end{subequations}
\end{proposition}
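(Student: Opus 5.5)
The plan is to compute $*F_A + \Phi\wedge F_A$ directly from the curvature expression above and read off the components. The approach follows the abelian and generic $Q_{\widehat m}$ cases. The instanton condition is equivalent to $F_A$ lying in the $21$-dimensional $\mathfrak{spin}(7)\subset\Lambda^2$. In the orthonormal coframe \eqref{eq:orthonormal-basis} this amounts to seven linear conditions on the coefficients of $F_A$. Each condition pairs $\omega_{0j}$ with the combination of $\omega_{pq}$ that $\Phi$ determines: for instance, $\omega_{07}$ pairs with $\omega_{12}+\omega_{34}+\omega_{56}$, $\omega_{01}$ with $-\omega_{27}$ together with the terms $\omega_{46},\omega_{35}$ read off from $\Phi$, and so on. These are exactly the relations the paper has already used to derive \eqref{eq:abelian-instaton} and \eqref{eq:new-generic-SO3-instanton}. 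So the first step is to rewrite $F_A$ in the frame $\omega_i$, dividing each $e_{pq}$ coefficient by the corresponding product of scale factors $a,b,c,f$.

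Next, project onto each $T_i$ separately. The $T_1$ component, paired with $\omega_{07}$, gives
$$\dot\xi/f + (\psi_2^2-\xi-1)/a^2 + 2/b^2 + (\xi-1+\psi_1^2)/c^2 = 0,$$
possibly with signs fixed by $\Phi$. I then check that the cross terms $\psi_1\psi_2[\cos\theta(e_{16}-e_{25})-\sin\theta(e_{26}+e_{15})]$ drop out of the remaining $T_1$-conditions. These terms lie in the $\langle e_1,e_2\rangle\wedge\langle e_5,e_6\rangle$ block, and the anti-self-dual-type combinations there (as in the generic case) pair with $\omega_{03},\omega_{04}$. I must verify that these two pairings cancel, or else that they force $\psi_1\psi_2=0$. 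I expect cancellation, because $e_{16}-e_{25}$ and $e_{26}+e_{15}$ are of type $(2,0)+(0,2)$ relative to the appropriate structure and lie in $\Lambda^2_{21}$. Using \eqref{eq:evo-f} with $(k,l,m)=(1,0,-1)$ and $\Delta=1$, the identity $\partial_t(1/f^2)\cdot f^2 = -2\dot f/f = f(-1/a^2+1/c^2)$ absorbs the $\xi$-terms. This converts the equation into the stated form for $\partial_t(\xi/f^2)$.

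For the $T_2,T_3$ components, the $\omega_{05},\omega_{06}$ pairings give two real equations. Taking combinations with $\cos\theta$ and $\sin\theta$ separates them into a radial equation for $\dot\psi_1$ and a phase equation $\psi_1\dot\theta=0$. The radial equation reads $\dot\psi_1 = -(2c/(ab)+(\xi+3)/f)\psi_1$: the $e_{24}-e_{13}$ terms contribute $\psi_1/(ab)$, doubled by the two terms, and the $(\xi+3)e_{67}$ term contributes $(\xi+3)\psi_1/(cf)$, multiplied by $c$ after normalising $\dot\psi_1 e_{05}=(\dot\psi_1/c)\omega_{05}$. Similarly, the $\omega_{01},\omega_{02}$ pairings yield the $\psi_2$ equation, exactly as in \eqref{eq:new-generic-SO3-instanton} with $3m=-3$. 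The equation $\dot\theta=0$ holds wherever $\psi_1\neq0$, and the $\psi_1$ equation is linear, so $\psi_1$ either vanishes identically or never vanishes. In the degenerate case $\theta$ is pure gauge and can be set constant. The remaining components (e.g.\ $\omega_{03},\omega_{04}$ for $T_2,T_3$) must be checked to hold automatically.

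The main obstacle is the sign bookkeeping and this consistency check. Specifically, I need the mixed terms $\psi_1\cos\theta(e_{14}+e_{23})$ in $T_3$ and $\psi_1\cos\theta(e_{24}-e_{13})$ in $T_2$, together with the $T_1$ cross terms, to give no extra constraints. This requires using the exact form of $\Phi$ and comparing coefficients term by term in $\Lambda^2_7$.
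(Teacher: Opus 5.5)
Your proposal is correct and is the same direct computation the paper relies on: you impose $F_A\perp\Lambda^2_7$ in each of the $T_1,T_2,T_3$ components (pairing $\omega_{0j}$ with $\iota_{e_j}\varphi$), absorb the $\xi$-terms via $-2\dot f/f=f(c^{-2}-a^{-2})$, and split the $\omega_{05},\omega_{06}$ conditions into $\dot\psi_1=-\bigl(\tfrac{2c}{ab}+\tfrac{\xi+3}{f}\bigr)\psi_1$ and $\psi_1\dot\theta=0$, with all remaining pairings vanishing identically. Two corrections: the $T_1$ cross terms cancel because $e_{15}+e_{26}$ and $e_{16}-e_{25}$ are, up to sign, the real and imaginary parts of $dz_1\wedge d\bar z_3$, i.e.\ primitive $(1,1)$-forms orthogonal to $\iota_{e_3}\varphi\ni\omega_{15}-\omega_{26}$ and $\iota_{e_4}\varphi\ni-\omega_{16}-\omega_{25}$ (genuinely $(2,0)+(0,2)$ terms would \emph{not} cancel, so your heuristic predicts the wrong outcome), and $\omega_{01}$ pairs with $+\omega_{27}$, not $-\omega_{27}$, which is the sign your final $\psi_2$ equation in fact uses.
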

In the $(A,B,C)$ coordinates, the instanton equations translate to
\begin{subequations}
    \begin{align}
        \xi' &= (BC-AB) \xi + AB(1-\psi^2_1) - 2AC + BC(1- \psi_2^2)\\
        \psi_1' &= -(2C+3+\xi)\psi_1\\
        \psi_2' & = -(2A -3 +\xi) \psi_2
    \end{align}
\end{subequations}
Note that this case is exceptional since there exists an involution $(A,B,C) \to (-C, -B, -A)$ of the $\Spin(7)$-metric ODE system \eqref{eq:new-evo-spin7}, which maps $p_0^1\in \mathcal{C}^-$ to $p_0^{-1} \in \mathcal{C}^+$, which extends to the $\Spin(7)$ ODE as 
 $$(s, A,B,C,\xi, \psi_1, \psi_2) \mapsto (-s, -C, -B, -A, -\xi, \psi_2, \psi_1)\;,$$
which explains its form. We have 
\begin{lemma}
    The fixed points of the extended instanton system over $p_0^i$ are
    \begin{gather*}
        \left(p_0^1, 1, 0, 0\right) \qquad \qquad \left(p_0^1, -3,  2 ,0  \right) \qquad \qquad \left(p_0^{-1}, -1, 0, 0\right) \qquad \qquad \left(p_0^{-1}, 3,  0 ,2 \right) \qquad \qquad\;.
    \end{gather*}
\end{lemma}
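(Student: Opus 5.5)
The plan is a direct case analysis of the algebraic equations $\xi'=\psi_1'=\psi_2'=0$. The first step is to identify the two base points. For $(k,l)=(1,0)$ we have $m=-1$ and $\Delta=1$, while $p_0^l$ is undefined because $l=0$. The $\C P^2$-type fixed points are therefore
\[
p_0^1=p_0^k=(-2,-2,0)\in\mathcal{C}^-, \qquad p_0^{-1}=p_0^m=(0,2,2)\in\mathcal{C}^+ .
\]
The remaining step is to substitute these values of $(A,B,C)$ into the reduced instanton system in the $(A,B,C)$-coordinates and solve for $(\xi,\psi_1,\psi_2)$. The angle $\theta$ satisfies $\dot\theta=0$, so it is an inert parameter and plays no role in locating the fixed points.

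At $p_0^1$ the system becomes
\[
\xi'=-4\xi+4(1-\psi_1^2),\qquad \psi_1'=-(\xi+3)\psi_1,\qquad \psi_2'=-(\xi-7)\psi_2 .
\]
I will split into cases according to which of $\psi_1,\psi_2$ vanish.
\begin{enumerate}[label=(\alph*)]
\item If $\psi_1=\psi_2=0$, then $\xi=1$, giving the abelian point.
\item If $\psi_1\neq0$, then $\xi=-3$. This forces $\psi_2=0$, since $\xi\neq 7$. The first equation then gives $\psi_1^2=1-\xi=4$, and we take $\psi_1=2$. This is legitimate because $\psi_1\mapsto-\psi_1$ is absorbed by $\theta\mapsto\theta+\pi$.
\item If $\psi_1=0$ and $\psi_2\neq0$, then $\xi=7$. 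But then $\xi'=-28+4\neq0$, a contradiction.
\end{enumerate}
This yields exactly $(p_0^1,1,0,0)$ and $(p_0^1,-3,2,0)$.

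At $p_0^{-1}$ the system becomes
\[
\xi'=4\xi+4(1-\psi_2^2),\qquad \psi_1'=-(\xi+7)\psi_1,\qquad \psi_2'=-(\xi-3)\psi_2 .
\]
The same three cases apply.
\begin{enumerate}[label=(\alph*)]
\item If $\psi_1=\psi_2=0$, then $\xi=-1$.
\item If $\psi_2\neq0$, then $\xi=3$, which forces $\psi_1=0$. The first equation gives $\psi_2^2=4$, so $\psi_2=2$ using the convention $\psi_2\geq0$.
\item If $\psi_1\neq 0$ and $\psi_2=0$, then $\xi=-7$. This gives $\xi'=-24\neq0$, a contradiction.
\end{enumerate}
Thus we obtain $(p_0^{-1},-1,0,0)$ and $(p_0^{-1},3,0,2)$.

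As a consistency check, the involution $(s,A,B,C,\xi,\psi_1,\psi_2)\mapsto(-s,-C,-B,-A,-\xi,\psi_2,\psi_1)$ maps the first pair of fixed points onto the second pair.

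There is no real obstacle here. The only point needing care is the sign normalisation of $\psi_1$ and $\psi_2$. The sign of $\psi_2$ is fixed by the gauge choice $\psi_2\in[0,\infty)$ in Proposition \ref{prop:homogeneous connections SO3-10}. The sign of $\psi_1$ is fixed by absorbing it into $\theta\in S^1$.
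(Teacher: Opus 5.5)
Your proposal is correct and is the same direct fixed-point computation that the paper leaves implicit. You substitute $p_0^1=(-2,-2,0)$ and $p_0^{-1}=(0,2,2)$ into the reduced $(A,B,C)$-system and split according to which of $\psi_1,\psi_2$ vanish, which yields exactly the four listed points. The sign normalisations ($\psi_1\mapsto-\psi_1$ absorbed by $\theta\mapsto\theta+\pi$, and $\psi_2\geq 0$) are justified correctly by Proposition~\ref{prop:homogeneous connections SO3-10}.
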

By computing the linearisations of the ODE around the fixed points, we have
\begin{proposition}
At each of the stationary points
\[ (p_0^1,1,0,0) \qquad\text{and}\qquad (p_0^1,-3,2,0), \]
the eigenvectors corresponding to negative eigenvalues of the linearised system are tangent to the hyperplane $\{\psi_2=0\}$.

Similarly, at each of the stationary points 
\[ (p_0^{-1},-1,0,0) \qquad\text{and}\qquad (p_0^{-1},3,0,2), \]
the eigenvectors corresponding to positive eigenvalues of the linearised system are tangent to the hyperplane $\{\psi_1=0\}$.
\end{proposition}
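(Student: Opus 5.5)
The plan is a direct linearisation computation at each of the four stationary points, using the structure of the $(A,B,C,\xi,\psi_1,\psi_2)$ system. The first observation is that the system is block-triangular near these points. The $(A,B,C)$-subsystem \eqref{eq:new-evo-spin7} is independent of the gauge variables. The equations for $\psi_1,\psi_2$ are of the form $\psi_j'=(\text{function})\,\psi_j$. Hence at a stationary point with $\psi_2=0$, the row of the Jacobian corresponding to $\psi_2$ is zero except for its diagonal entry
\[
\lambda_{\psi_2}=-(2A-3+\xi)\big|_{\text{fixed pt}} .
\]
The same holds for $\psi_1$ at points with $\psi_1=0$. So the hyperplane $\{\psi_2=0\}$ is invariant, and the linearisation preserves its tangent space. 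The full spectrum is therefore the spectrum of the restriction to $\{\psi_2=0\}$ together with the single eigenvalue $\lambda_{\psi_2}$, whose eigenvector has a nonzero $\psi_2$-component.

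The first claim then reduces to showing that $\lambda_{\psi_2}$ is not negative, and that no negative eigenvector acquires a $\psi_2$-component through a repeated eigenvalue. At $p_0^1=(-6,-6,0)$ (with $\Delta=1$, $k=1$) the relevant $A$-coordinate is $A=-6$.
\begin{itemize}
\item At $(p_0^1,1,0,0)$ we get $\lambda_{\psi_2}=-(-12-3+1)=14>0$.
\item At $(p_0^1,-3,2,0)$ we get $\lambda_{\psi_2}=-(-12-3-3)=18>0$.
\end{itemize}
I will double-check the sign conventions for $A$ at $p_0^1$ against the formula for $p_0^k=(-2\Delta/k,-2\Delta/k,0)$; with $\Delta=1$ this gives $(-2,-2,0)$ rather than $(-6,-6,0)$, so I will recompute with the correct point. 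In either case the expected outcome is that $\lambda_{\psi_2}$ is positive while all the relevant (negative) eigenvalues lie in the $\{\psi_2=0\}$ block. Since $\lambda_{\psi_2}$ does not coincide with any of the negative eigenvalues, the generalized eigenspaces for negative eigenvalues are contained in the invariant subspace $T\{\psi_2=0\}$, and this proves the first claim.

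For the second pair of points I will use the involution
\[
(s,A,B,C,\xi,\psi_1,\psi_2)\mapsto(-s,-C,-B,-A,-\xi,\psi_2,\psi_1)
\]
stated just before the lemma. It maps $(p_0^1,1,0,0)\mapsto(p_0^{-1},-1,0,0)$ and $(p_0^1,-3,2,0)\mapsto(p_0^{-1},3,0,2)$. It intertwines the vector fields up to the time reversal $s\mapsto -s$, so it conjugates the linearisations up to an overall sign. Negative eigenvectors are therefore sent to positive eigenvectors, and the hyperplane $\{\psi_2=0\}$ is sent to $\{\psi_1=0\}$. This gives the second claim with no further computation.

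The main obstacle is the first bullet: correctly evaluating $A$ and $\xi$ at $p_0^1$, and confirming that the diagonal entry $\lambda_{\psi_2}$ has the opposite sign to the relevant eigenvalues (or at least differs from them). Should it instead be negative, the claim would fail, so this sign check is the crux. I would also verify, as a consistency check, that the $\xi$-equation linearised at these points does not couple $\psi_2$ into the negative block. This coupling is automatically absent, because the $\psi_2$-dependence enters only through $\psi_2^2$, whose derivative vanishes at $\psi_2=0$.
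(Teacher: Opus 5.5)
Your argument is correct and is essentially the paper's proof, which is a direct linearisation; you organise it economically through the decoupled $\psi_2$-row and the involution $(s,A,B,C,\xi,\psi_1,\psi_2)\mapsto(-s,-C,-B,-A,-\xi,\psi_2,\psi_1)$ that the paper records just before the statement. The sign check you deferred does close: for $N_{1,0}$ one has $\Delta=1$, so $p_0^1=p_0^k=(-2,-2,0)$ (your $A=-6$ is the $N_{1,1}$ value), and $\lambda_{\psi_2}=-(2A-3+\xi_0)=7-\xi_0$ equals $6$ at $\xi_0=1$ and $10$ at $\xi_0=-3$, both positive, while correspondingly $\lambda_{\psi_1}=-(7+\xi_0)\in\{-6,-10\}$ at $p_0^{-1}=(0,2,2)$, as the involution predicts.
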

As a direct corollary, we have
\begin{corollary}
A smooth instanton solution over $Q_1 \rightarrow M_{1,0}^1$ or $P_1 \rightarrow M^1_{1,0}$ is of the form $(\xi, \psi_1,0)$. Similarly, a smooth instanton solution on $Q_{-1}$ or $P_{-1}$ over  $M_{1,0}^{-1}$ is of the form $(\xi, 0,\psi_2)$.  In particular, the solutions behave as in the generic case discussed above.
\end{corollary}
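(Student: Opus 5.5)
The corollary follows from the preceding proposition through the stable/unstable manifold theorem. We treat $M^1_{1,0}$ in detail. The case $M^{-1}_{1,0}$ then follows from the involution $(s,A,B,C,\xi,\psi_1,\psi_2)\mapsto(-s,-C,-B,-A,-\xi,\psi_2,\psi_1)$. This involution maps $p_0^1$ to $p_0^{-1}$, exchanges the roles of $\psi_1$ and $\psi_2$, and reverses time, so negative eigenvalues become positive ones.

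\emph{Step 1: reduce to trajectories leaving a fixed point.} A smooth extension of the connection over the singular orbit $\C P^2$ forces the trajectory of the extended system in $(A,B,C,\xi,\psi_1,\psi_2)$ to emanate from a fixed point lying over $p_0^1$ as $s\to+\infty$. Recall that $p_0^1\in\mathcal C^-$ and $f<0$, so the flow in $s$ is time-reversed. By the lemma listing the fixed points, the only admissible ones are $(p_0^1,1,0,0)$, which corresponds to the reducible extension $P_1$, and $(p_0^1,-3,2,0)$, which corresponds to $Q_1$.

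\emph{Step 2: locate the trajectory on the stable manifold.} In the time direction appropriate to $\mathcal C^-$, such a trajectory lies on the stable manifold of the relevant fixed point. By the preceding proposition, this manifold is tangent at the fixed point to the span of the eigenvectors with negative eigenvalues, and that span lies inside $\{\psi_2=0\}$.

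\emph{Step 3: upgrade tangency to containment.} Tangency alone does not give $\psi_2\equiv0$, so we use the invariance of the hyperplane. The equation $\psi_2'=-(2A-3+\xi)\psi_2$ shows that $\{\psi_2=0\}$ is invariant under the full flow. Consequently the restricted system on $\{\psi_2=0\}$ has its own stable manifold at the fixed point, and this is an honest submanifold contained in the full stable manifold.

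By the previous proposition, the negative eigenspace of the linearisation restricted to $\{\psi_2=0\}$ is the whole negative eigenspace. The two stable manifolds therefore have the same dimension, and by uniqueness of the stable manifold they coincide locally. Every admissible trajectory thus satisfies $\psi_2\equiv0$ near the singular orbit. By invariance and uniqueness of ODE solutions, $\psi_2\equiv0$ for all $s$.

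\emph{Step 4: conclude.} With $\psi_2=0$, the equations become
\[
\xi'=(BC-AB)\xi+AB(1-\psi_1^2)-2AC+BC,\qquad \psi_1'=-(2C+3+\xi)\psi_1 .
\]
This is the same structure as the generic system \eqref{eq:new-generic-SO3-instanton} with $i=k$, via the relabelling of the pairs $(A,\psi)\leftrightarrow(C,\psi_1)$. The angle $\theta$ is constant by \eqref{eq:SO3-instanton-N10} and can be gauged away. This justifies the claim that the solutions behave as in the generic case.

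\emph{Main obstacle.} The delicate point is the dimension count in Step 3 when eigenvalues are degenerate or some are zero. Zero eigenvalues could arise, for instance, from the $f$-direction or from the free parameter $\theta$. One must check that only hyperbolic negative directions enter the smoothness condition. If centre directions appear, we would instead argue directly with the linear equation for $\psi_2$. Its coefficient satisfies $-(2A-3+\xi)\to -(2\cdot(-2)-3+\xi_0)$ at the fixed point, where $\xi_0\in\{1,-3\}$, and this limit has a sign forcing $\psi_2$ to blow up unless $\psi_2\equiv0$. That blow-up is incompatible with convergence to the fixed point.
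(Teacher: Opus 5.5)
Your proposal is correct and follows the paper's route: the paper reads the corollary off directly from the preceding proposition that the negative-eigenvalue directions at $(p_0^1,1,0,0)$ and $(p_0^1,-3,2,0)$ lie in the flow-invariant hyperplane $\{\psi_2=0\}$. Your Step 3 only makes explicit the invariance-plus-dimension argument the paper leaves implicit, and using the involution for $M^{-1}_{1,0}$ is equivalent to the second half of that proposition.

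The obstacle you flag does not arise: both fixed points are hyperbolic in $(A,B,C,\xi,\psi_1,\psi_2)$, since the $\psi_2$-eigenvalue is $6$, resp.\ $10$, the $f$-direction has eigenvalue $-2$, and $\theta$ is a decoupled constant. In fact your fallback argument alone already proves the claim, because $\psi_2(s)=\psi_2(s_0)\exp\bigl(-\int_{s_0}^{s}(2A-3+\xi)\,d\tilde s\bigr)$ blows up as $s\to+\infty$ unless $\psi_2\equiv 0$.
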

Let us move on to the case corresponding to $\theta_3\in \Map(\U(1), \SO(3))$ over $N_{1,1}$.  The space of cohomogeneity one connections is parametrised by
\begin{align*}
    A(t) =& (\chi_1e_1 + \chi_2 e_2+\xi e_7+ 3e_8) \otimes T_1  +\psi_1\left[e_5 \otimes  \big(\cos(\theta) T_2  - \sin(\theta) T_3 \big) +e_6 \otimes  \big(\sin(\theta) T_2  + \cos(\theta) T_3 \big) \right]  \\&+\psi_2 ( e_4\otimes T_2 +  e_3\otimes T_3)
\end{align*}
 The curvature of $A(t)$ is computed as usual. Introducing the change of coordinates $\chi = \chi_1 \cos( \theta) - \chi_2 \sin( \theta)$, we have
\begin{lemma}
On the principal bundle corresponding to $\theta_{3}$ over $N_{1,1}$, the $\Spin(7)$-instanton equations are
\begin{subequations} \label{eq:SO3exceptionalN11-3}
 \begin{align}
    \frac{\dot{\xi}}{f}& =  \left( \frac{2}{3a^2} - \frac{1}{3b^2} -\frac{1}{3c^2}\right)\xi +\left(\frac{3- \psi_1^2}{c^2}-\frac{3- \psi^2_2}{b^2}\right)\\
    \dot{\chi}&= \left(\frac{6}{f} -  \frac{2a}{bc} \right) \chi + \frac{a}{bc}\psi_1\psi_2\\
    \dot{\psi_1} &= - \left(\frac{2c}{ab}+\frac{\xi +3}{f}\right) \psi_1 - \frac{c}{ab} \chi \psi_2\\
    \dot{\psi_2} &= - \left(\frac{2b}{ac}+\frac{3-\xi }{f}\right) \psi_2 - \frac{b}{ac} \chi \psi_1
\end{align}
\end{subequations}
as long as $\psi_1$ and $\psi_2$ don't vanish simultaneously. 
\end{lemma}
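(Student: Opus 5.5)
The plan is to follow the same derivation used for $Q_{\widehat m}\to N_{k,l}$ and for $N_{1,0}$: compute the curvature of $A(t)$, take Hodge duals in the orthonormal coframe \eqref{eq:orthonormal-basis}, and impose $*F_A=-\Phi\wedge F_A$ component by component.

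\textbf{Curvature.} Using the Maurer--Cartan relations \eqref{eq:MC-SU(3)} with $(k,l)=(1,1)$, so that $m=-2$, $\widehat k=3$, $\widehat l=-3$, $\widehat m=0$ and $\Delta=3$, write $F_A=dt\wedge\dot A+d_{N}A+\tfrac12[A\wedge A]$. I would organise the result by $\so(3)$-component.
\begin{enumerate}[label=(\roman*)]
  \item The $T_1$-component collects four groups of terms:
  \begin{itemize}
    \item $\dot\xi\, e_{07}$ and $\dot\chi_j\, e_{0j}$;
    \item the abelian terms already computed for $N_{1,1}$ in the abelian section;
    \item $\psi_1^2 e_{56}$ and $\psi_2^2 e_{34}$, with signs fixed by the orientation reversal in the $\psi_2$-term;
    \item mixed terms $\psi_1\psi_2(\cdots)$ in $e_{35},e_{36},e_{45},e_{46}$, whose coefficients depend on $\theta$.
  \end{itemize}
  \item The $T_2$- and $T_3$-components contain $\dot\psi_1,\dot\theta$ times $e_{05},e_{06}$ and $\dot\psi_2$ times $e_{03},e_{04}$.
  \item They also contain the $e_{7}$-terms with coefficients $(\xi\pm3)$, coming from $3k=3$, $3l=3$ and the $\xi e_7$ bracket.
  \item Finally, there are cross terms $\chi_j\psi_i$ arising from $[e_1T_1,\psi\, e_{5}T_2]$ and similar brackets.
\end{enumerate}

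\textbf{Projecting the instanton equation.} I would split $\Lambda^2\otimes\so(3)$ into the $\Lambda^2_7$ part and then read off the projections:
\begin{itemize}
  \item the $\omega_{07}T_1$ direction gives the $\xi$-equation;
  \item the $\omega_{01},\omega_{02}$ directions give the $\chi_j$-equations;
  \item the $\omega_{05},\omega_{06},\omega_{03},\omega_{04}$ directions with $T_2,T_3$ give the equations for $\psi_1$, $\psi_2$ and $\theta$.
\end{itemize}
The $\xi$-equation should reproduce the abelian $N_{1,1}$ equation, with $3$ replaced by $3-\psi_1^2$ and $3-\psi_2^2$. The $\chi_j$-equations should reproduce \eqref{eq:abelian-instanton-2}, now with a $\psi_1\psi_2$ source rotated by $\theta$.

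\textbf{Key step: the rotation by $\theta$.} This is where I expect the main obstacle. The $\chi_j$-equations involve the combinations $\chi_1\cos\theta-\chi_2\sin\theta$ and $\chi_1\sin\theta+\chi_2\cos\theta$. The $\psi$-directions give two real equations each, which I would write in the complex form $\partial_t(\psi_1e^{i\theta})$. I would:
\begin{enumerate}[label=(\alph*)]
  \item separate the real part, which gives $\dot\psi_1$ and $\dot\psi_2$;
  \item separate the imaginary part, which gives $\psi_1\dot\theta=(\text{combination})\,\chi^\perp\psi_2$ together with a companion relation for $\psi_2$.
\end{enumerate}
Consistency of these two imaginary equations forces $\chi^\perp\coloneqq\chi_1\sin\theta+\chi_2\cos\theta$ to satisfy a decoupled linear equation. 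This requires division by $\psi_1$ or $\psi_2$, which is exactly where the hypothesis that they do not vanish simultaneously enters.

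\textbf{Residual gauge freedom.} I would then use the residual $\U(1)$ gauge freedom (a rotation in $\langle T_2,T_3\rangle$ combined with the reparametrisation of $\theta$) to normalise $\dot\theta=0$ and $\chi^\perp$ consistently. What remains is the system \eqref{eq:SO3exceptionalN11-3} in $\chi=\chi_1\cos\theta-\chi_2\sin\theta$.

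\textbf{Checks.} As a consistency check, the result should map under the $B\leftrightarrow C$ symmetry with $\psi_1\leftrightarrow\psi_2$, $\xi\to-\xi$. It should also reduce to the abelian system when $\psi_1=\psi_2=0$.
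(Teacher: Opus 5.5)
\textbf{The gap: $\dot\theta$ and $\chi^\perp$ cannot be gauged away.} Your overall route (compute $F_A$, pair each $\so(3)$-component with the generators $\omega_{0i}+\iota_{e_i}\varphi$ of $\Lambda^2_7$, recombine the two $\psi_1$-equations) is all the paper does; it only says the curvature is ``computed as usual''. The step that fails is the last one: you cannot normalise $\dot\theta=0$ or $\chi^\perp$ using residual gauge freedom.
\begin{itemize}
\item In temporal gauge the only remaining gauge transformations are constant rotations $\exp(\phi T_1)$, and the ansatz has already used them to make the $\psi_2$-block real.
\item They act trivially on $T_1$-components, so they cannot change $\chi_1,\chi_2$, hence not $\chi^\perp$.
\item They rotate both off-diagonal blocks at once, so once $\psi_2$ is real the relative phase $\theta$ is gauge invariant. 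A time-dependent rotation would create a $dt\otimes T_1$ term and leave the ansatz.
\end{itemize}
What actually kills these quantities is the instanton equation itself.
\begin{itemize}
\item The $T_2$-component of $F_A$ paired with $\omega_{03}+\omega_{15}-\omega_{26}+\omega_{47}$ contains no derivative. Equivalently, so does the $T_3$-component paired with $\omega_{04}-\omega_{16}-\omega_{25}-\omega_{37}$. It gives the algebraic constraint $\psi_1\chi^\perp=0$, with $\chi^\perp=\chi_1\sin\theta+\chi_2\cos\theta$. This is the ``imaginary part'' of the $\psi_2$-equation, which has no phase to absorb it.
\item The imaginary part of the $\psi_1$-equation is $\psi_1\dot\theta=\frac{c}{ab}\psi_2\chi^\perp$.
\end{itemize}
So $(\psi_1,\psi_2)\neq(0,0)$ forces $\chi^\perp=0$ pointwise, and then $\dot\theta=0$ wherever $\psi_1\neq0$. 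This is compatible with the $\chi_j$-equations, because their $\psi_1\psi_2$-source is proportional to $(\cos\theta,-\sin\theta)$ and has no $\chi^\perp$-component. That is where the non-vanishing hypothesis enters, not through a decoupled linear ODE for $\chi^\perp$ that is afterwards gauged away.

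\textbf{A warning about the $\chi$-equation.} When you carry out the $T_1$-projection you should not expect the displayed $\chi$-equation. The cross term of $a_2\wedge a_3$ is $\psi_1\psi_2\bigl[\cos\theta\,(e_{46}-e_{35})-\sin\theta\,(e_{36}+e_{45})\bigr]$. It lands in the same slots as $\chi_1\,de_1+\chi_2\,de_2$ and effectively replaces $\chi$ by $\chi+\psi_1\psi_2$ there. Pairing with $\omega_{01}+\omega_{46}-\omega_{35}+\omega_{27}$ and $\omega_{02}+\omega_{36}+\omega_{45}-\omega_{17}$ gives
\[
\dot\chi=\Bigl(\frac{6}{f}-\frac{2a}{bc}\Bigr)\chi-\frac{2a}{bc}\,\psi_1\psi_2 .
\]
The gradient-flow structure of the $\Spin(7)$-instanton equation gives the same coefficient. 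With $L^2$-weights $a^{-2}$ for $\chi$ and $2c^{-2}$ for $\psi_1$, the coefficient $-\frac{c}{ab}$ of $\chi\psi_2$ in the $\psi_1$-equation forces $-\frac{2a}{bc}$ for $\psi_1\psi_2$ in the $\chi$-equation. The same rule, with weight $2b^{-2}$, correctly reproduces $-\frac{b}{ac}$ in the $\psi_2$-equation. The $\xi$-, $\psi_1$- and $\psi_2$-equations do come out as stated. Your proposed checks ($B\leftrightarrow C$ symmetry and the abelian reduction) cannot detect this coefficient. Verify it directly rather than aiming to reproduce $+\frac{a}{bc}\psi_1\psi_2$.
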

Note that the requirement $(\psi_1, \psi_2) (s)\neq (0,0)$ is not a limiting assumption, since
\begin{lemma}
    If there exists $T\in \R $ such that $(\psi_1(T),\psi_2(T))=(0,0)$, then we have $(\psi_1, \psi_2)=(0,0)$ for all time, and the ODE system reduces to the abelian instanton system.
\end{lemma}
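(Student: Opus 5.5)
The plan is to treat the system \eqref{eq:SO3exceptionalN11-3} as an ODE for the vector $y=(\xi,\chi,\psi_1,\psi_2)$ and to use uniqueness of solutions. I first note that the right-hand sides of the equations for $\psi_1,\psi_2$ are linear in $(\psi_1,\psi_2)$. The coefficients depend on $(a,b,c,f,\xi,\chi)$, which are smooth on the principal part, where $abcf\neq 0$. So $(\psi_1,\psi_2)$ satisfies a linear homogeneous system
\[
\frac{d}{dt}\begin{pmatrix}\psi_1\\ \psi_2\end{pmatrix}=M(t)\begin{pmatrix}\psi_1\\ \psi_2\end{pmatrix},
\]
with $M(t)$ continuous along the given solution. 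Vanishing at a single time $T$ then forces $(\psi_1,\psi_2)\equiv(0,0)$, either by uniqueness for linear ODEs or by a Gr\"onwall estimate on $\psi_1^2+\psi_2^2$.

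The subtlety is the caveat in the lemma: the system \eqref{eq:SO3exceptionalN11-3} was derived under the assumption $(\psi_1,\psi_2)\neq(0,0)$. This assumption was used to gauge away $\theta$ and to combine $\chi_1,\chi_2$ into $\chi=\chi_1\cos\theta-\chi_2\sin\theta$. To avoid circularity, I will run the argument on the full, un-reduced instanton system for $(\xi,\chi_1,\chi_2,\psi_1,\theta,\psi_2)$. Equivalently, I will use the Cartesian variables $\Psi_1=\psi_1 e^{i\theta}\in\C$ and $\psi_2$, in which the connection ansatz is polynomial. In these variables the equations are smooth and the $\T_2,T_3$-components of $F_A$ are linear in $(\Psi_1,\psi_2)$. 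This follows from the curvature structure: every term in the $T_2,T_3$ components contains exactly one factor of $\psi_1$ or $\psi_2$, coming either from $d\psi$ or from a bracket with the $T_1$-part. The $\Spin(7)$-equations for $(\Psi_1,\psi_2)$ are therefore a linear homogeneous system whose coefficients depend on $(\xi,\chi_1,\chi_2)$ and the metric. Invoking uniqueness at $T$ gives the vanishing for all time.

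Finally, I substitute $\psi_1=\psi_2=0$ into the remaining equations. The $\xi$-equation becomes the abelian one in \eqref{eq:abelian-instaton for N11} with $n=3$. The $\chi_i$-equations become \eqref{eq:abelian-instanton-2}, since the source term $\psi_1\psi_2$ drops out. This is exactly the abelian system on $N_{1,1}$.

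The main obstacle is bookkeeping. I must confirm that, in the un-reduced variables, no $\psi$-independent source term appears in the $T_2,T_3$-components of the instanton equations. I will check this by projecting $*F_A+\Phi\wedge F_A$ onto $T_2,T_3$. This should be routine given the $\U(1)$-weight decomposition: $\langle T_2,T_3\rangle$ has weight $3$, and the $\psi$-free part of $A$ is valued in $T_1$.
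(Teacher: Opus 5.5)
The paper states this lemma without proof, and your argument is correct and is the natural one. The $\langle T_2,T_3\rangle$-component of the instanton equation is linear homogeneous in $(\Psi_1,\psi_2)$ and their derivatives, by the grading $[T_1,\langle T_2,T_3\rangle]\subseteq\langle T_2,T_3\rangle$ and $[\langle T_2,T_3\rangle,\langle T_2,T_3\rangle]\subseteq\langle T_1\rangle$. Passing to the Cartesian variable $\Psi_1=\psi_1e^{i\theta}$ correctly sidesteps the singularity of the reduced system \eqref{eq:SO3exceptionalN11-3} at $(\psi_1,\psi_2)=(0,0)$.

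The one point worth stating explicitly is why these equations are in normal form, so that uniqueness applies. In temporal gauge the $\Spin(7)$-instanton equation is the evolution equation $\partial_t A_t=\pm{*_t}\bigl(*_t\varphi_t\wedge F_{A_t}\bigr)$, so each component of $\Psi_1$ and $\psi_2$ has its own derivative equation. If $\psi_2$ is kept real for all $t$, the imaginary part of its equation is merely an additional constraint, which does not affect uniqueness.
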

Using the $(A,B,C)$ coordinates, the ODE system becomes 
\begin{subequations}\label{eq:exceptional-SO3-11-compactified}
\begin{align}
    \xi'& =  \left( 2BC -AC-AB\right)\frac{\xi}{3} +AB(3- \psi_1^2)+AC(\psi^2_2-3)\\
    \chi' &= \left(6-2A \right) \chi + A \psi_1\psi_2\\
    \psi_1' &= - \left(2C+\xi +3\right) \psi_1 - C\chi \psi_2\\
    \psi_2' &= - \left(2B+3-\xi\right) \psi_2 - B \chi \psi_1
\end{align}
\end{subequations}
Due to the symmetry $B \leftrightarrow C$, we focus on $p_0^1=(-6,-6,0)$. As usual, we have
\begin{lemma}
    The fixed points of the extended system \eqref{eq:exceptional-SO3-11-compactified} over $p_0^1$ are
    $$ \left(p_0^1, 9, 0,0, 0\right) \qquad \text{and}\qquad \left(p_0^1, -3, 0,  2 ,0  \right)\;. $$
    The corresponding linearisations are 
$$\begin{pmatrix}
    L_0^1&&&\\
    &-12&&&\\
    &&18&&\\
    &&&-12&\\
    &&&&18
\end{pmatrix} \qquad  \text{and} \qquad \begin{pmatrix}
    &&&0&0&0&0\\
    &L_0^1&&0&0&0&0\\
    &&&0&0&0&0\\
    0&0&24&-12&0& -144 & 0\\
    0&0&0&0&18&0&-12\\
    0&0&-4&-2&0 &0&0\\
    0&0&0&0&12&0&6
\end{pmatrix}\;.
$$
\end{lemma}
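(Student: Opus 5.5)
The plan is to specialise the system \eqref{eq:exceptional-SO3-11-compactified} to the base point, solve the resulting algebraic equations by a case split on $\psi_1$, and then differentiate. Throughout I order the coordinates as $(A,B,C,\xi,\chi,\psi_1,\psi_2)$. First I restrict the $(A,B,C)$-part to $p_0^1=(-6,-6,0)$, which is a fixed point of \eqref{eq:new-evo-spin7}. Since $C=0$, all terms carrying $C$ drop out, and the system becomes
\begin{align*}
\xi' &= -12\,\xi + 36\,(3-\psi_1^2)\,, \qquad \chi' = 18\,\chi - 6\,\psi_1\psi_2\,,\\
\psi_1' &= -(\xi+3)\,\psi_1\,, \qquad \psi_2' = (9+\xi)\,\psi_2 + 6\,\chi\psi_1\,.
\end{align*}

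Next I solve for the stationary points by splitting on the equation $\psi_1'=0$.
\begin{enumerate}[label=(\roman*)]
\item If $\psi_1=0$, then $\xi'=0$ forces $\xi=9$. The equations $\chi'=0$ and $\psi_2'=0$ then read $18\chi=0$ and $18\psi_2=0$, so we get $(p_0^1,9,0,0,0)$.
\item If $\xi=-3$, then $\xi'=0$ gives $36+36(3-\psi_1^2)=0$, so $\psi_1^2=4$. The residual sign is removed by the gauge action of $S=\diag(1,-1,-1)$ used in Proposition~\ref{prop:homogeneous connections SO3-11}, so we may take $\psi_1=2$. The remaining equations become the linear system $18\chi-12\psi_2=0$ and $12\chi+6\psi_2=0$. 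Its determinant is $108+144\neq 0$, so $\chi=\psi_2=0$, giving $(p_0^1,-3,0,2,0)$.
\end{enumerate}

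For the linearisations, the $(A,B,C)$-block is $L_0^1$ from \eqref{eq:linearisation-ABC}, and the $(A,B,C)$-rows have no dependence on the connection variables. The remaining work is to compute the partial derivatives of the four connection equations in \eqref{eq:exceptional-SO3-11-compactified} with respect to all seven variables, and evaluate them at each point.

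At the abelian point, $\psi_1=\psi_2=\chi=0$ kills every mixed term. For instance
\[
\partial_A\xi' = -\tfrac{(B+C)\xi}{3} + B(3-\psi_1^2) = 18-18 = 0\,,
\]
and similarly $\partial_B\xi'=0$ and $\partial_C\xi'=0$. The connection block is then diagonal, $\diag(-12,18,-12,18)$.

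At the irreducible point, the only non-zero metric couplings come from the $C$-derivatives:
\[
\partial_C\xi' = \tfrac{(2B-A)\xi}{3} + A(\psi_2^2-3) = 6+18 = 24\,, \qquad \partial_C\psi_1' = -2\psi_1 - \chi\psi_2 = -4\,.
\]
The $A$- and $B$-derivatives vanish, for example $\partial_A\xi' = 2\xi - 6(3-\psi_1^2) = -6+6 = 0$. The connection block has entries $-12$ and $-72\psi_1=-144$ in the $\xi'$ row, and $18$ and $-6\psi_1=-12$ in the $\chi'$ row. The $\psi_1'$ row has $-\psi_1=-2$ in the $\xi$ column and $-(\xi+3)=0$ on the diagonal. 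The $\psi_2'$ row has $6\psi_1=12$ in the $\chi$ column and $9+\xi=6$ on the diagonal. These reproduce the stated matrices.

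The argument is entirely routine, so there is no real obstacle. The step requiring most care is keeping the coordinate ordering and signs consistent in the mixed entries (the $-144$, $-12$ and $12$), and in particular remembering that $\psi_2'$ carries the coefficient $-(2B+3-\xi)$, which evaluates to $9+\xi$ at $B=-6$. The one conceptual point is the gauge justification for fixing $\psi_1=2$ rather than $-2$.
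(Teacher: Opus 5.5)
Your proposal is correct. The restricted system at $p_0^1$, the case split on $\psi_1'=0$, and every entry of both linearisations (in particular $24$, $-144$, $-12$, $-4$, $-2$, $12$ and $6$) check out, and this matches the paper, which presents the lemma as a direct computation with no further argument.

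Your normalisation $\psi_1=2$ via $S=\diag(1,-1,-1)$ is legitimate because $\psi_2=0$ at that point. Equivalently, the system is invariant under $(\chi,\psi_1)\mapsto(-\chi,-\psi_1)$, which comes from $\theta\mapsto\theta+\pi$, so $(p_0^1,-3,0,-2,0)$ is the same fixed point up to gauge.
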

A short computation shows that the eigenspaces of negative eigenvalue are the same as in the generic case. Hence, we have
\begin{proposition}
    A smooth instanton solution on $Q_3$ or $P_3$ over $M_{1,1}^1$ is of the form $(\xi, 0, \psi_1, 0)$.
\end{proposition}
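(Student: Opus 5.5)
Since $p_0^1=(-6,-6,0)$ lies in $\mathcal{C}^-$, the curves $\gamma^1_\theta$ emanate from it in reversed time. A solution is smooth across the singular orbit exactly when its trajectory in $(A,B,C,\xi,\chi,\psi_1,\psi_2)$-space lies on the unstable manifold, in reversed $s$, of one of the two fixed points listed in the preceding lemma. Equivalently, it lies on the invariant manifold tangent to the eigenspaces of \emph{negative} eigenvalue, as $s\to+\infty$ with $f\le 0$. The plan is to identify this manifold explicitly and show it is contained in the invariant subspace $\{\chi=0,\ \psi_2=0\}$.

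We first check that $\{\chi=0,\psi_2=0\}$ is invariant under \eqref{eq:exceptional-SO3-11-compactified}. If $\chi=\psi_2=0$, then $\chi'=A\psi_1\psi_2=0$ and $\psi_2'=-B\chi\psi_1=0$, so uniqueness of ODE solutions gives invariance. On this subspace the system reduces to equations for $(\xi,\psi_1)$ of exactly the generic form, with $\psi_1'=-(2C+\xi+3)\psi_1$, analogous to \eqref{eq:new-generic-SO3-instanton} with $e_5,e_6$ playing the role of $e_1,e_2$. The local analysis of the generic case therefore applies on it.

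Next we read off the linearisations. At $(p_0^1,9,0,0,0)$ the matrix is diagonal in the connection variables, with entries $-12,18,-12,18$ for $(\xi,\chi,\psi_1,\psi_2)$. The negative eigenspace is therefore spanned by the negative eigenspace of $L_0^1$ (eigenvalue $-12$, consistent with $-4\Delta/i$ for $\Delta=3$, $i=1$), together with the $\xi$ and $\psi_1$ directions. The $\chi$ and $\psi_2$ directions are repelling. At $(p_0^1,-3,0,2,0)$ the $(\chi,\psi_2)$-block is decoupled from the others:
\[
\begin{pmatrix}18&-12\\12&6\end{pmatrix},
\]
which has trace $24$ and determinant $252$. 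Its eigenvalues $12\pm 6\sqrt{-6}$ have positive real part, so the negative eigenspace again lies in $\{\chi=\psi_2=0\}$.

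By the stable manifold theorem, the local stable manifold of each fixed point is the unique invariant manifold tangent to its negative eigenspace. The invariant subspace $\{\chi=\psi_2=0\}$ contains the fixed point and is tangent to that eigenspace. Restricting the flow to it, the generic analysis provides a stable manifold of full dimension, namely the families $\Gamma^1_\theta(a,\alpha)$ and $\Omega^1_\theta(c)$. These must therefore coincide with the full local stable manifold. Any smooth solution thus satisfies $\chi\equiv 0$ and $\psi_2\equiv0$ near the singular orbit, and by invariance for all time.

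The main delicate point is that the linearisation at $(p_0^1,-3,0,2,0)$ is not diagonal. One has to check carefully that the block coupling $(\xi,\psi_1)$ with $C$ contributes no negative directions outside the subspace. It also has to be confirmed that no zero eigenvalue appears, so that there are no centre-manifold issues. For this we will compute the characteristic polynomial of the $(C,\xi,\psi_1)$-block, using the entries $24,-12,-144,-4,-2,0$, and verify that it matches the generic case with $\Delta=3$, $m\to 1$.
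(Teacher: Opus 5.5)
Your proposal is correct and follows the paper's own argument. The paper likewise reads off from the two linearisations that every negative-eigenvalue direction lies in $\{\chi=\psi_2=0\}$, using the fact that the decoupled $(\chi,\psi_2)$-block at $(p_0^1,-3,0,2,0)$ has complex eigenvalues with positive real part; your invariance and dimension-count step just makes explicit the stable-manifold reasoning the paper leaves implicit.

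One small arithmetic slip: that block has eigenvalues $12\pm 6\sqrt{3}\,i$ (its discriminant is $-432$), not $12\pm 6\sqrt{-6}$. This is harmless, since the real part is $12$ either way. The check you flag as delicate is also immediate: the $(\xi,\psi_1)$-block $\begin{pmatrix}-12&-144\\-2&0\end{pmatrix}$ has eigenvalues $12$ and $-24$, so there is no zero eigenvalue, and it lives entirely inside the invariant subspace.
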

 Note that, unlike the preceding cases, the second matrix has non-real eigenvalues. However, their real part is strictly positive, so they do not provide us with smooth solutions.

Finally, we consider the trivial case $\theta_0 \to N_{1,1}$. A cohomogeneity one connection in temporal gauge takes the form $$A(t) = \sum _{i=1}^3 \left( \chi_i e_1 + \zeta_i e_2 + \xi_i e_7\right) \otimes T_i\;.$$
Its curvature is 
\begin{align*}
    F_A=& \sum_{i=1}^3  \left[\dot{\chi_i}\; e_{01} + \dot{\zeta_i}  \;e_{02}+ \dot{\xi_i}\; e_{07}  + 
    \frac{\xi_i}{3}(e_{34}+ e_{56} -2e_{12})+ \chi_i(e_{46}-6e_{27}-e_{35}) +\zeta_i(6e_{17}+e_{36}+e_{45}) \right. \\& + \left. \sum_{j,k=1}^3 \epsilon_{ijk}\left(\chi_j\zeta_k \;e_{12} + \chi_j\xi_k \;e_{17} + \zeta_j\xi_k \;e_{27}\right)   \right] \otimes T_i
\end{align*}
From this, straightforward computations yield the $\Spin(7)$-instanton equation:
\begin{subequations}\label{eq:flat SO(3)-instaton N11}
\begin{align}
    \frac{\dot \xi_i}{f} &=  \left(\frac{2}{a^2}-\frac{1}{b^2}-\frac{1}{c^2} \right)\frac{\xi_i}{3}- \sum_{j,k=1}^3 \epsilon_{ijk} \frac{\chi_j \zeta_k}{a^2}\\
    \dot \chi_i &= \left(\frac{6}{f}-\frac{2a}{bc}\right)\chi_i - \sum_{j,k=1}^3 \epsilon_{ijk}\frac{\zeta_j\xi_k}{f}\\
    \dot \zeta_i &= \left(\frac{6}{f}-\frac{2a}{bc}\right)\zeta_i +\sum_{j,k=1}^3 \epsilon_{ijk} \frac{\chi_j \xi_k
    }{f}\;.
    \end{align}
\end{subequations}
The previous system still has full gauge symmetry. The reader will readily verify we can break it by imposing $\xi_2 = 0 = \xi_3$, so the $\Spin(7)$-instanton ODE system \eqref{eq:flat SO(3)-instaton N11} simplifies to 
\begin{subequations}
    \begin{align}
    \frac{\dot{\xi_1}}{f}&= \left(\frac{2}{a^2} - \frac{1}{b^2} - \frac{1}{c^2}\right) \frac{\xi_1}{3} + \frac{ \chi_3 \zeta_2- \chi_2 \zeta_3}{a^2}
    \end{align}
    \begin{align}
    \dot{\chi_1} &= \left(\frac{6}{f}- \frac{2a}{bc}\right)\chi_1 \qquad \qquad &\dot{\zeta_1} = \left(\frac{6}{f}- \frac{2a}{bc}\right) \zeta_1&\\
    \dot{\chi_2}& = \left(\frac{6}{f}- \frac{2a}{bc}\right)\chi_2 - \frac{\xi_1\zeta_3}{f} \qquad \qquad
    &\dot{\zeta_2} = \left(\frac{6}{f}- \frac{2a}{bc}\right) \zeta_2+ \frac{\xi_1\chi_3}{f}&\\
    \dot{\chi_3} &= \left(\frac{6}{f}- \frac{2a}{bc}\right)\chi_3 + \frac{\xi_1\zeta_2}{f} \qquad \qquad
    &\dot{\zeta_3} = \left(\frac{6}{f}- \frac{2a}{bc}\right) \zeta_3- \frac{\xi_1\chi_2}{f}&
    \end{align}
\end{subequations}
together with the conserved quantities $F_1= \chi_1 \zeta_3 - \chi_3 \zeta_1$ and $ F_2 =\chi_1 \zeta_2 -\chi_2 \zeta_1$. A short computation yields
\begin{subequations}
    \begin{align}
         \frac{d}{dt}\left( \chi_2^2 - \zeta_3^2\right)&= 2 \left(\frac{6}{f}-\frac{2a}{bc}\right)\left( \chi_2^2 - \zeta_3^2\right)\\
          \frac{d}{dt}\left( \chi_3^2 - \zeta_2^2\right) &= 2 \left(\frac{6}{f}-\frac{2a}{bc}\right)\left( \chi_3^2 - \zeta_2^2\right)
    \end{align}
\end{subequations}
Using the long time behaviour of the abelian solutions, Proposition \ref{prop:vanishing-chi-abelian}, we have 
\begin{proposition}
The ODE system \eqref{eq:flat SO(3)-instaton N11} cannot have smooth bounded solutions unless $$\chi_1=0 \,, \qquad  \zeta_1=0 \,, \qquad  \chi_2= \pm \zeta_3 \qquad \text{and} \qquad  \chi_3 = \pm \zeta_2\;.$$ 
\end{proposition}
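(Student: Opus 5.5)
The plan is to exploit the two differential identities displayed just before the statement, combined with the linear structure of the equations for $\chi_1,\zeta_1$, and to reduce everything to the abelian analysis of Proposition~\ref{prop:vanishing-chi-abelian}. Throughout, write $h:=\frac{6}{f}-\frac{2a}{bc}$. In the compactified variable $s$ (with $dt=f\,ds$) this becomes $fh=6-2A$, exactly the coefficient appearing in the abelian equation for $\chi$ on $N_{1,1}$.

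\textbf{The first components.} In the gauge $\xi_2=\xi_3=0$, the functions $\chi_1$ and $\zeta_1$ satisfy the decoupled linear equations $\dot\chi_1=h\chi_1$ and $\dot\zeta_1=h\zeta_1$. These are identical to the abelian equation \eqref{eq:abelian-instanton-2}. The argument of Proposition~\ref{prop:vanishing-chi-abelian} therefore applies verbatim:
\begin{itemize}
\item on $\gamma^1_\theta$, the local analysis at the fixed point forces $\chi_1=\zeta_1=0$ for smoothness;
\item on $\gamma^{-2}_\theta$, the explicit solution $\kappa\exp\!\left(\int_{-\infty}^s(6-2A)\,d\widetilde s\right)$ is unbounded unless $\kappa=0$.
\end{itemize}
In both cases $\chi_1=\zeta_1=0$.

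\textbf{The remaining components.} Set $D_1:=\chi_2^2-\zeta_3^2$ and $D_2:=\chi_3^2-\zeta_2^2$. The displayed identities say that each $D_j$ satisfies the linear equation $\dot D_j=2hD_j$. Hence
\[
D_j(s)=\kappa_j\exp\!\left(2\int^s(6-2A)\,d\widetilde s\right),
\]
so $D_j$ is a constant multiple of the square of a solution of the abelian $\chi$-equation. The same dichotomy then applies.
\begin{itemize}
\item \emph{Near the singular orbit for $\gamma^1_\theta$.} Since $A\to-6$, the exponent grows like $e^{36s}$ in the direction towards the orbit. Smooth extension with bounded components forces $\kappa_j=0$.
\item \emph{Near infinity for $\gamma^{-2}_\theta$.} Here $6-2A>0$ in both the ALC and AC regimes, since $A\to0$ or $A\to A^\pm$ with $6-2A^\pm>0$. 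So $D_j$ grows exponentially unless $\kappa_j=0$. But boundedness of the connection bounds $\chi_2,\chi_3,\zeta_2,\zeta_3$ and hence $D_j$.
\end{itemize}
Therefore $D_1=D_2=0$, which gives $\chi_2=\pm\zeta_3$ and $\chi_3=\pm\zeta_2$.

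\textbf{Main obstacle.} The delicate step is justifying that boundedness or smoothness of the original variables transfers to the quadratic quantities $D_j$ at the appropriate end. One must also confirm that the relevant end — the singular orbit for $\gamma^1$, and infinity for $\gamma^{-2}$ — is the one where the exponent blows up, given the time reversal in $\mathcal C^-$. I would settle this by reading off the sign of $6-2A$ at $p_0^1=(-6,-6,0)$ and at the asymptotic limits, mirroring Proposition~\ref{prop:vanishing-chi-abelian}. The check is consistent because smoothness at the singular orbit requires the components to match the fixed-point eigenspaces, which excludes the growth rate $e^{12s}$ per component. Finally, I would note that the gauge fixing $\xi_2=\xi_3=0$ is compatible with the statement, since the conclusion is phrased in that gauge.
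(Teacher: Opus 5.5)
Your proposal is correct and follows the paper's argument. The paper also notes that $\chi_1$, $\zeta_1$ and the quantities $u_j$, defined by $u_1^2=\chi_2^2-\zeta_3^2$ and $u_2^2=\chi_3^2-\zeta_2^2$, all obey the abelian equation $\Xi'=(6-2A)\Xi$. It then rules out non-zero solutions using the positive linearisation at $p_0^1$ (where the orbit sits at $s=+\infty$) and the exponential growth from Proposition~\ref{prop:vanishing-chi-abelian} on $\gamma^{-2}_\theta$.

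Working directly with $D_j$, which satisfies $D_j'=2(6-2A)D_j$, is slightly cleaner, because it avoids taking square roots of quantities that may be negative. Two small corrections:
\begin{itemize}
\item At $p_0^1$ we have $6-2A\to 18$, so the per-component rate being excluded is $e^{18s}$, not $e^{12s}$.
\item The integral $\int(6-2A)\,d\widetilde s$ should start from a finite base point, since it diverges at $s=-\infty$ on $\gamma^{-2}_\theta$.
\end{itemize}
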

\begin{proof}
Take $u_1^2= \chi_2^2 - \zeta_3^2$ and $u_2^2 ={\chi_3^2 - \zeta_2^2}$. The quantities $\chi_1, \zeta_1, u_1$ and $u_2$ are governed by the ODE $\Xi' =  (6-2A) \Xi$, which appeared in the study of abelian connections.

Over $p_0^1$, the only fixed point is $\Xi=0$, and the linearisation is positive, so there are no smooth solutions, since we are working in negative time in this case. Over $p_0^{-2}$, the fixed point $\Xi=0$ has positive linearisation, so there is a smooth local solution to the ODE. However, by Proposition \ref{prop:vanishing-chi-abelian}, we know that any solution with $\Xi_0 \neq 0$ will have exponential growth along the non-compact end. Hence, the only smooth bounded solution to this ODE is the trivial $\Xi \equiv 0$. \end{proof}
Hence, without loss of generality, we are reduced to the system 
\begin{subequations}\label{eq:reduced SO(3) N11-0} \begin{align}
    \xi' &= (2BC- AC-AB) \frac{\xi}{3} -BC\left(\chi^2 +(-1)^{\sigma}  \zeta^2 \right) \\
    \chi' &= \left(6-2A -\xi \right)\chi\\
    \zeta' &= \left(6-2A -(-1)^{\sigma}\xi \right)  \zeta
    \end{align}
\end{subequations}
where we have a remaining choice of sign. 
In the case $\sigma=0$, we have an $S^1$ symmetry between $\chi$ and $\zeta$, so we can set $\chi = \psi \cos(\theta)$ and $\zeta = \psi \sin (\theta)$ for a given $\theta \in S^1$. The ODE system \eqref{eq:reduced SO(3) N11-0} reduces to \begin{subequations}\label{eq:reduced-pure-signature}
    \begin{align}
        \xi' &= \left(2BC-AC-AB \right) \frac{\xi}{3} - BC \psi^2\\
        \psi' & = -(2A -6 + \xi) \psi\;, 
    \end{align}
\end{subequations}
with $\theta$ constant, which is precisely the generic case of the $\Spin(7)$-instanton for gauge group $\SO(3)$ with respect to $\widehat{m}=0$. In particular, one readily checks that 
\begin{corollary}
    All solutions to \eqref{eq:reduced-pure-signature} emanating from $(p_0^1,0,0)$ have $\psi=0$. That is, all instantons over $M^1_{1,1}$ are abelian.
\end{corollary}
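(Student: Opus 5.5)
The plan is to work with the system \eqref{eq:reduced-pure-signature} near the fixed point $(p_0^1,0,0)$, where $p_0^1=(-6,-6,0)$. Since $\gamma^1_\theta$ lies in $\mathcal{C}^-$, its solutions emanate from $p_0^1$ in reversed time, as $s\to+\infty$. For such a curve I will show that $\psi$ must vanish identically.

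First I linearise the $\psi$-equation at the fixed point. At $(A,\xi)=(-6,0)$ we get
\[
\psi'=-(2A-6+\xi)\psi\approx 18\,\psi,
\]
so the $\psi$-direction has eigenvalue $+18$. This matches the block $18$ in the linearisation at $(p_0^1,9,0,0,0)$ computed earlier. The admissible eigenvectors for curves leaving $p_0^1$ are those with negative eigenvalue $-\frac{4\Delta}{1}=-12$, coming from $L_0^1$ and the $\xi$-block. Hence the local unstable manifold relevant for reversed time, that is, the set of trajectories converging to the fixed point as $s\to+\infty$, is tangent to $\{\psi=0\}$.

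Second, I make this exact rather than infinitesimal. The hyperplane $\{\psi=0\}$ is invariant, because the $\psi$-equation is linear homogeneous in $\psi$. Integrating it gives
\[
\psi(s)=\psi(s_0)\exp\Big(-\int_{s_0}^{s}(2A-6+\xi)\,d\tilde s\Big).
\]
Along any trajectory converging to $(p_0^1,0)$ as $s\to+\infty$, the integrand tends to $-18$. The exponential therefore grows like $e^{18(s-s_0)}$, so $\psi\to0$ forces $\psi(s_0)=0$. Since $s_0$ is arbitrary, $\psi\equiv0$. With $\psi=0$ the remaining $\xi$-equation is the abelian equation \eqref{eq:new-abelian-instanton-11} with $n=0$, so the connection is abelian.

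The only point needing care is the sign and time-direction bookkeeping. Here "emanating" means converging as $s\to+\infty$, because of the reversal $dt=f\,ds$ with $f<0$. One must check that $A\to-6$ and $\xi\to0$ along the solution, which holds by the asymptotics \eqref{eq:local_gammai} and Lemma \ref{lemma:fixed-points-abelian} with $n=0$. Once that is in place, the exponential integral argument above is immediate.
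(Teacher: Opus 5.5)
Your proof is correct and follows the same approach as the paper, which only says "one readily checks". You linearise at $(p_0^1,0,0)$, find that the $\psi$-direction has eigenvalue $+18$ while every admissible (reversed-time) direction has eigenvalue $-12$, and use the integrating factor for the linear homogeneous $\psi$-equation to make this exact: $\psi\to 0$ as $s\to+\infty$ forces $\psi\equiv 0$, so no stable-manifold dimension count is needed.
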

In the case $\sigma=1$, we have 
\begin{lemma}
   The fixed points of the extended system of mixed signature over the singular orbit points $p_0^i$ are 
    $$\left(p_0^1, 0, 0,0 \right)\,, \qquad \qquad \left(p_0^{-2}, 0, 0 ,0 \right)\,,\qquad \qquad \left(p_0^{-2}, -6 ,0,2 \right) \qquad \text{and} \qquad \left(p_0^{-2}, 6 ,2,0\right)\;.$$
\end{lemma}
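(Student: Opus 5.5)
The proof is a direct computation: substitute the coordinates of each singular-orbit point into the right-hand side of the $\sigma=1$ case of \eqref{eq:reduced SO(3) N11-0} and solve the resulting polynomial equations in $(\xi,\chi,\zeta)$. For $(k,l)=(1,1)$ we have $m=-2$ and $\Delta=3$. With the convention fixed earlier, $p_0^1=(-6,-6,0)$, and the formula for $p_0^m$ gives $p_0^{-2}=(0,3,3)$. On the relevant fixed points I will use the combinations $BC$ and $2BC-AC-AB$, and for the $\chi,\zeta$ equations only the value of $A$.

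\emph{Over $p_0^1$.} Here $BC=0$ and $2BC-AC-AB=-36$, so the $\xi$-equation becomes $\xi'=-12\xi$, which forces $\xi=0$. The $\chi$- and $\zeta$-equations then read $\chi'=18\chi$ and $\zeta'=18\zeta$, so $\chi=\zeta=0$. This gives the single fixed point $(p_0^1,0,0,0)$.

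\emph{Over $p_0^{-2}$.} Now $A=0$, $BC=9$ and $2BC-AC-AB=18$. The system reduces to
\[
\xi'=6\xi-9(\chi^2-\zeta^2),\qquad \chi'=(6-\xi)\chi,\qquad \zeta'=(6+\xi)\zeta .
\]
I split according to which of $\chi,\zeta$ vanish.
\begin{itemize}
\item If $\chi=\zeta=0$, then $\xi=0$.
\item If $\chi\neq0$, then $\xi=6$. Since $6+\xi=12\neq0$, this forces $\zeta=0$, and then $36-9\chi^2=0$ gives $\chi=\pm2$.
\item If $\zeta\neq0$, then symmetrically $\xi=-6$, $\chi=0$ and $\zeta=\pm2$.
\end{itemize}
Both $\chi$ and $\zeta$ nonzero is impossible, since that would require $\xi=6$ and $\xi=-6$ simultaneously.

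The only step that is not pure arithmetic is discarding the signs $\chi=-2$ and $\zeta=-2$. For this I invoke the residual gauge freedom used in the reduction to \eqref{eq:reduced SO(3) N11-0}. The system is invariant under $\chi\mapsto-\chi$ and under $\zeta\mapsto-\zeta$ separately, since $\chi$ and $\zeta$ enter the $\xi$-equation only through their squares and the other two equations are linear in them. These sign changes are realised by the adjoint action of diagonal elements $\diag(\pm1,\pm1,\pm1)\in\SO(3)$, in the same way that $\operatorname{Ad}(S)$ was used to impose $\psi\ge0$ in Proposition~\ref{prop:homogeneous connections SO3-kl}.

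I expect this gauge identification to be the only place requiring care. The diagonal element must preserve the constraint $\xi_2=\xi_3=0$ and the relations $\chi_2=\pm\zeta_3$, $\chi_3=\pm\zeta_2$ that defined the reduced variables. Both hold because such a diagonal element fixes $T_1$ and only flips the signs of $T_2,T_3$. Once this is checked, the four listed points are exactly the fixed points up to gauge.
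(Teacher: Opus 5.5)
Your proposal is correct and is the same direct substitution the paper leaves implicit, since the lemma is stated there without proof: at $p_0^1=(-6,-6,0)$ the coefficient $BC$ vanishes and everything collapses to the origin, while at $p_0^{-2}=(0,3,3)$ the system $\xi'=6\xi-9(\chi^2-\zeta^2)$, $\chi'=(6-\xi)\chi$, $\zeta'=(6+\xi)\zeta$ has exactly the listed zeros up to sign. One correction to your gauge remark: the element $\diag(1,-1,-1)$ you use (fixing $T_1$, negating $T_2,T_3$) sends $(\xi,\chi,\zeta)\mapsto(\xi,-\chi,-\zeta)$, so it flips both signs simultaneously rather than separately (separate flips are symmetries of the ODE but not gauge when $\chi\zeta\neq0$); this still suffices here, because one of $\chi,\zeta$ vanishes at each non-abelian fixed point.
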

Computing the linearisations at $\left(p_0^1, 0, 0,0 \right)$, we find that the only smooth solutions emanating from $(p_0^1,0,0,0)$ are the abelian ones. The linearisation at $(p_0^{-2},0,0,0)$ has one positive eigenvalue, $\lambda=6$, with multiplicity five and obvious eigenvectors, and  the linearisations at $(p_0^{-2},6,2,0)$ and $(p_0^{-2},-6,0,2)$ have two positive eigenvalues, $\lambda=6$ and $\mu=12$, each with multiplicity two.

In particular, we have one additional parameter in the space of local solutions compared to the generic case, corresponding to the choice $\chi \neq0$ (resp. $\zeta \neq  0$). As we shall see, these new families are unbounded along the non-compact end, so we do not go into further detail.

\section{\texorpdfstring{$\SO(3)$}{SO(3)}-connections -- Long time existence} \label{sec:long-time-SO3}
We focus our discussion on the dynamical system associated with a generic Aloff--Wallach space, since all the exceptional cases admit a reduction to it, up to an $S^1$-freedom (or the mixed case, which will be discussed at the end of the section).

Hence, we would like to understand the long-time existence of the ODE system \eqref{eq:new-generic-SO3-instanton}:
\begin{subequations}
\begin{align}
    \xi' &=-F \xi  + \widehat{m}\widehat{F}  - BC\psi^2 \\
    \psi'&= -(2A+3m+\xi) \psi\;,
\end{align}    
\end{subequations}
and its analogues for $\widehat{k}$ and $\widehat{l}$. Our main tool for doing so will be splitting the $(\xi, \psi)$-phase space into sectors that are invariant under the ODE. We first give the details for the case $n=\widehat{m}$.
\begin{proposition}\label{prop:invarinat sectors}
Consider the sector decomposition of the half plane $(\xi, \psi) \in \R \times [0, \infty)$ for a given time $s\in \R$: 
    \begin{align*}        
        \mathcal{A} & \coloneqq  \left\{ \xi \geq -3m \right\} \cap \left\{ \psi^2  \leq 4 - 2 \left(\frac {A}{B} + \frac{A}{C}\right) \right\}\\
        \mathcal{B} & \coloneqq   \left\{\xi\leq \frac{\widehat{m}^2}{m}\right\}\cup \left(\{\xi\leq -3m -2A\}\cap \{\psi^2\geq 4\}\right)\\
        \mathcal{C} & \coloneqq \left\{\frac{\widehat{m}^2}{m}\leq \xi\leq -3m\right\}\cap \{\psi^2\leq 4\}\\
        \mathcal{F} & \coloneqq  \left( \left\{ \xi \geq -3m -2A \right\} \cap \left\{ \psi^2 \geq 4\right\}\right) \cup \left( \left\{ \xi \geq -3m  \right\} \cap \left\{ 4 - 2 \left(\frac {A}{B} + \frac{A}{C}\right) \leq \psi^2 \leq 4\right\} \right)     \end{align*}
Then,
\begin{enumerate}[label=(\roman*)]
    \item the set $\mathcal{A}$ is forward invariant, 
    \item the set $\mathcal{B}$ is forward invariant, and
    \item the set $\mathcal{F}$ is backwards invariant.
\end{enumerate}
\end{proposition}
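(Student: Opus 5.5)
The plan is a barrier argument. Each of $\mathcal{A}$, $\mathcal{B}$, $\mathcal{F}$ is a union or intersection of regions cut out by the time-dependent curves $\xi=-3m$, $\xi=\frac{\widehat m^2}{m}$, $\xi=-3m-2A$, $\psi^2=4$ and $\psi^2=h(s)$, where
\[ h\coloneqq 4-2\left(\tfrac{A}{B}+\tfrac{A}{C}\right). \]
I will check the sign of the normal derivative along each boundary piece, using the background $\gamma^m_\theta\subset\mathcal C_+$. There $A,B,C\ge0$, $m<0$, $\widehat m\ge0$, and $A\le B\le C$ by Proposition~\ref{prop:ABC-inequalities}. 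The line $\psi=0$ is invariant in both time directions, since $\psi'$ is proportional to $\psi$, so only the other boundaries need checking. A union of forward invariant sets is forward invariant, which handles $\mathcal B$ and $\mathcal F$ piece by piece. Corners are dealt with by the usual strict/non-strict perturbation argument (replace $0$ by $\pm\epsilon$ in the barriers and let $\epsilon\to0$).

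For $\mathcal{A}$, on $\xi=-3m$ I use the identity $\widehat m\widehat F+3mF=4BC-2AB-2AC$ of Lemma~\ref{lemma:identities-klm-khatlhatmhat}. This gives
\[ \xi'=BC\,(h-\psi^2)\;, \]
which is $\ge0$ when $\psi^2\le h$. On $\psi^2=h$ with $\xi\ge-3m$ we have $(\psi^2)'=-2(2A+3m+\xi)\psi^2\le-4Ah$. It then suffices to show $h'\ge-4Ah$ along $\gamma^m_\theta$. For this I compute $(A/B)'$ from \eqref{eq:new-evo-spin7}:
\[ (A/B)'=\tfrac{A}{B}\Big(4(B-A)+\tfrac{mBC-lAC}{\Delta}\Big)\;, \]
and similarly for $(A/C)'$. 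I then need the resulting polynomial inequality in $A,B,C$ on the region $0\le A\le B\le C$, possibly also using the bounds $B,C\le -2\Delta/m$ inherited from the invariant region of \cite{Chi25}.

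\textbf{This algebraic inequality is the step I expect to be the main obstacle.} I would first try to organise it as a sum of terms of the form $A(B-A)$, $A(C-A)$ and $-mBC$ with non-negative coefficients, which makes the sign manifest.

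For $\mathcal B$, on $\xi=\frac{\widehat m^2}{m}$ the identity $m\widehat F-\widehat mF=2A(C-B)$ gives
\[ \xi'=\tfrac{2\widehat m}{m}A(C-B)-BC\psi^2\le 0\;, \]
since $\widehat m\ge0>m$ and $C\ge B$. On $\xi=-3m-2A$ with $\psi^2\ge4$, I again use the identity $3mF+\widehat m\widehat F=4BC-2AB-2AC$ together with $A'$. This yields
\[ (\xi+3m+2A)'\le 2A\Big(\tfrac{m}{\Delta}BC-3A\Big)\le0\;. \]
On $\psi^2=4$ with $\xi\le-3m-2A$ we get $(\psi^2)'\ge0$. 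Hence neither piece can be left in forward time.

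For $\mathcal F$, I reverse the time direction and reuse the same computations with opposite signs. On $\psi^2=4$ with $\xi\ge-3m-2A$ we have $(\psi^2)'\le0$. The inequality $(\xi+3m+2A)'\le0$ derived for $\mathcal B$ controls the boundary $\xi=-3m-2A$. On $\xi=-3m$ with $\psi^2\ge h$ we have $\xi'=BC(h-\psi^2)\le0$. The lower boundary $\psi^2=h$ is governed by exactly the key inequality $(\psi^2-h)'\le0$ from the $\mathcal A$ step. In each case the backward flow points into $\mathcal F$, which gives (iii).
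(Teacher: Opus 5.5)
Your proposal is correct and is essentially the paper's barrier argument: the same identities from Lemma~\ref{lemma:identities-klm-khatlhatmhat}, the same boundary derivatives, and for $\mathcal{F}$ the same computations. The paper organises the $\mathcal{F}$ case by invoking (i)--(ii) and checking only the boundary shared with $\mathcal{C}$, which amounts to your direct check. The step you flag as the main obstacle follows at once from your own formulas for $(A/B)'$ and $(A/C)'$: one gets $h'+4Ah=-\frac{2A}{\Delta}\bigl(m(B+C)-l\frac{AC}{B}-k\frac{AB}{C}\bigr)\ge 0$, since $m<0\le k,l$ and $A,B,C\ge 0$, so neither the ordering nor the bound $B,C\le -2\Delta/m$ is needed; this is exactly how the paper discards those terms.
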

Note that the sets $\mathcal{A}$, $\mathcal{B}$ and $\mathcal{F}$ are not time-independent. 
\begin{proof}
    Note that the intersection of any two subsets is contained in the boundary of the respective subsets. Hence it suffices to understand the behaviour at the boundary of each set. 
    \begin{enumerate}[label=(\roman*)]
        \item Along the boundary $4- \psi^2  -2 \left( \frac{A}{B} + \frac{A}{C}\right) =0$, we have 
        \begin{align*}
            \left(4-\psi^2 - 2 \left(\frac{A}{B} + \frac{A}{C}\right)\right)'       &=-2\left(\frac{A}{B}\left(4B-4A-\frac{l}{\Delta}AC+\frac{m}{\Delta}BC\right)+\frac{A}{C}\left(4C-4A-\frac{k}{\Delta}AB+\frac{m}{\Delta}BC\right)\right)\\
            &\quad +2\psi^2(2A+3m+\xi)\\
            &\geq -2\left(\frac{A}{B}\left(4B-4A\right)+\frac{A}{C}\left(4C-4A\right)\right) +4A\psi^2+2(3m+\xi)\psi^2\\
            &=-4A\left[4 - \psi^2 -2\left(\frac{A}{B}+ \frac{A}{C}\right)\right]+2(3m+\xi)\psi^2\\&  \geq 0\
        \end{align*}
    since $\xi \geq -3m $ by assumption. Similarly, at the boundary $\{\xi+3m=0\}$, we have 
    \begin{equation}\label{eq:xiprimeat-3m}
        \xi'=4BC-2AC-2AB-BC\psi^2=BC\left(\frac{4BC-2AC-2AB}{BC}-\psi^2\right)\geq 0\;,
    \end{equation}
    so the set $\mathcal{A}$ is forward invariant.
        \item Similarly, along the boundary $\xi = \frac{\widehat{m}^2}{m}$, we have 
        \begin{equation}
            \xi' =  2 \frac{\widehat{m}}{m} A (C-B) - BC \psi^2 \leq 0\;,
        \end{equation}
        so the set $\left\{ \xi \leq \frac{\widehat{m}^2}{m}\right\}$ is forward invariant. Let us analyse the other piece. Since we assume $2A+\xi+3m\leq 0$, the inequality $\psi^2\geq 4$ is preserved. Similarly, we have 
    \begin{align*}
    \left.(2A+\xi+3m)'\right|_{2A+\xi+3m=0}&= 2A \left( \frac{m}{\Delta}BC- 3A\right) + \left(4 -  \psi^2\right)BC\leq 0\;,
    \end{align*}
    as $\psi^2  \geq 4$. Hence the set $\mathcal{B}$ is invariant.
     \item Finally, we need to show no solution can enter $\mathcal{F}$. Since $\mathcal{A}$ and $\mathcal{B}$ are forward invariant, it suffices to show that a solution cannot cross from $\mathcal{C}$. Hence, we need to show that the boundary $$\mathcal{F} \cap \mathcal{C} =  \left\{ \xi = -3m \;, \psi^2 \in \left[4 - 2 \left(\frac{A}{B} + \frac{A}{C} \right), 4 \right] \right\} \cup \left\{ \xi \in [-3m-2A, -3m] \;,  \psi =2 \right\} \;.$$
     acts as a barrier for our ODE system. But, akin to Equation \eqref{eq:xiprimeat-3m}, we now have 
    \begin{equation}
     \xi' =  BC\left(\frac{4BC-2AC-2AB}{BC}-\psi^2\right)\leq 0\;,
    \end{equation}
     when $\xi = -3m$. Similarly, since $2A +\xi +3m \geq 0$, $\left. (\psi^2)' \right|_{\psi^2=4} \leq 0$, which concludes the proof. \qedhere
     \end{enumerate}
\end{proof}
We refer to the sectors $\mathcal{A}, \mathcal{B}, \mathcal{C}$ and $\mathcal{F}$ as the \textit{abelian}, \textit{blow-up}, \textit{critical} and \textit{forbidden} sectors respectively. A first result that justifies our choice of nomenclature is
\begin{proposition} \label{prop:finite-time-blowup}
    Any integral curve that enters $\mathcal{B}$ with $\psi \neq 0$ blows up in finite time. 
\end{proposition}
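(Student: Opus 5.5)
The plan is to show that inside $\mathcal{B}$ the pair $(\xi,\psi)$ satisfies differential inequalities of Liouville/Riccati type, which force $\psi\to\infty$ and $\xi\to-\infty$ in finite $s$-time. I work along $\gamma^m_\theta$, so $A,B,C\geq 0$ and $m<0$; in particular $-3m>0$ and $\widehat m^2/m\leq 0$. The other fillings follow by the usual time reversal. Since $\mathcal{B}$ is forward invariant (Proposition \ref{prop:invarinat sectors}), once the curve enters $\mathcal{B}$ it stays there. Because $\psi'=-(2A+3m+\xi)\psi$, the sign of $\psi$ is preserved, so $\psi\neq 0$ for all later times and we may assume $\psi>0$.

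\textbf{Step 1: $\psi$ grows.} On the piece $\{\xi\leq \widehat m^2/m\}$ we have $-(3m+\xi)\geq -3m>0$. Along both ALC and AC metrics, $A$ is bounded and $A\to 0$ at the ALC end (Proposition \ref{prop:ALC-asymptotic-behaviour}); in the AC case I would use the bound on $A_\pm$ from Proposition \ref{prop:AC-inequalitites}. From this I expect $\psi'\geq \delta\psi$ for some $\delta>0$ after a finite time. On the other piece, $\{\xi\leq-3m-2A\}\cap\{\psi^2\geq4\}$, we get $\psi'\geq 0$ and $\psi\geq 2$ directly. In either case, after finite time we have $\psi^2\geq 4$ and $\psi$ nondecreasing. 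If the first piece needs the smallness of $A$, I would pass to a later time where that holds.

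\textbf{Step 2: $\xi$ is driven down.} With $\psi^2$ large, $\xi'=-F\xi+\widehat m\widehat F-BC\psi^2$ and $F,\widehat F$ are bounded along the metric. Setting $u:=-\xi$, this gives
$$u'\geq BC\,\psi^2-|F|\,u-K,\qquad (\log\psi^2)'\geq 2(u-K')$$
for constants $K,K'$ depending only on the metric. On any finite window $[s_0,s_0+1]$, $BC$ is bounded below by a positive constant $\beta$. This holds since $B,C>0$ away from $p_0^m$; in the ALC regime $BC\sim s^{-2}$ is still positive on compact windows.

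\textbf{Step 3: comparison with a blowing-up model.} Set $X=\psi^2$. I would compare with the model system $x'=2(w-K')x$, $w'=\beta x-|F|w-K$. After the shift $y=\log x$, this is a damped Liouville equation $y''+|F|y'\approx 2\beta e^{y}-\mathrm{const}$. Its solutions with $e^{y}$ large blow up within time of order $(\beta e^{y_0})^{-1/2}$. The key point for the argument is that this blow-up time shrinks as $\psi$ grows. So by first waiting (Step 1) until $\psi^2$ is large enough that the predicted blow-up time is less than $1$, the comparison only needs to be carried out on a unit window where $\beta$, $|F|$ and $K$ are uniform constants. A standard comparison lemma for cooperative systems then applies: the right-hand sides are increasing in the other variable when $x,w>0$. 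It yields $\psi\to\infty$ and $\xi\to-\infty$ before the model does.

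The main obstacle is Step 3 in the ALC regime, because $BC$ decays like $s^{-2}$; the point to get right is that exponential growth of $\psi$ from Step 1 beats this polynomial decay. Concretely, I would verify that $\beta(s)\,\psi(s)^2\gtrsim s^{-2}e^{2\delta s}\to\infty$, so that the window-length condition is eventually met. I would also check the nonnegativity needed for the cooperative comparison, namely $u-K'>0$ once $\xi$ is sufficiently negative.
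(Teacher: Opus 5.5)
\textbf{Gap: the second component of $\mathcal{B}$.} Your Steps 2--3 have no input on the piece $\{\xi\le -3m-2A\}\cap\{\psi^2\ge 4\}$. There your Step 1 only gives $\psi\ge 2$ and $\psi'\ge 0$, and this does not prevent $\psi$ from staying bounded. Steps 2--3, however, start from ``$\psi^2$ large'', and in the ALC case they need $\psi$ to grow exponentially so that $\inf_{[s_0,s_0+1]}BC\cdot\psi(s_0)^2\to\infty$. Neither is established on this piece, so as written the argument only covers $\{\xi\le \widehat m^2/m\}$.

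The missing ingredient is the evolution of $X:=-(2A+3m+\xi)\ge 0$. Use $F\le 0$ along $\gamma^m_\theta$, the bound $\xi\le -3m-2A$, and the identities of Lemma~\ref{lemma:identities-klm-khatlhatmhat}. Then on this piece
\[
X'\;\ge\;2A\Bigl(3A-\frac{m}{\Delta}BC\Bigr)+BC\,(\psi^2-4)\;\ge\;0,
\]
with strict inequality whenever $A>0$, since $m<0$. Hence $X$ is strictly increasing, so $X\ge X(s_1)>0$ for $s\ge s_1$, and $\psi'=X\psi\ge X(s_1)\psi$. Only with this exponential growth does your window argument apply. Alternatively, this inequality together with $y=\sqrt{\psi^2-4}$ places you directly in the paper's comparison lemma.

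\textbf{Comparison with the paper, and two small corrections.} Your route is genuinely different from the paper's. The paper applies one comparison lemma to each component, obtaining $\ddot y\ge \kappa_2 y^3/s^2$. In $\log s$ this Emden--Fowler inequality blows up in finite time for any positive data, so the polynomial decay $BC\sim s^{-2}$ is absorbed without any growth input on $\psi$. You instead trade that decay against exponential growth of $\psi$ on unit windows. This needs the extra Step~1 information, but it replaces the paper's constant $\kappa=\max\{0,\sup(A+2\Delta/m)\}$ by the more transparent eventual smallness of $A$.

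First correction: in the AC case, Proposition~\ref{prop:AC-inequalitites} does not bound $A^+$ directly. Use the fixed-point relation
\[
A^+=\frac{\Delta(\mathsf b+\mathsf c-3)}{k\mathsf b+l\mathsf c}<\frac{\Delta}{|m|},
\]
which holds since $\mathsf b,\mathsf c>2$. This gives $3|m|-2A^+>0$.

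Second correction: since $F\le 0$, you have $u'=|F|u-\widehat m\widehat F+BC\psi^2$, so the linear term has the favourable sign when $u\ge 0$. The damping in your model is therefore unnecessary: $(\log\psi^2)''\ge 2BC\psi^2-K$ already gives an undamped Liouville inequality. On the second piece $u$ may be negative, but it is bounded below by $3m$, so this only changes constants.
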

Before we give the proof, we need the following lemma.
\begin{lemma} \label{lemma:blow-up-comparsion}
Consider the ODE system 
$$ \dot{x}(t) =F_1(t)x +F_2(t)y^2   \qquad \qquad \dot{y}(t)= (x-\kappa_1)y$$
where $F_1(t)>0$ and $t^2 F_2(t) \geq \kappa_2$ for some constants $\kappa_1 \geq 0$ and $\kappa_2>0$. Then, any solution with initial conditions $x_0\geq 0$, $y_0 > 0$ blows up in finite time.

The same holds if $F_1\geq 0$ and $x_0 \geq \kappa_1$.
\end{lemma}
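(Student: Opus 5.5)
We first show that $x$ becomes and stays strictly larger than $\kappa_1$, and then run a second-order comparison for $u=\log y$. This exploits the fact that the coupling $F_2y^2$ has only polynomially decaying weight $\kappa_2 t^{-2}$, while $y$ grows exponentially. Throughout, $t$ ranges over $[t_0,\infty)$ with $t_0>0$, and we argue by contradiction, assuming the solution exists for all $t\ge t_0$.

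\emph{Step 1: sign and monotonicity.} Since $y'=(x-\kappa_1)y$, the sign of $y$ is preserved, so $y>0$ throughout. Whenever $x\geq 0$ we have
\[
x'=F_1x+F_2y^2\geq F_2y^2\geq \kappa_2t^{-2}y^2>0,
\]
so $\{x\ge 0\}$ is forward invariant and $x$ is strictly increasing.

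In the second case ($F_1\ge 0$, $x_0\ge\kappa_1$) this immediately gives $x(t)>\kappa_1$ for all $t>t_0$. Hence $y$ is increasing, and for any $t_1>t_0$ there is $\delta>0$ with $x\geq\kappa_1+\delta$ on $[t_1,\infty)$.

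In the first case ($F_1>0$, $x_0\ge0$) we must also rule out $x$ increasing to a limit $\le\kappa_1$. When $\kappa_1=0$ this is automatic. In general I would argue by contradiction: if $x\le\kappa_1$ for all $t$, then $x'\ge F_1x$ with $x$ increasing and positive. This should force growth past $\kappa_1$ once a positive lower bound on $F_1$ holds on a set of infinite measure, which is what is available in the applications, where $F_1$ is built from $A,B,C$ with known asymptotics. I expect this step, extracting $x>\kappa_1$ from the mere positivity of $F_1$, to be the main obstacle. If it cannot be done abstractly, I would import the lower bound on $F_1$ from Proposition~\ref{prop:ALC-asymptotic-behaviour} and the AC limits. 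Either way we reach a time $t_1$ and a $\delta>0$ with $x\ge\kappa_1+\delta$ on $[t_1,\infty)$.

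\emph{Step 2: reduction to a scalar inequality.} Set $u=\log y$. Then $u'=x-\kappa_1\geq\delta$ on $[t_1,\infty)$, so $u(t)\ge u(t_1)+\delta(t-t_1)\to\infty$ linearly. Moreover, since $F_1x\ge0$ there,
\[
u''=x'\geq \kappa_2\,t^{-2}e^{2u}.
\]

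\emph{Step 3: blow-up.} Fix $\tau\ge t_1$ and work on the window $[\tau,2\tau]$, where $t^{-2}\ge(2\tau)^{-2}$. Multiplying the inequality by $u'>0$ and integrating from $\tau$ gives
\[
(u')^2\ \ge\ \frac{\kappa_2}{4\tau^2}\left(e^{2u}-e^{2u(\tau)}\right).
\]
Hence, once $u\ge u(\tau)+1$, we have $u'\ge c\,\tau^{-1}e^{u}$ for a uniform constant $c>0$. Comparing with $v'=c\tau^{-1}e^v$, which explodes within time $\tau e^{-v}/c$, we see that $u$ reaches $+\infty$ within time $O(1)+\tau e^{-u(\tau)-1}/c$ after $\tau$. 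The $O(1)$ is the time needed to gain one unit of $u$, at most $1/\delta$.

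Since $u(\tau)\gtrsim\delta\tau$, the quantity $\tau e^{-u(\tau)}$ tends to $0$. So for $\tau$ large the explosion occurs before $2\tau$, keeping the estimate inside the window, and contradicting global existence. Thus $y$, and with it $x$, blows up in finite time.
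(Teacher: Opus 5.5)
The gap is exactly where you suspected: Step~1, in the first case with $\kappa_1>0$. It cannot be closed abstractly, because that case of the statement is false as written. Take $\kappa_1=\kappa_2=1$, $F_1(t)=e^{-t}$, $F_2(t)=t^{-2}$, $t_0=1$, $x_0=0$, $y_0=\tfrac{1}{10}$. While $x<1$, $y$ decreases, so $y\le\tfrac{1}{10}$ and $\dot x\le e^{-t}x+\tfrac{1}{100}t^{-2}$. Gronwall then gives $x\le e^{1/e}/100<\tfrac{1}{50}$, and by continuity this bound persists for all $t\ge1$. So the solution is global, $x$ stays bounded, and $y\le\tfrac{1}{10}e^{-\frac{49}{50}(t-1)}\to0$.

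The paper's proof handles this step by asserting that $x$ reaches $\kappa_1$ in finite time ``since $\dot x>0$ \ldots\ and $\dot y<0$ whenever $0<x<\kappa_1$''. That inference fails for the same reason: while $x<\kappa_1$, the only guaranteed forcing, $\kappa_2t^{-2}y^2$ with $y$ decreasing, is integrable, and so may be $F_1$. An extra hypothesis is needed, for instance $\kappa_1=0$, or $\int_{t_0}^\infty F_1\,dt=\infty$. Under the latter, $x(t)\ge x(t_1)\exp\int_{t_1}^tF_1$ for any $t_1>t_0$ eventually exceeds $\kappa_1$; your ``positive lower bound on a set of infinite measure'' is a special case of this.

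Your fallback of importing that bound from the applications fails in the ALC regime. In Proposition~\ref{prop:finite-time-blowup} one has $F_1=-F$, and for ALC metrics $F=o(s^{-2})$ because $k+l+m=0$. So $F_1$ is integrable, and Proposition~\ref{prop:ALC-asymptotic-behaviour} provides no bound of the kind you need. What works there instead is the coefficient $-2\bigl(A+\tfrac{2\Delta}{m}\bigr)$ in the $\psi$-equation, which tends to $-\tfrac{4\Delta}{m}>0$. After a time shift one may therefore take $\kappa_1=0$. In the AC regime $-F\to-F^+>0$, and your route is fine.

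Once $x\ge\kappa_1+\delta$ is secured (in particular in the second case), your Steps~2 and~3 are correct and take a different route from the paper. The paper differentiates once more to get $\ddot y\ge\kappa_2t^{-2}y^3$ and passes to $\sigma=\log t$. In that variable the weight disappears exactly and the inequality becomes the autonomous $\partial_\sigma^2y\ge\kappa_2y^3+\partial_\sigma y\ge\kappa_2y^3$, so a single energy integral bounds the blow-up time. You instead keep the weight and beat it on dyadic windows $[\tau,2\tau]$, using the linear growth of $u=\log y$. This is longer and needs the margin $\delta$, but it makes the mechanism explicit: exponential growth of $y$ against a coupling that decays only like $t^{-2}$.
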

\begin{proof}
The main idea is to find a simpler comparison ODE that has finite-time blow-up. 

First, note that there exists a finite time $t_1\geq t_0$ for which $x(t_1)\geq\kappa_1$ since $\dot x>0$ by the initial conditions and $\dot y <0$ whenever $0< x <\kappa_1$. Thus, without loss of generality, we may take initial conditions $x_0 \geq  \kappa _1$, $y_0> 0$.

Now, by differentiation, we have
$$ \ddot{y} =  \dot{x} y + (x-\kappa_1)\dot{y} = F_1 xy+ F_2y^3  + (x-\kappa_1)^2 y\geq  \kappa_2 \frac{y^3}{t^2}\;,$$
since the first and last terms are non-negative by the chosen initial condition. Hence, it suffices to prove that $$t^2 \ddot{y} = \kappa_2 y^3$$ blows up in finite time for $\kappa_2>0$ and $y_0>0, \dot y_0 >0$.

Consider the change of variables $\tau = \log(t)$, so the ODE becomes $y'' = \kappa_2 y^3 +y'$, which can be compared with $y'' = \kappa_2 y^3$. Multiplying by $y'$ and integrating, we have
$$ \frac{d}{d\tau }y(\tau) = \sqrt{ \frac{\kappa_2y^4}{2} +K}\;,$$
with $K= {y'_0}^2- \frac{\kappa_2 y_0^4}{2}$. The blow up time is given by the integral
$$ T_\infty  =  \sqrt{2} \int_{y_0}^\infty \frac{1}{\sqrt{ \kappa_2y^4 +2K}} dy\;,$$
which is well defined since $2K +\kappa_2 y^4_0 \geq 0$. Let us check that the integral is indeed finite. As $y \to \infty$, the integrand behaves as $\sim \frac{1}{y2}$, so the tail is finite.

In the limiting case $2K +\kappa_2 y^4(0)=0$, we have a singularity at $y(0)$. Near $y_0$ we have the expansion
$$ \sqrt{y^4- y_0^4} = \sqrt{(y-y_0)} \sqrt{(y+y_0)(y^2+y_0^2)} \ \sim 2 \sqrt{y_0^3
}\;\sqrt{y-y_0}\;, $$
and so $T_\infty$ is always finite, as claimed. 
\end{proof}

\begin{proof}[Proof of Proposition \ref{prop:finite-time-blowup}]
Using Lemma \ref{lemma:identities-klm-khatlhatmhat}, we can rewrite the $\Spin(7)$-ODE system as 
 \begin{align*}
    \xi' & = \left( \frac{\widehat{m}^2}{m} - \xi\right) F  +2\frac{\widehat{m}}{m}A(C-B) -  BC\psi^2\;,\\
    \psi '& = -(2A +3m + \xi) \psi \;.  
 \end{align*}
Consider the change of variables $x =\frac{\widehat{m}^2}{m} - \xi$, $y=\psi $. We set $  F_1 =  - F\geq 0$, $F_2= BC$ and 

$\kappa = \max \left\{ 0 , \sup_\R \left( A + \frac{2\Delta}{m}\right) \right\}$, so the system transforms to 
 \begin{align*}
    x' & = F_1 x - 2\frac{\widehat{m}}{m}A(C-B) + F_2 y^2 \geq F_1x + F_2 y^2\;,\\
    y '& =  xy  - 2\left(A+\frac{2\Delta}{m}\right) y \geq (x- 2\kappa) y \; 
 \end{align*}
Since we have $A,B, C  \sim \frac{1}{s}$  as  $s \uparrow \infty$ (in fact $A,B, C \sim  1$ in the AC case), Lemma \ref{lemma:blow-up-comparsion} gives us finite time blow up for $(\xi , \psi) \in (-\infty, \frac{\widehat{m}^2}{m}] \times (0, \infty)$. 

Now, assume $(\xi , \psi^2) \in (-\infty, {-3m-2A}] \times (4, \infty)$. Under these conditions, we have
\begin{align*}
    -(2A+3m+\xi)' &=  -2A\left(B+C-3A- \frac{k}{\Delta}AB - \frac{l}{\Delta}AC\right) -  \left(\frac{\widehat{m}^2}{m} - \xi\right) F- 2 \frac{\widehat{m}}{m}A(C-B) + BC\psi^2\\
    &\geq  -2A\left(B+C-3A- \frac{k}{\Delta}AB - \frac{l}{\Delta}AC\right) -  \left(\frac{4\Delta}{m} + 2A\right)F- 2 \frac{\widehat{m}}{m}A(C-B) + BC\psi^2\\
    &= 2A\left(3A - \frac{m}{\Delta}BC\right) + BC(\psi^2-4)\\
    & \geq BC(\psi^2-4)\;.
\end{align*}
Hence, if we take $x= -(2A+3m+\xi)$, $y= \sqrt{\psi^2-4}$, $F_1=0$ and $F_2=BC$, Lemma \ref{lemma:blow-up-comparsion} shows that the solutions blow up in finite time, proving the claim.
\end{proof}
Let us study the short time behaviour of the local integral curves $\Gamma (a, \alpha)$ and $\Omega(c)$ with respect to the $\mathcal{ABCF}$-decomposition. We have
\begin{lemma}\label{lemma:short time integrable curves}
Fix a forward complete $\Spin(7)$-metric $\gamma^m_\theta$, with $\theta\in(0,\theta^*]$. We consider the families of integral curves $\Gamma^m_\theta(a,\alpha)$ and $\Omega^m_\theta(c)$ emanating from
$\left(p_0^m,\frac{\widehat m^2}{m},0\right)$ and $(p_0^m,-3m,2)$, respectively, as defined in \eqref{eq:local-Gamma a,b} and \eqref{eq:local-Omega c}. We have the following:
\begin{enumerate}[label=(\roman*)]
    \item For $\alpha \in (0, \pi/2)$, and $s$ sufficiently negative, we have  $\Gamma^m_\theta(a,\alpha) \in \mathcal{C}$ and $\Gamma^m_\theta(a, \pi - \alpha) \in \mathcal{B}$.
    \item For all $c \in \R$, $\Omega^m_\theta(c) \in \mathcal{C}$ for $s$ sufficiently negative.
    \item For all $s$ sufficiently negative, there exists some $c_*>0$ large enough such that $\Omega^m_\theta(c)(s) \in \mathcal{A}$ and $\Omega^m_\theta(-c)(s) \in \mathcal{B}$ for all $c>c_*$.
\end{enumerate}
\end{lemma}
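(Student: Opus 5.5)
The plan is to read off membership in each sector directly from the leading-order asymptotics of the local curves as $s\to-\infty$, since each sector is cut out by finitely many inequalities. Along $\gamma^m_\theta$ we have $A\to 0$ and $B,C\to -2\Delta/m>0$, with $A/B,A/C\to 0$ at rate $e^{-\frac{4\Delta}{m}s}$. Hence the threshold $4-2(A/B+A/C)$ tends to $4$, and $-3m-2A\to-3m$. For each curve I will therefore compare the expansions of $\xi$ and $\psi$ against these limiting thresholds, keeping the first-order corrections whenever a limit sits exactly on a boundary.

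For (i), the curve $\Gamma^m_\theta(a,\alpha)$ has $\xi=\frac{\widehat m^2}{m}+a\cos\alpha\, e^{-\frac{4\Delta}{m}s}+o(\cdot)$ and $\psi=a\sin\alpha\,e^{-\frac{4\Delta}{m}s}+o(\cdot)$, where $a>0$ is implicit. Using $\frac{\widehat m^2}{m}<-3m$ (equivalently $\widehat m^2+3m^2=4\Delta>0$ with $m<0$), the following holds for $\alpha\in(0,\pi/2)$:
\begin{itemize}
\item $\xi$ lies strictly above $\frac{\widehat m^2}{m}$ and below $-3m$;
\item $\psi$ is positive and tends to $0<2$.
\end{itemize}
So the curve lies in $\mathcal C$. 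For $\pi-\alpha$ the cosine changes sign, so $\xi<\frac{\widehat m^2}{m}$, which places the curve in $\mathcal B$.

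For (ii), along $\Omega^m_\theta(c)$ we have $\xi\to-3m$ and $\psi\to 2$, which is a corner of the sectors, so first-order terms decide. Write $\epsilon=e^{-\frac{4\Delta}{m}s}\to 0$. Then $\xi+3m=6m\Delta\sin\theta\,\epsilon+O(\epsilon^2)$ and $\psi-2=-3m^2\sin\theta\,\epsilon+O(\epsilon^2)$. With $m<0$ and $\sin\theta>0$ for $\theta\in(0,\theta^*]$, we get $\xi<-3m$ and $\psi<2$. For $\xi\ge\frac{\widehat m^2}{m}$ there is nothing to check, since it holds with room to spare. Thus $\Omega^m_\theta(c)\in\mathcal C$, independent of $c$, because $c$ only enters at order $\epsilon^2$.

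For (iii) I read the statement as uniformity in the opposite order: fix $s$ sufficiently negative, then let $|c|$ be large. At a fixed $s_0$, the solution depends continuously on $c$, and the unstable-manifold parametrisation gives $\Omega(c)(s)=\Omega(0)(s)+c\,w_4\epsilon^2(1+o(1))$ locally. I would instead use the flow invariance $\Omega(c)(s)\approx\Omega(c e^{-\frac{8\Delta}{m}\sigma}\cdots)$ up to the time-shift, which is not exact because the metric is fixed. Cleaner is to compute directly. The $w_4$ direction moves $\xi$ by $-4cm\Delta\,\epsilon^2>0$ for $c>0$ and moves $\psi$ by $-cm^2\epsilon^2<0$. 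So for $c$ large the $c$-term dominates the $\sin\theta$ terms, giving $\xi>-3m$. Moreover $\psi^2\approx 4-4cm^2\epsilon^2$, while the $\mathcal A$-threshold $4-2(A/B+A/C)$ is $4-O(\epsilon)$. Hence $\psi^2$ lies below it once $c\epsilon$ is large, and the curve is in $\mathcal A$. For $-c$ the signs flip: $\xi<-3m-2A$ requires the $c$-term to beat both $\sin\theta\,\epsilon$ and $2A=O(\epsilon)$, and $\psi^2>4$ follows similarly, placing the curve in $\mathcal B$.

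The main obstacle is exactly here. The $c$-term is $O(\epsilon^2)$ while the competing terms are $O(\epsilon)$, so at fixed $s$ one needs $c\gg\epsilon^{-1}$. This regime is outside the asymptotic expansion, which is only valid for $c\epsilon^2$ small. I would handle it by rescaling, or by using that the local unstable manifold is a graph over the eigencoordinates on a neighbourhood of fixed size. Points with eigencoordinate $c\epsilon^2=\delta$ are then controlled uniformly, and $c_*\sim\delta\epsilon^{-2}$ suffices for $\delta$ small but fixed relative to $\epsilon$.
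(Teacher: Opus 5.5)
Parts (i) and (ii) are correct and are the paper's argument. For (ii) the paper uses the tangent vector $\cos\theta\,w_1+\sin\theta\,w_2$, whose $(\xi,\psi)$-projection $(6m\Delta\sin\theta,-3m^2\sin\theta)$ points into $\mathcal C$; that is your first-order expansion. In (iii) your linear thresholds, $c\gtrsim\frac32\sin\theta\,\epsilon^{-1}$ and $c\gtrsim3\sin\theta\,\epsilon^{-1}$, are the paper's $C_1,C_2$. You also check the second defining inequality of $\mathcal A$ and of $\mathcal B$, which the paper does not spell out. Passing to the nonlinear problem through a fixed-size chart of the unstable manifold, instead of the paper's implicit-function continuation from the linearised system, is a reasonable alternative.

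The gap is in how you finish (iii). The chart controls $\Omega(\pm c)(s)$ at a fixed time only while its $w_4$-coordinate, roughly $c\,\epsilon^2$, stays below a fixed $\delta$. In that window the expansion is uniform in $c$, so the regime $\epsilon\ll c\,\epsilon^2\le\delta$ is not actually outside it. Your computation therefore proves the claim for $c$ between a fixed multiple of $\epsilon^{-1}$ and $\delta\epsilon^{-2}$. The statement, however, requires every $c>c_*$. Taking $c_*\sim\delta\epsilon^{-2}$ places all such $c$ exactly where the chart says nothing: at fixed $s$, $\Omega(\pm c)(s)$ leaves every fixed neighbourhood of $(p_0^m,-3m,2)$ as $c\to\infty$.

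You need one more step, transporting the information in time rather than in $c$. For $c\ge\delta\,\epsilon(s_0)^{-2}$, let $s_1(c)\le s_0$ be the time at which the $w_4$-coordinate of $\Omega(\pm c)$ equals $\pm\delta$. There $\epsilon(s_1)\approx\sqrt{\delta/c}\le\epsilon(s_0)\ll\delta$. Your estimate then places $\Omega(c)(s_1)$ in $\mathcal A$ and $\Omega(-c)(s_1)$ in $\mathcal B$, and the forward invariance in Proposition \ref{prop:invarinat sectors} carries this to $s_0$.

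Alternatively, use monotonicity in $c$: Lemma \ref{lemma:ordering} (whose proof does not use the present lemma) together with the preservation of $\psi>0$ propagates membership from one $c$ in your window to all larger $|c|$. For $\mathcal B$ you must also read the statement with the convention of Proposition \ref{prop:long-time-critical}. By Proposition \ref{prop:finite-time-blowup}, the curves $\Omega(-c)$ with $c$ very large may blow up before reaching $s_0$.
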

\begin{proof}
Since all eigenvalues have non-zero real parts, the Hartman--Grobman theorem guarantees that the local behaviour of our non-linear ODE is fully determined by its linearisation around the fixed point. In particular, the integral curves $\Gamma^m_\theta (a,\alpha)$ and $\Omega^m_\theta(c)$ are well defined. Claim $(i)$  follows readily. 

For $(ii)$, note that the eigenvector $w_4$ has a larger eigenvalue than the eigenvalue of $w_1$ and $w_2$, so all integral curves $\Omega^m_\theta(c)$ are tangent to 
$v = \cos(\theta) w_1 + \sin(\theta)w_2$ at the fixed point $(p,-3m , 2)$. The claim follows by noticing that the projection of $v$ to the $(\xi, \psi)$-plane is $(6m\Delta \sin \theta, - 3m^2\sin \theta)$, which points towards the interior of the critical sector $\mathcal{C}$.

Finally, for $(iii)$, note that for all sufficiently negative times, we can find initial conditions $C_1$ and $C_2$ such that the corresponding solutions to the linearised equation satisfy $\xi_L(s)\geq -3m$ and $\psi(s) \geq 2$ respectively. Indeed, it suffices to take 
\[ C_1 \geq  \frac{3}{2}\sin \theta \,e^{\frac{4\Delta}{m}s}  \qquad \qquad \text{and} \qquad \qquad C_2 \leq - 3\sin \theta \,e^{\frac{4\Delta}{m}s}\;.\]
Now, consider the one-parameter family of ODEs $\Psi'= L\Psi + \epsilon Q(\Psi)$, so that we get the full non-linear system at $\epsilon=1$ and the linearised system at $\epsilon=0$. Take the smooth function  
$$F(s, \epsilon, C) = \psi_{C,\epsilon} (s) -2\;.$$
For some $s_0$, we have $c_0$ such that $F(s_0, 0, c_0)=0$, and a straightforward application of the implicit function theorem gives us a family $(s_\epsilon, c_\epsilon) $ such that $ F(s_\epsilon, \epsilon, c_\epsilon)=0$, for some $\epsilon$ small enough. 

Now, in order to make sure we can enlarge $\epsilon$ to $\epsilon=1$, we need to effectively control $\frac{d s_\epsilon }{d \epsilon} =  - \frac{\partial_\epsilon F }{\partial _s F}$, which   we can do as long as $s_0$ is sufficiently negative to begin with, since the terms involved in $\varepsilon$ are higher order in $s$.
\end{proof}
We now give a technical result that will be key in understanding solutions in the critical sector $\mathcal{C}$. Its proof is topological in spirit, and uses the following elementary result from point-set topology:
\begin{lemma}\label{lemma:topolgy-closed-chain}
    A descending chain of non-empty closed sets in a compact space has a non-empty closed intersection.
\end{lemma}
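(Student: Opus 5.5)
This is the finite intersection property (Cantor's intersection theorem), so I would argue by contradiction through the open cover definition of compactness. Let $X$ be compact and $K_1 \supseteq K_2 \supseteq \cdots$ a descending chain of non-empty closed subsets. More generally, the chain may be indexed by a totally ordered set $I$, which matters later when the chain is parametrised by a continuous parameter such as $s$ or $c$. Closedness of $\bigcap_{j} K_j$ is immediate, since arbitrary intersections of closed sets are closed. Only non-emptiness needs an argument.

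For non-emptiness, suppose $\bigcap_j K_j = \emptyset$. Then the complements $U_j = X \setminus K_j$ are open and their union is $X \setminus \bigcap_j K_j = X$, so they form an open cover of $X$. Compactness yields a finite subcover $U_{j_1}, \dots, U_{j_r}$. Because the $K_j$ are nested and the index set is totally ordered, the $U_j$ are ascending. Therefore this finite union equals $U_{j_*}$, where $j_*$ is the largest of $j_1, \dots, j_r$. It follows that $U_{j_*} = X$, i.e.\ $K_{j_*} = \emptyset$, contradicting non-emptiness of every member of the chain.

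I expect no real obstacle. The one point to handle carefully is the use of the total order on the index set, which guarantees that a finite subfamily has a smallest member. For a general directed family one would instead invoke the finite intersection property directly. In the intended application, the compact space will be something like a closed interval of initial parameters $c$ or $\alpha$, or a compact piece of the $(\xi,\psi)$ half-plane. The closed sets will be those parameters whose integral curves remain in the critical sector $\mathcal{C}$ up to time $s$. Their closedness follows from continuous dependence on initial data together with the forward invariance of $\mathcal{A}$ and $\mathcal{B}$ from Proposition~\ref{prop:invarinat sectors}. That verification belongs to the subsequent argument, not to this lemma.
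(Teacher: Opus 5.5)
Your proof is correct: it is the standard open-cover argument, and the paper gives no proof at all, quoting the lemma as an elementary point-set fact. Your allowance for a chain indexed by an arbitrary totally ordered set is exactly what is needed in Proposition~\ref{prop:long-time-critical}, where the nested sets $\mathcal{C}^{-1}_s$ are indexed by $s\in\R$.
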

\begin{proposition}\label{prop:long-time-critical}
Let $K$ be a connected compact subset of the space of initial conditions of the ODE system \eqref{eq:new-generic-SO3-instanton}, such that the solutions $\{S_p \}_{p \in K}$ are not in $\mathcal{F}$ for any $s\in \R$. Assume that for some $s_* \in \R$,  there exist $p, p' \in K$ such that $S_p(s_*) \in \mathcal{A}$ and $S_{p'}(s_*) \in \mathcal{B}$. Then there exists $q\in K$ such that $S_q(s) \in \mathcal{C}$ for all $s \in \R$. 
\end{proposition}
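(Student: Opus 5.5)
We argue by a connectedness argument in $K$, combined with a nested-compact-sets argument as in Lemma \ref{lemma:topolgy-closed-chain}. By hypothesis no solution $S_p$, $p\in K$, ever enters $\mathcal{F}$. Hence at every time $s$ the point $S_p(s)$ lies in $\mathcal{A}\cup\mathcal{B}\cup\mathcal{C}$.

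For each $s\ge s_*$ define
\[
K_{\mathcal{A}}(s)=\{p\in K \st S_p(\sigma)\in\mathcal{A}\text{ for some }\sigma\le s\},
\qquad
K_{\mathcal{B}}(s)=\{p\in K \st S_p(\sigma)\in\mathcal{B}\text{ for some }\sigma\le s\}.
\]
By Proposition \ref{prop:invarinat sectors}, $\mathcal{A}$ and $\mathcal{B}$ are forward invariant. Hence $p\in K_{\mathcal{A}}(s)$ if and only if $S_p(s)\in\mathcal{A}$, and likewise for $\mathcal{B}$. The two sets are disjoint, since $\mathcal{A}\cap\mathcal{B}$ lies in the boundaries and one checks it is empty modulo the barrier analysis. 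Both are non-empty for $s\ge s_*$, by the hypothesis on $p$ and $p'$.

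The key step is to show that the interiors (relative to the phase space) are open conditions in $p$. Concretely, I would replace $\mathcal{A}$ and $\mathcal{B}$ by their interiors, using the strict inequalities in the barrier computations of Proposition \ref{prop:invarinat sectors}. A solution touching $\partial\mathcal{A}$ from $\mathcal{A}$ is then pushed strictly inside, or else lies on $\partial\mathcal{C}$. In the same way, a trajectory in $\mathcal{B}$ with $\psi\neq 0$ blows up in finite time by Proposition \ref{prop:finite-time-blowup}, so entry into $\mathcal{B}$ is detected by the open condition ``the solution ceases to exist before time $s+1$'' or ``$\xi<\widehat m^2/m$ strictly''. Continuous dependence on initial data on compact time intervals then makes $U_{\mathcal{A}}(s)=\{p \st S_p(s)\in\operatorname{int}\mathcal{A}\}$ and $U_{\mathcal{B}}(s)$ open in $K$.

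The remaining set $K_{\mathcal{C}}(s)=K\setminus(U_{\mathcal{A}}(s)\cup U_{\mathcal{B}}(s))$ is therefore closed. It is non-empty because $K$ is connected and both open sets are non-empty and disjoint. These sets are nested, $K_{\mathcal{C}}(s')\subset K_{\mathcal{C}}(s)$ for $s'>s$, again by the forward invariance of $\mathcal{A}$ and $\mathcal{B}$. Lemma \ref{lemma:topolgy-closed-chain} then yields $q\in\bigcap_{s\ge s_*}K_{\mathcal{C}}(s)$. For this $q$, $S_q(s)\in\mathcal{C}$ for all $s\ge s_*$, and also for $s<s_*$: if it lay in $\mathcal{A}$ or $\mathcal{B}$ at an earlier time, forward invariance would keep it there at $s_*$. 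The solution exists for all time, since it stays in the compact region $\mathcal{C}$.

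The main obstacle I expect is the boundary bookkeeping. Points on the common boundary $\partial\mathcal{A}\cap\partial\mathcal{C}$, on $\partial\mathcal{B}\cap\partial\mathcal{C}$, and on the degenerate locus $\psi=0$ must not break the openness. In particular, the line $\psi=0$ with $\xi\le\widehat m^2/m$ lies in $\mathcal{B}$ but does not blow up. I would handle this by counting non-strict boundary points as belonging to $\mathcal{C}$, which keeps $K_{\mathcal{C}}$ closed, and by verifying openness of the strict sectors directly from the strict sign of the vector field computed in Proposition \ref{prop:invarinat sectors}.
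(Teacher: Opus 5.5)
Your skeleton is the paper's: pull the sectors back to $K$ at each time, use connectedness of $K$ to get a non-empty $\mathcal{C}$-preimage for $s\ge s_*$, nest these by forward invariance, and finish with Lemma~\ref{lemma:topolgy-closed-chain}. However, your switch to interiors opens three gaps that you do not close.

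\emph{Non-emptiness.} Connectedness now needs $U_{\mathcal{A}}(s)$ and $U_{\mathcal{B}}(s)$ to be non-empty. The hypothesis only puts $S_p(s_*)$ and $S_{p'}(s_*)$ in the closed sectors, possibly on their boundaries, and you never show these orbits reach the interiors.

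\emph{Nesting.} The nesting of $K_{\mathcal{C}}(s)$ requires forward invariance of $\operatorname{int}\mathcal{A}$ and $\operatorname{int}\mathcal{B}$. This does not follow from the forward invariance of the closed sectors that you cite.

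\emph{Openness.} ``The solution ceases to exist before time $s+1$'' is not an open condition on initial data. The maximal existence time $T(p)$ is only lower semicontinuous, so $\{T\le c\}$ is closed, not open. The open condition you want is that $S_p(\sigma)\in\operatorname{int}\mathcal{B}$ for some $\sigma\le s$ with $\sigma<T(p)$.

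All three can be repaired with strict barriers. For instance, $S_p(s_*)\in\mathcal{A}\setminus\mathcal{F}$ forces $X=4-\psi^2-2(A/B+A/C)>0$. Lemma~\ref{lemma:identities-klm-khatlhatmhat} then gives $(\xi+3m)'=-F(\xi+3m)+BC\,X$, which, together with $X'\ge -4AX$ on $\mathcal{A}$, pushes the orbit into $\operatorname{int}\mathcal{A}$ for $s>s_*$. As written, though, these steps are asserted rather than proved.

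Your first objects $K_{\mathcal{A}}(s)$ and $K_{\mathcal{B}}(s)$ were already the right ones: you needed them closed, not open. This is what the paper does. It takes the closed preimages $\mathcal{A}^{-1}_s=\{p\in K \mid S_p(s)\in\mathcal{A}\}$ and $\mathcal{B}^{-1}_s$, and puts into $\mathcal{B}^{-1}_s$ also those $p$ whose solution has blown up before $s$. Lower semicontinuity of $T$ is exactly what keeps this set closed.

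The two sets are disjoint, since $\mathcal{A}\subset\{\xi\ge-3m\}$ while $\mathcal{B}\subset\{\xi\le\widehat m^2/m\}\cup\{\xi\le-3m-2A\}$, with $\widehat m^2/m<-3m$ and $A>0$ at finite $s$. They are non-empty for $s\ge s_*$ directly from the hypothesis and forward invariance. Connectedness then yields a point of $K$ in neither set, whose orbit lies in $\mathcal{C}$ at time $s$, with no boundary analysis needed for non-emptiness.

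Strict barriers enter only in the nesting of the closed sets $\mathcal{C}^{-1}_s$, i.e.\ forward invariance of $\mathcal{A}\setminus\mathcal{C}$ and $\mathcal{B}\setminus\mathcal{C}$. For $\mathcal{A}$ this is the identity above. For $\mathcal{B}$ it is the analogue $(\xi-\widehat m^2/m)'\le -F(\xi-\widehat m^2/m)$, together with monotonicity of $\psi^2$ on $\{\xi\le -3m-2A\}$.
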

\begin{proof}
For each $ s \in \R $, we consider the set $$\mathcal{A}_s^{-1} \coloneqq \left\{ \alpha \in  K \st F_\alpha(s) \in \mathcal{A}\right\} \subseteq  K\;.$$
Analogously, we define $\mathcal{B}^{-1}_s$ and $\mathcal{C}^{-1}_s$. The former requires some extra care, as the solution might have blown up at an earlier time. This gives rise to a closed partition, 
$$K =  \mathcal{A}^{-1}_s \cup \mathcal{B}^{-1}_s \cup \mathcal{C}^{-1}_s\;,$$
since the ODE is non-singular. By the forward invariance of $\mathcal{A}$ and $\mathcal{B}$, we have $\mathcal{C}_t^{-1} \subseteq \mathcal{C}_s^{-1}$ whenever $t \geq s$.

Finally, note that $\mathcal{C}_s^{-1} \neq \emptyset$ for any $s\geq s_*$ by continuity with respect to initial conditions, since $\mathcal{A}_s^{-1}$ and $\mathcal{B}_s^{-1}$ are non-empty by assumption and $K$ is connected. By Lemma \ref{lemma:topolgy-closed-chain}, the existence of $q$ follows.
\end{proof}
We have dealt exclusively with the case $Q_{\widehat{m}} \rightarrow M^m_{k,l}$. For $Q_{\widehat{k}} \rightarrow M^k_{k,l}$, the analogous decomposition holds:
\begin{proposition}
    Consider the following sector decomposition of the half plane $(\xi, \psi) \in \R \times [0, \infty)$ for a given time $s\in \R$: 
    \begin{align*}
        \mathcal{A} & \coloneqq  \left\{ \xi \leq -3k \right\} \cap \left\{ \psi^2  \leq 4 - 2 \left(\frac {C}{A} + \frac{C}{B}\right) \right\}\\
        \mathcal{B} & \coloneqq   \left\{\xi\geq \frac{\widehat{k}^2}{k}\right\}\cup \left(\{\xi\geq -3k -2C\}\cap \{\psi^2\geq 4\}\right)\\
        \mathcal{C} & \coloneqq \left\{ -3k\leq \xi\leq \frac{\widehat{k}^2}{k}\right\}\cap \{\psi^2\leq 4\}\\
        \mathcal{F} & \coloneqq  \left( \left\{ \xi \leq -3k -2C \right\} \cap \left\{ \psi^2 \geq 4\right\}\right) \cup \left( \left\{ \xi \leq -3k  \right\} \cap \left\{ 4 - 2 \left(\frac {C}{A} + \frac{C}{B}\right) \leq \psi^2 \leq 4\right\} \right) 
    \end{align*}
Then, the sets $\mathcal{A}$ and  $\mathcal{B}$ are backwards invariant, and the set $\mathcal{F}$ is forward invariant.
\end{proposition}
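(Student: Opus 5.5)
The plan is to repeat the boundary computations of Proposition \ref{prop:invarinat sectors}, adapted to $Q_{\widehat{k}}\to M^k_{k,l}$. The exchange $(m,A)\leftrightarrow(k,C)$ turns the sectors of the $\widehat{m}$ case into the ones stated here, with every inequality on $\xi$ reversed. The reversal comes from working in $\mathcal{C}^-$, where $A,B,C,f\le 0$ and the time parametrisation runs backwards. First write down the $\widehat{k}$-analogue of \eqref{eq:new-generic-SO3-instanton}, obtained from the curvature formula with $e_5,e_6$ in place of $e_1,e_2$:
\[
\xi' = -F\xi + \widehat{k}\widehat{F} - AB\psi^2, \qquad \psi' = -(2C+3k+\xi)\psi .
\]
With Lemma \ref{lemma:identities-klm-khatlhatmhat} this becomes
\[
\xi' = \Bigl(\tfrac{\widehat{k}^2}{k}-\xi\Bigr)F + 2\tfrac{\widehat{k}}{k}C(B-A) - AB\psi^2 .
\]
Along $\gamma^k_\theta$ we use $A\le B\le C\le 0$ from Proposition \ref{prop:ABC-inequalities}. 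Hence $AB\ge 0$, $C(B-A)\le 0$, and $C/A,\,C/B\in[0,1]$, so $4-2(C/A+C/B)\ge 0$.

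\textbf{Boundaries of $\mathcal{A}$ and $\mathcal{B}$.} For $\mathcal{A}$ there are two boundary pieces.
\begin{enumerate}[label=(\roman*)]
\item On $\{\xi=-3k\}$ the identity $\widehat{k}\widehat{F}+3kF = 4AB-2AC-2BC$ gives
\[
\xi' = AB\Bigl(\tfrac{4AB-2AC-2BC}{AB}-\psi^2\Bigr) = AB\Bigl(4-2\tfrac{C}{A}-2\tfrac{C}{B}-\psi^2\Bigr),
\]
which is $\ge 0$ inside $\mathcal{A}$. Since $\mathcal{A}$ is the side $\xi\le -3k$, this is exactly the sign for backward invariance.
\item On the curved boundary $\psi^2 = 4-2(C/A+C/B)$, differentiate using \eqref{eq:new-evo-spin7} and Lemma \ref{lemma:comparing-ABC}, exactly as in part (i) of Proposition \ref{prop:invarinat sectors}. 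After dropping terms of definite sign, the derivative should factor as
\[
-4C\bigl[\,\cdots\,\bigr] + 2(3k+\xi)\psi^2 ,
\]
whose bracket vanishes on this boundary. Since $\xi\le -3k$, the second term is $\le 0$, which again gives backward invariance.
\end{enumerate}

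For $\mathcal{B}$, again check each boundary piece.
\begin{enumerate}[label=(\roman*)]
\item On $\{\xi=\widehat{k}^2/k\}$ we have $\xi' = 2\tfrac{\widehat{k}}{k}C(B-A) - AB\psi^2 \le 0$, so in backward time $\xi$ cannot drop below $\widehat{k}^2/k$.
\item On $\{\xi\ge -3k-2C\}\cap\{\psi^2\ge 4\}$, first note $(\psi^2)' = -2(2C+3k+\xi)\psi^2\le 0$. So $\psi^2$ is non-increasing forward and the condition $\psi^2\ge 4$ is preserved backwards.
\item On $\{2C+\xi+3k=0\}$, compute
\[
(2C+\xi+3k)' = 2C\Bigl(\tfrac{k}{\Delta}AB-3C\Bigr) + (4-\psi^2)AB ,
\]
the analogue of the computation in Proposition \ref{prop:invarinat sectors}(ii). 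Check that this has the sign making $\{\xi\ge -3k-2C\}$ invariant backwards when $\psi^2\ge 4$.
\end{enumerate}

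\textbf{The forbidden sector.} Forward invariance of $\mathcal{F}$ follows as in part (iii) of Proposition \ref{prop:invarinat sectors}. The complement of $\mathcal{F}$ is $\mathcal{A}\cup\mathcal{B}\cup\mathcal{C}$, and $\mathcal{A},\mathcal{B}$ are backward invariant. So it suffices to show that $\mathcal{F}\cap\mathcal{C}$ is a barrier, i.e.\ a solution in $\mathcal{F}$ cannot move into $\mathcal{C}$ in forward time. This uses the same two identities, now evaluated on the segments $\{\xi=-3k,\ \psi^2\in[4-2(C/A+C/B),4]\}$ and $\{\psi=2,\ \xi\in[-3k,-3k-2C]\}$.

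\textbf{Main obstacle.} The delicate step is the sign bookkeeping in the curved-boundary derivative for $\mathcal{A}$. There the discarded terms $\tfrac{k}{\Delta}$, $\tfrac{l}{\Delta}$, $\tfrac{m}{\Delta}$ times products of $A,B,C$ must have a definite sign. Since $m<0<k,l$, this sign must be verified using $A\le B\le C\le 0$ rather than assumed by analogy. If a term such as $\tfrac{l}{\Delta}AC\cdot\tfrac{C}{A}$ has the wrong sign, I would instead keep it and absorb it via $C/A\le 1$.
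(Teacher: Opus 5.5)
Your overall strategy matches the paper's, and the routine parts are correct: the $\mathcal{B}$ boundaries, the flat boundary $\{\xi=-3k\}$ of $\mathcal{A}$, and the barrier argument for $\mathcal{F}$. In particular, the identity $(2C+\xi+3k)'=2C(\tfrac{k}{\Delta}AB-3C)+(4-\psi^2)AB$ is right and has the required sign.

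The gap is at the one step the paper singles out as the only subtlety: the curved boundary $X:=4-\psi^2-2(C/A+C/B)=0$ of $\mathcal{A}$. You leave this as ``should factor'' and then guess at the wrong term. Using \eqref{eq:new-evo-spin7}, the derivative is exactly
\[
X'=-4CX+2(3k+\xi)\psi^2-\frac{2}{\Delta}\Bigl(kBC-m\frac{BC^2}{A}\Bigr)-\frac{2}{\Delta}\Bigl(kAC-l\frac{AC^2}{B}\Bigr).
\]
With $A\le B\le C\le 0$ and $m<0$, the first bracket is nonnegative, so its contribution can be dropped. The second bracket, however, contains $+\frac{2l}{\Delta}\frac{AC^2}{B}\ge 0$. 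So simply ``dropping terms of definite sign'' does not yield $X'\le 0$, and your step (ii) fails as written.

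Your guessed culprit $\frac{l}{\Delta}AC\cdot\frac{C}{A}$ does not occur: the $l$-contributions cancel in $(C/A)'$. The ratio bound $C/A\le 1$ also does not control the actual bad term.

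The missing step is to pair the bad term with $-\frac{2k}{\Delta}AC$. Write the last bracket as $\frac{AC}{B}(kB-lC)$ and use $kB-lC\le l(B-C)\le 0$. This needs both $C/B\le 1$ (i.e.\ $B\le C$) and $k\ge l$, and it is exactly how the paper closes the argument. With this inserted, $X'\le 2(3k+\xi)\psi^2\le 0$ on $\{X=0,\ \xi\le -3k\}$, and the rest of your proof goes through.
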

\begin{proof}
    The proof is almost identical to the case of $\widehat{m}$. The only subtlety is proving $X=  4- \psi^2 - 2 \left(\frac {C}{A} + \frac{C}{B}\right)$ satisfies $X'\leq 0$ whenever $\xi\leq -3k$. We have
    \begin{align*}
        X'&=-2\left[\frac{C}{A}\left(4A-4C+\frac{k}{\Delta}AB-\frac{m}{\Delta}BC\right)+\frac{C}{B}\left(4B-4C+\frac{k}{\Delta}AB-\frac{l}{\Delta}AC\right)\right]+2\psi^2(2C+3k+\xi)\\
        &\leq -4C\left[4 - \psi^2 -2\left(\frac{C}{A}+ \frac{C}{B}\right)\right]+2(3k+\xi)\psi^2  - \frac{2}{\Delta}\frac{AC}{B}\left(kB-lC\right)        
        \\&  \leq 0\
    \end{align*}
    where the last term is negative since $(kB-lC)\leq l(B-C)\leq 0$ since $k \geq l\geq 0$ and $B \leq C \leq 0$ by Proposition \ref{prop:ABC-inequalities}. The rest of the computations follow Proposition \ref{prop:invarinat sectors} verbatim, and are omitted. 
\end{proof}
The analogues of Proposition \ref{prop:finite-time-blowup} and Lemma \ref{lemma:short time integrable curves} can be seen to hold easily for $Q_{\widehat{k}}$ where now the blow-up occurs in finite backwards time. 

For $Q_{\widehat{l}}$, the analogous sector decomposition is no longer invariant under the ODE. We do not have a good geometric justification for this. We introduce the following modified decomposition.
\begin{proposition}
    Consider the following sector decomposition of the half plane $(\xi, \psi) \in \R \times [0, \infty)$ for a given time $s\in \R$: 
    \begin{align*}
        \mathcal{A}_\mu& \coloneqq  \left\{ \xi \leq -3l+\mu(ABC)^{\frac{1}{3}} \right\} \cap \{\psi^2  \leq 4\} \\
        \mathcal{B} & \coloneqq   \left\{\xi\geq \frac{\widehat{l}^2}{l}\right\}\cup \left(\{\xi\geq -3l -2B\}\cap \{\psi^2\geq 4\}\right)\\
        \mathcal{C}_\mu & \coloneqq \left\{ -3l+\mu(ABC)^{\frac{1}{3}}\leq \xi\leq \frac{\widehat{l}^2}{l}\right\}\cap \{\psi^2\leq 4\}\\
        \mathcal{F} & \coloneqq   \left\{ \xi \leq -3l - 2B \right\} \cap \left\{ \psi^2\geq 4  \right\},
    \end{align*}
Then, the set $\mathcal{B}$ is backwards invariant, and the set $\mathcal{F}$ is forward invariant, and there exists some $\mu>0$ such that $\mathcal{A}_\mu$ is backward invariant.
\end{proposition}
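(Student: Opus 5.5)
We need the $Q_{\widehat{l}}$ analogue of Proposition \ref{prop:invarinat sectors}. The system is obtained from \eqref{eq:new-generic-SO3-instanton} by the permutation sending $m\mapsto l$, so that the roles of $A$ and $B$ are exchanged:
\begin{align*}
\xi'&=-F\xi+\widehat{l}\widehat{F}-AC\psi^2, & \psi'&=-(2B+3l+\xi)\psi .
\end{align*}
We work in $\mathcal{C}^-$, where $A,B,C\le 0$, $B,C\ge A$ by Proposition \ref{prop:ABC-inequalities}, and time is reversed. The boundary checks split into three parts: the two pieces of $\mathcal{B}$, the set $\mathcal{F}$, and the new set $\mathcal{A}_\mu$, whose boundary moves with $(ABC)^{1/3}$.

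\emph{The set $\mathcal{B}$.} Using Lemma \ref{lemma:identities-klm-khatlhatmhat}, on $\{\xi=\widehat{l}^2/l\}$ we get
\[
\xi'=\tfrac{2\widehat{l}}{l}B(A-C)-AC\psi^2 .
\]
Here $\widehat{l}/l<0$, $B\le 0$ and $A-C\le 0$, so the first term is $\le 0$, while $-AC\psi^2\le 0$. Thus $\xi'\le 0$, which is exactly backward invariance of $\{\xi\ge\widehat{l}^2/l\}$. On the other piece $\{\xi\ge -3l-2B,\ \psi^2\ge 4\}$, the quantity $2B+3l+\xi\ge 0$ makes $(\psi^2)'\le 0$, so $\psi^2\ge 4$ is preserved backward. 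At $2B+\xi+3l=0$ we compute $(2B+\xi+3l)'$ as in Proposition \ref{prop:finite-time-blowup}. Substituting $\xi=-3l-2B$ and using $\widehat{l}\widehat{F}+3lF=4AC-2AB-2BC$ should give
\[
(2B+\xi+3l)'=2B\bigl(\tfrac{l}{\Delta}AC-3B\bigr)+(4-\psi^2)AC .
\]
The sign of this, together with the constraints $B\le 0$ and $\psi^2\ge 4$, gives the required inequality in reversed time. $\mathcal{F}$ is the complement of the rest, and its forward invariance follows from the same two boundary computations read with the opposite sign, as in part (iii) of Proposition \ref{prop:invarinat sectors}.

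\emph{The set $\mathcal{A}_\mu$.} This is the main step. The naive barrier $4-\psi^2-2(B/A+B/C)$ fails because $l$ is the smallest of $k,l,|m|$: the analogue of the error term $-\tfrac{2}{\Delta}\tfrac{AC}{B}(kB-lC)$ no longer has a sign. So we use $\psi^2\le 4$ together with $\xi\le -3l+\mu(ABC)^{1/3}$, and we need $\mu$. Write $h=(ABC)^{1/3}\le 0$ and use $h'/h=\tfrac13\bigl(A'/A+B'/B+C'/C\bigr)=-\tfrac13(A+B+C)-\tfrac23F$.

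At $\xi=-3l+\mu h$ we then need $\xi'\le \mu h'$, i.e.
\[
4AC-2AB-2BC-\mu hF-AC\psi^2\le \mu h'.
\]
The left side arises as $-F(-3l)+\widehat{l}\widehat{F}$. At $\psi^2=4$ we need $(\psi^2)'\ge 0$, i.e. $2B+\xi+3l=2B+\mu h\le 0$, which holds automatically for $\mu\ge 0$. So the real issue is the $\xi$-boundary with $\psi^2\le 4$. The worst case is $\psi=0$, where the left side is $4AC-2AB-2BC$, a quadratic in $(A,B,C)$ of indefinite sign. The right-side correction $\mu(h'+hF)$ is homogeneous quadratic too: $\mu h\bigl(-\tfrac13(A+B+C)+\tfrac13F\bigr)$, and its leading part $-\tfrac{\mu}{3}h(A+B+C)$ is negative definite in $\mathcal{C}^-$.

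I would first try to show, using $B,C\ge A$, Proposition \ref{prop:ABC-inequalities} and the bounds of Proposition \ref{prop:AC-inequalitites}, that $4AC-2AB-2BC\le c_0\,|h|\,|A+B+C|$ along $\gamma^l_\theta$. The trajectory is compactly confined between $p_0^l$, the origin and $p^-_{k,l}$, so a continuity and compactness argument in the projective ratios $(B/A,C/A)$ should give such a $c_0$. Large $\mu$ then absorbs this term.

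The difficulty is near $p_0^l$, where $B\to 0$ so $h\to 0$ while $4AC>0$. There I would use the local expansion \eqref{eq:local_gammai}: the connection curves start at $\xi=-3l$, and one has to check that the first-order term is dominated. If this fails, I would replace $h$ by a more adapted function such as $B$ itself, or check invariance only for $s$ beyond a fixed time and handle the initial segment by the local analysis. This is where I expect the genuine obstacle.
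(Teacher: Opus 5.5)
Your checks for $\mathcal{B}$, for $\mathcal{F}$, and for the face $\{\psi^2=4\}$ of $\mathcal{A}_\mu$ are correct; the paper does not write out the first two. There is, however, a genuine gap in the main step.

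\textbf{The sign error.} On the moving face $\xi=-3l+\mu h$ you impose $\xi'\le\mu h'$. That is the condition for \emph{forward} invariance of $\{\xi+3l-\mu h\le 0\}$. This set lies on the opposite side of its boundary from $\{\xi\ge\widehat{l}^2/l\}$, so backward invariance requires $(\xi+3l-\mu h)'\ge 0$ there, i.e.\ $\xi'\ge\mu h'$. This is the same orientation you correctly used when you required $(\psi^2)'\ge 0$ on $\{\psi^2=4\}$.

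\textbf{Consequences.} Your version of the condition fails for every $\mu\ge 0$. Near $p_0^l$ one has $h\to 0$ while $(4-\psi^2)AC\to(4-\psi^2)\,4\Delta^2/l^2>0$. This is exactly the ``obstacle'' you found near $p_0^l$. It comes from the sign error, not from the choice of $h$, so neither fallback (using $B$ instead of $h$, or restricting to late times) can repair it. Your absorption step also contradicts your own remark that $-\frac{\mu}{3}h(A+B+C)\le 0$: with your inequality, enlarging $\mu$ only makes things worse.

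\textbf{How the argument closes with the correct sign.} Your own computation gives
\[
\xi'-\mu h'=(4-\psi^2)AC-2B(A+C)+\frac{\mu}{3}h(A+B+C)-\frac{\mu}{3}hF .
\]
\begin{itemize}
\item The worst case is now $\psi^2=4$, not $\psi=0$, and $(4-\psi^2)AC\ge 0$ can be dropped.
\item The term $-\frac{\mu}{3}hF$ is also non-negative, since $h\le 0$ and $\Delta F=kB(A-C)+lC(A-B)\ge 0$ when $A\le B,C\le 0$.
\item Writing $\mathsf{b}=B/A$ and $\mathsf{c}=C/A\in(0,1]$, it remains to check $\mu(\mathsf{b}\mathsf{c})^{1/3}\frac{1+\mathsf{b}+\mathsf{c}}{3}\ge 2\mathsf{b}(1+\mathsf{c})$.
\item The left side is at least $\frac{\mu}{3}(\mathsf{b}\mathsf{c})^{1/3}$ and the right side is at most $4\mathsf{b}\le 4\mathsf{b}^{2/3}$. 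So $\mu\ge 12\sup_s (B/C)^{1/3}$ suffices.
\item This supremum is finite because $C<0$ along the curve and $B/C$ has finite limits at both ends: it tends to $0$ at $p_0^l$, to $1$ at the ALC end, and to $\mathsf{b}_-/\mathsf{c}_-$ at $p^-_{k,l}$ in the AC case.
\end{itemize}
This is the paper's argument. Note that near $p_0^l$ the correct inequality is the easy part: there $2B(A+C)=O(|B|)$, while $\frac{\mu}{3}h(A+B+C)$ is of order $|B|^{1/3}$.
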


\begin{proof}
We verify that there exists some $\mu>0$ such that $(\xi + 3l- \mu(ABC)^{\frac{1}{3}})'\geq 0$ whenever $\xi= - 3l +\mu(ABC)^{\frac{1}{3}}$ and $\psi^2\leq 4$.
\begin{align*}
  (\xi + 3l- \mu(ABC)^{\frac{1}{3}}))' &=  \frac{3l}{\Delta}(kAB+lAC +m BC) + \frac{\widehat{l}}{\Delta}\left( \widehat{k}AB + \widehat{l}AC+ \widehat{m}BC\right)\\
  &\quad - \frac{\mu(ABC)^{\frac{1}{3}}}{\Delta}(kAB+lAC +m BC)- AC\psi^2\\
  &\quad -\mu(ABC)^{\frac{1}{3}}\left(-\frac{A+B+C}{3}-\frac{2m}{3\Delta}BC-\frac{2l}{3\Delta}AC-\frac{2k}{3\Delta}AB\right)\\
  &= ( 4- \psi^2) AC-2AB-2BC\\
  &\quad -\mu(ABC)^{\frac{1}{3}}\left(-\frac{A+B+C}{3}+\frac{m}{3\Delta}BC+\frac{l}{3\Delta}AC+\frac{k}{3\Delta}AB\right)\\
  &\geq  -\mu(ABC)^{\frac{1}{3}}\left(-\frac{A+B+C}{3}\right)-2AB-2BC\\
  &=A^2\left( \mu(\mathsf{b} \mathsf{c})^{\frac{1}{3}}\frac{1+\mathsf{b}+ \mathsf{c}}{3}-2\mathsf{b}-2\mathsf{b}\mathsf{c}\right)
  \end{align*}
where $\mathsf{b}= \frac{B}{A}$, and $\mathsf{c}= \frac{C}{A}$ as usual. Consider $\kappa=\left(\frac{\mathsf{b}}{\mathsf{c}}\right)^{\frac{1}{3}} = \left(\frac{B}{C}\right)^{\frac{1}{3}} $, and take $$\mu=12 \max\limits_{\theta\in[0,\theta_*]} \sup\limits_{ s \in \R} \,\kappa(s)\;.$$
Note that the supremum is always finite since $\kappa$ is continuous and $ \displaystyle{\lim_{{s \to \pm \infty}}\kappa < +\infty}$. Since $0<\mathsf{b}, \mathsf{c} \leq 1$, we have
\begin{align*}
\mu(\mathsf{b}\mathsf{c})^{\frac{1}{3}}\frac{1+\mathsf{b}+\mathsf{c}}{3}-2\mathsf{b}-2\mathsf{b}\mathsf{c}&\geq \frac{\mu}{3}(\mathsf{b}\mathsf{c})^{\frac{1}{3}}-4\mathsf{b}\geq \frac{\mu}{3}(\mathsf{b}\mathsf{c})^{\frac{1}{3}}-4\mathsf{b}^{\frac{2}{3}}=\mathsf{c}^{\frac{2}{3}}\left(\frac{\mu}{3}-4\kappa\right)\kappa \geq 0 \;.\qedhere
\end{align*}
\end{proof}

The analogues of Proposition \ref{prop:finite-time-blowup} and Lemma \ref{lemma:short time integrable curves} can be adapted easily again. The case of (iii) in Lemma  \ref{lemma:short time integrable curves} requires some additional care:
\begin{lemma}
    For any $s$ large enough, there exists $c_*$ such that the integral curve $\Omega_c(s)$ goes into $\mathcal{A}_\mu$ for all $c>c_*$. 
\end{lemma}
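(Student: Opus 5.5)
We adapt the argument for part (iii) of Lemma~\ref{lemma:short time integrable curves} to the modified sector $\mathcal{A}_\mu$ for $Q_{\widehat{l}}\to M^l_{k,l}$. Recall that here the flow runs in reversed time, so ``$s$ large enough'' plays the role of ``$s$ sufficiently negative'' in the $\widehat m$ case. The integral curves $\Omega^l_\theta(c)$ emanate from $(p_0^l,-3l,2)$ with expansion
\[
\xi_c(s) = -3l + \alpha_1 \sin\theta\, e^{-\frac{4\Delta}{l}s} + c\,\beta_1\, e^{-\frac{8\Delta}{l}s} + o\bigl(e^{-\frac{8\Delta}{l}s}\bigr),
\]
\[
\psi_c(s) = 2 + \alpha_2 \sin\theta\, e^{-\frac{4\Delta}{l}s} + c\,\beta_2\, e^{-\frac{8\Delta}{l}s}+ o\bigl(e^{-\frac{8\Delta}{l}s}\bigr).
\]
Here the constants $\alpha_i,\beta_i$ come from the analogues of $w_2,w_4$ at $p_0^l$. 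Since $l>0$, the exponents blow down as $s\to+\infty$. To land in $\mathcal{A}_\mu$ we need two inequalities at a fixed $s$: first $\xi_c(s)\le -3l+\mu(ABC)^{1/3}$, and second $\psi_c(s)^2\le 4$.

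The key new point is the size of $\mu(ABC)^{1/3}$ near $p_0^l$. Along $\gamma^l_\theta$ we have $B=O(e^{-\frac{4\Delta}{l}s})$ while $A,C\to-\frac{2\Delta}{l}$. Hence $(ABC)^{1/3}\sim \kappa_0\, e^{-\frac{4\Delta}{3l}s}$, with the real cube root taken, which is negative. This decays \emph{more slowly} than the $w$-directions. So near the singular orbit the boundary of $\mathcal{A}_\mu$ sits strictly to the left of $\xi=-3l$, at distance of order $e^{-\frac{4\Delta}{3l}s}$. The danger is that the linear part $\alpha_1\sin\theta\, e^{-\frac{4\Delta}{l}s}$ cannot reach it unless $c$ is large.

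The plan is therefore as follows. Fix $s_0$ large. Choose the sign of $c$ so that $c\,\beta_1<0$ and $c\,\beta_2<0$. Checking that $w_4$ has entries of opposite sign to the needed orientation is part of the computation of the eigenvectors at $p_0^l$, mirroring $w_4$ at $p_0^m$. Then pick $c$ so large that, for the \emph{linearised} solution at $s_0$,
\[
|c\beta_1| e^{-\frac{8\Delta}{l}s_0} \ge 2\Bigl(|\alpha_1|\sin\theta\, e^{-\frac{4\Delta}{l}s_0}+ \mu|\kappa_0| e^{-\frac{4\Delta}{3l}s_0}\Bigr),
\]
together with the analogous bound forcing $\psi<2$. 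This gives explicit thresholds $C_1(s_0),C_2(s_0)$ as in Lemma~\ref{lemma:short time integrable curves}, and we set $c_*=\max(C_1,C_2)$.

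Next we pass from the linearised to the nonlinear flow. Use the homotopy $\Psi'=L\Psi+\epsilon Q(\Psi)$ and the implicit function theorem applied to
\[
F(s,\epsilon,c)=\xi_{c,\epsilon}(s)+3l-\mu(ABC)^{1/3}(s),
\]
exactly as before. This requires controlling $\partial_\epsilon F/\partial_s F$. Here $\partial_s F$ is dominated by the term coming from $(ABC)^{1/3}$, of order $e^{-\frac{4\Delta}{3l}s}$. The quadratic terms $Q$ are $O(c^2e^{-\frac{16\Delta}{l}s})$, which is much smaller once $s_0$ is large relative to $c$. We therefore have to take $s_0$ large \emph{after} fixing the scaling relation between $c$ and $s_0$. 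We take $c\sim e^{\frac{8\Delta}{l}s_0 - \frac{4\Delta}{3l}s_0}$, so that the $w_4$ term is comparable to the barrier and still small in absolute terms, keeping the trajectory in the Hartman--Grobman neighbourhood.

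Finally, forward-in-$c$ monotonicity gives the statement for all $c>c_*$. Once $\Omega_c(s_0)\in\mathcal{A}_\mu$, increasing $c$ only pushes $\xi$ further left and $\psi$ further down at leading order. Because $\mathcal{A}_\mu$ is backward invariant, which in reversed time is the relevant direction, membership at one large $s$ propagates.

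The main obstacle is this scale mismatch between $(ABC)^{1/3}$ and the eigen-directions. It forces $c$ to grow with $s$, and we must verify that the nonlinear remainder stays subordinate. If the implicit-function bookkeeping becomes unwieldy, the fallback is a direct barrier comparison: estimate $\xi_c$ via the integrated equation for $\xi$ using $\psi\le 2$, and bound the error by Gronwall on a short interval.
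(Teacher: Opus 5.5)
Your proposal is correct and follows essentially the paper's route. You linearise at $(p_0^l,-3l,2)$, note that the barrier $\mu(ABC)^{1/3}$ is of order $t^{1/3}$ with $t=e^{-4\Delta s/l}$, and solve the linearised inequality with $c\sim t^{-5/3}$, which matches the paper's equation for ``$C$''. You then pass to the nonlinear flow through the $\epsilon$-homotopy and the implicit function theorem.

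Two small remarks. Your use of the ordering lemma to cover all $c>c_*$ is a useful addition that the paper leaves implicit. The only slip is that at the zero of $F$, $\partial_sF$ gets order-$t^{1/3}$ contributions from both the $w_4$-term and the barrier, not just the latter; their sum is still nonzero, so the argument is unaffected.
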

\begin{proof}
 As in the previous case, we deal with the linearised problem, and then use the implicit function theorem to perturb to the original problem. For simplicity, let $t= e^{-\frac{4\Delta}{l}s}$. We have the series expansion
$$ \mu (ABC)^{\frac{1}{3}} =  - \mu \Delta\left( \frac{24 \sin \theta}{l} t\right)^\frac{1}{3}  \left[ 1+  \frac{l}{\Delta}\left( \cos \theta - \widehat{l} \sin \theta \right)t  \right]  + O(t^{\frac{7}{3}})\;.$$
Hence, solving the linearised problem corresponds to finding $C$ solving 
  $$6l \Delta \sin (\theta) t  - 4l\Delta C t^2  =  - \mu \Delta\left( \frac{24 \sin \theta}{l} t\right)^\frac{1}{3}  \left[ 1+  \frac{l}{\Delta}\left( \cos \theta - \widehat{l} \sin \theta \right)t  \right]  + \widehat{C}t^{\frac{7}{3}}$$
for $t>0$ small enough, which is straightforward. The rest of the proof follows the previous case. 
\end{proof}
In order to proceed, it becomes useful to distinguish between the asymptotically locally conical and asymptotically conical cases. We begin with the former. 
\subsection{The ALC case}
Let us begin by studying the fixed points of the generic $\Spin(7)$-instanton ODE system and their linearisations at the ALC limit.
\begin{lemma}
    The fixed points of \eqref{eq:new-generic-SO3-instanton} at the ALC limit $(A,B,C)  = (0,0,0)$ are the lines 
    $$ (0, \xi_\infty, 0) \qquad (0, -3m, \psi_\infty)\;.$$
The respective linearisations are 
$$ \begin{pmatrix}
        0  & 0 \\ 0 & -(\xi_\infty+3m)
    \end{pmatrix}\qquad \text{and} \qquad \begin{pmatrix}
        0  & 0 \\ -\psi_\infty & 0
    \end{pmatrix}\;.$$
\end{lemma}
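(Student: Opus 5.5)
The plan is to restrict the instanton system \eqref{eq:new-generic-SO3-instanton} to the invariant plane $A=B=C=0$. This plane corresponds to the ALC limit point of the metric subsystem, and Proposition \ref{prop:ALC-asymptotic-behaviour} shows that $A,B,C\to 0$ there. Setting $A=B=C=0$ makes both $F$ and $\widehat{F}$ vanish, since each is a combination of the quadratic monomials $AB$, $AC$, $BC$; the term $BC\psi^2$ vanishes as well. The $(\xi,\psi)$-subsystem therefore collapses to
\begin{equation*}
\xi'=0,\qquad \psi'=-(3m+\xi)\psi.
\end{equation*}
Its fixed points are exactly the points with $(3m+\xi)\psi=0$. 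This set is the union of the line $\{\psi=0\}$, parametrised by $\xi=\xi_\infty$, and the line $\{\xi=-3m\}$, parametrised by $\psi=\psi_\infty$. In the statement's ordered coordinates these are $(0,\xi_\infty,0)$ and $(0,-3m,\psi_\infty)$, where the first entry records $(A,B,C)=0$.

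For the linearisation I would compute the Jacobian of the reduced vector field
\begin{equation*}
(\xi,\psi)\mapsto\bigl(0,\,-(3m+\xi)\psi\bigr).
\end{equation*}
The first row is identically zero. The second row is $\bigl(-\psi,\,-(3m+\xi)\bigr)$. At $(\xi_\infty,0)$ the Jacobian is $\operatorname{diag}\bigl(0,-(\xi_\infty+3m)\bigr)$. At $(-3m,\psi_\infty)$ it has the single nonzero entry $-\psi_\infty$ in the lower-left position. These are the two stated matrices.

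The step needing care is whether the full linearisation, including the $(A,B,C)$ directions, could introduce off-diagonal coupling into the $(\xi,\psi)$ block. It does not: every $A,B,C$-dependent term in the $\xi'$ equation is at least quadratic in $(A,B,C)$. The only linear term is $-2A\psi$ in the $\psi'$ equation. That term contributes only to the $A$-column and leaves the displayed $2\times 2$ block unchanged. The metric block at the origin is itself degenerate, which is why the statement records only the $(\xi,\psi)$ block. Apart from this check, the computation is direct.
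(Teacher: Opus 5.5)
Your proposal is correct and matches the paper, which states this lemma without a separate proof, as the direct computation you carry out. On $A=B=C=0$ the terms $F$, $\widehat{F}$ and $BC\psi^2$ all vanish, so the system reduces to $\xi'=0$, $\psi'=-(3m+\xi)\psi$, whose Jacobian gives exactly the two displayed blocks. Your coupling check is automatic: the $(\xi,\psi)$ block of the full Jacobian is obtained by differentiating in $(\xi,\psi)$ and then substituting $A=B=C=0$, which is the Jacobian of the restricted system.
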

Note that $\partial \mathcal{A} \cap \{(0, \xi_\infty, 0) \st \xi_\infty \in \R \} = \{(0, \xi_\infty, 0) \st \xi_\infty \geq-3m\}$ corresponds precisely to the space where the function $\psi$ decays exponentially fast at infinity. In fact, we have 
\begin{lemma}
    Every integral curve that enters $\mathcal{A}$ converges to $( \xi_\infty, 0)$ for some $\xi_\infty \geq -3m$. 
\end{lemma}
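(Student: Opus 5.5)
We work with the case $n=\widehat m$ on $M^m_{k,l}$ with an ALC metric $\gamma^m_\theta$, so that $A,B,C\sim s^{-1}$ as $s\to\infty$ by Proposition \ref{prop:ALC-asymptotic-behaviour}, with $A,B,C,f\geq 0$ and $B,C\geq A$. Let $(\xi,\psi)$ be an integral curve with $(\xi(s_0),\psi(s_0))\in\mathcal A$. By Proposition \ref{prop:invarinat sectors}(i) it stays in $\mathcal A$ for all $s\geq s_0$. Hence
\[
\xi\geq -3m,\qquad \psi^2\leq 4-2\Big(\frac AB+\frac AC\Big)\leq 4 .
\]
So $\psi$ is bounded. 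The first step is to show that $\xi$ is also bounded above, which rules out blow-up, so the solution exists for all forward time. For this we use $\xi'=-F\xi+\widehat m\widehat F-BC\psi^2$. Here $F\geq 0$ on $\mathcal C_+$ (indeed $F\sim -\frac{2\Delta}{m}\cdot$positive near the end, and more robustly $-F$ is the coefficient appearing in Proposition \ref{prop:finite-time-blowup}, where it was used that $-F\leq 0$ after the sign convention). With this, $\xi'\leq -F\xi+C_1 s^{-2}$ for $\xi$ large, and the inhomogeneity is integrable. We then conclude exactly as for the abelian integral representation \eqref{eq:integral_xi}, using the integrating factor $f^{-2}$ from Proposition \ref{prop:condensed-smooth-abelian-ODE}, that $\xi$ remains bounded.

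The second step is the decay of $\psi$. We have $\psi'=-(2A+3m+\xi)\psi$, where $2A\geq 0$ and $\xi+3m\geq 0$, so $\psi$ is non-increasing. If $\psi\to\psi_\infty>0$, then $\int^\infty(\xi+3m+2A)\,ds<\infty$. We would then treat the case where $\xi$ approaches $-3m$. At $\xi=-3m$, Equation \eqref{eq:xiprimeat-3m} gives
\[
\xi'=BC\Big(4-2\frac AB-2\frac AC-\psi^2\Big)\geq 0 .
\]
The key point is that along the ALC end this bracket tends to $4-4-\psi_\infty^2<0$, because $A/B,A/C\to 1$. That contradicts staying in $\mathcal A$ with $\psi_\infty>0$: the constraint $\psi^2\leq 4-2(A/B+A/C)\to 0$ forces $\psi\to 0$ directly. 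So the sector shrinking to $\psi=0$ at infinity already yields $\psi\to0$, and the main work is the next step.

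The third step is convergence of $\xi$. Once $\psi\to 0$ and $\psi^2$ is bounded by the shrinking sector width, the $\xi$-equation is the abelian equation perturbed by $-BC\psi^2=O(s^{-2})$. Writing it as $\partial_s\big((\xi-\widehat m^2/m)/f^2\big)=\big(\tfrac{2\widehat m}{m}A(C-B)-BC\psi^2\big)/f^2$ and using $f\to\ell_\theta$, the right-hand side is $O(s^{-2})$ and hence integrable. Therefore $\xi\to\xi_\infty$, with $\xi_\infty\geq-3m$ by the invariance of $\mathcal A$.

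The main obstacle is justifying integrability without circularity, namely showing that $A(C-B)$ and $BC\psi^2$ decay fast enough. We expect $|C-B|=o(s^{-1})$ from the ALC asymptotics, so the integrand is $O(s^{-2})$. If that decay is insufficient, the fallback is monotonicity: once $\xi\geq -3m+\delta$, $\psi$ decays exponentially, so the perturbation is integrable. The remaining case, where $\xi$ stays near $-3m$, is handled by a bounded monotone limit argument.
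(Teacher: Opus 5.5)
Your proof is essentially the paper's: forward invariance of $\mathcal{A}$ gives $\psi\to 0$, and the abelian integral representation then gives convergence of $\xi$. You get $\psi\to0$ from the pinching bound $\psi^2\le 4-2(A/B+A/C)\to 0$, which is more careful than the paper's appeal to exponential decay (that is only automatic when $\xi_\infty>-3m$). The integrand $\bigl(\tfrac{2\widehat m}{m}A(C-B)-BC\psi^2\bigr)/f^2$ is $O(s^{-2})$ directly from $A,B,C=O(s^{-1})$, $\psi\le 2$ and $f\uparrow\ell_\theta$, so your ``main obstacle'' paragraph is unnecessary. One correction: along $\gamma^m_\theta$ one has $F=\tfrac{1}{\Delta}\bigl(kB(A-C)+lC(A-B)\bigr)\le 0$, not $F\ge 0$, so the damping heuristic in your first step is wrong as stated; it is not needed, since the $f^{-2}$ integrating factor bounds $\xi$, and rules out blow-up, regardless of the sign of $F$.
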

\begin{proof}
    Since the set $\mathcal{A}$ is forward invariant, the function $\psi^2$ decays exponentially to zero in $\mathcal{A}$. Then the function $\xi$ also converges as $s \to \infty$ by our discussion of the abelian case, since $BC = s^{-2}+ o(s^{-2})$ as $s$ goes to infinity, the term $BC\psi^2$ becomes negligible.
\end{proof}
Using this lemma, we have 
\begin{proposition}\label{prop:long-time ALC perturbative}
For each ALC metric $\gamma_{\theta}^m$ on $M_{k,l}^m$, let $a_\theta
^*\in \R$ be the initial condition such that the abelian solution $\Gamma^m_{\theta}(a^*_\theta,0)$ converges to $(-3m,0)$. Then, for all $a >a_\theta^*$, there exists a sufficiently small $\delta>0$ such that $\Gamma^m_{\theta}(a,\alpha)$ converges to some abelian connection $(\xi_\infty,0)$ with $\xi_\infty>-3m$ whenever $\alpha \in [0,\delta)$.

Similarly, there exists $c^*_\theta$ such that, for all $c>c^*_\theta$, the integral curve $\Omega^m_{\theta}(c)$ converges to some abelian connection $(\xi_\infty,0)$ with $\xi_\infty>-3m$.
\end{proposition}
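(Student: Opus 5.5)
The plan is to use the forward invariance of the abelian sector $\mathcal{A}$ (Proposition \ref{prop:invarinat sectors}) together with the preceding lemma: once an integral curve lies in $\mathcal{A}$, it converges to $(\xi_\infty,0)$ with $\xi_\infty\geq -3m$. The task is then to show that the relevant curves enter $\mathcal{A}$ at some finite time, and that the limit is strictly above $-3m$.

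\textbf{First claim.} Start with the abelian family $\alpha=0$, i.e.\ $\psi\equiv 0$. Here the integral representation \eqref{eq:integral_xi} applies (with $n=\widehat m$), and the map $a\mapsto \xi_\infty(a)$ is affine in the initial datum $\kappa\leftrightarrow a$. Its slope is nonzero, with sign fixed by $\ell_\theta^2>0$. By definition of $a^*_\theta$, we therefore get $\xi_\infty(a)>-3m$ for all $a>a^*_\theta$ (after checking the orientation of the parametrisation). Fix such an $a$. Then pick a time $s_1$ with $\xi_{a,0}(s_1)>-3m+\epsilon$. This is possible because $\xi$ converges and $-3m$ is the constant appearing in $\mathcal{A}$.

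Next I would use the ALC asymptotics of Proposition \ref{prop:ALC-asymptotic-behaviour}. Since $A/B, A/C\to 1$, the threshold $4-2(A/B+A/C)$ tends to $0$, and is positive whenever $B,C>A$. So $\mathcal{A}$ at time $s_1$ contains a neighbourhood of $(\xi_{a,0}(s_1),0)$ in the half plane. Even where the threshold is small, it contains all points with $\xi>-3m$ and $\psi$ sufficiently small; if $A=B$ or $A=C$ at some time, I would shift $s_1$ slightly to a time where the inequality is strict. Continuous dependence on initial data on the compact interval $(-\infty,s_1]$, via the Hartman--Grobman parametrisation \eqref{eq:local-Gamma a,b}, gives $\delta>0$ such that $\Gamma^m_\theta(a,\alpha)(s_1)\in\mathcal{A}$ for $\alpha\in[0,\delta)$. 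Forward invariance and the lemma then give convergence to some $(\xi_\infty,0)$ with $\xi_\infty\ge -3m$.

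\textbf{Strictness of the limit.} This is the step I expect to be the main obstacle, since a priori the limit could equal $-3m$. I would argue as follows. In $\mathcal{A}$, $\psi$ decays like $\exp(-\int(2A+3m+\xi))$, so $\psi^2$ is integrable against $BC\sim s^{-2}$. Hence $\xi$ is a small perturbation of the abelian solution through the same point at $s_1$: the variation-of-constants formula for $\xi'=-F\xi+\widehat m\widehat F-BC\psi^2$ gives $\xi_\infty$ as the abelian limit minus a term $O(\int BC\psi^2/f^2)$. This term can be made small by shrinking $\delta$, keeping $\xi_\infty>-3m$. If instead the limit equalled $-3m$, then $\psi$ would only decay polynomially, and I would need the quantitative estimate to exclude it, which is the delicate point.

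\textbf{Second claim.} The argument is the same. By Lemma \ref{lemma:short time integrable curves}(iii), for any sufficiently negative $s_0$ there is $c_*$ with $\Omega^m_\theta(c)(s_0)\in\mathcal{A}$ for $c>c_*$, so the lemma yields $\xi_\infty\ge-3m$. For strictness, I would use that increasing $c$ pushes $\xi(s_0)$ far above $-3m$ (by the coefficient $-4cm\Delta>0$, as $m<0$) while $\psi(s_0)<2$. I would then compare with the abelian solution from $(\xi(s_0),0)$ via the same perturbation estimate. Enlarging $c$ if necessary, taking $c^*_\theta\ge c_*$, gives $\xi_\infty>-3m$.
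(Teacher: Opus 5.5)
Your entry argument is the paper's. The abelian curve $\Gamma^m_\theta(a,0)$ eventually lies in $\mathcal{A}\cap\{\xi\ge -3m+\varepsilon\}$, continuity then puts $\Gamma^m_\theta(a,\alpha)$ in $\mathcal{A}$ for small $\alpha$, and Lemma~\ref{lemma:short time integrable curves}(iii) does the same for $\Omega^m_\theta(c)$. The paper stops there and leaves $\xi_\infty>-3m$ implicit, so you are right to address strictness. For the first claim your estimate does it. In $\mathcal{A}$ one has $\psi'=-(2A+3m+\xi)\psi\le 0$, so the correction is at most $\ell_\theta^2\,\psi(s_1)^2\int_{s_1}^\infty BC/f^2\,ds$. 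This tends to $0$ as $\delta\to0$ whatever the decay rate of $\psi$, so your worry about polynomial decay is moot.

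The gap is strictness in the second claim. ``The same perturbation estimate'' relied on $\psi(s_1)$ being small. Here $\Omega^m_\theta(c)(s_0)$ lies near $(-3m,2)$, so $\psi(s_0)\approx 2$ and $\ell_\theta^2\int_{s_0}^\infty BC\psi^2/f^2$ is not small. Nor does the coefficient $-4cm\Delta$ show that $\xi_c(s_0)$ is far above $-3m$ at a fixed $s_0$. The expansion of $\Omega^m_\theta(c)$ holds only while $c\,e^{-8\Delta s/m}$ is small, so it is not uniform in $c$, and within its range $\xi_c(s_0)+3m$ is itself small. As written, you only get $\xi_\infty\ge -3m$ for $\Omega^m_\theta(c)$.

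The missing observation is that $-BC\psi^2$ should not be treated as an error term. In $\mathcal{A}$ it is dominated by $\widehat m\widehat F+3mF=4BC-2AB-2AC$ (Lemma~\ref{lemma:identities-klm-khatlhatmhat}). Combining this with $(f^{-2})'=Ff^{-2}$ gives
\[
\Big(\frac{\xi+3m}{f^2}\Big)'=\frac{BC}{f^2}\Big(4-\frac{2A}{B}-\frac{2A}{C}-\psi^2\Big)\;\ge\;0\qquad\text{in }\mathcal{A}.
\]
So for a curve lying in $\mathcal{A}$ from time $s_0$ on, $\xi_\infty+3m\ge \ell_\theta^2\,(\xi(s_0)+3m)/f(s_0)^2$.

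Equality $\xi_\infty=-3m$ would force $\xi\equiv-3m$ and $\psi^2\equiv 4-2A/B-2A/C$ on $[s_0,\infty)$. The flow forbids this: the terms discarded in the proof of Proposition~\ref{prop:invarinat sectors} are strictly positive when $A>0$ (since $m<0$), so the curve leaves that boundary at once. Hence every curve that enters $\mathcal{A}$ at a finite time has $\xi_\infty>-3m$. Both claims therefore follow from entry into $\mathcal{A}$ alone, with no perturbation estimate.
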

\begin{proof}
If $\xi_\infty>-3m$ for some initial condition $a> a^*$, the integral curve $\Gamma^m_{\theta}(a,0)$, eventually enters the set $\mathcal{A}_\epsilon$ for some $\varepsilon>0$, where $\mathcal{A}_\varepsilon= \mathcal{A} \cap \{\xi \geq -3m +\varepsilon\}$. By continuous dependence on initial conditions, there exists a sufficiently small $\delta>0$ such that for $\alpha\in(0,\delta)$, the integral curve $\Gamma^m_\theta(a,\alpha)$ enters $\mathcal{A}$.

The statement for $\Omega_\theta^m(c)$ follows from Lemma \ref{lemma:short time integrable curves} $(iii)$.
\end{proof}
Now, consider the compact subfamilies $\Gamma_\theta^m(a , \alpha)$ for a fixed $a>a^*$ and $\alpha \in \left[ 0, \pi\right]$, and $\Omega ^m_\theta (c)$ for $c \in [-c_*, c_*]$. Applying Proposition \ref{prop:long-time-critical}, we have solutions that are contained in the critical sector $\mathcal{C}$ for all time:
\begin{proposition}\label{prop:long-time ALC np}$\;$
Let $\gamma_\theta ^m$ be an ALC metric. Then, for each $a> a_\theta^*$, there exists $\alpha^* \in \left[ 0, \pi \right]$ such that $\Gamma_\theta^m(a, \alpha^*) (s) \in \mathcal{C}$ for all $s \in \R$.

Similarly, $\Omega_\theta^i(c^*)$ remains in $\mathcal{C}$ for all $s \in \R$:
\end{proposition}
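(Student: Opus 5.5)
The plan is to apply Proposition \ref{prop:long-time-critical} directly, once to each of the two compact connected families singled out just before the statement. The work consists of checking its three hypotheses: compactness and connectedness of the parameter set, avoidance of the forbidden sector $\mathcal{F}$, and the existence of a common time $s_*$ at which one member lies in $\mathcal{A}$ and another in $\mathcal{B}$.

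For the first claim, fix $a>a^*_\theta$ and take $K=\{\Gamma^m_\theta(a,\alpha)\st\alpha\in[0,\pi]\}$, parametrised by the closed interval $[0,\pi]$. This set is compact and connected, and the local integral curves depend continuously on $\alpha$ by Hartman--Grobman.

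\emph{Avoiding $\mathcal{F}$.} By Lemma \ref{lemma:short time integrable curves}(i) every member starts, for $s$ very negative, in $\mathcal{C}$ or $\mathcal{B}$, or in $\mathcal{A}$ in the abelian endpoint cases $\alpha\in\{0,\pi\}$. None starts in $\mathcal{F}$. Since $\mathcal{F}$ is backwards invariant by Proposition \ref{prop:invarinat sectors}(iii), a curve that is outside $\mathcal{F}$ initially can never enter it.

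\emph{Members in $\mathcal{A}$ and $\mathcal{B}$.} Proposition \ref{prop:long-time ALC perturbative} shows that for small $\alpha\in[0,\delta)$ the curve eventually enters $\mathcal{A}$ and stays there by forward invariance. For $\alpha$ near $\pi$ (more precisely $\pi-\alpha\in(0,\pi/2)$) the curve lies in $\mathcal{B}$ by Lemma \ref{lemma:short time integrable curves}(i), and it remains in $\mathcal{B}$ by forward invariance. Choosing $s_*$ later than the entry time of the $\mathcal{A}$ representative therefore gives both members simultaneously.

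One subtlety is that $\mathcal{B}$-curves with $\psi\neq0$ blow up in finite time (Proposition \ref{prop:finite-time-blowup}). I would handle this as the proof of Proposition \ref{prop:long-time-critical} suggests: once a curve has blown up, declare its parameter to lie in $\mathcal{B}^{-1}_s$ for all later $s$. The sets $\mathcal{A}^{-1}_s$ and $\mathcal{B}^{-1}_s$ are then disjoint, and their closedness still holds, because entering $\mathcal{B}$ is an open condition in time. The conclusion is that $\mathcal{C}^{-1}_s$ is non-empty for every $s\geq s_*$ and decreasing in $s$, so Lemma \ref{lemma:topolgy-closed-chain} produces $\alpha^*$ with $\Gamma^m_\theta(a,\alpha^*)(s)\in\mathcal{C}$ for all $s$. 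For $s<s_*$ we need membership in $\mathcal{C}$ as well. Since $\mathcal{A}$ and $\mathcal{B}$ are forward invariant, a curve lying in $\mathcal{C}$ for all large $s$ must have been in $\mathcal{C}$ at all earlier times too.

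The second claim follows the same pattern with $K=\{\Omega^m_\theta(c)\st c\in[-c_*,c_*]\}$. Lemma \ref{lemma:short time integrable curves}(ii) puts every member in $\mathcal{C}$ initially, hence outside $\mathcal{F}$ forever. Part (iii) of the same lemma gives $\Omega(c_*)(s)\in\mathcal{A}$ and $\Omega(-c_*)(s)\in\mathcal{B}$ at a common sufficiently negative time $s_*$, after enlarging $c_*$ if needed. Proposition \ref{prop:long-time-critical} then yields $c^*\in[-c_*,c_*]$ whose curve stays in $\mathcal{C}$.

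The step I expect to be the main obstacle is the closedness of the decomposition $K=\mathcal{A}^{-1}_s\cup\mathcal{B}^{-1}_s\cup\mathcal{C}^{-1}_s$ when blow-up occurs. Concretely, one must show that limits of parameters whose curves lie in $\mathcal{C}$ at time $s$ still have curves defined and lying in $\mathcal{C}$ at time $s$. Boundedness in $\mathcal{C}$ prevents blow-up before time $s$, so continuous dependence on initial data applies there, and I would verify this carefully. The same step also requires the boundary points $\alpha=0,\pi$ (where $\psi\equiv0$) to be handled consistently, which I would do by placing them in $\mathcal{A}$ or $\mathcal{B}$ according to their $\xi$-coordinate.
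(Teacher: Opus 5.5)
Your proposal is correct and is the paper's argument: the paper obtains this statement by applying Proposition~\ref{prop:long-time-critical} to the compact families $\{\Gamma^m_\theta(a,\alpha)\}_{\alpha\in[0,\pi]}$ and $\{\Omega^m_\theta(c)\}_{c\in[-c_*,c_*]}$. Your checks supply the details it leaves implicit: backward invariance of $\mathcal{F}$, the $\mathcal{A}$-witness near $\alpha=0$, the $\mathcal{B}$-witness near $\alpha=\pi$, and assigning blown-up parameters to $\mathcal{B}^{-1}_s$. One minor slip: the abelian endpoints do not start in $\mathcal{A}$, since $a>a^*_\theta>0$ and $\widehat{m}^2/m<-3m$ mean the curve with $\alpha=0$ starts in $\mathcal{C}$ and the one with $\alpha=\pi$ starts in $\mathcal{B}$; using $\alpha=\pi$, where $\psi\equiv0$, as the $\mathcal{B}$-witness also avoids any blow-up issue for that witness.
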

We have
\begin{proposition}\label{prop:long-time ALC np2}
For each ALC metric $\gamma_{\theta}^m$ on $M_{k,l}^m$, any integral curve with $\psi \neq 0$ contained in the critical sector $ \mathcal{C}$ converges to $(0,-3m,0)$ as $s \to \infty$. Moreover, it has asymptotics
    $$\xi+ 3m \approx  C s^{-2} \qquad \qquad \psi \approx D s^{-2}\;.$$
\end{proposition}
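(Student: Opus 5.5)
The plan is to first show that $\xi$ has a limit, then that this limit must be $-3m$, then that $\psi\to 0$, and finally to read off the $s^{-2}$ rates from a linearised analysis at $(0,-3m,0)$.

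\textbf{Convergence of $\xi$ and identification of the limit.} In $\mathcal{C}$ both $\xi\in[\frac{\widehat m^2}{m},-3m]$ and $\psi\in(0,2]$ are bounded. By Proposition~\ref{prop:ALC-asymptotic-behaviour}, $A,B,C=s^{-1}+o(s^{-1})$, so every term on the right of $\xi'=-F\xi+\widehat m\widehat F-BC\psi^2$ is $O(s^{-2})$. (In fact $F=o(s^{-2})$, since $k+l+m=0$.) Hence $\xi'$ is integrable and $\xi\to\xi_\infty\in[\frac{\widehat m^2}{m},-3m]$. If $\xi_\infty<-3m$, then $\psi'/\psi=-(2A+3m+\xi)\geq \tfrac12|\xi_\infty+3m|>0$ for large $s$. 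Since $\psi\neq0$, this forces exponential growth of $\psi$, contradicting $\psi\leq 2$. Thus $\xi_\infty=-3m$.

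\textbf{Reduction to a system for $u=\xi+3m\le 0$.} Using Lemma~\ref{lemma:identities-klm-khatlhatmhat} (the $m$-identity $\widehat m\widehat F+3mF=4BC-2AB-2AC$), I rewrite the $\xi$-equation as
\[
u'=-Fu+\bigl(4BC-2AB-2AC\bigr)-BC\psi^2,\qquad \psi'=-(2A+u)\psi .
\]
The key input is a second-order expansion of the ALC metric. I would get it from the blown-up system in the proof of Proposition~\ref{prop:ALC-asymptotic-behaviour}. At the fixed point $(\widehat A,\widehat B,\widehat C,\widehat s)=(1,1,1,0)$ the linearisation in the hatted variables has eigenvalues $-3,-4,-4$, and $-1$ in the $\widehat s$ direction. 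The $\widehat s$-direction is then resonance-free relative to these, so the unstable-manifold expansion contains no logarithms. This gives $A,B,C=s^{-1}+O(s^{-2})$ and $A-B,\,B-C=O(s^{-2})$. Consequently $4BC-2AB-2AC=O(s^{-3})$ and $2A=2s^{-1}+O(s^{-2})$.

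\textbf{$\psi\to0$ and the rates.} Suppose $\psi\not\to0$. Then along a sequence $\psi$ stays bounded below, and since $u\to0$ the equation gives $u(s)=-\int_s^\infty u'\,d\tilde s\gtrsim \int_s^\infty BC\psi^2 -O(s^{-2})$. A short bootstrap (using $\psi'/\psi\geq -2A$, so $\psi$ cannot drop fast) makes this positive of size $\sim s^{-1}$ for large $s$, contradicting $u\leq0$ in $\mathcal{C}$. Hence $\psi\to0$ and the curve converges to $(0,-3m,0)$.

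For the rates:
\begin{itemize}
\item Integrating the $u$-equation from infinity gives $u=O(s^{-2})$, since $BC\psi^2=o(s^{-2})$ and the metric term is $O(s^{-3})$.
\item Then $\psi'/\psi=-2s^{-1}+O(s^{-2})$, which integrates to $\psi=Ds^{-2}(1+O(s^{-1}))$ with $D>0$.
\item Feeding this back, $u'=c_3s^{-3}+O(s^{-4})$, which yields $u=Cs^{-2}+o(s^{-2})$, i.e.\ $\xi+3m\approx Cs^{-2}$.
\end{itemize}

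\textbf{Main obstacle.} The hard step is the second-order ALC expansion: specifically, that $A,B,C$ agree with $s^{-1}$ to order $s^{-2}$ with no $s^{-2}\log s$ terms. Any $\log$ term in $A$ would change the $\psi$-rate to $s^{-2}(\log s)^{\alpha}$. I would first try the normal-form and resonance check at $(1,1,1,0)$ described above. If a resonance does appear, I would instead bound $\int^\infty (A-s^{-1})\,ds$, since only its finiteness is needed for $\psi\sim Ds^{-2}$.
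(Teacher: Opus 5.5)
Your route is essentially sound and differs from the paper's. However, the step you single out as the main obstacle rests on a miscomputed spectrum.

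\textbf{The second-order metric expansion.} In the blown-up metric system, the $(\widehat A,\widehat B,\widehat C)$-block of the linearisation at $(1,1,1,0)$ has rows $(-3,1,1)$, $(1,-3,1)$, $(1,1,-3)$. That is, it equals $J-4I$ with $J$ the all-ones matrix. Its eigenvalues are therefore $-1$ on $(1,1,1)$ and $-4,-4$ on $(1,1,1)^\perp$, not $-3,-4,-4$. The $e^{-\tau}$ mode along $(1,1,1)$ has to be present, since it is the time shift $s\mapsto s+s_0$.

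Hence the $\widehat s$-eigenvalue $-1$ is repeated, and ``resonance-free'' is false. A priori the eigenvalue $-1$ could carry a Jordan block and produce $\tau e^{-\tau}$ terms, i.e.\ $s^{-2}\log s$ terms. What excludes this is $k+l+m=0$: the $\widehat s$-column of the linearisation is $\frac{1}{\Delta}(m,l,k)$, which is orthogonal to $(1,1,1)$, so the $-1$-eigenspace is semisimple.

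Concretely, substituting $A=s^{-1}(1+\alpha s^{-1}+\dots)$, and likewise for $B$ and $C$, into the $(A,B,C)$-system gives $2\alpha-\beta-\gamma=\frac{m}{\Delta}$, $2\beta-\alpha-\gamma=\frac{l}{\Delta}$ and $2\gamma-\alpha-\beta=\frac{k}{\Delta}$. This system is solvable exactly because the right-hand sides sum to zero, with solution $(\alpha,\beta,\gamma)=c_1(1,1,1)+\frac{1}{3\Delta}(m,l,k)$. It yields $4BC-2AB-2AC=-\frac{2m}{\Delta}s^{-3}+O(s^{-4})$. Your last step then gives $\xi+3m=\frac{m}{\Delta}s^{-2}+o(s^{-2})$, which is negative, as it must be in $\mathcal{C}$.

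Your worry about logarithms is also somewhat misdirected. An $s^{-2}\log s$ correction to $A$ is integrable, so it would not affect $\psi\sim Ds^{-2}$, and your fallback suffices there. The second-order expansion really matters for the $\xi$-rate, through the $s^{-2}$ coefficients of $B-A$ and $C-A$.

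\textbf{The bound on $u$.} Integrating $BC\psi^2=o(s^{-2})$ only gives $u=o(s^{-1})$, not $u=O(s^{-2})$. Use the sign instead. You have $u(s)=\int_s^\infty BC\psi^2\,d\tilde s+O(s^{-2})$ with a nonnegative integral. Together with $u\le 0$ in $\mathcal{C}$, this forces $-Ks^{-2}\le u\le 0$. Alternatively, bootstrap via $\psi=O(s^{-2+\epsilon})$. With these two repairs your argument is complete.

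\textbf{Comparison with the paper.} The paper identifies the limit using the instability of the abelian segment $[\frac{\widehat m^2}{m},-3m)\times\{0\}$. It then blows up at $(0,-3m,0)$ with $\tau=\log s$, $\widehat s=1/s$ and $\xi+3m=\widehat s\,u$, and reads the rates off the linearisation $\operatorname{diag}(1,-2,-1)$, carrying the metric corrections as $o(1)$ errors. Your integral estimates are more elementary and explicit. They make $\xi_\infty=-3m$ and $\psi\to 0$ transparent, and once the metric expansion is in place they even pin down the constant in $\xi+3m\approx Cs^{-2}$.
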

\begin{proof}
    The limiting set within the critical sector $\mathcal{C}$ is $\left[\frac{\widehat{m}^2}{m}, -3m\right] \times \{0\} \cup  \{-3m\} \times [0, 2]$. Computing the linearisation of the ODE at the ALC point, it follows that the segment $\left[\frac{\widehat{m}^2}{m}, -3m\right)$ is unstable in the $\psi$-direction, so our integral curve must converge to $\{-3m\} \times [0, 2]$.
    
    For $\{-3m\} \times [0, 2]$, both eigenvalues of the linearisation  are zero, so the Hartman–-Grobman theorem gives us no information. Let us blow up the ODE system at infinity and exploit the ALC asymptotics from Proposition \ref{prop:ALC-asymptotic-behaviour}. 
    
    Take the change of coordinates $\tau= \log s$, and introduce the coordinate $\widehat{s} = \frac{1}{s}$. By Proposition \ref{prop:ALC-asymptotic-behaviour}, we have $A(\widehat{s}), B(\widehat{s}), C(\widehat{s}) = \widehat{s}+o(\widehat{s})$. In particular, we have $F, \widehat{F}=o(\widehat{s}\,^{2})$ since $k+l+m=0=\widehat{k}+\widehat{l}+\widehat{m}$. 
    
    Taking the change of variables, $X= \xi+3m$, the ODE system becomes the autonomous ODE system 
    \begin{align*}
        \partial_\tau X&=-\widehat{s}\psi^2 + o(\widehat{s})\\
        \partial_\tau  \psi &=  -\left(\frac{X}{\widehat{s}}+2\right) \psi + o(1)\\
        \partial_\tau \widehat{s} &= -\widehat{s} \;.
    \end{align*}
    As in Proposition \ref{prop:ALC-asymptotic-behaviour}, this suggests the blow-up change of coordinate $X=\widehat{s}u$, which yields the system
    \begin{align*}
        \partial_\tau u&= u- \psi^2  + o(1)\\
        \partial_\tau \psi &=  -\left(u+2\right) \psi + o(1)\\
        \partial_\tau  \widehat{s} &= - \widehat{s}
    \end{align*}
    The only fixed point of the blown up system is the origin, with linearisation
    $ \diag (1, -2, -1) $ in the $(u, \psi, \widehat{s})$ coordinates. The asymptotic decay rates for $\xi$ and $\psi$ follow. 
\end{proof}

In particular, any $\SO(3)$-connection over the ALC space is asymptotic to the pullback of an abelian $G_2$-monopole on the cone $(1, \infty) \times \SU(3)/T^2$.

In summary, we have proved the following.
\begin{theorem}\label{thm:ALC-instantons-existence}
   For every ALC metric $\gamma_{\theta}^m$ on $M_{k,l}^m$, the bundle $Q_{\widehat{m}}$ admits a one-dimensional family of $\Spin(7)$-instantons. 

    If, in addition, $\widehat{m}=0 \mod m $, the bundle $P_{\widehat{m}} \to M_{k,l}^m$ carries
    \begin{enumerate} [label=(\roman*)]
        \item a two-dimensional family of non-abelian $\Spin(7)$-instantons converging exponentially to the abelian connection $(\xi_\infty, 0)$, with $\xi_\infty >-3m$ , and 
        \item a one-dimensional family of non-abelian $\Spin(7)$-instantons converging to the abelian connection $(-3m, 0)$.
    \end{enumerate}
\end{theorem}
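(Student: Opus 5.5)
The proof assembles the results of this subsection, treating the two bundles separately. In both cases the local smooth solutions are supplied by the linearisation analysis at the relevant fixed points of Lemma~\ref{lemma:fixed-pts generic SO3}. The global behaviour is then read off from the sector decomposition of Proposition~\ref{prop:invarinat sectors}.

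For $Q_{\widehat{m}}$, the smooth local instantons are the curves $\Omega^m_\theta(c)$, $c\in\R$, emanating from $(p_0^m,-3m,2)$. By Lemma~\ref{lemma:short time integrable curves}(iii), for $|c|$ large the curves $\Omega^m_\theta(c)$ and $\Omega^m_\theta(-c)$ lie in $\mathcal{A}$ and $\mathcal{B}$ respectively at some fixed early time $s_*$. Curves in $\mathcal{A}$ are global, with $\psi\to0$ exponentially and $\xi\to\xi_\infty\ge -3m$, by the lemma preceding Proposition~\ref{prop:long-time ALC perturbative}. Curves entering $\mathcal{B}$ blow up by Proposition~\ref{prop:finite-time-blowup}, and $\mathcal{F}$ is never entered, since it is backward invariant and the local curves start in $\mathcal{C}$ by Lemma~\ref{lemma:short time integrable curves}(ii). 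Applying Proposition~\ref{prop:long-time-critical} to the connected compact set $K=[-c_*,c_*]$ produces a curve staying in $\mathcal{C}$ for all time. That curve converges to $(-3m,0)$ with polynomial rate by Proposition~\ref{prop:long-time ALC np2}.

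Next I will show that the set of $c$ giving global bounded solutions is a closed half-line $[c^*_\theta,\infty)$. Its interior consists of $\mathcal{A}$-solutions and its endpoint is the critical solution. I will get this from a monotonicity/comparison argument in $c$ (larger $c$ pushes $\psi$ down and $\xi$ up near the singular orbit, which is preserved by the order-preserving structure of the $(\xi,\psi)$-system). Combined with the openness of $\mathcal{A}$-entry and the fact that $\mathcal{B}$-entry is open, this gives the claimed one-dimensional family. This monotonicity, i.e.\ uniqueness of the critical $c$, is the step I expect to be hardest. If a clean comparison fails, I will settle for existence of a connected one-parameter family, which is all the statement literally asserts.

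For $P_{\widehat{m}}$, when $\widehat{m}\equiv0\bmod m$, the local solutions are $\Gamma^m_\theta(a,\alpha)$ from $(p_0^m,\widehat{m}^2/m,0)$, with $\alpha\in(0,\pi)$ giving non-abelian ones. For $a>a^*_\theta$, Proposition~\ref{prop:long-time ALC perturbative} gives an open interval of small $\alpha$ for which the curve enters $\mathcal{A}$ and converges exponentially to $(\xi_\infty,0)$ with $\xi_\infty>-3m$. Letting $a$ vary gives the two-dimensional family (i). For $\alpha$ near $\pi$ the curve lies in $\mathcal{B}$ by Lemma~\ref{lemma:short time integrable curves}(i) and blows up. Applying Proposition~\ref{prop:long-time-critical} on $K=\{a\}\times[0,\pi]$ yields $\alpha^*(a)$ as in Proposition~\ref{prop:long-time ALC np}, whose curve stays in $\mathcal{C}$. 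By Proposition~\ref{prop:long-time ALC np2} it converges polynomially to $(-3m,0)$, and letting $a>a^*_\theta$ vary gives the one-dimensional family (ii). The only extra care needed is that $\mathcal{F}$ is avoided, which again follows from backward invariance of $\mathcal{F}$ and the fact that the local curves start outside it.
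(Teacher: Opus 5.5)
Your proposal is correct and takes essentially the same route as the paper. The paper obtains the theorem by assembling Lemma~\ref{lemma:short time integrable curves}, the invariant sectors, finite-time blow-up, Proposition~\ref{prop:long-time-critical}, and Propositions~\ref{prop:long-time ALC perturbative}--\ref{prop:long-time ALC np2}, exactly as you do. Your extra half-line step is not needed for the statement: the ordering of Lemma~\ref{lemma:ordering} already shows that the bounded values of $c$ form a closed half-line, whereas uniqueness of the critical value is a separate question that the paper settles only for the AC metric (Lemma~\ref{lemma:unique_curve}).
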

Applying the same arguments to the cases $\widehat{k}$ and $\widehat{l}$, we obtain the corresponding versions of the theorem. 

The critical value $\xi_\infty = -3i$ has a natural interpretation in terms of the holonomy around the asymptotic circle at infinity. As usual, we focus on the case $i=m$, the other cases being analogous. The diagonal part of the limiting connection is $(\xi_\infty e_7 +\widehat{m} e_8) \otimes T_1$. 

Consider the character $\chi_{\widehat{m}}: \diag( z_1, z_2, z_3) \mapsto z_1z_2^{-1}$.  In our basis \eqref{eq:su3-basis}, we have
$$ d\chi_{\widehat{m}}(E_7)= -3m  \qquad \qquad d\chi_{\widehat{m}}(E_8) = \widehat{m}\;.$$

Hence, $A_0 = (-3m e_7 +  \widehat{m}e_8) \otimes T_1$ is the pullback of the homogeneous pseudo Hermite-Yang-Mills connections on the associated line bundle over the flag manifold $\SU(3)/T^2$. In particular, its holonomy along the circle fibration $$\U(1) \cong T^2/ \U(1)_{k,l} \to N_{k,l} \to \SU(3)/ T^2$$
is trivial. Thus, the limiting connection decomposes as 
    $$ A_\infty  = A_0 +  (\xi_\infty + 3m)e_7 \otimes T_1\;.$$
This identifies $X_\infty  =  \xi_\infty +3m $ as the vertical difference $A_\infty - A_0$ between the limiting connection and lift of the canonical connection. The analogous root characters give \(X_\infty=\xi_\infty+3i\) for \(i=k,l\). 

The $E_7$-flow has period $\frac{\pi}{\Delta}$, so
$$ \operatorname{Hol}(A_\infty) =  \exp \left( \frac{\pi}{\Delta} X_\infty \;T_1\right)\;. $$
In particular, the critical value $X_\infty=0$ corresponds to trivial asymptotic holonomy. The coefficient $X_\infty$ retains more information than this holonomy: the latter is unchanged under $X_\infty\mapsto X_\infty+2\Delta n$, whereas the decay of the particular mode retained by our invariant ansatz depends on $X_\infty$ itself. As expected, the limiting value $X_\infty$ controls the asymptotic decay rates of a solution $(\xi, \psi)$. For $\psi$, since it satisfies the equation $\psi' = -(2A+X) \psi$, and $A = s^{-1}$, we have the asymptotic 
$$ \psi (s)  =  \kappa s^{-2} \exp\left({-s\,X_\infty}\right)\; (1+ o(1))\;,$$
for some $\kappa \in \R$. Since $dt=f,ds$ and $f\to\ell>0$, the corresponding exponential rate in geodesic distance is $X_\infty/\ell$. At $X_\infty=0$, the exponential term vanishes, and we recover Proposition \ref{prop:long-time ALC np2}. To describe the diagonal component, let $\xi_{\mathrm{ab}}$ be the abelian solution having the same limit. The difference $D(s)= \xi(s)- \xi_{ab}(s)$ satisfies the ODE 
$$ D' + F D = -BC \psi^2\;.$$
Hence, if $X_\infty>0$, the difference decays exponentially, $D \sim s^{-6}\exp\left({-2s\,X_\infty}\right)$. For $X_\infty = 0$, we get $D \sim s^{-5}$.

This difference in asymptotic behaviour is consistent with the monopole interpretation of the limiting connection, discussed in the abelian case (cf. Remark \ref{rmk:monopole}). Under the asymptotic circle fibration, the limiting model corresponds to an abelian $G_2$-monopole on the cone over $\SU(3)/T^2$. Relative to the lift determined by the root character, its Higgs field is
$\Phi_\infty=\frac{X_\infty}{\ell}T_1$, and is parallel with respect to the limiting abelian connection.

The corresponding distinction between decay rates appears in the analysis of $\SU(2)$ monopoles on AC $G_2$-manifolds by Fadel--Nagy--Oliveira \cite[Thm.~6.1, Prop.~6.3]{FNO}. For a monopole $(A,\Phi)$ of positive mass, the Higgs field splits the adjoint bundle along the end into the line commuting with $\Phi$ and its orthogonal complement. On the latter, the commutator with $\Phi$ supplies a non-zero limiting mass term. Under their finite intermediate energy and curvature decay hypotheses, they prove exponential decay of the transverse components of $F_A$ and $\nabla_A\Phi$. The longitudinal component has no such mass term; their estimate for the full covariant derivative is polynomial, namely $|\nabla_A\Phi|=O(r^{-6})$ in dimension seven.
\subsection{The AC case}
Let us now study the long time existence of solutions in the limiting cases where the metric $\gamma_\theta$ is asymptotic to a $\Spin(7)$ cone. 
\begin{lemma}
Consider the asymptotically conical $\Spin(7)$-metric $\gamma^i_{\theta^*}$ on $M^i_{k,l}$, and the quantities
\begin{subequations}
\begin{align}
     D_{\widehat{m}}&=4-A^+\left(3\frac{1}{B^+}+3\frac{1}{C^+} +\frac{A^+}{B^+C^+}\right) = 4-\left(\frac{3}{\mathsf{b}} +\frac{3}{\mathsf{c}}+ \frac{1}{\mathsf{b}\mathsf{c}} \right)  \,,\\
     D_{\widehat{l}}&=4-B^-\left(3\frac{1}{A^-}+3\frac{1}{C^-} +\frac{B^-}{A^-C^-}\right) = 4-\left(3{\mathsf{b}} +\frac{3\mathsf{b}}{\mathsf{c}}+ \frac{\mathsf{b}^2}{\mathsf{c}}\right) \,,\\ 
     D_{\widehat{k}}&=4-C^-\left(3\frac{1}{A^-}+3\frac{1}{B^-} +\frac{C^-}{A^-B^-}\right)= 4-\left(3{\mathsf{c}} +\frac{3\mathsf{c}}{\mathsf{b}}+ \frac{\mathsf{c}^2}{\mathsf{b}}\right) \,; \label{eq:def-Dk}
\end{align}
\end{subequations}
where $p_{k,l}^{\pm}=(A^\pm, B^\pm, C^\pm)$ are the fixed points of the system \eqref{eq:new-evo-spin7} corresponding to the two nearly parallel $G_2$-metrics on $N_{k,l}$, and $\mathsf{b}=\frac{B}{A}$ and $\mathsf{c}=\frac{C}{A}$, as usual.

The $\Spin(7)$-instanton equation over $Q_{\widehat{i}}$ has one fixed point, $(p^\pm_{k,l}, \widehat{i}\Lambda^\pm, 0)$. If $D_{\widehat{i}} \leq 0$, it is unique. If $D_{\widehat{i}}>0$, the system has an additional fixed point $(p_{k,l}^\pm, \xi_\infty, \psi_\infty)$, with $\psi_\infty\neq 0$. 
\end{lemma}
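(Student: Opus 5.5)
The fixed points over the AC critical point $p^\pm_{k,l}$ come from setting the right-hand sides of \eqref{eq:new-generic-SO3-instanton}, and its analogues for $\widehat{k},\widehat{l}$, to zero with $(A,B,C)$ frozen at $p^\pm_{k,l}$. I treat $Q_{\widehat{m}}$ over $p^+_{k,l}$ in detail; the cases $\widehat{k},\widehat{l}$ over $p^-_{k,l}$ follow by the same steps after permuting the roles of $A,B,C$. The $\psi$-equation $\psi'=-(2A+3m+\xi)\psi$ splits the problem into two cases: $\psi=0$, or $\xi=-3m-2A$.

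\emph{Case $\psi=0$.} The $\xi$-equation becomes $-F\xi+\widehat{m}\widehat{F}=0$. We know $F^\pm\neq 0$: indeed $F^+<0$ and $F^->0$, as used in the lemma on the abelian AC fixed points. Hence this case gives exactly one fixed point, $\xi=\widehat{m}\widehat{F}^\pm/F^\pm=\widehat{m}\Lambda^\pm$, the abelian point $(p^\pm_{k,l},\widehat{m}\Lambda^\pm,0)$.

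\emph{Case $\psi\neq0$.} Here $\xi=-3m-2A$, and the $\xi$-equation forces
\[
BC\,\psi^2=3mF+\widehat{m}\widehat{F}+2AF .
\]
I would simplify the right-hand side in two steps.

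1. Lemma \ref{lemma:identities-klm-khatlhatmhat} gives $\widehat{m}\widehat{F}+3mF=4BC-2AB-2AC$.
2. To express $F$ through $A,B,C$, I sum the three conditions $A'=B'=C'=0$ from \eqref{eq:new-evo-spin7}, each divided by its (nonzero) variable. The linear parts add up to $-(A+B+C)$ and the quadratic parts add up to $2F$, so at $p^\pm_{k,l}$ one has $F=-\tfrac12(A+B+C)$.

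Substituting, the right-hand side equals $4BC-3AB-3AC-A^2$. Since $BC>0$ in both cones $\mathcal{C}_\pm$, dividing by it gives
\[
\psi_\infty^2=4-\frac{3A}{B}-\frac{3A}{C}-\frac{A^2}{BC}=D_{\widehat{m}} .
\]

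\emph{Conclusion.} With the gauge normalisation $\psi\ge0$, a nonabelian fixed point exists exactly when $D_{\widehat{m}}>0$. It is then unique, with $\xi_\infty=-3m-2A^\pm$ and $\psi_\infty=\sqrt{D_{\widehat{m}}}$. When $D_{\widehat{m}}=0$ the candidate has $\psi=0$, so it is not a genuine extra point and the abelian point is the only one. When $D_{\widehat{m}}<0$ there is no real $\psi_\infty$.

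For $Q_{\widehat{k}}$ and $Q_{\widehat{l}}$ the nonabelian branch is $\xi=-3k-2C$, respectively $\xi=-3l-2B$. The relevant identity from Lemma \ref{lemma:identities-klm-khatlhatmhat} is $\widehat{k}\widehat{F}+3kF=4AB-2AC-2BC$ (respectively the $\widehat{l}$ version), and dividing by $AB$ (respectively $AC$) gives $D_{\widehat{k}}$ (respectively $D_{\widehat{l}}$) in the stated form.

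The only nontrivial step is spotting that the summed fixed-point relation $F=-\tfrac12(A+B+C)$ removes all the $k,l,m,\Delta$ dependence. The rest is bookkeeping, where the main care needed is tracking signs when dividing by $BC$ in the negative cone.
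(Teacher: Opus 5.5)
Your proof is correct and follows the paper's route: split on $\psi$, set $\xi=-3m-2A$, use Lemma \ref{lemma:identities-klm-khatlhatmhat} to reach $BC\,\psi^2=4BC-2AB-2AC+2AF$, and then eliminate $F$ using the fixed-point equations for $(A,B,C)$. Your summed relation $F=-\tfrac12(A+B+C)$ is exactly the ``direct algebraic computation'' the paper leaves implicit, though the quadratic parts of the summed equations add up to $-2F$, not $2F$ (the identity you derive from them is nonetheless right).
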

\begin{proof}
We outline the proof in the case of $Q_{\widehat{m}}\to M^m_{k,l}$, with $Q_{\widehat{l}}$ and $Q_{\widehat{k}}$ being analogous. If a fixed point of the ODE system \eqref{eq:new-generic-SO3-instanton} is not abelian, the condition $\psi'=0$ forces $\xi_\infty = -3m-2A$. Substituting this in the equation for $\xi'=0$, we have
\begin{align*}
    \psi^2_\infty = \frac{1}{{B^+ C^+}} \left( 4 B^+ C^+ - 2A^+ B^+   -2 A^+ C^+  + \frac{2A^+ }{\Delta}\left(kA^+ B^+ +lA^+ C^+ +mB^+ C^+ \right)\right)= D_{\widehat{m}}\;,
\end{align*}
where we used the identity in Lemma \ref{lemma:identities-klm-khatlhatmhat} and the fact that $p_{k,l}^+ = (A^+, B^+,C^+)$ is a fixed point of \eqref{eq:new-evo-spin7} with $ABC\neq0 $. The claim follows by direct algebraic computation.
\end{proof}
\begin{remark}
    Note that, since we are in the AC regime, the fixed points correspond to $G_2$-instantons for the corresponding nearly parallel $G_2$-structure on $N_{k,l}$. In particular, one can easily see that the quantities $D_{\widehat{i}}^\pm$ are proportional to the $\sigma_i(\varphi^\pm)$ introduced by Ball--Oliveira in \cite{BO19}. While Ball--Oliveira are interested in all $D^\pm_i$ and consider arbitrary coclosed $G_2$-structures, we restrict ourselves to $D_{\widehat{k}}^-, D_{\widehat{l}}^-$ and $D_{\widehat{m}}^+$, and avoid the use of the superscript, since there are no ambiguities.
\end{remark} 
We would like to understand the behaviour of $D_{\widehat{i}}$ for a given Aloff--Wallach space $N_{k,l}$. We have
\begin{proposition}
\label{prop: small l/k for positive D^-_k}
    For any $k\geq l\geq 0$, we have $D_{\widehat{m}}>0$ and $D_{\widehat{l}}<0$. Moreover, $D_{\widehat{k}}>0$ if $\frac{l}{k}\in[0,r_*)$ and  $D_{\widehat{k}}<0$ if $\frac{l}{k}\in (r^*,1]$, where $r^*=-\frac{1}{2}+\frac{\sqrt{861+222\sqrt{19}}}{34}$.
\end{proposition}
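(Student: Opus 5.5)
We work entirely with the homogenised coordinates $(\mathsf{b},\mathsf{c})=(B/A,C/A)$ of the nearly parallel points, which by Proposition \ref{prop:AC-inequalitites} are the intersections of the ellipse $\mathsf{L}_1=0$ with the hyperbola $\mathsf{L}_2=0$ in \eqref{eq:homogenised_AC_equation}. Each $D_{\widehat{i}}$ is then a rational function of $(\mathsf{b},\mathsf{c})$ alone. The plan is to treat $r=l/k\in[0,1]$ as a continuous parameter: $\mathsf{L}_2$ depends on $(k,l)$ only through $r$ after dividing by $k$. We track the two intersection points $(\mathsf{b}_\pm(r),\mathsf{c}_\pm(r))$ as continuous curves and evaluate the sign of $D_{\widehat{i}}$ along them.

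For the two sign-definite cases we only need the a priori bounds in Proposition \ref{prop:AC-inequalitites}, without solving anything.

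For $D_{\widehat{m}}$ at $p^+$, we have $\mathsf{b},\mathsf{c}>2$ and $\mathsf{b}+\mathsf{c}>23/5$ with $|\mathsf{b}-\mathsf{c}|<3/5$. Hence $\mathsf{b},\mathsf{c}>2$, and indeed $\min(\mathsf{b},\mathsf{c})>(23/5-3/5)/2=2$. This gives
\[
\frac{3}{\mathsf{b}}+\frac{3}{\mathsf{c}}+\frac{1}{\mathsf{b}\mathsf{c}}=\frac{3(\mathsf{b}+\mathsf{c})+1}{\mathsf{b}\mathsf{c}}.
\]
The quantity $\mathsf{b}\mathsf{c}\geq \big((\mathsf{b}+\mathsf{c})^2-(3/5)^2\big)/4$ is large compared with the numerator, so we can bound it by a one-variable function of $S=\mathsf{b}+\mathsf{c}\in(23/5,5)$ and check that it is below $4$. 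If this crude bound is not sharp enough, we use $\mathsf{L}_1=0$ to eliminate $(\mathsf{b}-\mathsf{c})^2=(4-(S-3)^2)/4$ exactly, giving $\mathsf{b}\mathsf{c}=(S^2-(\mathsf{b}-\mathsf{c})^2)/4$ as a function of $S$.

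For $D_{\widehat{l}}$ at $p^-$, we have $\mathsf{b}+\mathsf{c}>1$, $\mathsf{b},\mathsf{c}<1$ and $\mathsf{b}\geq \mathsf{c}$. The term $3\mathsf{b}+3\mathsf{b}/\mathsf{c}+\mathsf{b}^2/\mathsf{c}$ is at least $3\mathsf{b}+3+\mathsf{b}$. Since $\mathsf{b}\geq (\mathsf{b}+\mathsf{c})/2>1/2$, this exceeds $5$, so $D_{\widehat{l}}<0$.

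The sign of $D_{\widehat{k}}$ genuinely changes, and this is the main obstacle. We impose $D_{\widehat{k}}=0$, i.e.\ $3\mathsf{b}\mathsf{c}+3\mathsf{c}+\mathsf{c}^2=4\mathsf{b}$, which is a conic through the origin. We intersect it with the ellipse $\mathsf{L}_1=0$, selecting the branch with $1<\mathsf{b}+\mathsf{c}<7/5$ and $\mathsf{b}\geq\mathsf{c}$. Parametrising, e.g.\ solving the conic for $\mathsf{b}=(\mathsf{c}^2+3\mathsf{c})/(4-3\mathsf{c})$, turns $\mathsf{L}_1=0$ into a single polynomial equation in $\mathsf{c}$. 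Its admissible root $\mathsf{c}_0$ gives $\mathsf{b}_0$. Substituting into $\mathsf{L}_2=0$, which is linear in $(k,l)$, determines the unique ratio
\[
r_*=\frac{\mathsf{b}_0(3+3\mathsf{c}_0-5\mathsf{b}_0)}{\mathsf{c}_0(3+3\mathsf{b}_0-5\mathsf{c}_0)}.
\]
We then simplify it to the stated closed form $-\tfrac12+\sqrt{861+222\sqrt{19}}/34$. The appearance of $\sqrt{19}$ suggests the quartic factors over $\mathbb{Q}(\sqrt{19})$, which we would verify by direct computer-algebra elimination (a resultant in $\mathsf{c}$).

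Uniqueness of the crossing follows because the resultant has only this admissible root. Since $D_{\widehat{k}}$ varies continuously with $r$ along the $p^-$ branch, its sign is constant on $[0,r_*)$ and on $(r_*,1]$. The sign on each side is fixed by evaluation at the endpoints. At $r=1$ we have $p^-_{1,1}$, with $\mathsf{b}=\mathsf{c}=1/2$, giving $D_{\widehat{k}}=4-(3/2+3+1/2)<0$. At $r=0$ we check positivity directly, or alternatively compute the transversality of the crossing.

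Finally, we check that the zero is a transversal crossing rather than a tangency. This is also what allows one to interpret $r_*$ as the bifurcation where the irreducible equilibrium meets the abelian one.
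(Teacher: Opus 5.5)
Your proposal is correct and follows the paper's proof essentially step for step. $D_{\widehat{m}}>0$ and $D_{\widehat{l}}<0$ come from the bounds of Proposition~\ref{prop:AC-inequalitites}; for $D_{\widehat{m}}$ your cruder estimate $\mathsf{b}\mathsf{c}>\bigl(S^2-\tfrac{9}{25}\bigr)/4$ already exceeds $(3S+1)/4$ once $S>\tfrac{17}{5}$, whereas the paper eliminates $\mathsf{b}\mathsf{c}$ exactly via $\mathsf{L}_1=0$. For $D_{\widehat{k}}$, your substitution $\mathsf{b}=\mathsf{c}(\mathsf{c}+3)/(4-3\mathsf{c})$ reproduces the paper's quartic $\mathrm{Res}_{\mathsf{b}}(\mathsf{L}_1,\mathsf{L}_3)=4(17\mathsf{c}^4-24\mathsf{c}^3+68\mathsf{c}^2-72\mathsf{c}+20)$. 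Its two real roots are $\mathsf{c}\approx0.444$ and $\mathsf{c}\approx0.775$, and the latter gives $\mathsf{b}\approx1.75$ and $r\approx-1.76$, so only one admissible point remains. Reading $r$ off the linear equation $\mathsf{L}_2=0$ is then a more direct substitute for the paper's second resultant.

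The one computation you leave open is the endpoint $r=0$. There $\mathsf{c}=\tfrac{3-\sqrt5}{2}$, $\mathsf{b}=\tfrac{3(5-\sqrt5)}{10}$ and $D_{\widehat{k}}=\tfrac{8\sqrt5-14}{3}>0$, as the paper records. With a unique zero and opposite endpoint signs, no transversality check is needed.
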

\begin{proof}
As in Proposition \ref{prop:AC-inequalitites}, we introduce homogenised coordinates $\mathsf{b} = \frac{B}{A}$ and $\mathsf{c} = \frac{C}{A}$. The homogenised fixed points satisfy equation $\mathsf{L}_1=0$, which is equivalent to 
$$
4\mathsf{b}\mathsf{c}=\frac{5}{4}+\frac{5}{4}(\mathsf{b}+\mathsf{c})^2
-\frac{3}{2}(\mathsf{b}+\mathsf{c}).$$
Therefore, we have 
\begin{equation}
    \frac{BC}{A^2}D_{\widehat{m}}=4\mathsf{b}\mathsf{c}-3(\mathsf{b}+\mathsf{c})-1 =\frac{1}{4}+\frac{5}{4}(\mathsf{b}+\mathsf{c})^2-\frac{9}{2}(\mathsf{b}+\mathsf{c}) >0
\end{equation}
where we used $\mathsf{b}+\mathsf{c}>\frac{23}{5}$ for $p_{k,l}^+$ from Proposition \ref{prop:AC-inequalitites}. Similarly, for $p_{k,l}^-$, Proposition \ref{prop:AC-inequalitites} implies that $\mathsf{b}>\mathsf{c}$ and $2\mathsf{b}>\mathsf{b}+\mathsf{c}>1$. Therefore,
\begin{equation}
    \frac{C}{A}D_{\widehat{l}}=4\mathsf{c}-3\mathsf{b}-3\mathsf{b}\mathsf{c}-\mathsf{b}^2 <\mathsf{c}-4\mathsf{b}\mathsf{c} <0.
\end{equation}
Finally, for $D_{\widehat{k}}$, we proceed to prove the existence and uniqueness of its zero $r^*$ and find its value. Using the homogenised equations \eqref{eq:homogenised_AC_equation}, we obtain 
\begin{equation}
    \mathrm{Res}_\mathsf{b}(\mathsf{L}_1,\mathsf{L}_2)=\mathsf{c}\left((-14k + 54l)\mathsf{c}^3 + (-54k - 132l)\mathsf{c}^2 + (-24k + 116l)\mathsf{c} + (36k - 48l)\right)\;.
\end{equation}
The vanishing of $D_{\widehat{k}}$ at $p_{AC}^-$ corresponds to a zero of the polynomial $\mathsf{L}_3=4\mathsf{b}-3\mathsf{c}-3\mathsf{b}\mathsf{c}-\mathsf{c}^2$. We then have 
\begin{equation}
    \mathrm{Res}_\mathsf{b}(\mathsf{L}_1,\mathsf{L}_3)=68\mathsf{c} ^4 - 96\mathsf{c} ^3 + 272\mathsf{c} ^2 - 288\mathsf{c} + 80=0.
\end{equation}
The simultaneous vanishing of $\mathsf{L}_1,\mathsf{L}_2$ and $\mathsf{L}_3$ then requires 
\begin{equation}
\mathrm{Res}_\mathsf{c}(\mathrm{Res}_\mathsf{b}(\mathsf{L}_1,\mathsf{L}_2),\mathrm{Res}_\mathsf{b}(\mathsf{L}_1,\mathsf{L}_3)),
\end{equation}
a fourth-order homogeneous polynomial in $(k,l)$, to vanish. The only positive root is $$r^*= -\frac{1}{2}+\frac{\sqrt{861+222\sqrt{19}}}{34}\approx 0.75773577\;.$$
Since $D_1=\frac{8 \sqrt{5}-14}{3}>0$ for $N_{1,0}$ and $D_3=-12<0$ for $N_{1,1}$, the proof is complete.
\end{proof}
We have no good geometrical interpretation for the existence of the critical $r^*$, or reason for its value. 

Let us now study the local dynamics of the different fixed points. We have 
\begin{lemma}\label{lem:Ab AC end}
The linearisation of the ODE system \eqref{eq:new-generic-SO3-instanton} at $(p_{k,l}^+, \widehat{m}\Lambda^+, 0)$ is positive definite, and it is negative definite at $(p_{k,l}^-, \widehat{k}\Lambda^-, 0)$ as long as $l < r^* k$. For $(p_{k,l}^-, \widehat{k}\Lambda^-, 0)$ with $l >r^*k$ and for $(p_{k,l}^-, \widehat{l}\Lambda^-, 0)$, the linearisation has a positive and a negative eigenvalue.
\end{lemma}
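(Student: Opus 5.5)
The plan is to compute the Jacobian of the $(\xi,\psi)$-subsystem of \eqref{eq:new-generic-SO3-instanton} at the abelian fixed points, with the metric frozen at $p^\pm_{k,l}$. The metric directions decouple: they form a block-triangular part whose eigenvalues are already fixed by the AC asymptotics of $\gamma_{\theta^*}$. The definiteness claims therefore concern only the $2\times2$ block in the gauge variables.

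At $(p^+_{k,l},\widehat m\Lambda^+,0)$ the block is upper triangular, since $\partial_\psi\xi'=-2BC\psi=0$ and $\partial_\xi\psi'=-\psi=0$. Its diagonal entries are
$$-F^+ \qquad\text{and}\qquad -(2A^++3m+\widehat m\Lambda^+).$$
The first entry is positive by the lemma on the abelian AC fixed points ($F^+<0$). So everything reduces to the sign of $-(2A+3i+\widehat i\Lambda)$ at the relevant point.

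I would relate this quantity to $D_{\widehat i}$. Write $X_\infty=\widehat i\Lambda+3i+2A$ for $i=m$ (and $2C$, $2B$ in the $k$, $l$ cases). Using Lemma~\ref{lemma:identities-klm-khatlhatmhat}, namely $\widehat m\widehat F+3mF=4BC-2AB-2AC$, one gets
$$F\bigl(\widehat m\Lambda+3m\bigr)=\frac{\widehat m\widehat F\, m+3m^2F}{m}.$$
Rearranging this with the fixed-point relations, I expect the identity
$$-F\,(\widehat m\Lambda+3m+2A)=BC\,D_{\widehat m}\quad\text{(up to a positive factor).}$$
This is exactly the computation in the preceding lemma, which showed $\psi_\infty^2=D_{\widehat m}$ when $\xi=-3m-2A$. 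Equivalently, the $\xi$-nullcline meets the line $\xi=-3m-2A$ at $\psi^2=D$. Since $\xi'$ is affine in $\xi$ with slope $-F$, the difference $\widehat m\Lambda-(-3m-2A)$ equals $BC\,D/(-F)$ up to sign.

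The signs then follow. For the $+$ point, $F^+<0$ and $BC>0$, so the sign of $-(2A+3m+\widehat m\Lambda)$ is the sign of $D_{\widehat m}>0$ (Proposition~\ref{prop: small l/k for positive D^-_k}); both eigenvalues are positive. For the $-$ points, $F^->0$, and the sign conventions flip because all of $A,B,C$ are negative. The $\psi$-eigenvalue has the sign opposite to $D_{\widehat i}$, and the $\xi$-eigenvalue $-F^-$ is negative.

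This gives the cases in the statement. For $\widehat k$ with $l<r^*k$ we have $D_{\widehat k}>0$, so both eigenvalues are negative. For $l>r^*k$, and for $\widehat l$ where $D_{\widehat l}<0$, the eigenvalues have mixed signs.

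The main obstacle is tracking the signs in the key identity in the $\mathcal C_-$ case. There $BC>0$ while $AC,AB>0$ as well, and the relation $\psi_\infty^2=D$ must be checked with the correct factor $AC$ or $AB$ replacing $BC$, as in the definitions of $D_{\widehat k}$ and $D_{\widehat l}$. I would verify the identity directly by substituting $\Lambda=\widehat F/F$ and simplifying with Lemma~\ref{lemma:identities-klm-khatlhatmhat}.
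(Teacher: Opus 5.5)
Your argument is essentially the paper's: at $\psi=0$ the $(\xi,\psi)$-block is diagonal with entries $-F$ and $-(2A+3m+\widehat m\Lambda)$ (with $2C+3k$, resp.\ $2B+3l$, in the other cases). The second entry is rewritten using Lemma~\ref{lemma:identities-klm-khatlhatmhat} and the definition of $D_{\widehat i}$, and the signs then follow from $F^+<0<F^-$ and the known signs of $D_{\widehat i}$.

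Since the lemma is purely about signs, fix your displayed identity. Summing the fixed-point equations gives $2F=-(A+B+C)$, and hence exactly $F(\widehat m\Lambda+3m+2A)=BC\,D_{\widehat m}$, together with $F(\widehat k\Lambda+3k+2C)=AB\,D_{\widehat k}$ and $F(\widehat l\Lambda+3l+2B)=AC\,D_{\widehat l}$. It is not $-F(\cdots)=BC\,D_{\widehat m}$ ``up to a positive factor''. The sign conclusions you state are the ones this corrected identity gives, but they contradict the version you wrote.
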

\begin{proof}
We take the case $Q_{\widehat{k}}$, the others being analogous. A short computation yields the linearisation of the extended system. When restricted to the $(\xi, \psi)$-plane, it reads
$$\begin{pmatrix}
    - F & - 2 AB\psi\\
    -\psi & -(2C +3k+\xi)
\end{pmatrix}\;\;,$$
with $F$ as above. At $(p_{k,l}^\pm, \widehat{k}\Lambda^\pm, 0)$ the anti-diagonal vanishes, and we have 
\begin{align*}
    -(2C+3k+\xi)& =  -\frac{1}{F} \left[(2C^\pm+3k)F+ \widehat{k}\widehat{F}\right]\\
    &= - \frac{ABD_{\widehat{k}}}{F}\;.
\end{align*}
where we used the definition of $D_{\widehat{k}}$, Equation \eqref{eq:def-Dk}. The rest of the claim now follows by noting that $F^+<0$ and $F^->0$.
\end{proof}
Using the previous computations, we have 
\begin{lemma}
\label{lem:nonAb AC end}
The linearisation of the extended system at $(p_{k,l}^+, -3m -2A^+, \sqrt{D_{\widehat{m}}})$, restricted to $(\xi,\psi)$ is 
    $$L^{nAb}_m=\begin{pmatrix}
    -F^+& -2B^+C^+ \sqrt{ D_{\widehat{m}}}\\
    -\sqrt{D_{\widehat{m}}}&0
\end{pmatrix}\;. $$
 Hence, the fixed point has a positive and a negative eigenvalue, 
 $$\lambda_\pm= \frac{1}{2}\left(-F^+ \pm \sqrt{{F^+}^2 + 8B^+C^+ D_{\widehat{m}}}\right)\;.$$
The same holds for $D_{\widehat{k}}$ whenever $l < r^* k$.
\end{lemma}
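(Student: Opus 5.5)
The plan is to linearise the $(\xi,\psi)$-part of the system \eqref{eq:new-generic-SO3-instanton} directly at the non-abelian fixed point. Because the metric variables $(A,B,C)$ form an autonomous subsystem sitting at the fixed point $p^+_{k,l}$, the full linearisation is block lower-triangular. The eigenvalues in the $(\xi,\psi)$-directions are therefore those of the $2\times2$ block obtained by differentiating
\[
\xi'=-F\xi+\widehat m\widehat F-BC\psi^2, \qquad \psi'=-(2A+3m+\xi)\psi
\]
with respect to $\xi$ and $\psi$, with $A,B,C$ frozen at $A^+,B^+,C^+$.

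\textbf{Computing the block.} The entries are as follows.
\begin{itemize}
\item $\partial_\xi\xi'=-F^+$.
\item $\partial_\psi\xi'=-2B^+C^+\psi_\infty$.
\item $\partial_\xi\psi'=-\psi_\infty$.
\item $\partial_\psi\psi'=-(2A^++3m+\xi_\infty)$, which vanishes because the fixed point has $\xi_\infty=-3m-2A^+$.
\end{itemize}
Substituting $\psi_\infty=\sqrt{D_{\widehat m}}$, from the preceding lemma identifying $\psi_\infty^2=D_{\widehat m}$, gives exactly $L^{nAb}_m$.

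\textbf{Sign of the eigenvalues.} The characteristic polynomial is
\[
\lambda^2+F^+\lambda-2B^+C^+D_{\widehat m}=0,
\]
so the roots are $\lambda_\pm$ as stated, and their product is $\det L^{nAb}_m=-2B^+C^+D_{\widehat m}$. Now $p^+_{k,l}\in\mathcal C_+$, so $B^+,C^+>0$, and $D_{\widehat m}>0$ by Proposition~\ref{prop: small l/k for positive D^-_k}. Hence the determinant is strictly negative. The discriminant ${F^+}^2+8B^+C^+D_{\widehat m}$ is then positive, so both roots are real, and they have opposite signs, giving one positive and one negative eigenvalue.

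\textbf{The case $D_{\widehat k}$.} The same computation applies at $(p^-_{k,l},-3k-2C^-,\sqrt{D_{\widehat k}})$, using the $Q_{\widehat k}$ system $\psi'=-(2C+3k+\xi)\psi$ and $\xi'=-F\xi+\widehat k\widehat F-AB\psi^2$. The determinant becomes $-2A^-B^-D_{\widehat k}$. Here $A^-,B^-<0$, so $A^-B^->0$, and $D_{\widehat k}>0$ exactly when $l<r^*k$, again by Proposition~\ref{prop: small l/k for positive D^-_k}. Thus the determinant is negative and the eigenvalues have opposite signs.

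\textbf{Main obstacle.} The only real check is bookkeeping: confirming that $\xi_\infty$ indeed annihilates the $\psi$-diagonal entry, and that the signs of the products $B^+C^+$ and $A^-B^-$ are right given the sign conventions of $\mathcal C_\pm$. The rest is the $2\times2$ determinant argument.
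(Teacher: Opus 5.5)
Your proposal is correct and is the argument the paper intends. The paper gives no separate proof; it refers to the Jacobian computation in Lemma~\ref{lem:Ab AC end}, where the lower-right entry $-(2A^++3m+\xi)$ vanishes at $\xi_\infty=-3m-2A^+$. Your observation that $\det L^{nAb}_m=-2B^+C^+D_{\widehat m}<0$ (and $-2A^-B^-D_{\widehat k}<0$ when $l<r^*k$) is simply a clean way of seeing that $\lambda_\pm$ are real and of opposite sign.
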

Using the linearised analysis, we can now establish a long time existence result. First, we need the following straightforward lemma:
\begin{lemma} \label{lemma:unbounded_AC_A}
    Let $a^* \in \R$ be the initial condition such that $\Gamma^m(a^*,0)$ is the unique bounded integral curve from Proposition \ref{prop:unique-ab-AC}. Then, for all $a> a^*$, there exists $\delta>0$ sufficiently small such that for $\alpha \in [0, \delta)$, the integral curve $\Gamma^m(a,\alpha)$ eventually enters $\mathcal{A}$. The analogous statement for $Q_{\widehat{k}}$ and $Q_{\widehat{l}}$ holds.
\end{lemma}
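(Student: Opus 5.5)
The plan mirrors the ALC argument of Proposition~\ref{prop:long-time ALC perturbative}, with the ALC limit replaced by the hyperbolic abelian AC fixed point. We work with $Q_{\widehat m}$ over $M^m_{k,l}$, so the metric curve converges to $p^+_{k,l}$; the cases $Q_{\widehat k}$ and $Q_{\widehat l}$ follow by time reversal.

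\textbf{Step 1: the abelian slice $\alpha=0$.} Along $\Gamma^m(a,0)$ we have $\psi\equiv 0$, and $\xi$ is given by the integral representation \eqref{eq:integral_xi} with $\kappa$ an affine function of $a$. By Proposition~\ref{prop:unique-ab-AC}, the value $a^*$ is exactly the one for which $\xi$ converges to $\widehat m\Lambda^+$. Since $f$ grows exponentially along the AC end, for $a\neq a^*$ the term $-f^2(s)(\kappa-I_\infty)$ dominates, so $\xi\to\pm\infty$ with sign determined by $a-a^*$. We orient the parametrisation so that $a>a^*$ gives $\xi\to+\infty$; this is the direction in which the curve leaves through $\xi\ge -3m$. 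Hence there is a time $s_0$ with $\xi(s_0)>-3m+1$, while $\psi(s_0)=0$.

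\textbf{Step 2: the abelian slice lies in the interior of $\mathcal A$.} At $s_0$ the point $(\xi,\psi)=(\xi(s_0),0)$ satisfies $\xi>-3m$ strictly. We also need
\[
4-2\left(\frac{A}{B}+\frac{A}{C}\right)>0 \quad\text{at } s_0 .
\]
This holds because $B,C\ge A>0$ along $\gamma^m$ by Proposition~\ref{prop:ABC-inequalities}, so $\frac{A}{B}+\frac{A}{C}\le 2$, with equality only if $A=B=C$. If equality occurs at $s_0$, we shift $s_0$ slightly. Near the AC limit the inequality is strict, since $\mathsf b+\mathsf c>\frac{23}{5}$ by Proposition~\ref{prop:AC-inequalitites}, which gives $\frac1{\mathsf b}+\frac1{\mathsf c}<2$. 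Taking $s_0$ large therefore places $\Gamma^m(a,0)(s_0)$ in the interior of $\mathcal A(s_0)$.

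\textbf{Step 3: perturbation in $\alpha$.} By continuous dependence on the initial data $(a,\alpha)$ in the local parametrisation \eqref{eq:local-Gamma a,b}, we can choose $\delta>0$ so that $\Gamma^m(a,\alpha)(s_0)$ lies in the interior of $\mathcal A(s_0)$ for all $\alpha\in[0,\delta)$. Two points need care here. The solutions must exist up to $s_0$, which holds because the compact time interval and the smooth dependence on the unstable-manifold parametrisation give a uniform existence time. The sets $\mathcal A(s)$ depend continuously on $s$, so their interior is an open condition in $(s,\xi,\psi)$. The forward invariance of $\mathcal A$ from Proposition~\ref{prop:invarinat sectors} then gives the conclusion.

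\textbf{Other bundles and the main obstacle.} For $Q_{\widehat k}$ and $Q_{\widehat l}$ we reverse time and use the backward invariance of $\mathcal A$ and $\mathcal A_\mu$. For $\mathcal A_\mu$ the interior condition $\xi<-3l+\mu(ABC)^{1/3}$ holds once $\xi\to-\infty$, and $\psi^2<4$ holds trivially. The main obstacle is Step 1: confirming that $a>a^*$ drives $\xi$ to the correct side. This requires checking the sign relation between $a$ and $\kappa$ in \eqref{eq:integral_xi}, and using $\kappa\neq I_\infty$ to get unbounded growth against $f^2$. If the sign turns out opposite, the statement should be read with the parametrisation reflected.
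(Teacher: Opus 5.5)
Your argument is the one the paper intends. The paper states this lemma as ``straightforward'' and gives no proof, but its ALC counterpart, Proposition \ref{prop:long-time ALC perturbative}, is proved exactly as you propose: the abelian slice enters the interior of $\mathcal A$, you use continuity in $\alpha$, and you use the invariance of $\mathcal A$.

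The step you flag as the main obstacle closes in a few lines, and you should include the argument rather than hedge. On the slice $\alpha=0$ we have $\psi\equiv0$, so $D=\xi_{a,0}-\xi_{a^*,0}$ solves $D'=-FD$. By \eqref{eq:new-evo-f}, $f^2$ solves the same equation, so $D=\lambda f^2$ for a constant $\lambda$. By \eqref{eq:local-Gamma a,b}, $D\sim(a-a^*)e^{-\frac{4\Delta}{m}s}$ as $s\to-\infty$, so $\lambda$ has the sign of $a-a^*$. Along the AC end $f^2\to\infty$, because $F\to F^+<0$. Since $\xi_{a^*,0}$ stays bounded, it follows that $\xi_{a,0}\to+\infty$ for every $a>a^*$. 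So for $i=m$ the given parametrisation already has the right orientation, and no reflection is needed. This also makes the detour through \eqref{eq:integral_xi} and $I_\infty$ unnecessary.

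For $Q_{\widehat k}$ and $Q_{\widehat l}$, the same computation (now with the singular orbit at $s=+\infty$) shows that the literal analogue of \eqref{eq:local-Gamma a,b} sends $a>a^*$ to $\xi\to+\infty$. That is the blow-up side $\mathcal B$. There $\mathcal A$ (resp.\ $\mathcal A_\mu$) lies on the side $\xi\le-3k$ (resp.\ $\xi\le-3l+\mu(ABC)^{1/3}$). So the analogous statement really does hold only with the two sides of $a^*$ exchanged, which is the reflected reading you anticipated. Your remark ``once $\xi\to-\infty$'' depends on exactly this.

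Finally, for $Q_{\widehat k}$ you also need the interior condition $4-2\bigl(\frac{C}{A}+\frac{C}{B}\bigr)>0$ near $p^-_{k,l}$. It follows from $\mathsf c\le\mathsf b$ and $\mathsf b+\mathsf c<\frac75$ in Proposition \ref{prop:AC-inequalitites}, which together give $\mathsf c+\frac{\mathsf c}{\mathsf b}<\frac{17}{10}$.
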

\begin{proposition}\label{prop:connections-AC}
Let $N_{k,l}$ be an arbitrary Aloff--Wallach space and consider the associated cohomogeneity one manifold $M^i_{k,l}$ with $i\neq 0$, equipped with its asymptotically conical $\Spin(7)$ metric. Let $Q_{\widehat{i}} \to M^i_{k,l}$ be the corresponding irreducible principal $\SO(3)$-bundle. Then, there exists a connection $A_*$ on $Q_{\widehat{i}}$ given by the integral curve $\Omega^i(c^*)$, such that if $i=l$, or $i=k$ and $l> r^*\;k$, it converges to the abelian instanton over $Q_{\widehat{i}} \rightarrow N_{k,l}$. If $i=m$ or $i=k$ and $l< r^*\;k$, it converges to the unique irreducible $G_2$-instanton over $Q_{\widehat{i}} \rightarrow N_{k,l}$.   

Assume moreover that $\widehat{i} =0 \mod i $. Then, the reducible $\SO(3)$-bundle $P_{\widehat{i}}$ carries a one-parameter family  of connections $A_a$ with $a \in (a^*, \infty)$, corresponding to integral curves $\Gamma^i\big(a, \alpha(a)\big)$ such that for $i=l$, or $i=k$ with $l> r^*\;k$, they converge to the abelian instanton over $Q_{\widehat{i}} \rightarrow N_{k,l}$, and for $i=m$, or $i=k$ with $l< r^*\;k$, they converge to the unique irreducible $G_2$-instanton over $Q_{\widehat{i}} \rightarrow N_{k,l}$.   
\end{proposition}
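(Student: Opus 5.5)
The plan is to treat $Q_{\widehat{m}}\to M^m_{k,l}$ in forward time and then indicate the changes for $i=k,l$. On $M^k_{k,l}$ and $M^l_{k,l}$ the curves flow in reversed time, and for $i=l$ the modified sectors $\mathcal{A}_\mu$, $\mathcal{C}_\mu$ replace $\mathcal{A}$, $\mathcal{C}$. The argument has three steps: a topological shooting argument that produces a trajectory trapped in the critical sector $\mathcal{C}$ for all time, a Dulac argument that rules out recurrent behaviour, and the local analysis at the AC fixed points from Lemmas \ref{lem:Ab AC end} and \ref{lem:nonAb AC end}, which identifies the limit.

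\emph{Existence on $Q_{\widehat{m}}$.} Fix $s_*$ sufficiently negative and take the compact connected family $K=\{\Omega^m_{\theta^*}(c)\}_{c\in[-c_*,c_*]}$ with $c_*$ as in Lemma \ref{lemma:short time integrable curves}(iii). Then $\Omega(c_*)(s_*)\in\mathcal{A}$ and $\Omega(-c_*)(s_*)\in\mathcal{B}$. By Lemma \ref{lemma:short time integrable curves}(ii), every member starts in $\mathcal{C}$, so by the backward invariance of $\mathcal{F}$ (Proposition \ref{prop:invarinat sectors}) no member ever enters $\mathcal{F}$. Proposition \ref{prop:long-time-critical} now gives $c^*$ such that $\Omega(c^*)(s)\in\mathcal{C}$ for all $s$. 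Since $\mathcal{C}$ is bounded in $(\xi,\psi)$, this curve exists for all time and is bounded. It also has $\psi\not\equiv0$, because $\psi\to2$ at $-\infty$ and $\{\psi=0\}$ is invariant. The argument for $P_{\widehat{m}}$ is the same. For fixed $a>a^*$ we take $K=\{\Gamma^m(a,\alpha)\}_{\alpha\in[0,\pi-\delta']}$. Small $\alpha$ eventually enters $\mathcal{A}$ by Lemma \ref{lemma:unbounded_AC_A}, while $\alpha$ near $\pi-\delta'$ enters $\mathcal{B}$ by Lemma \ref{lemma:short time integrable curves}(i). This yields $\alpha(a)$, and the resulting family is indexed by $a\in(a^*,\infty)$.

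\emph{Identifying the limit.} Along the AC metric, $(A,B,C)\to p^+_{k,l}$ exponentially, so the $(\xi,\psi)$-system is asymptotically autonomous. Its limit system is
\[
\xi'=-F^+\xi+\widehat{m}\widehat{F}^+-B^+C^+\psi^2,\qquad \psi'=-(2A^++3m+\xi)\psi .
\]
By Markus--Thieme theory, the $\omega$-limit set of our bounded trajectory is a compact, connected, chain-recurrent set of this planar system contained in $\overline{\mathcal{C}}$.
\begin{itemize}
\item Periodic orbits and cycles of separatrices in $\{\psi>0\}$ are excluded by Dulac's criterion with weight $1/\psi$. The weighted divergence is $-F^+/\psi$, which has a fixed sign because $F^+<0$; in the $p^-$ cases the sign is again fixed since $F^->0$.
\item The $\omega$-limit set must therefore consist of fixed points. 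It is a single point by connectedness, since by the lemma above there are at most two fixed points.
\item The abelian point $(\widehat{m}\Lambda^+,0)$ is a source by Lemma \ref{lem:Ab AC end}, so a nonconstant trajectory cannot converge to it.
\item Hence the trajectory converges to the irreducible point $(-3m-2A^+,\sqrt{D_{\widehat{m}}})$, which exists because $D_{\widehat{m}}>0$. It lies in $\mathcal{C}$: one checks $\psi_\infty^2=D_{\widehat{m}}<4$ and $\frac{\widehat{m}^2}{m}\le-3m-2A^+\le-3m$. Convergence then takes place along its one-dimensional stable manifold, using the saddle structure from Lemma \ref{lem:nonAb AC end}.
\end{itemize}
The same reasoning applies to $i=k$ with $l<r^*k$ in reversed time, where the abelian point is a sink in reversed time, hence a repeller in the relevant direction.

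For $i=l$, and for $i=k$ with $l>r^*k$, we have $D_{\widehat{i}}<0$ by Proposition \ref{prop: small l/k for positive D^-_k}. In this case the abelian point is the only fixed point, so the trapped trajectory must converge to it, approaching along the stable direction of the saddle from Lemma \ref{lem:Ab AC end}.

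\emph{Main obstacle.} I expect the hardest step to be the passage from the nonautonomous trapped trajectory to convergence to a fixed point. This requires checking the hypotheses of asymptotically autonomous Poincar\'e--Bendixson theory: exponential convergence of the coefficients and compactness of the trajectory. It also requires checking that the Dulac weight still works near $\psi=0$ on $\partial\mathcal{C}$, where chain-recurrent sets could a priori include heteroclinic chains along the invariant line $\{\psi=0\}$. To handle the latter, I would argue that any such chain must pass through the abelian source, which is impossible because no trajectory can enter a neighbourhood of a hyperbolic source in forward time.
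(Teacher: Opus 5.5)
Your existence step is the paper's: the shooting argument of Proposition~\ref{prop:long-time-critical}, with endpoints supplied by the short-time lemma and Lemma~\ref{lemma:unbounded_AC_A}. You differ in identifying the limit.

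The paper argues only from the fixed-point structure at the AC end. When $D_{\widehat i}<0$, the abelian point is the only candidate in $\mathcal{C}$. (The paper's text interchanges the two case lists here; your assignment is the one consistent with the sign computation of $D_{\widehat i}$.) When $D_{\widehat i}>0$, the abelian point has a zero-dimensional stable manifold in the $(\xi,\psi)$-directions. So the only solution tending to it is the bounded abelian one of Proposition~\ref{prop:unique-ab-AC}. The paper takes for granted that a trajectory trapped in $\mathcal{C}$ converges to \emph{some} fixed point.

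Your argument fills exactly that gap. You use the asymptotically autonomous Poincar\'e--Bendixson trichotomy together with the Dulac weight $1/\psi$. The weighted divergence is $-F^\pm/\psi$, of fixed sign, so there are no periodic orbits or homoclinic loops in $\{\psi>0\}$. No cycle can pass through the abelian point, since in the limit system nothing flows into it. Your version is therefore the more complete proof.

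One step needs correcting. The claim that the abelian point is a source, ``so a nonconstant trajectory cannot converge to it'', is autonomous intuition and is false for this nonautonomous system. The bounded abelian solution $\xi_*$ is nonconstant and does converge to that point, and $\omega=\{\text{source}\}$ is perfectly compatible with Markus--Thieme. What excludes it for your curve is $\psi\not\equiv 0$. If $(\xi,\psi)\to(\widehat m\Lambda^+,0)$, then $\psi'/\psi=-(2A+3m+\xi)$ tends to the $\psi$-eigenvalue $-B^+C^+D_{\widehat m}/F^+>0$. Hence $|\psi|$ grows exponentially, which is the elementary content of the paper's zero-dimensional stable manifold remark.

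Likewise, ``no trajectory can enter a neighbourhood of a hyperbolic source'' is false as stated. It should read: no orbit of the limit system other than the equilibrium has the source in its $\omega$-limit set. That is what actually rules out cycles through it.
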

\begin{proof}
Lemma \ref{lemma:unbounded_AC_A} and Proposition \ref{prop:long-time-critical} guarantee the existence of a one-parameter family of forward complete integral curves contained in $\mathcal{C}$, as well as the exceptional curve $\Omega(c^*)$ which proves existence. 

In the cases $i=m$ or $i=k$ with $l < r^*k$ the abelian connection is the unique limiting point in $\mathcal{C}$, and the claim follows. In the other cases, the limiting set of $\mathcal{C}$ consists of two points. But the abelian limiting point has index zero (resp. two for $Q_{\widehat{k}}$) when restricted to the $(\xi, \psi)$-space, so the corresponding stable manifold is zero dimensional. Hence, there exists at most one solution converging to it. But that is precisely the abelian connection, so every other solution must converge to the non-abelian limiting point.
\end{proof}
Note that for the exceptional case $N_{1,1}$, we have an additional $S^1$-family of solutions, given by the $S^1$-freedom of the ODE system \eqref{eq:reduced SO(3) N11-0}.
\subsection{The exceptional mixed case}
Let us now consider the exceptional case $Q_0 \rightarrow M_{1,1}^{-2}$, with mixed signature from \eqref{eq:reduced SO(3) N11-0} with $\sigma=1$. Our goal is to show that generic solutions in this case all blow up in finite time. 

The $\Spin(7)$-metrics $\gamma^{-2}_{\theta}$ over $M_{1,1}^{-2}$ satisfy $B=C$ and $B\geq A \geq 0$, so the ODE system reduces to 
\begin{subequations}\label{eq:new-mixed-system}
    \begin{align}
        \xi' &= 2B(B-A)\frac{\xi}{3} +  B^2(\chi^2- \psi^2 ) \\
        \psi'&=  (6 -2A -\xi) \psi\\
        \chi'&=  (6 -2A +\xi) \chi
    \end{align}
\end{subequations}
Before we proceed further, note that for initial conditions $(0, \psi_0, \psi_0)$, we have the explicit solution 
\begin{equation}\label{eq:exceptional_sol}
    \xi(s) =0 \qquad \psi(s) =\chi(s) =  \psi_0 \exp \left( \int_{-\infty}^s 6 -2A \;d \widetilde{s}\right)\;,
\end{equation}
which blows up exponentially as discussed in Proposition \ref{prop:vanishing-chi-abelian}. We now have
\begin{lemma}\label{lemma:aux-mixed}
For some time $T$ large enough, all solutions satisfy either $\xi = 0$ and $\xi' =0$ or $\xi \;  \xi' \geq0$.
\end{lemma}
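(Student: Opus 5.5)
Write $\sigma(s)=\xi^2$, so that $\sigma'=2\xi\xi'$. Then $\xi\xi'\geq 0$ is exactly the statement that $|\xi|$ is non-decreasing at time $s$. The plan is to study the sign of $\xi'$ on each sign region of $\xi$ using \eqref{eq:new-mixed-system}, together with the metric inequalities $B=C$ and $B\geq A\geq 0$ along $\gamma^{-2}_\theta$. The key quantity is $W:=\chi^2-\psi^2$. By \eqref{eq:new-mixed-system}, $W$ satisfies
\[
W' = 2(6-2A)W + 2\xi(\chi^2+\psi^2),
\]
while
\[
\xi' = \tfrac{2}{3}B(B-A)\xi + B^2 W .
\]
So $(\xi,W)$ form a cooperative (quasi-monotone) planar system. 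The coefficient $\tfrac23 B(B-A)\geq 0$, and the off-diagonal couplings $B^2\geq0$ and $2(\chi^2+\psi^2)\geq0$ are non-negative.

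The first step is to show that the quadrants $\{\xi\geq0,\ W\geq0\}$ and $\{\xi\leq0,\ W\leq0\}$ are forward invariant.
\begin{itemize}
\item On $\{\xi=0,\ W\geq0\}$ we have $\xi'=B^2W\geq0$.
\item On $\{W=0,\ \xi\geq0\}$ we have $W'=2\xi(\chi^2+\psi^2)\geq0$.
\item The other quadrant follows by the symmetry $(\xi,\psi,\chi)\mapsto(-\xi,\chi,\psi)$ of \eqref{eq:new-mixed-system}.
\end{itemize}
Inside the first quadrant both terms of $\xi'$ are non-negative, so $\xi'\geq0$. Likewise $\xi'\leq0$ in the second. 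In both cases $\xi\xi'\geq0$.

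The second step, which I expect to be the main obstacle, is to show that every solution eventually enters one of these closed quadrants, unless it stays on the diagonal $\{\xi=0,\ W=0\}$. If it stays there, then $\xi\equiv0\equiv\xi'$, and by uniqueness it is the explicit solution \eqref{eq:exceptional_sol} or the trivial one. A solution outside the good quadrants lies in the mixed region $\{\xi W<0\}$. Take $\xi>0>W$. There $W'\geq 2(6-2A)W$ is pushed upward by $2\xi(\chi^2+\psi^2)$, and $\xi'$ is pushed downward by $B^2W$. I would show that the trajectory must exit the mixed region in finite time. Once it exits, it lands on an axis of a good quadrant and is trapped by the first step.

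To prove the exit, I would use the growth of the linear coefficient $6-2A>0$; recall $0\leq A\leq B\to$ small in the ALC case and $A$ bounded in the AC case, as in Proposition~\ref{prop:vanishing-chi-abelian}. With this I would compare the ratio $\xi/W$, or equivalently the angle in the $(\xi,W)$-plane, and show that it cannot remain in the mixed quadrant forever. Heuristically, a cooperative system with positive off-diagonal terms rotates the mixed quadrants toward the principal eigendirection of the nonnegative coupling. A concrete way to make this rigorous: in the mixed region, $(\xi W)'\leq B^2W^2+2\xi^2(\chi^2+\psi^2)+\ldots$, and one tracks whether $\xi$ or $W$ reaches zero first. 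If $\xi$ hits zero with $W<0$, the solution enters $\{\xi\leq0,\ W\leq0\}$. If $W$ hits zero with $\xi>0$, it enters the first quadrant.

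It then remains to exclude the case where both $\xi\to0$ and $W\to0$ asymptotically without crossing. I would rule this out using the decay and growth rates: $\chi^2+\psi^2$ grows like $e^{2\int(6-2A)}$ unless both $\psi$ and $\chi$ vanish. That growth forces $W$ to cross zero within bounded time whenever $\xi$ stays bounded away from zero. Taking $T$ to be the exit time, uniform in the weak sense the lemma requires (\emph{for some $T$ large enough}, depending on the solution), concludes the proof.
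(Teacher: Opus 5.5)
Your first step is sound. It is the same trapping mechanism the paper uses: the paper works with $\log\chi-\log\psi$ on the mixed region $\{\xi>0,\ \psi>\chi\}$ rather than with $W$, but the quadrants are the same. The gap is the second step, and it cannot be filled, because some solutions never leave the mixed region.

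Your cooperative-system heuristic in fact predicts this: for a cooperative planar system, the subdominant direction lies in the mixed quadrants. To see it concretely, put $r=\log(\chi/\psi)$ and $P=\chi\psi$. Then $r'=2\xi$, $W=2P\sinh r$, and $P'=2(6-2A)P$ decouples, so \eqref{eq:new-mixed-system} becomes
\[
r''=\tfrac{2}{3}B(B-A)\,r'+4B^2P\,\sinh r .
\]
This equation is of saddle type at $r=0$, which is the exceptional solution.

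The open quadrants $U_\pm=\{\pm r>0,\ \pm r'>0\}$ are the interiors of your good quadrants. They are forward invariant, because $r''$ has the sign of $r$ there. Now fix $s_1$, $P(s_1)>0$ and $r(s_1)<0$, and shoot in $r'(s_1)\in[0,\infty)$:
\begin{itemize}
\item the value $0$ enters $U_-$ immediately;
\item large values reach $r=0$ with $r'>0$ and so enter $U_+$;
\item both sets of shooting values are open.
\end{itemize}
By connectedness, some $r'(s_1)>0$ gives a solution that never enters $U_+\cup U_-$. Reaching $\{r'=0,\ r<0\}$ or $\{r=0,\ r'>0\}$ would put it into $U_-$ or $U_+$ at once, and reaching the origin is excluded by uniqueness. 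Hence it stays in $\{\xi>0,\ W<0\}$ for all $s\ge s_1$, and it exists globally.

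Along this solution, $\xi'<0$ at arbitrarily late times. Otherwise $r'\ge 2\xi(T)>0$ on $[T,\infty)$ and $r$ would cross $0$. Your proposed exclusion only handles the case where $\xi$ stays bounded away from zero. Here instead $\int^\infty\xi\,ds<\infty$, because $r$ stays in $(r(s_1),0)$. This is exactly the case ``$\xi\to0$, $W\to0$ without crossing'' that you wanted to rule out, and it occurs.

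The paper's proof has the same hole. It passes from $(\log\chi-\log\psi)'=2\xi>0$ on the mixed region to the existence of a finite time at which $\xi=0$ or $\psi=\chi$. Monotonicity alone does not give such a crossing. So the lemma, read as ``eventually for all later times'', holds only off this stable set of $r=0$. That is how the subsequent finite-time blow-up theorem uses it, since it needs the solution to lie in $U_+$ or $U_-$ at some time. The stable set contains such a solution for every $r(s_1)<0$ and $P(s_1)>0$. For small $|r(s_1)|$ these lie close to the exceptional solution, so they also occur among the solutions emanating from the singular orbit.

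What does hold for every solution with $\psi_0\chi_0\neq0$ is that $\chi\psi$ grows exactly like $\exp\int 2(6-2A)\,ds$. That is already enough to exclude bounded instantons in the mixed case.
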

\begin{proof}
First, note that both sets of conditions are preserved by the ODE, with the first one corresponding to the exceptional solution outlined above. Hence, we need to argue that solutions with $\xi \;  \xi' < 0$ eventually satisfy $\xi \, \xi' \geq 0$.

Let us assume that $\xi_0 >  0 , \, \xi_0' < 0$. In particular $\psi_0 > \chi_0$. But $\log \psi'  - \log \chi' = -2 \xi <0$ since $\xi_0> 0 $. Thus, there exists a finite time such that either $\xi =0$ or $\xi' =0$. Note that if $\xi' =0$ first, there exists a previous time such that $\psi = \chi$.

Let $T$ be the first time such that either $\xi=0$ or $\psi = \chi$. Note that they are mutually exclusive, as otherwise it would be with the exceptional solution with initial condition $(0, \psi_0, \psi_0)$. We have the following: 
\begin{itemize}
    \item If $\xi(T)=0$ with  $\psi(T) > \chi(T)$, we get $\xi'(T) <0$, so $\xi(t) \leq 0$ and $\dot \xi <0 $ for all $t \geq T$. 
    \item If $\psi(T) = \chi(T)$  with $\xi(T) >0$, we have $\xi'(T) >0$, hence $\xi(t) > 0$ and $\xi ' >0 $ for all $t \geq T$. 
\end{itemize}
The case $\xi_0<0  , \dot \xi_0>0 $ follows from the next straightforward lemma.
\end{proof}
\begin{lemma}\label{lemma:aux-mixed-2}
    The ODE system \eqref{eq:new-mixed-system} is invariant under the involution $ \tau:  (\xi, \psi, \chi) \mapsto (-\xi, \chi, \psi)$. 
\end{lemma}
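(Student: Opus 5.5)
The claim is a direct substitution check: we verify that the vector field defining \eqref{eq:new-mixed-system} is equivariant under $\tau$. Since $\tau$ acts only on the fibre variables $(\xi,\psi,\chi)$ and leaves the background metric functions $A$, $B$ untouched, it suffices to show that if $(\xi,\psi,\chi)(s)$ solves the system, so does $(\widetilde\xi,\widetilde\psi,\widetilde\chi)=(-\xi,\chi,\psi)$. Equivalently, writing the right-hand sides as $V=(V_\xi,V_\psi,V_\chi)$, we check that $V\circ\tau=d\tau(V)$, i.e.
\begin{equation*}
V_\xi(-\xi,\chi,\psi)=-V_\xi(\xi,\psi,\chi),\qquad V_\psi(-\xi,\chi,\psi)=V_\chi(\xi,\psi,\chi),\qquad V_\chi(-\xi,\chi,\psi)=V_\psi(\xi,\psi,\chi).
\end{equation*}

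We verify the three identities in turn. For the first, the linear term $\frac{2}{3}B(B-A)\xi$ is odd in $\xi$. The quadratic term $B^2(\chi^2-\psi^2)$ changes sign under the swap $\psi\leftrightarrow\chi$. Hence $V_\xi$ is odd under $\tau$, as required.

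For the second and third, we substitute directly:
\begin{equation*}
V_\psi(-\xi,\chi,\psi)=(6-2A+\xi)\chi=V_\chi(\xi,\psi,\chi),
\end{equation*}
and symmetrically $V_\chi(-\xi,\chi,\psi)=(6-2A-\xi)\psi=V_\psi(\xi,\psi,\chi)$. Together with the first identity, this shows $\tau$ maps integral curves to integral curves.

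We also note that $\tau$ preserves the relevant initial data, since the fixed point $(p_0^{-2},0,0,0)$ is $\tau$-invariant. It is also consistent with the exceptional solution \eqref{eq:exceptional_sol}, which is $\tau$-fixed. The only potential subtlety is making sure the reduction $B=C$ has already been applied, so that no asymmetric $A$, $B$, $C$ dependence survives. Under that reduction the check is purely mechanical, and there is no real obstacle.

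Applying the lemma, $\tau$ maps solutions with $\xi_0<0$, $\xi_0'>0$ to solutions with $\xi_0>0$, $\xi_0'<0$. This completes the remaining case of Lemma \ref{lemma:aux-mixed}.
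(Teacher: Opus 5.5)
Your proof is correct and is exactly the direct substitution check the paper has in mind; the paper states this lemma as straightforward and gives no proof. The check is: the linear term in $\xi'$ is odd, $B^2(\chi^2-\psi^2)$ flips sign under the swap $\psi\leftrightarrow\chi$, and the $\psi'$ and $\chi'$ equations are exchanged. Your caveat about first imposing $B=C$ is unnecessary, since the unreduced mixed system with $\xi$-equation term $-BC(\chi^2-\zeta^2)$ is equally $\tau$-equivariant.
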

We can now prove
\begin{theorem}
    Any solution different from the exceptional solution \eqref{eq:exceptional_sol} with $\psi_0 \chi_0 \neq 0$ blows up in finite time. 
\end{theorem}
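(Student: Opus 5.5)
The plan is to reduce, via Lemmas \ref{lemma:aux-mixed} and \ref{lemma:aux-mixed-2}, to solutions that for $s\geq T$ satisfy $\xi>0$ and $\xi'>0$. Then I would compare with the blow-up mechanism of Lemma \ref{lemma:blow-up-comparsion}. First, the invariant cases. If $\xi(T)=0=\xi'(T)$, then $\chi(T)=\psi(T)$. By uniqueness of solutions the trajectory is then the exceptional solution \eqref{eq:exceptional_sol}, which is excluded. Otherwise Lemma \ref{lemma:aux-mixed} gives $\xi\xi'\geq 0$ with $\xi\neq 0$ for $s\geq T$. The involution $\tau$ of Lemma \ref{lemma:aux-mixed-2} swaps $\psi$ and $\chi$ and flips $\xi$, so I may assume $\xi(s)>0$ and $\xi'(s)\geq 0$ for all $s\geq T$. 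Here $\chi\neq 0$, since $\chi_0\neq 0$ and the $\chi$-equation is linear in $\chi$.

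Next I would show that $\chi$ dominates $\psi$. Since $(\log\chi-\log\psi)'=2\xi>0$, the ratio $\psi/\chi$ is decreasing. I then want $\chi^2-\psi^2\geq \tfrac12\chi^2$ eventually. This holds once $\xi\geq \xi(T)>0$, because then $\psi/\chi$ decays at least like $e^{-2\xi(T)(s-T)}$, which tends to zero.

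Third, I would set up the comparison. Let $x=\xi+6-2\sup A$ (shifted so that the $\chi$-equation has the form $\chi'\geq (x-\kappa_1)\chi$), or simply take $x=\xi$ and $\kappa_1=\max\{0,\sup(2A-6)\}$. The equations give
\[ x'\geq \tfrac{2}{3}B(B-A)\,x+\tfrac12 B^2\chi^2,\qquad \chi'\geq (x-\kappa_1)\chi. \]
This uses $B\geq A\geq 0$ from the form of $\gamma^{-2}_\theta$, so $F_1=\tfrac23 B(B-A)\geq 0$. I would also note that $A$ stays bounded: it decays like $s^{-1}$ in the ALC case and has a finite limit in the AC case. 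In the ALC case $B\sim s^{-1}$, so $s^2F_2=\tfrac12 s^2B^2\geq\kappa_2>0$ for large $s$. In the AC case $B\to B^+>0$, so $F_2$ is bounded below by a constant, which is even stronger. Lemma \ref{lemma:blow-up-comparsion} then applies to $(x,|\chi|)$, in its second form with $F_1\geq0$.

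The step I expect to be the main obstacle is meeting the hypothesis $x_0\geq\kappa_1$. The first branch of that lemma's proof supplies it: $\xi$ is nondecreasing, and $|\chi|$ cannot decay forever while $\xi'\geq \tfrac12B^2\chi^2$ stays positive. More concretely, if $\xi$ stayed below $\kappa_1$ forever, then $\xi'$ would be integrable, which forces $\int B^2\chi^2<\infty$. But $\chi'=(6-2A+\xi)\chi$, and for large $s$ we have $6-2A\geq 6-o(1)>0$ in the ALC case, and $6-2A^+>0$ in the AC case, where $A^+<B^+$ and $A^+$ is small by Proposition \ref{prop:AC-inequalitites}. So $|\chi|$ grows exponentially, contradicting integrability. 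Hence $\xi$ eventually exceeds $\kappa_1$ and Lemma \ref{lemma:blow-up-comparsion} yields finite-time blow-up. I would double-check the sign $6-2A^+>0$ from the explicit value of $p^+_{1,1}$; in fact this exponential growth of $\chi$ already gives unboundedness independently of the comparison lemma.
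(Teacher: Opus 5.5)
Your route is essentially the paper's: reduce via Lemmas \ref{lemma:aux-mixed} and \ref{lemma:aux-mixed-2} to an eventually nondecreasing $\xi>0$, then run a super-linear comparison. The gap is in that reduction, which the paper's proof also relies on. Lemma \ref{lemma:aux-mixed} is false, and so is finite-time blow-up for every non-exceptional solution.

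To see this, take $\chi,\psi>0$ (the system is invariant under $\psi\mapsto-\psi$ and $\chi\mapsto-\chi$) and set $r=\log(\chi/\psi)$, $P=\chi\psi$. Then
\[
r'=2\xi,\qquad \xi'=\tfrac23B(B-A)\,\xi+2B^2P\sinh r,\qquad P'=(12-4A)P,
\]
so $(r,\xi)$ solves a planar non-autonomous system with a saddle at the origin, which is the exceptional solution. The quadrants $\{r>0,\xi>0\}$ and $\{r<0,\xi<0\}$ are forward invariant. There your comparison does give finite-time blow-up, as does the paper's $y=e^r$, $\ddot y\ge\kappa y^2$.

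Now fix $r(s_0)>0$ and shoot in $\xi(s_0)<0$:
\begin{itemize}
\item data near $0$ leave $\{r>0,\xi<0\}$ transversally through $\{\xi=0\}$, where $\xi'=2B^2P\sinh r>0$;
\item very negative data leave transversally through $\{r=0\}$, where $r'=2\xi<0$;
\item the origin cannot be reached, since it is itself a solution.
\end{itemize}
Both exit sets are open and nonempty, so some $\xi(s_0)$ never leaves. Along that solution $0<r\le r(s_0)$, so $\xi$ solves a linear equation with locally bounded forcing, and the solution exists for all $s$. Since $\xi<0<r$ throughout, $\xi\xi'\ge 0$ on some $[T,\infty)$ would force $r\to-\infty$, a contradiction.

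The same shooting works inside the smooth family emanating from $(p_0^{-2},0,0,0)$. Fix $\psi\sim u\,e^{6s}$ and $\chi\sim v\,e^{6s}$ with $v>u>0$, and vary the $e^{6s}$-coefficient $w\le 0$ of $\xi$. For $w=0$ one gets $\xi\sim\frac32(v^2-u^2)e^{12s}>0$, while for $w\ll 0$ the function $r$ crosses zero. The unjustified step in the paper's proof of Lemma \ref{lemma:aux-mixed} is ``there exists a finite time such that either $\xi=0$ or $\xi'=0$''. Also, $\xi'=0$ with $\xi>0$ gives $\chi<\psi$ when $B>A$, so it does not produce a time with $\psi=\chi$.

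What survives is unboundedness, and that is what the paper actually uses (no bounded solutions in the mixed case). Indeed $|\chi\psi|(s)=|\chi\psi|(s_0)\exp\int_{s_0}^{s}(12-4A)\,d\tilde s$ grows exponentially, because $A<3$ near the end in both the ALC regime ($A\to 0$) and the AC regime ($A\to\frac65$). Your closing remark points this way, but it derives growth of $\chi$ from $\xi\ge 0$, which you only have in the invariant quadrant. The product $\chi\psi$ needs no sign information on $\xi$.

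So the provable statement is weaker than the one you set out to prove:
\begin{itemize}
\item solutions entering $\{\xi(\chi^2-\psi^2)>0\}$ blow up in finite time;
\item the remaining non-exceptional solutions stay in the mixed quadrants and exist for all time, growing exponentially without blowing up in finite time.
\end{itemize}
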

\begin{proof}
By virtue of Lemmas \ref{lemma:aux-mixed} and \ref{lemma:aux-mixed-2}, we may assume $\psi, \chi >0$, and $\xi \geq 0$ by taking $t_0$ large enough. Moreover, we assume that our initial conditions are not in the exceptional solution family $(0, \psi_0, \psi_0)$.

Since $A \leq 3$ for some $s_0$ large enough, the ODE system \eqref{eq:new-mixed-system} is bounded below by the system 
\begin{subequations}
    \begin{align}
        \dot \xi &= \frac{\kappa_1}{t^2}(\chi^2 - \psi^2)\,\\
        \dot \psi&= (\kappa_2 - \xi) \psi\\
        \dot \chi&= (\kappa_2 + \xi) \chi
    \end{align}
\end{subequations}
for suitable $\kappa_1, \kappa_2 >0$. We claim that such an ODE system blows up in finite time.

Let $r=  \log (\chi)  - \log(\psi)$, so $\dot r = 2\xi$, with initial conditions $r_0>0$ and $\dot{r_0}>0$. Moreover, we have 
$$\partial_t (\chi \psi) =  2 \kappa_2 \chi \psi \qquad  \implies \qquad \chi(t)\psi(t) = \kappa_0\exp{(2\kappa_2\,t)} \qquad \text{for } \kappa_0>0\;.$$
Using this, we have 
$$ \ddot{r} =  2 \frac{\kappa_1}{t^2} \left(\chi^2 - \psi^2\right) = 2 \frac{\kappa_1}{t^2} \left(\chi \psi\right) \left ( \frac{\chi}{\psi}  - \frac{\psi}{\chi}\right)= \frac{4 \kappa_0 \kappa_1 }{t^2} \exp{(2\kappa_2\,t)}  \sinh{ r }\;. $$
Setting $y= \exp r$, we have
$$\ddot{ y } = \left( \ddot{r}+ (\dot r)^2 \right) y  \geq \frac{4 \kappa_0 \kappa_1}{t^2} \exp{(2\kappa_2\,t)}  \sinh{ r } \exp {r} \geq \kappa y^2\;$$
for some suitable $\kappa >0$, since we may take $t \geq t_0$ large enough, and $r_0, \dot{r_0}>0$. Finite time blow-up is now straightforward.
\end{proof}
\section{Bubbling phenomena}\label{sec:bubbling}
We now turn to compactness and compactification of the families constructed in the previous section. We first study bubbling phenomena, which, from the dynamical systems perspective, correspond to families
of integral curves breaking at an intermediate fixed point. The following lemma provides the framework for this analysis.

\begin{lemma}\label{lem: bubble existence}
Consider an autonomous system $x'=f(x)$ on $\R^n$, and let $p,q,r$ be fixed points, with $q$ hyperbolic. Suppose that $\widehat{\Gamma}$ is an integral curve joining $p$ to $q$, and that $\{\Gamma_a\}$ is a family of integral curves joining $p$ and $r$ which converges to $\widehat{\Gamma}$ in a neighbourhood of $p$ as $a\to a^*$.

Then there exist a sequence $a_n\to a^*$, shifts
$s_n\to+\infty$, and a maximal integral curve $\Omega: (-\infty, S) \to \R^n$, emanating from $q$ such that, for all compact intervals $I\subset\R$
and $J\subset(-\infty,S)$, we have
\begin{equation}
 \lim_{n \to \infty} d_H\bigl(\Gamma_{a_n}(I),\widehat{\Gamma}(I)\bigr) +
d_H\bigl(\Gamma_{a_n}(s_n+J),\Omega(J)\bigr) =0\;
\end{equation}
uniformly.
\end{lemma}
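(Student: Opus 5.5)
The plan is a standard broken-trajectory argument built on the local structure of the hyperbolic point $q$. First I fix a small closed ball $U$ around $q$ on which the Hartman--Grobman theorem (or the stable/unstable manifold theorem) applies. In such a ball, $W^s_{\mathrm{loc}}(q)$ and $W^u_{\mathrm{loc}}(q)$ are embedded discs meeting transversally at $q$. Since $\widehat\Gamma$ converges to $q$, I can choose $s_0$ with $\widehat\Gamma([s_0,\infty))\subset W^s_{\mathrm{loc}}(q)\cap U$. The first convergence statement is then immediate: solutions of an autonomous ODE depend continuously on initial data on compact time intervals. So if $\Gamma_a\to\widehat\Gamma$ near $p$, say $\Gamma_a(s_1)\to\widehat\Gamma(s_1)$ at some fixed time $s_1$, then $\Gamma_a\to\widehat\Gamma$ uniformly on every compact $I$. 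This is the sense in which I read the hypothesis, with the time normalisation of $\Gamma_a$ fixed by its behaviour near $p$.

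Next I show that $\Gamma_a$ must leave $U$ after entering it, and define exit times. Since $\Gamma_a$ converges to $r\neq q$, and $r\notin U$ after shrinking $U$, each $\Gamma_a$ that enters $U$ near $\widehat\Gamma(s_0)$ eventually exits $U$. Let $\sigma_a>s_0$ be the first exit time after $s_0$. Because $\Gamma_a(s_0)\to\widehat\Gamma(s_0)\in W^s_{\mathrm{loc}}$, the time spent in $U$ diverges; otherwise $\widehat\Gamma$ itself would exit $U$, by continuous dependence. Hence $\sigma_a\to\infty$. The hyperbolic (lambda-lemma / Hartman--Grobman) picture in linearised coordinates then shows that the exit points $\Gamma_a(\sigma_a)$ lie on $\partial U$ and accumulate only on $W^u_{\mathrm{loc}}(q)\cap\partial U$. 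Indeed, in linear coordinates $x=(x_s,x_u)$ the stable component decays like $e^{-\lambda(\sigma_a-s_0)}|x_s(s_0)|$ while the exit forces $|x_u(\sigma_a)|=\varepsilon$, so $x_s(\sigma_a)\to0$. By compactness of $\partial U\cap W^u_{\mathrm{loc}}$, I pass to a sequence $a_n\to a^*$ with $\Gamma_{a_n}(\sigma_{a_n})\to y_\infty\in W^u_{\mathrm{loc}}(q)\setminus\{q\}$.

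I then define $\Omega$ as the maximal solution with $\Omega(0)=y_\infty$, on its maximal interval $(-\infty,S)$. Backward completeness and $\Omega(s)\to q$ as $s\to-\infty$ hold because $y_\infty\in W^u_{\mathrm{loc}}(q)$. I set $s_n=\sigma_{a_n}\to\infty$. For any compact $J\subset(-\infty,S)$, continuous dependence on initial data at time $0$ gives $\Gamma_{a_n}(s_n+\cdot)\to\Omega$ uniformly on $J$, since $\Omega$ exists on a neighbourhood of $J$. Uniform convergence of parametrised curves on compact intervals implies the Hausdorff distance statement, and summing the two pieces gives the claim.

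The main obstacle is the middle step: justifying that the exit points converge to the unstable manifold and not merely to some point of $\partial U$. A topological conjugacy alone does not control the flow's exit geometry quantitatively, so I would use a $C^1$ linearisation or the local stable/unstable manifold coordinates with cone estimates. I would also use the choice of $U$ as an isolating block, with the boundary split into entrance and exit parts, so that exits happen only through the part of $\partial U$ near $W^u_{\mathrm{loc}}$, with distance controlled by the incoming distance to $W^s_{\mathrm{loc}}$.
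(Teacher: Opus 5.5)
Your proposal is correct and follows essentially the same route as the paper. In both, the exit times of $\Gamma_{a_n}$ from a small ball around $q$ diverge, a subsequence of exit points converges on the compact boundary, $\Omega$ is the integral curve through the limit point, and both convergence statements follow from continuous dependence on initial data. The only difference is how you show $\Omega(s)\to q$ as $s\to-\infty$: the paper observes that for each fixed $s<0$ the points $\Gamma_{a_n}(s_n+s)$ have not yet left the ball, so the backward orbit of the limit point stays in the ball and hence lies on $W^u_{\mathrm{loc}}(q)$; this avoids the quantitative cone or linearisation estimate you identified as the main obstacle.
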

\begin{proof}
Let $U$ be an open neighbourhood of $q$, on which the Hartman--Grobman theorem applies, and consider $B\subseteq U$ a closed ball centred at $q$. Choose a sequence $a_n \to a^*$ and let $\Gamma_n$ be the corresponding sequence of integral curves. Since the initial conditions of $\Gamma_n$ converge to those of $\widehat{\Gamma}$, we may assume that $\Gamma_n$ converges uniformly to $\widehat{\Gamma}$ on compact time intervals, up to reparametrisation. 

Therefore, for sufficiently large $n$, $\Gamma_n $ enters $B$, and since $\Gamma_n$ converges to $r$, exits the ball at some time $s_n \in \R$. Let $x_n =  \Gamma_n(s_n) \in \partial B$ be the exit point of $\Gamma_n$ from $B$. Since $\Gamma_n$ converges uniformly to $\widehat{\Gamma}$ on compact time intervals, we have $ s_n \to \infty$. Passing to a subsequence, the exit points $\{x_n\}$ converge to some $x \in \partial B$ by compactness. 

Let $\Omega(s)$ be the integral curve through $x$. Then, by continuous dependence on initial conditions, $\Omega(s)=\lim\limits_{n\to\infty}\Gamma_n(s_n+s) $ on compact time intervals. For $s<0$, the points $\Gamma_n(s_n+s)\) lie before the exit from the ball and, as $n\to\infty$, they follow $\widehat{\Gamma}$ approaching \(q\). Thus, $\lim\limits_{s\to - \infty} \Omega(s) =q$. 
    
Therefore $\Omega$ is the integral curve of the statement, where the Hausdorff distance convergence follows from continuity with respect to initial conditions.
\end{proof}
The main goal of this section is to understand the bubbling phenomena that occur in the families $\Gamma(a, \alpha)$ for both the AC and ALC asymptotics. To carry out the details, we take $i=m$, as usual. The reader can easily verify that there are no distinctions for $i=k,l$ other than the time reversal considerations.  We begin with the AC case, as it's simpler. 

Let $\alpha: (a^*, \infty) \to (0, \pi)$ be the function parametrising the non-abelian $\Spin(7)$-instanton curves $\Gamma(a,\alpha)$. The key observation is
\begin{proposition}\label{prop:alpha-limit}
  We have 
   $$ \lim_{a \to \infty} \alpha (a) =  \arctan \left( - \frac{m}{2 \Delta}\right)\;. $$
\end{proposition}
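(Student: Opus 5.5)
The plan is to show that in the limit $a\to\infty$ the instanton equations decouple from the metric. The relevant dynamics then happen while $(A,B,C)$ is still essentially frozen at $p_0^m$, where the problem reduces to the normal $\U(2)$-invariant ASD problem on $\R^4/\Z_{|m|}$. The angle $\alpha(a)$ must converge to the unique angle in the unstable plane of the frozen system whose trajectory stays in the critical sector $\mathcal{C}$.

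\emph{Step 1: rescaling.} Define $s_a$ by $a\,e^{-\frac{4\Delta}{m}s_a}=1$. Since $m<0$, $s_a\to-\infty$ as $a\to\infty$. Shift time by $\tilde s=s-s_a$. Along $\Gamma^m_\theta(a,\alpha)$ the metric component satisfies $(A,B,C)(s_a+\tilde s)=p_0^m+O(e^{-\frac{4\Delta}{m}s_a}e^{-\frac{4\Delta}{m}\tilde s})$. Hence on every compact $\tilde s$-interval it converges to $p_0^m=(0,-\tfrac{2\Delta}{m},-\tfrac{2\Delta}{m})$, while the $(\xi,\psi)$-component has leading term $(\cos\alpha,\sin\alpha)e^{-\frac{4\Delta}{m}\tilde s}$ of size one. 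Using $F(p_0^m)=\tfrac{4\Delta}{m}$, $B_0C_0=\tfrac{4\Delta^2}{m^2}$ and $\widehat F(p_0^m)=\tfrac{4\Delta\widehat m}{m^2}$, and setting $X=\xi-\tfrac{\widehat m^2}{m}$, the limiting autonomous system is
\begin{equation*}
X'=-\tfrac{4\Delta}{m}X-\tfrac{4\Delta^2}{m^2}\psi^2,\qquad \psi'=-\Bigl(X+\tfrac{\widehat m^2}{m}+3m\Bigr)\psi .
\end{equation*}
Since $\widehat m^2+3m^2=4\Delta$, we have $\tfrac{\widehat m^2}{m}+3m=\tfrac{4\Delta}{m}$. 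After rescaling the variables and changing to $\tau=e^{-\frac{2\Delta}{m}\tilde s}$, this system is equivalent to Equation \eqref{eq:U2-ASD}. Its fixed points are the origin, a hyperbolic source with double eigenvalue $-\tfrac{4\Delta}{m}$, and the point corresponding to $(-3m,2)$.

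\emph{Step 2: the frozen problem.} For the frozen system I would exhibit the explicit heteroclinic orbit from the origin to $(-3m,2)$, namely the charge one radial instanton. Concretely, I would make the ansatz $\psi=\tfrac{\lambda x}{1+\mu x}$ with $x=e^{-\frac{4\Delta}{m}\tilde s}$, solve for $X$ from the $\psi'$-equation, and impose the $X'$-equation. This should fix $\lambda/\mu$ so that $\psi\to2$. Reading off the ratio of the coefficients of $x$ in $(X,\psi)$ then gives $\tan\alpha_\infty=-\tfrac{m}{2\Delta}$. This is the identity to be checked.

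Next I would show that the sectors $\mathcal{A},\mathcal{B},\mathcal{C}$ of Proposition \ref{prop:invarinat sectors}, frozen at $p_0^m$, split the unstable directions cleanly. Angles $\alpha<\alpha_\infty$ should enter $\mathcal{A}$, where $\psi$ decays and $\xi$ runs past $-3m$. Angles $\alpha>\alpha_\infty$ should enter $\mathcal{B}$ and blow up, by the frozen analogue of Proposition \ref{prop:finite-time-blowup}. Because the trajectory from the origin depends monotonically on the angle inside the two-dimensional invariant picture, $\alpha_\infty$ is the unique angle whose orbit stays in $\mathcal{C}$.

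\emph{Step 3: perturbation and the main obstacle.} Entering $\mathcal{A}$ or $\mathcal{B}$ strictly in finite time is an open condition. By continuous dependence on parameters along the rescaled family, for $|\alpha-\alpha_\infty|\geq\varepsilon$ and $a$ large, $\Gamma^m_\theta(a,\alpha)$ enters $\mathcal{A}$ or $\mathcal{B}$. Forward invariance of $\mathcal{A}$ and $\mathcal{B}$ (Proposition \ref{prop:invarinat sectors}) then shows it cannot be an instanton in the family, so $|\alpha(a)-\alpha_\infty|<\varepsilon$.

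The hard part is the uniformity of this step. The time at which the frozen orbit enters $\mathcal{A}$ or $\mathcal{B}$ tends to infinity as $\alpha\to\alpha_\infty$. Meanwhile the sectors themselves move with $(A,B,C)$: the boundary of $\mathcal{A}$ contains the term $\tfrac{A}{B}+\tfrac{A}{C}$. So I need entry to occur within a bounded window in $\tilde s$, where the metric error is $O(a^{-1})$. I would get this by working with compact angle sets $|\alpha-\alpha_\infty|\geq\varepsilon$, for which the entry time in the frozen system is bounded. I would then apply Gronwall on that fixed window, with error controlled by $e^{\frac{4\Delta}{m}s_a}\to0$.
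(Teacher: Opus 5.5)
Your proposal is correct and follows essentially the paper's route. The paper realises your time shift $s_a$ as the compactified parameter $\beta\to\frac{\pi}{2}$ and obtains the same frozen system \eqref{eqn:compactifying-subsystem} at $p_0^m$. It identifies your heteroclinic as the explicit invariant line $\psi+\frac{m}{2\Delta}\xi-\frac{\widehat m^2}{2\Delta}=0$ (the curve $\mathbb{B}$), and concludes from the $\mathcal{A}/\mathcal{B}$ splitting of angles and continuity in the initial data; your uniformity argument in Step 3 simply spells out that last appeal to continuity more carefully.
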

In order to prove this, it is convenient to take a step back to introduce a natural compactified coordinate for $a$. We have been considering solutions to the extended ODE system with initial conditions of the form
$$\Gamma = \left(p_0^m, \frac{\widehat{m}^2}{m}, 0\right) +\left[\cos (\theta)  v_1 + \sin (\theta) v_2 + a \left( \cos( \alpha)v_3 +\sin(\alpha)v_4\right)\right] e^{-\frac{4 \Delta}{m}s} + o\left(e^{-\frac{4 \Delta}{m}s}\right)\;.$$
This choice was made to fix the reparametrisation freedom of $s$ for the $\Spin(7)$-metric. But, we can consider the more general initial conditions
\begin{equation}\label{eq:generalised-ic-gamma}
    \Gamma  = \left(p_0^m, \frac{\widehat{m}^2}{m}, 0\right) + \left[\cos(\beta) \left(\cos (\theta)  v_1 + \sin (\theta) v_2\right) + \sin(\beta)\left( \cos( \alpha)v_3 +\sin(\alpha)v_4\right)\right] e^{-\frac{4 \Delta}{m}s} + o\left(e^{-\frac{4 \Delta}{m}s}\right)\;.
\end{equation}
Note that, as long as $\beta \neq \frac{\pi}{2}$, we can find a reparametrisation of $s$ mapping one set of initial conditions to the other. The following is immediate:
\begin{lemma}
For $\beta = \frac{\pi}{2}$, integral curves of the extended system are $\left(0, -\frac{2 \Delta}{m}, -\frac{2\Delta}{m}, \xi(s), \psi(s)\right)$, where $\xi$ and $\psi$ solve the reduced system
\begin{subequations}\label{eqn:compactifying-subsystem}
    \begin{align}
        \xi'&=-\frac{4\Delta}{m}\xi+\frac{4\Delta\widehat{m}^2}{m^2}-\frac{4\Delta^2}{m^2}\psi^2\\
        \psi'&=-(3m+\xi)\psi\;. 
    \end{align}
\end{subequations}
\end{lemma}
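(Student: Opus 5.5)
The plan is to show that the locus $\beta=\frac{\pi}{2}$ in the generalised initial conditions \eqref{eq:generalised-ic-gamma} is an invariant subset of the extended system, and then to read off the induced equations on it. When $\beta=\frac{\pi}{2}$, the leading term of $\Gamma$ has no component along $v_1$ or $v_2$. These are the only eigenvectors with non-zero projection onto the $(A,B,C)$-directions, since $v_3$ and $v_4$ are the obvious extensions of the $\xi$- and $\psi$-eigendirections. So the $(A,B,C)$-part of the curve is the constant $p_0^m$ to leading order. The key observation is that the $(A,B,C)$-subsystem \eqref{eq:new-evo-spin7} is autonomous and decoupled from $(\xi,\psi)$. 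Hence the constant solution $(A,B,C)\equiv p_0^m$ is an exact solution, and it is the unique integral curve of that subsystem whose unstable-manifold coordinate vanishes. Here I use the Hartman--Grobman / stable manifold parametrisation from \eqref{eq:local_gammai}: curves leaving $p_0^m$ are parametrised by their leading coefficient in the two-dimensional eigenspace, and the zero coefficient gives the fixed point itself. Therefore $(A,B,C)=\left(0,-\frac{2\Delta}{m},-\frac{2\Delta}{m}\right)$ for all $s$.

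The second step substitutes $A=0$ and $B=C=-\frac{2\Delta}{m}$ into the instanton system \eqref{eq:new-generic-SO3-instanton}. With $A=0$ we get $F=\frac{m}{\Delta}BC=\frac{m}{\Delta}\cdot\frac{4\Delta^2}{m^2}=\frac{4\Delta}{m}$ and $\widehat{F}=\frac{\widehat m}{\Delta}BC=\frac{4\Delta\widehat m}{m^2}$. This gives
\[
\xi'=-\tfrac{4\Delta}{m}\xi+\tfrac{4\Delta\widehat m^2}{m^2}-\tfrac{4\Delta^2}{m^2}\psi^2 ,
\qquad
\psi'=-(3m+\xi)\psi ,
\]
which is exactly \eqref{eqn:compactifying-subsystem}. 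Conversely, any solution of this reduced system, paired with the constant $(A,B,C)$, solves the extended system. The $(\xi,\psi)$-part then emanates from $\left(\frac{\widehat m^2}{m},0\right)$ along $\cos(\alpha)v_3+\sin(\alpha)v_4$, matching the initial condition.

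The only point needing care is the uniqueness claim in the first step. A priori the $o(e^{-4\Delta s/m})$ remainder could hide an $(A,B,C)$-motion. This is ruled out because decoupling means the $(A,B,C)$-component of $\Gamma$ is itself an integral curve of \eqref{eq:new-evo-spin7} converging to $p_0^m$ as $s\to-\infty$ with vanishing leading coefficient. By the unstable manifold theorem, such a curve is the fixed point. I expect no real obstacle; the rest is a direct computation.
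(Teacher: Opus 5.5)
Your proposal is correct and follows the same route as the paper, which calls the lemma immediate. At $\beta=\frac{\pi}{2}$ there is no component along $v_1,v_2$, so the decoupled $(A,B,C)$-part is the fixed point $p_0^m$; substituting $A=0$, $B=C=-\frac{2\Delta}{m}$ (so $F=\frac{4\Delta}{m}$ and $\widehat{m}\widehat{F}=\frac{4\Delta\widehat{m}^2}{m^2}$) into the instanton system gives the reduced equations, and your unstable-manifold argument makes precise the step the paper leaves implicit.
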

The strategy is to use the subsystem above to study the limit $\beta \to \frac{\pi}{2}$, exploiting the continuity of ODE solutions with respect to initial conditions. First, we have
\begin{proposition}\label{prop:compactifying-integral-curve}
    The subsystem \eqref{eqn:compactifying-subsystem} has the invariant line $$\psi+\frac{m}{2\Delta}\xi-\frac{\widehat{m}^2}{2\Delta}=0\;.$$
    In particular, the integral curve  
     $$ \mathbb{B}(s) = \left( \frac{\widehat{m}^2}{m} +  \frac{1}{e^{\frac{4 \Delta}{m} s} - \frac{m }{4\Delta}}, \,\left( -\frac{m}{2\Delta}\right)  \frac{1}{e^{\frac{4 \Delta}{m} s} - \frac{m }{4\Delta}}\right) \qquad \qquad s \in \R$$
    joins the fixed points $\left(p_0^m, \frac{\widehat{m}^2}{m},0\right)$ and $(p_0^m , -3m , 2)$.  
\end{proposition}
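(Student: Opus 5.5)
The proof is a direct verification in three steps. First I would show that the line is invariant for \eqref{eqn:compactifying-subsystem}. Then I would restrict the system to the line, where it becomes a scalar Riccati equation, and solve it explicitly. Finally I would read off the limits as $s\to\pm\infty$.

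\emph{Step 1 (invariance).} Set $X=\xi-\frac{\widehat m^2}{m}$ and
$$L=\psi+\frac{m}{2\Delta}\xi-\frac{\widehat m^2}{2\Delta}=\psi+\frac{m}{2\Delta}X.$$
The key algebraic input is the identity $\widehat i\,\widehat j+3ij=2(3\delta_{ij}-1)\Delta$ with $i=j=m$. It gives $\widehat m^2=4\Delta-3m^2$, that is, $3m+\frac{\widehat m^2}{m}=\frac{4\Delta}{m}$. In these variables the system reads
$$X'=-\frac{4\Delta}{m}X-\frac{4\Delta^2}{m^2}\psi^2,\qquad \psi'=-\Big(\frac{4\Delta}{m}+X\Big)\psi.$$
Differentiating gives
$$L'=-\Big(\frac{4\Delta}{m}+X\Big)\psi-2X-\frac{2\Delta}{m}\psi^2.$$
On $\{L=0\}$ we substitute $\psi=-\frac{m}{2\Delta}X$. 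The terms $\pm 2X$ and $\pm\frac{m}{2\Delta}X^2$ cancel in pairs, so $L'$ vanishes on $\{L=0\}$. Since the vector field is polynomial, uniqueness of solutions then implies that the line $\{L=0\}$ is invariant.

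\emph{Step 2 (explicit solution).} On the line, the $X$-equation becomes the Riccati equation $X'=-\frac{4\Delta}{m}X-X^2$. With $u=1/X$ it linearises to $u'=\frac{4\Delta}{m}u+1$. Its solutions are $u=Ke^{\frac{4\Delta}{m}s}-\frac{m}{4\Delta}$, and the reparametrisation freedom in $s$ lets us normalise $K=1$. This recovers exactly the formula for $\mathbb{B}(s)$, with $\psi=-\frac{m}{2\Delta}X$.

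\emph{Step 3 (endpoints and regularity).} Since $m<0$, the constant $-\frac{m}{4\Delta}$ is positive, so the denominator $e^{\frac{4\Delta}{m}s}-\frac{m}{4\Delta}$ never vanishes. Hence $\mathbb{B}$ is defined for all $s\in\R$. As $s\to-\infty$, the exponential $e^{\frac{4\Delta}{m}s}$ tends to $+\infty$, so $X\to0$ and $\psi\to0$; this is the fixed point $\left(p_0^m,\frac{\widehat m^2}{m},0\right)$. As $s\to+\infty$, the exponential tends to $0$, so $X\to-\frac{4\Delta}{m}$. Then $\xi\to\frac{\widehat m^2}{m}-\frac{4\Delta}{m}=-3m$ and $\psi\to2$, which is the fixed point $(p_0^m,-3m,2)$ of Lemma~\ref{lemma:fixed-pts generic SO3}. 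Moreover, $\psi>0$ along the whole curve, so $\mathbb{B}$ stays in the admissible half-plane.

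There is no real obstacle here. The only nontrivial point is spotting the identity $\widehat m^2+3m^2=4\Delta$, which makes the cancellation in Step 1 work.
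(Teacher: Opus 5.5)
Your proposal is correct and follows essentially the paper's route. The paper also differentiates $L$ directly, but it records the slightly stronger factorisation $L'=-\frac{2\Delta}{m}(2+\psi)L$ along all trajectories rather than your tangency-plus-uniqueness argument, and then only asserts that $\mathbb{B}$ is an integral curve. Your Riccati linearisation $u=1/X$ and the endpoint computation, using $\widehat m^2+3m^2=4\Delta$ and $m<0$, supply exactly the details the paper calls straightforward.
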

\begin{proof}
    We have 
    \begin{equation}
        \begin{split}
        \left(\psi+\frac{m}{2\Delta}\xi-\frac{\widehat{m}^2}{2\Delta}\right)'&=-(3m+\xi)\psi+\frac{m}{2\Delta}\left(-\frac{4\Delta}{m}\xi+\frac{4\Delta\widehat{m}^2}{m^2}-\frac{4\Delta^2}{m^2}\psi^2\right)\\
        &=-\frac{2\Delta}{m}(2+\psi)\left(\psi+\frac{m}{2\Delta}\xi-\frac{\widehat{m}^2}{2\Delta}\right). \quad
        \end{split}
    \end{equation}
It is now straightforward to check that $\mathbb{B}$ is an integral curve of the reduced system joining the two fixed points. 
\end{proof}
\begin{proof}[Proof (of Proposition \ref{prop:alpha-limit})]
Set $\alpha^*= \arctan\left( \frac{-m}{2 \Delta} \right)$, the slope of the critical line from the previous proposition. The $\mathcal{A}, \mathcal{B}, \mathcal{C}, \mathcal{F}$ sector decomposition restricts naturally to the system \eqref{eqn:compactifying-subsystem}. 

Consider $\widetilde{\Gamma}(\alpha)$ the integral curves emanating from $\left(\frac{\widehat{m}^2}{m}, 0\right)$ for the limiting system \eqref{eqn:compactifying-subsystem}. Then, for $s$ large enough, we have $\widetilde{\Gamma}_\alpha(s) \in \mathcal{A}$ for $\alpha< \alpha^*$ and  $\widetilde{\Gamma}_\alpha(s) \in \mathcal{B}$ for $\alpha>\alpha^*$. The claim follows by the continuity of ODE solutions with respect to initial conditions. 
\end{proof}
We now have the desired bubbling result:
\begin{proposition}\label{prop:bubbled-off-connection AC}
Let $\Omega(c^*)$ be the curve from Proposition \ref{prop:connections-AC} and $\mathbb{B}$ be the integral curve joining $\left(p_0^m, \frac{\widehat{m}^2}{m},0\right)$ and $(p_0^m , -3m , 2)$ from Proposition \ref{prop:compactifying-integral-curve}. Consider $\Gamma(a, \alpha)$ the one-parameter family of instantons of Proposition \ref{prop:connections-AC}. Then, as $a\to \infty$, the Hausdorff distance between $\mathbb{B} \cup \Omega^m(c^*)$ and $\Gamma(a, \alpha)$ tends to zero. 

In particular, the family of connections $A_a$ associated with $\Gamma^m(a, \alpha)$ converges pointwise, away from $Z_0$, to the connection $\widetilde{A}_{c^*}$ associated with $\Omega^m(c^*)$ .
\end{proposition}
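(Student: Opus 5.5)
The idea is to treat the limit $a\to\infty$ as a breaking of trajectories and to feed it into Lemma \ref{lem: bubble existence}. The compactified initial conditions \eqref{eq:generalised-ic-gamma} are the right coordinates. After reparametrising $s$, the curve $\Gamma(a,\alpha(a))$ corresponds to a parameter $\beta(a)$ with $\tan\beta(a)=a$, so $\beta(a)\to\frac{\pi}{2}$. By Proposition \ref{prop:alpha-limit}, $\alpha(a)\to\alpha^*$. Continuity of solutions with respect to initial data, uniformly on compact $s$-intervals near $p_0^m$, then shows that the curves converge near the source fixed point $\left(p_0^m,\frac{\widehat m^2}{m},0\right)$ to the $\beta=\frac{\pi}{2}$, $\alpha=\alpha^*$ curve. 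By Proposition \ref{prop:compactifying-integral-curve}, that curve is exactly $\mathbb{B}$, and it ends at the hyperbolic fixed point $q=(p_0^m,-3m,2)$.

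We then apply Lemma \ref{lem: bubble existence} with $p$ the source, $q=(p_0^m,-3m,2)$, $\widehat\Gamma=\mathbb{B}$ and $r$ the AC limit (abelian or irreducible, according to Proposition \ref{prop:connections-AC}). This produces shifts $s_n\to\infty$ and a limiting curve $\Omega$ emanating from $q$ with $\Gamma_{a_n}(s_n+\cdot)\to\Omega$ locally uniformly. To identify $\Omega$, note that it leaves $q$ tangent to the unstable directions $w_1,w_2,w_4$, and its $(A,B,C)$-projection must be the metric curve $\gamma_{\theta^*}$. The reason is that the $(A,B,C)$-components of $\Gamma_{a_n}$ are precisely this metric curve, shifted. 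Hence $\Omega=\Omega^m(c)$ for some $c\in\R$, or $\Omega$ is the degenerate curve lying in $\{A=0\}$; the latter is excluded because the exit points on $\partial B$ have $A$ bounded away from $0$. The sectors $\mathcal{A},\mathcal{B},\mathcal{F}$ are open-type conditions and each $\Gamma_{a_n}$ stays in $\mathcal{C}$ for all time, so the limit $\Omega^m(c)$ remains in the closed set $\mathcal{C}$ for all $s$. In particular it cannot blow up (Proposition \ref{prop:finite-time-blowup}), so $S=\infty$. The bounded curves among the $\Omega^m(c)$ that stay in $\mathcal{C}$ are characterised in Proposition \ref{prop:connections-AC}, and uniqueness forces $c=c^*$. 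Because the limit is independent of the subsequence, the whole family converges.

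The main obstacle is upgrading convergence on compact windows $I$ and $s_n+J$ to convergence in Hausdorff distance of the entire curves. This requires control of three regions: the transit region between the two windows near $q$, the tail near $-\infty$, and the tail near the AC limit $r$. Near $q$, I would use the Hartman--Grobman linearisation: inside a small ball the trajectories stay close to $W^s(q)\cup W^u(q)$, which is covered by $\mathbb{B}$ and $\Omega^m(c^*)$. For the tail at $r$, when $r$ is a sink in the $(\xi,\psi)$-directions the convergence is uniform on a fixed neighbourhood. When $r$ is a saddle (the abelian limit with $l>r^*k$, or $i=l$), I would instead use that all the curves lie on the stable manifold of $r$, which depends continuously on the data, together with the barrier sets, to keep them in a uniform neighbourhood. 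Finally, pointwise convergence of the connections $A_a$ away from $Z_0$ follows from the convergence of $(\xi,\psi)$ in $t$. Compact subsets of $M\setminus Z_0$ correspond to $t$ in a compact interval $[t_0,t_1]$ with $t_0>0$, and in terms of $s$ the shifted window $s_n+J$ captures exactly these $t$-values. The reason is that the metric is fixed, so $t=\int f\,ds$ and the shift $s_n$ is absorbed by the time reparametrisation relating $\beta(a)$ to the original normalisation.
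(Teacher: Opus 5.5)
Your route is the paper's: Lemma~\ref{lem: bubble existence} with $\widehat\Gamma=\mathbb{B}$ and $q=(p_0^m,-3m,2)$. The limit is identified through the unstable manifold of $q$. Closedness and invariance of $\mathcal{C}$ plus uniqueness force $c=c^*$; uniqueness is Lemma~\ref{lemma:unique_curve}, since Proposition~\ref{prop:connections-AC} only gives existence. Hyperbolic control at $q$ and the common endpoint give the Hausdorff statement, and divergence of the transit time gives the pointwise one.

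The step you assert without proof is the exclusion of a limit $\Omega$ lying in the invariant fibre $\{(A,B,C)=p_0^m\}$. Your reason, that the exit points $x_n\in\partial B$ have $A$ bounded away from $0$, is not evident. Inside $B$ the $w_4$-component grows at rate $-8\Delta/m$, twice the rate of the metric components. So whether $\Gamma_{a_n}$ leaves $B$ with metric displacement comparable to the radius, or with $A\to0$, depends on how its $w_4$-component at entry compares with the square of its metric displacement. Nothing in your argument controls this.

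The step is load-bearing twice. First, it is what makes $\Omega$ some $\Omega^m(c)$. Second, your final sentence needs it. The exit instants measured in the original time of $\gamma_{\theta^*}$ differ from $s_n$ by $\frac{\abs{m}}{4\Delta}\log\sqrt{1+a_n^2}\to\infty$. ``The shift is absorbed'' means these original-time instants stay bounded, and that fails precisely in the degenerate case. The paper's proof also passes over this case silently.

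The repair is your own sector argument. In the fibre, $q$ is a saddle of \eqref{eqn:compactifying-subsystem}, so its unstable manifold there is one-dimensional and tangent to the $(\xi,\psi)$-part $(-4m\Delta,-m^2)$ of $w_4$. Both entries are nonzero, so near $q$ one branch has $\xi>-3m$ and the other has $\psi>2$. Both branches lie outside the closed set $\mathcal{C}$. But $\Omega$ is a locally uniform limit of curves in $\mathcal{C}$, and it differs from $q$ because it passes through $x\in\partial B$. Hence the original-time exit instants are bounded. Both the identification $\Omega=\Omega^m(c^*)$ and the pointwise convergence at fixed $t>0$ then follow.

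Separately, your tail analysis misclassifies the endpoint. For $i=m$ it is the irreducible point $(p^+_{k,l},-3m-2A^+,\sqrt{D_{\widehat m}})$. By Lemma~\ref{lem:nonAb AC end} this is a saddle in the $(\xi,\psi)$-plane, with determinant $-2B^+C^+D_{\widehat m}<0$. The endpoints for $i=k$ and $i=l$ are also saddles, by Lemmas~\ref{lem:Ab AC end} and~\ref{lem:nonAb AC end}. There is no sink case, so your stable-manifold argument is needed throughout, including in the case of the statement. Since the metric is fixed, what that argument must show is that the shifted curves lie on the local stable manifold of $r$ after the window. It is not a question of a stable manifold varying continuously.
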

\begin{proof}
 By the local theory around $(p_0^m, -3m, 2)$, the curve $\Omega$ from Lemma \ref{lem: bubble existence}, must be $\Omega^m(c)$ for some $c \in \R$.
 
 But since $\Gamma_a$ remains in $\mathcal{C}$ for all time, so does $\Omega(c)$, hence $c=c^*$. The Hausdorff distance claim follows by the uniform convergence along compact intervals, the inclination Lemma applied to a neighbourhood of $(p_0^m, -3m, 2)$, and the fact that $\Gamma(a, \alpha)$ and $\Omega(c)$ have the same endpoint.
 
 To prove the pointwise convergence of the connections, it suffices to estimate the time it takes $\Gamma_a$ to travel from $(p_0^m, \frac{\widehat{m}^2}{m}, 0)$ to a neighbourhood of $(p_0^m, -3m,2)$. If we fix the time parametrisation coming from the curve $\Omega^m(c)$, it is clear it tends to $-\infty$ as $a \to \infty$ (or rather $\beta \to \frac{\pi}{2}$). 
\end{proof}
To justify the bubbling terminology, we must show that this limiting process indeed exhibits the gauge-theoretic bubbling behaviour conjectured by Tian \cite{Tian00} and Tao--Tian \cite{TT04}(cf. \cite{LO18}).

Consider $A_a$ the one-parameter family of connections emanating from $\left(p_0^m, \frac{\widehat{m}^2}{m},0\right)$, with $a> a^*$. Let  $A_{*}$ be the corresponding bubbled-off connection emanating from $\left(p_0^m,-3m, 2\right)$ corresponding to the integral curve $\Omega_*$. We denote their curvatures by $ F_a$ and $F_*$ respectively.
\begin{proposition}\label{prop:energy-conservation}
 We have 
\begin{equation}
    \lim_{a \to +\infty} \left( \abs{F_a}^2 -  \abs{F_*}^2  \right)\vol_g \rightharpoonup  \frac{8 \pi^2}{\abs{m}} \delta_{Z_0}
\end{equation}
where $Z_0 \cong \C P^2$ is the singular orbit and the convergence is understood in the sense of currents. 
\end{proposition}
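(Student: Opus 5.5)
Away from $Z_0$, Proposition \ref{prop:bubbled-off-connection AC} already gives convergence of $A_a$ to $A_*$ on compact subsets of the principal part. Since the integrands are $\SU(3)$-invariant, the measures $\abs{F_a}^2\vol_g$ are pullbacks of measures on the orbit space, $(\abs{F}^2 \vol_{N}(t))\,dt$, where $\vol_N(t)=a^2b^2c^2 f\cdot\frac{\pi^4}{2\Delta}$ by Lemma \ref{lemma:volume_normalisation}. The plan is therefore a one-dimensional statement: $\abs{F_a}^2\vol_N\,dt - \abs{F_*}^2\vol_N\,dt \rightharpoonup \frac{8\pi^2}{\abs m}\delta_{t=0}$.

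\textbf{Step 1: no loss at infinity.} By the Hausdorff convergence of Proposition \ref{prop:bubbled-off-connection AC} and the hyperbolicity of the AC endpoint (Lemmas \ref{lem:Ab AC end}, \ref{lem:nonAb AC end}), the tails of $\Gamma_a$ stay in a fixed neighbourhood of the limiting fixed point. From the curvature formula \eqref{eq:curvature-generic-SO3}, $\abs{F}^2\vol_N$ then decays uniformly at the same rate as for $A_*$, so there is no energy escaping to infinity. Consequently the whole defect is concentrated in a shrinking neighbourhood of $t=0$.

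\textbf{Step 2: total defect via Chern--Weil.} For $\Spin(7)$-instantons, $\abs{F}^2\vol=-\operatorname{tr}(F\wedge F)\wedge\Phi$ up to normalisation. On $\{t\le T\}$ this is exact: $\operatorname{tr}(F\wedge F)=d\,CS(A)$ and $d\Phi=0$. The energy in $\{t\le T\}$ is therefore a boundary term $\int_{\{t=T\}}CS(A)\wedge\Phi$, which is a polynomial expression in $(\xi,\psi)(T)$ and the metric coefficients. It is not a contribution from $Z_0$, since $A_a$ and $A_*$ both extend smoothly, but on different bundles $P_{\widehat m}$ and $Q_{\widehat m}$. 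The difference of energies on $\{t\le T\}$ is then the difference of the two boundary terms at $T$, which tends to zero by Step 1, plus the difference of the Chern--Weil numbers of the restricted bundles. I would compute this difference of characteristic numbers over the tubular neighbourhood, viewed as an $\R^4/\Z_{\abs m}$-bundle over $\C P^2$ (Lemma \ref{lemma:normal_metric}). It is fibrewise the instanton number of the bubble on $\R^4/\Z_{\abs m}$, namely $8\pi^2/\abs m$ in the normalisation of the statement, integrated against $\Phi|_{\C P^2}=\vol_{\C P^2}$. The $\vol(\C P^2)$ factor should be absorbed into interpreting $\delta_{Z_0}$ as integration over $Z_0$.

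\textbf{Step 3: identification of the bubble, which I expect to be the main obstacle.} To confirm that the defect is exactly the ASD charge, I would rescale $t=\lambda_a\tau$, with $\lambda_a\to0$ chosen from the transit time in the proof of Proposition \ref{prop:bubbled-off-connection AC}. Along $\mathbb B$ the metric is frozen at $p_0^m$, so the system \eqref{eqn:compactifying-subsystem} is the $\U(2)$-invariant ASD equation on flat $\R^4/\Z_{\abs m}$, and the explicit solution $\mathbb B$ is the standard charge-one instanton quotient. Its energy computed directly is $8\pi^2/\abs m$. The hard part is controlling the error uniformly: $\Gamma_a$ shadows $\mathbb B$ only in a scaled region, and one must show that the energy in the annular neck between scale $\lambda_a$ and a fixed radius vanishes. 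I would handle this with the inclination lemma near the hyperbolic point $(p_0^m,-3m,2)$, which gives exponential-in-$s$ closeness to $\Omega(c^*)$ there, hence polynomial smallness in $t$. Combined with the exactness in Step 2, this gives the weak convergence tested against arbitrary compactly supported functions of $t$.
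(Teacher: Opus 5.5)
Your Step~2 takes a genuinely different route from the paper's, and it is sound once repaired. As written, however, the proposal contains false statements and leaves uncomputed the one number that matters.

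\textbf{The paper's route.} The paper never invokes Chern--Weil. It treats the orbit-integrated density $\mathcal{D}$ of Lemma~\ref{lemma:curvature-density} as a continuous function on the extended phase space, extended across $A=0$ because $f^4/A^2=(f/a)^2(bc)^2$ stays bounded. Off $Z_0$ the defect vanishes by uniform convergence on compact time intervals and dominated convergence. Near $Z_0$, the Hausdorff convergence $\Gamma(a,\alpha)\to\mathbb{B}\cup\Omega^m(c^*)$ together with Lemma~\ref{lemma:curve-length} reduces the defect to $\int_{\mathbb{B}}\mathcal{D}$, which is then evaluated from the closed form of $\mathbb{B}$. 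Your Step~3 is that argument in rescaled-$t$ language.

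\textbf{What Step~2 buys.} Step~2 instead exhibits the defect as a relative characteristic number of $P_{\widehat m}$ versus $Q_{\widehat m}$, independent of the dynamics near the saddle. Done properly, it makes Step~3 unnecessary. Suppose the defect on $\{t\le T\}$ tends to a constant $\kappa$ for each fixed $T$. Then smooth convergence on $\{t\ge\epsilon\}$ and positivity already give weak convergence to $\kappa\,\delta_{Z_0}$, with no neck estimate, via the bound $\bigl|\int_{\{t\le\epsilon\}}(\phi-\phi(0))\abs{F_a}^2\vol_g\bigr|\le\operatorname{osc}_{[0,\epsilon]}\phi\int_{\{t\le\epsilon\}}\abs{F_a}^2\vol_g$ for invariant test functions $\phi(t)$.

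\textbf{Step~1 is false.} On the AC metric, $A_a$ and $A_*$ converge to a non-flat homogeneous $G_2$-instanton on the link. Hence $\abs{F}\sim r^{-2}$ and both energies are infinite. Currents are tested against compactly supported forms, so drop Step~1 entirely. The boundary terms at a fixed $T$ converge by smooth convergence at $t=T$.

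\textbf{Step~2 needs correcting.} $\operatorname{tr}(F\wedge F)$ is not exact on $\{t\le T\}$: this set retracts onto $\C P^2$, where $p_1(Q_{\widehat m})=-3H^2\neq0$. Instead, choose reference connections $B_P,B_Q$ that agree near $\{t=T\}$; this is possible because the bundles are canonically identified on the principal part. Then $\omega=\operatorname{tr}F_{B_P}^2-\operatorname{tr}F_{B_Q}^2$ is closed with compact support. Since $\Phi-\pi^*(\Phi|_{Z_0})$ is exact on the tube, the energy difference equals $\vol(Z_0)\int_{\mathrm{fibre}}\omega$, up to Chern--Simons boundary terms at $t=T$ that tend to zero.

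\textbf{The key value is only asserted.} The value of $\int_{\mathrm{fibre}}\omega$ is the entire content of the proposition, and you only assert it. To compute it, take $B_Q$ to be the canonical connection $(\xi,\psi)=(-3m,2)$, which on each normal fibre is the flat connection $2\sum_i\sigma_i\otimes T_i$. Take $B_P$ to be any invariant profile from $(\widehat m^2/m,0)$ to $(-3m,2)$, for instance a truncated charge-one instanton. The fibre integral depends only on the end data and equals $8\pi^2/\abs{m}$.

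\textbf{If you fix the constant through Step~3 instead.} The missing observation is that the density vanishes at $q=(p_0^m,-3m,2)$. There $\xi+3m=0$ and $m\xi-\widehat m^2+\Delta\psi^2=4\Delta-3m^2-\widehat m^2=0$, so the density is $O(\abs{x-q})$ near $q$. The bound on $\int(\abs{u}+\abs{v})\,ds$ in the proof of Lemma~\ref{lemma:curve-length} then controls the arbitrarily long residence of the curve near $q$. The inclination lemma alone gives closeness to $\Omega^m(c^*)$ only on the outgoing half of that passage; on the incoming half the curve shadows $\mathbb{B}$.
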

We need some technical lemmas first.
\begin{lemma}\label{lemma:curve-length}
Under the assumptions of Lemma \ref{lem: bubble existence}, consider the ball of radius $\varepsilon>0$ centred at $q$, $B_\varepsilon$. We have the estimate 
    \begin{equation}
        \operatorname{Length}( \Gamma_a \cap B _\varepsilon) \leq C \varepsilon
    \end{equation}
uniformly on $a$ for some constant $C$.
\end{lemma}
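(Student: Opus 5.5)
The plan is to work inside the Hartman--Grobman neighbourhood $U$ of the hyperbolic point $q$ from the proof of Lemma~\ref{lem: bubble existence}. There the flow is topologically conjugate to its linearisation, but length is not a topological invariant. So I will not use the conjugacy. Instead I will use smooth local coordinates adapted to the stable/unstable splitting, $x=(x_s,x_u)$ with $q=0$. Since $q$ is hyperbolic with no zero eigenvalues, after shrinking $\varepsilon$ I can pick a quadratic Lyapunov-type function. The target is that on $B_\varepsilon$ one has $f(x)=Lx+O(|x|^2)$ with $L=\diag(L_s,L_u)$, where $\operatorname{Re}\sigma(L_s)<-\lambda<0<\lambda<\operatorname{Re}\sigma(L_u)$, and an adapted inner product gives
\[
\tfrac{d}{ds}|x_s|\le -\lambda |x_s| + C|x|^2,\qquad \tfrac{d}{ds}|x_u|\ge \lambda|x_u|-C|x|^2 .
\]

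The key steps, in order, are as follows.

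First, I bound the speed: on $B_\varepsilon$, $|\Gamma_a'|=|f(\Gamma_a)|\le \|L\|\,|\Gamma_a|+C|\Gamma_a|^2\le C'|\Gamma_a|$. The length of $\Gamma_a\cap B_\varepsilon$ is therefore at most $C'\int |x(s)|\,ds$ over the time the curve spends in $B_\varepsilon$.

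Second, I control that integral by exponential dichotomy. Along the portion inside $B_\varepsilon$ (for $\varepsilon$ small, so that $C|x|\le\lambda/2$), the quantity $|x_s|$ decays at rate at least $\lambda/2$ whenever $|x_s|\ge|x_u|$. Likewise $|x_u|$ grows at rate at least $\lambda/2$ whenever $|x_u|\ge|x_s|$. A cone argument then gives the following picture. The sojourn splits into an incoming phase, where $|x|\lesssim|x_s|$ decays exponentially, followed by an outgoing phase, where $|x|\lesssim|x_u|$ grows exponentially. The cone $\{|x_u|\ge|x_s|\}$ is forward invariant, which I will check by differentiating $|x_u|-|x_s|$ on its boundary. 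Hence each phase is a single interval.

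Third, I conclude. On the incoming phase $\int|x|\,ds\le \int_0^\infty 2\varepsilon e^{-\lambda s/2}ds=4\varepsilon/\lambda$, and the outgoing phase gives the same bound by time reversal. The curve might re-enter $B_\varepsilon$ after exiting. To handle this I will shrink $\varepsilon$ so that $B_\varepsilon$ is well inside $U$ and use uniform convergence to $\widehat\Gamma$ and $\Omega$ outside. This should ensure that for $a$ near $a^*$ each $\Gamma_a$ meets $B_\varepsilon$ in a single interval. Finitely many $a$ away from $a^*$ are compact in parameter, so the bound holds there by continuity. Altogether $\operatorname{Length}(\Gamma_a\cap B_\varepsilon)\le (8C'/\lambda)\varepsilon$, uniformly in $a$.

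The main obstacle is the cone/dichotomy step. The eigenvalues at $q$ may be non-real or non-semisimple, for instance at the fixed points considered above where complex eigenvalues appear, so a naive Euclidean norm need not be monotone. I will first try to choose an adapted inner product for which $\langle L_s v,v\rangle\le-\lambda|v|^2$ and $\langle L_u w,w\rangle\ge\lambda|w|^2$; such a product exists by the standard Lyapunov construction. With it the nonlinear terms are absorbed once $\varepsilon$ is small. If that fails, the fallback is to use a $C^1$ linearisation (Hartman's theorem in the relevant nonresonant setting) and pull back the length bound with the bi-Lipschitz constant.
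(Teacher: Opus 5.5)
Your proof is correct, but it reaches the estimate by a different route from the paper's, mainly through the choice of coordinates near $q$.

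\textbf{The paper's argument.} It straightens both invariant manifolds, taking coordinates $(u,v)$ in which the stable and unstable manifolds are $\{v=0\}$ and $\{u=0\}$. Because these sets are invariant, the nonlinear part of $u'$ vanishes on $\{u=0\}$, and that of $v'$ vanishes on $\{v=0\}$. This gives the multiplicative inequalities $\frac{d}{ds}|u|\le -c|u|$ and $\frac{d}{ds}|v|\ge c|v|$ on the whole box $R_\varepsilon$. So $|u|$ decays and $|v|$ grows throughout the single passage $[s_-,s_+]$. Integrating gives $\int|u|\,ds\le \varepsilon/c$ and $\int|v|\,ds\le\varepsilon/c$ directly, combined with the same speed bound $|\Gamma_a'|\le K(|u|+|v|)$ that you use. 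No phase decomposition is needed.

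\textbf{Your argument.} You keep the linear stable/unstable splitting with a Lyapunov norm. Your error terms are therefore additive, of size $C|x|^2$, and each component is monotone only inside its own cone. The forward invariance of $\{|x_u|\ge|x_s|\}$ and the split into an incoming and an outgoing phase are exactly what replace the straightening. Your per-passage bound checks out.

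\textbf{What each route buys.} Yours needs only the Taylor expansion at $q$ and an adapted inner product, at the price of the cone argument. Such an inner product exists for any hyperbolic spectrum, including non-real or non-semisimple cases, so your $C^1$-linearisation fallback is unnecessary. You are also right not to use the Hartman--Grobman conjugacy for length; the paper works in smooth coordinates as well. The paper's route requires the stable manifold theorem, with enough regularity to straighten both manifolds, but then the estimate is a two-line integration.

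\textbf{The shared weak point.} In both arguments the bound is per passage through the ball, so uniformity in $a$ rests on $\Gamma_a$ meeting $B_\varepsilon$ in a single time interval. The paper simply asserts this (``the unique time interval''). Your appeal to convergence towards $\widehat\Gamma$ and $\Omega$ is at the same level of rigour. Note, however, that convergence on compact time windows alone does not rule out a later return to $B_\varepsilon$ after $\Gamma_a$ has followed $\Omega$ for a long time. Excluding that requires global information from the application, such as how $\Omega^m(c^*)$ and $\Gamma_a$ reach their common endpoint. The bare hypotheses of Lemma~\ref{lem: bubble existence} do not supply it.
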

\begin{proof}
    Since $q$ is hyperbolic, we may choose local coordinates $(u,v)$ with $(u,0)$ and $(0,v)$ parametrising the stable and unstable manifolds respectively. After suitably shrinking the neighbourhood, the coordinates of the curve will satisfy the differential inequalities
    $$ \frac{d}{ds} \abs{u(s)} \leq - c \abs{u} \qquad \qquad \frac{d}{ds} \abs{v(s)} \geq c \abs{v}$$
    for some $c>0$. 
    
    Consider the rectangle $R_\varepsilon\coloneqq \left\{ \abs{u} \leq  \varepsilon\;, \abs{v}\leq \varepsilon\right\}$. For $\varepsilon>0$ sufficiently small, let $[s_-,s_+]$ be the unique time interval of $\Gamma_a(s)$ through $R_\varepsilon$.
    
    Since the vector field vanishes at the origin, we also have
    $$ \left| \frac{d}{ds}\Gamma_a(s)\right| \leq K\bigl(|u(s)|+|v(s)|\bigr)$$
with $K$ independent of $a$ and $\varepsilon$. Integrating, we have 
$$\int_{s_-}^{s_+}|u(s)|\,ds \leq \frac{|u(s_-)|}{c}\leq\frac{\varepsilon}{c},
    \qquad \text{and}
\qquad  \int_{s_-}^{s_+}|v(s)|\,ds
       \leq \frac{|v(s_+)|}{c}\leq\frac{\varepsilon}{c}.
$$
Hence
\[
    \operatorname{Length}\bigl(\Gamma_a\cap B_\varepsilon \bigr)
       \leq \frac{2K}{c}\,\varepsilon. \qquad \qedhere
\]
\end{proof}
\begin{lemma}\label{lemma:curvature-density}
In terms of $(A,B,C)$, and a connection $A= (\xi, \psi)$, we have
\begin{equation}
    \abs{F}^2 \vol_g=  \frac{f^4}{A^2} \left(  \frac{2}{BC} (\xi+3m)^2\psi^2 + \left( \frac{m\xi  - \widehat{m}^2 + \Delta \psi^2}{\Delta} \right)^2+ O(A)\right) ds  \wedge e_{1234567}
\end{equation}
\end{lemma}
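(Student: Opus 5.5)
The identity in Lemma~\ref{lemma:curvature-density} follows by squaring the orthonormal-frame expression of the curvature \eqref{eq:curvature-generic-SO3}, multiplying by the volume form, and rewriting everything in the coordinates \eqref{eq:change-ABC}.

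\textbf{Step 1: the pointwise norm.} First I rewrite \eqref{eq:curvature-generic-SO3} in the orthonormal coframe \eqref{eq:orthonormal-basis}, using $e_{ij}=\omega_{ij}/(\text{product of scale factors})$. Because the $T_i$ are orthonormal, $|F|^2$ is a sum of squared coefficients. The components are as follows.
\begin{itemize}
\item The $T_1$-part contributes $\dot\xi^2/f^2$ from the $\omega_{07}$ term. It also contributes $\bigl(\tfrac{m\xi-\widehat m^2+\Delta\psi^2}{\Delta}\bigr)^2 a^{-4}$ from $\omega_{12}$, and analogous terms with $b^{-4}$ and $c^{-4}$ from $\omega_{34}$ and $\omega_{56}$.
\item The $T_2$- and $T_3$-parts each contribute $\dot\psi^2/a^2$.
\item They also each contribute $2\psi^2/(bc)^2$ from $e_{46}-e_{35}$ (resp.\ $e_{45}+e_{36}$).
\item Finally they each contribute $(3m+\xi)^2\psi^2/(af)^2$ from the $e_{27}$ (resp.\ $e_{17}$) term.
\end{itemize}

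\textbf{Step 2: the volume form.} We have $\vol_g=dt\wedge\omega_{1\cdots7}=a^2b^2c^2f^2\,ds\wedge e_{1234567}$, using $dt=f\,ds$. From \eqref{eq:change-ABC},
\[
BC=\frac{f^2}{a^2},\qquad AB=\frac{f^2}{c^2},\qquad AC=\frac{f^2}{b^2},\qquad ABC=\frac{f^3}{abc}.
\]
Hence $a^2b^2c^2f^2=f^8/(ABC)^2$. Each coefficient squared is then multiplied by this factor.

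\textbf{Step 3: the leading terms.}
\begin{itemize}
\item The $\omega_{12}$ term gives $a^{-4}\cdot f^8/(ABC)^2=(BC)^2f^{-4}\cdot f^8/(ABC)^2=f^4/A^2$ times the square bracket. This matches the claimed second term.
\item The $\omega_{17},\omega_{27}$ terms give $2(3m+\xi)^2\psi^2\cdot\frac{1}{a^2f^2}\cdot\frac{f^8}{(ABC)^2}=\frac{f^4}{A^2}\cdot\frac{2}{BC}(\xi+3m)^2\psi^2$. This matches the first term.
\end{itemize}

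\textbf{Step 4: the remainder.} Every other term must be shown to be $\frac{f^4}{A^2}O(A)$. This is the step requiring care, since $A\to0$ at the singular orbit (and along the ALC end), so one must check no factor of $A^{-1}$ survives.
\begin{itemize}
\item For the time derivatives, $\dot\xi=\xi'/f$ and $\dot\psi=\psi'/f$.
\begin{itemize}
\item The $\dot\xi$ term gives $(\xi')^2f^{-4}\cdot f^8/(ABC)^2=\frac{f^4}{A^2}\cdot\frac{(\xi')^2}{B^2C^2}$. Substituting $\xi'=-F\xi+\widehat m\widehat F-BC\psi^2$ from \eqref{eq:new-generic-SO3-instanton}, each of $F$ and $\widehat F$ is $BC$ times a bounded quantity plus $O(A)$. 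So this term is the square of $\frac{m\xi-\widehat m^2+\Delta\psi^2}{\Delta}$ up to $O(A)$. It can therefore be absorbed: it either combines with the second term, or the statement's constant is understood modulo the instanton relation.
\item The $\dot\psi$ term gives $\frac{2(\psi')^2}{f^2a^2}\cdot\frac{f^8}{(ABC)^2}=\frac{f^4}{A^2}\cdot\frac{2(\psi')^2}{BC}$, with $\psi'=-(2A+3m+\xi)\psi$. This reproduces the first term up to $O(A)$.
\end{itemize}
\item The $\omega_{34},\omega_{56}$ terms carry factors $(AC)^2$ and $(AB)^2$ in place of $(BC)^2$. They therefore contribute $\frac{f^4}{A^2}O(A^2)$.
\item The $2\psi^2/(bc)^2$ terms carry the factor $(bc)^{-2}=A^2/(f^2\cdot f^2/(BC))$. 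After the same manipulation they give $\frac{f^4}{A^2}O(A)$.
\end{itemize}

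Thus the main obstacle is bookkeeping: making sure that, after using the instanton equations for the derivative terms, all leftovers carry at least one factor of $A$. The coefficients in front of the two displayed leading terms should be read as absorbing the derivative contributions, which by the first-order equations are of the same form. I would present the computation term by term in a table indexed by the coframe pairs.
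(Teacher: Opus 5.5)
There is a genuine gap: a factor of $2$ that your Steps 3 and 4 expose but do not resolve. Write $Y=\frac{m\xi-\widehat m^2+\Delta\psi^2}{\Delta}$. With your normalisation ($T_i$ orthonormal), Step 3 already produces exactly the two displayed leading terms from the algebraic components alone ($\omega_{12}$, and $\omega_{17},\omega_{27}$). Step 4 then shows, correctly, that after substituting the instanton equations the derivative components contribute the same amount again:
\[
\frac{f^4}{A^2}\frac{(\xi')^2}{(BC)^2}=\frac{f^4}{A^2}\bigl(Y^2+O(A)\bigr),
\qquad
\frac{f^4}{A^2}\frac{2(\psi')^2}{BC}=\frac{f^4}{A^2}\Bigl(\frac{2}{BC}(\xi+3m)^2\psi^2+O(A)\Bigr).
\]
Adding everything, your computation gives
\[
\frac{f^4}{A^2}\Bigl(2Y^2+\frac{4}{BC}(\xi+3m)^2\psi^2+O(A)\Bigr)\,ds\wedge e_{1234567},
\]
which is twice the claimed density. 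Saying that the stated coefficients ``absorb'' the derivative contributions contradicts your own Step 3. Reading the constant ``modulo the instanton relation'' does not remove a factor of $2$ either.

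The missing ingredient is the normalisation of the inner product on $\mathfrak{so}(3)$. The paper computes $\abs{F}^2\vol_g=-\frac14\operatorname{Tr}(F\wedge *F)$. With $[T_i,T_j]=\varepsilon_{ijk}T_k$, this gives $\abs{T_i}^2=\frac12$.

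For the $T_1$-block, set $X_1=Y/a^2$, $X_2=\frac{l\xi-\widehat m\widehat l}{\Delta b^2}$, $X_3=\frac{k\xi-\widehat m\widehat k}{\Delta c^2}$. The block is $\frac12\bigl[(\dot\xi/f)^2+X_1^2+X_2^2+X_3^2\bigr]$. The instanton equation reads $\dot\xi/f=-(X_1+X_2+X_3)$, so the block equals $\sum_j X_j^2+\sum_{j<h}X_jX_h$.

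For the $T_2,T_3$-blocks, together they give $(\dot\psi/a)^2+2\psi^2/(bc)^2+(3m+\xi)^2\psi^2/(af)^2$. Substitute $\dot\psi/a=-\bigl(\frac{2}{bc}+\frac{3m+\xi}{af}\bigr)\psi$.

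In both cases each algebraic square supplies exactly half of the corresponding leading coefficient, and the derivative term supplies the other half. The leftover terms carry factors $A/B$, $A/C$ or $A^2/(BC)$, hence are $O(A)$. With this normalisation your Steps 2--4 go through unchanged and yield the lemma.

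Note also that the $O(A)$ bookkeeping needs $B,C$ bounded away from $0$ (and $\xi,\psi$ bounded). That holds near $p_0^m$ and along the bubble, where the lemma is used. It fails on the ALC end, where $A/B\to1$, so your parenthetical about the ALC end should be dropped.
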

\begin{proof}
We have $\abs{F}^2 \vol_g = -\frac{1}{4} \operatorname{Tr}\left(F \wedge * F \right)$. In our basis $\{T_i\}$, satisfying $[T_i, T_j]= \varepsilon^{ijk} T_k$, we have $\abs{T_i}^2 =\frac{1}{2}$. Using the expression for the curvature of a cohomogeneity one connection, Equation \eqref{eq:curvature-generic-SO3}, we have 
\begin{align*}
   -\frac{1}{4}\operatorname{Tr}\left(F  \wedge  * F \right)=& \frac{1}{2}\left[ \left( \frac{\dot{\xi}}{f}\right)^2 + \left(\frac{m\xi - \widehat{m}^2+ \Delta \psi^2}{\Delta a^2}\right)^2 +\left(\frac{l\xi - \widehat{m}\,\widehat{l}}{\Delta b^2}\right)^2+\left(\frac{k\xi - \widehat{m\,}\widehat{k}}{\Delta c^2}\right)^2\right] \omega_{01234567}  \\
    &+\left[ \left( \frac{\dot{\psi}}{a}\right)^2 + \left(\frac{2 \psi  }{bc}\right)^2 +\left(\frac{(3m +\xi)\psi}{af}\right)^2\right]\omega_{01234567} \\
    = & \left[\left(\frac{m\xi - \widehat{m}^2+ \Delta \psi^2}{\Delta A }\right)^2 +\left(\frac{l\xi - \widehat{m}\,\widehat{l}}{\Delta B}\right)^2+\left(\frac{k\xi - \widehat{m\,}\widehat{k}}{\Delta C}\right)^2 + \frac{(m\xi - \widehat{m}^2+ \Delta \psi^2)(l\xi - \widehat{m}\,\widehat{l})}{\Delta^2 AB}\right. \\
    &\;\left.+ \frac{(m\xi - \widehat{m}^2+ \Delta \psi^2)(k\xi - \widehat{m}\,\widehat{k})}{\Delta^2  AC}+ \frac{(l\xi - \widehat{m}\,\widehat{l})(k\xi - \widehat{m}\,\widehat{k})}{\Delta^2 BC}\right] f^4 ds\, e_{1234567} \\
    &+\frac{2}{BC}\left[4 \psi^2+\frac{(3m +\xi)^2\psi^2}{A^2} + \frac{2(3m +\xi)\psi^2}{A} \right] f^4 ds\, e_{1234567} \;.
\end{align*}
Factoring $A^{-2}$, and discarding higher order terms, the claim follows.
\end{proof}
\begin{proof} [Proof of Proposition \ref{prop:energy-conservation}]
Let $A_a$ be a smooth connection over a bundle over $M^m_{k,l}$. Consider the associated density 1-form 
\begin{equation*}
    \mathcal{D}: (A,B, C, \xi, \psi) \mapsto  \int_{N_{k,l}} \abs{F}^2 \vol_g \;,
\end{equation*}
corresponding to the fibre integration of the $L^2$-norm of the curvature along the (generic) orbits of the $\SU(3)$ action. By Lemma  \ref{lemma:curvature-density}, $\mathcal{D}_a$ is continuous for every $\Spin(7)$-metric. 

For convenience, let $\tau= \exp \left(\frac{-2\Delta}{m}s\right)$, so the singular orbit is at $\tau=0$, and $f \sim \tau$ and $A \sim \tau^2$ as $\tau \to 0$. More concretely, in a neighbourhood of the singular orbit, we have  
$$f^4/A^2  =  \frac{f^2}{a^2} (bc)^2   =  \left(\frac{2 \Delta}{m}\right)^2 \frac{\vol (Z_\tau)}{\vol(Z_0) }  =   \left(\frac{2 \Delta}{m}\right)^2 \frac{2}{\pi^2} \vol (Z_\tau)\;,$$
where $Z_\tau$ is the image of $\C P^2$ at time $\tau$ under the exponential map, and we used Lemma \ref{lemma:volume_normalisation}. Therefore, we see that the density $\mathcal{D}_a$ remains bounded in the limit $\tau \to 0$, and we can extend it to $\tau=0$.

Let $K\subset M^i_{k,l}$ be a compact set in $M^i_{k,l}$. It suffices to prove
        $$\lim_{a \to \infty} \int_K \abs{F_a}^2 -  \abs{F_*}^2 \,\vol_g = \frac{8\pi^2}{\abs{m}}  \mathsf{H}^{4}(K \cap{Z}_0) \,,
        $$
        where $\mathcal{H}^{4}$ is the induced 4-dimensional Hausdorff measure. 
        
        Since the connections are $\SU(3)$-invariant, we can take $K$ to be $\SU(3)$-invariant, so it matches the orbits of the group action. In particular, letting $I = \operatorname{proj}(K) \subseteq \R$, we have 
            $$\lim_{a \to \infty} \int_K \abs{F_a}^2 -  \abs{F_*}^2\; \vol_g  = \lim_{a \to \infty} \left( \int_{\Gamma_a(I)} \mathcal{D} - \int_{\Omega_*(I)}\mathcal{D}\right)\;.$$
        If $0 \notin I$, we have that $\Gamma_a(I)$ converges uniformly over compact sets to $\Omega_*(I)$ by Lemma \ref{lem: bubble existence}. Hence, Lebesgue dominated convergence yields
        $$\lim_{a \to \infty} \int_K \abs{F_a}^2 -  \abs{F_*}^2 \vol_g = 0\;.$$
        It remains to prove the second assertion. Let $I_\epsilon= (0, \epsilon)$ for $\epsilon>0$. Since $\mathcal{D}$ has a removable singularity at $\left(0, -  \frac{2\Delta}{m}, -\frac{2\Delta}{m}\right)$, and using Lemma  \ref{lemma:curve-length}, we have 
        $$\lim_{\epsilon \to 0} \lim_{a \to \infty}  \left( \int_{\Gamma_a(I_\epsilon)} \mathcal{D} - \int_{\Omega_*(I_\epsilon)}\mathcal{D}\right) =  \int_{\mathbb{B}} \mathcal{D}\;. $$
        Now, along the bubble $\mathbb{B}$, we have $A=0$, $B=C = -\frac{2\Delta }{m}$ and $\frac{f^4}{A^2} = \left(\frac{2\Delta
        }{m}\right)^2 \frac{2}{\pi^2} \vol( Z_0)$, where this last equality should be understood as the limiting value of the removable singularity. To lighten notation, we set $q= \frac{4\Delta}{m}$. We have
        \begin{align*}
            \int_{\mathbb{B}} \mathcal{D} &=  \frac{f^4}{2A^2} \vol_1(N_{k,l})\int_{\mathbb{R}} \frac{16}{q^2} (3m+\xi)^2 \psi^2 + 2 \left(\frac{m \xi - \widehat{m}^2 + \Delta \psi^2}{\Delta}\right)^2 ds\\
            &= \frac{12q^4\pi^2}{\Delta}\vol(Z_0) \int_\R  e^{2qs} \left(q e^{q s} - 1 \right)^{-4}ds\\
            &= -\frac{12q\pi^2}{\Delta}\vol(Z_0) \int_0^{-1} u(u+1) du=  -\frac{ 2q\pi^2}{\Delta} \vol (Z_0) \\            
            &= - \frac{8\pi^2}{m}\vol(Z_0) \;,
        \end{align*}
        where we used the change of variables $u = \left(qe^{qs} - 1\right)^{-1}$.
\end{proof}
Proposition \ref{prop:energy-conservation} should be understood as a statement on $L^2$-energy conservation along the bubbling process. The energy of the bubble $\mathbb{B}$ suggests that the connections are ``bubbling off" an anti-self-dual (ASD) instanton along the normal bundle of the singular orbit $Z_0$. We will show that this is precisely the case. 

We start with a more general statement that will prove useful in the next section. First, we introduce some notation to for the rescaling that we will need. Take $p \in Z_0$ and $\delta> 0$. We denote as  
\begin{align}
    r_\delta : N_p Z_0  &\to N_p Z_0  \label{eq:normal-fibre-rescaling}\\
      (p, f) & \mapsto  (p, \delta f) \nonumber
\end{align}
the fibre rescaling map of the normal bundle of the singular orbit by a factor $\delta$. We have
\begin{proposition} \label{prop:limmit-ASD}
    Let $A_a(t)$ be the family of connections corresponding to the integral curves $\Gamma(a, \alpha)$ for a fixed $\alpha$. Take $\lambda>0$ and consider the family of rescalings $r_{\delta_a}$ with $\delta_a =  \sqrt{\frac{\lambda}{a}}$. 
    
    The rescaled limit connection
    $$ A^\lambda_\alpha =  \lim_{a \to \infty} r^*_{\delta_a} (A_a)$$
    is a spherically symmetric ASD connection with respect to the flat metric on $\R^4/\Z_{\abs{i}}$. 

    Similarly, let $A_c(t)$ be the family of connections corresponding to the integral curves $\Omega(c)$. Then, for $\delta_c =  \left(\frac{\lambda}{12c\,m^2}\right)^{\frac{1}{4}}$, the rescaled limit connection is a spherically symmetric ASD connection with respect to the flat metric.   
\end{proposition}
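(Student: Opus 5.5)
Near the singular orbit I would work with the local expansions of $\Gamma(a,\alpha)$ and $\Omega(c)$ and track what the fibre rescaling $r_{\delta}$ does to them. By Lemma~\ref{lemma:normal_metric} and the asymptotics $a(t)=t+o(t)$, $f=-\frac{2\Delta}{m}t+o(t)$, $b,c\to b_0$, the metric on a normal fibre $N_pZ_0$ is $dt^2+t^2\sum\sigma_i^2+o(t^2)$. Here $\sigma_1$ is dual to $H=-\frac{m}{2\Delta}E_7$ and $\sigma_2,\sigma_3$ are dual to $E_1,E_2$. Pulling back by $r_\delta$ and rescaling the metric by $\delta^{-2}$ gives the flat metric on $\R^4/\Z_{\abs{m}}$ in the limit $\delta\to0$. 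Restricted to the fibre, the connection $(\xi e_7+\widehat{m}e_8)\otimes T_1+\psi(e_1\otimes T_2+e_2\otimes T_3)$ reads
\[
\xi\,e_7|_{\mathrm{fibre}}\otimes T_1+\psi(\sigma_2 T_2+\sigma_3 T_3),
\]
since $e_8$ vanishes on the fibre tangent directions $H,E_1,E_2$ and $e_7=-\frac{2\Delta}{m}\sigma_1$ there. This is exactly a $\U(2)$-invariant (spherically symmetric) ansatz $\Xi(\tau)\sigma_1T_1+\Psi(\tau)(\sigma_2T_2+\sigma_3T_3)$, with $\Xi$ proportional to $\xi-\frac{\widehat{m}^2}{m}$ after a gauge/normalisation shift. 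The $\Spin(7)$ equations near $Z_0$ then reduce, to leading order, to the system \eqref{eqn:compactifying-subsystem}. After the change of variable $\tau=e^{-\frac{2\Delta}{m}s}\propto t$, I would check directly that this is the ASD system \eqref{eq:U2-ASD}. The terms discarded are $O(A)=O(t^2)$ and $o(1)$ metric corrections.

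The rescaling argument goes as follows. Along $\Gamma(a,\alpha)$ the deviation is $\xi-\frac{\widehat{m}^2}{m}\approx a\cos\alpha\,\tau^2$ and $\psi\approx a\sin\alpha\,\tau^2$. Substituting $\tau=\delta_a\tilde\tau$ with $\delta_a=\sqrt{\lambda/a}$ gives rescaled data $\lambda\cos\alpha\,\tilde\tau^2$ and $\lambda\sin\alpha\,\tilde\tau^2$, independent of $a$. Since $\Xi,\Psi$ are coefficients of the $\sigma_i$, which are scale-invariant one-forms on the cone, the pullback $r_\delta^*$ acts only on the radial variable. I would then write the rescaled ODE for $(\tilde\xi,\tilde\psi)(\tilde\tau)$ on a fixed compact $\tilde\tau$-interval. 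In that ODE the metric quantities $(A,B,C)$ are evaluated at $\delta_a\tilde\tau$, so $A=O(\delta_a^2\tilde\tau^2)\to0$ and $B,C\to-\frac{2\Delta}{m}$ uniformly. The coefficients therefore converge smoothly to those of the autonomous limit system \eqref{eqn:compactifying-subsystem}, that is, the $\beta=\pi/2$ system. Continuous dependence on parameters and initial data, uniform on compact intervals, gives $C^\infty_{\mathrm{loc}}$ convergence to the solution of the limit system with the prescribed $\tilde\tau^2$ asymptotics. The Hartman--Grobman parametrisation of the unstable manifold depends continuously on the vector field, which handles initial data posed at $\tilde\tau=0$. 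That limit solution is a $\U(2)$-invariant ASD connection on $\R^4/\Z_{\abs{m}}$.

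For $\Omega(c)$ I would argue the same way, now around the point $(p_0^m,-3m,2)$. The $w_1,w_2$ contributions scale like $\tau^2$ and are killed by rescaling. The $w_4$ contribution scales like $c\tau^4$, with $\psi-2\approx-cm^2\tau^4$ and $\xi+3m\approx-4cm\Delta\tau^4$. Choosing $\delta_c^4\propto\lambda/(12cm^2)$ normalises this leading term, which matches the $\tau^4/6$, $\tau^4/12$ asymptotics in Theorem~\ref{thm:intro-compactification-ALC}; the limit is again a solution of the flat ASD system.

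The main obstacle is making the limit uniform near $\tilde\tau=0$, where the ODE is singular, together with the identification of the ASD system. For the first point I would pass to the blown-up coordinates used in the local theory, i.e.\ the $s$-variable, where the fixed point is hyperbolic and the vector fields converge in $C^1$. The local stable/unstable manifold theorem then gives convergence of the unstable manifolds, so the rescaled curves converge uniformly on $(-\infty,S]$. For the second point I would verify by a direct curvature computation that for the flat metric $d\tau^2+\tau^2\sum\sigma_i^2$ the ASD equation for this ansatz is exactly the limit subsystem. This is a routine but sign-sensitive check, and the orientation fixed by $\Phi$ must be matched.
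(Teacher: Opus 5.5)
This is essentially the paper's argument. You write the restricted connection as $\widetilde{\xi}\,\sigma_1\otimes T_1+\psi(\sigma_2\otimes T_2+\sigma_3\otimes T_3)$ and rescale by $\delta_a$ (resp.\ $\delta_c$), so that the $\tau^2$ (resp.\ $\tau^4$) coefficients become fixed multiples of $\lambda$. The limit then solves the $\U(2)$-invariant ASD system, which is exactly the $\beta=\frac{\pi}{2}$ subsystem \eqref{eqn:compactifying-subsystem} in the variable $e^{-\frac{2\Delta}{m}s}$. Your unstable-manifold continuity argument only makes the paper's passage to the limit more explicit.

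One correction to your frame computation. On the fibre $e_7=-\frac{m}{2\Delta}\sigma_1$, not $-\frac{2\Delta}{m}\sigma_1$. Also, $e_8$ does not vanish on $H=-\frac{m}{2\Delta}E_7+\frac{\widehat{m}}{2\Delta}E_8$, since $e_8(H)=\frac{\widehat{m}}{2\Delta}$. This term, not a gauge choice, produces $\widetilde{\xi}=\frac{\widehat{m}^2-m\xi}{2\Delta}$. That is the normalisation for which the limit system is exactly \eqref{eq:U2-ASD} (via $\widehat{m}^2+3m^2=4\Delta$) and the connection is smooth at the origin.
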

\begin{proof}
    Take $\{dr, \sigma_i\}$ as a basis of $1$-forms in a neighbourhood of the singular orbit, with $\sigma_i$ from Lemma \ref{lemma:normal_metric}.
    Then the connection one-form becomes 
    $$A_a = (\xi_a e_7 + \widehat{m}e_8)\otimes T_1 + \psi_a(e_1\otimes T_2 + e_2 \otimes T_3) = \widetilde{\xi_a} \sigma_1 \otimes T_1 + \psi_a (\sigma _2\otimes T_2 + \sigma_3 \otimes T_3)\;,$$
    with $\widetilde{\xi_a} = \frac{\widehat{m}^2 - m \xi_a}{2 \Delta}$. In a neighbourhood of the singular orbit, we have the asymptotics 
    $$ \widetilde{\xi_a} = - \frac{m\, a}{2 \Delta} \cos (\alpha) t^2  + o(t^2) \qquad \qquad \psi_a =  a \sin( \alpha) t^2 +o(t^2)\;.$$
    For fixed $\lambda>0$, set $\delta_a= \sqrt{\frac{\lambda}{a}}$ and consider the functions
    $$ \Xi = \lim_{a \to \infty} r^*_{\delta_a} \left( \widetilde{ \xi_a} \right) \qquad   \qquad \Psi = \lim_{a \to \infty} r^*_{\delta_a} \left( \psi_a\right)   $$
    Undoing the change of variables $dt = f ds$, using $f = -\frac{2\Delta}{m} t +o(t)$, the pair $(\Xi, \Psi)$ will satisfy the limiting ODE system.
    \begin{subequations}\label{eq:normal-limit}
    \begin{align}
        \tau \frac{d}{d\tau } \Xi  & = 2 \Xi - \Psi^2\\
        \tau \frac{d}{d\tau } \Psi & = (2- \Xi)\Psi
    \end{align} 
    \end{subequations}
    with asymptotics 
    $$ \Xi =  - \frac{m\lambda \cos(\alpha)}{2\Delta}\tau^2 +o(\tau^2) \qquad \qquad \Psi  = \lambda \sin(\alpha) \tau^2 +o(\tau^2)\;.$$
Geometrically, the system \eqref{eq:normal-limit} is precisely the  $\U(2)$-invariant ASD system \eqref{eq:spherical-Nahm} restricted to our set-up, with 
 $$ X_1  =  \Xi \; T_1 \qquad X_2  = \Psi \, T_2 \qquad  X_3 = \Psi\, T_3 \;. $$
 Similarly, for $\Omega(c)$, we have the asymptotic expansion
 $$ \widetilde{\xi_c} = 2 + 2c\,m^2 \tau^4+o(\tau^4) \qquad  \qquad \psi_c  = 2 -  c\,m^2 \,\tau^4 +o(\tau^4)\;.$$
 Taking $\delta_c=  \left(\frac{\lambda}{12 m^2\, c}\right)^{\frac{1}{4}}$, the rescaled limits will satisfy the $\U(2)$-invariant ASD equations \eqref{eq:normal-limit}, with asymptotics 
\begin{equation}\label{eq:nahm-asymptotics-omega}
\Xi = 2 + \frac{\lambda}{6} \tau^4 +o(\tau^4)\qquad \qquad \Psi = 2 -\frac{\lambda}{12} \tau^4 +o(\tau^4)\;.\qedhere
\end{equation}
\end{proof}
It now suffices to show that
\begin{proposition}
    The limit ASD connection for $\alpha =  \arctan \left( -\frac{m}{2 \Delta}\right)$ is the standard radial instanton of scale $\lambda$:
    $$ A^\lambda  =  \frac{2\lambda \tau^2}{ 1+ \lambda \tau^2} \sum_{i=1}^3 \sigma_i \otimes T_i\;.$$
\end{proposition}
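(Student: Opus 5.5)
The plan is to combine the asymptotics of Proposition~\ref{prop:limmit-ASD} with the fact that the diagonal $\Xi=\Psi$ is invariant for the limiting system \eqref{eq:normal-limit}. On that diagonal the system reduces to a scalar logistic equation, which can be solved explicitly.

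\emph{Step 1: the initial data lie on the diagonal.} For $\alpha=\arctan\left(-\frac{m}{2\Delta}\right)$ we have $\sin\alpha=-\frac{m}{2\Delta}\cos\alpha$. The asymptotics in Proposition~\ref{prop:limmit-ASD} then give
\[
\Xi=-\frac{m\lambda\cos\alpha}{2\Delta}\tau^2+o(\tau^2)=\lambda\sin\alpha\,\tau^2+o(\tau^2),\qquad \Psi=\lambda\sin\alpha\,\tau^2+o(\tau^2).
\]
So $\Xi$ and $\Psi$ have the same leading coefficient. This is the rescaled shadow of the invariant line $\psi+\frac{m}{2\Delta}\xi-\frac{\widehat m^2}{2\Delta}=0$ of Proposition~\ref{prop:compactifying-integral-curve}. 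Indeed, in terms of $\widetilde{\xi}=\frac{\widehat m^2-m\xi}{2\Delta}$ that line reads exactly $\widetilde{\xi}=\psi$. I expect one could also prove $\Xi\equiv\Psi$ directly by passing this invariance to the limit, but the ODE argument below avoids the limiting procedure.

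\emph{Step 2: invariance of the diagonal and uniqueness.} Subtracting the two equations of \eqref{eq:normal-limit} gives
\[
\tau\frac{d}{d\tau}(\Xi-\Psi)=2(\Xi-\Psi)+\Psi(\Xi-\Psi)=(2+\Psi)(\Xi-\Psi),
\]
so $\{\Xi=\Psi\}$ is invariant. In the variable $u=\log\tau$ the point $(0,0)$ is a hyperbolic source with linearisation $2\,\mathrm{Id}$. There are no resonances, since $2\neq 2j$ for $j\ge 2$. Hence a solution emanating from the origin is uniquely determined by the coefficient vector of $\tau^2=e^{2u}$. This is the same Hartman--Grobman argument already used for the local families $\Gamma_\theta^i(a,\alpha)$.

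It follows that the limit is the diagonal solution with leading coefficient $\lambda\sin\alpha$. On the diagonal the system becomes $\tau h'=2h-h^2$ for $h=\Xi=\Psi$. This is the logistic equation in $u$, and its solutions vanishing at $\tau=0$ are
\[
h(\tau)=\frac{2\kappa\tau^2}{1+\kappa\tau^2},\qquad \kappa=\tfrac12\lambda\sin\alpha .
\]

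\emph{Step 3: identification with the standard instanton.} Since $X_1=\Xi T_1$, $X_2=\Psi T_2$ and $X_3=\Psi T_3$, the limit connection is $h(\tau)\sum_i\sigma_i\otimes T_i$. This is the radial-gauge form of the charge-one BPST instanton on $\R^4$, pulled back to the quotient by $\Z_{\abs{m}}$. It is flat at $\tau=0$ and asymptotic to the pure gauge $2\sum_i\sigma_i\otimes T_i$ as $\tau\to\infty$.

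Finally, one absorbs the positive constant $\tfrac12\sin\alpha$ into the scale by replacing $\lambda$ with $\lambda'=\tfrac12\lambda\sin\alpha$. Equivalently, one uses the dilation freedom $\delta_a\mapsto c\,\delta_a$, since $\lambda$ was only defined up to this choice of rescaling. The main obstacle I expect is bookkeeping rather than analysis. One must check that $\sin\alpha>0$ for this value of $\alpha$, which the sign conventions of the curve $\mathbb{B}$ should confirm. One must also verify that the relation $\widetilde{\xi}=\psi$ in the normal frame $\sigma_i$ is compatible with the orientation conventions of Lemma~\ref{lemma:normal_metric}, so that the limit is ASD rather than self-dual.
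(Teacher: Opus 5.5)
Your proposal is correct and follows the paper's own argument. For $\alpha=\arctan(-m/(2\Delta))$ the trigonometric identity makes the leading $\tau^2$-coefficients of $\Xi$ and $\Psi$ coincide, the diagonal $\{\Xi=\Psi\}$ is invariant, and the resulting logistic equation is solved explicitly, with the constant absorbed into $\lambda$. Your Step 2 usefully makes explicit a uniqueness step the paper uses tacitly (matching leading coefficients forces the limit onto the diagonal), and a quicker route is to note that $D=\Xi-\Psi$ solves the linear equation $\tau D'=(2+\Psi)D$, so $D=o(\tau^2)$ forces $D\equiv 0$.
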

\begin{remark}
    The additional factor $2$ appearing is due to our choice of basis $\{T_i\}$ satisfying $[T_i, T_j] = \varepsilon_{ijk} T_k$ rather than the usual choice $[T_i, T_j] = 2\varepsilon_{ijk} T_k$ in the literature.
\end{remark}
\begin{proof}
    Using the classical trigonometric identities 
    $$ \cos( \arctan(x)) =  \frac{1}{\sqrt{x^2+1}} \qquad \qquad \sin( \arctan(x)) =  \frac{x}{\sqrt{x^2+1}}\;,$$ 
    we see that the asymptotics of the pair $(\Xi, \Psi)$ becomes
    $$\Xi =  2\lambda \tau^2 +o(\tau^2) \qquad \quad \Psi = 2\lambda \tau^2 +o(\tau^2)\;,$$
    for a suitable new choice of $\lambda$. 

    Note that the ODE \eqref{eq:normal-limit} preserves the condition $\Xi=\Psi$, so we are reduced to the ODE
    $$ \tau \frac{d}{d\tau}\Xi = (2- \Xi)\Xi\;,$$
    which is the well-known logistic equation, up to the reparametrisation $\widetilde{\tau\,} = \log (\tau)$. It is straightforward to verify that 
    \begin{equation}
        \Xi(\tau)  = \Psi(\tau) =  \frac{2 \lambda \tau^2}{1 +\lambda \tau^2 }
    \end{equation}
    is the sought-after solution.
\end{proof}
So we have established all the expected bubbling properties for the limit $\Gamma(a, \alpha) \to \Omega(c^*)$. 

Let us now focus our attention to the ALC case. Recall that, in this case, we similarly have a function $\alpha: [a^*_\theta, +\infty) \to [0, \pi]$ that parametrises a two-dimensional family of solutions as follows. For $a > a^*_\theta$ and $\beta <\alpha$, the integral curve $\Gamma(a, \beta)$ converges exponentially to $(\xi_\infty,0)$ with $\xi_\infty>-3m$. We also have the boundary curve: For $a\geq  a^*_\theta$, the integral curves $\Gamma(a, \alpha)$ converge polynomially to $(-3m, 0)$. 

Replicating the proof of Proposition \ref{prop:alpha-limit} and Proposition \ref{prop:bubbled-off-connection AC} from the AC case, we have that $\alpha$ converges to $\alpha^* =\arctan\left( -\frac{m}{2\Delta}\right)$ in the limit  $a \to \infty$, and the boundary curve $\Gamma(a\cos(\alpha), a \sin (\alpha))$ converges to $\mathbb{B} \cup \Omega_\theta(c^*)$.

It is then a natural question whether the rest of the $\Omega(c)$ family can be realised as a bubbling limit of suitable $\Gamma(a, \beta)$. We shall show that this is indeed the case. First, we need the following ordering lemma.
\begin{lemma}\label{lemma:ordering}
The partial ordering 
$$(\xi_1,\psi_1)\succeq(\xi_2,\psi_2)
\quad\Longleftrightarrow\quad
\xi_1\geq\xi_2,\quad\psi_1\leq\psi_2$$
is preserved by the $\Spin(7)$-instanton ODE system \eqref{eq:new-generic-SO3-instanton} for $\widehat{m}$. For $\widehat{k}$ and $\widehat{l}$, the inequality for $\psi_i$ must be reversed.
\end{lemma}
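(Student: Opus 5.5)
This is a comparison principle for the planar, non-autonomous system
\[
\xi'=-F\xi+\widehat m\widehat F-BC\psi^2,\qquad \psi'=-(2A+3m+\xi)\psi ,
\]
where $(A,B,C)(s)$ is a fixed $\Spin(7)$-metric curve and the coefficients are evaluated along it. The plan is to show that the system is cooperative (of Kamke type) with respect to the cone $\{\delta\xi\ge0,\ \delta\psi\le0\}$, and then invoke the standard comparison theorem. Concretely, substitute $\eta=-\psi$. The system becomes
\[
\xi'=-F\xi+\widehat m\widehat F-BC\eta^2,\qquad \eta'=-(2A+3m+\xi)\eta .
\]
The ordering $\succeq$ is then the usual componentwise order on $(\xi,\eta)$. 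By Kamke's theorem it suffices to check two monotonicity conditions. The right-hand side of $\xi'$ must be non-decreasing in $\eta$ when $\xi$ is fixed. The right-hand side of $\eta'$ must be non-decreasing in $\xi$ when $\eta$ is fixed. We only need these on the region $\psi\ge0$, i.e.\ $\eta\le0$, where the solutions live; the gauge fixing gives $\psi\ge0$, and $\psi\equiv0$ is invariant.

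The off-diagonal partial derivatives are
\[
\partial_\eta(\xi')=-2BC\eta=2BC\psi,\qquad \partial_\xi(\eta')=-\eta=\psi .
\]
For $i=m$ the curve lies in $\mathcal C_+$, so $B,C\ge0$. With $\psi\ge0$, both derivatives are therefore non-negative, and the cooperativity conditions hold.

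To make this rigorous without quoting Kamke, I would argue by first contact. Take two solutions with $(\xi_1,\psi_1)\succeq(\xi_2,\psi_2)$ at $s_0$, and let $s_1$ be the first time at which the order fails. There are two cases.

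If $\xi_1=\xi_2$ at $s_1$ while $\psi_1\le\psi_2$, then
\[
(\xi_1-\xi_2)'=BC(\psi_2^2-\psi_1^2)\ge0 .
\]
If $\psi_1=\psi_2=\psi$ at $s_1$ while $\xi_1\ge\xi_2$, then
\[
(\psi_1-\psi_2)'=-(\xi_1-\xi_2)\psi\le0 .
\]
These inequalities are only weak, so they do not by themselves prevent a crossing. To upgrade to strict preservation, I would either perturb the initial data into the strict order and pass to the limit using continuous dependence, or apply Gronwall to the positive and negative parts of $\xi_2-\xi_1$ and $\psi_1-\psi_2$. The difference system is linear in the differences, with off-diagonal coefficients of the correct sign, so either route should work. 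The main care point is precisely this degenerate-contact issue, together with the fact that $\psi=0$ is invariant, so that solutions cannot change the sign of $\psi$.

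For $\widehat k$ and $\widehat l$, the curves lie in $\mathcal C_-$, and two things change. First, the relevant product in the $\xi'$ equation becomes $AB$ for $\widehat k$ (and $AC$ for $\widehat l$), and this product is still non-negative. Second, the flow is read in reversed time, equivalently with $f<0$. I would recompute the off-diagonal terms directly. For $\widehat k$ the system is
\[
\xi'=\dots-AB\psi^2,\qquad \psi'=-(2C+3k+\xi)\psi .
\]
Its off-diagonal derivatives, $-2AB\psi$ and $-\psi$, have the same signs as before. Because the relevant direction of time is backwards, the preserved order under the reversed flow is the cone $\{\xi_1\ge\xi_2,\ \psi_1\ge\psi_2\}$. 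This matches the statement that the inequality for $\psi$ is reversed. I would check this by substituting $s\mapsto -s$ and rerunning the same first-contact argument.
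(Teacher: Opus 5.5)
Your proposal is correct and follows essentially the same route as the paper, which argues by barriers using exactly your two boundary computations $(\xi_1-\xi_2)'=-BC(\psi_1^2-\psi_2^2)\geq 0$ on $\{\xi_1=\xi_2\}$ and $(\psi_1-\psi_2)'=-(\xi_1-\xi_2)\psi_1\leq 0$ on $\{\psi_1=\psi_2\}$. Your extra care goes beyond what the paper writes out: you handle the fact that these contact inequalities are only weak (via strict perturbation or Gronwall), and you explain the reversed $\psi$-inequality for $\widehat{k},\widehat{l}$ by reading the flow backwards in $s$ (the geometric direction, since $dt=f\,ds$ with $f<0$), which the paper leaves implicit.
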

\begin{proof}
We give the proof for $i=m$, the other cases being analogous. Once more, we argue by barriers. Along the boundary $\xi_1= \xi_2$, we have 
\[(\xi_1 - \xi_2)'  = -  BC( \psi_1^2 - \psi^2_2) \geq 0\;.\]
Similarly, along the boundary $\psi_1= \psi_2$, we have
\[ (\psi_1 - \psi_2)' =  - \left(\xi_1 -\xi_2\right) \psi_1 \leq 0\;. \qquad \qedhere\]
\end{proof}
In particular, we get the following useful characterisation. 
\begin{corollary}$~$
For $\beta_1 \leq \beta_2 \leq \frac{\pi}{2}$, we have $\Gamma^m_\theta\left( a, \beta_1 \right)(s) \succeq \Gamma^m_\theta\left( a, \beta_2 \right)(s)$ for all $s \in \R$. For $d\geq c \geq 0$, we have $\Omega^m_\theta(d)(s) \succeq \Omega^m_\theta(c)(s)$.
\end{corollary}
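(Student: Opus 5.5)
The plan is to deduce both orderings directly from Lemma \ref{lemma:ordering}. The preserved relation $\succeq$ is closed, so it suffices to verify that it holds for $s$ sufficiently negative, i.e. in the regime where the local expansions \eqref{eq:local-Gamma a,b} and \eqref{eq:local-Omega c} control the curves. Since the metric component $\gamma^m_\theta$ is common to all curves being compared, Lemma \ref{lemma:ordering} applies with the same coefficients $A,B,C$. It then propagates the ordering to all $s$ on the common interval of existence, and hence to all $s\in\R$ for the forward-complete solutions under consideration.

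For the $\Gamma$ family I will compare the leading terms at the abelian fixed point. We have
$$\xi_{a,\beta}=\tfrac{\widehat m^2}{m}+a\cos\beta\, e^{-\frac{4\Delta}{m}s}+o\big(e^{-\frac{4\Delta}{m}s}\big),\qquad \psi_{a,\beta}=a\sin\beta\, e^{-\frac{4\Delta}{m}s}+o\big(e^{-\frac{4\Delta}{m}s}\big).$$
For $\beta_1<\beta_2\le \pi/2$ we have $\cos\beta_1>\cos\beta_2$ and $\sin\beta_1<\sin\beta_2$. The coefficient gaps are strict, so for $s$ very negative they dominate the $o(\cdot)$ errors. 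This gives strict inequalities $\xi_1>\xi_2$ and $\psi_1<\psi_2$ near $s=-\infty$, and the lemma then carries them forward. The case $\beta_1=\beta_2$ is trivial, since the curves coincide.

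For the $\Omega$ family the two curves share the same first-order term $\sin\theta$ at rate $e^{-\frac{4\Delta}{m}s}$. They are distinguished by the $w_4$ component at rate $e^{-\frac{8\Delta}{m}s}$:
$$\xi_c=-3m+6m\Delta\sin\theta\, e^{-\frac{4\Delta}{m}s}-4cm\Delta\, e^{-\frac{8\Delta}{m}s}+\dots,\qquad \psi_c=2-3m^2\sin\theta\, e^{-\frac{4\Delta}{m}s}-cm^2e^{-\frac{8\Delta}{m}s}+\dots$$
Since $m<0$ and $d>c$, we get $\xi_d-\xi_c\sim -4(d-c)m\Delta e^{-\frac{8\Delta}{m}s}>0$ and $\psi_d-\psi_c\sim-(d-c)m^2e^{-\frac{8\Delta}{m}s}<0$, which is the required ordering.

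The main obstacle is justifying that the remainder in the difference $\Omega(d)-\Omega(c)$ is genuinely $o(e^{-\frac{8\Delta}{m}s})$, so that the second-order gap is not swamped by the nonlinear remainder of the first mode. Note that the $o(e^{-\frac{8\Delta}{m}s})$ in the expansion is uniform but not differential in $c$. I would handle this with the stable/unstable manifold theorem, which in backward time here gives an unstable manifold at $(p_0^m,-3m,2)$. This manifold depends smoothly on parameters, and for fixed $\theta$ the curves $\Omega(c)$ are orbits within it. Hence $\partial_c\Omega(c)$ solves the linearised equation along $\Omega(c)$ and is asymptotic to $w_4e^{-\frac{8\Delta}{m}s}$; the cross terms with the $e^{-\frac{4\Delta}{m}s}$ mode only produce corrections of order $e^{-\frac{12\Delta}{m}s}$. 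Integrating $\partial_c\Omega$ from $c$ to $d$ then gives the difference with the correct sign near $s=-\infty$, and Lemma \ref{lemma:ordering} concludes.
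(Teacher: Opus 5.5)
Your proposal is correct and is the argument the paper leaves implicit: it states the corollary as an immediate consequence of Lemma~\ref{lemma:ordering}. You read off strict ordering for $s\ll 0$ from the expansions \eqref{eq:local-Gamma a,b} and \eqref{eq:local-Omega c}, using the strict coefficient gaps $a(\cos\beta_1-\cos\beta_2)>0$, $a(\sin\beta_2-\sin\beta_1)>0$, and $-4(d-c)m\Delta>0$, $(d-c)m^2>0$, and then propagate forward with the lemma along the common metric $\gamma^m_\theta$. Your extra linearisation argument for the $e^{-\frac{8\Delta}{m}s}$ gap is not needed if you take the stated coordinate asymptotics at face value, since you only compare two fixed curves, so the difference of their remainders is already $o\bigl(e^{-\frac{8\Delta}{m}s}\bigr)$; it does, however, have the merit of being robust to a possible common resonant term, because the unstable eigenvalues of the instanton system at $(p_0^m,-3m,2)$ are $\lambda,\lambda,2\lambda$ with $\lambda=-\frac{4\Delta}{m}$.
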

\begin{proposition}
For every $c>c_*$, there exists $a_0 \in \R^+$ large enough, and a curve $\beta^c: [a_0, +\infty] \to [0, \alpha_*]$ such that 
$$ \Gamma (a, \beta^c(a)) \xrightarrow{a \to \infty }\Omega(c)\;,$$
where the convergence is along compact sets away from the singular orbit. 
\end{proposition}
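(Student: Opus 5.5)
The plan is to realise $\mathbb{B}\cup\Omega(c)$ as a broken trajectory by a shooting argument in $\beta$ for each fixed large $a$. The argument combines the ordering of Lemma \ref{lemma:ordering} with the local structure of the flow near the hyperbolic point $q=(p_0^m,-3m,2)$ and the breaking mechanism of Lemma \ref{lem: bubble existence}.

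\textbf{Step 1: a section near $q$.} Fix $c>c_*$. By the corollary to Lemma \ref{lemma:ordering}, the curves $\Omega(c)$ are monotone in $c$ for $\succeq$. Fix $\varepsilon>0$ small, and let $s_c$ be the time at which $\Omega(c)$ crosses the sphere $\partial B_\varepsilon(q)$; write $x_c=\Omega(c)(s_c)$. For fixed metric parameter $\theta$, the local unstable manifold of $q$ inside the instanton hypersurface is the one-parameter family $\{\Omega(c')\}_{c'}$ of \eqref{eq:local-Omega c}. Its trace on $\partial B_\varepsilon(q)$ is therefore a curve $c'\mapsto x_{c'}$, which is strictly monotone for $\succeq$. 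Pick $c_-<c<c_+$ with $c_*<c_-$. Then $x_{c_\pm}$ lie strictly on either side of $x_c$ in the order.

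\textbf{Step 2: shooting in $\beta$.} For $a$ large, consider the compact arc $\beta\in[0,\alpha(a)]$. Here $\Gamma(a,0)$ is abelian, and $\Gamma(a,\alpha(a))$ is the boundary curve, which by the ALC analogue of Proposition \ref{prop:bubbled-off-connection AC} converges to $\mathbb{B}\cup\Omega(c_*)$. By monotonicity in $\beta$ from the same corollary, $\Gamma(a,\beta)\succeq\Gamma(a,\alpha(a))$ for all $\beta$ in the arc. Since $\alpha(a)\to\alpha^*$ and $\mathbb{B}$ is the $\beta=\pi/2$ limit, the inclination ($\lambda$-)lemma at $q$ applies. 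It shows that for $\beta$ in a neighbourhood $[\beta_-(a),\alpha(a)]$, the curves $\Gamma(a,\beta)$ enter $B_\varepsilon(q)$ close to the stable manifold. They then exit $C^1$-close to the unstable family, so the exit point $y(a,\beta)\in\partial B_\varepsilon(q)$ lies within $o(1)$ of the curve $\{x_{c'}\}$ as $a\to\infty$. At $\beta=\alpha(a)$ the exit point is near $x_{c_*}\preceq x_{c_-}$. The step I expect to be the main obstacle is showing that at $\beta=\beta_-(a)$, chosen on the $\mathcal{A}$-side, the exit point is $\succeq x_{c_+}$. To prove this I would push $\beta_-(a)$ towards $\alpha(a)$ while keeping a fixed order gap, using the abelian ordering $\Gamma(a,0)\succeq$ and continuity. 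Once both endpoints are controlled, the intermediate value theorem, applied to the ordered coordinate along $\{x_{c'}\}$, gives $\beta^c(a)$ with $y(a,\beta^c(a))$ $\succeq$-equivalent to $x_c$ up to $o(1)$. Taking the supremum of such $\beta$ makes $\beta^c$ well defined, with values in $[0,\alpha_*]$ after restricting to $a\geq a_0$.

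\textbf{Step 3: identifying the limit.} Apply Lemma \ref{lem: bubble existence} to $\Gamma(a,\beta^c(a))$ with $p=(p_0^m,\widehat m^2/m,0)$ and $r$ an abelian ALC limit. This requires first checking that these curves converge to $\mathbb{B}$ near $p$, which I expect to follow from $\beta^c(a)\to\alpha^*$ by the argument of Proposition \ref{prop:alpha-limit}. The lemma produces a limiting curve emanating from $q$, which must be some $\Omega(c')$. The exit points converge to $x_c$, so $c'=c$, and the ordering pins this down uniquely. Uniform convergence on compact time intervals, together with the divergence of the transit time near $p$ (as in the proof of Proposition \ref{prop:bubbled-off-connection AC}), gives convergence of the connections on compact sets away from the singular orbit.
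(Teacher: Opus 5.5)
Your proposal uses a section at $q$ and the inclination lemma, whereas the paper shoots at a fixed regular time. As written, your Step 2 has a genuine gap at exactly the point you flag. Your intermediate value argument needs some $\beta$ whose exit point $y(a,\beta)$ lies beyond $x_c$ along the unstable family. The exit map is only defined, and only close to $\{x_{c'}\}$, on the window of $\beta$ near $\alpha^*$ where $\Gamma(a,\beta)$ passes within $o(\varepsilon)$ of $q$. The one endpoint you control, $\beta=\alpha(a)$, lands near $x_{c_*}$, which is on the near side of $x_c$. Nothing produces a $\beta$ landing beyond it.

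The abelian end cannot supply one. $\Gamma(a,0)$ has $\psi\equiv 0$, so for $\varepsilon<2$ it never enters $B_\varepsilon(q)$. Hence ``pushing $\beta_-(a)$ towards $\alpha(a)$ using $\Gamma(a,0)\succeq$ and continuity'' says nothing about where exit points land. There is also a problem with Step 1: Lemma~\ref{lemma:ordering} compares two solutions at the same time $s$. Different curves reach $\partial B_\varepsilon(q)$ at different times, with different metric coordinates $\gamma_\theta(s)$, so $\succeq$ does not order points on the sphere.

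Closing Step 2 along your lines would need a quantitative passage estimate at $q$. At entry the metric deviation from $p_0^m$ is of order $1/a$, so landing near a prescribed $x_c$ means controlling the $w_4$-component at roughly scale $a^{-2}$. You would then have to show that $\beta\mapsto\Gamma(a,\beta)$ crosses the stable manifold of $q$ with speed bounded below in that direction, uniformly in $a$. Nothing in the proposal provides this.

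The paper avoids the section entirely. Fix $s_0$ and set $L=\xi_{\Omega(c)}(s_0)$. The map $\beta\mapsto\xi_{a,\beta}(s_0)$ is continuous on all of $[0,\alpha(a)]$. At $\beta=\alpha(a)$ it tends to $\xi_{\Omega(c_*)}(s_0)<L$, by bubbling of the boundary curve and monotonicity of $\Omega(\cdot)$. At $\beta=0$ it tends to $+\infty$, since by \eqref{eq:integral_xi} the abelian value $\xi_{a,0}(s_0)$ is affine in $a$ with positive slope. So for $a$ large there is $\beta^c(a)$ with $\xi_{a,\beta^c(a)}(s_0)=L$.

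The limit is then identified by compactness rather than the inclination lemma:
\begin{itemize}
\item The equal-time ordering against $\Gamma(a,\alpha(a))$ gives $\psi_{a,\beta^c(a)}\le\psi_{a,\alpha(a)}\le 2$.
\item Integrating the $\xi$-equation backwards from $\xi(s_0)=L$ bounds $\xi$ on compact sets.
\item A subsequence therefore converges to a solution $(\widehat{\xi},\widehat{\psi})$ with $\widehat{\xi}\ge\xi_{\Omega(c_*)}$ and $\widehat{\psi}\le\psi_{\Omega(c_*)}$.
\item A backward barrier $\{\xi=H\}$ gives $\limsup_{s\to-\infty}\widehat{\xi}<\infty$, so the limit must emanate from $(-3m,2)$ and is some $\Omega(\widehat{c})$.
\item Finally $\widehat{\xi}(s_0)=L$, together with uniqueness and monotonicity in $c$, forces $\widehat{c}=c$.
\end{itemize}
On this route your Step 3 is unnecessary, and $\beta^c(a)\to\alpha^*$ follows afterwards rather than being needed as an input.
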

\begin{proof}
Fix $c>c_*$ and a finite time $s_0$, and set $L = \xi_{\Omega(c)}(s_0)$. The key insight is that, for $a$ large enough, there exists a curve $\beta^c(a)$ such that the integral curve $\Gamma(a, \beta^c)$ satisfies $\xi_{\Gamma_{a,\beta}} (s_0) = L$.

Indeed, on one side, we have
$$ L > \xi_{\Omega(c^*)}(s_0) = \lim_{a\to \infty} \xi_{\Gamma_{a,\alpha}} (s_0)\;,$$
while on the other, we have  
$$  \xi_{\Gamma_{a,0 }} (s_0)  \xrightarrow{\, a \; \to \, \infty \, } + \infty \;.$$
Continuity (and monotonicity) guarantee the existence (and uniqueness) of $\Gamma(a, \beta^c)$ for a given $s_0$, $L$ and $a$. It remains to prove that our candidate family converges to $\Omega(c)$.

 By construction, we have $(\xi_{{a,\beta^c}}, \psi_{{a,\beta^c}})\succeq (\xi_{{a,\alpha(a)}}, \psi_{{a,\alpha(a)}})$, so Lemma \ref{lemma:ordering} bounds $\psi_{{a,\beta^c}}$ on compact subsets of \((-\infty,s_0]\). The ODE for $\xi$ with the starting condition $\xi(s_0)= L$ bounds $\xi$. 
 
 Hence, the family $(\xi_{{a,\beta^c}}, \psi_{{a,\beta^c}})$ converges smoothly on compact subsets to a solution $(\widehat{\xi\,}, \widehat{\psi})$, which satisfies $\widehat{\xi\,}(s_0) = L$, and 
 \begin{equation}
     \widehat{\xi\,}(s) \geq \lim_{a \to \infty} \xi_{a,\alpha(a)}(s) = \xi_{c^*}(s) \qquad\qquad \widehat{\psi\,}(s) \leq \lim_{a \to \infty} \psi_{a,\alpha(a)}(s) = \psi_{c^*}(s)\;.
 \end{equation} 
In particular, we have 
$$ \liminf_{s \to -\infty} \widehat{\xi\,} \geq  -3m \qquad \qquad \limsup_{s \to -\infty} \widehat{\psi} \leq 2\;.$$
The proof is essentially complete, provided we show that $\limsup_{s \to -\infty} \widehat{\xi\,} < \infty$. In that case, the solution $( \widehat{\xi\,}, \widehat{\psi}\,)$ must emanate from $(-3m ,2)$, since it is the only limiting point of the ODE over $p_0^m$ with $\xi \geq -3m$. Therefore the solution $(\widehat{\xi\,}, \widehat{\psi})$ must be $\Omega(\widehat{c})$ for some $\widehat{c}$. Uniqueness of solutions for the extended system $(A,B,C, \xi, \psi)$ and monotonoicity forces $\widehat{c}=c$.

In order to see that the limit superior of $\widehat{\xi\,}$ is bounded, note it satisfies the ODE with 
$$ \xi' =  -F(s) \xi(s) +g(s)\;,$$
where $F(s) \to \frac{4\Delta}{m}$ and $\abs{g(s)} \to C < +\infty$ as $s \to -\infty$, since $\limsup_{s \to -\infty}\widehat{\psi}\leq 2 <\infty$. In particular, taking $H \in \R$ large enough yields $\xi'|_{\xi=H} >0$ for $s$ sufficiently negative, so $H$ acts as a backwards barrier, as needed.
\end{proof}
We have established that every member of the one-parameter family $\Omega(c)$ can be realised as a suitable limit $\Gamma\left(a, \beta^c(a)\right)$. By comparing with the limiting system \eqref{eqn:compactifying-subsystem}, it follows that necessarily $\beta^c(a) \xrightarrow{a \to \infty} \alpha ^*$, and so we have 
\begin{proposition}\label{prop:bubbling-ALC}
    For every $c\geq c^*$, there exists a curve $\beta^c:[a^*, \infty)\to [0, \pi]$ such that 
    $$ \lim_{a \to \infty} d_H \big(\mathbb{B}\cup \Omega^m(c), \Gamma(a, \beta^c (a))\big) =0 \;.$$
    In particular, the family of connections $A_{a, \beta^c}$, associated to $\Gamma^m(a, \beta^c(a))$, converges pointwise to the connection $\widetilde{A}_{c}$, associated to the integral curve $\Omega^m(c)$ away from $Z_0$. Moreover, the bubbling properties of Propositions \ref{prop:energy-conservation} and \ref{prop:limmit-ASD} hold. 
\end{proposition}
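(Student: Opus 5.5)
The plan is to assemble this from the preceding proposition (existence of $\beta^c$ for $c>c^*$), the boundary-curve result for $c=c^*$ (the ALC replication of Propositions \ref{prop:alpha-limit} and \ref{prop:bubbled-off-connection AC}), and the bubbling lemmas. For $c=c^*$ I set $\beta^{c^*}(a)=\alpha(a)$. For $c>c^*$ I take the curve $\beta^c$ from the previous proposition on $[a_0,\infty)$ and extend it continuously to $[a^*,a_0]$ in any way; only the limit $a\to\infty$ matters.

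The first step is to show $\beta^c(a)\to\alpha^*=\arctan\left(-\frac{m}{2\Delta}\right)$. I would pass to the generalised initial conditions \eqref{eq:generalised-ic-gamma}: $a\to\infty$ corresponds to $\beta\to\frac{\pi}{2}$ there, and the limiting dynamics is the subsystem \eqref{eqn:compactifying-subsystem}. If a subsequence had $\beta^c(a_n)\to\beta_\infty\neq\alpha^*$, continuity in initial data applied to the limiting system would put the curves eventually either in $\mathcal{A}$ or in $\mathcal{B}$ near $p_0^m$, as in the proof of Proposition \ref{prop:alpha-limit}. In the first case, at the fixed time $s_0$, $\xi$ would escape to $+\infty$, like $\Gamma(a,0)$. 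In the second, the curve would blow up or sit below $\Omega(c^*)$. Either contradicts $\xi_{\Gamma(a,\beta^c)}(s_0)=L\in(\xi_{\Omega(c^*)}(s_0),\infty)$ together with the ordering Lemma \ref{lemma:ordering}.

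Next I establish Hausdorff convergence. Given $\beta^c(a)\to\alpha^*$, the curves $\Gamma(a,\beta^c(a))$ converge near $(p_0^m,\frac{\widehat m^2}{m},0)$ to $\mathbb{B}$, the heteroclinic from Proposition \ref{prop:compactifying-integral-curve}. Lemma \ref{lem: bubble existence} then gives a subsequence and shifts $s_n\to\infty$ along which the curves track an integral curve emanating from the hyperbolic point $(p_0^m,-3m,2)$. By the local theory, that curve is some $\Omega(\widehat c)$. The convergence already proved on compact sets away from $Z_0$ forces $\widehat c=c$, and since the limit is unique, the whole family converges. Near $(p_0^m,-3m,2)$ I would use Lemma \ref{lemma:curve-length} and the inclination lemma to control the pieces of the curves inside small balls. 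For the tail I would use that both $\Gamma(a,\beta^c(a))$ and $\Omega(c)$ lie in $\mathcal{A}$ and converge to the same limit line. Together these upgrade compact-interval convergence to $d_H\to0$. Pointwise convergence of connections away from $Z_0$ follows as in Proposition \ref{prop:bubbled-off-connection AC}, because the transit time to the neighbourhood of $(p_0^m,-3m,2)$ diverges.

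For the bubbling statements, the energy identity follows from the proof of Proposition \ref{prop:energy-conservation} essentially verbatim, since it only used convergence to $\mathbb{B}\cup\Omega$ and the density computation along $\mathbb{B}$. Proposition \ref{prop:limmit-ASD} applies with $\alpha$ replaced by $\beta^c(a)\to\alpha^*$, which yields the standard radial instanton. I expect the \textbf{main obstacle} to be the Hausdorff control at the non-compact end in the ALC regime. There the limit is a non-hyperbolic line of fixed points, so compact-time convergence does not automatically give uniform closeness. I would first try to handle it with a forward-invariant subset $\mathcal{A}_\varepsilon$ and the integral representation \eqref{eq:integral_xi} for the abelian limit, which gives continuity of $\xi_\infty$ in the data.
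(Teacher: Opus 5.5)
Your proposal is correct and follows essentially the paper's route. The paper gives no separate proof: it obtains the proposition from the preceding construction of $\beta^c$, a comparison with the limiting system \eqref{eqn:compactifying-subsystem} that forces $\beta^c(a)\to\alpha^*$, the boundary curve $\alpha(a)$ for $c=c^*$, and a replication of Propositions \ref{prop:bubbled-off-connection AC}, \ref{prop:energy-conservation} and \ref{prop:limmit-ASD}; your handling of the non-hyperbolic ALC end via the forward-invariant set $\mathcal{A}_\varepsilon$ fills in a step the paper leaves implicit.
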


\section{The moduli spaces of \texorpdfstring{$\Spin(7)$}{Spin(7)}-instantons}
We wrap up our discussion by considering the different moduli spaces of cohomogeneity one $\Spin(7)$-instantons for a given principal $\SO(3)$-bundle $P \rightarrow M^i_{k,l}$. From Proposition \ref{prop:admissible-bundles-CP2} and Proposition \ref{prop:homogeneous connections SO3-kl}, we are interested in the irreducible $\SO(3)$-bundles $Q_{\widehat{i}}$ and the reducible bundles $P_{\widehat{i}}$ in the exceptional cases when $\widehat{i} = 0 \mod i$. We denote the moduli spaces of instantons by $\mathcal{M}(Q_{\widehat{i}})$ and $\mathcal{M}(P_{\widehat{i}})$, respectively.

From a dynamical systems point of view, these correspond to the collection ofbounded integral curves $(A,B,C, \xi, \psi)$ solving the extended ODE system \eqref{eq:new-evo-spin7} and \eqref{eq:new-generic-SO3-instanton} emanating from $(-3i, 2)$ in the case of $Q_{\widehat{i}
}$  and from $ \left(\frac{\widehat{i}^2}{i}, 0  \right)$ for the exceptional case $P_{\widehat{i}}$. 

In fact, from a purely dynamical point of view, we can consider the \textit{extended} moduli space:
$$\mathcal{M}(P) \coloneqq \bigcup_{k,l \in \R} \mathcal{M} (P)_{k,l}$$
for a given principal $\SO(3)$-bundle $P$, since there is no dynamical reason why $k,l$ must be fixed, or more generally, must be integers. Let us consider the case where $k$ and $l$ vary over the reals. Taking the radial-like parametrisation $(k, l)= (\mu, \mu \,r)$ with $\mu \in \R_+ $ and $r\in [0,1]$, it is easy to verify that the moduli space $\mathcal{M}$ carries a natural $\R_+$-action given by 
$$  \mu \circ :  (s\,; A, B, C, \xi, \psi) \mapsto (\mu s \,; \mu A,\mu B, \mu C,\mu \xi , \psi)\;.$$
The quotient of the moduli space by this action is isomorphic to $ \widetilde{ \mathcal{M}} (P) \coloneqq \bigcup_{r \in [0,1]} \widetilde{\mathcal{M}}_r(P)$, where $\widetilde{\mathcal{M}}_r(P)$ is a choice of representative of the $\R^+$. Of course, there is a canonical choice for $\widetilde{\mathcal{M}}_r(P)$ when $r \in [0,1] \cap \mathbb{Q}$; the original moduli space $\mathcal{M}(P)$.

Under this construction, we have a straightforward
dynamical interpretation of the critical parameter $r^*$:
\begin{proposition}
    The critical parameter $r^*$ identified in Proposition \ref{prop: small l/k for positive D^-_k} corresponds to a bifurcation of the fixed point $(\widehat{k}\Lambda^-,0)$ into the pair of critical points $(\widehat{k}\Lambda^-,0)$ and $(-3k-2C^-, \sqrt{D_{\widehat{k}}})$.
\end{proposition}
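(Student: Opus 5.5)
Regard $r=l/k$ as a continuous parameter, fixing the scale $k=1$ via the $\R_+$-action, so that the nearly parallel fixed point $p^-_{1,r}=(A^-,B^-,C^-)$ depends continuously on $r$. We then restrict the instanton system over $Q_{\widehat{k}}$ to the invariant fibre over $p^-_{1,r}$ in the $(\xi,\psi)$-plane. The claim is a statement about this planar system. For $r>r^*$ it has the single fixed point $(\widehat{k}\Lambda^-,0)$. At $r=r^*$ the non-abelian branch $(-3k-2C^-,\sqrt{D_{\widehat{k}}})$ collides with it. For $r<r^*$ the two points separate. The plan has three steps: locate the fixed points as functions of $r$, show they coincide exactly when $D_{\widehat{k}}=0$, and check the change of stability.

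First, by the fixed-point lemma in the AC subsection, a non-abelian fixed point must have $\xi_\infty=-3k-2C^-$ and $\psi_\infty^2=D_{\widehat{k}}$. The abelian fixed point is $\xi=\widehat{k}\Lambda^-$. We verify the identity
\[
(2C^-+3k)F^-+\widehat{k}\widehat{F}^-=A^-B^-D_{\widehat{k}},
\]
which is the computation used in Lemma \ref{lem:Ab AC end}. From it,
\[
-3k-2C^- - \widehat{k}\Lambda^- = \frac{A^-B^-D_{\widehat{k}}}{F^-}.
\]
So the $\xi$-coordinates of the two points differ by a nonzero multiple of $D_{\widehat{k}}$, using $F^->0$ and $A^-B^-\neq0$. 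Their $\psi$-coordinates differ by $\sqrt{D_{\widehat{k}}}$. Hence the two points coincide exactly when $D_{\widehat{k}}(r)=0$. By Proposition \ref{prop: small l/k for positive D^-_k}, $r^*$ is the unique zero of $D_{\widehat{k}}$ on $[0,1]$, and the non-abelian branch is real precisely for $r<r^*$.

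Second, we check that this is a genuine bifurcation rather than a tangential touching. The linearisation at the abelian point, restricted to $(\xi,\psi)$, is diagonal with entries $-F^-$ and $-A^-B^-D_{\widehat{k}}/F^-$. Its $\psi$-eigenvalue therefore passes through zero exactly at $r^*$. It changes sign, negative for $r<r^*$ and positive for $r>r^*$, which is the content of Lemma \ref{lem:Ab AC end}. For $r<r^*$, Lemma \ref{lem:nonAb AC end} shows the emerging point is a saddle. The picture is thus a pitchfork in $\psi$: under the $\psi\mapsto-\psi$ symmetry (gauge by $S$), the branches $\pm\sqrt{D_{\widehat{k}}}$ merge into the $\psi=0$ point. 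The stability of the abelian point changes across $r^*$ in agreement with the usual exchange.

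The main obstacle is showing that $D_{\widehat{k}}$ changes sign transversally at $r^*$, that is, $\frac{d}{dr}D_{\widehat{k}}(r^*)\neq0$, and that the fixed point $p^-_{1,r}$ depends smoothly on $r$. For smooth dependence, apply the implicit function theorem to $\mathsf{L}_1=\mathsf{L}_2=0$, using that the ellipse and hyperbola meet transversally. For transversality of the sign change, the first attempt is to observe that $r^*$ is a simple root of the resultant polynomial in $(k,l)$ computed in Proposition \ref{prop: small l/k for positive D^-_k}. Failing that, it already suffices that $D_{\widehat{k}}$ is positive on $[0,r^*)$ and negative on $(r^*,1]$, as that proposition states.
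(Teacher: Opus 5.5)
Your proposal is correct and follows the argument the paper has in mind. The paper states this proposition without a separate proof, as a direct reading of the AC fixed-point lemma together with Proposition~\ref{prop: small l/k for positive D^-_k} and Lemmas~\ref{lem:Ab AC end} and~\ref{lem:nonAb AC end}, and, as you note, the sign change of $D_{\widehat{k}}$ across $r^*$ already suffices without transversality. One harmless slip: since $F^{-}=-\tfrac{1}{2}(A^{-}+B^{-}+C^{-})$ at the fixed point, you get $-3k-2C^{-}-\widehat{k}\Lambda^{-}=-A^{-}B^{-}D_{\widehat{k}}/F^{-}$ rather than $+A^{-}B^{-}D_{\widehat{k}}/F^{-}$; this is exactly the $\psi$-eigenvalue you later write correctly, and it does not affect your conclusion.
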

Next, let us investigate what happens to the $\Spin(7)$-instantons during the ALC $\to $ AC transition,  We have 
\begin{lemma}\label{lemma:unique_curve}
Fix the AC $\Spin(7)$-metric $\gamma^i_\theta$. The curve $\alpha:[a^*, \infty) \to [0, \pi]$ parameterising bounded $\Spin(7)$-instantons on $P_{\widehat{i}}$ is unique.

Similarly, $\Spin(7)$-instanon $\Omega^i(c^*)$ on $Q_{\widehat{i}}$ is unique.
\end{lemma}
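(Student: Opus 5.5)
The plan is to prove both statements with the ordering of Lemma~\ref{lemma:ordering}, combined with the local hyperbolic structure at the AC limit points described in Lemmas~\ref{lem:Ab AC end} and~\ref{lem:nonAb AC end}. We work with $i=m$ and the AC metric $\gamma^m_{\theta^*}$. The other cases change only by time reversal and by reversing the $\psi$-inequality.

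First, fix $a>a^*$ and consider the one-parameter family $\alpha\mapsto\Gamma^m(a,\alpha)$. We will show that at most one $\alpha$ yields a bounded, forward complete solution.

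By the Corollary to Lemma~\ref{lemma:ordering}, for $\alpha_1<\alpha_2$ (in the range where the ordering applies) we have $\Gamma(a,\alpha_1)\succeq\Gamma(a,\alpha_2)$ for all $s$. Suppose both are bounded. Each then stays in $\mathcal{C}$, since entering $\mathcal{A}$ or $\mathcal{B}$ forces convergence to a different abelian limit or blow-up (Proposition~\ref{prop:finite-time-blowup}). By Proposition~\ref{prop:connections-AC}, both converge to the same limit point.

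\begin{itemize}
\item If that limit is the abelian point, its stable manifold restricted to the $(\xi,\psi)$-plane is trivial in the relevant directions. Both curves would then lie on the same one-dimensional unstable/stable set, which is impossible since they are distinct solutions of the extended system over the same metric curve.
\item If the limit is the irreducible point, the linearisation $L^{nAb}_m$ has exactly one stable direction in the $(\xi,\psi)$-plane. The converging solutions therefore form a one-dimensional stable manifold, on which the $\succeq$-order is compatible with the splitting.
\end{itemize}

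To get a contradiction in the second case, we form the differences $X=\xi_1-\xi_2\ge0$ and $Y=\psi_2-\psi_1\ge0$. They satisfy the linear cooperative system
\[ X'=-FX+BC(\psi_1+\psi_2)Y,\qquad Y'=-(2A+3m+\xi_2)Y+\psi_1X. \]
Near the end, this is asymptotic to $L^{nAb}_m$ conjugated so that it becomes a Metzler matrix. Its Perron--Frobenius eigenvector lies in the open positive quadrant and corresponds to the positive eigenvalue $\lambda_+$. A nonzero non-negative solution of a cooperative system cannot decay along the stable eigenvector, because that eigenvector has mixed signs. So $(X,Y)$ grows, contradicting convergence of both curves to the same point. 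Hence $X=Y=0$, and by uniqueness of solutions $\alpha_1=\alpha_2$. The abelian case can be handled by the same cooperative-difference argument, using that the relevant eigenvalue is positive.

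Applying the identical argument to the family $\Omega^m(c)$ gives uniqueness of $c^*$. The Corollary orders $\Omega(d)\succeq\Omega(c)$ for $d\ge c$, and the bounded solutions again converge to a common hyperbolic point with the same sign structure.

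The main obstacle is the cooperative/Perron--Frobenius step. We need $BC(\psi_1+\psi_2)$ and $\psi_1$ to be non-negative along the tail, which holds in $\mathcal{C}_+$ since $B,C\ge0$ and $\psi\ge0$. We also need to check that the off-diagonal signs of $L^{nAb}_m$ are negative, so that under $Y\mapsto-Y$ the matrix becomes Metzler with its positive eigenvector in the open quadrant. Finally, we must control the non-autonomous error as $s\to\infty$, via exponential convergence of $(A,B,C)$ to $p^+_{k,l}$ and a Levinson-type asymptotic argument. We would first verify the sign of $\lambda_+$'s eigenvector explicitly from the formula in Lemma~\ref{lem:nonAb AC end}.
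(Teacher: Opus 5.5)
Your argument is sound for the case you work out. For $i=m$, bounded non-abelian curves converge to the irreducible point, and the limit of your difference system is $L^{nAb}_m$ conjugated by $\operatorname{diag}(1,-1)$. That is an irreducible Metzler matrix whose stable eigenvector has mixed signs, so, granting a Levinson-type asymptotic result, a non-negative decaying solution must vanish.

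\textbf{The gap} is your claim that the other cases differ only by time reversal. For $i=l$ we have $D_{\widehat l}<0$, so there is no irreducible point at all. For $i=k$ with $l>r^*k$ we have $D_{\widehat k}<0$ as well. In both cases the bounded curves on $P_{\widehat i}$ and on $Q_{\widehat i}$ converge to the abelian point $(p^-_{k,l},\widehat i\Lambda^-,0)$, which by Lemma~\ref{lem:Ab AC end} is a saddle in the $(\xi,\psi)$-plane. At that point:
\begin{itemize}
\item the off-diagonal coefficients of your difference system tend to zero with $\psi_1,\psi_2$, so the limiting matrix is diagonal and reducible;
\item in the relevant reversed time, its stable eigenvector is the $Y$-axis, which lies in the closed positive quadrant.
\end{itemize}
So the Perron--Frobenius mixed-sign argument yields nothing here. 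Your first bullet does not close this either. Over a fixed metric curve, a one-dimensional stable set in the $(\xi,\psi)$-fibre carries a one-parameter family of distinct solutions, so two distinct solutions lying on it is no contradiction.

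\textbf{The missing observation} is that the first equation of your difference system suffices on its own; this is the whole of the paper's proof. Since $Y\ge 0$ and $BC(\psi_1+\psi_2)\ge 0$, you get $X'\ge -FX$. Moreover $\Delta F=kB(A-C)+lC(A-B)\le 0$ on $\mathcal{C}_+$, and $F\to F^+<0$ at the AC end. Hence $X$ is non-decreasing and, once positive, grows exponentially. This contradicts the bound on $\xi$ in $\mathcal{C}$. If instead $X\equiv 0$, the same equation forces $Y\equiv 0$.

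For $i=k,l$, use the reversed $\psi$-ordering, $Y=\psi_1-\psi_2\ge 0$, and $\sigma=-s$. The same computation gives $\frac{dX}{d\sigma}\ge FX$, with $F\ge 0$ on $\mathcal{C}_-$ and $F\to F^->0$. This argument is uniform in $i$ and for both the $\Gamma$ and $\Omega$ families. It needs neither the identity of the limit point, nor hyperbolicity, nor any Levinson-type asymptotics. That is why the Perron--Frobenius step you flag as the main obstacle is unnecessary even for $i=m$.
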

\begin{proof}
We prove it for $\gamma_{k,l}^m$ by contradiction. Fix $\check{a}\in [a^*, \infty)$ and assume there exist two angles, $0<\alpha_1<\alpha_2<\frac{\pi}{2}$, such that $\Gamma(\check{a}, \alpha_i) \in \mathcal{C}$ for all time $s\in \R$. Then, by the ordering lemma, Lemma \ref{lemma:ordering}, we have
$$ U(s)= \xi_1(s) - \xi_2(s) \geq 0 \qquad \text{and}  \qquad V(s) = \psi_2(s) - \psi_1(s) \geq 0\;.$$
for all $s \in \R$. Hence, the difference function $U$ satisfies the ODE inequality
$$ U' =  - F U +  BC( \psi_1 +\psi_2) V \geq -FU\;,$$
where $F= \frac{kAB+lAC+mBC}{\Delta}<0$. So $U$ is unbounded as $s$ grows, giving the required contradiction.

The same argument shows that there cannot exist two solutions $\Omega_c(s)$ and $\Omega_d(s)$ that remain in $\mathcal{C}$ for all time. 
\end{proof}
Using this uniqueness result, we now have
\begin{proposition}
Consider the $\Spin(7)$-metrics $\gamma_\theta^i$ with $\theta \in (0, \theta^*]$. Let $\alpha_\theta: [ a_\theta^* , \infty) \to [0, \pi]$ be the ALC boundary curve of $\Spin(7)$-instantons,  and $\alpha_{\theta^*}:[a^* , \infty) \to [0, \pi]$ the AC curve parameterising $\Spin(7)$-instantons. Then, for every $a>a^*$
$$ \lim_{\theta \to \theta^*} \alpha_\theta (a) =  \alpha_{\theta^*}(a)\;.$$
Moreover, the convergence is uniform on compact subsets of $[a^*, \infty)$.
\end{proposition}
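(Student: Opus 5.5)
The plan is to argue by compactness and uniqueness. We work with $i=m$, the other cases differing only by time reversal. The extended system for $(A,B,C,\xi,\psi)$ depends continuously on the metric parameter $\theta$. The local integral curves $\Gamma^m_\theta(a,\alpha)$ from \eqref{eq:local-Gamma a,b} come from the unstable manifold of the hyperbolic point $\left(p_0^m,\frac{\widehat m^2}{m},0\right)$. The Hartman--Grobman/stable manifold theorem with parameters gives that this manifold varies continuously in $\theta$. Hence, on any compact time interval, $\Gamma^m_\theta(a,\alpha)$ depends continuously on $(\theta,a,\alpha)$. We also use that $\gamma^m_\theta\to\gamma^m_{\theta^*}$ uniformly on compact time intervals, which is the continuity of the family in \cite{Chi25}.

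\textbf{Step 1 (subsequential limits).} Fix $a>a^*$ and a sequence $\theta_n\to\theta^*$. By compactness of $[0,\pi]$, pass to a subsequence with $\alpha_{\theta_n}(a)\to\bar\alpha$. First check that $a>a^*_{\theta_n}$ for large $n$. This is done by showing $a^*_\theta\to a^*$, and that follows from the explicit integral representation \eqref{eq:integral_xi} of the abelian solution: $a^*_\theta$ is the initial datum $\kappa$ for which $\xi_\infty=-3m$, and the improper integral converges to the AC one. This convergence needs uniform decay of the integrand, which is where the care goes.

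\textbf{Step 2 (identify the limit curve).} Show that $\Gamma^m_{\theta^*}(a,\bar\alpha)$ lies in the critical sector $\mathcal C$ for all $s$. The sets $\mathcal A$ and $\mathcal B$ are open-ish and forward invariant (Proposition \ref{prop:invarinat sectors}), with defining inequalities depending continuously on $(A,B,C)$. So suppose the limit curve entered the interior of $\mathcal A$ or of $\mathcal B$ at some finite time $s_0$. By continuity, $\Gamma^m_{\theta_n}(a,\alpha_{\theta_n}(a))$ would do the same for large $n$, and by forward invariance it would stay there. This contradicts the fact that the ALC boundary curve remains in $\mathcal C$ (Proposition \ref{prop:long-time ALC np}).

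Touching $\partial\mathcal C$ without entering the interior is excluded by the strict barrier inequalities. Where these degenerate, the argument of Lemma \ref{lemma:short time integrable curves} is used. It must also be checked that the curve never enters the forbidden sector $\mathcal F$, which is backward invariant. So the limit is a bounded AC instanton on $P_{\widehat m}$ with parameter $a$. By the uniqueness in Lemma \ref{lemma:unique_curve}, $\bar\alpha=\alpha_{\theta^*}(a)$. Since every subsequence has this same limit, we get pointwise convergence.

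\textbf{Step 3 (uniformity).} Let $K\subset(a^*,\infty)$ be compact, and suppose convergence is not uniform on $K$. Then there are $a_n\to\bar a\in K$ and $\theta_n\to\theta^*$ with $|\alpha_{\theta_n}(a_n)-\alpha_{\theta^*}(a_n)|\ge\epsilon$. Rerunning Step 2 with joint continuity in $(\theta,a,\alpha)$ gives a subsequential limit $\bar\alpha$ such that $\Gamma_{\theta^*}(\bar a,\bar\alpha)\subset\mathcal C$. Uniqueness forces $\bar\alpha=\alpha_{\theta^*}(\bar a)$. To conclude, we also need continuity of $\alpha_{\theta^*}$ itself, and this follows from the same compactness-plus-uniqueness argument at fixed $\theta^*$. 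Together these give a contradiction.

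\textbf{Main obstacle.} The delicate part is Step 2 near $s=\infty$. The sectors depend on time, and the asymptotic regimes differ: ALC has $A,B,C\sim s^{-1}$, whereas AC has a nonzero limit $p^+_{k,l}$. The convergence of metrics is therefore only locally uniform in $s$, not uniform on all of $\mathbb R$. To handle this, the argument must only use exits from $\mathcal C$ at finite times, so that local uniform convergence suffices. The plan does exactly this: being in $\mathcal C$ for all $s$ is a closed condition, given by an intersection over finite times. What remains is to verify that exits from $\mathcal C$ are strict, so that an exit of the limit curve at finite time is stable under perturbation.
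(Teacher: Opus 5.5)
Your proposal is correct and is essentially the paper's argument. You extract a subsequential limit $\bar\alpha$ of $\alpha_{\theta_n}(a)$, use continuous dependence on initial data to see that $\Gamma_{\theta^*}(a,\bar\alpha)$ stays in the closed critical sector for all $s$, conclude by Lemma~\ref{lemma:unique_curve}, and get uniformity by letting $a$ vary and using continuity of $\alpha_{\theta^*}$.

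Two minor remarks.
\begin{itemize}
\item The discussion of strict barriers and of $\mathcal F$ in Step 2 is unnecessary. Membership in the closed set $\mathcal C$ at each fixed time passes directly to pointwise limits, and for $i=m$ this set is even independent of $s$ and $\theta$.
\item The domain issue in your Step 1, which the paper leaves implicit, only needs $\limsup_n a^*_{\theta_n}\le a^*$. This follows from a finite-time comparison rather than from uniform estimates on the improper integral. The AC abelian solution with parameter $a>a^*$ exceeds $-3m$ at some finite time, hence so do the ALC ones for $\theta_n$ near $\theta^*$. Since $\{\xi\ge -3m\}$ is forward invariant for the abelian flow, this gives $\xi_\infty\ge -3m$ and so $a\ge a^*_{\theta_n}$.
\end{itemize}
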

\begin{proof}
    Fix some $\check{a}\in [a^*, \infty)$ and consider a sequence $\theta_n \to \theta^*$. Up to taking subsequences, we have 
    $\alpha_{n}(\check{a}) \to \check{\alpha}$ for some $\check{\alpha}$. Smooth dependence on initial conditions implies 
    $$ \Gamma \left (\check{a}, \alpha_n \right)\to \Gamma(\check{a}, \check{\alpha})$$
    on compact subspaces of the AC manifold. Now, since  critical sectors $\mathcal{C}_\theta$ pass to the limit $\mathcal{C}_{\theta^*}$ for all times $s\in \R$, the limiting solution $\Gamma(\check{a}, \overline{\alpha})$ is contained in the AC critical sector, and Lemma \ref{lemma:unique_curve} implies $\alpha_{\theta_*}(\check a)=\check\alpha$.

    The same argument allows $a$ to vary, and continuity of $\alpha_{\theta^*}$ yields uniform convergence over compact subsets. 
\end{proof}
Note that, while this proposition makes sense for any $n \in \mathbb{N}$, corresponding to the principal $\SO(3)$-bundle over the principal part given by $\theta_n: \U(1) \to \SO(3)$, geometrically, this only arises in the  two exceptional families $(i),(ii)$ and three exceptional cases $(iii) - (v)$ from Proposition \ref{prop:admissible-bundles-CP2}.
\subsection{Compactifications and ideal boundary data}
Let us now discuss the compactness properties of the moduli spaces $\mathcal{M}_{\theta}(P)$ for fixed $\theta\in(0,\theta^*]$ and a principal $\SO(3)$-bundle $P$. On each moduli space, the underlying topology is the one induced by smooth convergence on compact subsets in a fixed invariant gauge. From the point of view of the dynamical system, this coincides with the natural Euclidean topology on the space of initial conditions.

When the bundle changes, convergence means smooth convergence away from the singular orbit, together
with convergence to the corresponding bubble after rescaling.
The disjoint unions below specify the strata of the compactification, rather than its topology. 

We give the proofs for $i=m$; the remaining cases follow using
the corresponding ordering and normalisation. As usual, we set
$\alpha^*=\arctan(-m/(2\Delta))$. We begin with the AC case.  
\begin{theorem}
    Let $\gamma^i_{k,l}(\theta^*)$ be the AC $\Spin(7)$-metric on $M^i_{k,l}$. The moduli space of cohomogeneity one $\Spin(7)$-instantons on $Q_{\widehat{i}}$ is compact.

    In the exceptional case, $\widehat{i}=0 \mod i$, the 
    moduli space $\mathcal{M}(P_{\widehat{i}})$ admits the natural compactification
    $$ \overline{\mathcal{M}(P_{\widehat{i}})} = \mathcal{M}(P_{\widehat{i}})\sqcup \mathcal{M}(Q_{\widehat{i}})\;.$$
\end{theorem}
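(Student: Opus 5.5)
The argument has two parts: a direct statement about $Q_{\widehat{i}}$, and a stratified compactness argument for $P_{\widehat{i}}$ in the $(a,\alpha)$ initial-data picture.

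\textbf{The bundle $Q_{\widehat{i}}$.} By Lemma~\ref{lemma:unique_curve} and Proposition~\ref{prop:connections-AC}, the bounded invariant instantons on $Q_{\widehat{i}}$ over the AC metric form the single point $\Omega^i(c^*)$. Compactness is therefore immediate. The only check is that no other local solution $\Omega^i(c)$ is bounded. This follows from Lemma~\ref{lemma:short time integrable curves}, Proposition~\ref{prop:invarinat sectors} and the AC analogue of the sector analysis. Any $\Omega(c)$ that leaves $\mathcal{C}$ either enters $\mathcal{B}$, where it blows up in finite time by Proposition~\ref{prop:finite-time-blowup}, or enters $\mathcal{A}$. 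In the latter case it converges to an abelian limit different from the hyperbolic point $(p^\pm_{k,l},\widehat{i}\Lambda^\pm,0)$, and hence is unbounded, as in Proposition~\ref{prop:unique-ab-AC}.

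\textbf{The bundle $P_{\widehat{i}}$.} By Proposition~\ref{prop:connections-AC} and Lemma~\ref{lemma:unique_curve}, $\mathcal{M}(P_{\widehat{i}})$ is identified with the graph $\{(a,\alpha(a)) : a\in[a^*,\infty)\}$, a closed half-line. The endpoint $a^*$ is the abelian solution, and it is included because it is bounded. Take a sequence $[A_n]$ with parameters $a_n$. There are two cases.

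If $a_n$ is bounded, pass to a subsequence with $a_n\to\check a$ and $\alpha(a_n)\to\check\alpha$. Continuous dependence on initial conditions, together with the closedness of the condition ``$\Gamma(\check a,\check\alpha)(s)\in\mathcal{C}$ for all $s$'', shows that the limit is again bounded and lies in $\mathcal{C}$. By uniqueness, $\check\alpha=\alpha(\check a)$, so the limit lies in $\mathcal{M}(P_{\widehat{i}})$ with $C^\infty_{\mathrm{loc}}$ convergence.

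If $a_n\to\infty$, we are in the bubbling regime. Proposition~\ref{prop:alpha-limit} gives $\alpha(a_n)\to\alpha^*$. Proposition~\ref{prop:bubbled-off-connection AC} gives Hausdorff convergence of $\Gamma(a_n,\alpha(a_n))$ to $\mathbb{B}\cup\Omega^m(c^*)$, and hence smooth convergence on compact subsets away from $Z_0$ to the unique point of $\mathcal{M}(Q_{\widehat{i}})$. Propositions~\ref{prop:energy-conservation} and \ref{prop:limmit-ASD}, together with the identification of the standard radial instanton at $\alpha^*$, give the normal bubble. Consequently, adjoining the single point $\mathcal{M}(Q_{\widehat{i}})$ at $a=\infty$ yields a compact space.

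\textbf{Main obstacle.} The delicate step is upgrading the Hausdorff convergence of trajectories to $C^\infty_{\mathrm{loc}}$ convergence of connections on compact sets $K\subset M\setminus Z_0$. This requires showing that the radial parametrisations match, namely that $\Gamma_{a_n}(s_n+\cdot)\to\Omega(c^*)$ with $s_n$ equal to the geodesic time shift. I would use the fact that the metric coordinates $(A,B,C,f)$ are independent of $(\xi,\psi)$, so the time parameter on $K$ is fixed. The $(\xi,\psi)$ components then converge uniformly on $\operatorname{proj}(K)$ by the inclination lemma at the hyperbolic point $(p_0^m,-3m,2)$: the time spent near $\mathbb{B}$ collapses to $s\to-\infty$. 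Higher derivatives follow from the ODE.

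Finally, I would note that the limit is independent of the subsequence, because the limiting curve is unique. This makes $\overline{\mathcal{M}(P_{\widehat{i}})}$ a one-point compactification, homeomorphic to $[a^*,\infty]$.
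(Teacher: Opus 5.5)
Your proposal is correct and follows the paper's proof. $\mathcal{M}(Q_{\widehat{i}})$ is the single point $\Omega^i(c^*)$ by Proposition~\ref{prop:connections-AC} and Lemma~\ref{lemma:unique_curve}, and for $P_{\widehat{i}}$ you split sequences into the bounded-$a$ case (continuous dependence, where your closedness-plus-uniqueness step makes explicit the continuity of $a\mapsto\alpha(a)$ that the paper uses tacitly) and the $a\to\infty$ case handled by Proposition~\ref{prop:bubbled-off-connection AC}, so the parameter compactifies to $[a^*,\infty]$.

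One small slip in your side check for $Q_{\widehat{i}}$: at the AC end the only abelian fixed point is $(p^\pm_{k,l},\widehat{i}\Lambda^\pm,0)$, so a curve entering $\mathcal{A}$ does not converge to some other abelian limit (that is the ALC picture). Instead $\psi\to0$ while $\xi$ diverges, because that point repels in the $\xi$-direction, and this still gives the unboundedness you need.
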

\begin{proof}
For $Q_{\widehat{m}}$, Proposition \ref{prop:connections-AC} and Lemma \ref{lemma:unique_curve} imply that there exists a unique $\Spin(7)$-connection, so the moduli space consists of a single point, $\mathcal{M}(Q_{\widehat{m}})\cong \{c^*\}$, and compactness is clear. 

For $P_{\widehat{m}}$, Proposition \ref{prop:connections-AC} and Lemma \ref{lemma:unique_curve} give 
$$\mathcal{M}( P_{\widehat{m}}) \cong \big\{ \Gamma^m \big(a, \alpha (a) \big) \;\big| \; a \in [a^*, \infty )  \big\}\;.$$
For any sequence $\{a_n\}$, up to taking a subsequence, there are two possibilities. If $a_n \to \overline{a}< \infty$, continuous dependence on the initial data gives convergence to $\Gamma(\overline{a},\alpha(\overline{a}))$, including across the singular orbit.

If $a_n \to \infty$, we have bubbling along the singular orbit. Proposition \ref{prop:bubbled-off-connection AC} gives convergence to \(\Omega(c^*)\) away from the singular orbit, together with the standard normal ASD bubble. Thus, the parametrisation of the moduli space compactifies to $[a^*, \infty]$, with the point at infinity corresponding to $\Omega(c^*)$, which is precisely $\mathcal{M}(P_{\widehat{m}}) \sqcup\mathcal{M}(Q_{\widehat{m}})$.
\end{proof}
In the ALC case, the moduli spaces admit a natural compactification characterised by an $\U(2)$-invariant ASD connection on $\R^4$, identified as the normal directions to the singular orbit.
\begin{theorem}
    Let $\gamma^i_{r}(\theta)$ be an ALC $\Spin(7)$-metric on $M^i_{k,l}$, and set $\alpha^*= \arctan\left(- \frac{m}{2\Delta}\right)$. The moduli space of cohomogeneity one $\Spin(7)$-instantons $\mathcal{M}_{\theta}(Q_{\widehat{i}})$ admits a natural one-point compactification.
    
    In the exceptional case, $\widehat{i}=0 \mod i$, the 
    moduli space $\mathcal{M}(P_{\widehat{i}})$ admits the natural compactification
    $$ \overline{\mathcal{M}(P_{\widehat{i}})} = \mathcal{M}(P_{\widehat{i}})\sqcup \mathcal{M}(Q_{\widehat{i}}) \sqcup \big(\left[0, \alpha^* \right] \sqcup \{\pi\}\big) \;,$$
    where $\mathbb{I} = [0, \alpha^*] \sqcup \{\pi\}$ corresponds to limiting ideal solutions, given by solutions of the $\U(2)$-invariant ASD equation \eqref{eq:normal-limit} with prescribed asymptotics.
    \end{theorem}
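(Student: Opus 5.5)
We prove the statement for $i=m$. We parametrise $\mathcal M_\theta(Q_{\widehat m})$ by $c\in[c^*_\theta,\infty)$, the closed half-line of Theorem~\ref{thm:ALC-instantons-existence}. For $\mathcal M_\theta(P_{\widehat m})$ we use the closed region
\[
R=\{(a,\beta)\st a\ge a^*_\theta,\ 0\le\beta\le\alpha(a)\}
\]
in the initial-data plane of \eqref{eq:generalised-ic-gamma}. A sequence with bounded parameters has a convergent subsequence by continuous dependence on initial data. Since the sets $\mathcal{C}$ and $\mathcal{A}$ are closed, the limit is again bounded, so it lies in the same moduli space. It therefore suffices to classify the sequences that escape to infinity in parameter space.

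\textbf{The bundle $Q_{\widehat m}$.} Take $c_n\to\infty$. By the ordering corollary to Lemma~\ref{lemma:ordering}, $\xi_{c_n}$ is monotone in $n$ and increases without bound on every fixed compact $s$-interval. Hence no limiting connection exists away from $Z_0$. Instead we apply Proposition~\ref{prop:limmit-ASD} with $\delta_{c}=(\lambda/12cm^2)^{1/4}$. It gives a single rescaled $\U(2)$-invariant ASD profile with the asymptotics \eqref{eq:nahm-asymptotics-omega}. Its parameter $\lambda$ is fixed by the choice of dilation, so up to dilation the limit is unique. This unique profile is the added point $\{*\}$, and we obtain $\overline{\mathcal M(Q_{\widehat m})}=\mathcal M(Q_{\widehat m})\sqcup\{*\}$.

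\textbf{The bundle $P_{\widehat m}$.} Take $(a_n,\beta_n)$ with $a_n\to\infty$. We pass to a subsequence with $\beta_n\to\beta_\infty\in[0,\alpha^*]$; this range is correct because $\alpha(a)\to\alpha^*$ by the ALC analogue of Proposition~\ref{prop:alpha-limit}. Fix a time $s_0$ and split into two cases according to whether $\xi_{a_n,\beta_n}(s_0)$ stays bounded.
\begin{itemize}
\item If $\xi_{a_n,\beta_n}(s_0)$ stays bounded, we repeat the proof of Proposition~\ref{prop:bubbling-ALC}. The ordering lemma gives bounds, and the barrier argument gives convergence to some $\Omega(c)$ with $c\ge c^*$. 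Simultaneously, the rescalings $r_{\delta_a}$, $\delta_a=\sqrt{\lambda/a}$, produce the standard radial bubble. This yields the stratum $\mathcal M(Q_{\widehat m})$; surjectivity onto it is Proposition~\ref{prop:bubbling-ALC}.
\item If $\xi_{a_n,\beta_n}(s_0)\to\infty$, there is no limit away from $Z_0$. Proposition~\ref{prop:limmit-ASD} applied along the sequence gives an ideal ASD limit with asymptotics $\Xi\sim -\tfrac{m\lambda}{2\Delta}\cos\beta_\infty\,\tau^2$ and $\Psi\sim\lambda\sin\beta_\infty\,\tau^2$. After normalising the dilation and the vector $(-\tfrac{m}{2\Delta}\cos\beta,\sin\beta)$ to length $2$, this becomes $\Xi\sim 2\cos\beta'\,\tau^2$, $\Psi\sim 2\sin\beta'\,\tau^2$. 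The reparametrisation $\beta_\infty\mapsto\beta'$ is monotone and sends $[0,\alpha^*]$ onto $[0,\pi/4]$, since $\alpha^*$ corresponds to $\Xi=\Psi$, the standard instanton.
\end{itemize}

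\textbf{The point $\{\pi\}$.} This point comes from sequences in which $a_n$ stays bounded while the normalised direction tends to the purely abelian direction, namely the $\beta=\pi/2$ degeneration of \eqref{eq:generalised-ic-gamma} with the sign opposite to the critical branch. Equivalently, the limit profile is $\Psi\equiv0$ with $\Xi\sim-2\tau^2$: the abelian ASD solution of \eqref{eq:U2-ASD}. This profile is isolated from the interval $[0,\pi/4]$ because it arises as the boundary of the $\mathcal{B}$-side. We would verify this using the sector invariance of Proposition~\ref{prop:invarinat sectors} on the subsystem \eqref{eqn:compactifying-subsystem}.

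\textbf{Main obstacle.} The delicate step is the dichotomy in the $P_{\widehat m}$ case. We must show that boundedness at one time $s_0$ propagates to every compact set, and that the limiting angle $\beta_\infty$ is well defined and independent of the subsequence. We would approach both by comparison with the limiting system \eqref{eqn:compactifying-subsystem}, using the invariant line of Proposition~\ref{prop:compactifying-integral-curve} as a barrier. We expect the identification of the isolated point $\{\pi\}$ to be the least certain part of the argument.
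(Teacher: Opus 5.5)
Your overall strategy matches the paper's: classify escaping sequences by whether $\xi(s_0)$ stays bounded, then use Proposition~\ref{prop:bubbling-ALC} in the bounded case and normal rescaling in the unbounded case. The paper packages this through the embedding $(a,\alpha)\mapsto(\nu(a)\cos\alpha,\nu(a)\sin\alpha,\nu(\xi_{a,\alpha}(s_0)))$ into $D_1^+\times[-1,1]$.

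\textbf{Main gap: the point $\{\pi\}$.} The problem starts with your parametrisation: the region $R$ is not all of $\mathcal{M}_\theta(P_{\widehat m})$. On an ALC metric every abelian solution converges, since the initial constant $\kappa\in\R$ is in bijection with the limit $\xi_\infty$. With $\psi\equiv0$ these are bounded instantons on $P_{\widehat m}$. So the moduli space also contains $\Gamma(a,0)$ for $a<a^*_\theta$ and, crucially, the whole abelian ray $\Gamma(a,\pi)$, $a\ge0$, on which $\xi$ starts below $\widehat m^2/m$. The paper's parametrisation is $\{0\le\alpha\le\alpha_\theta(a)\}\cup\{\alpha=\pi\}$. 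The point $\{\pi\}$ is the end of this ray. As $a\to\infty$ with $\alpha=\pi$ one has $\xi(s_0)\to-\infty$; this is the paper's boundary component with $\nu(\xi)=-1$. The normal rescaling then gives $\Psi\equiv0$ and $\Xi=\frac{m\lambda}{2\Delta}\tau^2$, which is $-2\tau^2$ after normalising.

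Your limiting profile is therefore right, but your mechanism cannot be. Taking $a_n$ bounded while invoking the $\beta=\pi/2$ degeneration is contradictory: in \eqref{eq:generalised-ic-gamma}, $\beta\to\pi/2$ is the same as $a\to\infty$. You have also already shown that bounded parameters converge inside the moduli space. Within $R$ no sequence produces $\{\pi\}$ at all, since every angle lies in $[0,\alpha(a)]$.

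Once the ray is included, isolation is immediate. By Lemma~\ref{lemma:short time integrable curves}(i), every direction $\pi-\alpha$ with $\alpha\in(0,\pi/2)$ enters $\mathcal{B}$ with $\psi\neq0$, and it blows up by Proposition~\ref{prop:finite-time-blowup}. So near $\alpha=\pi$ the moduli space is exactly the abelian ray, and its endpoint is disjoint from $[0,\alpha^*]$.

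\textbf{Smaller omission: where $\Omega(c_n)$ goes.} The $\mathcal{M}(Q_{\widehat m})$-stratum of $\overline{\mathcal{M}(P_{\widehat m})}$ is not closed, so you must say where $\Omega(c_n)$, $c_n\to\infty$, converges inside $\overline{\mathcal{M}(P_{\widehat m})}$. Your point $\{*\}$, with profile $\Xi=2+\frac{\lambda}{6}\tau^4+\dots$, is none of the points $\Xi\sim 2\cos\beta'\,\tau^2$ you list. As written, your $\overline{\mathcal{M}(P_{\widehat m})}$ is therefore not compact.

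The paper glues $\{*\}$ to the endpoint $\alpha^*$ of $\mathbb{I}$. In the $\beta=\pi/2$ subsystem \eqref{eqn:compactifying-subsystem}, the curves leaving $(\widehat m^2/m,0)$ at angle $\alpha\uparrow\alpha^*$ break into $\mathbb{B}$ followed by the unstable branch of $(-3m,2)$. That branch is also the limit of $\Omega(c_n)$.

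In your normalisation at $\tau=0$, the point $\beta'=\pi/4$ shows the standard instanton instead, so you must show these are the same boundary point. The explicit profiles do this: $\Xi=q\tau^2\coth(\frac{q}{2}\tau^2+\arctanh q)$ and $\Psi=q\tau^2\csch(\frac{q}{2}\tau^2+\arctanh q)$. As $q\to0$ they converge at fixed $\tau$ to the charge-one instanton. In the rescaled variable $\sigma^2=q\tau^2$ they converge to $\sigma^2\coth(\sigma^2/2)=2+\frac{\sigma^4}{6}+\dots$, which is your $\{*\}$ profile.
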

\begin{proof}
    We focus on the case $\widehat{m}=0 \mod m$. The other case follows by time reversal. From Theorem \ref{thm:ALC-instantons-existence}, we have 
    \begin{align*}
        \mathcal{M}(P_{\widehat{m}}) &\cong \big\{\Gamma(a, \alpha) \;\big|\;  a\in [0,\infty )\,,\; 0 \leq\alpha\leq \alpha_\theta(a)   \; \text{or } \;\alpha =\pi\big\} \subseteq \R^+ \times [0, \pi]\;,\\
        \mathcal{M}(Q_{\widehat{m}})& \cong \big\{\Omega(c)  \; \big|\;  c\in [c_\theta^*,\infty) \big\}\;.
    \end{align*}
    Let $s_0$ be some finite time, and consider the rescaling  function
    \begin{align}
        \nu:  \R & \to (-1,1) \\
        x &\mapsto  \frac{x}{\sqrt{1+x^2}}\;.
        \end{align}
    We can embed the moduli space $\mathcal{M}(P_{\widehat{m}})$ in the compact space $D_1^+ \times [-1,1]$, via the map
    \begin{align*}
        \mathbb{E}_{s_0}: \mathcal{M}(P_{\widehat{m}}) & \to D_1^+ \times [-1,1] \\
        (a, \alpha) & \mapsto \big(\nu(a) \cos \alpha,\; \nu (a) \sin \alpha , \;\nu\left( \xi_{a,\alpha}(s_0)\right)\big)
    \end{align*}
    where $D_1^+$ is the closed positive half of the unit disk.

    Under this embedding, the boundary of the image of $\mathcal{M}(P_{\widehat{m}})$ has four components:
    \begin{enumerate}[label=(\roman*)]
        \item \textbf{Bounded initial data} with $\nu(a)<1$. By continuity with respect to initial conditions, this  corresponds to the limiting curve $\alpha_\theta$.
        \item \textbf{Unbounded initial data and bounded evaluation} with $\nu(a)=1$ and $\abs{\nu(\xi)}<1$. By Proposition \ref{prop:bubbling-ALC} (and its construction), it follows that these limiting points correspond precisely to $\{\Omega(c)\st c\in [c^*, \infty)\}\cong \mathcal{M}(Q_{\widehat{m}})$. 
        \item \textbf{Unbounded initial data and unbounded evaluation} with $\nu(a)=1$, $\nu(\xi)=1$ and $\alpha\in [0,\pi]$. After taking a subsequence, we can identify this boundary point with an integral curve of the limiting system \eqref{eqn:compactifying-subsystem} with $\beta=\frac{\pi}{2}$. In this setup, the limiting curves $[[\alpha]]$ with $\alpha \in [0, \alpha^*)$ correspond to an integral curve emanating from $\left(\frac{\widehat{m}^2}{m},0\right)$ and compactify curves $\Gamma(a_n, \alpha_n)$ with $a_n \to+\infty $  and $\alpha_n \to \alpha < \alpha^*$. The limiting curve $[[\alpha^*]]$ corresponds to the integral curve emanating from $(-3m,2)$ (rather than the bubble $\mathbb{B}$) and compactifies the family $\Omega(c_n)$ with $c_n \to +\infty$. 
        
        \item \textbf{Exceptional ray} with $\nu(a)=1$, $\nu(\xi)=-1$ and $\alpha=\pi$. This gives the other limiting abelian connections. 
    \end{enumerate}
    This proves the desired compactification:
    $$ \overline{\mathcal{M}(P_{\widehat{i}})} \cong  \mathcal{M}(P_{\widehat{i}})\sqcup \mathcal{M}(Q_{\widehat{i}}) \sqcup \big(\left[0, \alpha^* \right] \sqcup \{\pi\}\big) \;.$$
    The fact that ideal curves $[[\alpha]]$ correspond to solutions to the $\U(2)$-invariant ASD equations with prescribed asymptotics follows from Proposition \ref{prop:limmit-ASD}. 
    
    Finally, although the embedding $\mathbb{E}_{s_0}$ depends on the choice of regular time $s_0$, the boundary components are canonically identified for different choices of $s_0$, establishing naturality.
\end{proof}
Let us take a closer look at the solutions of the $\U(2)$-invariant equations. Up to a reparametrisation of the scale parameter $\lambda$ we are interested in solutions with asymptotics 
$$\Xi^\beta(t) =  2 \lambda t^2 + o(t^2) \qquad \qquad \Psi^\beta(t) = 2\lambda \beta t^2 + o(t^2)$$ for $\lambda \in (0,\infty)$ and $\beta\in [0,1)$, corresponding to the compactification of the $\Gamma$-family, and with asymptotics
$$ \Xi(t) =2 +  \frac{\lambda}{6}t^4 + o(t^4) \qquad \qquad \Psi(t) = 2 - \frac{\lambda}{12}t^4 + o(t^4) \;,$$  corresponding to the compactification of the $\Omega$-family.

By rescaling time, we may fix $\lambda =\frac{1}{2}$. For $\beta=0$, the solution is simply $(2 \lambda t^2,0)$. For $\beta\in (0,1)$, Equations \eqref{eq:normal-limit} have the explicit solutions 
\begin{equation}
    \Xi^\beta  = q \; t^2 \coth  \left(  \frac{q}{2}t^2 +  z\right) \qquad \qquad  \Psi^\beta =  q \; t^2 \csch \left(  \frac{q}{2}t^2 +  z\right) \;,
\end{equation}
where $q =  \sqrt{1- \beta^2}$ and $z=\arctanh(q)$.

These solutions were originally considered in the physics literature in \cite[Eq. (5.12)]{HIM99}. In the limit $\beta \to 0$, we recover the abelian $(t^2, 0)$ solution. In the limit $\beta \to 1$, for fixed $t$, we have 
$$z+ \frac{q}{2}t^2 =  \arctanh (q) + \frac{q}{2}t^2 = q \left( 1 + \frac{t^2}{2}\right) +o(q)\;,$$
and since $\coth(x)$ and $\csch(x)$ both have a simple pole with residue one at $x=0$, we have 
$$ \lim_{\beta \to 1}\, \Xi^\beta(t)  = \frac{2t^2 }{2 + t^2} + o(t^2) \qquad  \qquad  \lim_{\beta \to 1} \,\Psi^\beta(t)  = \frac{2t^2 }{2 + t^2} + o(t^2) \;,$$
which recovers the charge one ASD connection on $\R^4$.

If instead we take $\beta \to 1$  while rescaling time as $\tau^2 = q \, t^2$, we have the limit
$$\Xi(\tau)= \tau^2 \coth \left( \frac{\tau^2}{2}\right)= 2 + \frac{\tau^4}{6} + o(\tau^4) \qquad \qquad \Psi(\tau)= \tau^2 \csch \left( \frac{\tau^2}{2}\right)=2 -  \frac{\tau^4}{12} + o(\tau^4)\;,$$
which are precisely the asymptotics of the $\Omega$-family we derived in Equation \eqref{eq:nahm-asymptotics-omega}. Hence, we see how the solutions to \eqref{eq:normal-limit} ``split" into two, corresponding to the breaking of the integral curve, as expected.

\section{The exceptional case \texorpdfstring{$M^0_{1,0}$}{M010}}
Finally, let us consider the exceptional case $M^0_{1,0}$, where the exceptional orbit is diffeomorphic to $S^5$. In this case, the coordinates $(A,B,C)$ do not compactify the dynamical system\footnote{In \cite{Lehmann22}, Lehmann introduces new coordinates that compactify the $\Spin(7)$-metric equations. Unfortunately, they do not compactify the $\Spin(7)$-instanton equations.}, so we are forced to study a singular initial value problem. 

In his original work, Reidegeld \cite[Thm 6.1]{Reidegeld_11} showed that there exists a one-parameter family of local $\Spin(7)$-metrics, $\gamma^0_\mu(t)$, characterised by the squashing of the $S^1$-fibre of the Hopf-fibration $S^5 \to \C P^2$ of the singular orbit.

Near the singular orbit, the $\Spin(7)$-metrics have the power series expansion
\begin{subequations}\label{eq:asymptotics-metric-S5}
\begin{align}
a(t)&=1+\frac{\mu}{6}t
+\left(1-\frac{5\mu^2}{72}\right)t^2
+\left(-\frac{7\mu}{90}+\frac{167\mu^3}{6480}\right)t^3+ o(t^3)\\[2mm]
b(t)&=2t+\left(-\frac43+\frac{\mu^2}{27}\right)t^3+o(t^3),\\[2mm]
c(t)&=1-\frac{\mu}{6}t
+\left(1-\frac{5\mu^2}{72}\right)t^2
+\left(\frac{7\mu}{90}-\frac{167\mu^3}{6480}\right)t^3+o(t^3)\\[2mm]
f(t)&=-\mu-\frac{\mu^3}{6}t^2 + o(t^3)
\end{align}
\end{subequations}
In particular, a short computation akin to Lemma \ref{lemma:normal_metric} shows that the metric in the normal directions to the singular orbit converges to the flat metric on $\R^3$. 

By the work of Lehmann \cite{Lehmann22}, we have
\begin{theorem}
    There exists $\mu^*\in (0, \infty)$ such that, for $\mu\in (0,\mu^*)$, the local metric $\gamma^0_\mu$ is (forward) complete and asymptotically locally conical. The limiting metric $\gamma^0_{\mu^*}$ is complete and asymptotically conical. 
\end{theorem}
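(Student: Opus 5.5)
The plan is a shooting argument in the squashing parameter $\mu$, modelled on the proof of \cite[Thm.~1.1]{Chi25} for the $\C P^2$ fillings. Since $(A,B,C)$ do not compactify the problem here, I would first pass to Lehmann's normalised coordinates. These are obtained by rescaling the functions $a,b,c,f$ by a quantity comparable to the mean curvature of the principal orbit, so that the system \eqref{eq:evo-spin7} with $(k,l)=(1,0)$, $\Delta=1$ becomes a polynomial vector field on a compact region. In these variables the singular orbit $S^5$ becomes a fixed point $p_0$, and the expansions \eqref{eq:asymptotics-metric-S5} say that the local solutions $\gamma^0_\mu$ form a one-parameter family of trajectories leaving $p_0$. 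The family is indexed by the unstable direction, with $\mu$ playing the role of $\theta$ in \eqref{eq:local_gammai}, and depends continuously on $\mu$.

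The first step is to find a forward invariant region $\mathcal{R}$ containing all the $\gamma^0_\mu$ for small $t$. I expect it to be cut out by sign conditions ($b>0$, $a,c>0$, $f<0$) together with comparison inequalities of the type in Lemma \ref{lemma:comparing-ABC}, such as $a\ge c$ and an upper bound on $b$ relative to $a,c$. These would be checked by barrier computations at the boundary, exactly as in Proposition \ref{prop:ABC-inequalities}. Inside $\mathcal{R}$ the normalised variables are bounded and the normalising function grows at least linearly, so trajectories that stay in $\mathcal{R}$ are forward complete.

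The second step is to classify the $\omega$-limits in $\mathcal{R}$. The candidates are:
\begin{itemize}
\item the ALC critical set, where the rescaled fibre $f$ tends to a finite nonzero limit and the remaining directions flow to the $G_2$ cone over $\SU(3)/T^2$;
\item the AC fixed point corresponding to the nearly parallel structure on $N_{1,0}$;
\item an exit through $\partial\mathcal{R}$, which signals a degenerate or incomplete end.
\end{itemize}
A Lyapunov-type monotone quantity, such as the derivative of $f$ divided by the normalisation or the ratio $a/c$, should rule out periodic or other recurrent behaviour. I would then show that the set $S_{\mathrm{ALC}}$ of $\mu$ giving ALC limits is open. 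This follows by blowing up the ALC end as in the proof of Proposition \ref{prop:ALC-asymptotic-behaviour}, which makes the ALC limit an attracting hyperbolic point. Similarly, the set $S_{\mathrm{bad}}$ of $\mu$ that leave $\mathcal{R}$ is open, because leaving $\mathcal{R}$ is a transversal crossing of a barrier. The AC point has a codimension-one stable manifold within the relevant invariant slice.

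The third step handles the endpoints of the parameter range. For $\mu\to0$ the circle fibre collapses, and the solution should be a small perturbation of the degenerate $\mu=0$ limit, a product-type configuration whose end is ALC; this shows $(0,\varepsilon)\subset S_{\mathrm{ALC}}$. For $\mu$ large the fibre term $f/b^2$ dominates and drives the trajectory out of $\mathcal{R}$, so $S_{\mathrm{bad}}\neq\emptyset$. I would then set $\mu^*=\sup\{\mu:(0,\mu)\subset S_{\mathrm{ALC}}\}$. Then $\mu^*$ lies in neither open set, so by the trichotomy $\gamma^0_{\mu^*}$ must converge to the AC point and is therefore complete and AC.

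The main obstacle is the large-$\mu$ step, together with the exhaustiveness of the trichotomy. Proving that the bad set is nonempty, and that no trajectory can accumulate elsewhere, requires an explicit quantitative barrier. I would try first a scaling argument: rescale $t\mapsto t/\mu$ and show that as $\mu\to\infty$ the limiting system has $b$ overtaking $a,c$ in finite time.
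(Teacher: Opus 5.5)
The paper does not prove this statement; it quotes Lehmann \cite{Lehmann22}, whose proof is a dynamical-systems argument in compactifying coordinates. Your overall strategy (invariant region, open ALC set, open bad set, supremum) is the natural one. As a proof, however, it has a genuine gap at the step that makes $\mu^*$ finite, and the mechanism you propose there uses a term that is not in the equations.

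For $N_{1,0}$ we have $(k,l,m)=(1,0,-1)$ and $\Delta=1$, so the coefficient $\frac{l}{2\Delta}$ of $f/b^2$ in \eqref{eq:evo-b} is zero. This is exactly why $f$ can stay non-zero on $S^5$ while $b\to 0$. The fibre enters only through $-\frac{f}{2a^2}$ in $\dot a/a$, through $+\frac{f}{2c^2}$ in $\dot c/c$, and through $\dot f=\frac{f^2}{2}(a^{-2}-c^{-2})$. Since $(\dot a-\dot c)|_{a=c}=-f/a>0$, the inequality $a\ge c$ is preserved, so $|f|$ is non-decreasing and $c$ is pushed down.

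Your rescaling is the right one. With $\tau=\mu t$, $\tilde b=\mu b$ and $\tilde f=f/\mu$, the $\mu\to\infty$ limit is
\[
\begin{aligned}
\frac{da}{d\tau}&=\frac{c^2-a^2}{\tilde b\,c}-\frac{\tilde f}{2a}, &\qquad \frac{dc}{d\tau}&=\frac{a^2-c^2}{a\,\tilde b}+\frac{\tilde f}{2c},\\
\frac{d\tilde b}{d\tau}&=\frac{a^2+c^2}{ac}, &\qquad \frac{d\tilde f}{d\tau}&=\frac{\tilde f^2}{2}\Big(\frac{1}{a^2}-\frac{1}{c^2}\Big),
\end{aligned}
\]
with $a=c=1$, $\tilde b\sim 2\tau$ and $\tilde f=-1$ at $\tau=0$, consistent with \eqref{eq:asymptotics-metric-S5}. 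In this limit $b=O(\mu^{-1})$ drops out, so $b$ does not overtake $a,c$. Even if it did, that would not signal incompleteness. What you need is a finite-time singularity of this limiting system, presumably a collapse of $c$ driven by $\tilde f/(2c)$, that persists for large finite $\mu$ as an open condition.

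Second, the trichotomy is asserted rather than proved, and as written it is inconsistent. You declare $\mathcal R$ forward invariant, yet leaving $\mathcal R$ is your bad alternative. You need an exit set on which crossing forces a singularity at finite geodesic time. Violating a comparison inequality says nothing about completeness, and in fact $a\ge c$ can never fail.

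Above all, you need a complete list of the possible $\omega$-limits in the closure of the compactified region. The supremum argument only shows that $\gamma^0_{\mu^*}$ is neither ALC nor in the open bad set. To conclude that it is AC, you must know that convergence to any other fixed point of the compactified system either cannot occur or forces incompleteness through an open condition. A monotone quantity rules out recurrence, but not convergence to a further saddle. Such convergence is not an open condition, and it is exactly where $\mu^*$ could land. This classification is the substance of the cited argument and is missing here.

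The small-$\mu$ step also needs more than you state. Setting $\mu=0$ in \eqref{eq:asymptotics-metric-S5} gives $f\equiv 0$ and $a=c$, which is the Bryant--Salamon $G_2$-metric on $\Lambda^2_-\C P^2$ with a collapsed circle, not an ALC metric. You need coordinates in which the local family is continuous up to $\mu=0$, and in which the conical end of this $G_2$-trajectory attracts nearby trajectories of the full system. The blow-up of Proposition~\ref{prop:ALC-asymptotic-behaviour} cannot provide this uniformly: in the $(A,B,C)$ variables the whole family degenerates to the origin as $\mu\to 0$, on compact sets away from the singular orbit.
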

\begin{remark}
    We have adopted the opposite sign conventions to Lehmann to remain coherent with our previous notation.
\end{remark} 
Let us study $\Spin(7)$-instantons for the metrics $\gamma^0_\mu$. By Proposition \ref{prop:admissible-bundles-S5}, we only have two smooth bundles, the reducible (flat) bundle $P_0$, and the irreducible $\SO(3)$-bundle $Q_2$. 

Over $P_0 \rightarrow M^0_{1,0}$, we have 
\begin{proposition}
    Consider $\gamma^0_{\mu}$ with $\mu \in (0, \mu^*)$. Then there is a one-parameter family of abelian instantons over $P_0$, given by 
$$ \xi(t) =  \kappa f^2(t)  =  \kappa\mu^2 + \frac{\kappa \mu^4}{3} t^2 +o(t^3)$$
with $\kappa \in \R$. For the AC metric $\gamma^0_{\mu^*}$, the only bounded $\Spin(7)$-instanton is the flat connection, i.e. $\xi(t)=0$.
\end{proposition}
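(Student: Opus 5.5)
We first derive the abelian instanton equation on $N_{1,0}$ for the trivial bundle $P_0$, then integrate it explicitly, and finally analyse boundedness at the two kinds of ends. For $n=0$ the homogeneous connection is $A=\xi(t)\,e_7\otimes T_1$. Setting $n=0$ in Equation \eqref{eq:abelian-instaton} gives
\[
\frac{\dot\xi}{f}=-\frac{\xi}{\Delta}\Big(\frac{m}{a^2}+\frac{l}{b^2}+\frac{k}{c^2}\Big).
\]
Comparing with Equation \eqref{eq:evo-f}, the right-hand side equals $\frac{2\dot f}{f^2}\xi$. Hence $\dot\xi/\xi=2\dot f/f$, and so $\partial_t(\xi/f^2)=0$. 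This is also the $n=0$ case of Proposition \ref{prop:condensed-smooth-abelian-ODE}, where the inhomogeneous term vanishes. Every solution on the principal part is therefore $\xi=\kappa f^2$ with $\kappa\in\R$.

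Smoothness across the singular orbit $S^5$ has to be checked next. By Proposition \ref{prop:connections-S5}, the invariant connections on the trivial bundle over $S^5$ are exactly $c\,\omega_7$. Here $e_7$ is the direction tangent to $S^5$ (it is not collapsed, since $f(0)=-\mu\neq0$). So the extension condition only requires $\xi$ to be an even smooth function of $t$ with arbitrary value at $t=0$. Substituting the expansion \eqref{eq:asymptotics-metric-S5}, $f=-\mu-\frac{\mu^3}{6}t^2+o(t^3)$, gives $f^2=\mu^2+\frac{\mu^4}{3}t^2+o(t^3)$. This yields the stated expansion of $\xi$. The evenness of $f$ in the Reidegeld local solution guarantees smoothness to all orders, so each $\kappa$ gives a smooth instanton.

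It remains to determine boundedness at infinity. In the ALC regime $\mu<\mu^*$, $f$ tends to the finite asymptotic fibre length, so $\xi=\kappa f^2$ is bounded. Its curvature $F_A=\kappa\,d(f^2e_7)$ decays like the abelian solutions treated earlier, and the whole line $\kappa\in\R$ consists of bounded instantons. In the AC case $f$ grows linearly in $t$, so $\xi=\kappa f^2$ grows quadratically. The norm of $A$ measured in the orthonormal frame, $|\xi e_7|=|\xi|/|f|\sim|\kappa f|$, is then unbounded. For the curvature, the $\omega_{07}$-component $\dot\xi/f=2\kappa\dot f$ tends to a nonzero constant, which contradicts the decay expected of a bounded instanton on a cone. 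Hence only $\kappa=0$ survives.

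The main obstacle is making ``bounded'' precise in the AC case, that is, checking that $f$ really grows linearly along $\gamma^0_{\mu^*}$. The plan is to use Lehmann's AC asymptotics, under which $a,b,c,f\sim t$ up to constants, and then observe that the curvature component $2\kappa\dot f$ does not decay. Combined with the normalisation that bounded means $|F_A|\to0$, or that the connection is bounded in the orthonormal frame, this excludes $\kappa\neq0$.
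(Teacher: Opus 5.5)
Your proposal is correct and follows the paper's proof: with $n=0$ the abelian instanton equation is exactly $\partial_t(\xi/f^2)=0$, so $\xi=\kappa f^2$, and in the AC case boundedness forces $\kappa=0$ because $f$ grows. One caveat: rule out $\kappa\neq0$ by the unboundedness of $\xi=\kappa f^2$, which is the paper's sense of a bounded solution, and not by curvature decay, since $|F_A|$ stays bounded (tending to a nonzero constant) on the AC end.
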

\begin{proof}
Recall that we can rewrite the abelian $\Spin(7)$-instanton ODE as 
$$\partial_t \left( \frac{\xi}{f^2}\right) =  0$$
since, in this case, $n=0$. Integrating, we get the desired claim.
\end{proof}
\begin{remark}
    Note that, in this case, the connection coefficient $\xi$ converges to an arbitrary constant $\kappa \mu^2 \in \R$ over the singular $S^5$. This matches precisely with Proposition \ref{prop:connections-S5}, as expected.
\end{remark}
The $\Spin(7)$-instanton equations over $Q_2\to M^0_{1,0}$ are
\begin{subequations}\label{eq:SO(3)inst-M010}
\begin{align}
    \frac{\dot{\xi}}{f} &= \frac{\xi-2}{a^2} + \frac{4- \psi^2}{b^2} - \frac{\xi+2}{c^2}\\
    \dot \psi  & =- \left( \frac{2b}{ac}+ \frac{\xi}{f}\right) \psi
\end{align}
\end{subequations}

Letting $\xi(t) = t u (t)$ and  $\psi= 2+t^2 v (t)$, we get the induced ODE system 
\begin{subequations}
    \begin{align}
        \frac{du }{dt} & = -\frac{1}{t}\left(u - \mu v  -4 \mu\right) +R_1(t, u,v)\\
        \frac{d v}{dt}&= -\frac{1}{t}\left( 2v- \frac{2u }{\mu}  + 8\right) + R_2(t,u,v)
    \end{align}
\end{subequations}
with $R_i$ smooth in $(t, u,v)$ in a neighbourhood of the singular orbit. We use Malgrange’s \cite{Mal74} existence and uniqueness result for singular IVP, following the formulation of Foscolo--Haskins in \cite[Thm. 4.7]{FH17}. In their language, we have 
$$M_{-1} = \begin{pmatrix}
    \mu v + 4\mu- u \\  \frac{2u }{\mu}-2v - 8
\end{pmatrix}\;,$$
which vanishes for $(u_0,v_0)=(\lambda,\frac{\lambda}{\mu}-4)$, and we have
$$ \det \left( h \operatorname{Id} - d_{(u_0,v_0)} M_{-1} \right) = \det \begin{pmatrix}
     h+ 1 & -\mu \\ -\dfrac{2}{\mu} & h+2 
\end{pmatrix} = (h+1)(h+2)-2 = h(h+3)\;.$$
So, we have
\begin{proposition}
    There exist a one-parameter family of local solutions $\Omega(\lambda)(t)$ to the ODE system \eqref{eq:SO(3)inst-M010} with the local formal expansion
    \begin{subequations}\label{eq:formal-expansion}
\begin{align}
        \xi(t) & = \lambda t+
    \left[
        \frac{\lambda^2}{5\mu}
        +\frac{77\mu^2-288}{270}\lambda
        +\frac{4\mu^3}{15}
        \right]t^3+o(t^4),\\[2mm]
        \psi(t)
        &=2+\left(\frac{\lambda}{\mu}-4\right)t^2+\left[
        \frac{7\lambda^2}{20\mu^2}
        -\frac{342-8\mu^2}{135\mu}\lambda
        +\frac{1260-32\mu^2}{135}
        \right]t^4+o(t^4).
\end{align}        
    \end{subequations}
\end{proposition}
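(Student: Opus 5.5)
The statement is a direct application of Malgrange's singular initial value theorem in the Foscolo--Haskins formulation \cite[Thm.~4.7]{FH17}. Once that theorem applies, what remains is a finite recursive computation of Taylor coefficients. The plan is as follows.

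\textbf{Step 1: put the system in the form required by the theorem.} Substitute $\xi=tu$ and $\psi=2+t^2v$ into \eqref{eq:SO(3)inst-M010}, using the expansions \eqref{eq:asymptotics-metric-S5} for $a,b,c,f$. I will check that the result has the form $t\,\frac{dY}{dt}=M_{-1}(Y)+tM(t,Y)$ with $Y=(u,v)$, where $M$ is smooth (in fact real analytic) near $t=0$.

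The point needing care is the $\frac{4-\psi^2}{b^2}$ term. Since $b=2t+O(t^3)$, we have $b^{-2}=\frac{1}{4t^2}(1+O(t^2))$. Also $4-\psi^2=-4t^2v-t^4v^2$. The quotient is therefore $-v+O(t^2)$, which is regular. Similarly, $\frac{\xi-2}{a^2}-\frac{\xi+2}{c^2}$ is smooth. Its constant term $-4$, after multiplying by $f=-\mu+\dots$ and dividing by $t$ via $\dot\xi=u+t\dot u$, produces the $4\mu/t$ term.

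In the $\psi$ equation, $\frac{2b}{ac}=4t+O(t^2)$ and $\frac{\xi}{f}=-\frac{tu}{\mu}+O(t^2)$. Multiplying by $\psi=2+t^2v$ and dividing by $t^2$ yields the singular coefficient $-(8-2u/\mu+2v)/t$, as displayed. I will verify that each $R_i$ is smooth in $(t,u,v)$, which only requires the metric expansions to be analytic in $t$, as they are by Reidegeld's construction.

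\textbf{Step 2: apply the theorem.} The zero set of $M_{-1}$ is the line $\{(u_0,v_0)=(\lambda,\lambda/\mu-4)\}$, and $\det(h\,\mathrm{Id}-d M_{-1})=h(h+3)$. The eigenvalues of $dM_{-1}$ are therefore $0$ and $-3$. In particular, $dM_{-1}$ has no positive integer eigenvalues. By \cite[Thm.~4.7]{FH17}, for every $Y_0$ with $M_{-1}(Y_0)=0$ there is a unique solution with $Y(0)=Y_0$, analytic in $t$ and depending continuously on $Y_0$. The zero eigenvalue corresponds to the free parameter $\lambda$ along the line of zeros. The eigenvalue $-3$ introduces no further freedom, since negative modes $t^{-3}$ are excluded by requiring a continuous limit at $t=0$. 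This gives the one-parameter family $\Omega(\lambda)$.

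\textbf{Step 3: compute the coefficients and check smoothness.} With $u=\lambda+u_1t+u_2t^2+\dots$ and $v=v_0+v_1t+v_2t^2+\dots$, matching powers gives linear systems $(j\,\mathrm{Id}-dM_{-1})(u_j,v_j)=(\text{known})$ for $j\ge1$. These are invertible because $j(j+3)\neq0$. I expect parity to force the odd coefficients of $u$ and $v$ to vanish: $f$, $b$, and the combinations $a\pm c$ have definite parity in $t$, and the system is invariant under $t\mapsto -t$, $\mu\mapsto-\mu$ (compare the $a\leftrightarrow c$ swap). The expansion would then be in $t^3$ for $\xi$ and $t^4$ for $\psi$, as displayed. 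Solving the $j=2$ system, using the $t^2$ and $t^3$ coefficients of the metric, gives the stated brackets.

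I expect this algebra to be the main obstacle. It is a bookkeeping problem: the $t^3$ coefficients of $a$ and $c$ and the $t^2$ coefficient of $f$ all feed into $R_1$ and $R_2$, and sign conventions for $f<0$ must be tracked. I would do it by computer algebra and cross-check the expansion by substituting it back into \eqref{eq:SO(3)inst-M010}.

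Finally, smooth extension over $S^5$ is consistent with Proposition \ref{prop:connections-S5}: $\psi\to2$ gives the canonical connection on $Q_2$. The parity structure ($\xi$ odd in $t$, $\psi$ even) matches the required behaviour along the normal $\R^3$ directions.
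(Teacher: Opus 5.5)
Your proposal follows the paper's argument: substitute $\xi=tu$, $\psi=2+t^2v$, apply the Foscolo--Haskins form of Malgrange's theorem with $M_{-1}$ vanishing on the line $(u_0,v_0)=(\lambda,\lambda/\mu-4)$ and $\det(h\,\mathrm{Id}-dM_{-1})=h(h+3)$, then solve the $j=2$ linear system, which does reproduce the stated brackets from the given metric expansions. One correction to your parity remark: the symmetry that kills the odd coefficients of $u,v$ at fixed $\mu$ is $(t,\xi,\psi)\mapsto(-t,-\xi,\psi)$, using $a(-t)=c(t)$, $b(-t)=-b(t)$, $f(-t)=f(t)$ and uniqueness for the singular initial value problem, whereas $t\mapsto -t$, $\mu\mapsto-\mu$ only relates the solutions for $\mu$ and $-\mu$; you need this correct symmetry for the $o(t^4)$ remainder in $\xi$, because the $t^4$ coefficient of $b$, not recorded in \eqref{eq:asymptotics-metric-S5}, enters the $t^4$ coefficient of $\xi$.
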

Similarly to the generic case, we have 
\begin{lemma}
The partial ordering 
$$ (\xi_1, \psi_1) \succeq (\xi_2, \psi_2) \quad \iff\quad   \xi_1 \geq \xi_2  \;,\;\;  \psi_1\geq \psi_2$$
is preserved by the ODE \eqref{eq:SO(3)inst-M010}. In particular, we have $\Omega(\lambda_1) \succeq  \Omega(\lambda_2)$ for all $\lambda_1 \geq \lambda_2$.
\end{lemma}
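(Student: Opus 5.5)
The plan is a barrier argument in the original geodesic time $t$, as in Lemma \ref{lemma:ordering}. The key point is that the system \eqref{eq:SO(3)inst-M010} is \emph{cooperative}, or quasi-monotone, on the region where the solutions live. There, the $\xi$-equation is non-decreasing in $\psi$ and the $\psi$-equation is non-decreasing in $\xi$.

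First I would record two sign facts along the solutions in question.
\begin{enumerate}[label=(\roman*)]
  \item $f<0$ for all $t>0$. This holds near $t=0$ by the expansion \eqref{eq:asymptotics-metric-S5}, since $f=-\mu+\dots$ with $\mu>0$. It persists because \eqref{eq:evo-f} is linear and homogeneous in $f$, so $f$ can never vanish.
  \item $\psi>0$. The $\psi$-equation is linear homogeneous in $\psi$, and $\psi(0)=2$, so $\psi$ cannot vanish either.
\end{enumerate}
The metric coefficients $a,b,c$ are common to both solutions and positive for $t>0$.

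Next I would check the two boundary faces of the ordered region. On $\{\xi_1=\xi_2\}$, all terms in $\dot\xi$ except the $\psi^2$ term cancel, giving
$$\dot\xi_1-\dot\xi_2=-\frac{f}{b^2}\left(\psi_1^2-\psi_2^2\right)\geq 0 \quad\text{whenever } \psi_1\geq\psi_2>0,$$
using $f<0$. On $\{\psi_1=\psi_2=\psi\}$ we get
$$\dot\psi_1-\dot\psi_2=-\frac{\xi_1-\xi_2}{f}\,\psi\geq 0 \quad\text{whenever } \xi_1\geq\xi_2.$$
These are exactly the Kamke conditions. To pass from non-strict boundary inequalities to actual invariance, I would invoke the standard comparison theorem for quasi-monotone systems. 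Alternatively, I would perturb: compare with the solution starting from $(\xi_2-\varepsilon,\psi_2-\varepsilon)$, obtain strict inequalities at a first touching time, which gives a contradiction, and then let $\varepsilon\to0$ using continuous dependence on data, valid for $t>0$ where the system is regular.

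For the statement about the family $\Omega(\lambda)$, the difficulty is that $t=0$ is a singular point, so I cannot compare initial data there. Instead I would use the formal expansion \eqref{eq:formal-expansion}, which holds as a genuine asymptotic expansion by Malgrange's theorem. For $\lambda_1>\lambda_2$ it gives
$$\xi_1-\xi_2=(\lambda_1-\lambda_2)t+O(t^3), \qquad \psi_1-\psi_2=\frac{\lambda_1-\lambda_2}{\mu}\,t^2+O(t^4).$$
Both are strictly positive on some interval $(0,\varepsilon)$, since $\mu>0$. Applying the invariance just proved from time $t=\varepsilon/2$ onwards then yields $\Omega(\lambda_1)\succeq\Omega(\lambda_2)$ on the whole common interval of existence.

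I expect the main care to be needed in two places. One is that the error terms in the expansion are uniform enough to fix the sign near $t=0$. The other is keeping the sign bookkeeping straight: $f<0$ here in forward time $t$, whereas the earlier generic analysis worked in $s$ with time reversal.
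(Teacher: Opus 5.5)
Your proposal is correct and follows essentially the same route as the paper. You use the two barrier identities on $\{\xi_1=\xi_2\}$ and $\{\psi_1=\psi_2\}$, relying on $f<0$ and $\psi\geq 0$. You then use the expansion \eqref{eq:formal-expansion} to fix the order of $\Omega(\lambda_1)$ and $\Omega(\lambda_2)$ near the singular orbit.

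Your additional care fills in details the paper leaves implicit: the strictly cooperative first-touching argument, and starting the comparison at $t=\varepsilon/2$ away from the singular point. The only slip is that \eqref{eq:evo-f} is quadratic rather than linear in $f$. Writing it as $\dot f=h(t)f$, with $h$ continuous along the solution, still shows that $f$ cannot vanish.
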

\begin{proof}
Once more, we have suitable barrier-like identities. Along $\xi_1=\xi_2$, we have
$$\left( \dot \xi_1 -  \dot \xi_2\right) =  - \frac{f}{b^2}\left( \psi_1^2- \psi_2^2\right) \geq 0\;, $$
since $\psi_1 \geq \psi_2 \geq 0$ and $f<0$. Similarly, along $\psi_1=\psi_2$, we have
$$ \left( \dot \psi_1 -  \dot \psi_2\right) = - \frac{\psi_1}{f}\left( \xi_1- \xi_2\right) \geq 0\;.$$
The claim for the local solutions $\Omega(\lambda)$ now follows from the solution's formal expansion, Equation \eqref{eq:formal-expansion}.
\end{proof}
Hence, we conjecture
\begin{conjecture}
    Let $\gamma^0_\mu(t)$ be a complete $\Spin(7)$-metric, with $\mu \in (0, \mu^*]$. For the ALC metrics, $\mu <\mu^*$, there exists a one-parameter family of $\Spin(7)$-instantons, indexed by $\lambda \in(-\infty,  \lambda^*_\mu]$. For the AC metric $\gamma^0_{\mu^*}$, there exists a unique $\Spin(7)$-instanton, corresponding to $\lambda^*$ and satisfying $$\lim_{\mu \to \mu^*} \lambda^*_\mu = \lambda^* \;.$$
\end{conjecture}
Even without long-time existence, we may still study the local solutions near the singular orbit as \(\lambda\to-\infty\). Let \(r_\delta\) denote the normal rescaling map, analogous to the one in Equation~\eqref{eq:normal-fibre-rescaling}, with
$$ \delta^2_\lambda = -\frac{\mu\kappa^2}{3\lambda}
\qquad \text{for } \lambda\ll 0. $$
We define the functions
$$
\Xi_\delta = -\frac{\delta}{\mu}\, r^*_\delta(\xi),
\qquad\qquad
\Psi_\delta = \frac{1}{2}\, r^*_\delta \psi.
$$
Then, for the rescaled time \(\tau = t/\delta\), these functions satisfy the ODE system
\begin{subequations}\label{eq: prenormalised-BPS}
    \begin{align}
    \frac{d}{d \tau}   \Xi_\delta &= \delta f \left(\frac{1}{a^2} - \frac{1}{c^2}\right) \Xi_\delta + 2 \delta^2\frac{f}{\mu} \left(\frac{1}{a^2} + \frac{1}{c^2}\right)  - \frac{4 \delta^2}{b^2} \frac{f}{\mu} \left( 1- \Psi_\delta^2\right)\\
      \frac{d}{d \tau}  \Psi_\delta &= -\left(\frac{2\delta b}{ac}  - \frac{\mu}{f} \Xi_\delta \right)\Psi_\delta
    \end{align}
\end{subequations}
Taking the limit $\delta \to 0$, since 
$$r^*_\delta a \to 1+o(1)\qquad r^*_\delta b \to 2\delta \tau +o(\delta)\qquad  r^*_\delta c \to 1+o(1)\qquad r^*_\delta f \to  -\mu+o(1)\;,$$
we get the limiting equations
\begin{equation}\label{eq:inst-to-Bogo}
     \frac{d}{d \tau}  \Xi = \frac{1-  \Psi^2}{\tau^2} \qquad \qquad      \frac{d}{d \tau} \Psi = - \Xi\Psi\;, 
\end{equation}
with asymptotics at the origin
\begin{equation}
    \Xi = \frac{\kappa^2}{3} \tau +  o(\tau) \qquad \qquad \Psi = 1 -  \frac{ \kappa^2}{6} \tau^2  + o(\tau^2)\;.
\end{equation}
The system \eqref{eq:inst-to-Bogo} is the well-known radial Bogomolny system \eqref{eq:spherical-bogomolny}, and the asymptotics are precisely those of the explicit BPS solution:
 $$\Xi(\tau ) = \kappa \coth (\kappa \tau) - \frac{1}{\tau} \qquad \qquad \Psi(\tau)  = \frac{\kappa \tau}{\sinh (\kappa \tau)}\;.$$
Hence, we have
\begin{theorem}
    In the limit $\lambda \to -\infty$, the rescaled connection $r^*_{\delta_\lambda}{A_\lambda}$ in the normal directions to the singular orbit converges to the BPS monopole in $\R^3$ of scale $\kappa$.
\end{theorem}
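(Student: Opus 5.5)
The plan is to make rigorous the formal limit just derived, using continuous dependence of solutions of a singular initial value problem on parameters. First I rewrite the rescaled system \eqref{eq: prenormalised-BPS} in the variable $\tau = t/\delta$. It then becomes a family of singular ODEs in $(\Xi_\delta,\Psi_\delta)$ with coefficients depending smoothly on $\delta\tau$. This uses the power series \eqref{eq:asymptotics-metric-S5}: we have $r_\delta^*a = 1+O(\delta\tau)$, $r_\delta^*c = 1+O(\delta\tau)$, $r_\delta^* b = 2\delta\tau(1+O(\delta^2\tau^2))$ and $r_\delta^*f = -\mu + O(\delta^2\tau^2)$. Substituting these, the right-hand sides converge to those of \eqref{eq:inst-to-Bogo} uniformly on compact sets of $(\tau,\Xi,\Psi)$ with $\tau\in(0,T]$. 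The errors are of order $\delta\tau$, or $\delta^2$ times bounded quantities.

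Second, I check the initial data near $\tau=0$. From the formal expansion \eqref{eq:formal-expansion} and the choice $\delta_\lambda^2 = -\mu\kappa^2/(3\lambda)$, the leading coefficients give $\Xi_\delta = \tfrac{\kappa^2}{3}\tau + O(\delta)$ and $\Psi_\delta = 1 - \tfrac{\kappa^2}{6}\tau^2 + O(\delta)$. I then have to control the higher terms. The $t^3$ coefficient of $\xi$ contains $\lambda^2/(5\mu)$, which after rescaling contributes $\delta\cdot\delta^3\lambda^2\tau^3 = O(\delta^2)\tau^3\cdot\kappa^4$. This is a $\tau^3$ term of the correct BPS order, so the full expansion should be organised as a power series in $\lambda t^2$ and $t$. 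To do this I redo the Malgrange / Foscolo--Haskins \cite[Thm.~4.7]{FH17} setup directly for the rescaled variables $(U,V)$ with $\Xi = \tau U$ and $\Psi = 1+\tau^2 V$, treating $\delta$ as a parameter. The singular term $M_{-1}$ then has $\delta$-independent indicial roots $h=0,-3$ (as computed above), and the remainder is smooth in $(\tau,\delta,U,V)$ including $\delta=0$. Malgrange's theorem with smooth parameter dependence then gives a unique solution, determined by the free parameter $U(0)=\kappa^2/3$, on a uniform interval $[0,\tau_0]$, depending continuously on $\delta$ up to $\delta=0$. At $\delta=0$ this solution is the BPS solution, because the BPS monopole is the unique solution of \eqref{eq:inst-to-Bogo} with these asymptotics.

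Third, I extend the convergence from $[0,\tau_0]$ to any compact interval $[0,T]$. Since the limiting BPS solution is global and bounded, standard continuous dependence for regular ODEs applies on $[\tau_0,T]$. In particular, the local solutions $A_\lambda$ exist on $t\in[0,\delta T]$ for $|\lambda|$ large, because that interval shrinks into the neighbourhood where the local solution $\Omega(\lambda)$ is defined. I then translate back to connections: $\Xi$ is the $\sigma_1$ component of the normal connection, since the normal fibre is $\R^3$ with $S^2$ orbits and the metric converges to the flat one, and $\Psi$ is the Higgs field. Smooth convergence of $(\Xi_\delta,\Psi_\delta)$ on compacts gives convergence of $r_{\delta_\lambda}^*A_\lambda$ to the BPS monopole of scale $\kappa$.

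I expect the main obstacle to be the second step. The existence interval of the local solutions $\Omega(\lambda)$ in $t$ is not obviously uniform as $\lambda\to-\infty$, so I must make sure the parameter-dependent singular IVP is set up in rescaled variables where $\lambda$ disappears. The key is that $\lambda\delta^2$ is fixed and all other $\delta$-dependence is regular. I would first try to verify directly that, in the variables $(\tau,U,V)$, the right-hand side is a polynomial in $\delta$ with coefficients smooth in $\delta\tau$, using that $a,b/t,c,f$ are smooth in $t$.
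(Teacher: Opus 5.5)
The gap is in your last step, and that step is the whole of the paper's proof: you have the roles of $\Xi$ and $\Psi$ reversed.

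The function $\psi$ is the coefficient of $e_3\otimes T_2+e_4\otimes T_3$. The directions $e_3,e_4$ are the ones whose scale $b(t)=2t+\dots$ collapses, so they span the tangent space of the normal $S^2$. Hence $\Psi_\delta=\tfrac12 r_\delta^*\psi$ is the coefficient $a$ in the invariant connection $\sigma_1\otimes T_1+a(\sigma_2\otimes T_2+\sigma_3\otimes T_3)$ on the Hopf bundle over $S^2$. The $\sigma_1\otimes T_1$ term there is the fixed isotropy part, coming from the $e_8\otimes T_1$ term, and it is not a free function. There is no $\sigma_1$ direction on the three-dimensional normal slice, and Wang's theorem pins this term down.

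The function $\xi$ is instead the coefficient of $e_7$. Its length $\abs{f}\to\mu$ stays positive at $t=0$, so $e_7$ is \emph{tangent} to $S^5$: it is the Hopf fibre of $S^5\to\C P^2$. Contracting $\xi e_7\otimes T_1$ with the unit-length generator gives the Higgs field $\Phi_\lambda=\frac{\xi_\lambda}{\mu}T_1$. You also need the scaling weight. Since $*_{g_\delta}=\delta^{-1}r_\delta^*(*_{g_\perp})$ on $1$-forms in dimension three, the Bogomolny equation is preserved only if the Higgs field is rescaled as $\delta\, r_\delta^*\Phi$. This is why $\Xi_\delta=-\frac{\delta}{\mu}r_\delta^*\xi$ carries the factor $\delta$.

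So $(r_\delta^*A^\perp_\lambda,\delta r_\delta^*\Phi_\lambda)$ corresponds to $(\Psi_\delta,-\Xi_\delta)$, and \eqref{eq:inst-to-Bogo} is \eqref{eq:spherical-bogomolny} with $a=\Psi$ and $\phi=\Xi$. Under your assignment the limit is not a BPS pair. Your ``Higgs field'' $\Psi=\kappa\tau/\sinh(\kappa\tau)$ equals $1$ at the origin and decays at infinity, which is the behaviour of the connection coefficient $a$, not of $\phi$. Without the correct identification, and without justifying the normalisation $1/\mu$ and the weight $\delta$, convergence of $(\Xi_\delta,\Psi_\delta)$ does not give convergence of the normal connection and Higgs field to the BPS monopole.

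The analytic part of your plan is sound, and more careful than the paper, which only takes the formal limit $\delta\to 0$ and matches asymptotics. Adjoining $\delta$ as a variable with $d\delta/d\tau=0$ in the Foscolo--Haskins formulation does give a uniform interval and continuity up to $\delta=0$, with two small corrections.
\begin{itemize}
\item In the variables $\Xi=\tau U$, $\Psi=1+\tau^2V$ one finds $M_{-1}=-(U+2V+4\delta^2)(1,1)$, with indicial polynomial $h(h+3)$, or $h^2(h+3)$ once $\delta$ is adjoined. So the singular datum is $V(0)=-U(0)/2-2\delta^2$; this is how $v(0)=\lambda/\mu-4$ is recovered.
\item The $\lambda^2t^3/(5\mu)$ term rescales to $-\kappa^4\tau^3/45$. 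This is $O(1)$ and is exactly the BPS coefficient, not $O(\delta^2)$, because $\lambda\delta^2$ is fixed.
\end{itemize}
Finally, existence of $\Omega(\lambda)$ on $[0,\delta T]$ should be deduced from continuous dependence of the rescaled problem on $[0,T]$. You do not have a fixed neighbourhood of existence for $\Omega(\lambda)$ to appeal to.
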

\begin{proof}
Let $A_{\lambda}^{\perp}$ denote the normal component
of the connection. Contracting its tangential component
with the generator of the Hopf fibration, normalised to have unit length at the singular orbit, defines the Higgs field
$$ \Phi_\lambda=\frac{\xi_\lambda(t)}{\mu}T_1 \;.$$
Thus, under the normal dilation $r_\delta(x)=\delta x$,
the rescaled connection and Higgs field  are $r_{\delta_\lambda}^*(A_{\lambda}^{\perp})$ and $
\delta_\lambda r_{\delta_\lambda}^*(\Phi_\lambda)$. 
These are precisely $\Psi_\delta$ and $-\Xi_\delta$.

The factor $\delta_\lambda$ in the Higgs field is required
by the scaling of the Bogomolny equation. Indeed, for the rescaled metric $g_\delta=\delta^{-2} r_{\delta}^*g_\perp$, the Hodge star on one-forms satisfies
\[*_{g_\delta}  =\delta^{-1}r_\delta^*(*_{g_\perp}).\qquad \qedhere\]
\end{proof}
\appendix
\section{Explicit examples}\label{sec:explicit-examples}
We briefly study the gauge theory for the explicit Bryant--Salamon and Calabi metrics.
\subsection{The Bryant-Salamon case}
In the new coordinates, the Bryant--Salamon metric on $M_{1,1}^2$ is given explicitly by
$$A=  \frac{6 e^{6s}}{ 1 +5 e^{6s} }\qquad B=3 \qquad  C=3\;.$$ 
By straightforward computations, one checks that the straight line $\xi-3\psi=0$ is invariant under the $\Spin(7)$-instanton ODE. Therefore, if $\xi \neq  0$, the non-abelian ODE system on the Bryant--Salamon manifold reduces to 
\begin{equation}
    \psi' =\psi \left(6-3\psi - \frac{12 e^{6s}}{1 +5e^{6s}}\right) \end{equation}
This is a Bernoulli-type ODE, with solution 
$$ \psi(u)  = \frac{6 u }{ 1 + 5 u + \kappa (1+5u)^{2/5}},$$
where we introduced the change $u =  e^{6s}$ to lighten notation, and $\kappa \in \R$. The condition that $\psi \geq 0 $ forces $\kappa \geq -1$. It is straightforward to verify that, for $u \to \infty$, $ \psi(u) \to \frac{6}{5}$, which is the non-abelian fixed point. For $u \to 0$, we see that $\psi \to 0$ for all $\kappa >-1$. For $\kappa=-1$, we have
$$\lim_{u \to 0 } \psi = \lim_{u \to 0 } \frac{6 u }{ 1 + 5 u - (1+5u)^{2/5}}  =  \lim_{u \to 0 } \frac{6 u }{ 3u + O(u^2)} = 2\;,$$
giving an explicit example of the bubbling behaviour considered in Section \ref{sec:bubbling}.
\subsection{The Calabi case}
The Calabi metric on $M_{1,1}^1\cong T^*\mathbb{C}P^2$ is given by
$$A=-6 \qquad B  = -3  \left( 1+ \frac{1 }{\sqrt{1+ e^{-12 s}}}\right) \qquad C  = -3  \left( 1- \frac{1 }{\sqrt{1+ e^{-12 s}}} \right)\;,$$ 
where we have taken the convention $p_0^1 = (-6,-6,0)$ once more. Note that we have the relationship $B+C=-6$ for all time and the difference $x= B-C = -\frac{6}{\sqrt{1+e^{-12s}}}$ satisfies the ODE $x'= x \left( 6 -  \frac{x^2}{6}\right)$ . In these new coordinates, the instanton ODE reduces to
\begin{subequations}\label{eq:instantons-Calabi}
    \begin{align}
        \xi'& = -\xi\left(6+\frac{x^2}{6}\right) -18x+3(x-6)\psi^2\\
        \psi' &= \left(x+3-\xi\right) \psi\;.
    \end{align}
\end{subequations}
A straightforward computation yields
 \begin{proposition}
    The sets
    \begin{align*}
        \mathcal{K}&=\{4\xi+6x-(x-6)\psi^2\leq 0\}\cap \{2\xi-x\geq 0\}\cap \{\psi,-x,x+6\geq 0\}\,,\\
        C_1&=\{\psi=0\} \cap\left\{2\xi +3x =0\right\}\cap \{\psi,-x,x+6\geq 0\}\; \text{and}\\
        C_2& =\{x-2\xi=0\}\cap \{8\xi-(\xi-3)\psi^2=0\}\cap \{\psi,-x,x+6\geq 0\}
    \end{align*}
    are compact and forward invariant under the ODE system \eqref{eq:instantons-Calabi}. 

    In particular, the curves $C_1$ and $C_2$ correspond to the traces of the integral curves $\Gamma^1(48,0)$ and $\Omega^1(0)$, respectively. 
\end{proposition}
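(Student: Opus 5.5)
The plan is to verify everything by direct barrier computations, using that $x=B-C$ decouples: $x'=x\left(6-\tfrac{x^2}{6}\right)$. First I check that the slab $\{-6\le x\le 0\}$ is invariant. Its endpoints $x=0$ and $x=-6$ are fixed points of the $x$-equation. On $(-6,0)$ one has $x'<0$, so along the Calabi metric $x$ runs monotonically from $0$ as $s\to-\infty$ to $-6$ as $s\to+\infty$, and $x$ can serve as a monotone time-like coordinate. Likewise $\{\psi\ge 0\}$ is invariant, since the $\psi$-equation is linear in $\psi$. This reduces everything to the $(\xi,\psi)$-directions.

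For $C_1$, the plane $\{\psi=0\}$ is invariant. Setting $\xi=-\tfrac{3}{2}x$, the quantity $2\xi'+3x'$ becomes
$$3x\left(6+\tfrac{x^2}{6}\right)-36x+3x\left(6-\tfrac{x^2}{6}\right)=0,$$
so $C_1$ is invariant. For $C_2$ I would use the defining functions $G_1=x-2\xi$ and $G_2=8\xi-(\xi-3)\psi^2$. I compute $G_1'$ and $G_2'$ from \eqref{eq:instantons-Calabi} and reduce modulo $(G_1,G_2)$, for instance by substituting $\xi=x/2$ and $\psi^2=4x/(x-6)$. The goal is to show that both vanish identically, so the curve is tangent to the flow. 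This is routine algebra.

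Compactness of $C_1$ and $C_2$ is immediate: $\xi$ is a multiple of $x\in[-6,0]$, and on $C_2$ one gets $\psi^2=8\xi/(\xi-3)\in[0,4]$. For $\mathcal K$, I combine the inequalities. From $\xi\ge x/2\ge -3$ and
$$4\xi\le (x-6)\psi^2-6x\le -6x\le 36,$$
the variable $\xi$ is bounded. Moreover $(x-6)\psi^2\ge 4\xi+6x\ge 8x$ gives $\psi^2\le 8x/(x-6)\le 4$. Hence $\mathcal K$ is compact.

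Forward invariance of $\mathcal K$ is the main step. On each boundary face I evaluate the derivative of the defining function and aim to show it has the inward sign, using the remaining inequalities. On the face $\{2\xi=x\}$, I need $(2\xi-x)'\ge 0$. There
$$2\xi'-x'=-x\left(6+\tfrac{x^2}{6}\right)-36x+6(x-6)\psi^2-x\left(6-\tfrac{x^2}{6}\right),$$
and I would show this is nonnegative using $\psi^2\le 8x/(x-6)$. That bound itself comes from the other face, and the extremal case is exactly $C_2$. On the face $\{4\xi+6x=(x-6)\psi^2\}$, I need the derivative of $4\xi+6x-(x-6)\psi^2$ to be $\le 0$. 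I expect this face to be the main obstacle: the expression is cubic and mixes $x$, $\xi$ and $\psi^2$. I would first eliminate $\xi$ using the face equation, then factor the result as $\psi^2$ times a polynomial in $(x,\psi^2)$, and finally check its sign on the region $-6\le x\le 0$, $0\le\psi^2\le 8x/(x-6)$ (one-sided, as the constraint $\xi\ge x/2$ yields). Corner points where faces meet are handled by the same computations, as in Proposition \ref{prop:invarinat sectors}.

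Finally, to identify $C_1$ and $C_2$ with $\Gamma^1(48,0)$ and $\Omega^1(0)$, I use that each is a one-dimensional invariant curve over the Calabi trajectory. So each contains exactly one integral curve of the extended system, and I only need to match its limit and leading asymptotics at the singular orbit $x\to0$ ($s\to-\infty$). $C_1$ has $\psi\equiv0$ and tends to the abelian fixed point, so it is a $\Gamma(a,0)$. Comparing the coefficient of $e^{12s}$ in $\xi=-\tfrac32 x$, with $x\sim -6e^{6s}(\cdots)$ expanded appropriately, against \eqref{eq:local-Gamma a,b} should give $a=48$. $C_2$ tends to the fixed point with $\psi=2$. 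Expanding $\xi=x/2$ and $\psi^2=4x/(x-6)$ shows that the coefficient of the faster mode $w_4$ vanishes, which gives $c=0$ by \eqref{eq:local-Omega c}.
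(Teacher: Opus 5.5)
Your compactness and forward-invariance argument for $\mathcal K$ is the paper's. The paper's proof consists only of the two face computations, each using the other defining inequality.

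\begin{itemize}
\item On $\{2\xi=x\}$ it obtains $(2\xi-x)'=-48x+6(x-6)\psi^2\ge 12(2\xi-x)=0$. This is exactly your use of $\psi^2\le 8x/(x-6)$.
\item On $\{D=0\}$, with $D=4\xi+6x-(x-6)\psi^2$, it uses the identity $D'=(2\xi-x)(4\xi+6x)+\bigl(-6+x-\tfrac{x^2}{6}-2\xi\bigr)D$. Hence $D'=(2\xi-x)(x-6)\psi^2\le 0$ on that face. Your elimination of $\xi$ gives the same quantity, $\tfrac12(x-6)\psi^2\bigl((x-6)\psi^2-8x\bigr)$.
\end{itemize}

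Your checks of $C_1$ and of compactness are correct. The paper does not carry these out, nor the identification.

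The identification step, as you set it up, would fail, because you place the singular orbit at the wrong end. Since $p_0^1=(-6,-6,0)\in\mathcal C^-$, the flow is time-reversed: along the Calabi trajectory $(A,B,C)\to p_0^1$ as $s\to+\infty$. So the singular orbit is $x=-6$. The end $x\to 0$ ($s\to-\infty$) is the AC end $p^-_{1,1}=(-6,-3,-3)$.

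At $x=0$ both $C_1$ and $C_2$ pass through $(\xi,\psi)=(0,0)$, the abelian limit on the cone. Nothing can be matched there, and your claim that $C_2$ tends to $\psi=2$ as $x\to0$ is false. At $x=-6$ you do get the right fixed points: $C_1$ ends at $(9,0)$ and $C_2$ at $(-3,2)$, the two fixed points over $p_0^1$. The expansion must be done in $e^{-12s}$, with $x=-6+3e^{-12s}+O(e^{-24s})$.

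There is also a slip on $C_2$: there $\psi^2=8x/(x-6)$, not $4x/(x-6)$; you use the correct value for $\mathcal K$. With your value the tangency check fails, since $(x-2\xi)'=48x-6(x-6)\psi^2$ would equal $24x\neq0$.

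Even at the correct end, ``$a=48$'' and ``$c=0$'' are not yet computed.
\begin{itemize}
\item The Calabi parameter $s$ must first be shifted to the normalised form $\cos\theta\,u_1+\sin\theta\,u_2$ of the metric. This shift rescales the $e^{-12s}$ coefficients.
\item The $w_4$-eigenvalue is exactly twice the other eigenvalue. So the $e^{-24s}$ coefficient of $C_2$ also receives quadratic contributions from the $e^{-12s}$ modes. The statement that the $w_4$-coefficient vanishes therefore needs a precise convention for $c$.
\end{itemize}
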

\begin{proof}
    At $2\xi-x=0$, we have 
    \begin{align*}
    (2\xi-x)'&=-2\xi\left(6+\frac{x^2}{6}\right) -36x+6(x-6)\psi^2-x \left( 6 -  \frac{x^2}{6}\right)\\
    &=-48x+6(x-6)\psi^2\\
    &\geq 12(2\xi - x) \\
    &=0.
    \end{align*}
    
    At $4\xi+6x-(x-6)\psi^2= 0$, we have 
    \begin{align*}
    (4\xi+6x-(x-6)\psi^2)'&=-4\xi\left(6+\frac{x^2}{6}\right) -72x+12(x-6)\psi^2+6x \left( 6 -  \frac{x^2}{6}\right)\\
    &\qquad -x \left( 6 -  \frac{x^2}{6}\right)\psi^2-2(x-6)(x+3-\xi)\psi^2 \\
    &=(2\xi-x)(4\xi+6x)+\left(-12-\frac{x}{6}(6+x)+2(x+3-\xi)\right)\left(4\xi+6x-(x-6)\psi^2\right)\\
    &=(2\xi-x)(x-6)\psi^2\\
    &\leq 0 \qedhere
    \end{align*}
\end{proof}

Since we know there exists a smooth one-parameter family of integral curves $\Gamma^1(a, \alpha(a))$ joining $(-6,-3,0)$ and $(0,0,0)$, there must exist an invariant smooth surface $\mathcal{S}$ contained in $\mathcal{K}$ that parametrises it, and is bounded by the curves $C_1$ and $C_2$. We ask 
\begin{question}
    Does the surface $\mathcal{S}$ admit a closed-form description? In particular, is it algebraic as in the Bryant--Salamon case?
\end{question}

The Calabi metric on $T^*\C P^2$ is actually a hyperkähler metric. This allows one to consider more restrictive types of instantons, known as $\operatorname{Sp}(2)$- and $\SU(4)$-instantons (cf. \cite{AMW25} for definitions). The curves $C_1$ and $C_2$ were already constructed in \cite{AMW25} and shown to be the only smooth bounded $\operatorname{Sp}(2)$-instantons on $M^1_{1,1}$. 

\section{Instantons on \texorpdfstring{$\Spin(7)$}{Spin(7)} metric cones}
Let us now consider $\Spin(7)$-instantons on a metric cone with homogeneous link $N$. $\Spin(7)$-instantons on a $\Spin(7)$ metric cone have a natural interpretation from the point of view of Chern--Simons Floer theory for nearly parallel $G_2$, as outlined in  \cite{DS11}. On a nearly parallel $G_2$-manifold $(N^7, \varphi)$ and principal $G$-bundle $P$, we have the Chern-Simons functional:
\begin{align*}
    CS_\varphi: \mathcal{A}(P) & \to \R \\
    A  & \mapsto  \frac{1}{4}\int_N \operatorname{Tr}\left(F_A^2 - F_{A_0}^2 \right) \wedge \varphi = \int_N CS_0(A, A_0) \wedge \psi\;,
\end{align*} 
where $A_0$ is a fixed background connection and $CS_0(A,A_0) =  \operatorname{Tr}\left[ \alpha \wedge \left(F_A+\frac{1}{2}d_{A_0}\alpha+  \frac{1}{3} \alpha \wedge \alpha \right) \right]$ with $\alpha = A-A_0$ is the usual Chern--Simons 3-form.

The critical points of this Chern--Simons-type functional $CS_\varphi$ are precisely $G_2$-instantons on $(N, \varphi)$, whilst $\Spin(7)$-instantons on the metric cone correspond to gradient flow lines of $CS_\varphi$ joining critical points. The main goal of this section is to establish the following theorem.
\begin{theorem}\label{thm:gradient-flow-CS}
 Denote by $(N_{k,l},g^\pm)$ the Aloff--Wallach space equipped with the nearly parallel $G_2$-structure associated to $p^\pm_{k,l}$, and consider the homogeneous principal $\SO(3)$-bundles $L^n \to N_{k,l}$ characterised by $\theta_n: \U(1) \to \SO(3)$.  

 Then, the Chern--Simons functional $CS\left(L^{\widehat{m}}\right)$ over $(N_{k,l}, g^+)$ has at least two critical points, and a gradient flow line joining them. For $l<r^*k$, the same holds for $CS\left(L^{\widehat{k}}\right)$ over $(N_{k,l}, g^-)$.
\end{theorem}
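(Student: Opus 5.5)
On the metric cone over $(N_{k,l},g^\pm)$ we freeze the metric variables at the fixed point $p^\pm_{k,l}=(A^\pm,B^\pm,C^\pm)$ of \eqref{eq:new-evo-spin7}. Then $F$ and $\widehat F$ become constants, and the reduced instanton system \eqref{eq:new-generic-SO3-instanton} for $Q_{\widehat m}$ turns into the autonomous planar system
\begin{equation*}
\xi'=-F^+\xi+\widehat m\widehat F^+-B^+C^+\psi^2,\qquad \psi'=-(2A^++3m+\xi)\psi .
\end{equation*}
For $Q_{\widehat k}$ over $g^-$ we use the analogous system. Its fixed points are exactly the homogeneous $G_2$-instantons on $L^{n}$, which are the critical points of $CS_\varphi$. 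Integral curves joining two of them are $\Spin(7)$-instantons on the cone, that is, gradient flow lines. The plan therefore has three steps: produce two critical points, show the invariant sectors persist on the cone, and connect the critical points by a trajectory.

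\textbf{Critical points.} The abelian point $(\widehat m\Lambda^+,0)$ always exists. By Proposition~\ref{prop: small l/k for positive D^-_k} we have $D_{\widehat m}>0$, so the lemma preceding it gives an irreducible fixed point $(-3m-2A^+,\sqrt{D_{\widehat m}})$. For $L^{\widehat k}$ with $l<r^*k$ the same lemma gives $D_{\widehat k}>0$, hence again two critical points.

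\textbf{Linearisations.} By Lemma~\ref{lem:Ab AC end} the abelian point is a source on the $g^+$ side, with both eigenvalues positive. On the $g^-$ side with $l<r^*k$ it is a sink; this is the same situation up to time reversal. By Lemma~\ref{lem:nonAb AC end} the irreducible point is a saddle, with eigenvalues $\lambda_\pm$ of opposite sign. Consequently each branch of its one-dimensional stable manifold (in forward time for $g^+$) must come from somewhere in backward time. The aim is to show that the branch lying in $\{\psi>0\}$ emanates from the abelian source.

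\textbf{Trapping the stable branch.} The sectors of Proposition~\ref{prop:invarinat sectors} still make sense with $(A,B,C)$ frozen. In particular the critical sector $\mathcal C=\{\widehat m\Lambda^+\lesssim\xi\le -3m-2A^+,\ \psi^2\le D\}$ is bounded by barriers, with the constants adjusted to the AC values. I would verify that its boundary is entered in backward time. Following the computation of Proposition~\ref{prop:invarinat sectors}(iii), $\xi'$ has a fixed sign on the vertical edge and $(\psi^2)'$ has a fixed sign on the horizontal edge. Granting this, the stable branch entering the saddle from inside $\mathcal C$ stays in $\mathcal C$ for all negative times. Its $\alpha$-limit set is then a nonempty compact invariant subset of $\mathcal C$. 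By Poincar\'e--Bendixson it is a fixed point, a periodic orbit, or a cycle of fixed points. Periodic orbits are excluded by Dulac's criterion with weight $1/\psi$: the divergence of the weighted field is $-F^+/\psi$, which has a fixed sign on $\psi>0$. The half-line $\{\psi=0\}$ is invariant, so no orbit crosses it. Hence the $\alpha$-limit is the abelian point, and the resulting trajectory is the required flow line.

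\textbf{Main obstacle.} The hardest step is choosing the exact barrier sector on the cone. The $s$-dependent sectors were built for curves leaving $p_0^i$, and one must check that a frozen version encloses both fixed points while keeping the correct crossing signs. If the sector approach fails, I would fall back on a limiting argument. By Proposition~\ref{prop:connections-AC}, the AC instanton $\Omega^i(c^*)$ converges to the irreducible point. Rescaling it along the cone end with $s\to\infty$, together with the family $\Gamma^i(a,\alpha(a))$, should produce a broken trajectory through the abelian point, in the spirit of Lemma~\ref{lem: bubble existence}, and this would yield the connecting orbit.
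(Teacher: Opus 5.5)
Your proof is correct, but it runs in the opposite direction from the paper's. The paper shoots forward from the abelian source. It takes the one-parameter family of curves in the two-dimensional unstable manifold of $(\widehat m\Lambda^+,0)$ and uses forward invariance of $\mathfrak A$ and $\mathfrak B$. The nested-closed-sets argument of Proposition~\ref{prop:long-time-critical} then yields one curve trapped in $\mathfrak C$ for all time, which must converge to the irreducible point. You instead take the one-dimensional stable manifold of the saddle $(-3m-2A^+,\sqrt{D_{\widehat m}})$, trace it backward inside the backward-invariant rectangle $\mathcal C=\mathfrak C$, and identify its $\alpha$-limit by Poincar\'e--Bendixson.

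Your route exploits that the frozen system is autonomous and planar. It needs neither curves from the source entering both $\mathfrak A$ and $\mathfrak B$ nor the connectedness argument. It also gives more: any connecting orbit lies in the stable manifold of the saddle, whose other branch lies in the backward-invariant $\mathfrak F$. So within this ansatz the flow line in $\{\psi>0\}$ is unique. The paper's route is exactly the machinery of the non-autonomous smooth case, where Poincar\'e--Bendixson is unavailable, which is why it transfers verbatim.

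The steps you left to check do go through.
\begin{itemize}
\item \emph{Edge signs.} On the edges of $\mathcal C$ one has $\xi'=-B^+C^+\psi^2\le 0$ at $\xi=\widehat m\Lambda^+$, $\xi'=B^+C^+(D_{\widehat m}-\psi^2)\ge0$ at $\xi=-3m-2A^+$, and $\psi'=(-3m-2A^+-\xi)\psi\ge 0$ at $\psi^2=D_{\widehat m}$. The source property gives $\widehat m\Lambda^+<-3m-2A^+$, so $\mathcal C$ is a genuine rectangle with the equilibria at opposite corners.
\item \emph{Stable direction.} You must check that the stable eigenvector points into $\mathcal C$. From the second row of $L^{nAb}_m$ one gets $v_2/v_1=\sqrt{D_{\widehat m}}/|\lambda_-|>0$, so one branch arrives from the lower-left, i.e.\ from the interior of $\mathcal C$.
\item \emph{Excluding the saddle.} You should also rule out the saddle itself as the $\alpha$-limit. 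Your Dulac/Green argument covers a homoclinic loop in $\{\psi>0\}$. More simply, $\psi'\ge 0$ throughout $\mathcal C$, so $\psi$ is monotone along your branch and stays strictly below $\sqrt{D_{\widehat m}}$. A LaSalle-type argument on $\{\psi'=0\}\cap\mathcal C$ then forces the $\alpha$-limit to be the abelian point, with no need for Dulac.
\end{itemize}

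The $g^-$ case is the same after time reversal: there $\mathcal C$ is forward invariant, and the unstable branch of the saddle, with direction $(1,-\sqrt{D_{\widehat k}}/\lambda_+)$, runs into the abelian sink. Your fallback limiting argument is not needed.
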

The two critical points correspond to the abelian and non-abelian fixed points $(p^\pm_{k,l}, \widehat{i}\Lambda^\pm , 0)$ and $\left(p^\pm_{k,l}, \xi_\infty, \sqrt{D_{\widehat{i}}}\right)$ respectively, and have been discussed in the main body of the paper. The existence of the gradient flow line translates in the cohomogeneity one setting to the existence of an integral curve of the ODE
\begin{subequations} \label{eq:ODE-metric cone}
    \begin{align}
    \xi' &= - F^+ \xi + \widehat{m} \widehat{F}^+ - B^+C^+\psi^2\\
    \psi'&= -(2A^++3m  +\xi) \psi
\end{align}
\end{subequations}
which is a subsystem of the ODE system \eqref{eq:new-generic-SO3-instanton} with the fixed $(A,B,C)=p_{k,l}^+ $ coordinates. In order to prove this, we follow the same strategy as in the smooth case. We outline the details for $\widehat{m}$. The case $\widehat{k}$ with $l < r^* k$ follows similarly.

By straightforward computation, we have
\begin{lemma}
Consider the $(\xi,\psi)$-plane division given by 
    \begin{align*}
        \mathfrak{A} & \coloneqq  \left\{ \xi \geq -3m-2A^+ \right\} \cap \left\{ \psi^2  \leq D_{\widehat{m}}^+\right\}\\
        \mathfrak{B} & \coloneqq  \left\{\xi\leq \widehat{m}\Lambda^+\right\}\cup \left(\left\{\xi\leq -3m-2A^+\right\}\cap \{\psi^2\geq D_{\widehat{m}}^+\}\right)\\
        \mathfrak{C} & \coloneqq\left\{ \widehat{m}\Lambda^+\leq \xi\leq -3m-2A^+\right\}\cap \{\psi^2\leq D_{\widehat{m}}^+\}\\
        \mathfrak{F} & \coloneqq  \left\{ \xi \geq -3m-2A^+ \right\}\cap \{\psi^2\geq D_{\widehat{m}}^+\}\,.
    \end{align*}
    The sectors $\mathfrak{A}$ and $\mathfrak{B}$ are forward invariant and $\mathfrak{F}$ is backwards invariant. 
\end{lemma}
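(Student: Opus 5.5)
The plan is a barrier argument on the boundary of each sector, mirroring Proposition~\ref{prop:invarinat sectors}. The new feature is that $(A,B,C)=p^+_{k,l}$ is now constant, which simplifies the computations. I will use three facts about the fixed point $p^+_{k,l}$: $F^+<0$; the identity $m\widehat{F}^+-\widehat{m}F^+=2A^+(C^+-B^+)$ from Lemma~\ref{lemma:identities-klm-khatlhatmhat}; and $B^+C^+>0$. From the fixed-point lemma I also take the two relations $\widehat{m}\Lambda^+=\widehat{m}\widehat{F}^+/F^+$ and
\[
\psi_\infty^2=D_{\widehat{m}}^+ \quad\text{at}\quad \xi=-3m-2A^+ ,
\]
i.e.\ $-F^+(-3m-2A^+)+\widehat{m}\widehat{F}^+=B^+C^+D^+_{\widehat{m}}$. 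Since the sectors are time-independent here, it suffices to check the sign of the vector field on each boundary piece, and on the pieces where two sectors meet.

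\textbf{Invariance of $\mathfrak{A}$.} On the piece $\psi^2=D^+_{\widehat{m}}$ with $\xi\geq -3m-2A^+$, we have $(\psi^2)'=-2(2A^++3m+\xi)\psi^2\leq 0$, so $\psi^2\le D^+_{\widehat{m}}$ is preserved. On the piece $\xi=-3m-2A^+$, the displayed identity gives $\xi'=B^+C^+(D^+_{\widehat{m}}-\psi^2)\geq 0$ whenever $\psi^2\le D^+_{\widehat{m}}$, so $\xi\ge -3m-2A^+$ is preserved. Hence $\mathfrak{A}$ is forward invariant.

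\textbf{Invariance of $\mathfrak{B}$.} On $\xi=\widehat{m}\Lambda^+$ we have $\xi'=-B^+C^+\psi^2\le 0$, so $\{\xi\le\widehat{m}\Lambda^+\}$ is forward invariant. For the second piece, when $\xi\le-3m-2A^+$ we have $\psi'\ge0$, so $\psi^2\ge D^+_{\widehat{m}}$ is preserved. On $\xi=-3m-2A^+$ with $\psi^2\ge D^+_{\widehat{m}}$, we get $\xi'=B^+C^+(D^+_{\widehat{m}}-\psi^2)\le0$, so $\xi\le -3m-2A^+$ is preserved. The union of the two pieces is therefore forward invariant.

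\textbf{Backward invariance of $\mathfrak{F}$.} On $\mathfrak{F}$ both inequalities are reversed relative to $\mathfrak{A}$: there $\xi'\le0$ on $\xi=-3m-2A^+$ and $(\psi^2)'\le0$ on $\psi^2=D^+_{\widehat{m}}$. So the flow points out of $\mathfrak{F}$, or along its boundary, on $\partial\mathfrak{F}$, and no trajectory can enter $\mathfrak{F}$ in forward time. Equivalently, $\mathfrak{F}$ is backward invariant.

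\textbf{Main obstacle.} The one thing to watch is the ordering $\widehat{m}\Lambda^+\le -3m-2A^+$, so that $\mathfrak{C}$ is nonempty and the sector pieces fit together as described. I would derive it from $D^+_{\widehat{m}}>0$ (Proposition~\ref{prop: small l/k for positive D^-_k}) combined with the displayed identity and $F^+<0$:
\[
\widehat{m}\Lambda^+-(-3m-2A^+)=\frac{B^+C^+D^+_{\widehat{m}}}{F^+}<0 .
\]
Degenerate boundary points, such as the corners and the fixed points themselves, are handled by the standard tangency argument: there the relevant derivatives vanish, and uniqueness of solutions prevents any crossing.
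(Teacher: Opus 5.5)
Your proposal is correct and is essentially the argument the paper has in mind: the paper gives no details beyond ``by straightforward computation'', i.e.\ the barrier check of Proposition~\ref{prop:invarinat sectors} specialised to the constant point $p^+_{k,l}$, where the identities $\xi'|_{\xi=-3m-2A^+}=B^+C^+(D^+_{\widehat m}-\psi^2)$ and $\xi'|_{\xi=\widehat m\Lambda^+}=-B^+C^+\psi^2$ do all the work, exactly as you use them. The identity for $m\widehat F^+-\widehat m F^+$ that you list is never actually needed, and the vector field is tangent to the sector boundaries only at the two fixed points $(\widehat m\Lambda^+,0)$ and $(-3m-2A^+,\sqrt{D^+_{\widehat m}})$, so your closing tangency remark suffices.
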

Using the same arguments as in the smooth cases, there exists an integral curve $G(s)= (\xi_G, \psi_G)$ emanating from $(\widehat{m} \Lambda^+, 0)$ that remains in $\mathfrak{C}$ for all $s\in \R$, and so it must converge to the non-abelian fixed point $\left(-2A^+ -3m, \sqrt{D_{\widehat{m}}}\right)$.

This proves the existence of the desired integral curve and finishes the proof of Theorem \ref{thm:gradient-flow-CS}.
\section{Invariant ASD connections and monopoles}
We give a brief derivation of the $\U(2)$-invariant equations associated to $\SU(2)$ ASD connections on $\left(\R^4\setminus\{0\}, g_{Eucl}\right)$ and the Bogomolny equations associated to monopoles on $\left(\R^3\setminus\{0\}, g_{Eucl}\right)$ under a spherical symmetry assumption. 
\subsection{\texorpdfstring{$\U(2)$}{U(2)}-invariant ASD equations} \label{sec:invariant-ASD}
We view $\R^4\setminus \{0\}$ as the metric cone over the round 3-sphere $S^3 \cong \SU(2)$. Take $\{dt , t\sigma_i\}$ the standard orthonormal co-frame satisfying $d\sigma_i = - 2\varepsilon_{ijk} \sigma_{jk}$, with ${i,j,k} \in\{1,2,3\}$, and $T_i\in \mathfrak{su}(2)$ generators satisfying $[T_i, T_j] = \varepsilon_{ijk} T_k$. Since $\U(2) \subseteq  \SO(4)$, a $U(2)$-invariant connection can be written in temporal gauge as
$$ A(t) = a(t)\sigma_3 \otimes T_3+ b(t)(\sigma_1\otimes T_1+\sigma_2\otimes T_2) +c(t)(\sigma_1\otimes T_2 - \sigma_2\otimes T_1)\;.$$
The curvature reads
\begin{align*}
    F_A(t) = &\left[ \left(\frac{dc }{dt} \sigma_2 - \frac{db }{dt}\sigma_1  \right) \wedge dt + (a-2)\left(c \sigma_1 +b\sigma_2\right)\wedge \sigma_3\right]\otimes T_1\\&+
\left[- \left(\frac{dc }{dt} \sigma_1 + \frac{db }{dt}\sigma_2  \right) \wedge dt + (a-2)\left(c \sigma_2 - b\sigma_1\right)\wedge \sigma_3\right]\otimes T_2 \\&+ \left( \frac{da }{dt} dt \wedge \sigma_3 +( b^2 +c^2-2a)\sigma_1\wedge\sigma_2\right)\otimes T_3\;,
\end{align*}
and the ASD condition $* F_A =-  F_A$ yields the ODE system:
\begin{equation}\label{eq:pre-spherical-Nahm}
 t  \frac{d a}{dt} = 2a -b^2 -c^2\;,\qquad  t  \frac{d b}{dt} = (2- a)b\;, \qquad t  \frac{d c}{dt} = (2- a)c\;.
\end{equation}
Note that $t \displaystyle\frac{d}{dt}\left(\frac{b}{c}\right)=0$, so the angle $\theta = \arctan \left( \frac{c}{b}\right)$ is constant. Hence, the constant gauge transformation $g = \exp (\theta\, T_3)$ allows us to reduce ourselves to the ODE:
\begin{equation}\label{eq:spherical-Nahm}
 t  \frac{d a}{dt} = 2a -b^2\;,\qquad  t  \frac{d b}{dt} = (2- a)b\;.
\end{equation}
\subsection{Radial Bogomolny equations}
We refer the reader to \cite{Oli14} for a more detailed derivation. As above, we view $\R^3\setminus \{0\}$ as the metric cone of the round $2$-sphere $S^2 \cong \SU(2)/\U(1)$. 

Take $\sigma_i$ left-invariant 1-forms on $\SU(2)$ satisfying $d\sigma_i = - \varepsilon_{ijk} \sigma_{jk}$ with the $\U(1)$-action generated by the dual of $\sigma_1$. Pulling back to $\R^3\setminus\{0\}$ the homogeneous Hopf bundle $$P=SU(2)\times_\chi SU(2)\to SU(2)/U(1)$$ with $\chi: z \mapsto \diag( z, z^{-1})$, we can write the flat metric as 
$$  g_{\R^3\setminus \{0\}} =  dt^2 + t^2 (\sigma_2^2 + \sigma_3^2)\;,$$
where we omit the pullback map for convenience. 

By Wang's theorem \cite{wang1958invariant}, any homogeneous connection $A$ and Higgs field  $\Phi$ in temporal gauge take the form
$$ A=\sigma_1 \otimes T_1 +a(t)(\sigma_2\otimes T_2+\sigma_3\otimes T_3)  \qquad \qquad  \Phi  = - \phi(t) T_1,$$
up to gauge. The curvature and covariant derivative are
\begin{subequations}
    \begin{align}
    F_A &=\frac{da}{dt} dt\wedge\left(\sigma_2\otimes T_2
         +\sigma_3\otimes T_3\right)
         +(a^2-1)\sigma_{23}\otimes T_1,\\
    d_{A}\Phi &=- \frac{d\phi}{dt} dt\otimes T_1
         + \phi\;a(\sigma_2 \otimes T_3-\sigma_3\otimes T_2).
\end{align}
\end{subequations}
The Bogomolny equation $F_{A}=*d_{A}\Phi$ reduces to the radial BPS equation \cite{PS75}:
\begin{equation}\label{eq:spherical-bogomolny}
    \frac{da}{dt}=  - a\phi\, \qquad \qquad 
    \frac{d\phi}{dt}= \frac{1 - a^2}{t^2}\;.
\end{equation}

\bibliographystyle{alpha}
\bibliography{Ref_def}
\bigskip
\noindent\rule{\textwidth}{0.3pt}
\bigskip

{\small
\noindent
\textsc{Xi'an Jiaotong-Liverpool University, Suzhou, China}\\
\textit{E-mail}\,: \href{mailto:hanci.chi@xjlu.edu.cn}{hanci.chi@xjlu.edu.cn}

\medskip
\noindent
\textsc{Beijing Institute of Mathematical Sciences and Applications, Beijing, China}\\
\textit{E-mail}\,: \href{mailto:esolefarre@bimsa.cn}{esolefarre@bimsa.cn}
\par}

\end{document}